\def\BState{\State\hskip-\ALG@thistlm}
\newtheorem{thm}{Theorem}[section]
\newtheorem{prop}[thm]{Proposition}
\newtheorem{cor}[thm]{Corollary}
\newtheorem{lem}[thm]{Lemma}
\newtheorem*{claim}{Claim}
\newtheorem{assumpD}{}
\theoremstyle{definition}
\newtheorem{defn}[thm]{Definition}
\theoremstyle{remark}
\newtheorem{quest}[thm]{Question}
\newtheorem{rem}[thm]{Remark}
\newtheorem{example}[thm]{Example}
\newtheorem{examples}[thm]{Examples}
\newcommand{\isom}{\mathrm{Isom}}
\newcommand{\U}{\mathrm X}
\newcommand{\bU}{\overline{\mathrm X}}
\newcommand{\hU}{\partial_h{\mathrm X}}
\newcommand{\pU}{\partial{\mathrm X}}
\newcommand{\act}{\curvearrowright}
\newcommand{\pG}{\Lambda (\Gamma o)}
\newcommand{\ccG}{\Lambda^{\textrm{con}}{(\Gamma o)}} 
\newcommand{\cG}{\Lambda^{\textrm{con}}_{\textrm{r,F}}{(\Gamma o)}}
\newcommand{\rcG}{\Lambda^{\textrm{reg}}_{\textrm{r,F}}{(\Gamma o)}}
\newcommand{\hG}{\Lambda^{\mathrm{hor}}{(Ho)}}
\newcommand{\HG}{\Lambda^{\mathrm{Hor}}{(Ho)}}
\newcommand{\mG}{\Lambda^{\mathrm{Myr}}{(\Gamma o)}}
\newcommand{\bX}{\partial_h{\mathrm X}}
\newcommand{\p}{\mathcal P}
\newcommand{\f}{\mathscr F}
\newcommand{\dirac}[1]{{\mbox{Dirac}}{(#1)}}
\newcommand{\e}[1]{\omega_{#1}}
\newcommand{\ax}{\mathrm{Ax}}
\newcommand{\diam }[1]{{\textbf{diam}\big(#1\big)}}
\newcommand{\proj}{\textbf{d}}
\newcommand{\pmf}{\mathscr {PMF}}
\newcommand{\mf}{\mathscr {MF}}
\newcommand{\ue}{\mathscr {UE}}
\newcommand{\T}{\mathcal {T}}
\newcommand{\len }{\ell}
\newtheoremstyle{query}%
{}{}
{\color{red}}
{}
{\sffamily\bfseries}{:}{12pt}
{}
\theoremstyle{query}
\newcommand{\ywy}[1]{{\color{red}{#1}}}
\begin{document}

\title[Confined subgroups in groups with contracting elements]{Confined subgroups in groups with contracting elements}

\author{Inhyeok Choi}
\address{June E Huh Center for Mathematical Challenges\\
Korea Institute for Advanced Study\\
Seoul, South Korea}
\email{inhyeokchoi48@gmail.com}

\thanks{(I.C.) Supported by Samsung Science \& Technology Foundation (SSTF-BA1301-51), Mid-Career Researcher Program (RS-2023-00278510) through the NRF funded by the government of Korea, KIAS Individual Grant (SG091901) via the June E Huh Center for Mathematical Challenges at KIAS, and by the Fields Institute for Research in Mathematical Sciences.}
\author{Ilya Gekhtman}

\author{Wenyuan Yang}

\address{Beijing International Center for Mathematical Research\\
Peking University\\
 Beijing 100871, China
P.R.}
\email{wyang@math.pku.edu.cn}

\thanks{(W.Y.) Partially supported by National Key R \& D Program of China (SQ2020YFA070059) and  National Natural Science Foundation of China (No. 12131009 and No.12326601)}

\author{Tianyi Zheng}
\address{Department of Mathematics, University of California San Diego, 9500 Gilman Dr. La Jolla, CA 92093, USA.} \email{tzheng2@math.ucsd.edu}
\subjclass[2000]{Primary 20F65, 20F67, 37D40}

\date{May 4th 2024}

\dedicatory{}

\keywords{confined subgroups, Patterson-Sullivan measures, contracting elements, Hopf decomposition, growth rate}

\begin{abstract} 
In this paper, we study the growth of confined subgroups through boundary actions of groups with contracting elements. We establish that the growth rate of a confined subgroup is strictly greater than half of the ambient growth rate in groups with purely exponential growth. Along the way, several results are obtained on the Hopf decomposition for boundary actions of subgroups with respect to conformal measures. In particular, we prove that confined subgroups are conservative, and examples of subgroups with nontrivial Hopf decomposition are constructed. We show a connection between Hopf decomposition and quotient growth and provide a dichotomy on quotient growth of Schierer graphs for subgroups in hyperbolic groups.

\end{abstract}

\maketitle
\setcounter{tocdepth}{2} \tableofcontents

\section{Introduction}

Let  $G$ be a locally compact, second countable topological group. The space $\mathrm{Sub}(G)$ of all closed subgroups in $G$, equipped with the Chabauty topology, is a compact metrizable space on which $G$ acts continuously by conjugation. Measurable and topological dynamics of the action $ G\act \mathrm{Sub}( G)$ have been instrumental in recent advances in the study of lattices in semi-simple Lie groups, see  \cite{ABBGNRS,FG23}. In particular, on the measurable dynamics side, invariant random subgroups (IRS), which are invariant measures on $\mathrm{Sub}(G)$, have attracted a lot of interest since the terminology was coined in \cite{AGV14}. Their topological counterpart, namely $G$--minimal systems in $\mathrm{Sub}(G)$, were introduced as uniformly recurrent subgroups (URS) in \cite{GW15}. URS play an important role in the connections between reduced $C^{\ast}$--algebras and topological dynamics developed in \cite{KK17,BKKO17}. 

The notion of confined subgroups was introduced in  \cite{HZ97} in the study of the ideal structure of group rings of simple locally finite groups. More recently, it has been further investigated in \cite{LBMB18,LBMB22} for a variety of countable groups of dynamical origin. Let $\Gamma, H$ be two subgroups of $G$, where $H$ is not necessarily contained in $\Gamma$. We say that  $H$ is  \textit{confined} by $\Gamma$ in $G$ if its $\Gamma$--orbit (\emph{i.e.} all conjugates of $H$ under $\Gamma$) does not accumulate in $\mathrm{Sub}(G)$ to the trivial subgroup. Equivalently, there exists a compact \textit{confining} subset $P\subseteq G$ such that $g^{-1}Hg\cap P\setminus\{1\}\ne \emptyset$ for all $g\in \Gamma$. When $\Gamma=G$, we simply say that $H$ is a confined subgroup of $G$. Every subgroup $H$ in a non-trivial URS of $\Gamma$ is confined by $\Gamma$. Conversely, if $H$ is confined by $\Gamma$, then the $\Gamma$--orbit closure of $H$ in $\mathrm{Sub}(G)$ contains a non-trivial $\Gamma$--minimal system. 

In a semi-simple Lie group, discrete confined torsion-free subgroups can be described by the geometric condition that the action on the associated symmetric space has bounded injectivity radius from above. Fraczyk-Gelander \cite{FG23} showed that in a higher-rank simple Lie group, discrete confined subgroups are exactly lattices, confirming a conjecture of Margulis. This is obviously false in the rank-1 case: indeed, normal subgroups of uniform lattices are confined in both the lattice and the ambient Lie group. Instead, Gelander posed a conjecture on the growth rates of discrete confined subgroups. This conjecture was recently proved by Gekhtman-Levit \cite{GL23}. The methods of \cite{FG23, GL23} have probabilistic ingredients, using stationary random subgroups to derive geometric results.  

The main goal of the present article is to investigate confined subgroups in a general class of discrete groups $\Gamma$, which  acts on a geodesic metric space $(\U,d)$ with \textit{contracting elements} defined as follows.
\begin{defn}\footnote{There are several notions of contracting elements in the literature. Our notion here is usually called strongly contracting by other authors. As there are no other ones used in this paper, we'd keep use of this terms   to be consistent with \cite{YANG10,YANG22}} 
An isometry $g\in \isom(\U)$ is called  \textit{contracting} if the orbital map
$$
n\in \mathbb Z\longmapsto g^no\in \U
$$
is a quasi-isometric embedding, so that the image $A:=\{g^no: n\in \mathbb Z\}$ has \textit{contracting property}: any ball $B$ disjoint with $A$ has $C$-bounded projection to $A$ for a constant $C\ge 0$ independent of $B$. 
\end{defn}
Contracting elements are usually thought of as hyperbolic directions, exhibiting negatively curved feature of the ambient space. The main   examples we keep in mind with application  include the following:
\begin{examples}\label{SCCexamples}
    \begin{itemize}

        \item Fundamental groups of rank-1 Riemannian manifolds with a finite geodesic flow invariant measure of maximal entropy (the BMS measure), where contracting elements are exactly loxodromic elements.
        \item Hyperbolic groups; more generally, relative hyperbolic groups, where loxodromic elements are contracting  elements.
        \item CAT(0)-groups with rank-1 elements, including right-angled Coxeter/Artin groups, where  contracting elements coincide with rank-1 elements.
        \item 
        Mapping class groups on Teichm\"{u}ller space endowed with Teichm\"{u}ller metric, where  contracting elements are exactly pseudo-Anosov elements.
       
    \end{itemize}
\end{examples}

On a heuristic level, one might ask to what extent confined subgroups are large, or resemble normal subgroups. To be more precise, we consider two closely related aspects: the dynamics of a discrete subgroup $H<\mathrm{Isom}(\U,d)$ acting on a suitable boundary of the space $\U$; and the growth rates of $H$ and its quotient $\U/H$. Our investigation focuses mainly on the questions related to these aspects for a discrete confined subgroup $H$.  

Fix a basepoint $o\in \U$. We define the \textit{growth rate} (or \textit{critical exponent}) of the orbit $Ho$ as
\begin{equation}\label{GrowthRateDefn}
\e{H}=\limsup_{n\to\infty}\frac{\log\sharp N_H(o,n)}{n},     
\end{equation}
where $N_H(o,n)=\{go: g\in H, d(o,go)\le n\}$. When $\Gamma$ is discrete, we also consider the growth rate $\e{\Gamma/H}$, defined below in (\ref{QuotientGrowthRateDefn}) analogously to (\ref{GrowthRateDefn}), of the image $\Gamma o$ under the projection $\U \to \U/H$ equipped with the quotient metric.  \footnote{In the literature, if $H$ is a subgroup of $\Gamma$, $\e H$  is usually called relative growth; the relative growth of a normal subgroup $H$  is referred as \textit{cogrowth} relative to the \textit{growth} of $\Gamma/H$ (particularly on $\Gamma=\mathbf F_d$, see \cite{GH}). Occasionally, the terminology is reversed: $\e{\Gamma/H}$ is called cogrowth relative to the growth $\e \Gamma$ of $\Gamma$ (see \cite{Ol17,BO10}). We find that the former is more convenient in this paper.} 

These growth rates have been intertwined with other quantities in the study of the geometric and dynamic aspects of discrete groups for a long history. In rank-1 symmetric spaces, the famous Elstrodt-Patterson-Sullivan-Corlette formula relates growth rate to the bottom $\lambda_0$ of Laplace-Beltrami spectrum on $\U/H$ as follows:
$$
\begin{aligned}
\lambda_0(\U/H)=\begin{cases}
\e H(h(\U)-\e H), &\e H\ge h(\U)/2\\
\frac{h^2(\U)}{4},&\e H \le h(\U)/2
\end{cases}    
\end{aligned}
$$
where $h(\U)$ is the Hausdorff dimension of visual boundary. If $\Gamma$ is a lattice, then $h(\U)=\e \Gamma$.  

In the discrete setting, by the cogrowth formula due to Grigorchuk, for $H<\mathbf{F}_d$ a subgroup in the free group on $d$ generators, the spectral radius $\rho(\U/H)$ of simple random walks on  $\U/H$ is given by
$$
\begin{aligned}
\rho(\U/H)=\begin{cases}
\frac{\sqrt{2d-1}}{2d}(\frac{\sqrt{2d-1}}{\mathrm{e}^{\e H}}+\frac{\mathrm{e}^{\e H}}{\sqrt{2d-1}}), &\mathrm{e}^{\e H} \ge \sqrt{2d-1}\\
\frac{\sqrt{2d-1}}{d},&\mathrm{e}^{\e H} \le \sqrt{2d-1}
\end{cases}    
\end{aligned}
$$
where $\U$ is the $2d$--regular tree (the standard Cayley graph of  $\mathbf{F}_d$), and $\log (2d-1)$ is the Hausdorff dimension of the space of ends of $\U$.

 The seminal works of Kesten \cite{Kes59} and Brooks \cite{Br85} characterize amenability in terms of spectral radius of random walks and Laplace spectrum on Riemannian manifolds, respectively. Using the above formulae, these results can be interpreted in a common geometric setting: for $\Gamma$ of some natural classes of groups, a normal subgroup $H<\Gamma$ attains the maximal relative growth rate $\e H=\e \Gamma$ if and only if the quotient $\Gamma/H$ is amenable. This perspective is fruitful, with the latest results  in  \cite{CDS, CDST} for strongly positively recurrent actions (SPR) or statistically convex-cocompact actions (SCC) on hyperbolic spaces. In a different direction, Kesten's theorem is generalized beyond normal subgroups: for IRS in \cite{AGV16} and URS in \cite{F20}. We refer the reader to these papers and reference therein for a more comprehensive introduction. To put into perspective, we draw in Fig. \ref{fig:actionssurvol}  the relation between various actions including SPR and SCC actions considered in this paper. 

The question of whether the inequality $\e{\Gamma/H}<\e \Gamma$ holds for normal subgroups $H\neq\Gamma$ has been actively investigated \cite{AL, Sam2,DPPS} since the introduction of the notion of {\it growth tightness} in \cite{GH}. The most general results currently available for normal subgroups of divergence type actions are given by \cite{ACTao, YANG10}, while the situation for general subgroups $H$ remains largely unexplored in the literature. Furthermore, the relations between $ \e H, \e \Gamma$ and $\e{\Gamma/H}$ still remain mysterious, despite recent works \cite{JM20,Coulon22}.

In this paper, for a confined subgroup $H$ of $\Gamma$, we establish the conservativity of the boundary action of $H$, a strict lower bound $\e H>\e \Gamma/2$ (such an inequality is sometimes referred to as {\it cogrowth tightness}), and an inequality relating $\e H$ to $\e \Gamma$ and $\e{\Gamma/H}$. Our approach studies boundary actions equipped with conformal measures and relies on geometric arguments with contracting elements. Notably, we do not rely on any input from random walks or considerations of probability measures on the Chabauty spaces. We first present some applications before stating our general results. 
 

\subsection{Main applications}\label{MainAppls}
Before presenting them in full generality, we state some of our main results in several well-studied geometric settings. 

First of all, let us consider a proper geodesic Gromov hyperbolic space or a CAT$(0)$ space $\U$, equipped with    the Gromov or visual boundary $\pU$ in the first and second cases respectively. Let $\Gamma<\isom(\U)$ be a non-elementary discrete subgroup containing a loxodromic or rank-1 element accordingly.  If the action $\Gamma\act \U$ has purely exponential growth (or more generally,  of divergence type), there exists a unique $\e \Gamma$--dimensional Patterson-Sullivan measure class $\mu_{\mathrm{PS}}$ on the Gromov or visual boundary $\pU$ constructed from the action $\Gamma\act \U$. Denote by $E(\Gamma)$ the set of isometries in $\isom(\U)$ which fix pointwise the limit set $\pG$ (Definition \ref{EllipticRadicalDefn}).  
\begin{thm}\label{mainthmhyperbolicandCAT(0)}
Assume the action $\Gamma\act \pU$ has purely exponential growth. Let $H<\isom(\U)$ be a nontrivial torsion-free discrete subgroup confined by $\Gamma$ with a compact confining subset. Then $\e H\ge \e \Gamma/2$. Furthermore,
\begin{enumerate}
    \item 
    If $H$ preserves the measure class of $\mu_{\mathrm{PS}}$, then the action $H\act (\partial X,\mu_{\mathrm{PS}})$ is conservative.
    \item 
    If $H$ admits a finite confining subset intersecting trivially $E(\Gamma)$, then $\e H> \e \Gamma/2$ and $\e H+\e {\Gamma/H}/2\ge \e \Gamma$.
\end{enumerate}   
\end{thm}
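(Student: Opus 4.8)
The plan is to deduce Theorem~\ref{mainthmhyperbolicandCAT(0)} from the general results on boundary actions of confined subgroups with respect to the Patterson--Sullivan measure class $\mu_{\mathrm{PS}}$, which in this setting plays the role of a conformal measure of dimension $\e\Gamma$. The first step is to check that the hypotheses of the general machinery are met: the action $\Gamma\act\U$ has purely exponential growth, hence is of divergence type, so the $\e\Gamma$--dimensional Patterson--Sullivan measure class is well defined, unique, ergodic, and supported on the (conical) limit set $\pG$; moreover it is quasi-conformal with the expected cocycle behavior along contracting directions. Because $H$ is confined by $\Gamma$ with a compact confining subset $P$, every conjugate $g^{-1}Hg$ ($g\in\Gamma$) meets $P\setminus\{1\}$, which one translates into a geometric statement: along $\Gamma$--orbits one always finds, within bounded distance, a nontrivial element of $H$ acting with controlled displacement. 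This is the key mechanism that prevents the $H$--action from being dissipative.

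Next, for part~(1) I would invoke the Hopf decomposition for the $H$--action on $(\pU,\mu_{\mathrm{PS}})$ and show the dissipative part is null. Assume $H$ preserves the class of $\mu_{\mathrm{PS}}$; then the Hopf dichotomy applies and the boundary splits $H$--equivariantly into conservative and dissipative parts. Using the confinement condition together with the shadow lemma for $\mu_{\mathrm{PS}}$, one argues by contradiction: on a wandering set of positive measure, shadows of $H$--orbit points would have summable measures, but confinement forces enough returns (conjugates of $H$ crossing the compact set $P$ near every $\Gamma$--translate) that the corresponding shadows are visited infinitely often on a positive-measure set, contradicting wandering. Hence the dissipative part is null and the action is conservative. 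This is the step I expect to be the main obstacle, because making ``enough returns'' quantitative requires carefully combining the contracting property of the relevant axes, a pigeonhole/counting argument over $\Gamma$--orbit points, and the Borel--Cantelli-type estimate from the shadow lemma; the torsion-free and nontriviality hypotheses on $H$ are used here to ensure the confining elements genuinely move points.

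For part~(2), the extra hypothesis is that $H$ admits a \emph{finite} confining subset $P$ with $P\cap E(\Gamma)=\emptyset$; since elements outside $E(\Gamma)$ do not fix the limit set pointwise, the confining elements act ``transversally'' and one gains a definite amount of measure at each return rather than merely infinitely many returns. I would run a counting argument: for each $\gamma\in\Gamma$ with $d(o,\gamma o)\le n$, confinement produces a nontrivial $h_\gamma\in H$ with $\gamma^{-1}h_\gamma\gamma\in P$, i.e.\ $h_\gamma$ has an axis fellow-traveling a contracting geodesic through a bounded neighborhood of $\gamma o$; grouping the $\gamma$'s by the coset $H\gamma$ and using purely exponential growth of $\Gamma$ together with the contracting geometry, one shows that the number of such cosets needed to cover the $n$--ball of $\Gamma o$ is at most $e^{(\e\Gamma-\e H+o(n))n}$ while each $H$--orbit within a coset contributes at least $e^{(\e H-\varepsilon)n/1}$ points after reflecting across the confining element — the ``$1/2$'' appearing because one can only guarantee half of the geodesic $[o,\gamma o]$ lies on the $H$--side. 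Optimizing this bookkeeping yields $\e H\ge\e\Gamma/2$, and the transversality from $P\cap E(\Gamma)=\emptyset$ upgrades it to the strict inequality $\e H>\e\Gamma/2$ by producing a genuinely exponential surplus of distinct orbit points. The same covering estimate, now tracking the image of $\Gamma o$ in $\U/H$ (whose growth rate is $\e{\Gamma/H}$), gives the bound on the number of $H$--cosets meeting the $n$--ball, and combining it with the lower bound $e^{(\e H-\varepsilon)n}$ on each coset's contribution to $N_\Gamma(o,n)$ produces $\e{\Gamma/H}/2 + \e H \ge \e\Gamma$ after letting $\varepsilon\to0$.
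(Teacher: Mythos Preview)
Your high-level strategy---reduce to general results on conservativity, the growth bound, and the growth/co-growth inequality---matches the paper exactly; Theorem~\ref{mainthmhyperbolicandCAT(0)} is deduced verbatim from Theorems~\ref{ConfinedConsThm}, \ref{ConvTightThm}, and~\ref{CoulonInequality}. However, the mechanisms you sketch for those three inputs diverge from the paper's and, in places, are too vague to constitute proofs.

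For part~(1), the paper does not argue by contradiction with a wandering set. Instead it shows directly that the big horospheric limit set $\HG$ contains (a full-measure subset of) the conical limit set: given a conical point $\xi$ with $g_n o\to\xi$ along a ray, confinement produces $p_n\in P$ with $h_n:=g_n p_n g_n^{-1}\in H$, and a simple distance estimate shows $h_n o$ lies in a fixed horoball centered at $[\xi]$. Torsion-freeness of $H$ is used to guarantee that $\{h_n o\}$ is infinite (otherwise some $h=h_n$ would fix $\xi$, forcing $\xi$ into a countable exceptional set). Conservativity then follows because $\HG$ is the infinite conservative part. Your Borel--Cantelli/shadow heuristic is not the route taken.

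For part~(2), the key technical ingredient you are missing is Lemma~\ref{GoodConfiningSetP} (an enhanced Extension Lemma): there is a finite set $F$ of contracting elements so that for every $g\in\Gamma$ one finds $f\in F$, $p\in P$ with $gfpf^{-1}g^{-1}\in H$ and $|d(o,gfpf^{-1}g^{-1}o)-2d(o,go)|\le D$. This ``doubling'' map is the true source of the factor $1/2$; it is uniformly finite-to-one, and immediately yields $\sharp\big(H\cap A_\Gamma(o,2n,\Delta')\big)\succ e^{n\e\Gamma}$, hence $\e H\ge\e\Gamma/2$. Your ``half the geodesic lies on the $H$-side'' picture does not capture this. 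The \emph{strict} inequality is considerably more delicate than ``transversality gives surplus'': the paper establishes a Shadow Principle over the full $\Gamma$-orbit for an $\e H$-dimensional $H$-conformal density (Lemma~\ref{ShadowPrinciple}), then argues by contradiction---assuming $\e H=\e\Gamma/2$, one bounds $\|\mu_{go}\|$ uniformly via a comparison of Poincar\'e series of conjugates $H^g$ (Lemma~\ref{EqualPSSeries}), and the shadow covering then forces $\e H\ge\e\Gamma$, a contradiction. Finally, the inequality $\e H+\e{\Gamma/H}/2\ge\e\Gamma$ is obtained not by a covering count but by Coulon's $\ell^2(\Gamma/H)$ argument: one shows $\phi_s(Hg)=\sum_{h\in H}e^{-sd(o,hgo)}$ lies in $\ell^2$ for $s>\e H$ (using the Shadow Principle), pairs it with $\varphi_t(Hg)=e^{-t d(Ho,Hgo)}\in\ell^2$ for $t>\e{\Gamma/H}/2$, and applies Cauchy--Schwarz.
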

In many circumstances,  the group of isometries fixing boundary pointwise $\pU$ is trivial (or finite),  \emph{e.g.} if $\U$ is a CAT(-1) space or a geodesically complete CAT(0) space with a \textit{rank-1} geodesic (\emph{i.e.}, bounding no half flat). In case of $\pG = \pU$, $E(\Gamma)$ is finite and the assumption in the item (2) is always fulfilled.


An important subclass of (uniquely) geodesically complete CAT$(0)$ spaces are provided by  Hadamard manifolds (\emph{i.e.}  a complete, simply connected $n$--dimensional Riemannian manifold  with non-positive sectional curvature). Its visual boundary $\partial  \U$, which
is defined as the set of equivalence classes of geodesic rays, is homeomorphic to $\mathbb S^{n-1}$.
Let  $\Gamma<\isom(\U)$ be a torsion-free discrete  group.
The quotient manifold $M=\U/\Gamma$ is called rank-1 if it admits a closed geodesic without a perpendicular parallel Jacobi field. In other words, $M$ is rank-1 if and only if $\Gamma$ contains a contracting element \cite[Theorem 5.4]{BF2}.  

\begin{thm}\label{mainthmRANK1}
Let $M=\U/\Gamma$ be a rank-1 manifold, and assume that the geodesic flow on the unit tangent bundle $T^{1}M$ has finite measure of maximal entropy. Let $\mu_{\mathrm{PS}}$ be the unique $\e \Gamma$--dimensional Patterson-Sullivan measure class on $\partial \U$ constructed from the action $\Gamma\act \U$. Let $H<\isom(\U)$ be a nontrivial torsion-free discrete subgroup confined by $\Gamma$ with a compact confining subset. Then $\e H\ge \e \Gamma/2$. Furthermore,
\begin{enumerate}
    \item 
    If $H$ preserves the measure class of $\mu_{\mathrm{PS}}$, then the action $H\act (\partial \U,\mu_{\mathrm{PS}})$ is conservative.
    \item 
    If $H$ admits a finite confining subset intersecting trivially $E(\Gamma)$, then $\e H> \e \Gamma/2$ and $\e H+\e {\Gamma/H}/2\ge \e \Gamma$.
\end{enumerate}   
\end{thm}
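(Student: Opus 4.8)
The plan is to derive Theorem~\ref{mainthmRANK1} directly from Theorem~\ref{mainthmhyperbolicandCAT(0)} in its CAT$(0)$ form, by checking that a rank-$1$ Hadamard manifold $M=\U/\Gamma$ whose geodesic flow carries a finite measure of maximal entropy (the Bowen--Margulis--Sullivan, or BMS, measure) fits exactly into that framework. First I would record the soft geometric facts: a Hadamard manifold $\U$ is a proper, geodesically complete, uniquely geodesic CAT$(0)$ space whose visual boundary is $\mathbb S^{n-1}$, and this is the boundary on which the Patterson--Sullivan class in the statement lives. By the cited result \cite[Theorem~5.4]{BF2}, $M$ is rank-$1$ if and only if $\Gamma$ contains a contracting isometry (a rank-$1$ axial isometry crosses no flat half-plane, which is the contracting condition of Example~\ref{SCCexamples}). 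Finiteness of the BMS measure forces $\pG$ to be infinite and $\Gamma$ to be non-elementary, so $\Gamma<\isom(\U)$ is a non-elementary discrete subgroup with a contracting element, and $H<\isom(\U)$ is as required.

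The one substantive point is to upgrade ``finite BMS measure'' to ``the action $\Gamma\act\U$ has purely exponential growth'', which is the running hypothesis of Theorem~\ref{mainthmhyperbolicandCAT(0)}. Here I would invoke the Hopf--Tsuji--Sullivan dichotomy for non-positively curved spaces, in the form established by Knieper, Link--Picaud and Ricks: a finite flow-invariant measure is conservative by Poincar\'e recurrence, so conservativity of the geodesic flow on $T^1M$ with respect to BMS gives that the diagonal $\Gamma$-action on $\partial^2\U$ with the Patterson--Sullivan measure is ergodic, i.e.\ $\Gamma\act\U$ is of divergence type; and Knieper's orbit-counting estimates for rank-$1$ manifolds with finite BMS give $\sharp N_\Gamma(o,n)\asymp e^{\e\Gamma n}$, i.e.\ purely exponential growth. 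Divergence type also yields uniqueness up to scaling of the $\e\Gamma$-dimensional conformal density, so the density $\mu_{\mathrm{PS}}$ produced by the general construction agrees as a measure class with Knieper's Patterson--Sullivan class used to build the BMS measure; hence all conclusions about $\mu_{\mathrm{PS}}$ are the intended ones.

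With these checks in place, Theorem~\ref{mainthmhyperbolicandCAT(0)} applies verbatim with $\U$ the Hadamard manifold: it gives $\e H\ge\e\Gamma/2$ for every nontrivial torsion-free discrete $H$ confined by $\Gamma$ with a compact confining subset; conservativity of $H\act(\partial\U,\mu_{\mathrm{PS}})$ when $H$ preserves the measure class; and the strict inequality $\e H>\e\Gamma/2$ together with $\e H+\e{\Gamma/H}/2\ge\e\Gamma$ when $H$ has a finite confining subset meeting $E(\Gamma)$ trivially. As remarked after Theorem~\ref{mainthmhyperbolicandCAT(0)}, the latter hypothesis is automatic once $\pG=\partial\U$ (for instance when $\Gamma$ is a lattice), since then $E(\Gamma)$ is finite.

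I expect the main obstacle to be the second paragraph: the Hopf--Tsuji--Sullivan machinery and the purely-exponential orbit asymptotics are genuinely more delicate in variable non-positive curvature than in the Gromov-hyperbolic or CAT$(-1)$ cases, because of flat strips and the failure of strict convexity of $\partial\U$, so one must make sure that ``rank-$1$ together with finite BMS'' is precisely what licenses them, and that the geometric, horofunction, and reduced boundaries used in the general results are mutually compatible in this setting. Everything else --- translating ``Hadamard manifold'' and ``rank-$1$ quotient'' into the language of proper CAT$(0)$ spaces with contracting elements and quoting Theorem~\ref{mainthmhyperbolicandCAT(0)} --- is routine bookkeeping.
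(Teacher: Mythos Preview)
Your approach is correct and essentially the same as the paper's. The paper proves Theorems~\ref{mainthmhyperbolicandCAT(0)} and~\ref{mainthmRANK1} in a single sentence, saying both ``follow immediately from Theorems~\ref{ConfinedConsThm}, \ref{ConvTightThm} and~\ref{CoulonInequality} together''; you simply route Theorem~\ref{mainthmRANK1} through Theorem~\ref{mainthmhyperbolicandCAT(0)} first, which is a harmless extra step since the latter is itself derived from those same three general theorems. Your explicit verification that finite BMS measure implies purely exponential growth is exactly what the paper handles implicitly via the references in the remark after Theorem~\ref{mainthmRANK1} and in Remark~\ref{ConjPEGRem} (citing Roblin rather than Knieper/Link--Picaud/Ricks, but the content is the same), and your caution about the subtleties in variable nonpositive curvature is well placed though ultimately unnecessary once one has the right citations.
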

As noted above, if $\Gamma\act\U$ has full limit set $\Lambda(\Gamma o)=\pU$, then $E(\Gamma)$ is trivial.

A few remarks are in order on the statements and their background. We refer the reader to the subsequent subsections for more detailed discussions.  
\begin{rem}
\begin{itemize}
\item 
The fundamental group $\Gamma$ is not necessarily a lattice in $\isom(\U)$. The maximal entropy measure lies in the measure class  $\mu_{\mathrm{PS}}\times \mu_{\mathrm{PS}} \times \mathbf{Leb}$ modulo $\Gamma$--action on $\partial \U \times \partial \U \times \mathbb R$ (usually called the Bowen-Margulis-Sullivan measure in the pinched negatively curved case \cite{DOP} or Knieper measure in the rank-1 case \cite{Kneiper1}).  The finiteness of this measure can be characterized in several ways (\emph{e.g.} \cite{DOP} and \cite{Roblin}). 

If $\Gamma$  is a uniform lattice and $\U$ is a rank-1 symmetric space of noncompact type, then $\mu_{\mathrm{PS}}$ is the Lebesgue measure on $\partial\U$ invariant under $\isom(\U)$ (see \cite{Quint}). In this restricted setting, if $H$ is confined by $\isom(\U)$ (equivalently by any uniform lattice, see Lemma \ref{CharConfinedSubgroups}), the strict inequality $\e H> \e \Gamma/2$ was proved by Gekhtman-Levit \cite{GL23} (without the finite confining subset assumption). 

\item The item (1) is known in confined Kleinian groups for the Lebesgue measure on the sphere (\cite[Theorem 2.11]{McMBook}). However, its proof does not generalize to Gromov hyperbolic spaces. Our proof   is different and works in general metric spaces; see Theorem  \ref{ConfinedConsThm}.

\item
If $H$ is contained in $\Gamma$, the assumptions after ``Furthermore" are redundant, and the corresponding conclusions hold without them.  A confined subgroup in a non-uniform lattice $\Gamma$ is not necessarily confined in $\isom(\U)$, so the inequality in item (2) does not follow from \cite{GL23}.


\end{itemize}
\end{rem}

Another application is given for confined subgroups in the mapping class group $\mathrm{Mod}(\Sigma_g)$ of a closed orientable surface $\Sigma_g$ ($g\ge 2$). The finitely generated group $\mathrm{Mod}(\Sigma_g)$ is actually the orientation preserving isometry group of the Teichm\"{u}ller space $\T_g$ with Teichm\"{u}ller metric, on which it acts   properly with growth rate $(6g-6)$. Investigating the similarities between $\mathrm{Mod}(\Sigma_g)$ and  lattices in semi-simple Lie groups has been an active research theme.  The following result fits into the rank-1 phenomenon of $\mathrm{Mod}(\Sigma_g)$.

\begin{thm}\label{mainthmMCG}
Consider  the measure class preserving action of $G=\mathrm{Mod}(\Sigma_g)$  on the Thurston boundary $\pmf$ of the  Teichm\"{u}ller space $\T_g$ with Thurston measure $\mu_{\mathrm{Th}}$ (cf. \cite{ABEM} recalled in Example \ref{ThurstonMeasure}). 

Let $H<G$ be a nontrivial  confined subgroup. If $g=2$, assume $H$ is infinite.  
Then the following holds 
\begin{enumerate}
    \item 
    $H\act (\pmf,\mu_{\mathrm{Th}})$ is conservative. 
    \item 
    $\e H> \e G/2=3g-3$.
    \item 
    $\e H+{\e{G/H}}/{2}\ge 6g-6$.
\end{enumerate} 
\end{thm}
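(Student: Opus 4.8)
The plan is to deduce the mapping class group statement from the general theorems for groups acting with contracting elements, after checking that $(G,\T_g)$ and the Thurston measure fit the hypotheses. First I would recall that $G=\mathrm{Mod}(\Sigma_g)$ acts properly on Teichm\"uller space $\T_g$ with the Teichm\"uller metric, that pseudo-Anosov elements are contracting (Example \ref{SCCexamples}), and that this action has purely exponential growth with $\e G = 6g-6$; the relevant Patterson--Sullivan measure class on $\pmf$ is represented by the Thurston measure $\mu_{\mathrm{Th}}$, by the work of Athreya--Bufetov--Eskin--Mirzakhani recalled in Example \ref{ThurstonMeasure}. The action of $G$ on $(\pmf,\mu_{\mathrm{Th}})$ preserves the measure class, so the hypothesis ``$H$ preserves the measure class of $\mu_{\mathrm{PS}}$'' in the conservativity statement is automatic for any subgroup $H<G$. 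Thus item (1) will follow immediately from the general conservativity result (Theorem \ref{ConfinedConsThm}), once $H$ is known to be a nontrivial confined subgroup with a compact confining subset --- and since $G$ is finitely generated and discrete in $\isom(\T_g)$, any confining subset is automatically finite, hence compact.

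For items (2) and (3), the key point is to verify the two extra hypotheses appearing after ``Furthermore'' in the general theorems: that $H$ admits a \emph{finite} confining subset (automatic here, as just noted, since $G$ is discrete and finitely generated) and that this confining subset intersects $E(G)$ trivially. So I would compute $E(G)$, the pointwise stabilizer in $\isom(\T_g)$ of the limit set $\Lambda(Go)\subseteq \pmf$. Since $\Gamma=G$ acts with full limit set $\Lambda(Go)=\pmf$ (the pseudo-Anosov fixed points are dense in $\pmf$), the remark following Theorem \ref{mainthmRANK1} applies: $E(G)$ is trivial (equivalently, one uses that the center of $\mathrm{Mod}(\Sigma_g)$ is trivial for $g\ge 3$, and is $\mathbb Z/2$ generated by the hyperelliptic involution for $g=2$). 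This is why the case $g=2$ needs the extra assumption that $H$ is infinite: one must ensure $H$ is not contained in the finite subgroup generated by the hyperelliptic involution, so that a nontrivial element of $H$ in the confining set lies outside $E(G)$; for $g\ge 3$ no such assumption is needed since $E(G)=\{1\}$ and $H$ is nontrivial. With the hypotheses verified, items (2) and (3) follow directly from item (2) of the general theorem (the hyperbolic/CAT(0) analogue, or rather its analogue for the MCG setting proved later in the paper), giving $\e H > \e G/2 = 3g-3$ and $\e H + \e{G/H}/2 \ge \e G = 6g-6$.

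The main obstacle I anticipate is not in the MCG-specific deduction, which is essentially a hypothesis-checking exercise, but rather lies upstream: establishing that the Teichm\"uller geodesic flow framework genuinely falls under the axiomatic setup of the general theorems. Specifically, one needs that the Thurston measure class is the correct boundary analogue of a Patterson--Sullivan measure and satisfies the shadow-lemma-type estimates and quasi-conformality used in the proof of the general conservativity and cogrowth-tightness results; the subtlety is that $\pmf$ is not a hyperbolic boundary and the relevant contracting geometry lives in the thick part, so the ``shadows'' must be taken in a way compatible with the partial hyperbolicity of $\T_g$. I would handle this by invoking the machinery already developed in the body of the paper (the results phrased for general spaces with contracting elements, applied with $\U=\T_g$), citing \cite{ABEM} for the precise structure of $\mu_{\mathrm{Th}}$ and \cite{YANG22} for the purely exponential growth of $\mathrm{Mod}(\Sigma_g)$, and then checking that the genus-$2$ hyperelliptic involution is the only source of non-triviality in $E(G)$.
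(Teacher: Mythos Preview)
Your reduction for $g\ge 3$ is essentially the paper's: once $E(G)$ is trivial (equivalently, the center of $\mathrm{Mod}(\Sigma_g)$ is trivial), any finite confining subset automatically avoids $E(G)\setminus\{1\}$, and Theorems \ref{ConfinedConsThm}, \ref{ConvTightThm}, \ref{CoulonInequality} give (1)--(3). One correction: for item (1) you cannot rely on alternative (i) of Theorem \ref{ConfinedConsThm}, since Teichm\"uller space is neither Gromov hyperbolic nor geodesically complete CAT(0); you must use alternative (ii), so already for conservativity the $E(G)$ verification is needed.

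For $g=2$ there is a real gap. You write that the hypothesis ``$H$ infinite'' ensures ``a nontrivial element of $H$ in the confining set lies outside $E(G)$'', but this conflates the roles of $H$ and the confining set: $P\subseteq G$ need not lie in $H$, and the requirement is that \emph{every} conjugate $g^{-1}Hg$ meet $P\setminus\{1\}$. When the hyperelliptic involution $\iota$ belongs to $H$ (it is central, so $\iota\in g^{-1}Hg$ for all $g$), the singleton $\{\iota\}$ is already a confining set, and nothing in your argument rules out that every minimal confining set contains $\iota$. Knowing $H$ is infinite, or that $H\not\subseteq\langle\iota\rangle$, does not by itself manufacture a confining set disjoint from $E(G)=\{1,\iota\}$.

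The paper closes this gap differently: it uses that $\mathrm{Mod}(\Sigma_2)$ is residually finite to choose a torsion-free finite-index subgroup $\hat G<G$, and sets $\hat H:=H\cap\hat G$. Then $\hat H$ has finite index in $H$, hence is infinite (this is where the hypothesis is used), torsion-free, and still confined. Any element of a confining set actually conjugated into $\hat H$ must have infinite order, hence lies outside the finite group $E(G)$; thus condition (ii) holds for $\hat H$, and the general theorems apply. Finally the growth quantities $\e H$, $\e G$, $\e{G/H}$ are unchanged under passage to finite index, so the conclusions transfer back to $H$.
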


The items (2) and (3) for normal subgroups were proved by Arzhantseva-Cashen \cite{AC20}, and Coulon \cite{Coulon22} respectively. The item (1) is new even for the normal subgroup case.
\begin{rem}
The Mumford compactness theorem implies that $G$ acts cocompactly on the $\epsilon$--thick part of $\T_g$ for any fixed $\epsilon>0$.
It follows that in the setting of Theorem \ref{mainthmMCG}, $H$ is a confined subgroup of $G$ if and only if $\T_g/H$ has bounded injectivity radius from above over any fixed thick part of $\T_g$. 
\end{rem}


\subsection{Ergodic properties of boundary actions}\label{SSecErgodicity}
It is classical that any action of a countable group equipped with a quasi-invariant measure admits the Hopf decomposition into the conservative and dissipative parts. In 1939, Hopf found the dichotomy in the geodesic flow of Riemannian surface with constant curvature that the flow invariant measure is either ergodic (thus conservative) or completely dissipative. Hopf's result was extended to higher dimensional hyperbolic manifolds by Sullivan \cite{Sul}, forming a still-growing collection of the now-called Hopf-Tsuji-Sullivan dichotomy results in a number of settings by many authors. For the convenience of the reader, we state the HTS dichotomy   for conformal measures on the Gromov boundary for a discrete group on CAT(-1) space (\cite{Roblin}):
\begin{thm}[HTS dichotomy for CAT(-1) spaces]\label{HTSCAT(-1)}
Assume that $H\act \U$ is a proper action on CAT\emph{(-1)} space. Let $\{\mu_x:x\in\U \}$ be a $\omega$-dimensional $H$-conformal density on the Gromov boundary $\pU$ for some $\omega>0$. Then we have the dichotomy: either the following equivalent statements hold:
\begin{enumerate}
    \item[(I.1)] 
    $\p_H(s, x,y)=\sum_{h\in H}\mathrm{e}^{-sd(x,hy)}$  is divergent at $s=\omega$.
    \item [(I.2)] 
    $\mu_x$ is supported on the set of conical points $\Lambda^{\mathrm{con}}(Ho)$.
    \item [(I.3)] 
    $\mu_x\times \mu_x$ is ergodic. In particular, $\mu_x$ is ergodic.
\end{enumerate}
or the following equivalent statements hold:
\begin{enumerate}
    \item [(II.1)]  
    $\p_H(s, x,y)=\sum_{h\in H}\mathrm{e}^{-sd(x,hy)}$  is convergent at $s=\omega$.
    \item [(II.2)]
    $\mu_x$ is null on the set of conical points $\Lambda^{\mathrm{con}}(Ho)$.
    \item [(II.3)]
    $\mu_x\times \mu_x$ is completely dissipative. 
\end{enumerate}  
Under the set (I) of conditions, we must have $\omega=\e H$ and  $\mu_x$ is unique up to scaling. 
\end{thm}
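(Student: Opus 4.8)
The plan is to run the classical Hopf--Tsuji--Sullivan scheme, in the form worked out for $\mathrm{CAT}(-1)$ spaces by Roblin, adapted to the stated generality. The workhorse is Sullivan's shadow lemma: for $r$ large there is $C=C(r)\ge 1$ with
$$
C^{-1}\mathrm{e}^{-\omega d(x,hx)}\ \le\ \mu_x\big(\mathcal O_r(x,hx)\big)\ \le\ C\,\mathrm{e}^{-\omega d(x,hx)}\qquad(h\in H),
$$
where $\mathcal O_r(x,hx)$ is the shadow at $hx$, seen from $x$, of the ball of radius $r$; this follows from conformality of $\{\mu_x\}$ together with standard Busemann estimates on shadows, valid in any Gromov hyperbolic, hence $\mathrm{CAT}(-1)$, space. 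Since at $s=\omega$ the Poincar\'e series $\p_H(s,x,x)$ is either convergent or divergent, it suffices to establish the two implication chains $(\mathrm I.1)\Rightarrow(\mathrm I.2)\Rightarrow(\mathrm I.3)$ and $(\mathrm{II}.1)\Rightarrow(\mathrm{II}.2)\Rightarrow(\mathrm{II}.3)$: the remaining equivalences then follow by elimination, since ergodicity forces conservativity and hence excludes complete dissipativity.

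The dissipative side is the softer one. For $(\mathrm{II}.1)\Rightarrow(\mathrm{II}.2)$, a conical limit point lies, up to bounded error, in infinitely many shadows $\mathcal O_r(x,hx)$, while $\sum_{h}\mu_x(\mathcal O_r(x,hx))\le C\,\p_H(\omega,x,x)<\infty$ by the shadow lemma; the convergence half of the Borel--Cantelli lemma then gives that $\mu_x$-a.e.\ boundary point lies in only finitely many such shadows, so $\mu_x\big(\Lambda^{\mathrm{con}}(Ho)\big)=0$. For $(\mathrm{II}.2)\Rightarrow(\mathrm{II}.3)$, a pair $(\xi,\eta)\in\pU\times\pU$ with both coordinates non-conical spans a bi-infinite geodesic whose projection to $X/H$ eventually leaves every compact set; a routine argument then produces, for $\mu_x\times\mu_x$-a.e.\ such pair, a neighborhood with pairwise disjoint $H$-translates, exhibiting a wandering set of full measure, so $H\act(\pU\times\pU,\mu_x\times\mu_x)$ is completely dissipative.

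The conservative side carries the real content. For $(\mathrm I.1)\Rightarrow(\mathrm I.2)$ I would argue by contradiction through the Hopf decomposition of $H\act(\pU,\mu_x)$: were the dissipative part of positive measure, a wandering set $W$ of full measure in it would satisfy $\sum_h\mu_x(hW)\le 1$, and comparing $\mu_x(hW)$ with $\mathrm e^{-\omega d(x,hx)}$ via the shadow lemma and a geometric quasi-independence estimate controlling overlaps of shadows of orbit balls would force $\p_H(\omega,x,x)<\infty$, contradicting $(\mathrm I.1)$; hence the action is conservative, and conservativity is equivalent to $\mu_x$ being carried by the conical limit set, conical points being precisely the recurrent ones for the geodesic flow on the associated bundle. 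For $(\mathrm I.2)\Rightarrow(\mathrm I.3)$ one runs Hopf's ergodicity argument: in the geodesic-flow picture on $\pU\times\pU\times\mathbb R$ with the Bowen--Margulis--Sullivan measure $\mu_x\times\mu_x\times dt$, an $H$-invariant (equivalently flow-invariant) function is expressed, via the Hopf ratio ergodic theorem for the now-conservative action, as an a.e.\ limit of orbital averages; these averages depend only on the forward endpoint of a geodesic, the initial segment being asymptotically negligible, and conical recurrence plus density of endpoint pairs allows one to connect $\mu_x\times\mu_x$-a.e.\ two pairs, forcing the function to be a.e.\ constant. This yields ergodicity of $\mu_x\times\mu_x$, in particular of $\mu_x$. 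I expect the main obstacle to be exactly $(\mathrm I.1)\Rightarrow(\mathrm I.2)$ and the rigorous execution of the Hopf argument without cocompactness: both rest on controlling measures, multiplicities, and overlaps of shadows of orbit balls with constants uniform in the orbit point, which is where the $\mathrm{CAT}(-1)$ hypothesis is genuinely needed.

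For the final clause, under $(\mathrm I)$ the shadow-lemma lower bound together with the bounded-multiplicity property of the family $\{\mathcal O_r(x,hx):d(x,hx)\in[n,n+1]\}$ — each boundary point lies in at most $\#\{h: d(x,hx)\le 2r\}$ of them, finite and independent of $n$ by properness — shows that $\#\{h:d(x,hx)\in[n,n+1]\}$ is bounded above by a constant multiple of $\mathrm e^{\omega n}$, whence $\p_H(s,x,x)<\infty$ for $s>\omega$, i.e.\ $\e H\le\omega$; divergence at $s=\omega$ gives $\e H\ge\omega$, so $\omega=\e H$. Uniqueness up to scaling then follows in the usual way: for any second $\omega$-dimensional $H$-conformal density $\{\mu'_x\}$, the Radon--Nikodym derivative $d\mu'_x/d(\mu_x+\mu'_x)$ is $H$-invariant, hence $\mu_x$-a.e.\ constant by $(\mathrm I.3)$, so $\mu'_x$ is proportional to $\mu_x$.
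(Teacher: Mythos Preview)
The paper does not supply a proof of this theorem: it is stated in the introduction as background, attributed to Roblin (the sentence preceding the statement reads ``we state the HTS dichotomy for conformal measures on the Gromov boundary for a discrete group on CAT($-1$) space (\cite{Roblin})''), and is invoked only as a known result to motivate and contextualize the paper's own theorems. So there is no in-paper proof to compare your proposal against.

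That said, your outline is the standard Roblin/Sullivan scheme and is broadly correct. One point worth tightening: your argument for $(\mathrm{I}.1)\Rightarrow(\mathrm{I}.2)$ via the Hopf decomposition of the single-boundary action $H\curvearrowright(\partial\mathrm{X},\mu_x)$ is not quite the usual route and the step ``comparing $\mu_x(hW)$ with $\mathrm e^{-\omega d(x,hx)}$ via the shadow lemma'' is where the sketch is vaguest---a wandering set need not sit inside a single shadow, so the comparison needs more than the shadow lemma alone. The standard argument instead goes through the \emph{double} boundary: one shows directly (via the shadow lemma and a Vitali-type covering) that divergence of the Poincar\'e series forces the conical set to have full $\mu_x$-measure, or equivalently that conservativity of the geodesic flow on $(\partial\mathrm{X}\times\partial\mathrm{X}\times\mathbb R)/H$ with the BMS measure is equivalent to divergence. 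Your remaining implications and the derivation of $\omega=\omega_H$ and uniqueness are fine.
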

This dichotomy and its variants have then been established for conformal densities on the visual boundary of CAT(0) spaces \cite{L18}; and in our context, groups with contracting elements acting on the convergence boundary $\pU$ in a sense in \cite{YANG22}. See Coulon's recent works \cite{Coulon22,Coulon24} for a more complete statement on horofunction boundary.   
 
The notion of {\it convergence boundary} $\pU$ (Definition \ref{ConvBdryDefn}) provides a unified framework for the following boundaries equipped with a conformal density (cf. Definition \ref{ConformalDensityDefn} and Examples \ref{ConvbdryExamples}): 
\begin{examples}\label{ConformalDensityExamples}
\begin{itemize}
    \item 
The prototype example is the rank-$1$ symmetric space $\U$ of non-compact type, with  a family of $\isom(\U)$--equivariant conformal measures $\{\mu_x:x\in\U\}$ in the Lebesgue measure class  on the visual boundary of $\U$. This class of conformal measures can be realized as Patterson-Sullivan measures for any uniform lattice $\Gamma<\isom(\U)$. See \cite{Quint}.

\item 
The Gromov boundary of a hyperbolic group, with quasi-conformal density constructed first by Patterson \cite{Patt} from a proper action of $G$ on  a hyperbolic space $\U$, and then studied extensively by Sullivan \cite{Sul,Sul2,Sul81} in late 1970s, with a long array of results \cite{Coor,Roblin} by many authors, to name a few.
\item 
Visual boundary of a CAT(0) space admitting a proper action with rank-1 elements. The Patterson's construction equally applies to produce a conformal density on visual boundary. 
\item 
Thurston boundary of Teichm\"{u}ller space with a conformal density constructed from Thurston measures (in the Lebesgue measure class) on the space of measured foliations in \cite{ABEM}. The conformal density could be realized as Patterson-Sullivan measures. See Example \ref{ThurstonMeasure} for more details. 
\item 
In recent works \cite{Coulon22,YANG22}, the horofunction boundary $\hU$ has been proven to be a convergence boundary for any proper action of a group with a contracting element; furthermore, on the horofunction boundary, a good theory of conformal density can be developed: in particular, the Shadow Lemma and HTS dichotomy are available. 
\end{itemize}
\end{examples}

Suppose that the space $\U$ admits a convergence boundary $\pU$ (or keep one of these examples in mind). Motivated by rank-1 symmetric space, we assume that there is an auxiliary proper action $\Gamma\act \U$, in order to endow a $\e\Gamma$--dimensional $\Gamma$--equivariant conformal density $\{\mu_x:x\in\U\}$ on $\pU$. Sometimes, we assume that $\Gamma\act \U$ is \textit{statistically convex-cocompact}     in the sense of \cite{YANG10} (SCC action; see Definition \ref{SCCDefn}); this is the case in Theorem \ref{HalfgrowthimplyDiss}. Among the groups in Example \ref{SCCexamples}, the first three classes are known to admit SCC actions; and all of these examples have \textit{purely exponential  growth} (PEG action):
\begin{equation}\label{PEGDefn}
\forall n\ge 0: \; \sharp N_\Gamma(o,n) \asymp \mathrm{e}^{n\e \Gamma}.\quad
\footnote{The symbol  $\asymp$ denotes the two sides equal up to a bounded multiplicative constant.} 
\end{equation}
Most of results actually only assume a PEG action $\Gamma\act \U$, or even a proper action of divergence type (DIV action) in a greater generality.  The class of DIV actions is naturally featured, as one of the two alternatives, in the HTS dichotomy \ref{HTSCAT(-1)}. To be clear, we shall make precise the actions $\Gamma\act \U$ assumed in our results stated in what follows, and refer to Fig. \ref{fig:actionssurvol} for the implication between these various actions. 

Our first main object is to analyze the Hopf decomposition for a measure-class-preserving action of a group $H$ on $(\pU, \mu_o)$.

In contrast to the HTS dichotomy of geodesic flow invariant measures, which can be applicable to the $H$--action on $(\pU \times \pU, \mu_o\times \mu_o)$, the ergodic behavior of actions on one copy of the boundary, which is intimately linked with the horocylic flow \cite{Kai00}, exhibits more complicated situations. For Kleinian groups, Sullivan \cite[Theorem IV]{Sul81} characterizes the conservative component with respect to the Lebesgue measure on the boundary in terms of the (small) horospheric limit set. This leads to several important applications in Ahlfors-Bers quasi-conformal deformation theory and Mostow-type rigidity theorems (\cite[Sect. V \&VI]{Sul81}).   
In a recent work \cite{GKN}, Grigorchuk-Kaimanovich-Nagnibeda carried out a detailed analysis of the boundary action of a subgroup $H$ in a free group $\mathbf F_d$  with respect to the uniform measure on $\partial \mathbf F_d$. These works serve as a source of inspiration in our considerations in a more general framework.

The limit set  $\Lambda(Ho)$  denotes the set of accumulation points of $Ho$ in a convergence boundary $\pU$. In the last two cases in Example \ref{ConformalDensityExamples}, $\Lambda(Ho)$  may depend on $o\in \U$. To avoid this difficulty, a nontrivial partition $[\cdot]$ on $\pU$ was introduced so that the set $[\Lambda(Ho)]$ of $[\cdot]$--classes on $\Lambda(Ho)$ is independent of the choice of base points.  Analogously to the notions in the actions of the convergence group, we can define {\it conical points} and the {\it big/small horospheric limit points}. More details and precise definitions are provided in Section \ref{prelim}.  

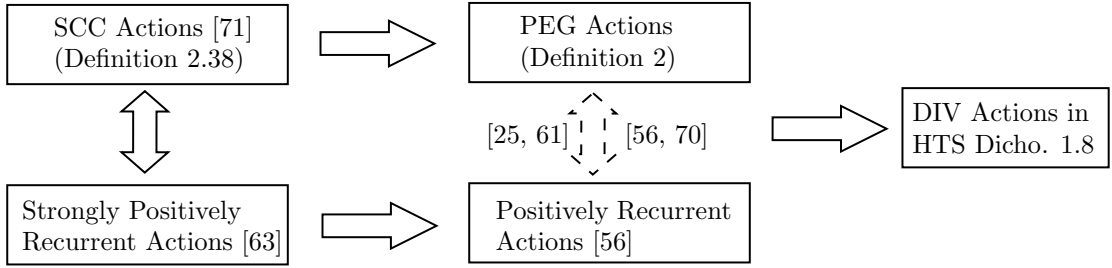
\begin{figure}
    \centering

\tikzset{every picture/.style={line width=0.75pt}} 

\begin{tikzpicture}[x=0.75pt,y=0.75pt,yscale=-1,xscale=1]

\draw   (15,58) -- (156.5,58) -- (156.5,98) -- (15,98) -- cycle ;
\draw   (248,57) -- (389.5,57) -- (389.5,97) -- (248,97) -- cycle ;
\draw   (15,149) -- (156.5,149) -- (156.5,189) -- (15,189) -- cycle ;
\draw   (248,148) -- (389.5,148) -- (389.5,188) -- (248,188) -- cycle ;
\draw   (173,71.75) -- (207.5,71.75) -- (207.5,66) -- (230.5,77.5) -- (207.5,89) -- (207.5,83.25) -- (173,83.25) -- cycle ;
\draw   (174,163.75) -- (208.5,163.75) -- (208.5,158) -- (231.5,169.5) -- (208.5,181) -- (208.5,175.25) -- (174,175.25) -- cycle ;
\draw   (84.42,101.53) -- (96.17,111.83) -- (90.29,111.82) -- (90.26,132.37) -- (96.14,132.38) -- (84.36,142.64) -- (72.61,132.35) -- (78.49,132.35) -- (78.52,111.8) -- (72.64,111.79) -- cycle ;
\draw  [dash pattern={on 4.5pt off 4.5pt}] (310.45,102.53) -- (322.19,112.85) -- (316.3,112.83) -- (316.24,133.38) -- (322.12,133.4) -- (310.33,143.64) -- (298.59,133.33) -- (304.48,133.34) -- (304.54,112.79) -- (298.66,112.77) -- cycle ;
\draw   (466.5,100) -- (570.5,100) -- (570.5,140) -- (466.5,140) -- cycle ;
\draw   (401,114.75) -- (435.5,114.75) -- (435.5,109) -- (458.5,120.5) -- (435.5,132) -- (435.5,126.25) -- (401,126.25) -- cycle ;

\draw (37,62) node [anchor=north west][inner sep=0.75pt]   [align=left] {SCC Actions \cite{YANG10}\\ (Definition \ref{SCCDefn})};
\draw (272,62) node [anchor=north west][inner sep=0.75pt]   [align=left] {PEG Actions \\(Definition \ref{PEGDefn})};
\draw (21,155) node [anchor=north west][inner sep=0.75pt]   [align=left] {Strongly Positively \\Recurrent Actions \cite{ST21}};
\draw (260,155) node [anchor=north west][inner sep=0.75pt]   [align=left] {Positively Recurrent\\Actions \cite{PS18}};
\draw (255,116) node [anchor=north west][inner sep=0.75pt]   [align=left] {\cite{DOP,Roblin}};
\draw (325,116) node [anchor=north west][inner sep=0.75pt]   [align=left] {\cite{PS18,YANG8}};
\draw (470,106) node [anchor=north west][inner sep=0.75pt]   [align=left] { DIV Actions in \\ HTS Dicho. \ref{HTSCAT(-1)}};

\end{tikzpicture}
    \caption{Relations between assumptions on actions under consideration. The equivalence in the second column is expected to hold in groups with contracting elements (some known cases as cited).}
    \label{fig:actionssurvol}
\end{figure}

First, we present a criterion when the action of a subgroup is completely dissipative. This is a vast generalization of \cite[Theorem 4.2]{GKN} for free groups, which can be traced back to the case of Fuchsian groups \cite{P77} (see \cite{Mat05}). 
\begin{thm}\label{HalfGrowthDisThm}
Let a group $H<\isom(\U)$ act properly with contracting elements, and $\mu_0$ be the PS measure constructed from a proper SCC action $\Gamma\act \U$. Assume that $\e H<\e \Gamma/2$. Then $\mu_o(\HG)=0$, where $\HG$ denotes the big horospheric limit set of $Ho$ defined in (\ref{HoroLimitPtsDef}).

In addition, assume that $H$ is torsion-free and preserves the measure class of $\mu_o$ on $\pU$ (in particular, if $H$ is a subgroup of $\Gamma$). Then the action of $H$ on $(\pU,\mu_o)$ is completely dissipative. 
\end{thm}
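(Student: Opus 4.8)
The plan is to reduce the complete dissipativity statement to the measure-theoretic vanishing of the big horospheric limit set, and to obtain the latter by a covering/shadow argument exploiting the growth gap $\e H < \e\Gamma/2$. First I would recall the general principle, going back to Hopf (and used in \cite{GKN} in the free-group case), that for a measure-class-preserving action of $H$ on $(\pU,\mu_o)$ the conservative part is supported, up to null sets, on the big horospheric limit set $\HG$: a point $\xi$ lies in the dissipative part essentially when only finitely many translates $h\mathcal{O}$ of a fixed shadow neighborhood $\mathcal{O}$ of $\xi$ contain $\xi$, and the Borel--Cantelli-type argument together with the Shadow Lemma identifies "$\xi$ is covered infinitely often by translated shadows" with "$\xi$ is a big horospheric limit point of $Ho$." So once $\mu_o(\HG)=0$ is established, conservativity fails $\mu_o$-almost everywhere, i.e.\ the action is completely dissipative; the torsion-freeness and measure-class invariance of $H$ are exactly what make the Hopf decomposition available and make "$\mu_o$-a.e.\ point is wandering" equivalent to "completely dissipative."

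The heart of the matter is therefore the first assertion, $\mu_o(\HG)=0$. Here I would argue as follows. By definition of the big horospheric limit set (formula (\ref{HoroLimitPtsDef})), $\xi\in\HG$ means there is a sequence $h_n\in H$ with $h_no\to\xi$ and the Busemann-type quantity $d(o,h_no) - 2(o\mid h_no)_\xi$ bounded above; geometrically this says $\xi$ is seen inside shadows $\Pi_o(h_no, R)$ — shadows at uniformly bounded parameter $R$ — for infinitely many $n$. Thus $\HG \subseteq \bigcup_{R} \limsup_{h\in H} \Pi_o(ho,R)$, and it suffices to bound $\mu_o\big(\limsup_n \Pi_o(h_no,R)\big)$ for each fixed $R$. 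The Shadow Lemma for the PS density $\mu_o$ of the SCC (hence divergence-type) action $\Gamma\act\U$ gives $\mu_o(\Pi_o(ho,R)) \asymp_R e^{-\e\Gamma\, d(o,ho)}$. Summing over $h\in H$ with $d(o,ho)\le n$ and using $\sharp N_H(o,n)\preceq e^{n(\e H+\epsilon)}$ for any $\epsilon>0$ (from the definition of $\e H$), the total mass of the shadows at distance in $[n,n+1]$ is $\preceq e^{n(\e H + \epsilon - \e\Gamma)}$, whose sum over $n$ converges precisely because $\e H < \e\Gamma - \e H$, i.e.\ $\e H < \e\Gamma/2$. By Borel--Cantelli the $\limsup$ set is $\mu_o$-null, and taking the countable union over integer $R$ gives $\mu_o(\HG)=0$.

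The step I expect to be the main obstacle is not the Borel--Cantelli estimate itself but verifying that the Shadow Lemma and the identification of $\HG$ with a $\limsup$ of bounded-parameter shadows go through in the present generality — a convergence boundary $\pU$ with its nontrivial partition $[\cdot]$, where limit sets and horospheric limit sets are defined via $[\cdot]$-classes and may a priori depend on the basepoint. I would need the partition to be compatible with shadows (so that the relevant shadow sets are $[\cdot]$-saturated up to controlled error) and to check that the PS measure is non-atomic on partition classes so that the per-shadow mass bound is not spoiled. These compatibility facts should be available from the convergence-boundary framework of \cite{YANG22} and the Shadow Lemma recalled in Section \ref{prelim}; modulo invoking them, the argument is the Hopf/Borel--Cantelli dichotomy above. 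Finally, the parenthetical "in particular, if $H$ is a subgroup of $\Gamma$" is immediate since a subgroup preserves the $\Gamma$-equivariant measure class $\{\mu_x\}$ on the nose.
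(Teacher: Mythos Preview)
Your proposal has a genuine gap: you have misidentified the geometric content of ``big horospheric limit point.'' You write that $\xi\in\HG$ means $\xi\in\Pi_o(h_no,R)$ for infinitely many $h_n\in H$ and bounded $R$, but that is the definition of a \emph{conical} point for $H$, not a horospheric one. In a tree (or any hyperbolic space), $h_no$ lying in a horoball at $\xi$ means the Gromov product $(h_no\mid\xi)_o\gtrsim d(o,h_no)/2$, whereas $\xi\in\Pi_o(h_no,R)$ requires $(h_no\mid\xi)_o\gtrsim d(o,h_no)-R$. These differ by a factor of two in the relevant distance. Indeed, your shadow sum $\sum_{h\in H}\mu_o(\Pi_o(ho,R))\asymp\sum_h e^{-\e\Gamma d(o,ho)}$ converges as soon as $\e H<\e\Gamma$, not ``precisely because $\e H<\e\Gamma-\e H$'' as you write; so your argument, if it worked, would prove $\mu_o(\HG)=0$ whenever $\e H<\e\Gamma$. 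That conclusion is false: infinite normal subgroups with $\e H<\e\Gamma$ exist and have $\mu_o(\HG)=1$ by Theorem~\ref{ConfinedConsThm}.

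The paper's argument repairs exactly this point. For $\xi\in\HG$ with $h_no$ in a horoball at $\xi$, one locates a ``midpoint'' $t_no\in\Gamma o$ on (or near) the ray $[o,\xi]$ with $d(o,t_no)\ge(1/2-\theta)\,d(o,h_no)$ and $d(t_no,[o,h_no])$ uniformly bounded; see Lemma~\ref{CloserInHoroball} for the hyperbolic case and the Claim in Theorem~\ref{HalfgrowthimplyDiss} for the general case. One then covers $\HG$ by $\limsup\Pi_o^F(t_no,r)$ and applies the Shadow Lemma at these midpoints, getting mass $\asymp e^{-\e\Gamma d(o,t_no)}\le e^{-\e\Gamma(1/2-\theta)d(o,h_no)}$. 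Since each $h$ contributes at most $\asymp d(o,ho)$ such midpoints, the Borel--Cantelli sum is $\sum_h d(o,ho)e^{-\e\Gamma(1/2-\theta)d(o,ho)}$, which converges precisely under $\e H<\e\Gamma/2$. The existence of such barrier points $t_no\in\Gamma o$ along the ray in the general (non-hyperbolic) case is where the SCC hypothesis enters, via the full measure of regularly contracting limit points (Proposition~\ref{FreqContFull}); this is the subtlety you flagged, but it is not the main missing idea.
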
 
Note that, if $\U$ is hyperbolic and $\Gamma\act \U$ is co-compact,
then any $H<\isom(\U)$ preserves the measure class of $\mu_o$ (see Lemma \ref{PreserveMeasureClass}). We expect Theorem \ref{HalfGrowthDisThm} to hold under the weaker assumption that $\Gamma\act\U$ has purely exponential growth (see Remark \ref{ConjPEGRem} on particular situations where this is true).

Next, we prove that the action of a confined subgroup is conservative.  Denote by $E(\Gamma)$ the set of isometries in $\isom(\U)$ which fix every $[\cdot]$--class in the limit set $[\pG]$ (Definition \ref{EllipticRadicalDefn}).  
\begin{thm}\label{ConfinedConsThm}
Suppose that a discrete subgroup $H<\isom(\U)$ is confined by $\Gamma$  with a compact confining subset $P$. Assume that $\Gamma\act \U$ is of divergence type, and furthermore: \begin{enumerate}
    
    \item [(i)]
    Either $H$ is torsion-free and $\U$ is a Gromov hyperbolic or geodesically complete CAT(0) space, or
    \item [(ii)]
   the confining subset $P$ is finite, and intersects trivially $E(\Gamma)$.

\end{enumerate} Then $\mu_o(\HG)=1$. In particular, $H\act (\pU,\mu_o)$ is conservative.
\end{thm}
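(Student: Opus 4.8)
The plan is to establish the stronger measure statement $\mu_o(\HG)=1$; conservativity of $H\act(\pU,\mu_o)$ then follows from the general identification of the conservative part of the boundary action with the big horospheric limit set (the same input that yields the ``completely dissipative'' half of Theorem~\ref{HalfGrowthDisThm}), once one knows $H$ preserves the measure class of $\mu_o$. Since $\Gamma\act\U$ is of divergence type, the HTS dichotomy for $\Gamma$ concentrates $\mu_o$ on the conical limit set, so $\mu_o$-almost every $\xi$ lies in $[\ccG]$ and is $R$-conical for some finite $R$ (allowed to depend on $\xi$). It therefore suffices to prove that each such $\xi$ ---outside, in case (ii) only, a $\mu_o$-null exceptional set--- is a big horospheric limit point of $Ho$.

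To that end I would fix such a $\xi$, a contracting geodesic ray $\rho$ from $o$ with $[\rho(\infty)]=[\xi]$, and elements $\gamma_n\in\Gamma$ with $d(\gamma_no,\rho)\le R$ and $t_n:=d(o,\gamma_no)\to\infty$. Applying the confining condition to $g=\gamma_n$ produces $p_n\in P\setminus\{1\}$ with $h_n:=\gamma_np_n\gamma_n^{-1}\in H\setminus\{1\}$ and $d(\gamma_no,h_n\gamma_no)=d(o,p_no)\le D:=\sup_{p\in P}d(o,po)$. A triangle-inequality estimate using $d(h_no,h_n\gamma_no)=d(o,\gamma_no)=t_n$ then gives, for a constant $c_0=c_0(D,R)$ and all $n$, the uniform lower bound $\beta_\xi(o,h_no)\ge -c_0$ for the Busemann cocycle; that is, every orbit point $h_no$ lies in one fixed horoball centred at $\xi$. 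If the elements $h_n$ assume infinitely many distinct values, then---$H$ being discrete---$d(o,h_no)\to\infty$ along a subsequence, so $Ho$ meets this horoball in an infinite set and (the horoball being ``thin'', as $\xi$ is a contracting direction) accumulates at $[\xi]$ within it; hence $\xi\in\HG$, and this $\xi$ is handled.

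There remains the degenerate possibility that $\{h_n\}_n$ is finite, so that some $h^\ast\in H\setminus\{1\}$ satisfies $d(\gamma_no,h^\ast\gamma_no)\le D$ for infinitely many $n$; since the $\gamma_no$ march out to $[\xi]$ along $\rho$, this forces $h^\ast[\xi]=[\xi]$. Under hypothesis (i), $h^\ast$ is an infinite-order element of a discrete group acting on a Gromov hyperbolic or geodesically complete CAT(0) space, hence neither finite-order (by torsion-freeness) nor elliptic (a bounded orbit would contradict discreteness); so $h^\ast$ is loxodromic with $\xi$ an endpoint ---giving $\xi\in\Lambda^{\mathrm{con}}(Ho)\subseteq\HG$--- or parabolic fixing $\xi$, in which case $\{h^{\ast k}o\}_{k\ge1}$ is an infinite escaping subset of a single horosphere about $\xi$, so again $\xi\in\HG$. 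Under hypothesis (ii) I would instead show this case is measure-theoretically negligible: as $P$ is finite one may take $p_n\equiv p\in P\setminus\{1\}$ along the subsequence, and $h^\ast=\gamma_np\gamma_n^{-1}$ for infinitely many $n$ forces all those $\gamma_n$ into a single left coset of $Z(p)\cap\Gamma$, whence $\xi$ lies in the $\Gamma$-orbit of $\Lambda\big((Z(p)\cap\Gamma)\,o\big)$. If that set carried positive $\mu_o$-measure, ergodicity of $\mu_o$ under $\Gamma$ (from the divergence-type hypothesis) together with the shadow lemma would make $Z(p)\cap\Gamma$ large enough to contain independent contracting elements, all centralised by $p$, which would force $p$ to fix $[\pG]$ pointwise, i.e.\ $p\in E(\Gamma)$, contrary to hypothesis. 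So the offending $\xi$'s form a $\mu_o$-null set, and $\mu_o(\HG)=1$ in every case.

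The main difficulty, as is typical in this circle of ideas, lies in the degenerate analysis of the last paragraph: carrying out rigorously, within the convergence-boundary framework (contracting rather than genuinely hyperbolic geodesics, conformal rather than geometric measures, and $[\cdot]$-classes on $\pU$), the two assertions that (a) an isometry which nearly fixes points arbitrarily far out along a contracting ray must fix the endpoint class and be loxodromic or parabolic there, and (b) a subgroup of $\Gamma$ centralising a non-torsion element $p\notin E(\Gamma)$ cannot have a non-$\mu_o$-null limit set. The thinness of horoballs at contracting points, the structure theory of elementary subgroups containing contracting elements, and ergodicity of the Patterson--Sullivan measure (from Section~\ref{prelim} and the divergence-type hypothesis) should supply what is needed; what is left is bookkeeping of additive constants and keeping track of the base-point changes ($o$ versus $h_no$, $\gamma_n$ versus $\gamma_n^{-1}$).
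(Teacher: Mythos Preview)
For case (i) your plan is essentially the paper's (Theorem~\ref{ConInHorLimitSet}): exploit torsion-freeness to classify the degenerate $h^\ast$ as loxodromic or parabolic fixing $[\xi]$. The paper simply discards such $\xi$ as belonging to a countable $\mu_o$-null set, while you argue directly that $\xi\in\HG$; both work. One point you gloss over: in the geodesically complete CAT(0) parabolic case you need $(h^\ast)^k o\to[\xi]$, which requires $h^\ast$ to have a \emph{unique} boundary fixed point --- the paper secures this via Lemma~\ref{UniqueFixedPt4Parabolic}.

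For case (ii) your centralizer argument has a genuine gap. You need that $\mu_o\big(\Lambda((Z(p)\cap\Gamma)\,o)\big)=0$ for every $p\in P\setminus E(\Gamma)$, and the sketch you give --- ergodicity plus the shadow lemma forces $Z(p)\cap\Gamma$ to contain independent contracting elements, hence $p\in E(\Gamma)$ --- does not close. One can indeed show (via Lemma~\ref{TightContractingTransition}, applied to the barriers $g_n$) that $K:=Z(p)\cap\Gamma$ contains a contracting element once the degenerate case occurs for a single $(r,F)$-conical $\xi$, and hence that $p$ fixes $[\Lambda(Ko)]$ pointwise; but $[\Lambda(Ko)]$ is not $\Gamma$-invariant, so ergodicity does not promote positive measure to full measure (or density) for \emph{that} set, and you cannot conclude that $p$ fixes all of $[\pG]$. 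Your summary also restricts to ``non-torsion $p$'', yet in case (ii) the group $H$ may have torsion and $p$ may well be a torsion element. The paper bypasses this analysis entirely by a different mechanism: the enhanced extension lemma (Lemma~\ref{GoodConfiningSetP}, built on Lemma~\ref{EllipticRadical}) inserts a contracting element $f\in F\subseteq\Gamma$ and sets $h_n=g_nf_np_nf_n^{-1}g_n^{-1}\in H$ so that $(g_n,f_n,p_n,f_n^{-1},g_n^{-1})$ labels an $(L,\tau)$-admissible path; Lemma~\ref{InjectiveExtMap} then makes $g_n\mapsto h_n$ uniformly finite-to-one, so $\{h_no\}$ is automatically infinite and no degenerate case arises (Theorem~\ref{ConInHorLimitSet2}).
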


Note that the action of $H$ on $(\pU,\mu_o)$ is not necessarily ergodic, with examples found in free groups \cite{GKN} and Kleinian groups \cite{McMBook}. 
As an immediate corollary, we obtain the following non-strict inequality. With a further inequality in Theorem \ref{CoulonInequality}, it turns out that the assumption of the SCC action can be relaxed to be the DIV action.
\begin{cor}\label{Nonstrict}
Assume that $\Gamma\act\U$ is SCC. In the setting of Theorem \ref{ConfinedConsThm},  we have $\e H\ge \e \Gamma/2$.  
\end{cor}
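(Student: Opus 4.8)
The plan is to deduce the inequality $\e H \ge \e \Gamma/2$ directly from the conclusion $\mu_o(\HG) = 1$ of Theorem \ref{ConfinedConsThm}, using the Shadow Lemma for the Patterson--Sullivan density $\{\mu_x\}$ together with a counting argument over the horospheric limit set. First I would recall that since $\Gamma \act \U$ is SCC, it is in particular of divergence type, so Theorem \ref{ConfinedConsThm} applies and gives $\mu_o(\HG) = 1$; moreover the SCC hypothesis guarantees purely exponential growth, hence the Shadow Lemma holds in the sharp form $\mu_o(\mathrm{shadow}_x(B_R)) \asymp \mathrm{e}^{-\e\Gamma\, d(o,x)}$ for orbit points $x \in \Gamma o$ (and for $x \in Ho \subseteq \Gamma o$ this is exactly what is needed), with implied constants depending only on $R$ once $R$ is large enough.

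Next I would exploit the defining property of the big horospheric limit set: a boundary point $\xi \in \HG$ lies within bounded ``horospheric'' distance of infinitely many points of $Ho$, which translates (after choosing $R$ large) into the statement that $\xi$ belongs to shadows $\mathrm{shadow}_{h_n o}(B_R)$ for an infinite sequence $h_n \in H$ with $d(o, h_n o) \to \infty$. Thus $\HG$ is covered, up to a $\mu_o$-null set, by $\limsup_n \bigcup_{h \in H,\, d(o,ho) = n} \mathrm{shadow}_{ho}(B_R)$. Since $\mu_o(\HG) = 1 > 0$, the Borel--Cantelli heuristic forces the series $\sum_{h \in H} \mu_o(\mathrm{shadow}_{ho}(B_R))$ to diverge; by the Shadow Lemma this is $\sum_{h \in H} \mathrm{e}^{-\e\Gamma\, d(o,ho)} = \p_H(\e\Gamma, o, o)$, so the Poincaré series of $H$ diverges at $s = \e\Gamma$. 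I would be slightly careful here: one does not get divergence from positivity of a $\limsup$ measure directly, but rather from the contrapositive of convergence Borel--Cantelli — if $\sum_h \mu_o(\mathrm{shadow}_{ho}(B_R)) < \infty$ then $\mu_o$-a.e. point lies in only finitely many of these shadows, contradicting $\mu_o(\HG)=1$ and the fact that a.e. point of $\HG$ lies in infinitely many. Hence $\sum_{h\in H} \mathrm{e}^{-\e\Gamma d(o,ho)} = \infty$.

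Finally, from divergence of $\sum_{h \in H} \mathrm{e}^{-s\, d(o,ho)}$ at $s = \e\Gamma$ I would conclude $\e H \ge \e\Gamma/2$ by an elementary argument: writing $a_n = \sharp N_H(o,n) - \sharp N_H(o,n-1)$ for the number of orbit points at distance in $(n-1,n]$, divergence at $s=\e\Gamma$ gives $\sum_n a_n \mathrm{e}^{-\e\Gamma n} = \infty$. If we had $\e H < \e\Gamma/2$, then for any $\epsilon$ with $\e H < \e\Gamma/2 - \epsilon$ we have $a_n \le \sharp N_H(o,n) \le \mathrm{e}^{(\e\Gamma/2 - \epsilon)n}$ for all large $n$, whence $\sum_n a_n \mathrm{e}^{-\e\Gamma n} \le \sum_n \mathrm{e}^{-(\e\Gamma/2 + \epsilon)n} < \infty$, a contradiction. (Here I use $\e\Gamma > 0$, which holds since $\Gamma$ contains a contracting element and $\U$ is unbounded.) Therefore $\e H \ge \e\Gamma/2$.

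The main obstacle I anticipate is the second step: making the passage from ``$\mu_o(\HG)=1$'' to ``$\sum_h \mu_o(\mathrm{shadow}_{ho}(B_R)) = \infty$'' fully rigorous in the convergence-boundary setting. One must check that membership in $\HG$ is genuinely captured by infinitely many uniformly-sized shadows about $Ho$ (this is essentially the geometric description of horospheric limit points via shadows, which should be available from the preliminaries and the contracting-element machinery), and that the Shadow Lemma applies uniformly to the relevant orbit points with a fixed radius $R$. Once these two inputs are in place, the rest is the soft Borel--Cantelli dichotomy plus the elementary counting estimate above; alternatively, if the precise shadow-description of $\HG$ is delicate, one can instead quote Theorem \ref{CoulonInequality} (the inequality $\e H + \e{\Gamma/H}/2 \ge \e\Gamma$ together with $\e{\Gamma/H} \le \e\Gamma$) as the excerpt itself suggests, which yields the same conclusion and even allows weakening SCC to DIV.
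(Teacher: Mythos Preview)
Your second step contains a genuine gap, and it is not merely a matter of care: the claimed passage from ``$\xi \in \HG$'' to ``$\xi$ lies in $\Pi_o(h_no,R)$ for infinitely many $h_n\in H$ with a \emph{uniform} $R$'' is false in general. That implication is precisely the definition of a \emph{conical} limit point, not a big horospheric one. A concrete counterexample already appears in $\mathbb H^2$ (upper half-plane, $o=i$, $\xi=\infty$): the points $(n,1)$ lie on the horosphere through $o$ at $\xi$ and converge to $\xi$ in the Gromov boundary, yet $d\big((n,1),[o,\xi]\big)\asymp \log n\to\infty$, so $\xi\notin\Pi_o\big((n,1),R\big)$ for any fixed $R$. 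In fact, if your Step~2 were correct, Steps~3--4 would yield divergence of $\mathcal P_H(s,o,o)$ at $s=\e\Gamma$ and hence $\e H\ge \e\Gamma$, which is strictly stronger than the corollary and false for confined subgroups in general; the fact that you only extract $\e H\ge\e\Gamma/2$ in Step~5 masks but does not repair this overreach.

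The paper's argument is simply to combine Theorem~\ref{ConfinedConsThm} ($\mu_o(\HG)=1$) with Theorem~\ref{HalfGrowthDisThm} ($\e H<\e\Gamma/2\Rightarrow \mu_o(\HG)=0$ under SCC), which immediately forces $\e H\ge\e\Gamma/2$. The substance is entirely in Theorem~\ref{HalfGrowthDisThm}, whose proof does \emph{not} shadow $\xi$ from $Ho$ directly. Instead, for $\xi\in\HG$ (intersected with the $\mu_o$-full set of regularly contracting points, which is where SCC is used) and each $h_n o$ in a horoball at $\xi$, one locates an auxiliary point $t_n o\in\Gamma o$ near a $\delta$-center of the triangle $\Delta(o,h_no,\xi)$; this $t_n o$ lies close to $[o,\xi]$ (so $\xi\in\Pi_o(t_no,R)$ with uniform $R$) and satisfies $d(o,t_no)\gtrsim(1/2-\epsilon)\,d(o,h_no)$. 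The Borel--Cantelli sum over shadows at these $t_n o$ is then controlled by $\sum_{h\in H} d(o,ho)\,\mathrm{e}^{-\e\Gamma(1/2-\epsilon)d(o,ho)}$, and it is this geometric halving at the triangle center that produces the factor $1/2$.

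Your fallback via Theorem~\ref{CoulonInequality} is sound (and is exactly how the paper later upgrades the hypothesis from SCC to DIV in Corollary~\ref{NonstrictDIV}), but it is not the argument behind Corollary~\ref{Nonstrict} as stated.
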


\subsection{Growth  inequalities for confined subgroups}
Via a different approach, we can upgrade the result in Corollary \ref{Nonstrict} to a strict inequality. We work with PEG actions $\Gamma\act \U$ strictly larger than the SCC action considered in \textsection\ref{SSecErgodicity} (particularly in Theorem \ref{HalfgrowthimplyDiss}).  

\begin{thm}\label{ConvTightThm}
Assume that the proper action $\Gamma\act \U$ has purely exponential growth with a contracting element.
If $H<\isom(\U)$ is a discrete subgroup confined by $\Gamma$ with a finite confining subset that intersects trivially with $E(\Gamma)$, then $\e H>\e \Gamma/2$.  
\end{thm}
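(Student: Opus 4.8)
The plan is to prove the strict inequality by a direct lower bound on the growth function of $H$: at scale roughly $2n$ we will exhibit strictly more than $\mathrm e^{n\e\Gamma}$ orbit points of $H$, the surplus over the ``square root'' bound $\e H\ge\e\Gamma/2$ coming from the combinatorial freedom of subdividing a long product of conjugates into variable-length blocks. First I set the stage. Since $\Gamma\act\U$ has purely exponential growth it is of divergence type, so $\U$ carries an $\e\Gamma$--dimensional $\Gamma$--conformal density $\{\mu_x\}$ on $\pU$ satisfying the Shadow Lemma, and the non-strict bound $\e H\ge\e\Gamma/2$ already holds by Corollary~\ref{Nonstrict} together with Theorem~\ref{CoulonInequality} (which removes the SCC hypothesis in the divergence-type case). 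In particular $\e\Gamma>0$, so $\Gamma$ is non-elementary, $\mu_o$ is non-atomic with full limit-set support, and $H\act\U$ has a contracting element. The concrete input from purely exponential growth is twofold: (a) for each $m$ the set $\mathcal G_m$ of elements $\gamma\in\Gamma$ with $d(o,\gamma o)\in[m,m+1)$ whose geodesic $[o,\gamma o]$ carries a long uniformly contracting barrier (the contracting elements of \cite{YANG10}) has cardinality $\asymp\mathrm e^{m\e\Gamma}$; and (b) orbit points roughly equidistribute towards $\mu_o$, so for any bounded shadow $O$ the proportion of $\gamma\in\mathcal G_m$ with $\gamma^{+}\in O$ is $\le\kappa$ for a constant $\kappa<1$ once $m$ is large.

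Second, the conjugation step. Fix a finite confining subset $P$ with $P\cap E(\Gamma)$ trivial. For $\gamma\in\mathcal G_m$ confinement supplies $p\in P\setminus\{1\}$ with $h_\gamma:=\gamma p\gamma^{-1}\in H$, and always $d(o,h_\gamma o)\le 2d(o,\gamma o)+O(1)$. Because $p\notin E(\Gamma)$ and $m$ is large, $p$ moves the relevant endpoint of the contracting quasi-axis of $\gamma$, so the path $[o,\gamma o]\cdot[\gamma o,\gamma p o]\cdot[\gamma p o,h_\gamma o]$ --- two long contracting segments joined by a bounded detour across a large turn --- is a uniform quasi-geodesic; thus $h_\gamma$ is a contracting element with a uniformly contracting quasi-axis, $d(o,h_\gamma o)=2d(o,\gamma o)+O(1)$, and $h_\gamma^{\pm}$ lie in uniformly bounded shadows of $\gamma o$ and of $o$. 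The last point makes $\gamma\mapsto h_\gamma$ boundedly-to-one on each $\mathcal G_m$. Discarding the (controlled) set of $\gamma$ for which no available conjugator exhibits the turn, we keep $\mathcal G_m'\subseteq\mathcal G_m$ of cardinality $\asymp\mathrm e^{m\e\Gamma}$ carrying such $h_\gamma$.

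Third, the gain. Fix a large integer $r$. For each composition $n=m_1+\dots+m_k$ with parts $m_i\in[r,2r]$ and each choice of $\gamma_i\in\mathcal G_{m_i}'$, form $h_{\gamma_1}h_{\gamma_2}\cdots h_{\gamma_k}\in H$. Retaining only the tuples for which every junction turn is large --- by (b) this costs at most a factor $\kappa^{k}$, i.e.\ a multiplicative $\mathrm e^{-O(n/r)}$ --- the product is an admissible path, hence a uniform quasi-geodesic of length $2\sum_i d(o,\gamma_i o)+O(k)=2n+O(n/r)$, and its corner decomposition is determined up to bounded error, so distinct retained tuples give distinct orbit points of $H$ up to bounded multiplicity (using also that $\gamma_i\mapsto h_{\gamma_i}$ is boundedly-to-one). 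The number of retained tuples is at least
\[
\Big(\#\{\text{compositions of }n\text{ with parts in }[r,2r]\}\Big)\cdot\mathrm e^{-O(n/r)}\cdot\prod_i\#\mathcal G_{m_i}'\ \gtrsim\ \theta_r^{\,n}\,\mathrm e^{-O(n/r)}\,\mathrm e^{n\e\Gamma},
\]
where $\theta_r>1$ is the reciprocal of the dominant singularity of $\big(1-\sum_{j=r}^{2r}x^{j}\big)^{-1}$ and satisfies $\log\theta_r\asymp(\log r)/r$. Hence $\sharp N_H(o,2n+O(n/r))\gtrsim\mathrm e^{\,n(\e\Gamma+\log\theta_r-O(1/r))}$, and therefore
\[
\e H\ \ge\ \frac{\e\Gamma+\log\theta_r-O(1/r)}{2+O(1/r)}.
\]
Since $\log\theta_r\asymp(\log r)/r$ dominates the $O(1/r)$ error terms, taking $r$ large forces the right-hand side strictly above $\e\Gamma/2$, which proves the theorem.

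The step I expect to be the real obstacle is the conjugation step, precisely: guaranteeing $d(o,h_\gamma o)=2d(o,\gamma o)+O(1)$ (with uniform contraction constants) for a \emph{positive proportion} of $\mathcal G_m$ using only $P\cap E(\Gamma)=\varnothing$. A priori the finitely many conjugators in $P$ could each fix a large --- though proper --- part of $[\Lambda(\Gamma o)]$, and one must rule out that for a given $\gamma$ all available conjugators degenerate simultaneously; I would handle this either by a growth estimate showing the degenerate $\gamma$'s form a set of growth rate $<\e\Gamma$, or by noting that a degenerate conjugate is itself a bounded-length contracting element of $H$ and feeding it back into the construction (this is where $E(\Gamma)\cap P$ trivial is genuinely used, as opposed to the weaker ``$E(\Gamma)$ trivial''). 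A secondary technical burden is the uniform quasi-geodesicity of the long products and the essential uniqueness of their corner decompositions --- routine in the admissible-path formalism for contracting elements, but needing care to keep all constants independent of $k$ and $r$.
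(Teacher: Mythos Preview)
Your approach is genuinely different from the paper's. The paper argues by contradiction through Patterson--Sullivan theory: assuming $\e H=\e\Gamma/2$, it shows (using Lemma~\ref{EqualPSSeries} together with the uniqueness of PS measures on the reduced Myrberg set in the divergence case) that $\|\mu_{go}\|$ is uniformly bounded over $\Gamma o$; the Shadow Principle (Lemma~\ref{ShadowPrinciple}) then forces $\e H\ge\e\Gamma$, a contradiction. The convergence-type case is handled separately via Lemma~\ref{ConjugateDivSeries}. Your direct-counting strategy, extracting strictness from the combinatorial surplus of compositions, is more elementary and, if completed, would give a quantitative gap $\e H\ge\e\Gamma/2+c$ rather than a bare strict inequality.

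There is, however, a real gap at the inter-block junctions. Your ``main obstacle'' (the conjugation step) is already handled by Lemma~\ref{GoodConfiningSetP}, which produces $h_\gamma=\gamma f p f^{-1}\gamma^{-1}\in H$ with $d(o,h_\gamma o)=2d(o,\gamma o)+O(1)$ for \emph{every} $\gamma\in\Gamma$, not just a positive proportion. But the admissible-path structure it yields places the contracting subsets $\gamma\ax(f)$ and $\gamma f p\,\ax(f)$ near the \emph{midpoint} $\gamma o$, not near $o$ or $h_\gamma o$. Hence at the junction between $h_{\gamma_i}$ and $h_{\gamma_{i+1}}$ there is no contracting subset available, and the connecting piece is a broken geodesic $[\gamma_i o,o]\cdot[o,\gamma_{i+1}o]$ rather than a single segment satisfying (BP). In a Gromov-hyperbolic space your ``proportion $\kappa<1$'' argument would work (it amounts to bounding a Gromov product), but in the stated generality---a proper space with contracting elements---``large turn at $o$'' is not an admissibility condition and does not by itself give a quasi-geodesic. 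This is precisely the situation Definition~\ref{AdmDef} does \emph{not} cover, so it is not routine in the formalism.

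The repair is cheap. By Lemma~\ref{ConfinedNonElementary} (or Lemma~\ref{UniformContractingElem}) $H$ contains a finite set $F'$ of independent contracting elements, so apply the Extension Lemma~\ref{extend3} \emph{inside $H$}: insert $f_i'\in F'$ between blocks to form $h_{\gamma_1}f_1'h_{\gamma_2}f_2'\cdots\in H$. This is now a genuine $(L,\tau)$-admissible path with contracting subsets at every junction, and injectivity up to bounded multiplicity follows from Lemma~\ref{InjectiveExtMap} iterated along the path. The insertions add only $O(k)=O(n/r)$ to the length and incur no proportion loss, so your composition-counting conclusion $\log\theta_r\asymp(\log r)/r\gg O(1/r)$ survives unchanged.
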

If $H<\Gamma$ is normal, Coulon \cite{Coulon22} and independently the third named author \cite{YANG22} proved this inequality using boundary measures in greater generality (only assuming $\Gamma\act \U$ is of divergence type). In addition, if $H$ is a normal subgroup of divergence type, then $\e H=\e \Gamma$. Whether this holds for general confined subgroups remains open.

Together with Corollary \ref{Nonstrict}, Theorem \ref{ConvTightThm} has the following application to a finite tower of confined subgroups of $\Gamma$.  
\begin{cor}
Assume that $\Gamma\act\U$ is a PEG action with contracting elements. Let $H:=H_0< H_1\cdots < H_s=:\Gamma$ be a sequence of subgroups so that $H_i$ is confined in $H_{i+1}$ for each $0\le i< s$. Then $\e H> \e \Gamma/2^s$.  

\end{cor}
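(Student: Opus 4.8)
The plan is to descend the tower $H_0<H_1<\cdots<H_s=\Gamma$ one step at a time, losing at most a factor $2$ in the growth rate at each step and gaining strictness only once, at the very top.

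Two preliminary observations make each step clean. First, since every $H_i$ is a subgroup of $H_{i+1}$, the side hypotheses of Theorem \ref{ConvTightThm} and Corollary \ref{Nonstrict} concerning a finite confining subset disjoint from $E(\cdot)$ are automatic, and the conclusions hold without them (see the remark following Theorem \ref{mainthmhyperbolicandCAT(0)}). Second, a confined subgroup of a group acting properly on $\U$ with a contracting element again acts properly with a contracting element (Section \ref{prelim}); iterating down the tower from $\Gamma$, every $H_i\act\U$ is proper with a contracting element, and in particular $\e{H_i}\le\e{\Gamma}<\infty$.

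For the descent itself: at the top, $H_{s-1}$ is confined in $\Gamma=H_s$ and $\Gamma\act\U$ has purely exponential growth with a contracting element, so Theorem \ref{ConvTightThm} gives the strict inequality $\e{H_{s-1}}>\e{\Gamma}/2$. For each $1\le i\le s-1$, the group $H_i$ is confined in $H_{i+1}$, which acts properly on $\U$ with a contracting element and finite growth rate; the non-strict half-growth inequality of Corollary \ref{Nonstrict}, applied with $H_{i+1}\act\U$ as the auxiliary action, gives $\e{H_i}\ge\e{H_{i+1}}/2$. Chaining these,
\[
\e{H_0}\ \ge\ \frac{\e{H_1}}{2}\ \ge\ \frac{\e{H_2}}{2^{2}}\ \ge\ \cdots\ \ge\ \frac{\e{H_{s-1}}}{2^{s-1}}\ >\ \frac{\e{\Gamma}}{2^{s}},
\]
which is the claim. (One may equally phrase this as an induction on $s$, the base case $s=1$ being Theorem \ref{ConvTightThm} applied to $H_0\subseteq\Gamma$.)

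The point needing care is the intermediate inequality $\e{H_i}\ge\e{H_{i+1}}/2$. Corollary \ref{Nonstrict} and its source Theorem \ref{ConfinedConsThm} are stated under a statistical convex-cocompactness (or divergence-type) hypothesis on the ambient action, and this does \emph{not} pass to confined subgroups in general (for instance the commutator subgroup of a surface group of genus $\ge 2$ is a convergence-type normal, hence confined, subgroup). What is really used is that the conservativity argument behind Theorem \ref{ConfinedConsThm} needs only a Patterson--Sullivan quasi-conformal density of the correct dimension $\e{H_{i+1}}$ on $[\Lambda(H_{i+1}o)]$ together with the Shadow Lemma, and Patterson's construction produces such a density from any proper action with a contracting element and finite growth rate. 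So the essential input is to check that the proof of Theorem \ref{ConfinedConsThm}/Corollary \ref{Nonstrict} runs verbatim for this quasi-conformal density with no divergence-type hypothesis — equivalently, that $\e H\ge\e K/2$ whenever $H$ is confined in any group $K$ acting properly on $\U$ with a contracting element. Granting this, the corollary follows at once from the displayed chain.
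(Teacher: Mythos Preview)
Your chain of inequalities is exactly the argument the paper intends (it offers nothing beyond the sentence introducing the corollary), and you are right to flag the intermediate step $\e{H_i}\ge\e{H_{i+1}}/2$: neither Corollary~\ref{Nonstrict} nor~\ref{NonstrictDIV} applies as stated, since SCC and divergence type do not pass to confined subgroups.

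Your proposed repair, however, targets the wrong ingredient. In the route to Corollary~\ref{Nonstrict} the bottleneck is not the conservativity Theorem~\ref{ConfinedConsThm} but the dissipativity Theorem~\ref{HalfGrowthDisThm}, whose proof goes through Proposition~\ref{FreqContFull} and genuinely uses SCC; relaxing the hypotheses on the conservative side does not by itself yield $\e H\ge\e K/2$. The clean fix avoids boundary measures entirely. Once $H_{i+1}$ is known (inductively, via Lemma~\ref{ConfinedNonElementary}) to be non-elementary with a contracting element, Lemma~\ref{GoodConfiningSetP} applied with $H_{i+1}$ in the role of $\Gamma$ gives a uniformly finite-to-one map $\pi\colon H_{i+1}\to H_i$, $g\mapsto gfpf^{-1}g^{-1}$, satisfying $|d(o,\pi(g)o)-2d(o,go)|\le D$ (the finite-to-one property is exactly the computation in the proof of Lemma~\ref{ExpLowerBound}). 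Comparing Poincar\'e series,
\[
\mathcal P_{H_i}(s,o,o)\ \succ\ \mathcal P_{H_{i+1}}(2s,o,o),
\]
hence $\e{H_i}\ge\e{H_{i+1}}/2$ with no growth hypothesis on $H_{i+1}$ whatsoever. This is Corollary~\ref{ConfinedDivergeAtHalf} with the PEG assumption stripped out: PEG is used there only to upgrade the conclusion to divergence \emph{at} the half-exponent, not for the bare inequality. One further caution: the elliptic-radical side condition is not automatic merely because $H_i<H_{i+1}$. If $H_{i+1}$ has a nontrivial finite normal subgroup meeting $H_i$, the confinement can be witnessed entirely inside that subgroup (e.g.\ $H_{i+1}$ the product of a finite group with a free group, and $H_i$ the finite factor, where $\e{H_i}=0$); the corollary should be read with the same hypothesis on the confining subsets as in Theorem~\ref{ConvTightThm}.
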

This in particular applies to an \textit{$s$--subnormal} subgroup $H$ for some integer $s\ge 1$, for which there exists a strictly increasing subnormal series, $H_0:=H\unlhd H_1\cdots \unlhd H_s=:\Gamma$, of length $s$. This statement was known in free groups by Olshanskii \cite{Ol17}, who also proved that the lower bound is sharp: for any $\epsilon>0$ and $s\ge 1$, there exists some $s$--subnormal subgroup for which $\e H<\epsilon+ \e \Gamma/2^s$.

Next, consider the space $\U/H:=\{Hx: x\in \U\}$ equipped with the quotient metric. That is, given $Hx, Hy\in \U/H$, define $\bar d(Hx, Hy)=\inf\{d(x,hy):h\in H\}$. Denote by $\pi: \U\to \U/H$ the natural projection.

Fixing a point $o\in \U$, consider the ball-like set in the image of $\Gamma o$ in $\U/H$:
$$N_{\Gamma/H}(o, n) =  \{Hg o: \bar d(Ho, Hgo)\le n\}$$ and define its growth rate $\e{\Gamma/H}$ as follows:
\begin{equation}\label{QuotientGrowthRateDefn}
\e {\Gamma/H} =\limsup_{n\to \infty} \frac{\log\sharp N_{\Gamma/H}(o, n)}{n}.    
\end{equation}
As $\Gamma o$ and $\Gamma o'$ have a finite Hausdorff distance, the growth rate $\e {\Gamma/H}$ does not depend on $o\in \U$. If $\U$ is the Cayley graph of $\Gamma$ and $H$ is a subgroup of $\Gamma$, then $\U/H$ is the Schreier graph associated with $H$. If in addition, $H$ is a normal subgroup of $\Gamma$, then $\e {\Gamma/H}$ is the usual growth rate of $\Gamma/H$ with respect to the projected generating set, which explains the notation. 

We obtain the following inequality relating the growth and co-growth of confined subgroups. 
\begin{thm}\label{CoulonInequality}
Assume that the proper action $\Gamma\act \U$ is of divergence type with a contracting element. If a discrete group $H<\isom(\U)$ is confined by $\Gamma$ with a finite confining subset that intersects trivially with $E(\Gamma)$, then  $$\e H+\frac{\e{\Gamma/H}}{2}\ge \e \Gamma$$    
\end{thm}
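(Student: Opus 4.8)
The plan is to compare, at the critical exponent $s=\e\Gamma$, the divergent Poincar\'e series of $\Gamma$ with a product-type series built from $H$ and from the coset space $\Gamma/H$. Choose coset representatives $\{t_j\}$ for $H\backslash\Gamma$ and write each $g\in\Gamma$ as $g=h\,t_j$ with $h\in H$; the naive bound $d(o,go)\le d(o,ho)+d(ho,go)$ together with a length comparison would give $\e\Gamma\le\e H+\e{\Gamma/H}$, which is too weak by a factor of $2$ in the second term. To gain the factor $1/2$ on $\e{\Gamma/H}$ one must exploit the contracting geometry: the confining element forces, for each $g\in\Gamma$, the existence of an $H$--translate of a contracting axis (coming from the finite confining subset $P$, which by hypothesis meets $E(\Gamma)$ trivially, so it contains a genuine contracting element after passing to a power) that a geodesic $[o,go]$ must fellow-travel, and the quotient distance $\bar d(Ho,Hgo)$ only records the portion of $[o,go]$ ``transverse'' to the $H$--orbit. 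The standard trick (as in the normal-subgroup arguments of Coulon and of Yang) is that a geodesic realizing $\bar d(Ho,Hgo)$ can be ``folded'' so that it meets the $H$--orbit of a contracting set in a controlled way, and the contracting property lets one reconstruct $g$ from: (i) an element of $H$ of length $\lesssim \bar d(Ho,Hgo)/2 + O(1)$ realizing one half of the folded path, (ii) a second element of $H$ of comparable length, and (iii) bounded combinatorial data near the contracting axis. This is exactly the mechanism that turns $\e{\Gamma/H}$ into $\e{\Gamma/H}/2$.

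Concretely I would proceed as follows. \textbf{Step 1.} Using confinement with the finite set $P$ disjoint from $E(\Gamma)$, extract a contracting element $c$ and a constant $C_0$ such that for every $g\in\Gamma$ there is $h_g\in H$ with $h_g g c^{\pm} g^{-1}h_g^{-1}$ — or more precisely a conjugate of a fixed contracting axis — passing within $C_0$ of a prescribed point; this gives a uniform supply of $H$--translated contracting segments near every orbit point, as in the proof of Theorem \ref{ConvTightThm}. \textbf{Step 2.} For $g\in\Gamma$, pick a geodesic $\gamma=[o,go]$ and use the Hopf/horospheric analysis behind Theorem \ref{ConfinedConsThm} (equivalently, a direct contracting-projection argument) to locate a point $p\in Ho$ on or near $\gamma$ such that both $d(o,p)$ and $d(p,go)$ are controlled and $d(p,go)$ is comparable to $\bar d(Ho,Hgo)$ up to the ``$H$-recurrent'' initial part. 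Writing $p=h_0 o$, this realizes $g=h_0\cdot(h_0^{-1}g)$ with $h_0\in H$ of length $\le \e H$-controlled and $h_0^{-1}g$ projecting to a quotient-geodesic of length $\asymp \bar d(Ho,Hgo)$. \textbf{Step 3.} Apply the same folding once more to the tail $[p,go]$: because the confining contracting axis through a point near $go$ can be translated back by $H$, the element $h_0^{-1}g$ decomposes (up to bounded error, with the contracting property controlling the overlaps) as a product $h_1\cdot w$ where $h_1\in H$ has length $\lesssim \tfrac12\bar d(Ho,Hgo)+O(1)$ and $w$ ranges over a set of size $\asymp \mathrm{e}^{\e{\Gamma/H}\cdot \tfrac12 \bar d(Ho,Hgo)}$ — the ``two halves'' of the folded quotient geodesic each contributing $\tfrac12\e{\Gamma/H}$. \textbf{Step 4.} Assemble the counting:
\[
\sharp N_\Gamma(o,n)\ \lesssim\ \sum_{k\le n}\sharp N_H(o,k)\cdot \sharp\{ w : |w|\lesssim (n-k)\}\ \lesssim\ \sum_{k\le n}\mathrm{e}^{\e H\,k}\,\mathrm{e}^{\frac12\e{\Gamma/H}(n-k)} .
\]
Taking $\tfrac1n\log$ and $n\to\infty$, and using that $\Gamma\act\U$ is of divergence type so that $\e\Gamma=\limsup\tfrac1n\log\sharp N_\Gamma(o,n)$ is genuinely attained, the right-hand exponent is $\max\{\e H,\ \tfrac12\e{\Gamma/H}\}$ only if one is careless; done properly the convolution forces the bound $\e\Gamma\le \e H+\tfrac12\e{\Gamma/H}$ exactly when the two contributions are allowed to occur in the \emph{same} element (i.e. the $H$-part and the two quotient-halves are concatenated, not chosen independently), which is what Steps 2–3 guarantee.

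\textbf{Main obstacle.} The delicate point is Step 3: showing that the ``quotient geodesic'' of length $L:=\bar d(Ho,Hgo)$ can be folded into two $H$--related halves so that the contracting data at the fold is uniformly bounded and the reconstruction map $(h_1,w)\mapsto g$ is finite-to-one with multiplicity depending only on $P$, $C$, and the contracting constants — not on $L$. This requires the full strength of the contracting property (bounded projection of balls to the axis) to rule out ``drift'' along the $H$--orbit that would otherwise let $L$ be spent linearly rather than as $L/2$, and it is where one must invoke that the confining subset is finite and meets $E(\Gamma)$ trivially (so the relevant translated axes are honestly contracting and not absorbed into the boundary-fixing radical). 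Once this folding lemma is in place — it is the analogue, for confined subgroups, of the growth-tightness surgery used for normal subgroups in \cite{Coulon22,YANG22} — the rest is the routine Poincar\'e-series bookkeeping sketched in Step 4.
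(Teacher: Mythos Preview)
Your proposal has a genuine gap, and it is not only the unproven ``folding lemma'' of Step 3 but the assembly in Step 4. The displayed convolution estimate
\[
\sharp N_\Gamma(o,n)\ \lesssim\ \sum_{k\le n}\mathrm{e}^{\e H\,k}\,\mathrm{e}^{\frac12\e{\Gamma/H}(n-k)}
\]
yields, after taking $\frac{1}{n}\log$, only $\e_\Gamma \le \max\{\e_H, \tfrac12\e_{\Gamma/H}\}$, which is false in general (take $H$ trivial). You acknowledge this and then assert that ``done properly'' the bound becomes the sum rather than the max, but that is not how convolutions of exponentials behave: any decomposition $g = (\text{$H$--part of length }k)\cdot(\text{tail of length }n-k)$ with a finite-to-one reconstruction map gives a genuine convolution, and its exponential rate is the maximum of the two exponents, never their sum. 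No rearranging of ``concatenated versus independent'' fixes this; the factor $\tfrac12$ on $\e_{\Gamma/H}$ simply cannot be produced by a direct decomposition--count of this shape.

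The paper obtains the $\tfrac12$ by a different mechanism, an $\ell^2$--Cauchy--Schwarz argument on $\ell^2(\Gamma/H)$ following Coulon. One sets $\varphi_t(Hg)=\mathrm{e}^{-t\,\bar d(Ho,Hgo)}$ and $\phi_s(Hg)=\sum_{h\in H}\mathrm{e}^{-s\,d(o,hgo)}$; the trivial fact $\|\varphi_t\|_{\ell^2}^2=\sum_{Hg}\mathrm{e}^{-2t\,\bar d(Ho,Hgo)}<\infty$ for $t>\e_{\Gamma/H}/2$ is where the $\tfrac12$ enters. The substantive step is showing $\phi_s\in\ell^2$ for $s>\e_H$, and this is where the confinement hypothesis is actually used, via the Shadow Principle for the $H$--conformal density over the full orbit $\Gamma o$ (Lemma \ref{lem:shadowLemCogrowth}): it bounds $\|\mu^s_{go}\|\,\mathrm{e}^{-s\,d(o,go)}\prec\mu^s_o(\Omega^F_o(go,r))$ uniformly in $g\in\Gamma$, and summing over annuli gives $\|\phi_s\|^2\prec\sum_{h\in H} d(o,ho)\,\mathrm{e}^{-s\,d(o,ho)}<\infty$. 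Then $\mathcal P_\Gamma(s+t,o,o)\le\langle\phi_s,\varphi_t\rangle\le\|\phi_s\|\,\|\varphi_t\|<\infty$ forces $s+t\ge\e_\Gamma$, hence $\e_H+\tfrac12\e_{\Gamma/H}\ge\e_\Gamma$. So the contracting geometry and the finite confining set are used not to fold quotient geodesics, but to establish the Shadow Principle over $\Gamma o$ rather than merely $Ho$; that is the missing analytic input in your plan.
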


The inequality for normal subgroups was obtained earlier by Jaerisch and Matsuzaki in \cite{JM20} for free groups, and by Coulon \cite{Coulon22} in a proper action of divergence type with contracting elements. Our proof uses a shadow principle for confined subgroups, from which we deduce the inequality following closely Coulon's proof. Compared to their works, it is worth noting that the confined subgroup $H$ is not required to be contained in $\Gamma$.

As a corollary, we can relax the SCC action in Corollary \ref{Nonstrict} to be DIV actions.
\begin{cor}\label{NonstrictDIV}
In the setup of Theorem \ref{CoulonInequality},  we have $\e H\ge \e \Gamma/2$.    
\end{cor}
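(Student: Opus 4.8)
\textbf{Proof plan for Corollary \ref{NonstrictDIV}.}
The plan is to deduce the non-strict inequality $\e H\ge \e\Gamma/2$ as a formal consequence of the growth inequality already established in Theorem \ref{CoulonInequality}. That theorem gives, under exactly the hypotheses of Corollary \ref{NonstrictDIV} (a proper divergence-type action $\Gamma\act\U$ with a contracting element, and $H$ confined by $\Gamma$ with a finite confining subset intersecting $E(\Gamma)$ trivially),
\begin{equation*}
\e H+\frac{\e{\Gamma/H}}{2}\ge \e\Gamma.
\end{equation*}
So it suffices to control the quotient growth term $\e{\Gamma/H}$ by the relative growth term $\e H$.

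First I would record the elementary comparison $\e{\Gamma/H}\le \e\Gamma$: the projection $\pi\colon\U\to\U/H$ is $1$-Lipschitz for the quotient metric $\bar d$, hence $N_{\Gamma/H}(o,n)$ is the image of $N_\Gamma(o,n)$ under $\pi$ and $\sharp N_{\Gamma/H}(o,n)\le \sharp N_\Gamma(o,n)$, so passing to $\limsup$ of $\tfrac1n\log(\cdot)$ gives $\e{\Gamma/H}\le\e\Gamma$. (This uses only properness of $\Gamma\act\U$, which is part of the standing hypotheses.) Plugging this into Theorem \ref{CoulonInequality} yields
\begin{equation*}
\e H\ge \e\Gamma-\frac{\e{\Gamma/H}}{2}\ge \e\Gamma-\frac{\e\Gamma}{2}=\frac{\e\Gamma}{2},
\end{equation*}
which is precisely the claimed inequality. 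Thus the corollary is immediate once both ingredients are in hand.

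The only point that deserves care — and the step I expect to be the main (minor) obstacle — is making sure the bound $\e{\Gamma/H}\le\e\Gamma$ is stated with the correct counting conventions: $N_{\Gamma/H}(o,n)$ counts $H$-orbits $Hgo$ with $\bar d(Ho,Hgo)\le n$, and one must check that each such class is hit by at least one $go\in N_\Gamma(o,n)$ (indeed if $\bar d(Ho,Hgo)\le n$ then $d(o,hgo)\le n$ for some $h\in H$, and $Hhgo=Hgo$, so $hgo\in N_\Gamma(o,n)$ projects to $Hgo$). This surjectivity of $\pi$ from $N_\Gamma(o,n)$ onto $N_{\Gamma/H}(o,n)$ is what gives the cardinality inequality, and it is genuinely elementary; no contracting-element geometry or divergence-type hypothesis is needed for this half, those being already absorbed into Theorem \ref{CoulonInequality}. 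One should also note, for completeness, that the conclusion $\e H\ge\e\Gamma/2$ here is weaker than the strict inequality of Theorem \ref{ConvTightThm} but is obtained under the strictly weaker hypothesis of a divergence-type (rather than PEG) action, which is the reason the corollary is worth stating separately.
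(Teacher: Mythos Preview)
Your argument is correct and is exactly the (implicit) proof the paper intends: Corollary~\ref{NonstrictDIV} is stated without proof immediately after Theorem~\ref{CoulonInequality}, and combining that inequality with the elementary bound $\e{\Gamma/H}\le\e\Gamma$ is the only natural way to extract $\e H\ge\e\Gamma/2$.

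One small caveat worth recording: in the full generality of Theorem~\ref{CoulonInequality} the subgroup $H$ lies in $\isom(\U)$ and need not be contained in $\Gamma$. Your surjectivity step ``$hgo\in N_\Gamma(o,n)$'' tacitly uses $hg\in\Gamma$, which requires $h\in\Gamma$; when $H\not\subset\Gamma$ this is not guaranteed, and indeed $\sharp N_{\Gamma/H}(o,n)$ can fail to be finite (think of $\Gamma,H$ two uniform lattices with trivial intersection). This is a subtlety the paper itself does not address, and in the principal case $H<\Gamma$ --- the setting in which $\e{\Gamma/H}$ is introduced, cf.\ the Schreier-graph remark following \eqref{QuotientGrowthRateDefn} --- your argument is airtight.
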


Note that it follows from Theorem \ref{CoulonInequality} that if $\Gamma/H$ has sub-exponential growth (\emph{i.e.} $\e{\Gamma/H}=0$), then $\e H= \e \Gamma$. By \cite{Coulon22}[Cor. 4.27], for a normal subgroup $H$, we have $\e H= \e \Gamma$ if $\Gamma/H$ is amenable. We do not know what condition on $\Gamma/H$ might characterize the equality $\e H= \e \Gamma$ for more general confined subgroups.

We are now in a position  to give the the proofs of Main Applications in \textsection\ref{MainAppls}. 
\begin{proof}[Proofs of Theorems \ref{mainthmhyperbolicandCAT(0)}, \ref{mainthmRANK1} and \ref{mainthmMCG}] Theorems \ref{mainthmhyperbolicandCAT(0)} and \ref{mainthmRANK1} follow  immediately from Theorems \ref{ConfinedConsThm}, \ref{ConvTightThm} and \ref{CoulonInequality} together. We note here that assumption in  (2) are redundant:  the elliptic radical $E(\Gamma)<\isom(\U)$ (\emph{i.e.} fixing $\Lambda(\Gamma o)$ pointwise) intersects $\Gamma$ in a finite subgroup. If $H<\Gamma$ is a torsion-free subgroup, then the confining subset of $H$ intersects trivially $E(\Gamma)$, so the assumption for (2) holds automatically.     

For mapping class groups, if $g>2$, then $E(G)$ is trivial, so  Theorem \ref{mainthmMCG} follows exactly as above. If $g=2$, $E(G)$ contains hyperelliptic involution, then an additional argument is needed to conclude the proof. Indeed, since $G$ is residually finite, we may pass to  finite index torsion-free subgroups $\hat G$ of $G$, where $\hat G\cap H$ is infinite and torsion-free, so Theorem \ref{mainthmMCG} follows. The growth rate remains the same after taking finite index subgroups, so the proof in the general case follows. 
\end{proof}

\subsection{Maximal quotient growth} We now relate the ergodic properties of the boundary actions studied in \textsection\ref{SSecErgodicity} to the growth of a $\Gamma$ orbit $\Gamma o$ in the quotient $\U/H$. 
A proper action $\Gamma\act \U$ is called {\it growth tight} if $\e{\Gamma/H}<\e \Gamma$ holds for any infinite normal subgroup $H<\Gamma$. In the setting of Theorem \ref{ConfinedConsThm}, the action of a torsion-free $H$ on the boundary is conservative. The inequality $\e{\Gamma/H}<\e \Gamma$ in particular implies that the quotient growth of $H\act \U$ is slower than $\Gamma\act \U$ in the following sense: $$\frac{\sharp N_{\Gamma/H}(o, n)}{\sharp N_\Gamma(o,n)}\to 0.$$
Following Bahturin-Olshanski \cite{BO10}, we say that $H\act \U$ has \textit{maximal quotient growth} \footnote{In terms of \cite{BO10}, $N_{\Gamma/H}(o, n)$ is the growth function of the right (usually non-isometric) action of $\Gamma$ on $H\backslash \Gamma$. The right action is not relevant here, so we take the term of quotient growth instead.} if the following holds: 
$$\sharp N_{\Gamma/H}(o, n)\asymp \sharp N_\Gamma(o,n).$$  

We shall establish an equivalence of conservative action and slower quotient growth, and a characterization of maximal right coset growth in terms of boundary actions. These are only obtained here for co-compact actions on hyperbolic spaces. We expect the characterizations to hold in a much more general setup. See Question \ref{questHorlimitset} and Remark \ref{ExtensionMCG} for conjectures for an extension to mapping class groups.

\begin{thm}\label{CharConsActioninHyp}
Assume that $\Gamma$ acts properly and co-compactly on a proper hyperbolic space $\U$ and $\mu_o$ is the unique Patterson-Sullivan measure class on the Gromov boundary $\pU$. Let $H<\Gamma$ be a subgroup. Then  the small/big horospheric limit set is $\mu_o$--full, if and only if  $\U/H$ grows slower than $\U$. Moreover,  $\mu_o(\hG)<1$ is equivalent to the following
$$
\sharp \{Hgo: \bar d(Ho,Hgo)\le n, g\in \Gamma\} \asymp \mathrm{e}^{\e \Gamma n}.
$$
\end{thm}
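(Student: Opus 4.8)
The theorem has two parts. The first asserts that for $H < \Gamma$ (with $\Gamma$ cocompact on a proper hyperbolic $\U$), the small (equivalently big) horospheric limit set is $\mu_o$–full $\iff$ $\U/H$ grows strictly slower than $\U$, i.e. $\sharp N_{\Gamma/H}(o,n)/\sharp N_\Gamma(o,n)\to 0$. The second asserts $\mu_o(\hG)<1 \iff \sharp N_{\Gamma/H}(o,n)\asymp \mathrm{e}^{\e\Gamma n}$ (maximal quotient growth). Since $\hG \subseteq \HG$ and in the cocompact hyperbolic setting these differ by a null set (by the HTS-type dichotomy / shadow lemma machinery recalled in Theorem \ref{HTSCAT(-1)} and its analogues), I would freely pass between the small and big horospheric limit sets, and between ``$\mu_o(\hG)=1$'' and ``$\mu_o(\HG)=1$''. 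Note the two parts are logically complementary: a $\mu_o$–full horospheric limit set gives slower-than-$\U$ growth, while a \emph{non}-full one gives comparable (maximal) growth, and there is nothing in between — so really I must prove a trichotomy-free clean equivalence, the key point being that $\mu_o(\hG)$ is effectively $0$ or $1$ (ergodicity-type input) only in the conservative case, but here we do not assume conservativity, so I should argue directly with the quantitative estimates rather than a $0$–$1$ law.

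The engine is a counting/measure comparison via shadows. For $g\in\Gamma$ let $\mathcal O(go)$ be the shadow at $o$ of a ball around $go$; by cocompactness and the Shadow Lemma, $\mu_o(\mathcal O(go))\asymp \mathrm{e}^{-\e\Gamma\, d(o,go)}$. The first step is to reinterpret the quotient ball $N_{\Gamma/H}(o,n)$: $Hgo$ lies in it iff $\inf_{h\in H} d(o,hgo)\le n$, i.e. iff some $H$–translate of $go$ lies in $N_\Gamma(o,n)$; so counting $N_{\Gamma/H}(o,n)$ amounts to counting $H$–orbits meeting the ball $B(o,n)$. The second step is the geometric heart: a point $\xi\in\pU$ lies in the (small) horospheric limit set $\hG$ iff every horoball based at $\xi$ contains infinitely many points of $Ho$; using the cocompact action of $\Gamma$, this can be translated into: for every (large) $n$, the set of $\Gamma$–shadows $\mathcal O(go)$ with $d(o,go)\approx n$ that ``survive'' modulo $H$ — i.e. one representative per $H$–orbit — fails to cover $\xi$, respectively covers $\xi$. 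Concretely I would show that the union over one representative $g$ per right coset $Hg$ with $d(o,go)\le n$ of the shadows $\mathcal O(go)$, call it $\Omega_n$, is a decreasing-in-an-appropriate-sense family whose limsup is (up to a null set) the complement of the horospheric limit set; this is the standard Borel–Cantelli packaging used e.g. in \cite{GKN} and in Sullivan's analysis \cite{Sul81}, adapted by replacing the full $\Gamma$–orbit by a transversal of $H\backslash\Gamma$.

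Granting that dictionary, the equivalence becomes a comparison of series. We have, choosing a transversal $T_n$ of $\{Hg : g\in\Gamma,\ d(o,go)\le n, g\ \text{minimal in its coset}\}$,
\[
\mu_o(\Omega_n) \;\le\; \sum_{g\in T_n}\mu_o(\mathcal O(go)) \;\asymp\; \sum_{g\in T_n}\mathrm{e}^{-\e\Gamma d(o,go)} \;\asymp\; \sum_{k\le n} \sharp\{Hgo : \bar d(Ho,Hgo)\in(k-1,k]\}\,\mathrm{e}^{-\e\Gamma k},
\]
and by a bounded-multiplicity (Vitali-type) argument in the cocompact hyperbolic setting one gets a matching lower bound $\mu_o(\Omega_n)\gtrsim \sum_{k\le n}(\#\ \text{shell of }N_{\Gamma/H})\mathrm{e}^{-\e\Gamma k}$ up to adjusting shadow radii. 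Now: if $\sharp N_{\Gamma/H}(o,n)$ grows strictly slower than $\mathrm{e}^{\e\Gamma n}$, then by a summation-by-parts / Abel argument the tail sums $\sum_{k\ge m}(\cdots)\mathrm{e}^{-\e\Gamma k}\to 0$, so $\mu_o(\limsup\Omega_n)=0$, i.e. the horospheric limit set is $\mu_o$–full — and conversely, if $\sharp N_{\Gamma/H}(o,n)\not\to 0$ relative to $\sharp N_\Gamma(o,n)\asymp\mathrm{e}^{\e\Gamma n}$, i.e. the shells of $N_{\Gamma/H}$ are ``$\asymp \mathrm{e}^{\e\Gamma k}$ along a subsequence'', the series $\sum \mu_o(\Omega_n)$ fails to be summable and a divergence–Borel–Cantelli (using quasi-independence of shadows, valid since $\Gamma\act\U$ is of divergence type — here cocompact — so Theorem \ref{HTSCAT(-1)}(I) applies and $\mu_o\times\mu_o$ is ergodic, giving the needed independence) forces $\mu_o(\limsup\Omega_n)>0$, hence the horospheric limit set is not full. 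For the ``Moreover'' statement one just tracks constants: $\mu_o(\hG)<1$ corresponds exactly to a \emph{positive-measure} limsup, which by the same Borel–Cantelli translates into the shells being $\asymp\mathrm{e}^{\e\Gamma k}$ (not merely along a subsequence but giving $\sharp N_{\Gamma/H}(o,n)\asymp\mathrm{e}^{\e\Gamma n}$, using that $N_{\Gamma/H}$ is ``coarsely sub-multiplicative''/the shells can't oscillate too wildly because $\U/H$ is a quotient of a cocompact $\U$).

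**Main obstacle.** The delicate point is the lower bound / divergence direction: establishing enough \emph{quasi-independence} of the coset-transversal shadows $\{\mathcal O(go): g\in T_n\}$ to run a Borel–Cantelli in the converging-to-positive-measure regime. For the full group $\Gamma$ this is classical (ergodicity of $\mu_o\times\mu_o$ on the double boundary, plus the cocompact Shadow Lemma); but after quotienting by $H$ one keeps only one representative per coset, and two such representatives $g, g'$ can have shadows that nest or overlap badly precisely when $g'g^{-1}$ is ``horospherically close'' to $H$ — which is the phenomenon we are trying to detect. Handling this requires a careful choice of representatives (minimal-length in each coset, with bounded ambiguity by cocompactness) and a doubling/packing argument on $\pU$ (Ahlfors regularity of $\mu_o$ in the cocompact hyperbolic case) to control the overlaps; this is where I expect the real work, and where the cocompactness hypothesis is genuinely used rather than just divergence type. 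Everything else — the shadow-lemma estimates, the dictionary between horospheric limit points and $\limsup$ of coset-shadows, the Abel summation — is routine once this independence is in place.
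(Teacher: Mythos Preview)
Your minimal-length coset representatives are exactly the orbit points in the Dirichlet domain $\mathbf D(o)$, so you are implicitly working with the same objects as the paper. Where you and the paper part ways is in the harder direction (non-full horospheric limit set $\Rightarrow$ maximal quotient growth). You propose a divergence Borel--Cantelli argument and correctly flag quasi-independence of the coset-transversal shadows as the obstacle; you do not resolve it, and I do not see a clean way to do so. The paper sidesteps this obstacle entirely by a geometric trick: it introduces the \emph{shrunken} Dirichlet domain $\mathbf D_{-R}(o)=\{x: d(x,o)\le d(x,Ho)-R\}$, which is star-shaped and on which the quotient map $\pi$ is \emph{injective}. The point is that at each fixed scale $n$, the shadows $\Pi_o^F(go,r)$ for $go\in A_\Gamma(o,n,\Delta)\cap \mathbf D_{-R}(o)$ have \emph{bounded multiplicity} (no independence needed) and, by star-shapedness, their union still covers $\Lambda\mathbf D_{-R}(o)$. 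Hence $\mu_o(\Lambda\mathbf D_{-R}(o))\asymp \mathrm e^{-\e\Gamma n}\cdot\sharp\bigl(A_\Gamma(o,n,\Delta)\cap\mathbf D_{-R}(o)\bigr)$, and positivity of the left side gives the count directly.

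Two further points where you skate over real content. First, the equality $\mu_o(\HG)=\mu_o(\hG)$ is not a formal consequence of HTS/shadow machinery: it requires showing the set of Garnett points is $\mu_o$--null, which the paper does by adapting Sullivan's density-point argument (using that $\mu_o$ is doubling in the cocompact hyperbolic case). Second, to transfer positive measure from $\Lambda\mathbf D_R(o)$ to the injective piece $\Lambda\mathbf D_{-R}(o)$, the paper shows the ``facet'' limit sets $\Lambda\bigl(\mathrm{Bis}(o,ho;R)\cap\mathbf D_R(o)\bigr)$ are $\mu_o$--null; this again uses the big/small horospheric equality. Your ``shells can't oscillate because $\U/H$ is a quotient of a cocompact $\U$'' is not needed and is not obviously true; the paper's argument gives $\asymp \mathrm e^{\e\Gamma n}$ at every scale directly from the uniform lower bound $\mu_o(\hat\Lambda_n)\ge c>0$.
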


Before giving some corollaries, let us point out the following question which plays a key role in the proof of Theorem \ref{CharConsActioninHyp}:
\begin{quest}\label{questHorlimitset}
Let $\mu_o$ be a conformal measure on a boundary $\pU$ \textbf{without} atoms (cf. Example \ref{requirenoatoms}).
Under which conditions, do we have $\mu_o(\HG)=\mu_o(\hG)$?    
\end{quest}
This question has been raised in Sullivan's work \cite{Sul81}, where he showed for the real hyperbolic space $\mathbf{H}^n$ that a discrete subgroup $H<\isom(\mathbf{H}^n)$ is conservative with respect to $(\mathbf{S}^{n-1},\mathbf{Leb})$ if and only if the volume of $\mathbf{H}^n/H$ grows slower than $\mathbf{H}^n$.  
The key part of the proof is that the big and small horospheric limit sets differ in a $\mathbf{Leb}$--null set. We elaborate this proof to answer the question positively for any subgroup $H$ in Theorem \ref{CharConsActioninHyp}. See Theorem \ref{BigSmallEqualInHyp} for details.

For normal subgroups, adapting an argument of \cite{Kat02,FM20}, the question can be answered positively in the following specific situation.  
\begin{thm}\label{NormalsubgroupsHor}
Assume that  $\Gamma\act \U$ has purely exponential growth with a contracting element. Let $H$ be an infinite normal subgroup of $\Gamma$. Then $\mu_o(\hG)=\mu_o(\HG)=1$.  
\end{thm}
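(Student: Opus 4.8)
The plan is to show that, under purely exponential growth (PEG), the big horospheric limit set $\HG$ of a nontrivial normal subgroup $H\unlhd\Gamma$ is already $\mu_o$--full, and then to promote this to the a priori smaller small horospheric limit set $\hG$. For the first half I would argue as follows. By the Shadow Lemma for the PS density $\{\mu_x\}$ on the convergence boundary $\pU$ (available under divergence type, hence under PEG), shadows of $\Gamma$--orbit points at radius $r$ have $\mu_o$--mass comparable to $\mathrm e^{-r\e\Gamma}$. Fix $\xi\in\pG$ (and by ergodicity/conservativity-type arguments, or rather directly by a Borel--Cantelli estimate, one reduces to $\mu_o$--a.e.\ $\xi$ since $\mu_o$ is supported on $\pG$ with no atoms). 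To see $\xi\in\HG$, I must produce infinitely many $h\in H$ with $ho$ lying in a bounded neighborhood of the horoball at $\xi$ through $o$; equivalently, infinitely many $h$ with $d(o,ho)-2(ho\mid\xi)_o$ bounded above, i.e.\ the Gromov product $(ho\mid\xi)_o$ at least $d(o,ho)/2-C$. Here is where normality and PEG enter: given a large $R$, the set of $g\in\Gamma$ with $d(o,go)\le R$ has cardinality $\asymp\mathrm e^{R\e\Gamma}$; the shadows $\Pi_o(go,r)$ for a fixed large $r$ cover $\pG$ with bounded multiplicity (a standard consequence of PEG — this is where one uses the comparison $\sharp N_\Gamma(o,n)\asymp\mathrm e^{n\e\Gamma}$ rather than just divergence), so a positive proportion of $go$, $d(o,go)\in[R-D,R]$, satisfy $(\xi\mid go)_o\ge R/2-C$. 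Translating by $H$: conjugating by elements of $H$ and using normality $g^{-1}Hg=H$, or more directly using that $H$ acts on $\pG$ with the whole $\Lambda(\Gamma o)$ as limit set (normal subgroups have the same limit set, Definition/standard fact), one arranges the required $h\in H$ deep in the horoball. I expect the cleanest route is the one of \cite{Kat02,FM20} cited by the authors: take $h\in H$ with $ho$ near a geodesic ray to $\xi$, which exists because $H$ is normal and its limit set is all of $\pG\ni\xi$, and then shows the horospheric condition is met for infinitely many such $h$ by a pigeonhole/shadow--covering count powered by PEG.

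For the second half — upgrading $\HG$ to $\hG$ — I would invoke (or re-prove in this setting) the fact, isolated by the authors as the content behind Question \ref{questHorlimitset}, that for an atomless conformal measure the big and small horospheric limit sets differ by a $\mu_o$--null set; in the hyperbolic co-compact case this is Theorem \ref{BigSmallEqualInHyp}, and for normal subgroups under PEG the mechanism of \cite{Kat02,FM20} should give it directly. Concretely, the difference $\HG\setminus\hG$ consists of points $\xi$ seen by $Ho$ in arbitrarily deep horoballs at $o$ but not in \emph{all} horoballs; one shows that if $\xi\in\HG$ then for $\mu_o$--a.e.\ such $\xi$ the orbit $Ho$ in fact enters every horoball, using again a shadow estimate: the set of $\xi$ for which $Ho$ misses the horoball of depth $t$ at $\xi$ has $\mu_o$--measure tending to $0$ as $t\to\infty$ (a Borel--Cantelli / second-moment argument against the PS measure, exploiting PEG to control overlaps of shadows). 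Intersecting over $t\in\mathbb N$ gives $\mu_o(\hG)=\mu_o(\HG)=1$.

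The main obstacle, I expect, is the covering/overlap estimate for shadows that makes the pigeonhole count work on a general convergence boundary rather than a genuinely hyperbolic boundary: one needs that shadows $\Pi_o(go,r)$ of orbit points at comparable distance have uniformly bounded overlap multiplicity and positive $\mu_o$--mass, which in the convergence-boundary framework requires the partition $[\cdot]$ to interact well with shadows and requires the PEG hypothesis in an essential way (divergence type alone is not obviously enough, which is consistent with the authors flagging the PEG assumption explicitly here and only conjecturally in Theorem \ref{HalfGrowthDisThm}). A secondary subtlety is keeping track of $E(\Gamma)$: since the statement has no finite-confining-set hypothesis, one works with $\mu_o$ supported on $\pG$ and with $H$ preserving this measure class because $H\unlhd\Gamma$ forces $\Lambda(Ho)=\pG$ and invariance of the PS class; the contribution of $E(\Gamma)$ is harmless because it fixes $[\pG]$ pointwise and hence does not affect horospheric membership. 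Once the shadow-overlap lemma is in hand, both halves are Borel--Cantelli arguments and the rest is bookkeeping.
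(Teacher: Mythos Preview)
Your two-step plan (first $\mu_o(\HG)=1$, then upgrade to $\hG$ by showing $\HG\setminus\hG$ is $\mu_o$--null) is not the paper's route, and the second step is where it breaks. The only place in the paper where $\mu_o(\HG\setminus\hG)=0$ is actually proved is Theorem~\ref{BigSmallEqualInHyp}, and that argument genuinely uses the cocompact hyperbolic setup (doubling of $\mu_o$, Lebesgue density, the Garnett-point analysis). Under mere PEG on a general space with a convergence boundary you do not have these tools, and your sketched ``Borel--Cantelli/second-moment'' argument for the upgrade is not supplied with the estimate that would make it work. So as written, the second half is a gap.

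The paper avoids this difficulty entirely by going directly to the \emph{small} horospheric set via the Myrberg set, and the key idea you are missing is that an infinite normal subgroup $H$ of $\Gamma$ \emph{already contains a contracting element} $f$. Once you have such an $f\in H$, then for every Myrberg point $\xi\in\mG$ and every $n\ge 1$ the ray $[o,\xi]$ carries an $(r,f^n)$--barrier $g_n\in\Gamma$ (this is exactly the defining property of $\mG$ in (\ref{MyrbergDefn})); normality gives $h_n:=g_nf^ng_n^{-1}\in H$, and since the barrier forces $g_no$ and $g_nf^no$ to lie within $r$ of the ray, the point $h_no$ sits in the horoball at $[\xi]$ of depth comparable to $d(o,f^no)$. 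Letting $n\to\infty$ pushes $h_no$ into arbitrarily deep horoballs, so $\xi\in\hG$ directly. Since $\mu_o(\mG)=1$ under divergence type (a countable intersection of the full-measure sets $\cG$), one gets $\mu_o(\hG)=1$ in one stroke, with no need to pass through $\HG$ or to control $\HG\setminus\hG$. Note also that PEG is not used in this argument beyond guaranteeing divergence type; your elaborate shadow-overlap and pigeonhole machinery is unnecessary here.
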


As another application of Theorem \ref{CharConsActioninHyp}, we obtain a characterization of conservative actions.
\begin{cor}\label{CorSlower}
Under the assumption of Theorem \ref{CharConsActioninHyp}, assume that $H$ is torsion-free. Then    the action $H\act (\pU,\mu_o)$ is conservative if and only if the quotient growth of $H\act \U$ is slower than $\Gamma\act \U$.   
\end{cor}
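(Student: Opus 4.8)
The plan is to deduce the corollary by chaining two equivalences that are already available: the identification of the conservative part of the boundary action with a horospheric limit set of $Ho$, and the characterization of when that limit set is $\mu_o$--full in terms of quotient growth supplied by Theorem~\ref{CharConsActioninHyp}. Since $H$ is torsion-free and $\Gamma$ acts properly and co-compactly on the proper hyperbolic space $\U$, Lemma~\ref{PreserveMeasureClass} shows that $H$ preserves the measure class of $\mu_o$; thus the Hopf decomposition $\pU=\mathcal C\sqcup\mathcal D$ of $H\act(\pU,\mu_o)$ into conservative and dissipative parts is defined, and the action is conservative exactly when $\mu_o(\mathcal D)=0$, i.e.\ when $\mu_o(\mathcal C)=1$.

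First I would recall the general link between the conservative part and horospheric limit sets: modulo $\mu_o$--null sets one has $\hG\subseteq\mathcal C\subseteq\HG$. The inclusion $\mathcal C\subseteq\HG$ is immediate, since a point outside $\HG$ is one along which the orbit $Ho$ eventually leaves every horoball, hence lies in a wandering set; the inclusion $\hG\subseteq\mathcal C$ is the Borel--Cantelli/shadow argument behind Sullivan's description of the conservative component \cite[Thm.\ IV]{Sul81}, which goes through here because $\mu_o$ is non-atomic in the co-compact hyperbolic setting. Because we are then in the situation of Theorem~\ref{BigSmallEqualInHyp}, where the big and small horospheric limit sets of $Ho$ agree up to a $\mu_o$--null set, the three quantities $\mu_o(\hG)$, $\mu_o(\mathcal C)$, $\mu_o(\HG)$ coincide. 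Hence $H\act(\pU,\mu_o)$ is conservative if and only if $\mu_o(\HG)=1$, i.e.\ the big (equivalently, small) horospheric limit set of $Ho$ is $\mu_o$--full; the implication ``$\mu_o(\HG)=1\Rightarrow$ conservative'' is the one already used in Theorem~\ref{ConfinedConsThm}.

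It then remains to invoke Theorem~\ref{CharConsActioninHyp}, whose hypotheses are exactly those in force: the small/big horospheric limit set of $Ho$ is $\mu_o$--full if and only if $\U/H$ grows strictly slower than $\U$, that is, $\sharp N_{\Gamma/H}(o,n)/\sharp N_\Gamma(o,n)\to 0$, which is precisely the assertion that the quotient growth of $H\act\U$ is slower than that of $\Gamma\act\U$. Combining the two equivalences yields the corollary.

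The only point that is not purely formal is the input used in the first step: that the conservative part sits between the small and big horospheric limit sets, and that these two sets differ by a $\mu_o$--null set under co-compactness. But this is exactly the material developed around Question~\ref{questHorlimitset} and Theorem~\ref{BigSmallEqualInHyp}, so at the level of this corollary everything reduces to assembling already-proved statements; the substantive work resides in Theorems~\ref{CharConsActioninHyp} and~\ref{BigSmallEqualInHyp}.
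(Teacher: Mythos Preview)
Your proposal is correct and follows essentially the same route as the paper, which treats the corollary as immediate: torsion-freeness forces every nontrivial boundary stabilizer in $H$ to be infinite, so by Lemma~\ref{HopfdecompLem} the conservative part coincides exactly with $\HG$ (this is spelled out in the paper just before Theorem~\ref{HalfgrowthimplyDiss}), and then Theorem~\ref{CharConsActioninHyp} gives the equivalence with slower quotient growth. Your detour through the chain $\hG\subseteq\mathcal C\subseteq\HG$ and the separate invocation of Theorem~\ref{BigSmallEqualInHyp} is harmless but redundant, since the equality $\mu_o(\hG)=\mu_o(\HG)$ is already built into the statement of Theorem~\ref{CharConsActioninHyp}.
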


Another corollary is characterizing the purely exponential growth of right cosets as in the ``moreover" statement. 

Let $H, K<\Gamma$ be two subgroups with proper limit sets $[\Lambda (Ho)], [\Lambda (Ko)]\subsetneq [\Lambda(\Gamma o)]$ (we say these are  of \textit{second kind} following the terminology in Kleinian groups). In \cite{HYZ}, it is shown that the double cosets $HgK$ over  $g\in \Gamma$  have purely exponential growth:
$$
\sharp \{HgKo:  d(o,HgKo)\le n, g\in \Gamma\} \asymp \mathrm{e}^{\e \Gamma n}.
$$
If $K=\{1\}$, this amounts to counting the right coset $Hg$ as above. In hyperbolic groups, this case is thus covered in Theorem \ref{CharConsActioninHyp}, which furthermore gives a characterization of purely exponential growth of $\{Hg: g\in \Gamma\}$ by $\mu_o(\hG)<1$. If $H=\{1\}$, a characterization remains unknown to us for purely exponential growth of left cosets $gK$.

To conclude our study, we present some examples of subgroups with nontrivial  Hopf decomposition.
First such examples  are constructed in free groups (\cite{GKN}). Here we are able to construct  these subgroups for any SCC actions on hyperbolic spaces.

\begin{thm}\label{MixedHopfThm}
Suppose that $\Gamma$ admits a proper SCC action on a proper hyperbolic space $\U$. Let $f\in \Gamma$ be a loxodromic element. Then there exist a large integer $n$ so that the normal closure $\llangle f^n\rrangle$ contains subgroups of  second kind with nontrivial conservative part and dissipative part.    
\end{thm}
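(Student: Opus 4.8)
The goal is to produce, inside the normal closure $N:=\llangle f^n\rrangle$, a subgroup of second kind whose boundary action is neither conservative nor completely dissipative. The strategy is to build such a subgroup as a carefully chosen free product of conjugates of $\langle f^n\rangle$, arranged so that one ``half'' of its generators behaves like a confined subgroup (forcing a conservative component) while the other ``half'' is so sparse that it contributes a genuine dissipative component. Concretely, I would first invoke the SCC hypothesis and the theory of \cite{YANG10, YANG22} to fix a large $n$ for which $f^n$ has very strong contracting constants and the extended Schottky / ping-pong machinery applies to families of conjugates $\{g f^n g^{-1}: g\in \Gamma\}$ with suitably separated fixed point pairs. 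This gives the building blocks: any ``independent'' finite or countable subfamily generates a free product which is quasiconvex-quasi-isometrically embedded relative to the axes, with controlled limit set.

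The first half of the construction: choose an infinite but \emph{sparse} sequence of conjugates $a_k = g_k f^n g_k^{-1}$, where $d(o, g_k o)$ grows fast (say doubly exponentially). Let $D=\langle a_k : k\ge 1\rangle$, a free product. Because the generators are spread out at super-exponential distances, the growth rate $\e{D}$ is $0$ (or can be made $<\e\Gamma/2$), so by Theorem \ref{HalfGrowthDisThm} the action $D\act(\pU,\mu_o)$ is completely dissipative; in particular $\mu_o(\Lambda^{\mathrm{Hor}}(Do))=0$. The second half: choose a confined-type subgroup $C<N$ of second kind — e.g. take $C$ to be generated by a $\Gamma$-equivariant-enough family of conjugates of $f^n$ so that $C$ is confined by a subgroup $\Gamma'<\Gamma$ of finite index (or confined with a finite confining set avoiding $E(\Gamma)$), while keeping $[\Lambda(Co)]\subsetneq[\Lambda(\Gamma o)]$ — this uses a ping-pong/small-cancellation argument to keep the limit set proper while making the orbit ``thick'' in many directions. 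By Theorem \ref{ConfinedConsThm}, $C\act(\pU,\mu_o)$ is conservative: $\mu_o(\Lambda^{\mathrm{Hor}}(Co))>0$ and in fact $=1$ on $[\Lambda(\Gamma' o)]$-relevant part.

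The key step is the amalgamation: form $H=\langle C, D\rangle$ inside $N$ as a free product $C * D$ (arranged via ping-pong by placing the fixed-point data of the $D$-generators far from, and in ``independent position'' relative to, the axes defining $C$), verify $H$ is still of second kind (its limit set is contained in a proper closed invariant subset, since both $\Lambda(Co)$ and $\Lambda(Do)$ are), and then analyze the Hopf decomposition of $H\act(\pU,\mu_o)$. The conservative part of $H$ contains (up to null sets) the conservative part of $C$: any $\mu_o$-positive $C$-conservative set remains $H$-conservative since $C<H$ and conservativity only grows under passing to larger groups. Hence $\mu_o(\mathrm{Cons}(H))\ge \mu_o(\mathrm{Cons}(C))>0$. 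For the dissipative part, I would use the shadow principle and a Borel–Cantelli argument along the sparse sequence $a_k$: the shadows of the $D$-coset translates of a fixed ball have $\mu_o$-measures summing to a finite total (by the sparseness of $d(o,g_ko)$ and the Shadow Lemma giving measure $\asymp e^{-\e\Gamma\, d(o,g_ko)}$ up to the exponent), so almost every point lies in only finitely many such shadows; combined with the free-product structure this shows a $\mu_o$-positive set of points is visited only finitely often by $H$, i.e. lies in the dissipative part. Thus $0<\mu_o(\mathrm{Diss}(H))<1$.

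\textbf{Main obstacle.} The delicate point is the simultaneous control of \emph{three} competing requirements on $H=C*D$: (i) $H\le N=\llangle f^n\rrangle$ — this forces every generator to be a product of conjugates of $f^n$, so one cannot choose fixed-point configurations completely freely, and the ping-pong separation must be achieved within the constraints imposed by the normal-closure relations; (ii) $H$ of second kind — the union of infinitely many conjugate axes must still have limit set avoiding an open piece of $\Lambda(\Gamma o)$, which requires the conjugating elements $g_k$ (and those defining $C$) to be chosen from a common ``half-space'' or horoball-complement region, using that $\Gamma$ is non-elementary to still have enough room; (iii) the quantitative split of the Hopf decomposition — one needs the dissipative contribution of $D$ to survive inside $H$ and not be ``absorbed'' into the conservative part by the $C$-action, which is where the independence (ping-pong) between $C$ and $D$ and the exact Borel–Cantelli summability are essential. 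I expect reconciling (i) with the ping-pong freedom of (ii)–(iii) to be the technical heart: the resolution is that, as in the free-group case of \cite{GKN} and the hyperbolic-group constructions in \cite{ChaThesis, BO10}, for $n$ large the set of conjugates $g f^n g^{-1}$ with $g$ ranging over a coset of a fixed finite-index subgroup already realizes all ``directions'' needed, and extended Schottky subgroups of the normal closure exist in abundance by the SCC assumption combined with the criteria of \cite{YANG10}.
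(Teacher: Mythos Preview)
Your proposal has a fundamental gap that makes the construction collapse. You want a subgroup $C<N$ that is simultaneously \emph{confined} by (a finite-index subgroup of) $\Gamma$ and \emph{of second kind}. These two properties are incompatible: by Corollary~\ref{FullLimitSet} (or Lemma~\ref{ConfinedNonElementary}), any subgroup confined by $\Gamma$ with a finite confining set disjoint from $E(\Gamma)$ has $[\Lambda(Co)]=[\Lambda(\Gamma o)]$, so $C$ is automatically of first kind. Worse, Theorem~\ref{ConfinedConsThm} then forces $\mu_o(\Lambda^{\mathrm{Hor}}(Co))=1$. Since conservativity is monotone under enlarging the group, your amalgam $H=C\ast D\supset C$ would also satisfy $\mu_o(\Lambda^{\mathrm{Hor}}(Ho))=1$, i.e.\ $H$ is fully conservative and of first kind --- the sparse factor $D$ cannot create any dissipative part once $C$ is present. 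Your step (5), where you hope the dissipative behavior of $D$ ``survives'' in $H$, goes in the wrong direction: passing to a larger group can only destroy dissipativity, never create it.

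The paper's approach inverts your strategy: rather than building up from below, it starts from above with the full normal closure $G=\llangle f^n\rrangle$ (which \emph{is} confined, being normal, hence fully conservative) and then \emph{removes} one free generator, writing $G=G_0\star H$ with $G_0=\langle f^n\rangle$. This $H$ is of second kind since $[f^\pm]\notin[\Lambda(Ho)]$, giving the dissipative part. For the conservative part, the key is a decomposition (Proposition~\ref{limitsetdecomp}) of $[\Lambda(Go)]$ into $G$-translates of $[\Lambda(Ho)]$ together with an exceptional set $E$; the crucial input $\e G<\e\Gamma$ (from the amenability criterion of \cite{CDST} for SCC actions on hyperbolic spaces, since $\Gamma/G$ is non-amenable) makes $E$ $\mu_o$-null via Borel--Cantelli. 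Since $\mu_o(\Lambda^{\mathrm{Hor}}(Go))=1$ and horospheric limit points of $G$ outside $E$ must land in some $g\Lambda^{\mathrm{Hor}}(Ho)$, one gets $\mu_o(\Lambda^{\mathrm{Hor}}(Ho))>0$. The moral: to get a nontrivial Hopf decomposition you must \emph{descend} from a conservative group, not ascend toward one.
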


We expect this result extends to any SCC action. Indeed, denoting $G=\llangle f^n\rrangle$,  if   $\e G< \e \Gamma$ is proved, $\U$ could be replaced with  a general metric space.  Amenability criterion in \cite{CDST} for hyperbolic spaces says that $\e G=\e\Gamma$ is equivalent to the amenability of $\Gamma/G$. If $n$ is sufficiently large, $\Gamma/G$ is non-amenable, so $\e G<\e \Gamma$. If the amenability criterion holds for any SCC action, then  $\e G< \e \Gamma$ follows and so does Theorem \ref{MixedHopfThm}.


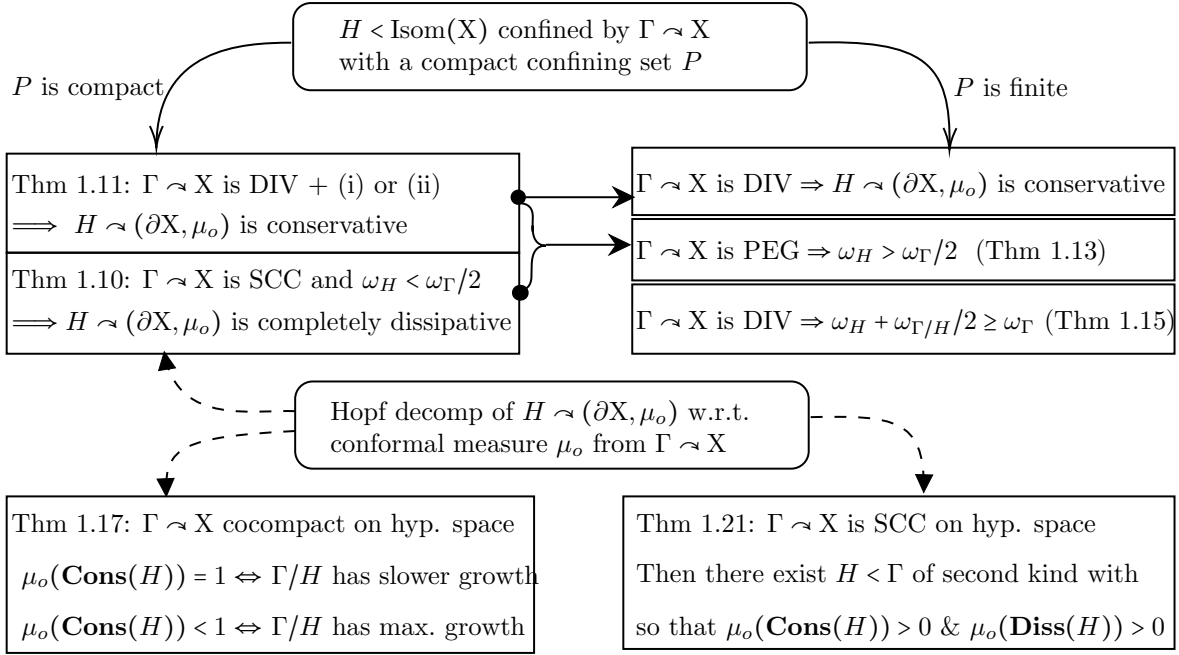
\begin{figure}
    \centering
 
\tikzset{every picture/.style={line width=0.75pt}} 

\begin{tikzpicture}[x=0.75pt,y=0.75pt,yscale=-1,xscale=1]

\draw   (39,132) -- (297.5,132) -- (297.5,183) -- (39,183) -- cycle ;
\draw   (39,82) -- (297.5,82) -- (297.5,132) -- (39,132) -- cycle ;
\draw   (354.5,79) -- (628,79) -- (628,113) -- (354.5,113) -- cycle ;
\draw   (39,255) -- (305,255) -- (305,334) -- (39,334) -- cycle ;
\draw   (350,255) -- (628,255) -- (628,332) -- (350,332) -- cycle ;
\draw   (354.5,115) -- (628,115) -- (628,146) -- (354.5,146) -- cycle ;
\draw   (354.5,148) -- (628,148) -- (628,183) -- (354.5,183) -- cycle ;
\draw   (183.5,14.8) .. controls (183.5,9.94) and (187.44,6) .. (192.3,6) -- (433.7,6) .. controls (438.56,6) and (442.5,9.94) .. (442.5,14.8) -- (442.5,41.2) .. controls (442.5,46.06) and (438.56,50) .. (433.7,50) -- (192.3,50) .. controls (187.44,50) and (183.5,46.06) .. (183.5,41.2) -- cycle ;
\draw    (443.5,26) .. controls (502.6,24.03) and (515.13,36.61) .. (514.54,78.09) ;
\draw [shift={(514.5,80)}, rotate = 271.33] [color={rgb, 255:red, 0; green, 0; blue, 0 }  ][line width=0.75]    (10.93,-3.29) .. controls (6.95,-1.4) and (3.31,-0.3) .. (0,0) .. controls (3.31,0.3) and (6.95,1.4) .. (10.93,3.29)   ;
\draw    (182.5,26) .. controls (156.76,26) and (115.34,28.94) .. (114.51,77.51) ;
\draw [shift={(114.5,79)}, rotate = 270] [color={rgb, 255:red, 0; green, 0; blue, 0 }  ][line width=0.75]    (10.93,-3.29) .. controls (6.95,-1.4) and (3.31,-0.3) .. (0,0) .. controls (3.31,0.3) and (6.95,1.4) .. (10.93,3.29)   ;
\draw    (297,104) -- (352.5,104) ;
\draw [shift={(355.5,104)}, rotate = 180] [fill={rgb, 255:red, 0; green, 0; blue, 0 }  ][line width=0.08]  [draw opacity=0] (10.72,-5.15) -- (0,0) -- (10.72,5.15) -- (7.12,0) -- cycle    ;
\draw [shift={(297,104)}, rotate = 0] [color={rgb, 255:red, 0; green, 0; blue, 0 }  ][fill={rgb, 255:red, 0; green, 0; blue, 0 }  ][line width=0.75]      (0, 0) circle [x radius= 3.35, y radius= 3.35]   ;

\draw   (184.5,205.8) .. controls (184.5,200.94) and (188.44,197) .. (193.3,197) -- (434.7,197) .. controls (439.56,197) and (443.5,200.94) .. (443.5,205.8) -- (443.5,232.2) .. controls (443.5,237.06) and (439.56,241) .. (434.7,241) -- (193.3,241) .. controls (188.44,241) and (184.5,237.06) .. (184.5,232.2) -- cycle ;
\draw  [dash pattern={on 4.5pt off 4.5pt}]  (184.5,212) .. controls (164.02,211.03) and (126.44,219.56) .. (119.01,184.76) ;
\draw [shift={(118.5,182)}, rotate = 81.03] [fill={rgb, 255:red, 0; green, 0; blue, 0 }  ][line width=0.08]  [draw opacity=0] (8.93,-4.29) -- (0,0) -- (8.93,4.29) -- cycle    ;
\draw  [dash pattern={on 4.5pt off 4.5pt}]  (184.5,222) .. controls (126.9,224.88) and (119.07,230.52) .. (119.42,252.21) ;
\draw [shift={(119.5,255)}, rotate = 267.61] [fill={rgb, 255:red, 0; green, 0; blue, 0 }  ][line width=0.08]  [draw opacity=0] (8.93,-4.29) -- (0,0) -- (8.93,4.29) -- cycle    ;
\draw  [dash pattern={on 4.5pt off 4.5pt}]  (444.5,215) .. controls (486.43,214.03) and (501.73,215.9) .. (501.54,250.29) ;
\draw [shift={(501.5,253)}, rotate = 271.55] [fill={rgb, 255:red, 0; green, 0; blue, 0 }  ][line width=0.08]  [draw opacity=0] (8.93,-4.29) -- (0,0) -- (8.93,4.29) -- cycle    ;
\draw  [line width=0.75]  (296.5,151) .. controls (301.17,151.11) and (303.55,148.83) .. (303.66,144.16) -- (303.79,138.29) .. controls (303.94,131.62) and (306.35,128.34) .. (311.02,128.45) .. controls (306.35,128.34) and (304.09,124.96) .. (304.24,118.3)(304.18,121.3) -- (304.34,114.16) .. controls (304.45,109.49) and (302.17,107.11) .. (297.5,107) ;
\draw  [line width=6] [line join = round][line cap = round] (297.8,152.22) .. controls (297.8,152.22) and (297.8,152.22) .. (297.8,152.22) ;
\draw    (311,128) -- (350.5,128) ;
\draw [shift={(353.5,128)}, rotate = 180] [fill={rgb, 255:red, 0; green, 0; blue, 0 }  ][line width=0.08]  [draw opacity=0] (10.72,-5.15) -- (0,0) -- (10.72,5.15) -- (7.12,0) -- cycle    ;

\draw (355,89.84) node [anchor=north west][inner sep=0.75pt]    {$\Gamma \act \U \textrm{ is }\mathrm{DIV} \Rightarrow H\act(\pU,\mu_{o}) \textrm{ is conservative}$};
\draw (355,123.4) node [anchor=north west][inner sep=0.75pt]    {$\Gamma\act \U \textrm{ is } \mathrm{PEG} \Rightarrow \omega_{H}  >\omega_{\Gamma }/2$ \ (Thm \ref{ConvTightThm})};
\draw (355,158.84) node [anchor=north west][inner sep=0.75pt]    {$\Gamma\act \U \textrm{ is } \mathrm{DIV} \Rightarrow \omega_{H} +\omega_{\Gamma /H} /2\geq \omega_{\Gamma}$\ (Thm \ref{CoulonInequality})};

\draw (355,262) node [anchor=north west][inner sep=0.75pt]   [align=left] {Thm \ref{MixedHopfThm}: $\Gamma\act \U$ is SCC on hyp. space};
\draw (355,287.4) node [anchor=north west][inner sep=0.75pt]   [align=left] {Then there exist  $H<\Gamma$ of  second kind with}; 
\draw (355,313.4) node [anchor=north west][inner sep=0.75pt]    {so that $\mu_{o}(\mathbf{Cons}(H))  >0\ \& \ \mu_{o}(\mathbf{Diss}(H))  >0$};

\draw (40,90.4) node [anchor=north west][inner sep=0.75pt]    {Thm \ref{ConfinedConsThm}:   $\Gamma \act \U \textrm{ is } \mathrm{DIV}$ + (i) or (ii)};
\draw (40,110.4) node [anchor=north west][inner sep=0.75pt]    {  
$ \Longrightarrow \ H\act (\pU,\mu_o)$ is conservative};
\draw (40,138) node [anchor=north west][inner sep=0.75pt]   [align=left] {Thm \ref{HalfGrowthDisThm}:  $\Gamma\act \U$ is SCC and $\omega_{H} < \omega_{\Gamma}/2$};
\draw (40,159.4) node [anchor=north west][inner sep=0.75pt]    {$\Longrightarrow H\act(\pU,\mu_o) \textrm{ is completely dissipative}$};
\draw (40,262) node [anchor=north west][inner sep=0.75pt]   [align=left] {Thm \ref{CharConsActioninHyp}: $\Gamma\act \U$ cocompact on hyp. space};
\draw (45,287.4) node [anchor=north west][inner sep=0.75pt]    {$\mu_{o}(\mathbf{Cons}(H)) =1\Leftrightarrow$  $\Gamma/H$ has slower\ growth};
\draw (45,313.4) node [anchor=north west][inner sep=0.75pt]    {$\mu_{o}(\mathbf{Cons}(H)) < 1\Leftrightarrow$ $\Gamma/H$ has max.\ growth};

\draw (205,12) node [anchor=north west][inner sep=0.75pt]   [align=left] {$H<\isom(\U)$ confined by $\Gamma\act \U$ \\
with a compact confining set $P$};
\draw (515,42) node [anchor=north west][inner sep=0.75pt]    {$P$ is finite};
\draw (200.75,205) node [anchor=north west][inner sep=0.75pt]   [align=left] {Hopf decomp of $H\act (\pU,\mu_o)$ w.r.t. \\ conformal measure $\mu_o$ from $\Gamma\act \U$};
\draw (40,42) node [anchor=north west][inner sep=0.75pt]    {$P$ is compact};

\end{tikzpicture}
     \caption{A flow chart of main results, where the assumptions DIV, PEG and SCC  on the action $\Gamma\act \U$ are illustrated in Fig. \ref{fig:actionssurvol}}
    \label{fig:theorems}
    \label{fig:enter-label}
\end{figure}
\subsection{Ingredients in the proofs and organization of the paper}
We highlight some tools and ingredients in obtaining the results presented above. The reader may restrict their attention to the special case $H<\Gamma$ for simplicity. Our standing assumption is that the space $\U$ admits a convergence boundary $\pU$, and $\Gamma$ acts properly on $\U$. Readers who are not familiar with the background on convergence boundary might keep in mind the example where $\U$ is hyperbolic space with Gromov boundary $\pU$: for this special case we have illustrations of the main ideas throughout the text. 

We refer to Fig. \ref{fig:theorems} for an overview of the main theorems and their logical relations. Fig. \ref{fig:actionssurvol} illustrates the implications between assumptions on the actions. 
\\
\paragraph{\textbf{Results on Hopf decomposition}}
The proof of results on the Hopf decomposition of $H\act(\pU, \mu_o)$, as presented above in \textsection\ref{SSecErgodicity}, makes extensive use of the conformal measure theory $\{\mu_x:x\in \U\}$ developed in \cite{YANG22} on the convergence boundary, provided that $H\act\U$ is of divergence type. The case of horofunction boundary was independently obtained by Coulon \cite{Coulon22}. The key facts we use include the following: 
\begin{enumerate}
    \item 
    the Shadow Lemma \ref{ShadowLem} holds for $\mu_x$, and
    \item 
    $\mu_x$ is supported on conical points (Lemma \ref{ConicalPointsLem}), and is unique up to bounded multiplicative constants on the reduced horofunction boundary (Lemma \ref{Unique}). 
\end{enumerate}
A well-prepared reader would notice that these are standard consequences of the HTS dichotomy (\emph{e.g.} for CAT(-1) spaces stated in Theorem \ref{HTSCAT(-1)}).   In greater generality, this  has been established for groups with contracting elements as mentioned above in \cite{Coulon22,YANG22}. We recommend the reader to keep in mind the situation of CAT(-1) spaces in the sequel without losing the essentials.   

The proof of Theorem \ref{HalfGrowthDisThm} in the case that $\U$ is a Gromov hyperbolic space is obtained by recasting the combinatorial proof in free groups \cite{GKN} in appropriate geometric terms. The general case mimics the same outline, with the additional input of the recent result \cite{QYANG} on full measures supported on regularly contracting limit sets. If $H$ is a confined subgroup, we push a generic set of limit points into the horospheric limit set of $H$  to prove the conservativity of the action of $H$, Theorem \ref{ConfinedConsThm}. This strategy has appeared in the works \cite{Kat02,FM20} for normal subgroups of groups acting on hyperbolic spaces. 
\\
\paragraph{\textbf{Key technical results on confined subgroups}}
Toward the proof of Theorem \ref{ConfinedConsThm}, we prove the following key technical result stated in Lemma \ref{GoodConfiningSetP}, which can be viewed as an enhanced version of the Extension Lemma \ref{extend3}. 
\begin{lem}[=Lemma \ref{GoodConfiningSetP}]\label{GoodConfiningSetPIntro}
There exists a finite subset $F$ of contracting elements in $\Gamma$ with the following property:
for any $g\in \Gamma$, there exist $f\in F$ and $p\in P$ such that $gfpf^{-1}g^{-1}$ lies in $H$ and
$$
|d(o, gfpf^{-1}g^{-1}o) - 2d(o,go)|\le D
$$
where $D$ depends only on $F$ and $P$.
\end{lem}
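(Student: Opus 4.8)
\emph{Strategy.} Rather than feeding the confining hypothesis the element $g$, I feed it the elements $gf$ for $f$ in a finite set $F$ of contracting elements fixed once and for all: then $gfpf^{-1}g^{-1}=(gf)p(gf)^{-1}$ automatically lies in $H$ for \emph{every} $f\in F$, and it only remains to select the $f$ for which this element displaces $o$ by about $2d(o,go)$.

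\emph{Set-up.} Starting from a contracting element of $\Gamma$ and using the construction behind the Extension Lemma \ref{extend3}, take $F$ to be a finite set of contracting elements — high powers of finitely many independent contracting elements — such that the quasi-axes $\ax(f)$, $f\in F$, are pairwise independent, the geodesics $[o,fo]$ are pairwise $D_0$-separated at $o$ (i.e.\ $(fo\mid f'o)_o\le D_0$ for $f\ne f'$, with $D_0$ coming from the independence), and $\min_{f\in F}d(o,fo)$ is as large as we please; set $L:=\max_{f\in F}d(o,fo)$ and $M:=\sup_{p\in P}d(o,po)<\infty$. For $g\in\Gamma$ and $f\in F$, the confining hypothesis applied to $gf$ gives $p=p(g,f)\in P\setminus\{1\}$ with $gfpf^{-1}g^{-1}\in H$; so only the length estimate is at stake, and we may choose $f$.

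\emph{Reduction to a displacement estimate.} Conjugating by $gf$, one has $d(o,gfpf^{-1}g^{-1}o)=d(g^{-1}o,\,qg^{-1}o)$ with $q:=fpf^{-1}$. Since $d(fo,qfo)=d(fo,fpo)=d(o,po)\le M$, the minimal-displacement locus of $q$ (an axis if $q$ is loxodromic, a bounded-depth horoball if $q$ is parabolic — in the torsion-free case (i) of Theorem \ref{ConfinedConsThm} these are the only possibilities, $q$ being a $\Gamma$-conjugate of the nontrivial element $gfpf^{-1}g^{-1}$ of $H$) lies within $O(L+M)$ of $o$. The triangle inequality through $fo$ already yields $d(o,gfpf^{-1}g^{-1}o)\le 2d(o,go)+2L+M$. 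For a matching lower bound, the displacement of a point grows at least like twice its distance to that locus, so $d(g^{-1}o,qg^{-1}o)\ge 2d(o,go)-O(L+M)$ \emph{unless} $g^{-1}o$ sits close to the locus while far from $o$ — equivalently, unless the ray $[o,g^{-1}o)$ points into the bounded-size shadow of the locus at infinity. Call such an $f$ \emph{bad}. It therefore suffices to show that, for each $g$, at most boundedly many $f\in F$ (with a bound depending only on $F$ and $P$) are bad.

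\emph{Bounding the bad $f$, and the main obstacle.} Pushing the picture forward by $g$ turns "$f$ is bad'' into: the \emph{single, $f$-independent} geodesic $[go,o]$ fellow-travels the minimal locus of $h:=gfpf^{-1}g^{-1}=gqg^{-1}$ for more than a threshold $T_0=T_0(F,P)$ near $go$. Now $h$ displaces $gfo$ by $d(gfo,h\,gfo)=d(o,po)\le M$, so that locus meets $B(gfo,M)$, and $d(gfo,go)\le L$; hence for $T_0$ large the part of the locus near $gfo$ falls in the fellow-travelling stretch, forcing $gfo$ — equivalently $fo$ after applying $g^{-1}$ — to lie within a constant $R=R(M)$ of $[go,o]$, respectively of $[o,g^{-1}o]$, \emph{with $R$ independent of $F$}. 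But two distinct $fo,f'o$ both within $R$ of one geodesic through $o$ would have $(fo\mid f'o)_o\ge\min_{f\in F}d(o,fo)-O(R)>D_0$ once $\min_{f\in F}d(o,fo)$ was taken large — a contradiction. So at most one bad $f$; choosing any good $f$, the displacement estimate gives $d(o,gfpf^{-1}g^{-1}o)\ge 2d(o,go)-O(L+M+T_0)$, and with the upper bound this is the lemma with $D=D(F,P)$. The genuinely delicate point is this count: the bad region of $q=fpf^{-1}$ varies with $f$ through the \emph{forced} element $p(g,f)$, not under our control, so one cannot simply steer $[o,g^{-1}o)$ away from a fixed set of directions; what rescues the argument is that compactness of $P$ pins the locus of $q$ to within $O(L+M)$ of $fo$, and the independence designed into $F$ then makes "$f$ bad'' entail "$fo$ hugs one geodesic through $o$'', which only boundedly many $f$ can satisfy. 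Carrying this out with constants depending only on $F,P$ — notably in the parabolic case (where the minimal locus is a horoball and one argues with Busemann functions rather than an axis) and in case (ii) (where $q$ may be elliptic and one invokes the finiteness of $P$ together with $P\cap E(\Gamma)=\{1\}$) — is where the remaining technical work sits.
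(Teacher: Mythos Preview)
The paper's proof is a one-line application of Lemma \ref{EllipticRadical} (the enhanced Extension Lemma) to the pair $(g,g^{-1})$: that lemma constructs $F$ so that for \emph{every} $p\in P$ simultaneously, the word $(g,f,p,f^{-1},g^{-1})$ labels an $(L,\tau)$--admissible path for a suitable $f\in F$ depending only on $g$; the distance estimate is then immediate from the fellow-travel Proposition \ref{admisProp}, and only afterward is $p$ selected via the confining hypothesis applied to $gf$. The decisive input is the assumption that $P$ is finite with $P\cap E(\Gamma)=\emptyset$: this is what guarantees, inside Lemma \ref{EllipticRadical}, that each $p\in P$ moves the fixed points $[f^\pm]$, hence $p\ax(f)\ne\ax(f)$, hence $[o,po]$ has bounded projection to $\ax(f)$ --- exactly the (BP) condition needed at the middle vertex of the admissible path.

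Your approach reverses the order --- apply confining first (yielding an uncontrolled $p=p(g,f)$), then select $f$ by a displacement-of-isometry argument --- and this has a genuine gap. The claims about minimal-displacement loci and about displacement growing like twice the distance to that locus are facts from hyperbolic or CAT(0) geometry; the lemma in the paper (\S\ref{secconfine}) is set in a general proper geodesic space with contracting elements, where there is no isometry trichotomy and the displacement function of $q=fpf^{-1}$ need not behave this way. More seriously, your main line of reasoning never invokes the hypothesis $P\cap E(\Gamma)=\{1\}$; it surfaces only in the closing parenthetical about case (ii) being ``where the remaining technical work sits''. But case (ii) \emph{is} the setting of Lemma \ref{GoodConfiningSetP}, and without that hypothesis the conclusion is simply false: if $p$ fixes $[\Lambda(\Gamma o)]$ classwise then $q=fpf^{-1}$ can displace every orbit point by a uniformly bounded amount, so $d(o,gfpf^{-1}g^{-1}o)$ stays bounded while $2d(o,go)\to\infty$. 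The paper's admissible-path route avoids all of this: once $F$ is engineered so that the relevant projections are bounded (and this is exactly where $P\cap E(\Gamma)=\emptyset$ enters), the distance estimate is purely combinatorial and holds in full generality.
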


To facilitate the reader with perspective from hyperbolic spaces,   we include a short section \textsection\ref{SecWarmup}  to demonstrate the main ideas and tools without invoking much preliminary material. Complete proofs of Theorem \ref{HalfGrowthDisThm} and Theorem \ref{ConfinedConsThm} are provided for this special case.

This lemma provides the geometric property of confined subgroups that allows us to prove statements in a manner similar to that of normal subgroups. In particular, adapting the argument in \cite{YANG22} for normal subgroups, we prove the following.
\begin{lem}[Shadow Principle in \textsection\ref{secshadow}]\label{ShadowPrincipleIntro}
Let $\{\mu_x\}_{x\in \U}$ be a $\e H$--dimensional $H$--quasi-equivariant quasi-conformal density supported on $\pU$.   Then there exists $r_0 > 0$ such that  
$$
\begin{array}{rl}
\|\mu_y\| e^{-\e H \cdot d(x, y)} \quad \prec_\lambda   \quad \mu_x(\Pi_{x}(y,r))\quad  \prec_{\lambda,\epsilon, r} \quad \|\mu_y\| e^{-\e H \cdot  d(x, y)}\\
\end{array}
$$
for any $x,y\in \Gamma o$ and $r \ge  r_0$.    
\end{lem}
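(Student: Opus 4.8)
\textbf{Proof plan for the Shadow Principle (Lemma \ref{ShadowPrincipleIntro}).}
The plan is to establish the two-sided estimate on the $\mu_x$-measure of a shadow $\Pi_x(y,r)$ by combining the Shadow Lemma for the ambient conformal density with the enhanced Extension Lemma for confined subgroups (Lemma \ref{GoodConfiningSetPIntro}). First, I would recall that $\{\mu_x\}$ is assumed to be a $\e H$-dimensional $H$-quasi-equivariant quasi-conformal density supported on $\pU$; since $H\act\U$ has contracting elements and $\{\mu_x\}$ is nontrivial, the Shadow Lemma \ref{ShadowLem} applies and gives, for $r$ large enough, a two-sided bound of the form
$$
\|\mu_y\|\, e^{-\e H \cdot d(x,y)} \;\prec_\lambda\; \mu_x\big(\Pi_x(y,r)\big) \;\prec_{\lambda,\epsilon,r}\; \|\mu_y\|\, e^{-\e H\cdot d(x,y)}
$$
\emph{for $x,y$ in an $H$-orbit}. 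So the content of the statement to be proved is upgrading ``$x,y\in Ho$'' to ``$x,y\in\Gamma o$''. By quasi-equivariance of the density under $H$ and the triangle inequality for $\|\mu_{\cdot}\|$, it suffices to treat $x=o$ and $y=go$ for an arbitrary $g\in\Gamma$, i.e.\ to compare the shadow $\Pi_o(go,r)$ with shadows centered at nearby points of $Ho$.

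The upper bound is the easier direction: a shadow $\Pi_o(go,r)$ is contained in a slightly larger shadow, and monotonicity together with the Shadow Lemma applied at points of $Ho$ within bounded distance of $go$ controls it — or, more directly, the upper bound $\mu_o(\Pi_o(go,r))\prec \|\mu_{go}\| e^{-\e H d(o,go)}$ follows from the general upper-bound half of the Shadow Lemma, which only uses that $\{\mu_x\}$ is a quasi-conformal density (this half does not need $go\in Ho$). The essential work is therefore the \emph{lower} bound: I need to produce, for each $g\in\Gamma$, an element $h\in H$ with $d(o,ho)\asymp 2d(o,go)$ whose associated geodesic $[o,ho]$ fellow-travels $[o,go]$ for a definite proportion, so that the shadow $\Pi_{o}(ho,r')$ — which has $\mu_o$-mass $\asymp \|\mu_{ho}\|e^{-\e H d(o,ho)}$ by the Shadow Lemma for $H$ — is contained in $\Pi_o(go,r)$, and then to relate $\|\mu_{ho}\|$ and $d(o,ho)$ back to $\|\mu_{go}\|$ and $d(o,go)$. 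This is exactly where Lemma \ref{GoodConfiningSetPIntro} enters: it supplies $h=gfpf^{-1}g^{-1}\in H$ with $|d(o,ho)-2d(o,go)|\le D$, and the contracting element $f\in F$ ensures (via the Extension Lemma \ref{extend3} and the contracting/quasi-geodesic properties) that the geodesic $[o,ho]$ passes within bounded Hausdorff distance of $go$ and then ``mirrors back,'' so a shadow around $ho$ of bounded enlargement sits inside $\Pi_o(go,r)$. Finally, $\|\mu_{ho}\|\asymp e^{\e H d(o,go)}\|\mu_{go}\|$ up to the constant $D$ (using $d(ho,go)\le d(o,go)+O(1)$ and quasi-conformality of the density to compare $\|\mu_{ho}\|$, $\|\mu_{go}\|$, $\|\mu_o\|$), which converts the Shadow-Lemma estimate $\mu_o(\Pi_o(ho,r'))\asymp \|\mu_{ho}\| e^{-\e H d(o,ho)}$ into $\asymp \|\mu_{go}\| e^{-\e H d(o,go)}$, as desired.

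I expect the main obstacle to be the geometric bookkeeping in the lower bound: verifying that the concatenation $[o,go]\cdot g[o,fpf^{-1}o]\cdot\text{(return)}$ produced by the Extension Lemma is a genuine quasi-geodesic with contracting behaviour, so that $[o,ho]$ has a controlled initial segment aligned with $[o,go]$ and the shadow inclusion $\Pi_o(ho,r')\subseteq\Pi_o(go,r)$ holds for an $r$ depending only on $F$, $P$, $\lambda$, $\epsilon$ — not on $g$. Once that alignment is in place, all the measure comparisons are routine applications of quasi-conformality and the Shadow Lemma. A secondary subtlety is that on a general convergence boundary the norm $\|\mu_x\|$ and the shadows are only defined up to the partition $[\cdot]$ and up to bounded multiplicative error, so throughout I would work with the ``$\prec_\lambda$'' notation and track the dependence of constants exactly as in \cite{YANG22}; the reader is invited to specialize to $\U$ hyperbolic and $\pU$ the Gromov boundary, where $\|\mu_x\|$ is comparable to a constant and the argument is transparent.
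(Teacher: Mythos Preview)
Your lower-bound strategy has a genuine gap in the final conversion step. You correctly produce $h=gfpf^{-1}g^{-1}\in H$ with $d(o,ho)\approx 2d(o,go)$ and $\Pi_o(ho,r')\subseteq\Pi_o(go,r)$, and the $H$--Shadow Lemma at $ho$ then gives $\mu_o(\Pi_o(go,r))\succ e^{-2\e H\, d(o,go)}$ (note $\|\mu_{ho}\|\asymp_\lambda 1$ by $H$--quasi-equivariance). But your claim ``$\|\mu_{ho}\|\asymp e^{\e H d(o,go)}\|\mu_{go}\|$'' does not follow from quasi-conformality: since $d(ho,go)\approx d(o,go)$, quasi-conformality only yields the two-sided bound $e^{-\e H d(o,go)}\prec\|\mu_{ho}\|/\|\mu_{go}\|\prec e^{\e H d(o,go)}$, and you need the one-sided extreme. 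Equivalently, your argument proves $\mu_o(\Pi_o(go,r))\succ e^{-2\e H d(o,go)}$, whereas the target is $\succ\|\mu_{go}\|\,e^{-\e H d(o,go)}$; the factor $\|\mu_{go}\|$ is precisely the unknown quantity that the Shadow \emph{Principle} is designed to track, and it can be much larger than $e^{-\e H d(o,go)}$.

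The paper's proof uses the confining element not to manufacture a single deep point $ho$, but as a \emph{boundary transformation} moving a set of full $\mu_{g_2o}$--mass into the shadow. For $\xi\in U:=[\Lambda(Ho)]$ approximated by $h_no\to\xi$, the enhanced Extension Lemma makes $(g,f,p,f^{-1},h_n)$ admissible, so $gfpf^{-1}\xi\in\Pi_o^F(go,r)$; splitting $U=U_1\cup U_2\cup U_3$ according to which $f_i\in F$ is selected, and choosing $p_i$ so that $k_i:=g_2f_ip_if_i^{-1}g_2^{-1}\in H$, one gets $k_i(g_2U_i)\subseteq\Pi_{g_1o}^F(g_2o,r)$. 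Now $H$--quasi-equivariance applied to $k_i$ gives $\mu_{k_ig_2o}(k_i\cdot g_2U_i)\asymp_\lambda\mu_{g_2o}(g_2U_i)$, and since $d(g_2o,k_ig_2o)\le\max_{f,p}d(o,fpf^{-1}o)$ is \emph{uniformly bounded}, quasi-conformality passes this to $\mu_{g_2o}$ with no exponential loss. Summing over $i$ and using $g_2U=U$ yields $\mu_{g_2o}(\Pi_{g_1o}^F(g_2o,r))\succ\|\mu_{g_2o}\|$, and one more comparison $\mu_{g_1o}\leftrightarrow\mu_{g_2o}$ gives the stated lower bound. The crucial difference from your plan is that equivariance is invoked for an isometry displacing $g_2o$ by $O(1)$, not by $d(o,go)$.
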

The usual Shadow Lemma  asserts only the above inequalities on a smaller region of $x,y\in Ho\subseteq \Gamma o$, where $\|\mu_y\|=1$. Hence, the Shadow Principle provides useful information in the case that $\Gamma o$ is a much larger set. If $\Gamma\act\U$ is cocompact, the above result holds over the whole space $\U$. 
\\
\paragraph{\textbf{Cogrowth tightness of confined subgroups}}
The nonstrict part of inequality $\e H> \e\Gamma/2$ asserted by Theorem \ref{ConvTightThm} follows immediately from Theorems \ref{HalfGrowthDisThm} and \ref{ConfinedConsThm}. The strict part turns out to require a more subtle argument. The strategy follows the outline of the proof of strict inequality for normal subgroups of \emph{divergence type} in \cite{YANG22}(\textsection\ref{section:cogrowthTight}). The proof for the convergence type is easier, so we omit the discussion here. 

The main point in \cite{YANG22} is to obtain a uniform bound $M$ on the mass of $\mu_{y}$ over $y\in \Gamma o$:
\begin{equation}\label{muyMass}
\forall go\in \Gamma o,\quad \|\mu_{go}\|\le M    
\end{equation}
Once this is proved, the coefficient  $\|\mu_{y}\|$ in the Shadow Principle disappears, so we would obtain $\e H\ge \e \Gamma$ from a standard covering argument. That is, the number of shadows $ \Pi_{o}(go,r))$, $go\in A_\Gamma(o,n,\Delta)$ which contain a given point of $\partial X$ is bounded by a uniform constant $C$, and therefore
$$
e^{-\e H n} \sharp A_\Gamma(o,n,\Delta) \le C
$$
which yields  $\e H\ge \e \Gamma$ and thus $\e H= \e \Gamma$ follows for normal subgroups $H$ of divergence type. 

To make life easier, let us assume $\U$ is a CAT(-1) space, so the  set  (I) of conditions in Theorem \ref{HTSCAT(-1)} holds.   As $H\act \U$ is of divergence type, there exists a unique    PS measure class $\{\mu_x: x\in \U\}$ of dimension $\e\Gamma$, up to scaling. In particular, the PS measure  $\mu_{go}$ is the unique limit point of the following one 
$$
\mu_{go}^{s,y} =\frac{1}{\p_H(s, o, y)}\sum_{h\in H}\mathrm{e}^{-sd(go,hy)}\dirac{hy}
$$ for \emph{any} $y\in \U$,  where $\p_H(s, x,y)=\sum_{h\in H}\mathrm{e}^{-sd(x,hy)}$ is the Poincar\'e series  associated to $H$. Note that
\begin{equation}\label{PSEquations}
\forall s>\e H:\quad \|\mu_{go}^{s,go}\| =\frac{\p_H(s, go, go)}{\p_H(s, o, go)} =  \Bigg[ \frac{\p_{H}(s, go, o)}{\p_{H^g}(s, o, o)}\Bigg]^{-1}
\end{equation}
The proof would be finished at this point, if $H^g=H$ is a normal subgroup: the RHS is the inverse of the mass   $\|\mu_{go}^{s,o}\|$.   The uniqueness of the limit $\mu_{go}$ concludes that $\|\mu_{go}\|=1$, so the proof for normal subgroups is finished. 

Our effort indeed comes into this stage to handle a general confined subgroup. We have to take a different routine to show (\ref{muyMass}), through an argument by contradiction. Assuming that $2\e H=\e \Gamma$, we make a crucial use of Lemma \ref{GoodConfiningSetPIntro} to obtain the following estimates on growth function of each conjugate $gHg^{-1}$
$$
C \mathrm{e}^{n\e H}\le \sharp (gHg^{-1}\cap A_\Gamma(o,n,\Delta))\le C' \mathrm{e}^{n\e H}
$$
in Lemma \ref{EqualPSSeries}, which yields the coarse equality of the associated Poincar\'e series:
$$
\p_{H}(\e H, o, o) \asymp \p_{H^g}(\e H, o, o)
$$
The upper bound in the above inequality uses some ingredients in proving purely exponential growth in \cite{YANG10}. Substituting this equation into (\ref{PSEquations})  proves the boundedness of $\|\mu_{go}\|$ as above, resulting in the equality $\e H= \e \Gamma$. This contradicts our assumption, concluding the proof of the strict inequality.

The argument by contradiction leaves open whether equality $\e H= \e\Gamma$ holds for confined subgroups of divergence type, while it is known to hold for normal subgroups.  
\\
\paragraph{\textbf{Subgroups with nontrivial Hopf decomposition}}
The construction of subgroups satisfying Theorem \ref{MixedHopfThm} starts in Section \ref{secNontrivialHopf}. We first take the normal closure $G=\llangle f^n\rrangle$ of a contracting element $f$ for some sufficiently large $n$. It is well known that such a $G$ is a free group of infinite rank (\cite{DGO}). Removing one generator gives a subgroup $H$ of second kind, for which we show that it has the desired properties. The proof relies on the recent adaption of rotating family theory to projection complex (\cite{CMM21, BDDKPS}). 
The proof of nontrivial conservative component uses the inequality  $\e G<\e \Gamma$ by the Amenability Theorem \cite{CDST} for SCC action on {hyperbolic} spaces.  We mention that the equivalence of $\e G<\e \Gamma$ with non-amenability of $\Gamma/G$ is conjectured to hold for any SCC action with contracting elements. 
\\
\paragraph{\textbf{A guide to the sections of the paper}} In the preliminary \textsection \ref{prelim}, we introduce necessary materials on contracting elements, convergence boundary,  quasiconformal density on it, and a brief discussion on Hopf decomposition for conformal measures. The main results of the paper are then grouped into three different but closely related parts. The first part is devoted to the study of ergodic properties on boundary, establishing completely dissipative actions for subgroups with small growth (Theorem \ref{HalfGrowthDisThm}) in  \textsection \ref{secdiss}, and conservative actions for confined subgroups (Theorem \ref{ConfinedConsThm}) in \textsection \ref{seccons}. To demonstrate the main idea in our general case, we include  a short section \ref{SecWarmup} to explain their proof in hyperbolic setup.   The growth of confined subgroups forms the main content of the second part. Using a key lemma \ref{GoodConfiningSetP} obtained in the first part, we prove the shadow principle for confined subgroups in \textsection\ref{secshadow} and then complete the proof of strict inequality in Theorem \ref{ConvTightThm} in \textsection \ref{section:cogrowthTight}.   As a further application of the Shadow principle, Section \ref{secinequality} shows an inequality in Theorem \ref{CoulonInequality}   relating the growth and co-growth of confined subgroups. The last part  first explains a close relation between  quotient growth and Hopf decomposition and then shows the existence of nontrivial Hopf decomposition.  In \textsection\ref{secmaxgrowth}), the conservative action is  characterized by slower quotient growth in Theorem \ref{CharConsActioninHyp}. The last section \ref{secNontrivialHopf} constructs in abundance subgroups of second kind with non-trivial Hopf decomposition    (Theorem \ref{MixedHopfThm}).  

\section{Preliminaries}\label{prelim}

Let $(\U, d)$ be a proper geodesic metric space. Let $\isom(\U,d)$ be the isometry group endowed with compact open topology. It is well known that a subgroup $\Gamma<\isom(\U,d)$ is discrete if and only if $\Gamma$  acts properly on $\U$ (\cite[Theorem 5.3.5]{Rat06}).

Let $\alpha :[s,t]\subseteq \mathbb R\to\U$ be a path parametrized by arc-length, from the initial point $\alpha^-:=\alpha(s)$ to the terminal point $\alpha^+:=\alpha(t)$. If $[s,t]=\mathbb R$, the restriction of $\alpha$ to $[a, +\infty)$ for $a\in\mathbb R$  is referred to  as a \textit{positive ray}, and its complement a \textit{negative ray}. By abuse of language, we often denote them by   $\alpha^+$ and $\alpha^-$ (in particular, when they represent boundary points to which the half rays converge as in Definition \ref{ConvBdryDefn}).

Given two parametrized points $x,y\in \alpha$, $[x,y]_\alpha$ denotes the parametrized
subpath of $\alpha$ going from $x$ to $y$, while 
 $[x, y]$ is a choice of a geodesic between $x, y\in \U$. 
 
A path $\alpha$ is called a \textit{$c$--quasi-geodesic} for $c\ge 1$ if for   any rectifiable subpath $\beta$,
$$\len(\beta)\le c \cdot d(\beta^-, \beta^+)+c$$
 where   $\ell(\beta)$ denotes the length of $\beta$. 
 
Denote by $\alpha\cdot \beta$ (or simply $\alpha\beta$) the concatenation of two paths $\alpha, \beta$  provided that $\alpha^+ =
\beta^-$.

Let $f, g$ be real-valued functions. Then $f \prec_{c_i} g$ means that
there is a constant $C >0$ depending on parameters $c_i$ such that
$f < Cg$. The symbol $\succ_{c_i}  $ is defined similarly, and  $\asymp_{c_i}$ means both $\prec_{c_i}  $ and $\succ_{c_i}  $  are true. The constant $c_i$ will be omitted if it is a universal constant.


\subsection{Contracting geodesics}
Let $Z$ be a closed subset of $\U$ and $x$ be a point in $\U$.  By $d(x, Z)$ we mean the set-distance 
between $x$ and $Z$, \emph{i.e.} 
\[
d(x, Z) : = \inf \big \{ d(x, y): y \in Z \big \}. 
\]
Let
\[ \pi_{Z}(x) : = \big \{ y\in Z: d(x, y) = d(x, Z) \big \} \]
be the set of closet point projections from $x$ to $Z$. Since $X$ is a proper metric space, 
$\pi_{Z}(x)$ is non empty. We refer to $\pi_{Z}(x) $ as the \emph{projection set} of $x$ to $Z$. Define $\proj_Z(x,y):=\|\pi_Z(x)\cup \pi_Z(y)\|$, where $\|\cdot\|$ denotes the diameter.

\begin{defn} \label{Def:Contracting}
We say a closed subset $Z \subseteq X$ is \emph{$C$--contracting} for a constant $C>0$ if,
for all pairs of points $x, y \in X$, we have
\[
d(x, y) \leq d(x, Z) \quad  \Longrightarrow  \quad \proj_Z(x,y) \leq  C.
\]
Any such $C$ is called a \emph{contracting constant} for $Z$. A collection of $C$--contracting subsets shall be referred to as a $C$--contracting system.

An element $h\in \isom(\U)$ is called \textit{contracting} if it acts co-compactly on a contracting bi-infinite quasi-geodesic. Equivalently, the map $n\in \mathbb Z\longmapsto h^no$ is a quasi-geodesic with a contracting image.  
\end{defn}

Unless explicitly stated, let us assume from now on that   $\Gamma<\isom(\U)$ is a discrete group, so $\Gamma \act \U$ is a proper action (\emph{i.e.} with discrete orbits and finite point stabilizers). 

A group is called \textit{elementary} if it is virtually $\mathbb Z$ or a finite group. In a discrete group, a contracting element must be of infinite order and is contained in a maximal elementary subgroup as described in the next lemma.

\begin{lem}\cite[Lemma 2.11]{YANG10}\label{elementarygroup}
For a contracting element $h\in \Gamma$,  we have
$$
E_\Gamma(h)=\{g\in \Gamma: \exists n\in \mathbb N_{> 0}, (\;gh^ng^{-1}=h^n)\; \lor\;  (gh^ng^{-1}=h^{-n})\}.
$$
\end{lem}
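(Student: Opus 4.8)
\textbf{Plan for the proof of Lemma \ref{elementarygroup}.}

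The goal is to show that for a contracting element $h$, the maximal elementary subgroup $E_\Gamma(h)$ (which by definition is the stabilizer of the pair of endpoints of the axis $A=\{h^no:n\in\mathbb Z\}$ in a suitable sense, or can be taken as the set-stabilizer of $A$ up to finite Hausdorff distance) is precisely the set of $g$ that conjugate some positive power $h^n$ to $h^{\pm n}$. The plan is to prove the two inclusions separately; the forward inclusion (an element of $E_\Gamma(h)$ conjugates a power of $h$ to $\pm$ that power) is the substantive one, and the reverse inclusion is essentially formal.

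First I would set up the geometry: since $h$ is contracting, the orbit map $n\mapsto h^no$ is a quasi-geodesic whose image $A$ is a contracting quasi-geodesic on which $\langle h\rangle$ acts cocompactly. The key geometric fact I would invoke is that contracting quasi-geodesics are \emph{quasi-geodesically stable} and that two contracting quasi-geodesics at finite Hausdorff distance must be ``parallel'' — and conversely, if they are not at finite Hausdorff distance, their projections onto one another are bounded. Now take $g\in E_\Gamma(h)$. By the definition/characterization of the maximal elementary subgroup of a contracting element (and properness of the action), $g$ must coarsely preserve $A$: the set $gA$ lies within bounded Hausdorff distance of $A$, with the bound depending only on the contracting constant and the quasi-geodesic constants of $A$, not on $g$. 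This is where one uses that $E_\Gamma(h)$ is defined so as to stabilize the ``limit set'' $\{h^{\pm\infty}\}$ of $\langle h\rangle$, together with a Morse/contracting lemma argument to upgrade endpoint-stabilization to coarse set-stabilization.

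Next, from $d_{\mathrm{Haus}}(gA, A)\le D$ I would extract an element of $\langle h\rangle$ comparable to $g$: because $\langle h\rangle$ acts cocompactly on $A$ with some coarse fundamental domain of diameter $\le L$, there is $k\in\mathbb Z$ with $d(gh^0o, h^ko)\le D+L$, i.e. $g^{-1}h^k$ moves $o$ a bounded amount. Then consider the conjugate $g^{-1}h g =: h'$; it is again contracting with axis $g^{-1}A$ at bounded Hausdorff distance from $A$, translating along $A$ by $\pm$ the same coarse amount as $h$ (the sign recording whether $g$ preserves or flips the orientation of $A$). Now the standard ping-pong / bounded-cancellation argument for contracting elements (cf. the proof that $E_\Gamma(h)$ is virtually cyclic in \cite{YANG10}) shows that since $h$ and $h'=g^{-1}hg$ are two contracting elements sharing the same axis (up to finite distance) and translating in compatible directions, a sufficiently high common power must literally coincide: there is $n>0$ with $g^{-1}h^ng = h^n$ or $g^{-1}h^ng = h^{-n}$, the sign being determined by the orientation behavior. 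Concretely: $h^{-n}\cdot(g^{-1}h^ng)$ fixes $o$ up to a bounded error \emph{and} is trivial on $A$ near infinity, so by properness (only finitely many elements move $o$ boundedly) combined with taking a further power to kill the finite stabilizer of the endpoints, it is the identity. This gives the inclusion $E_\Gamma(h)\subseteq\{g:\exists n>0,\ gh^ng^{-1}=h^{\pm n}\}$.

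For the reverse inclusion, suppose $gh^ng^{-1}=h^{\pm n}$ for some $n>0$. Then $g$ maps the axis of $h^n$ (which is at finite Hausdorff distance from $A$, since $\langle h^n\rangle$ has finite index in $\langle h\rangle$) to the axis of $h^{\pm n}=gh^ng^{-1}$, i.e. $gA$ is at finite Hausdorff distance from $A$; hence $g$ stabilizes the endpoint set $\{h^{+\infty},h^{-\infty}\}$, so $g\in E_\Gamma(h)$ by the definition of the maximal elementary subgroup. I expect the \textbf{main obstacle} to be the first step of the forward inclusion — rigorously passing from ``$g$ is in the maximal elementary subgroup'' to ``$gA$ is within Hausdorff distance bounded independently of $g$ from $A$'' — since this requires the precise definition of $E_\Gamma(h)$ used in the paper (stabilizer of the contracting axis up to finite distance, equivalently of the corresponding pair of boundary points) together with the contracting/Morse lemma; once that uniform bound is in hand, the extraction of the matching power of $h$ and the properness argument to conclude equality are routine for contracting elements. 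Since the paper cites this as \cite[Lemma 2.11]{YANG10}, I would in practice just quote that proof, but the sketch above is how I would reconstruct it.
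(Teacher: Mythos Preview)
The paper does not provide its own proof of this lemma: it is stated with a citation to \cite[Lemma 2.11]{YANG10} and used as a black box. So there is no ``paper's proof'' to compare against here.

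Your sketch is a reasonable reconstruction of the standard argument and correctly identifies the substantive step (passing from membership in $E_\Gamma(h)$ to a uniform Hausdorff bound $d_{\mathrm{Haus}}(gA,A)\le D$, then using properness to extract the matching power). One clarification worth making explicit: in this paper $E_\Gamma(h)$ is not defined prior to the lemma --- the lemma itself is serving as the working description of the maximal elementary subgroup --- so in the cited source the definition is taken to be the coarse set-stabilizer of the axis (equivalently the stabilizer of $\{h^{+\infty},h^{-\infty}\}$), which is exactly what you assume. With that definition in hand your forward inclusion goes through; the only place to be a bit more careful is the sentence ``$h^{-n}\cdot(g^{-1}h^ng)$ fixes $o$ up to a bounded error \emph{and} is trivial on $A$ near infinity, so \ldots\ it is the identity'': the clean way to finish is to note that the set $\{h^{-k}g^{-1}h^kg : k\in\mathbb Z\}$ (or $\{h^{k}g^{-1}h^kg : k\in\mathbb Z\}$ in the orientation-reversing case) moves $o$ a uniformly bounded amount, hence is finite by properness, and then a pigeonhole gives $g^{-1}h^ng=h^{\pm n}$ for some $n>0$. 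This avoids any appeal to ``trivial near infinity''.
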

We shall suppress $\Gamma$ and write $E(h)=E_\Gamma(h)$ if $\Gamma$ is clear in context. 

Keeping in mind the basepoint $o\in\U$, the \textit{axis} of $h$  is defined as the following quasi-geodesic 
\begin{equation}\label{axisdefn}
\ax(h)=\{f o: f\in E(h)\}.
\end{equation} Notice that $\ax(h)=\ax(k)$ and $E(h)=E(k)$    for any contracting element   $k\in E(h)$.
 
An element $g\in \Gamma$ \textit{preserves the orientation} of a bi-infinite quasi-geodesic $\gamma$ if $\alpha$ and $g\alpha$ has finite Hausdorff distance for any half ray $\alpha$ of  $\gamma$. Let $E^+(h)$ be the subgroup of $E(h)$ with possibly index 2 which elements  preserve  the orientation of their axis. Then we have 
$$
E^+(h)=\{g\in \Gamma: \exists 0\ne n\in \mathbb Z, \;gh^ng^{-1}=h^n\}.
$$
and $E^+(h)$ contains all contracting elements in $E(h)$, and $E(h)\setminus E^+(h)$ consists of torsion elements.

\begin{defn}
Two  contracting elements $h_1, h_2$ in a discrete group $\Gamma$  are called \textit{independent} if the collection $\f=\{g\ax(h_i): g\in \Gamma;\ i=1, 2\}$ is a contracting system with bounded intersection: for any $r>0$, there exists $L=L(r)$ so that
$$\forall X\ne Y\in \f,\; \|N_r(X)\cap N_r(Y)\|\le L.$$ 
This is equivalent to the bounded projection: $\|\pi_X(Y)\|\le B$ for some  $B$ independent of $X\ne Y$. 

In a possibly nondiscrete group $\Gamma$, we say that $h_1, h_2\in \Gamma$ are \textit{weakly independent} if $\{h_1^n o: n\in\mathbb Z\}$ and $\{h_2^n o: n\in\mathbb Z\}$ have infinite Hausdorff distance. Note that in some papers, weak independence is referred to as independence. 
\end{defn}
\begin{rem}
Note that two conjugate contracting elements with disjoint fixed points are weakly independent, but not independent. In the current paper, we mainly work  with independent contracting elements, though many technical results hold for  weakly independent contracting ones.     
\end{rem}

\begin{defn}\label{barriers}
Fix $r>0$ and a set $F$ in $\Gamma$.  A geodesic $\gamma$ contains an \textit{$(r, f)$--barrier} for $f\in F$   if there exists    an element $g \in \Gamma$ so that 
\begin{equation}\label{barrierEQ}
\max\{d(g\cdot o, \gamma), \; d(g\cdot fo, \gamma)\}\le r.
\end{equation}
By abuse of language,    the point $ho$ or  the axis $h\ax(f)$ is called {$(r, F)$--barrier} on $\gamma$.
\end{defn}

\subsection{Extension Lemma}

{We fix a finite set $F \subseteq \Gamma$ of independent contracting elements and let $\f = \{g \ax(f) : f \in F, g \in \Gamma\}$.} The following notion of an admissible path allows     to construct   a quasi-geodesic  by concatenating geodesics via $\f$.
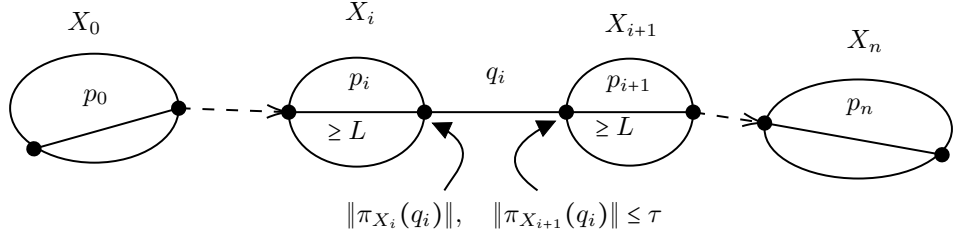
\begin{figure}
    \centering

\tikzset{every picture/.style={line width=0.75pt}} 

\begin{tikzpicture}[x=0.75pt,y=0.75pt,yscale=-1,xscale=1]

\draw   (80.5,98.5) .. controls (80.5,83.31) and (99.53,71) .. (123,71) .. controls (146.47,71) and (165.5,83.31) .. (165.5,98.5) .. controls (165.5,113.69) and (146.47,126) .. (123,126) .. controls (99.53,126) and (80.5,113.69) .. (80.5,98.5) -- cycle ;
\draw   (221,100.5) .. controls (221,85.31) and (236.33,73) .. (255.25,73) .. controls (274.17,73) and (289.5,85.31) .. (289.5,100.5) .. controls (289.5,115.69) and (274.17,128) .. (255.25,128) .. controls (236.33,128) and (221,115.69) .. (221,100.5) -- cycle ;
\draw   (461,109) .. controls (461,95.19) and (482.15,84) .. (508.25,84) .. controls (534.35,84) and (555.5,95.19) .. (555.5,109) .. controls (555.5,122.81) and (534.35,134) .. (508.25,134) .. controls (482.15,134) and (461,122.81) .. (461,109) -- cycle ;
\draw   (361,100.5) .. controls (361,85.59) and (375.33,73.5) .. (393,73.5) .. controls (410.67,73.5) and (425,85.59) .. (425,100.5) .. controls (425,115.41) and (410.67,127.5) .. (393,127.5) .. controls (375.33,127.5) and (361,115.41) .. (361,100.5) -- cycle ;
\draw  [dash pattern={on 4.5pt off 4.5pt}]  (165.5,98.5) -- (219,100.43) ;
\draw [shift={(221,100.5)}, rotate = 182.06] [color={rgb, 255:red, 0; green, 0; blue, 0 }  ][line width=0.75]    (10.93,-3.29) .. controls (6.95,-1.4) and (3.31,-0.3) .. (0,0) .. controls (3.31,0.3) and (6.95,1.4) .. (10.93,3.29)   ;
\draw  [dash pattern={on 4.5pt off 4.5pt}]  (425,100.5) -- (459.02,105.7) ;
\draw [shift={(461,106)}, rotate = 188.69] [color={rgb, 255:red, 0; green, 0; blue, 0 }  ][line width=0.75]    (10.93,-3.29) .. controls (6.95,-1.4) and (3.31,-0.3) .. (0,0) .. controls (3.31,0.3) and (6.95,1.4) .. (10.93,3.29)   ;
\draw    (289.5,100.5) -- (361,100.5) ;
\draw [shift={(361,100.5)}, rotate = 0] [color={rgb, 255:red, 0; green, 0; blue, 0 }  ][fill={rgb, 255:red, 0; green, 0; blue, 0 }  ][line width=0.75]      (0, 0) circle [x radius= 3.35, y radius= 3.35]   ;
\draw [shift={(289.5,100.5)}, rotate = 0] [color={rgb, 255:red, 0; green, 0; blue, 0 }  ][fill={rgb, 255:red, 0; green, 0; blue, 0 }  ][line width=0.75]      (0, 0) circle [x radius= 3.35, y radius= 3.35]   ;
\draw    (299.5,140) .. controls (305.29,130.35) and (319,118.84) .. (296.13,107.26) ;
\draw [shift={(293.5,106)}, rotate = 24.36] [fill={rgb, 255:red, 0; green, 0; blue, 0 }  ][line width=0.08]  [draw opacity=0] (8.93,-4.29) -- (0,0) -- (8.93,4.29) -- cycle    ;
\draw    (346.5,140) .. controls (338.7,134.15) and (322.34,121.65) .. (353.04,106.19) ;
\draw [shift={(355.5,105)}, rotate = 154.8] [fill={rgb, 255:red, 0; green, 0; blue, 0 }  ][line width=0.08]  [draw opacity=0] (8.93,-4.29) -- (0,0) -- (8.93,4.29) -- cycle    ;
\draw    (92.5,119) -- (165.5,98.5) ;
\draw [shift={(165.5,98.5)}, rotate = 344.31] [color={rgb, 255:red, 0; green, 0; blue, 0 }  ][fill={rgb, 255:red, 0; green, 0; blue, 0 }  ][line width=0.75]      (0, 0) circle [x radius= 3.35, y radius= 3.35]   ;
\draw [shift={(92.5,119)}, rotate = 344.31] [color={rgb, 255:red, 0; green, 0; blue, 0 }  ][fill={rgb, 255:red, 0; green, 0; blue, 0 }  ][line width=0.75]      (0, 0) circle [x radius= 3.35, y radius= 3.35]   ;
\draw    (221,100.5) -- (289.5,100.5) ;
\draw [shift={(221,100.5)}, rotate = 0] [color={rgb, 255:red, 0; green, 0; blue, 0 }  ][fill={rgb, 255:red, 0; green, 0; blue, 0 }  ][line width=0.75]      (0, 0) circle [x radius= 3.35, y radius= 3.35]   ;
\draw    (361,100.5) -- (425,100.5) ;
\draw [shift={(425,100.5)}, rotate = 0] [color={rgb, 255:red, 0; green, 0; blue, 0 }  ][fill={rgb, 255:red, 0; green, 0; blue, 0 }  ][line width=0.75]      (0, 0) circle [x radius= 3.35, y radius= 3.35]   ;
\draw    (461,106) -- (550.5,122) ;
\draw [shift={(550.5,122)}, rotate = 10.14] [color={rgb, 255:red, 0; green, 0; blue, 0 }  ][fill={rgb, 255:red, 0; green, 0; blue, 0 }  ][line width=0.75]      (0, 0) circle [x radius= 3.35, y radius= 3.35]   ;
\draw [shift={(461,106)}, rotate = 10.14] [color={rgb, 255:red, 0; green, 0; blue, 0 }  ][fill={rgb, 255:red, 0; green, 0; blue, 0 }  ][line width=0.75]      (0, 0) circle [x radius= 3.35, y radius= 3.35]   ;

\draw (108,48.4) node [anchor=north west][inner sep=0.75pt]    {$X_{0}$};
\draw (248,43.4) node [anchor=north west][inner sep=0.75pt]    {$X_{i}$};
\draw (379,49.4) node [anchor=north west][inner sep=0.75pt]    {$X_{i+1}$};
\draw (501,57.4) node [anchor=north west][inner sep=0.75pt]    {$X_{n}$};
\draw (248,144.4) node [anchor=north west][inner sep=0.75pt]    {$\| \pi _{X_{i}}( q_{i}) \| ,$};
\draw (322,144.4) node [anchor=north west][inner sep=0.75pt]    {$\| \pi _{X_{i+1}}( q_{i}) \| \leq \tau $};
\draw (250,79.4) node [anchor=north west][inner sep=0.75pt]    {$p_{i}$};
\draw (116,87.4) node [anchor=north west][inner sep=0.75pt]    {$p_{0}$};
\draw (380,80.4) node [anchor=north west][inner sep=0.75pt]    {$p_{i+1}$};
\draw (319,76.4) node [anchor=north west][inner sep=0.75pt]    {$q_{i}$};
\draw (501,92.4) node [anchor=north west][inner sep=0.75pt]    {$p_{n}$};
\draw (239,103.4) node [anchor=north west][inner sep=0.75pt]    {$\geq L$};
\draw (374,101.4) node [anchor=north west][inner sep=0.75pt]    {$\geq L$};

\end{tikzpicture}
    \caption{Admissible path}
    \label{fig:admissiblepath}
\end{figure}
\begin{defn}[Admissible Path]\label{AdmDef} Given $L,\tau\geq0$, a path $\gamma$ is called $(L,\tau)$-\textit{admissible} in $\U$, if $\gamma$ is a concatenation of geodesics $p_0q_1p_1\cdots q_np_n$ $(n\in\mathbb{N})$, where the two endpoints of each $p_i$ lie in some $X_i\in \f$, and   the following   \textit{Long Local} and \textit{Bounded Projection} properties hold:
\begin{enumerate}
\item[(LL)] Each $p_i$  for $1\le i< n$ has length bigger than $L$, and  $p_0,p_n$ could be trivial;
\item[(BP)] For each $X_i$, we have $X_i\ne X_{i+1}$ and $\max\{\|\pi_{X_i}(q_i)\|,\|\pi_{X_i}(q_{i+1})\|\}\leq\tau$, where $q_0:=\gamma_-$ and $q_{n+1}:=\gamma_+$ by convention.
\end{enumerate} 
The collection $\{X_i: 1\le i\le n\}$ is referred to as contracting subsets associated with the admissible path.
\end{defn}
\begin{rem}\label{ConcatenationAdmPath}
\begin{enumerate}
    \item 
    The path $q_i$ could be allowed to be trivial, so by the (BP) condition, it suffices to check $X_i\ne X_{i+1}$. It will be useful to note that admissible paths could be concatenated as follows: Let $p_0q_1p_1\cdots q_np_n$ and $p_0'q_1'p_1'\cdots q_n'p_n'$ be $(L,\tau)$--admissible. If $p_n=p_0'$ has length bigger than $L$, then the concatenation $(p_0q_1p_1\cdots q_np_n)\cdot (q_1'p_1'\cdots q_n'p_n')$ has a natural $(L,\tau)$--admissible structure.  
    \item 
    In many situations, $\tau$ could be chosen as the bounded projection constant $B$ of $\f$. In fact, if $L$ is large relative to $\tau$,  an $(L,\tau)$--admissible path could be always truncated near contracting subsets $X_i$  so that it becomes an $(\hat L,B)$--admissible path. See \cite[Lemma 2.14]{YANG22}.
\end{enumerate}
\end{rem}

We frequently construct a path labeled by a word $(g_1, g_2,\cdots,g_n)$, which by convention means the following concatenation
$$
[o,g_1o]\cdot g_1[o,g_2o]\cdots (g_1\cdots g_{n-1})[o,g_no]
$$
where the basepoint $o$ is understood in context. With this convention, the paths labeled by $(g_1,g_2,g_3)$ and $(g_1g_2, g_3)$ respectively differ, depending on whether  $[o,g_1o]g_1[o,g_2o]$ is a geodesic or not. 

A sequence of points $x_i$ in a path $p$   is called \textit{linearly ordered} if $x_{i+1}\in [x_i, p^+]_p$ for each $i$. 

\begin{defn}[Fellow travel]\label{Fellow}
Let   $\gamma = p_0 q_1 p_1 \cdots q_n p_n$ be an $(L, \tau)-$admissible
path. We say $\gamma$ has \textit{$r$--fellow travel} property for some $r>0$   if for any geodesic  
$\alpha$  with the same endpoints as $\gamma$,   there exists a sequence of linearly ordered points $z_i,
w_i$ ($0 \le i \le n$) on $\alpha$ such that  
$$d(z_i, p_{i}^-) \le r,\quad d(w_i, p_{i}^+) \le r.$$
In particular, $\|N_r(X_i)\cap \alpha\|\ge L$ for each $X_i\in \f(\gamma)$. 
\end{defn}
The following result  says that   a local long admissible path enjoys the fellow travel property.

\begin{prop}\label{admisProp}\cite{YANG6}
For any $\tau>0$, there exist $L,  r>0$ depending only on $\tau,C$ such that  any $(L, \tau)$--admissible path   has $r$--fellow travel property. In particular, it is a $c$--quasi-geodesic.
\end{prop}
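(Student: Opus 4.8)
\textbf{Plan for the proof of Proposition \ref{admisProp}.}
The proof is a quantified version of the Morse lemma for broken geodesics built from contracting pieces, so the plan is to bootstrap from the single-contracting-subset case to an arbitrary admissible concatenation. Fix $\tau>0$ and let $C$ be the contracting constant of $\f$. First I would record the elementary but essential quantitative fact about a single $C$--contracting subset $Z$: if $\sigma$ is any geodesic (or even quasi-geodesic) with both endpoints within distance $d$ of $Z$, then $\sigma$ enters a uniformly bounded neighbourhood $N_{\kappa(d)}(Z)$; and, conversely, if $\sigma$ has endpoints $x,y$ with $\proj_Z(x,y)$ large, then $\sigma$ must pass within $C$ of $Z$ and in fact $r$--fellow-travels the portion of $Z$ between the projection points, for a uniform $r=r(C)$. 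This is the local model for each piece $p_i$ and its supporting subset $X_i$.

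The core of the argument is then an induction on the number $n$ of contracting pieces, with the inductive hypothesis being a precise fellow-travelling statement with constants independent of $n$. I would choose $L$ large compared to $\tau$, $C$ and the bounded-projection constant $B$ of $\f$ (invoking Remark \ref{ConcatenationAdmPath}(2) to reduce, if convenient, to the case $\tau=B$). Let $\alpha$ be a geodesic with the same endpoints as $\gamma=p_0q_1\cdots q_np_n$. The key claim to propagate is: for each $i$ the entry and exit points of $\alpha$ into $N_r(X_i)$ exist, are linearly ordered in $i$ along $\alpha$, and lie within $r$ of $p_i^-,p_i^+$ respectively. To see that $\alpha$ cannot ``miss'' some $X_i$: if $\alpha$ stayed outside $N_C(X_i)$, then by the contracting property the projection to $X_i$ of the sub-geodesic of $\alpha$ spanning from near $p_i^-$ to near $p_i^+$ would be bounded by $C$; but $p_i$ itself has length $>L$ inside $X_i$ and the adjacent $q_i,q_{i+1}$ have projection to $X_i$ at most $\tau$, so the two ends of $\alpha$ project near the two ends of $p_i$, forcing a projection diameter $\ge L-2\tau\gg C$ — a contradiction. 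The linear ordering and the non-backtracking of $\alpha$ come from the bounded-projection (BP) hypothesis $X_i\ne X_{i+1}$ together with bounded intersection of the system $\f$: the $r$--neighbourhoods $N_r(X_i)$ overlap in diameter $\le L(r)$, so the fellow-travelled arcs along consecutive $X_i$ are essentially disjoint and occur in order. Summing the lengths of the in-$X_i$ arcs ($\ge L$ each), the connecting arcs, and the overlaps gives the two-sided comparison $\len(\gamma)\asymp_{c}\len(\alpha)=d(\gamma^-,\gamma^+)$, hence $\gamma$ is a $c$--quasi-geodesic with $c=c(\tau,C)$; and the displacement bounds $d(z_i,p_i^-)\le r$, $d(w_i,p_i^+)\le r$ are exactly the fellow-travel conclusion. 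The choice of $L$ and $r$ depends only on $\tau$ and $C$ as claimed, since every estimate above is in terms of $C$, $B$, $L(r)$ and $\tau$, with $B$ and $L(\cdot)$ themselves functions of $C$.

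\textbf{Main obstacle.} The delicate point is making the induction genuinely uniform in $n$: one must rule out that $\alpha$ fellow-travels the $X_i$'s out of order, doubles back, or that errors accumulate over the $n$ steps. This is handled by the bounded-intersection property of the contracting system (so distinct $N_r(X_i)$ meet in diameter $\le L(r)$, which is absorbed once $L> $ a fixed multiple of $L(r)+\tau$), ensuring each piece contributes a definite, non-overlapping ``progress'' of length $\ge L - O(\tau)$ along $\alpha$; thus the constants never degrade with $n$. Formally this is most cleanly packaged by first proving the two-piece ($n=1$, with nontrivial $p_0,p_1$) case by hand from the single-subset model, then concatenating via Remark \ref{ConcatenationAdmPath}(1). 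I expect this coherence/non-backtracking step to be where the real work lies; the rest is bookkeeping with the contracting inequality.
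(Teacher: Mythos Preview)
The paper does not prove Proposition~\ref{admisProp}; it is quoted from \cite{YANG6} as a black-box input and no argument is given here. So there is no ``paper's own proof'' to compare against.

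Your outline is essentially the standard proof and is on the right track: the single-subset contracting estimate, the claim that $\alpha$ must enter $N_C(X_i)$ near $p_i^-$ and exit near $p_i^+$, and the resulting length comparison are exactly the ingredients used in \cite{YANG6}. One point to tighten: you lean on ``bounded intersection of the system $\f$'' to get the linear ordering and non-backtracking, but the proposition asserts constants depending only on $\tau$ and $C$, with no reference to a bounded-intersection function of $\f$. In fact bounded intersection of distinct members of $\f$ is neither assumed nor needed: the (BP) condition already controls $\pi_{X_i}(q_i)$ and $\pi_{X_i}(q_{i+1})$, and an inductive application of the contracting inequality then bounds the projection to $X_i$ of the entire tail $q_{i+1}p_{i+1}\cdots p_n$ (and similarly the initial segment) by a constant in $\tau,C$ alone. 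That is what forces $\alpha$ to meet the $X_i$ in order and prevents error accumulation. Also note that non-consecutive $X_i$ need not be distinct (nothing forbids $X_1=X_3$), so a global bounded-intersection hypothesis would not directly give what you want anyway. With this adjustment your plan goes through.
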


The next lemma gives a way to build admissible paths.
\begin{lem}[Extension Lemma]\label{extend3}

{For any independent contracting elements $h_{1}, h_{2}, h_{3} \in \Gamma$,} there exist constants  $L, r, B>0$ depending only on $C$ with the following property.  

Choose any element $f_i\in \langle h_i\rangle$ for each $1\le i\le 3$  to form the set $F$ satisfying $\|Fo\|_{\min}\ge L$. Let $g,h\in \Gamma$ be any two elements.
\begin{enumerate}
\item
There exists an element $f \in F$ such that   the path  $$\gamma:=[o, go]\cdot(g[o, fo])\cdot(gf[o,ho])$$ is an $(L, \tau)$--admissible path relative to $\f$. 
\item
The point  $go$  is an $(r, f)$--barrier for any geodesic  $[\gamma^-,\gamma^+]$.	
\end{enumerate}
\end{lem}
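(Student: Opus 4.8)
The plan is to build the admissible path in (1) by carefully choosing $f \in F$ so that the three contracting subsets involved are pairwise distinct, and then invoke Proposition \ref{admisProp} to obtain the quasi-geodesic and fellow-travel conclusions from which (2) follows. Concretely, I would first set up the candidate contracting subsets: for each $f_i \in \langle h_i\rangle$, the axis $\ax(f_i)$ (resp. its $g$-translate and $gf$-translate) is contracting, and the collection $\f = \{g\ax(h_i)\}$ has bounded projection constant $B$ by the definition of independence. The subpaths I want to designate as the ``$p_i$'' pieces of the admissible path are the translates of $[o, fo]$ for a suitable $f \in \langle h_i\rangle$; since $f \in E(h_i)$, the endpoints $o$ and $fo$ both lie on $\ax(h_i)$, so $g[o,fo]$ has endpoints on $g\ax(h_i) \in \f$, as required by Definition \ref{AdmDef}.

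The key step is the pigeonhole choice of $f$. The danger is that the contracting subsets $X_0$ (containing the endpoints of $[o,go]$'s relevant neighbor, here effectively the trivial initial segment), $X_1 = g\ax(h_i)$, and $X_2$ (associated to $gf\ax(h_j)$ or the terminal segment) might coincide or have large mutual projection, violating the (BP) condition $X_i \neq X_{i+1}$ with $\max\{\|\pi_{X_i}(q_i)\|,\|\pi_{X_i}(q_{i+1})\|\} \leq \tau$. Since we have \emph{three} independent elements $h_1, h_2, h_3$ available, a counting argument guarantees that at least one index $i \in \{1,2,3\}$ has the property that $g\ax(h_i)$ differs from the (at most two) ``forbidden'' subsets determined by $go$ and by $h$; for that $i$ one chooses $f = f_i$. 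Then the (LL) condition holds because $\|Fo\|_{\min} \geq L$ forces $d(o, fo) \geq L$, so the middle piece $g[o,fo]$ has length at least $L$; and (BP) holds because the projections of the adjacent geodesics $[o,go]$ and $[o,ho]$ onto $g\ax(h_i)$ are bounded by $B$ (taking $\tau = B$), using that geodesics ``entering and leaving'' a contracting set near its ends have bounded projection — this is exactly the bounded-projection/contracting estimate, and one can also truncate near $X_i$ as in Remark \ref{ConcatenationAdmPath}(2) to get a clean $(\hat L, B)$-structure.

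For part (2), once $\gamma = [o,go]\cdot g[o,fo]\cdot gf[o,ho]$ is an $(L,\tau)$-admissible path (with $L$ chosen large relative to $\tau = B$ per Proposition \ref{admisProp}), it has the $r$-fellow-travel property: any geodesic $[\gamma^-, \gamma^+]$ passes within $r$ of the endpoints $p_1^- = go$ and $p_1^+ = gfo$ of the middle piece. Hence $\max\{d(go, [\gamma^-,\gamma^+]), d(gfo, [\gamma^-,\gamma^+])\} \leq r$, which is precisely the statement that $go$ is an $(r,f)$-barrier on $[\gamma^-,\gamma^+]$ in the sense of Definition \ref{barriers} (with the barrier element being $g$ and the displacement element $f$). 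The constants $L, r, B$ depend only on the contracting constant $C$ (and the finitely many $\ax(h_i)$'s, which are absorbed since their projection constants are controlled by $C$ after passing to the system $\f$), as required.

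The main obstacle I anticipate is the bookkeeping in the pigeonhole step: one must be careful that ``forbidden'' configurations are genuinely controlled by at most two bad choices, which requires knowing that $g\ax(h_i) = g\ax(h_j)$ for $i \neq j$ cannot happen (the $h_i$ are independent, hence their axes are distinct elements of $\f$), and that the subset associated to the incoming geodesic $[o,go]$ near $go$, and the one associated to the outgoing $[o,ho]$, each rule out at most one value of $i$. A secondary subtlety is handling the degenerate cases where $g$ or $h$ is trivial (so $p_0$ or $p_n$ is a point), which the definition of admissible path explicitly permits; these require no change to the argument but should be noted.
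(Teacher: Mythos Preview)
Your proposal is correct and follows the standard argument. The paper itself states this lemma without proof (it is recalled as a known result), so there is no in-paper proof to compare against directly; however, the paper later reproduces exactly your pigeonhole reasoning when proving the strengthened version Lemma~\ref{EllipticRadical}, where one sees explicitly that for any $g\in\Gamma$ the projection $\|\pi_{\ax(h_i)}([o,go])\|$ can exceed $\tau$ for at most one index $i$, and likewise for $h$, leaving at least one good $f\in F$.

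One small clarification worth making in a write-up: in the admissible decomposition of $\gamma=[o,go]\cdot g[o,fo]\cdot gf[o,ho]$ there is only a \emph{single} associated contracting subset $X_1=g\ax(h_i)$ (the outer pieces $p_0,p_2$ are trivial, which Definition~\ref{AdmDef} permits), not three subsets $X_0,X_1,X_2$ as your phrasing suggests. The (BP) condition then reduces to bounding $\|\pi_{g\ax(h_i)}([o,go])\|$ and $\|\pi_{g\ax(h_i)}(gf[o,ho])\|$; translating by $g^{-1}$ and using that $f\in\langle h_i\rangle$ preserves $\ax(h_i)$, these become $\|\pi_{\ax(h_i)}([o,g^{-1}o])\|$ and $\|\pi_{\ax(h_i)}([o,ho])\|$ respectively, each of which is large for at most one $i$ by bounded intersection of the three axes. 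This is exactly your argument, just with the bookkeeping streamlined.
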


\begin{rem}
Since admissible paths are local conditions, we can connect via $F$  any number of elements  $g\in G$ to satisfy (1) and (2). We refer the reader to \cite{YANG10} for a precise formulation.
\end{rem}

The following elementary fact will be invoked frequently.
\begin{lem}\label{InjectiveExtMap}
Assume that two words $(g_1, f_1, h_1)$ and $(g_2, f_2, h_2)$ label two $(L,\tau)$--admissible paths with the same endpoints, where $(C,\tau, L)$ satisfy Proposition \ref{admisProp}. For any $\Delta>1$, there exists $R=R(\Delta, C, \tau)$ with the following property.
If $|d(o,g_1o)-d(o,g_2o)|\le \Delta$ and $d(g_1o,g_2o)>R$, then $g_1=g_2$.   
\end{lem}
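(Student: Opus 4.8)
The plan is to exploit the fellow-travel property from Proposition~\ref{admisProp} together with the contracting/bounded-projection geometry of the system $\f$. First I would apply Proposition~\ref{admisProp} to both admissible paths: since $(g_1,f_1,h_1)$ and $(g_2,f_2,h_2)$ are $(L,\tau)$--admissible, each is an $r$--quasi-geodesic that $r$--fellow-travels any geodesic with the same endpoints, for $r=r(C,\tau)$. Taking a common geodesic $\alpha=[\gamma^-,\gamma^+]$ between the shared endpoints, I get linearly ordered points on $\alpha$ that are $r$--close to the transition vertices $g_io$ and $g_if_io$; in particular there are points $a_i,b_i\in\alpha$ with $d(a_i,g_io)\le r$ and the arc $[a_i,b_i]_\alpha$ shadowing $g_i[o,f_io]$, which has length $\ge L-2r$ (large, by choice of $L$).

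Next, the key point is to locate $g_1o$ and $g_2o$ on $\alpha$ and show their images $a_1,a_2\in\alpha$ are forced to be close. Both $a_1$ and $a_2$ lie within $r$ of the initial segment behavior: $g_1o$ (resp.\ $g_2o$) is at distance $\le 2r$ from $\alpha$ along the portion of $\alpha$ at arclength $\approx d(o,g_io)$ from $\gamma^-$ (using that the admissible path is a quasigeodesic, so $d(\gamma^-, g_io)\asymp d(o,g_io)$ up to the quasigeodesic constants, and that the fellow-traveling points are linearly ordered so the arclength parameter of $a_i$ on $\alpha$ is controlled by $d(o,g_io)$ up to an additive error depending on $C,r$). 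Since $|d(o,g_1o)-d(o,g_2o)|\le\Delta$, the arclength parameters of $a_1$ and $a_2$ on $\alpha$ differ by at most $\Delta + (\text{const}(C,r))$, hence $d(a_1,a_2)\le \Delta + \text{const}$, and therefore $d(g_1o,g_2o)\le 2r + \Delta + \text{const}=:R_0(\Delta,C,\tau)$. This already gives a bounded-distance conclusion; to upgrade to equality I would set $R=R_0$ in the lemma statement, so the hypothesis $d(g_1o,g_2o)>R$ directly contradicts the bound unless\ldots

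\ldots the linear-ordering/fellow-travel argument actually pins down \emph{which} fellow-traveling point corresponds to the $i=1$ transition. The subtle case is when $g_1o$ fellow-travels a point of $\alpha$ coming from a \emph{different} index than where $g_2o$ does. This is where I'd use the Long Local condition decisively: the segments $g_i[o,f_io]$ have length $>L$, so they produce disjoint, well-separated ``long'' sub-arcs of $\alpha$ (their $r$--neighborhoods on $\alpha$ have length $\ge L-2r\gg R_0$), and the bounded-projection property forbids two distinct associated contracting subsets $X$ from having long overlap. So if $d(g_1o,g_2o)>R$ with $R$ chosen larger than $L$ plus all the additive constants, the only consistent possibility is that $a_1,a_2$ lie in the \emph{same} long sub-arc, i.e.\ both are the $i=1$ transition region, and then the arclength estimate of the previous paragraph forces $d(g_1o,g_2o)\le R_0<R$, a contradiction. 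Hence $d(g_1o,g_2o)\le R$ always; combined with $|d(o,g_1o)-d(o,g_2o)|\le\Delta$ and properness/discreteness of $\Gamma\act\U$\ldots

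\noindent wait---properness only bounds the \emph{number} of such $g$, not forces $g_1=g_2$. The correct final step: once we know $g_1o$ and $g_2o$ are both the initial transition point and $a_1,a_2$ are $r$--close on $\alpha$ with both arcs $[a_i,b_i]_\alpha$ shadowing $g_i\ax(f_i)$-portions, the bounded intersection property of the independent system $\f$ forces $g_1\ax(f_1)=g_2\ax(f_2)$ as elements of $\f$ (two members of $\f$ with overlap of length $>L$ must coincide, for $L$ large). Since the transition points $g_io, g_if_io$ are endpoints of $p_i\subseteq g_i\ax(f_i)$ and are determined as the $\pi_{X}$-extremal vertices, and since $\ax(f_i)\subseteq\Gamma o$ is a discrete set, $g_1\ax(f_1)=g_2\ax(f_2)$ together with $d(g_1o,g_2o)\le R$ pins $g_1o=g_2o$ (choosing $R$ below the minimal gap coming from $\|Fo\|_{\min}\ge L$ is the wrong direction---rather, $g_1o$ and $g_2o$ are the same \emph{specified} endpoint $p_1^-$ of the common member of $\f$, hence equal), and thus $g_1=g_2$ since $\Gamma$ acts freely-enough (stabilizers are finite; if one insists on literal equality one replaces $g_i$ by $g_io$ throughout, which is what is used in practice). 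I would therefore set $R=R(\Delta,C,\tau)$ to be the maximum of $L$ and $R_0(\Delta,C,\tau)+2r$ and all additive constants above.

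The main obstacle I anticipate is the bookkeeping in the second paragraph: carefully converting the \emph{combinatorial} index information from the fellow-travel property (linearly ordered points, which sub-arc corresponds to which $p_i$) into a \emph{metric} statement, while handling the degenerate cases where $p_0$ or $q_i$ are trivial and where $g_io$ might a priori fellow-travel the ``wrong'' long segment. The resolution is uniformly that $L$ is chosen large relative to $\tau, r$ and relative to the desired additive error $\Delta$ plus $R_0$, so that distinct long segments are metrically unmistakable; this is exactly the regime in which Proposition~\ref{admisProp} and the bounded-projection constant $B$ of $\f$ are designed to operate.
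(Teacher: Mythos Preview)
The paper states this lemma as an ``elementary fact'' and gives no proof, so there is nothing to compare against directly. Your core argument is correct and is the intended one: apply Proposition~\ref{admisProp} to each admissible path against a common geodesic $\alpha=[o,g_1f_1h_1o]$, obtain points $a_i\in\alpha$ with $d(a_i,g_io)\le r$, observe that $|d(o,a_1)-d(o,a_2)|\le \Delta+2r$ since $a_i\in\alpha$ forces $d(o,a_i)$ to equal the arclength parameter, and conclude $d(g_1o,g_2o)\le \Delta+4r=:R$. Setting $R$ this way makes the hypothesis $d(g_1o,g_2o)>R$ impossible, so the implication holds vacuously.

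Everything after your sentence ``\ldots directly contradicts the bound unless\ldots'' is unnecessary and should be deleted. You are over-reading the conclusion $g_1=g_2$: since that conclusion would give $d(g_1o,g_2o)=0<R$, the lemma is logically equivalent to the statement ``$|d(o,g_1o)-d(o,g_2o)|\le\Delta$ implies $d(g_1o,g_2o)\le R$'', which is exactly what you proved in the first two paragraphs. This reading is confirmed by every use of the lemma in the paper (e.g.\ in Lemmas~\ref{lem:shadowBaby} and~\ref{ExpLowerBound}), where it is invoked only to bound $d(g_1o,g_2o)$. There is no ``which index'' ambiguity to resolve: Definition~\ref{Fellow} hands you a point of $\alpha$ that is $r$--close to $g_io$ directly for each path separately, so no matching of indices between the two paths is needed. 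Your attempt to force $g_1\ax(f_1)=g_2\ax(f_2)$ via bounded intersection, and then to deduce literal equality of group elements, is not required and in any case would need an assumption (trivial stabilizer of $o$) not present here.
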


\subsection{Horofunction boundary}
We recall the notion of horofunction boundary.

Fix a basepoint $o\in \U$. For  each $y \in  \U$, we define a Lipschitz map $b_y:  \U\to \U$     by $$\forall x\in \U:\quad b_y(x)=d(x, y)-d(o,y).$$ This   family of $1$--Lipschitz functions sits in the set of continuous functions on $ \U$ vanishing at $o$.  Endowed  with the compact-open topology, the  Arzela-Ascoli Lemma implies that the closure  of $\{b_y: y\in  \U\}$  gives a compactification of $ \U$. The complement of $X$ in this compactification is called  the \textit{horofunction boundary} of $ \U$ and is denoted by $\hU$.

A \textit{Buseman cocycle} $B_\xi:  \U\times \U \to \mathbb R$ (independent of $o$) is given by $$\forall x_1, x_2\in  \U: \quad B_\xi(x_1, x_2)=b_\xi(x_1)-b_\xi(x_2).$$

  
The topological type of horofunction boundary is independent of  the choice of a basepoint. Every isometry $\phi$ of $\U$ induces a homeomorphism on $\bU$:  
$$
\forall y\in \U:\quad\phi(\xi)(y):=b_\xi(\phi^{-1}(y))-b_\xi(\phi^{-1}(o)).
$$
Depending on the context, we may use both $\xi$ and $b_\xi$ to denote a point in the horofunction boundary.
\\
\paragraph{\textbf{Finite difference relation}.}
Two horofunctions $b_\xi, b_\eta$ have   \textit{$K$--finite difference} for $K\ge 0$ if the $L^\infty$--norm of their difference is $K$--bounded: $$\|b_\xi-b_\eta\|_\infty\le K.$$ 
The   \textit{locus} of     $b_\xi$ consists of  horofunctions $b_\eta$ so that $b_\xi, b_\eta$ have   $K$--finite difference for some $K>0$.  The loci   $[b_\xi]$  of    horofunctions $b_\xi$ form a \textit{finite difference equivalence relation} $[\cdot]$ on $\hU$. The \textit{locus} $[\Lambda]$ of a subset $\Lambda\subseteq \hU$ is the union of loci of all points in $\Lambda$.

If $x_n\in \U\to \xi\in \pU$ and  $y_n\in \U\to\eta\in \pU$ are sequences with $\sup_{n\ge 1}d(x_n, y_n)<\infty$, then  $[\xi]=[\eta]$.

\subsection{Convergence boundary}

Let $(\U, d)$ be a proper metric space admitting an isometric action of a non-elementary countable group $\Gamma$ with a contracting element. Consider a metrizable compactification $\bU:=\pU\cup \U$, so that $\U$ is open and dense in $\bU$. We also assume that the action of $\isom(\U)$ extends by homeomorphism to  $\pU$. 

We   equip $\pU$    with a  $\isom(\U)$--invariant  partition $[\cdot]$:   $[\xi]=[\eta]$ implies $[g\xi]=[g\eta]$ for any $g\in \isom(\U)$.  We say that $\xi$ is \textit{minimal} if $[\xi]=\{\xi\}$, and a subset $U$ is \textit{$[\cdot]$--saturated} if $U=[U]$.

We say that $[\cdot]$ restricts to be a \textit{closed} partition on a $[\cdot]$--saturated subset $U\subseteq \pU$ if  $x_n\in U\to \xi\in \pU$ and $y_n\in U\to\eta\in \pU$ are two sequences with $[x_n]=[ y_n]$, then $[\xi]=[\eta]$. (The points $\xi, \eta$ are not necessarily in $U$.) If $U=\pU$, this is equivalent to saying that the relation $\{(\xi,\eta): [\xi]=[\eta]\}$ is a closed subset in $\pU\times \pU$, so the quotient space $[\pU]$ is Hausdorff. In general, $[\cdot]$ may not be closed over the whole $\pU$ (\emph{e.g.}, the horofunction boundary with finite difference relation), but is closed when restricted to certain interesting subsets; see for example Assumption (C) below.  

We say that $x_n$ \textit{tends} (resp. \textit{accumulates}) to $[\xi]$ if the limit point (resp. any accumulate point) is contained in the subset $[\xi]$. This implies that $[x_n]$ tends or accumulates to $[\xi]$ in the quotient space $[\pG]$. So, an infinite ray $\gamma$ \textit{terminates} at $[\xi] \in \pU$ if any sequence of points in $\gamma$ accumulates in $[\xi]$. 
\begin{figure}
    \centering

\tikzset{every picture/.style={line width=0.75pt}} 

\begin{tikzpicture}[x=0.75pt,y=0.75pt,yscale=-1,xscale=1]

\draw    (27.5,123) -- (217.5,123) ;
\draw [shift={(219.5,123)}, rotate = 180] [color={rgb, 255:red, 0; green, 0; blue, 0 }  ][line width=0.75]    (10.93,-3.29) .. controls (6.95,-1.4) and (3.31,-0.3) .. (0,0) .. controls (3.31,0.3) and (6.95,1.4) .. (10.93,3.29)   ;
\draw [shift={(27.5,123)}, rotate = 0] [color={rgb, 255:red, 0; green, 0; blue, 0 }  ][fill={rgb, 255:red, 0; green, 0; blue, 0 }  ][line width=0.75]      (0, 0) circle [x radius= 3.35, y radius= 3.35]   ;
\draw    (146.5,78) .. controls (135.94,102) and (140.13,107.57) .. (144.02,121.25) ;
\draw [shift={(144.5,123)}, rotate = 255.07] [color={rgb, 255:red, 0; green, 0; blue, 0 }  ][line width=0.75]    (10.93,-3.29) .. controls (6.95,-1.4) and (3.31,-0.3) .. (0,0) .. controls (3.31,0.3) and (6.95,1.4) .. (10.93,3.29)   ;
\draw [shift={(146.5,78)}, rotate = 113.75] [color={rgb, 255:red, 0; green, 0; blue, 0 }  ][fill={rgb, 255:red, 0; green, 0; blue, 0 }  ][line width=0.75]      (0, 0) circle [x radius= 3.35, y radius= 3.35]   ;
\draw   (393,102) .. controls (393,90.95) and (408.68,82) .. (428.01,82.01) .. controls (447.34,82.01) and (463,90.97) .. (463,102.01) .. controls (463,113.06) and (447.33,122.01) .. (428,122.01) .. controls (408.67,122) and (393,113.05) .. (393,102) -- cycle ;
\draw   (310,107.98) .. controls (310,96.93) and (325.67,87.98) .. (345,87.99) .. controls (364.33,87.99) and (380,96.95) .. (380,107.99) .. controls (380,119.04) and (364.33,127.99) .. (345,127.99) .. controls (325.67,127.98) and (310,119.03) .. (310,107.98) -- cycle ;
\draw    (257.5,110.97) .. controls (307.5,113.98) and (376.5,118.99) .. (411.5,132) ;
\draw [shift={(411.5,132)}, rotate = 20.39] [color={rgb, 255:red, 0; green, 0; blue, 0 }  ][fill={rgb, 255:red, 0; green, 0; blue, 0 }  ][line width=0.75]      (0, 0) circle [x radius= 3.35, y radius= 3.35]   ;
\draw    (257.5,110.97) .. controls (296.5,102.98) and (433.5,89) .. (478.5,78) ;
\draw [shift={(478.5,78)}, rotate = 346.26] [color={rgb, 255:red, 0; green, 0; blue, 0 }  ][fill={rgb, 255:red, 0; green, 0; blue, 0 }  ][line width=0.75]      (0, 0) circle [x radius= 3.35, y radius= 3.35]   ;
\draw [shift={(257.5,110.97)}, rotate = 348.42] [color={rgb, 255:red, 0; green, 0; blue, 0 }  ][fill={rgb, 255:red, 0; green, 0; blue, 0 }  ][line width=0.75]      (0, 0) circle [x radius= 3.35, y radius= 3.35]   ;
\draw  [dash pattern={on 4.5pt off 4.5pt}]  (478.5,78) .. controls (508.04,93.74) and (489.09,100.79) .. (522.92,106.73) ;
\draw [shift={(524.5,107)}, rotate = 189.46] [color={rgb, 255:red, 0; green, 0; blue, 0 }  ][line width=0.75]    (10.93,-3.29) .. controls (6.95,-1.4) and (3.31,-0.3) .. (0,0) .. controls (3.31,0.3) and (6.95,1.4) .. (10.93,3.29)   ;
\draw [shift={(478.5,78)}, rotate = 28.05] [color={rgb, 255:red, 0; green, 0; blue, 0 }  ][fill={rgb, 255:red, 0; green, 0; blue, 0 }  ][line width=0.75]      (0, 0) circle [x radius= 3.35, y radius= 3.35]   ;
\draw  [dash pattern={on 4.5pt off 4.5pt}]  (146.5,78) .. controls (176.19,93.82) and (150.03,95.96) .. (212.58,113.46) ;
\draw [shift={(214.5,114)}, rotate = 195.48] [color={rgb, 255:red, 0; green, 0; blue, 0 }  ][line width=0.75]    (10.93,-3.29) .. controls (6.95,-1.4) and (3.31,-0.3) .. (0,0) .. controls (3.31,0.3) and (6.95,1.4) .. (10.93,3.29)   ;
\draw [shift={(146.5,78)}, rotate = 28.05] [color={rgb, 255:red, 0; green, 0; blue, 0 }  ][fill={rgb, 255:red, 0; green, 0; blue, 0 }  ][line width=0.75]      (0, 0) circle [x radius= 3.35, y radius= 3.35]   ;
\draw    (202.5,267) .. controls (180.5,254) and (181.5,222) .. (203.5,208) ;
\draw [shift={(203.5,208)}, rotate = 327.53] [color={rgb, 255:red, 0; green, 0; blue, 0 }  ][fill={rgb, 255:red, 0; green, 0; blue, 0 }  ][line width=0.75]      (0, 0) circle [x radius= 3.35, y radius= 3.35]   ;
\draw [shift={(202.5,267)}, rotate = 210.58] [color={rgb, 255:red, 0; green, 0; blue, 0 }  ][fill={rgb, 255:red, 0; green, 0; blue, 0 }  ][line width=0.75]      (0, 0) circle [x radius= 3.35, y radius= 3.35]   ;
\draw    (86.12,240) -- (183.5,240) ;
\draw [shift={(185.5,240)}, rotate = 180] [color={rgb, 255:red, 0; green, 0; blue, 0 }  ][line width=0.75]    (10.93,-3.29) .. controls (6.95,-1.4) and (3.31,-0.3) .. (0,0) .. controls (3.31,0.3) and (6.95,1.4) .. (10.93,3.29)   ;
\draw [shift={(86.12,240)}, rotate = 0] [color={rgb, 255:red, 0; green, 0; blue, 0 }  ][fill={rgb, 255:red, 0; green, 0; blue, 0 }  ][line width=0.75]      (0, 0) circle [x radius= 3.35, y radius= 3.35]   ;
\draw  [dash pattern={on 4.5pt off 4.5pt}]  (230.5,209) .. controls (268.92,217.87) and (253,232.55) .. (294.56,233.95) ;
\draw [shift={(296.5,234)}, rotate = 181.3] [color={rgb, 255:red, 0; green, 0; blue, 0 }  ][line width=0.75]    (10.93,-3.29) .. controls (6.95,-1.4) and (3.31,-0.3) .. (0,0) .. controls (3.31,0.3) and (6.95,1.4) .. (10.93,3.29)   ;
\draw  [dash pattern={on 4.5pt off 4.5pt}]  (234.5,269) .. controls (258.14,262.11) and (253.64,241.63) .. (295.55,239.1) ;
\draw [shift={(297.5,239)}, rotate = 177.4] [color={rgb, 255:red, 0; green, 0; blue, 0 }  ][line width=0.75]    (10.93,-3.29) .. controls (6.95,-1.4) and (3.31,-0.3) .. (0,0) .. controls (3.31,0.3) and (6.95,1.4) .. (10.93,3.29)   ;

\draw (122,131.4) node [anchor=north west][inner sep=0.75pt]    {$\pi _{\gamma }( y_{n})$};
\draw (160,61.4) node [anchor=north west][inner sep=0.75pt]    {$y_{n}$};
\draw (222,112.4) node [anchor=north west][inner sep=0.75pt]    {$[ \xi ]$};
\draw (21,127.4) node [anchor=north west][inner sep=0.75pt]    {$o$};
\draw (337,133.39) node [anchor=north west][inner sep=0.75pt]  [rotate=-0.01]  {$\gamma _{1}$};
\draw (527,97.42) node [anchor=north west][inner sep=0.75pt]  [rotate=-0.01]  {$[ \xi ]$};
\draw (416,89.4) node [anchor=north west][inner sep=0.75pt]  [rotate=-0.01]  {$\gamma _{n}$};
\draw (408,138.39) node [anchor=north west][inner sep=0.75pt]  [rotate=-0.01]  {$y_{1} \in \Omega _{o}( \gamma _{1})$};
\draw (443,48.4) node [anchor=north west][inner sep=0.75pt]    {$y_{n} \in \Omega _{o}( \gamma _{n})$};
\draw (253.5,91.37) node [anchor=north west][inner sep=0.75pt]    {$o$};
\draw (68.57,230.4) node [anchor=north west][inner sep=0.75pt]    {$o$};
\draw (212,200.4) node [anchor=north west][inner sep=0.75pt]    {$x_{n}$};
\draw (213,260.4) node [anchor=north west][inner sep=0.75pt]    {$y_{n}$};
\draw (301,227.4) node [anchor=north west][inner sep=0.75pt]    {$[ \xi ] \subseteq \mathcal{C}$};
\draw (370,228.4) node [anchor=north west][inner sep=0.75pt]    {$\Longrightarrow $};
\draw (421,224.4) node [anchor=north west][inner sep=0.75pt]    {$d( o,[ x_{n} ,y_{n}])\rightarrow \infty $};
\draw (18,10) node [anchor=north west][inner sep=0.75pt]   [align=left] {(\textbf{A})};
\draw (251,10) node [anchor=north west][inner sep=0.75pt]   [align=left] {(\textbf{B})};
\draw (20,232) node [anchor=north west][inner sep=0.75pt]   [align=left] {(\textbf{C})};

\end{tikzpicture}
    \caption{Assumptions (A)(B)(C) in convergence boundary}
    \label{fig:convbdry}
\end{figure}
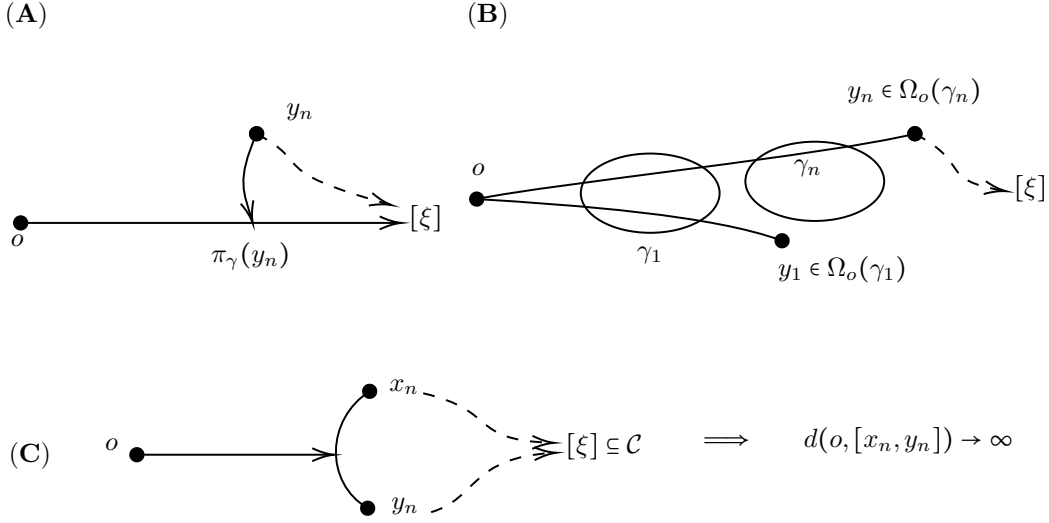

Recall that $\Omega_x(A):=\{y\in \U: \exists [x,y]\cap A\ne\emptyset\}$ is the cone of a subset $A\subseteq \U$ with light source at $x$. A sequence of subsets $A_n\subseteq \U$ is \textit{escaping} if $d(o,A_n)\to \infty$ for some (or any) $o\in \U$.

\begin{defn}\label{ConvBdryDefn}
    
We say that $(\bU,[\cdot])$ is a \textit{convergence compactification} of $\U$ if the following assumptions hold.
\begin{enumerate}
    \item[\textbf{(A)}]Any contracting geodesic ray $\gamma$ accumulates into a closed subset $[\xi]$ for some  $\xi\in \pU$; and any sequence $y_n\in \U$ with escaping projections $\pi_\gamma(y_n)$ tends to $[\xi]$. 

    \item[\textbf{(B)}]
    Let $\{X_n\subseteq \U :n\ge 1\}$ be an escaping sequence of $C$--contracting  quasi-geodesics for some $C>0$. Then for any given $o\in \U$, there exists a  subsequence of $\{Y_n:=\Omega_o(X_n): n\ge 1\}$ (still denoted by $Y_n$)  and $\xi\in \pU$ such that $Y_n$  accumulates to $[\xi]$: any convergent sequence of points $y_n\in Y_n$ tends to a point in $[\xi]$.
    \item[\textbf{(C)}]
    The set $\mathcal C$ of \textit{non-pinched} points  $\xi\in \pU$ is non-empty. We say $\xi$ is non-pinched if $x_n, y_n\in \U$ are two sequences of points converging to $[\xi]$, then $[x_n,y_n]$ is an escaping sequence of geodesic segments. Moreover,  for any $\xi\in \mathcal C$, the partition $[\cdot]$ restricted to $G[\xi]$ forms a closed relation.
\end{enumerate}  
\end{defn} 
\begin{rem}
Assumption (\textbf{C}) is a non-triviality condition: any compactification with the coarsest partition asserting $\pU$ as one $[\cdot]$--class satisfy (\textbf{A}) and (\textbf{B}). We require $\mathcal C$ to be non-empty to exclude such situations.  The ``moreover" statement is newly added relative to the one in \cite{YANG22}. If the restriction of the partition to $\mathcal C$ is maximal in the sense that every $[\cdot]$--class is singleton, then the ``moreover" assumption is always true.   
\end{rem} 
By Assumption (A), a contracting quasi-geodesic ray $\gamma$ determines a unique $[\cdot]$--class of a boundary point denoted by $[\gamma^+]$. Similarly, the positive and negative rays of a  bi-infinite contracting quasi-geodesic $\gamma:(-\infty,\infty)\to \U$ determine respectively two boundary $[\cdot]$--classes denoted by $[\gamma^+]$ and $[\gamma^-]$. 

\begin{examples}\label{ConvbdryExamples}
The first three convergence boundaries below are equipped with  a \textit{maximal} partition $[\cdot]$ (that is, $[\cdot]$--classes are singletons).
    \begin{enumerate}
        \item 
        Hyperbolic space $\U$ with Gromov boundary $\pU$, where  all boundary points are non-pinched.
        \item 
        CAT(0) space $\U$ with visual boundary $\pU$ (homeomorphic to horofunction boundary), where all boundary points are non-pinched.
        \item
        The Cayley graph $\U$ of a relatively hyperbolic group with Bowditch or Floyd boundary $\pU$, where  conical limit points are non-pinched.

        If $\U$ is infinitely ended, we could also take $\pU$ as the end boundary with the same statement.
        \item
        Teichm\"{u}ller space $\U$ with Thurston  boundary $\pU$, where $[\cdot]$ is given by Kaimanovich-Masur partition \cite{KaMasur} and uniquely ergodic   points are non-pinched.
        \item
        Any proper metric space $\U$ with horofunction boundary $\pU$, where $[\cdot]$ is given by finite difference partition and all boundary points are non-pinched.
        If $\U$ is the cubical CAT(0) space, the horofunction boundary is exactly the Roller boundary. 
        If $\U$ is the Teichm\"{u}ller space with Teichm\"{u}ller metric, the horofunction boundary is  the Gardiner-Masur  boundary (\cite{LS14,Walsh19}). 
    \end{enumerate}
\end{examples}
From these examples, one can see that the convergence boundary is not a canonical object associated to a given proper metric space. In some sense, the horofunction boundary provides a ``universal" convergence boundary with non-pinched points for any proper action.
\begin{thm}\label{HorobdryConvergence}\cite[Theorem 1.1]{YANG22}
The horofunction boundary is a convergence boundary with finite difference relation $[\cdot]$, where all boundary points are non-pinched. 
\end{thm}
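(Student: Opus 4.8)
The plan is to check, for the horofunction compactification $\bU=\U\cup\hU$ equipped with the finite difference relation $[\cdot]$, the three conditions (A), (B), (C) of Definition \ref{ConvBdryDefn}, and to prove separately that every $\xi\in\hU$ is non-pinched (so that $\mathcal C=\hU$ and the non-triviality part of (C) is automatic). The only geometric input I would use is the contracting property (Definition \ref{Def:Contracting}) in its fellow-travelling form, a standard consequence of Proposition \ref{admisProp}: there is $D=D(C)$ so that if $Z$ is $C$--contracting and $\proj_Z(x,y)$ is large then $[x,y]$ runs within $D$ of $\pi_Z(x)$ and of $\pi_Z(y)$ and $D$--fellow travels $Z$ in between, so that $d(x,y)=d(x,\pi_Z(x))+\|\pi_Z(x)\cup\pi_Z(y)\|+d(\pi_Z(y),y)$ up to $\pm D$. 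Feeding this into $b_y(x)=d(x,y)-d(o,y)$ produces the one estimate everything rests on: if $Z$ is $C$--contracting, $z\in Z$, the point $w$ lies ``behind $Z$'' with $\pi_Z(w)$ near $z$, and $\pi_Z(x)$ lies far from $z$ on $Z$ (the relevant configuration being $x$ near $o$ and $Z$ far from $o$), then $|b_w(x)-b_z(x)|\le K_0(C)$ with $K_0(C)$ \emph{independent of the evaluation point $x$}. This uniformity in $x$ is the technical heart of the statement: it is what converts ``geodesics fellow travel'' into ``horofunctions lie in one finite difference class'' after passing to limits.

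I would prove non-pinchedness first, since it is the cleanest step and uses no contraction at all. Suppose $x_n\to b_\xi$, $y_n\to b_\eta$ with $\|b_\xi-b_\eta\|_\infty=K<\infty$ while $d(o,[x_n,y_n])\not\to\infty$; after a subsequence there are $z_n\in[x_n,y_n]$ with $d(o,z_n)\le M$, and then $d(z_n,x_n),d(z_n,y_n)\to\infty$. By Arzelà--Ascoli the reparametrized subsegments $[z_n,y_n]$ converge (along a subsequence) to a geodesic ray $\tau$ with $\tau(0)=z:=\lim z_n$ and $\tau_n(s)\to\tau(s)$ for each $s$. Since $\tau_n(s)$ sits between $z_n$ and $y_n$ on the geodesic $[x_n,y_n]$, one has $d(x_n,\tau_n(s))=d(x_n,z_n)+s$ and $d(y_n,\tau_n(s))=d(y_n,z_n)-s$, hence $b_{x_n}(\tau_n(s))=b_{x_n}(z_n)+s$ and $b_{y_n}(\tau_n(s))=b_{y_n}(z_n)-s$. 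Letting $n\to\infty$ and using locally uniform convergence of $b_{x_n}\to b_\xi$, $b_{y_n}\to b_\eta$, I get $b_\xi(\tau(s))=b_\xi(z)+s$ and $b_\eta(\tau(s))=b_\eta(z)-s$ for all $s\ge 0$, so $|b_\xi(\tau(s))-b_\eta(\tau(s))|=|b_\xi(z)-b_\eta(z)+2s|$ is unbounded, contradicting $K<\infty$. Hence every $\xi$ is non-pinched, and $\mathcal C=\hU$ is nonempty because a space with a contracting element is unbounded.

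For (A), let $\gamma$ be a $C$--contracting ray from $o$. The key estimate with $Z=\gamma$ gives $|b_{\gamma(t)}(x)-b_{\gamma(t')}(x)|\le K_0(C)$ once $t,t'$ exceed the projection parameter of $x$; letting $t,t'\to\infty$ shows that any two accumulation points of $\{b_{\gamma(t)}\}$ are within $K_0(C)$ in sup norm, so the (nonempty, closed) accumulation set lies in a single finite difference class $[\xi]$, and the same estimate with $z=\pi_\gamma(y_n)$ shows $b_{y_n}$ accumulates in $[\xi]$ whenever $\pi_\gamma(y_n)$ escapes. For (B), given an escaping sequence $X_n$ of $C$--contracting quasi-geodesics I would set $p_n\in\pi_{X_n}(o)$ and pass to a subsequence so that $b_{p_n}\to b_\xi\in\hU$ (compactness of $\bU$, since $d(o,p_n)\to\infty$); every $y_n\in\Omega_o(X_n)$ then escapes ($d(o,y_n)\ge d(o,X_n)\to\infty$), and a short case analysis — on whether the crossing point of $[o,y_n]$ with $X_n$ is far from $p_n$ along $X_n$ or not — shows that $[o,y_n]$, and $[x,y_n]$ for $x$ in a fixed ball, must fellow travel $X_n$ near $p_n$ before leaving it, so the key estimate yields $|b_{y_n}(x)-b_{p_n}(x)|\le K_0(C)$ for each fixed $x$ and $n$ large; passing to a limit gives $\lim y_n\in[\xi]$. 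The non-emptiness in (C) is the non-pinchedness above, and the ``moreover'' clause (closedness of $[\cdot]$ on each orbit $G[\xi]$) I would deduce from the same fellow-travelling estimate combined with properness of $G\act\U$; this is the piece flagged as newly added after Definition \ref{ConvBdryDefn}.

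The main obstacle, and where I expect to spend the effort, is precisely the uniformity of $K_0(C)$ in the evaluation point in the key estimate, together with the closely related case analysis in (B) ruling out that a geodesic entering the cone $\Omega_o(X_n)$ merely grazes a far-away part of $X_n$ (in that degenerate case one argues instead that the cone itself shrinks towards $[\xi]$). Both are genuine invocations of the contracting property rather than formal manipulations; everything else is soft — Arzelà--Ascoli, compactness of $\bU$, and the elementary identity $b_y(x)-b_z(x)=-(d(x,z)+d(z,y)-d(x,y))$ valid when $z\in[o,y]$. This is essentially \cite[Theorem 1.1]{YANG22}, to which I would defer for the complete details of (A), (B) and the ``moreover'' clause.
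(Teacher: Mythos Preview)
Your outline of (A), (B) and non-pinchedness is essentially the content of \cite[Theorem 1.1]{YANG22}, and the paper simply defers these to that reference; only the ``moreover'' clause in (C) is argued in the paper. On that clause your proposal diverges from the paper and is more complicated than necessary. The paper's argument is purely soft: if $[\xi]$ is $K$--bounded then every isometric translate $g[\xi]$ is $K$--bounded (isometries preserve $\|b_\xi-b_\eta\|_\infty$), so $[x_n]=[y_n]\subseteq G[\xi]$ forces $|b_{x_n}(\cdot)-b_{y_n}(\cdot)|\le K$ uniformly in $n$; since horofunction convergence is pointwise convergence of $1$--Lipschitz functions, the bound passes to the limits, giving $[\xi']=[\eta']$. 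No contracting geometry and no properness of any action enters.

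Your suggestion to invoke ``the same fellow-travelling estimate combined with properness of $G\act\U$'' is therefore off-target in two respects. First, the fellow-travelling machinery is overkill: the closedness of the finite difference relation on an orbit is a statement about sup-norms of differences of horofunctions under pointwise limits, and needs nothing from contracting subsets. Second, and more concretely, no properness assumption on the acting group is available here --- the theorem is about the horofunction boundary of a proper space, and the relevant group is $\isom(\U)$ (or a subgroup thereof) with no discreteness hypothesis --- so an argument that genuinely uses properness would not establish the statement as written.
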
 
\begin{proof}
Only the ``moreover" statement in Assumption (C) requires clarification. Under the finite difference relation $[\cdot]$, if $[\xi]$ is $K$--bounded for some $K>0$, then any $g[\xi]$ is $K$--bounded as well. According to the topology of the horofunction compactification,  if $|b_{x_n}(\cdot)-b_{y_n}(\cdot)|\le K$ and $x_n\to \xi, y_n\to \eta$, then $|b_{\xi}(\cdot)-b_{\eta}(\cdot)|\le K$. This shows $[\xi]=[\eta]$ (however, $[\xi]$ may  not be $K$--bounded).      
\end{proof}

If $f\in \isom(\U)$ is a contracting element,  the positive and negative rays of a contracting quasi-geodesic $n\in \mathbb Z\mapsto f^n o$ determine two boundary points denoted by $[f^+]$ and $[f^-]$ respectively. Write $[f^\pm]=[f^+]\cup[f^-]$. We say that $f$ is  \textit{non-pinched} if $[f^\pm]\subseteq \mathcal C$ are non-pinched. On the horofunction boundary, any contracting element is non-pinched by Theorem \ref{HorobdryConvergence}. In contrast, there exist pinched contracting elements in the Bowditch boundary of a relatively hyperbolic group; for instance, those contracting elements in a parabolic group are pinched. The following two results fail if the non-pinched condition is dropped.

Recall that $\Gamma<\isom(\U)$ is a discrete group, which we assume in the next results. 

\begin{lem}\label{NonPinchedStabilizer}
If $f\in \Gamma$ is a non-pinched contracting element, then $E(f)$ coincides with the set stabilizer of the two fixed points $[f^\pm]$.    
\end{lem}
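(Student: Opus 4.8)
The plan is to prove the two inclusions between $E(f)$ and the set stabilizer $S:=\{g\in\Gamma:\ g\{[f^+],[f^-]\}=\{[f^+],[f^-]\}\}$. For $E(f)\subseteq S$: if $g\in E(f)$, then $gf^{n}g^{-1}=f^{\pm n}$ for some $n>0$ by Lemma~\ref{elementarygroup}, so $go\in E(f)o=\ax(f)$ and $gf^{kn}o=(gf^{n}g^{-1})^{k}(go)=f^{\pm kn}(go)$ for all $k$. As $k\to\infty$ the images $f^{\pm kn}(go)$ run to one end of $\ax(f)=E(f)o$ and hence accumulate into $[f^{+}]$ or $[f^{-}]$ by Assumption~(A) of Definition~\ref{ConvBdryDefn}; on the other hand, since $g$ extends to a self-homeomorphism of $\bU$ and $f^{kn}o$ accumulates into $[f^{+}]$, the images accumulate into $g[f^{+}]$. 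As $g$ sends $[\cdot]$--classes to $[\cdot]$--classes, $g[f^{+}]\in\{[f^{+}],[f^{-}]\}$; the same with $f^{-kn}o$ gives $g[f^{-}]\in\{[f^{+}],[f^{-}]\}$, so $g\in S$.

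For $S\subseteq E(f)$ the crux is to show, for $g\in S$, that $d_{\mathrm{Haus}}(\ax(f),g\ax(f))<\infty$; this is the only place the non-pinched hypothesis enters. The engine is an upgrade of Assumption~(A): if $\gamma$ is a $C$--contracting ray terminating at a non-pinched class $[\gamma^{+}]$ (Assumption~(C)) and $y_{n}\to[\gamma^{+}]$, then $\pi_{\gamma}(y_{n})$ escapes along $\gamma$. Indeed, fixing $z_{n}\in\gamma$ with $z_{n}\to[\gamma^{+}]$, non-pinchedness gives $d(o,[y_{n},z_{n}])\to\infty$; if $\pi_{\gamma}(y_{n})$ stayed bounded then $\proj_{\gamma}(y_{n},z_{n})\to\infty$, and the contracting property (Definition~\ref{Def:Contracting}, via \cite{YANG6}) would force $[y_{n},z_{n}]$ to pass uniformly near $\pi_{\gamma}(y_{n})$, hence near $o$, a contradiction. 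Now $g\ax(f)=\ax(gfg^{-1})$ is a $C$--contracting quasi-geodesic line whose two ends carry the sequences $gf^{k}o\to[f^{+}]$ and $gf^{-k}o\to[f^{-}]$ in some order (using $g[f^{\pm}]\subseteq\{[f^{+}],[f^{-}]\}$ from the first part and that $g$ is a homeomorphism of $\bU$), so by the upgrade their $\ax(f)$--projections escape toward the two respective ends of $\ax(f)$. Consequently, for every $p\in\ax(f)$ there is a subpath of $g\ax(f)$ whose $\ax(f)$--projection coarsely straddles $p$; since $\ax(f)$ is contracting, this subpath fellow-travels $\ax(f)$ across the straddled segment (concatenating a quasi-geodesic with its projection to a contracting set is a quasi-geodesic, and quasi-geodesics with common endpoints stay uniformly close to a contracting one; cf.\ \cite{YANG6,YANG10}), so $d(p,g\ax(f))\le D$ for $D=D(C)$. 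Hence $\ax(f)\subseteq N_{D}(g\ax(f))$; running the argument for $g^{-1}\in S$ and applying $g$ gives $g\ax(f)\subseteq N_{D}(\ax(f))$, so $d_{\mathrm{Haus}}(\ax(f),g\ax(f))<\infty$.

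To finish, set $Q:=\{h\in\Gamma:\ d_{\mathrm{Haus}}(h\ax(f),\ax(f))<\infty\}$, a subgroup containing $E(f)$ (which preserves $\ax(f)$ setwise) and, by the previous step, $g$. For $h\in Q$ the points $hf^{n}o$ lie within some fixed $R$ of $E(f)o$, hence within some $R'$ of $\{f^{m}o:m\in\mathbb Z\}$; choosing $m(n)$ with $d(hf^{n}o,f^{m(n)}o)\le R'$, the elements $f^{-m(n)}hf^{n}$ lie in the ball of radius $R'$ about $o$, which is finite by properness of $\Gamma\act\U$. Equating two of them gives $hf^{k}h^{-1}=f^{j}$ with $k\ne0$, and comparing stable translation lengths ($\tau(hxh^{-1})=\tau(x)$, $\tau(f^{i})=|i|\tau(f)$, $\tau(f)>0$ since $f$ is contracting) forces $|j|=|k|$; thus $hf^{k}h^{-1}=f^{\pm k}$ and $h\in E(f)$ by Lemma~\ref{elementarygroup}. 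Hence $Q=E(f)$ and $g\in E(f)$. The main obstacle is precisely the finiteness of $d_{\mathrm{Haus}}(\ax(f),g\ax(f))$: it fails when $[f^{\pm}]$ is pinched (e.g.\ for contracting elements inside parabolic subgroups relative to a Bowditch boundary), and the work is in making the projection and fellow-traveling estimates uniform along $\ax(f)$; everything else (invariance of the partition, properness, stable translation length, standard facts on contracting quasi-geodesics) is routine.
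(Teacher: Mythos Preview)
Your proof is correct and follows essentially the same approach as the paper's: the key step in both is to use the non-pinched hypothesis (Assumption~(C)) together with the contracting property of $\ax(f)$ to show that the projections to $\ax(f)$ of points on $g\ax(f)$ tending to $[f^{\pm}]$ must escape, which then forces $g\ax(f)$ to have finite Hausdorff distance from $\ax(f)$. Your write-up is in fact more complete than the paper's: you explicitly verify the inclusion $E(f)\subseteq S$, you prove (rather than invoke ``by definition'') that $E(f)$ coincides with the Hausdorff-stabilizer $Q$ of $\ax(f)$ via the pigeonhole/translation-length argument, and you spell out the passage from escaping projections to the fellow-travel estimate that yields the uniform bound $D$---a step the paper's proof leaves implicit after establishing unboundedness of the projections.
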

\begin{proof}
By Assumption A, the two half ray of the axis $\ax(f)$  accumulates either to $[f^-]$ or to $[f^+]$. The same holds for $g\ax(f)$, which accumulates  to $g[f^\pm]$.  By definition,  $E(f)$ is the set of elements $g\in G$ which preserves  $\ax(f)$ up to finite Hausdorff distance. Let $g\in G$ preserve $[f^\pm]$ but $g\notin E(f)$. Choose two sequence of points $x_n, y_n\in g\ax(f)$ so that $x_n\to [f^-], y_n\in [f^+]$, and let $z_n\in \pi_{\ax(f)}(x_n)$ and $w_n\in \pi_{\ax(f)}(y_n)$. We claim that both $z_n$ and $w_n$ are unbounded sequences. If not, assume for concreteness that $z_n$ is bounded. Choose $u_n\in \ax(f)$ tending to $[f^-]$ ($[f^-]$, if $w_n$ is bounded). The contracting property implies $[x_n, u_n]\cap B(z_n,2C)\ne\emptyset$, where $C$ is the contracting constant of $\ax(f)$. As $z_n$ is bounded and $x_n, u_n$ tend to $[f^-]$, this contradicts $[f^-]\subseteq\mathcal C$ by Definition \ref{ConvBdryDefn}(C).  
\end{proof}

\begin{lem}\cite[Lemma 3.12]{YANG22}\label{DisjointFixpts}
If $f, g\in \Gamma$ are two non-pinched independent contracting elements, then either $[f^\pm]=[g^\pm]$ or $[f^\pm]\cap[g^\pm]=\emptyset$.    
\end{lem}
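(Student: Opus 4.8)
\emph{Proof plan.} The plan is to argue by contradiction, exploiting the tension between the two hypotheses: non-pinchedness of $f$ forces a whole family of geodesics to leave every bounded set, whereas independence of $f$ and $g$ (in the form of bounded projections) forces those same geodesics to stay near the common basepoint $o$. Throughout I use that $o=1\cdot o$ lies on both quasi-geodesic axes $\ax(f)$ and $\ax(g)$, that $\pi_{\ax(f)}(o)=\{o\}$, and that $f^{\pm n}o\to[f^\pm]$, $g^{\pm n}o\to[g^\pm]$ as $n\to+\infty$ (cf.\ Assumption (A) in Definition \ref{ConvBdryDefn}).

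First I would dispose of the degenerate case $\ax(f)=\ax(g)$ as subsets of $\U$: then $n\mapsto g^no$ is a quasi-geodesic line escaping to the two ends of $\ax(f)$, so the set of accumulation classes of the two ends of $\ax(f)$ equals both $\{[f^+],[f^-]\}$ and $\{[g^+],[g^-]\}$, whence $[f^\pm]=[g^\pm]$ and the dichotomy holds. Assume now $\ax(f)\ne\ax(g)$, and suppose for contradiction that $[f^\pm]$ and $[g^\pm]$ are not disjoint. Since each of $[f^+],[f^-],[g^+],[g^-]$ is a block of the partition $[\cdot]$, some coincidence $[f^\epsilon]=[g^\delta]$ holds; replacing $f$ by $f^{-1}$ and/or $g$ by $g^{-1}$ — which fixes $\ax(f)$ and $\ax(g)$, preserves independence and the non-pinched hypothesis, and merely interchanges $[f^+]\leftrightarrow[f^-]$, resp.\ $[g^+]\leftrightarrow[g^-]$ — I may assume $[f^+]=[g^+]$.

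Now $\ax(f)$ and $\ax(g)$ are \emph{distinct} members of the contracting system $\f$, so by the bounded-projection formulation of independence there is $B_0>0$ with $\|\pi_{\ax(f)}(\ax(g))\|\le B_0$; since $o\in\ax(g)$ and $\pi_{\ax(f)}(o)=\{o\}$, the set $\pi_{\ax(f)}(\ax(g))$ lies in the $B_0$-ball about $o$. Set $x_n:=f^no$ and $y_n:=g^no$. On one hand $x_n\to[f^+]$ and $y_n\to[g^+]=[f^+]$; since $f$ is non-pinched, every point of $[f^+]$ lies in $\mathcal C$, so passing to a subsequence along which $x_n$ and $y_n$ converge (their limits lie in the single class $[f^+]$) and applying Definition \ref{ConvBdryDefn}(C), we get $d(o,[x_n,y_n])\to\infty$. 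On the other hand $\pi_{\ax(f)}(x_n)=\{x_n\}$ with $d(o,x_n)\to\infty$, while $\pi_{\ax(f)}(y_n)$ stays within $B_0$ of $o$, so the closest-point projections onto $\ax(f)$ of the two endpoints of $[x_n,y_n]$ are a distance $\to\infty$ apart. At this point I invoke the standard geometry of $C$-contracting sets (cf.\ \cite{YANG10,YANG6}): a geodesic whose endpoints project far apart onto a $C$-contracting set $Z$ must fellow-travel, within a constant $C_1=C_1(C)$, the concatenation of $[x_n,\pi_Z(x_n)]$, the subsegment of $Z$ between the two projections, and $[\pi_Z(y_n),y_n]$. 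That middle subsegment of $\ax(f)$ runs from $x_n$ out to a point within $B_0$ of $o$, hence passes within $B_0$ of $o$; therefore $d(o,[x_n,y_n])\le B_0+C_1$ for all large $n$, contradicting $d(o,[x_n,y_n])\to\infty$. This contradiction shows $[f^\pm]\cap[g^\pm]=\emptyset$ when $\ax(f)\ne\ax(g)$, completing the dichotomy.

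I expect the only genuine obstacle to be pinning down the quantitative contracting-geometry input used in the last step: one needs that $[x_n,y_n]$ runs close to $\ax(f)$ \emph{along the whole stretch between the two projections} — in particular close to $o$ — rather than merely that it dips into a bounded neighbourhood of $\ax(f)$ somewhere far away. The remaining ingredients — reducing the coincidences to $[f^+]=[g^+]$ via inverses, the degenerate case $\ax(f)=\ax(g)$, and the subsequence argument behind the escape statement — are routine bookkeeping.
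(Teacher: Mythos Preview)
The paper does not supply its own proof of this lemma; it is quoted from \cite[Lemma 3.12]{YANG22}. Your argument is correct and follows exactly the template the paper uses for the neighbouring Lemma~\ref{NonPinchedStabilizer}: play the bounded--projection constraint coming from independence against the escaping property coming from Assumption~(C) for non-pinched points.

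Your caution in the final paragraph is slightly excessive. You do not need the full fellow--travel statement; the weaker fact invoked verbatim in the paper's proof of Lemma~\ref{NonPinchedStabilizer} already suffices: for a $C$--contracting set $Z$, $u\in Z$, $p\in\U$ and $v\in\pi_Z(p)$, one has $[p,u]\cap B(v,2C)\ne\emptyset$. Applied with $Z=\ax(f)$, $u=x_n\in Z$, $p=y_n$ and $v\in\pi_Z(y_n)\subseteq B(o,B_0)$, this gives $d(o,[x_n,y_n])\le B_0+2C$ directly, with no need to control the geodesic along the entire stretch between the two projections. So the ``genuine obstacle'' you anticipate is already handled by the contracting property at the level the paper itself treats as immediate.
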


\begin{lem}\cite[Lemma 3.10]{YANG22}\label{DoubleDense}
Let $g,f\in \Gamma$ be two independent  contracting elements. Then for all $n\gg 0$, $h_n:=g^nf^n$ is a contracting element so that $[h_n^+]$ tends to $[g^+]$ and $[h_n^-]$ tends to $[f^-]$.    
\end{lem}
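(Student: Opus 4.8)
The plan is to build an explicit $h_n$--invariant quasi-geodesic line out of translates of $\ax(g)$ and $\ax(f)$, and to read off from its geometry both that $h_n$ is contracting and where its two fixed point classes sit. Let $\mathcal F = \{t\,\ax(g)\,,\, t\,\ax(f) : t\in\Gamma\}$; since $g$ and $f$ are independent, $\mathcal F$ is a contracting system with bounded projection. Fix $n$ large enough that $d(o,g^no)\ge L$ and $d(o,f^no)\ge L$, where $L$ is the threshold of Proposition~\ref{admisProp}, and let $\gamma_n$ be the bi-infinite path obtained by concatenating, over $k\in\mathbb Z$, the geodesics $h_n^k[o,g^no]$ and $h_n^kg^n[o,f^no]$, so that $\gamma_n$ runs through $\dots, h_n^{-1}o, o, g^no, h_no, h_ng^no, h_n^2o,\dots$. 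Its long pieces lie alternately in the translates $h_n^k\ax(g)$ and $h_n^kg^n\ax(f)$; consecutive translates are distinct (using $\ax(g)\ne\ax(f)$ together with $g^n\ax(g)=\ax(g)$ and $f^n\ax(f)=\ax(f)$) and the intervening turning geodesics are degenerate, so $\gamma_n$ is $(L,\tau)$--admissible. By Proposition~\ref{admisProp} it is a quasi-geodesic with the fellow travel property; being assembled from uniformly contracting pieces it is moreover a contracting quasi-geodesic, on which $h_n$ acts coboundedly by translation (this is part of the standard analysis of admissible paths relative to a contracting system, cf.\ \cite{YANG10,YANG6}). Hence $h_n$ is contracting with $\ax(h_n)$ at finite Hausdorff distance from $\gamma_n$.

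For the endpoints, observe that the forward ray $\gamma_n^+$ from $o$ begins with $[o,g^no]$, which lies in a uniformly bounded neighbourhood of $\ax(g)$ for parameter length $D_n := d(o,g^no)$, and only thereafter turns into $g^n\ax(f)$ and its $h_n$--translates; by the bounded projection property of $\mathcal F$, the portion of $\gamma_n^+$ beyond its first piece projects to $\ax(g)$ within $O(1)$ of $g^no$. Thus any point of $\gamma_n^+$ at parameter $\ge D_n$ projects to the ray $\ax(g)^+$ at distance $D_n-O(1)$ from $o$. A symmetric computation (using $h_n^{-1}g^n=f^{-n}$) shows the backward ray $\gamma_n^-$ first traverses the ray $\ax(f)^-$ for parameter length $d(o,f^no)$. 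Since $d(o,g^no),d(o,f^no)\to\infty$, the rays $\gamma_n^\pm$ track $\ax(g)^+$ and $\ax(f)^-$ — which accumulate in $[g^+]$ and $[f^-]$ by Assumption~(A) — over lengths tending to infinity. To conclude $[h_n^+]\to[g^+]$, take any $\xi_n\in[h_n^+]=[\gamma_n^+]$, approximate it along $\gamma_n^+$ by orbit points $h_n^{k}o$, and make a diagonal choice $k=k_n$ so that the resulting points $x_n$ lie past parameter $D_n$ on $\gamma_n^+$ yet still record the accumulation behaviour of $(\xi_n)_n$; since $d(o,\pi_{\ax(g)^+}(x_n))\ge D_n-O(1)\to\infty$, the escaping-projection clause of Assumption~(A) forces $x_n$, and hence every accumulation point of $(\xi_n)_n$, into $[g^+]$. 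The same argument with $\ax(f)^-$ in place of $\ax(g)^+$ gives $[h_n^-]\to[f^-]$.

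The main obstacle is this last step. The geometric content — that $\gamma_n^+$ fellow-travels $\ax(g)^+$ over a length $D_n\to\infty$ — is immediate from the admissible-path machinery, but turning it into the convergence statement for the moving partition classes $[h_n^+]$ requires carefully interleaving the limit in $n$ with the limit along each axis, so that the diagonal sequence $x_n$ is simultaneously deep enough inside $\gamma_n^+$ (to have large projection to $\ax(g)$) and faithful to the prescribed boundary class $[h_n^+]$. The bookkeeping for this diagonalization, together with the routine but slightly delicate verification that the $h_n$--periodic admissible line is genuinely contracting rather than merely a quasi-geodesic, is where the real work lies.
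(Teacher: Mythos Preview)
The paper does not prove this lemma; it is quoted verbatim from \cite[Lemma 3.10]{YANG22}. Your admissible--path construction of the $h_n$--periodic line $\gamma_n$ is the natural route and is correct: the (BP) condition holds with trivial $q_i$ once one checks $h_n^k\ax(g)$ and $h_n^kg^n\ax(f)$ are all distinct from their neighbours (which reduces to $h_n^k\notin E(g)\cup E(f)$, a consequence of independence), and then Proposition~\ref{admisProp} together with the standard fact that $(L,\tau)$--admissible paths over a contracting system are themselves contracting gives that $h_n$ is contracting. This much matches how the paper treats similar constructions, e.g.\ in Lemmas~\ref{IndepElemExists} and~\ref{UniformContractingElem}.

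The gap is in the endpoint step, and it is slightly sharper than the bookkeeping issue you flag. You write ``take any $\xi_n\in[h_n^+]$, approximate it along $\gamma_n^+$ by orbit points $h_n^ko$''. But Assumption~(A) only says that the accumulation set of $\gamma_n^+$ is \emph{contained in} the class $[h_n^+]$; it does not say the class equals that accumulation set. So an arbitrary $\xi_n\in[h_n^+]$ need not be approximable by points of $\gamma_n^+$, and your diagonal sequence $x_n$ need not witness the limiting behaviour of $(\xi_n)$. Your argument therefore proves only that \emph{some} representative $\zeta_n\in[h_n^+]$ (an actual accumulation point of $\gamma_n^+$) tends to $[g^+]$; passing from this to the full class requires an extra ingredient---for instance, a uniform bound on the ``diameter'' of the classes $[h_n^+]$ coming from the uniform contracting constant of the $\gamma_n$, or an appeal to the closed--relation clause in Assumption~(C) on a suitable orbit. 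Since the paper's later use of the lemma (in the proof of Lemma~\ref{EllipticRadical}) genuinely needs the whole class $[h_n^+]$ to land in a prescribed neighbourhood of $[g^+]$, this point cannot be elided; you should consult \cite{YANG22} for how it is handled there.
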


The following lemma is often used to choose an independent contracting element.
\begin{lem}\label{IndepElemExists}
Let $H<\isom(\U)$ be a non-elementary discrete group. Assume that $f\in H$  is  a contracting element. Then there exists a contracting element $h\in H$ that is independent with $f$.     
\end{lem}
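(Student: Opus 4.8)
The natural candidate for $h$ is a conjugate of $f$ itself. The first step is to produce the conjugator: by Lemma~\ref{elementarygroup} the subgroup $E:=E(f)$ is infinite and virtually cyclic, hence elementary; since $H$ is non-elementary we get $H\ne E$, so I may fix $g\in H\setminus E$ and set $h:=gfg^{-1}\in H$. As $h^{n}o=g\bigl(f^{n}(g^{-1}o)\bigr)$ and $f$ is contracting, $h$ is again a contracting element, whose axis $\ax(h)$ lies at bounded Hausdorff distance from $g\ax(f)$. So the whole problem reduces to showing that $f$ and $h$ are \emph{independent}, i.e.\ that $\f:=\{w\ax(f):w\in H\}\cup\{w\ax(h):w\in H\}$ is a contracting system with bounded intersection.

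For this I would argue as follows. Since $\{w\ax(h):w\in H\}$ agrees, up to a bounded Hausdorff perturbation, with $\{wg\ax(f):w\in H\}=\{w\ax(f):w\in H\}$, it is enough to check (a) that $\f_{0}:=\{w\ax(f):w\in H\}$ is a contracting system with bounded intersection, and (b) that $g\ax(f)$ is not at bounded Hausdorff distance from $\ax(f)$, so that $\f$ genuinely has at least two members. Part (b) is the easy half: if $g$ coarsely preserved $\ax(f)$, then comparing the cocompact cyclic actions of $f$ and $gfg^{-1}$ on this common coarse axis — using properness of $H\act\U$ — yields $gf^{n}g^{-1}=f^{\pm n}$ for some $n>0$, hence $g\in E$ by Lemma~\ref{elementarygroup}, contradicting the choice of $g$.

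I expect part (a) to be the main obstacle: for a single contracting element the collection of its $H$--translates forms a contracting system with bounded intersection. This is precisely the point where discreteness, i.e.\ properness of $H\act\U$, is indispensable — the contracting property of $\ax(f)$ by itself does not prevent two distinct $C$--contracting quasi-geodesics from fellow-travelling arbitrarily long. It is a standard fact in this framework, and I would recover it by contradiction: if $\|\pi_{\ax(f)}(w_{n}\ax(f))\|\to\infty$ with $w_{n}\ax(f)$ never Hausdorff-close to $\ax(f)$, the contracting property forces long fellow-travel of $w_{n}\ax(f)$ with $\ax(f)$; pre- and post-composing $w_{n}$ with elements of $E$ (which acts cocompactly on $\ax(f)$) slides these fellow-travelling segments to start uniformly near $o$ and makes $d(o,v_{n}o)\le M$ for the resulting $v_{n}\notin E$; properness then forces $\{v_{n}\}$ to be finite, so a single $v\notin E$ has $v\ax(f)$ fellow-travelling $\ax(f)$ on segments of unbounded length, whence the Morse property of contracting quasi-geodesics (arguing on each ray) forces $v\ax(f)$ and $\ax(f)$ to remain at bounded Hausdorff distance, so $v\in E$ by Lemma~\ref{elementarygroup} — a contradiction. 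Granting (a) and (b), $\f$ is a bounded-intersection contracting system, so $f$ and $h$ are independent, proving the lemma.

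Two remarks on possible variations. If one only wanted $f$ and $h$ \emph{weakly} independent, it would suffice that $\{f^{n}o\}$ and $\{h^{n}o\}$ have infinite Hausdorff distance, which is immediate from $g\notin E$; the work in (a) is exactly what upgrades this to genuine independence. And if one wished to bypass the translates-of-a-single-axis argument, one could instead take $h$ to be a long product such as $h=f^{N}gf^{N}g^{-1}$ with $N\gg0$, arranged so that $\ax(h)$ fellow-travels a bi-infinite $(L,\tau)$--admissible path assembled from translates of $\ax(f)$ via Proposition~\ref{admisProp}; but verifying independence along that route still relies on the same bounded-projection input, so it is not genuinely a shortcut.
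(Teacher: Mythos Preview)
Your main approach has a genuine gap, and it is precisely the gap the paper warns about in the Remark immediately following the definition of independence: two conjugate contracting elements with disjoint fixed points are weakly independent but \emph{not} independent. So $h=gfg^{-1}$ cannot work as the element you seek.

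The error is in your reduction. You argue that since $\{w\ax(h):w\in H\}$ agrees with $\{w\ax(f):w\in H\}$ ``up to bounded Hausdorff perturbation,'' it suffices to verify bounded intersection for $\f_0=\{w\ax(f)\}$. But bounded intersection in the paper's sense is not invariant under such perturbation. Concretely, $\ax(h)=E(h)\cdot o=gE(f)g^{-1}\cdot o$ and $g\ax(f)=gE(f)\cdot o$ are distinct subsets of $\U$ (unless $g^{-1}o\in E(f)\cdot o$) sitting at finite Hausdorff distance, hence they are \emph{distinct} elements $X\ne Y$ of $\f=\{w\ax(f)\}\cup\{w\ax(h)\}$ with $\|N_r(X)\cap N_r(Y)\|=\infty$ for all large $r$. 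So $\f$ fails bounded intersection, and $f,h$ are not independent.

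The correct route is exactly the ``variation'' you dismiss at the end. The paper takes two distinct conjugates $f_1,f_2$ of $f$ in $H$ and sets $h=f_1^{n}f_2^{n}$ for $n\gg 0$; by Lemma~\ref{DoubleDense} this is contracting, and its axis is built from alternating long segments that fellow-travel $\ax(f_1)$ and $\ax(f_2)$. Since every pair in $\f_0$ has bounded intersection, any single translate $w\ax(f)$ can fellow-travel only a bounded portion of each such segment, hence only a bounded portion of $\ax(h)$. That is what forces genuine bounded intersection between $\ax(h)$ and every element of $\f_0$, and gives independence. Your instinct that this alternative ``still relies on the same bounded-projection input'' is correct --- part (a) is indeed the standard input --- but the product construction is not a cosmetic detour: it is what produces an axis lying in a new coarse-equivalence class, which a bare conjugate never does.
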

\begin{proof}
The set of translated axis $\f:=\{g\ax(f): g\in \Gamma\}$ has bounded intersection. Pick two distinct conjugates denoted by $f_1,f_2$ of $f$. By Lemma \ref{DoubleDense}, $h:=f_1^nf_2^n$ is a contracting element whose fixed points tend to $[f_1^+]$ and $[f_2^-]$ respectively for any large $n\gg 0$. Note that the axis $\ax(h)$ of $h$ has an overlap with  $\ax(f_1)$ and $\ax(f_2)$  of a large length $d(o,f_i^no)$ for $i=1,2$. Taking into account every pair of elements in $\f$ has bounded intersection, we conclude  that $\ax(h)$ has bounded intersection with every element in $\f$. That is, $h$ is independent with $f$  by definition.
\end{proof}

Let $U$ be a subset of $\U$. The limit set of $U$, denoted by $\Lambda(U)$, is the set of all accumulation points of $U$ in $\pU$. If $H<\Gamma$ is a subgroup,   $\Lambda(Ho)$ may depend on the choice of the basepoint $o\in \U$, but the $[\cdot]$--loci $[\Lambda(Ho)]$ does not  by  Definition \ref{ConvBdryDefn}(B). For this reason, we shall refer $[\Lambda(Ho)]$ as the limit set of a subgroup $H$. Moreover, the limit set of a discrete group $\Gamma$ enjoys the following desirable property as in the general theory of convergence group action.
\begin{lem}\cite[Lemma 3.9]{YANG22}\label{FixptsDense}
Assume that $\Gamma$ is non-elementary. Let $f\in \Gamma$ be a   contracting element. Then $[\pG]=[\overline{\Gamma\xi}]$ for any $\xi\in [f^\pm]$.    
\end{lem}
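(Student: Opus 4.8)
The plan is to establish the two inclusions of $[\pG]=[\overline{\Gamma\xi}]$ separately, writing $\pG=\Lambda(\Gamma o)$, which is a closed $\Gamma$--invariant subset of $\pU$. The inclusion $[\overline{\Gamma\xi}]\subseteq[\pG]$ should be routine: since $f$ is contracting, $f^{n}o$ and $f^{-n}o$ accumulate into $[f^+]$ and $[f^-]$ by Assumption (A), so $[f^\pm]$ meets $\Lambda(\Gamma o)$; as $[\cdot]$ is $\isom(\U)$--invariant, this forces $[\xi]\subseteq[\pG]$ and then $\Gamma\xi\subseteq[\pG]$, whence $[\overline{\Gamma\xi}]\subseteq[\pG]$ by closedness of $\pG$.

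For the substantive inclusion $[\pG]\subseteq[\overline{\Gamma\xi}]$ I would proceed as follows. Fix $\eta\in\pG$ and choose $g_n\in\Gamma$ with $g_no\to\eta$ (so $d(o,g_no)\to\infty$, $\Gamma$ being discrete). Since $\Gamma$ is non--elementary and contains the contracting element $f$, iterating Lemma \ref{IndepElemExists} produces a finite set $F$ of pairwise independent contracting elements containing a large power $f^{k}$, with $\|Fo\|_{\min}$ exceeding the constant $L$ of the Extension Lemma. For each $n$ I would apply Lemma \ref{extend3} to the pair $(g_n, f^{k})$ to obtain a connector $f_{(n)}\in F$ making the path $[o,g_no]\cdot g_n[o,f_{(n)}o]\cdot g_nf_{(n)}[o,f^{k}o]$ admissible; because the outgoing direction $f^{k}$ already excludes $f$ as a choice, $f_{(n)}$ may be taken to be a power of a generator independent from $f$, so that $g_n\ax(f_{(n)})\neq g_nf_{(n)}\ax(f)$ and the last leg may be prolonged along $g_nf_{(n)}\ax(f)$ into an $(L,\tau)$--admissible ray $\gamma_n$. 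By Proposition \ref{admisProp} each $\gamma_n$ is a uniformly contracting quasi--geodesic ray starting at $o$, passing through $g_no$, and (by Assumption (A), as recalled after Definition \ref{ConvBdryDefn}) terminating in the class $[g_nf_{(n)}\xi]$, i.e.\ the class of a point of the orbit $\Gamma\xi$.

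To conclude I would invoke Assumption (B). The sub--rays $\gamma_n'$ of $\gamma_n$ issuing from $g_no$ form an escaping family of uniformly contracting quasi--geodesics, so after passing to a subsequence the cones $\Omega_o(\gamma_n')$ accumulate to a single $[\cdot]$--class $[\zeta]$; testing with $g_no\in\Omega_o(\gamma_n')$, which accumulates in $[\eta]$, gives $[\zeta]=[\eta]$, while testing with points far out along $\gamma_n'$ (which also lie in $\Omega_o(\gamma_n')$ and approach the classes $[g_nf_{(n)}\xi]$) shows these classes accumulate in $[\zeta]=[\eta]$; since $g_nf_{(n)}\xi\in\Gamma\xi$, a subsequential limit of $g_nf_{(n)}\xi$ then lies in $\overline{\Gamma\xi}\cap[\eta]$, so $\eta\in[\overline{\Gamma\xi}]$. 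The special case $\eta\in[f^\pm]$ is already contained in the first inclusion, using a flip in $E(f)$ to reach $[f^-]$ from $[\xi]\subseteq[f^+]$, or Lemma \ref{DoubleDense} when $E(f)=E^+(f)$.

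The step I expect to cause the most trouble is not geometric but the bookkeeping with the partition $[\cdot]$ in the last paragraph: passing from ``the classes $[g_nf_{(n)}\xi]$ accumulate in $[\eta]$'' to ``a genuine accumulation point of the orbit $\Gamma\xi$ lies in $[\eta]$'' is not automatic, since a convergent sequence of points each lying in the class of an orbit point need not have its limit in $[\eta]$ a priori. This is precisely where one must use that $[\cdot]$--limits are unchanged under bounded perturbations of the sequence, together with the closedness of $[\cdot]$ restricted to $\Gamma[\eta]$ guaranteed by Assumption (C) (reducing to non--pinched $\eta$ if necessary). A lesser point to be checked carefully is that the Extension Lemma's connector can really be forced off $f$ and that the prolonged ray $\gamma_n$ satisfies the (LL) and (BP) conditions with constants independent of $n$.
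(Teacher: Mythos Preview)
The paper does not supply its own proof of this lemma; it is quoted verbatim from \cite[Lemma~3.9]{YANG22}, so there is no argument in the present paper to compare against. Your strategy---extending each $[o,g_no]$ via the Extension Lemma along a translate of $\ax(f)$ and then invoking Assumption~(B) on the resulting escaping family of contracting sub-rays $\gamma_n'$---is the natural one and is essentially how such density statements are established in this framework.

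Two small points. First, in the final bookkeeping you correctly locate the difficulty (passing from ``the classes $[g_nf_{(n)}\xi]$ accumulate in $[\eta]$'' to an actual orbit point converging into $[\eta]$) and the right tool (the closedness clause in Assumption~(C)), but it is $\xi$, equivalently $f$, that must be non-pinched, not $\eta$: the closedness hypothesis applies to the saturated orbit $\Gamma[\xi]=\Gamma[f^\pm]$, and both the genuine orbit points $g_nf_{(n)}\xi$ and the interior approximants $\lim_m g_nf_{(n)}f^m o$ live there, so their subsequential limits share a $[\cdot]$--class. This matches the paper's sole use of the lemma (proof of Lemma~\ref{EllipticRadical}), where $f$ is explicitly chosen non-pinched. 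Second, the inclusion $[\overline{\Gamma\xi}]\subseteq[\pG]$ you call routine is not quite free: $\pG=\Lambda(\Gamma o)$ is closed, but its saturation $[\pG]$ need not be, and one again appeals to the closedness of $[\cdot]$ on $\Gamma[f^\pm]$ to pass limits through. Your closing remark about reaching $[f^-]$ from $[f^+]$ via a flip in $E(f)$ or via Lemma~\ref{DoubleDense} is unnecessary, since the main argument already treats every $\eta\in\pG$ uniformly.
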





\subsection{Quasi-conformal density and Patterson's construction}
Let $\bU:=\pU\cup\U$ be a convergence compactification, with a nonempty set $\mathcal C$ of non-pinched points in Definition \ref{ConvBdryDefn}.  We need to restrict to a smaller subset of  $\mathcal C$, on which some weak continuity of Busemann cocycles can be guaranteed.

\begin{assumpD}\label{AssumpE}
There exists a subset $\mathcal C^{\mathrm{hor}}\subseteq \mathcal C$ with a family  of  Buseman quasi-cocycles
$$\big\{B_\xi: \quad \U \times \U \longrightarrow \mathbb R\big\}_{\xi \in
\mathcal C^{\mathrm{hor}}}$$ so that  for any $x, y \in \U$, we have 
\begin{align}\label{BusemanConvError}
\limsup_{z\to[\xi]} |B_\xi(x, y)-B_z(x, y)| \le \epsilon,   
\end{align}
where $\epsilon\ge 0$ may depend on $[\xi]$ but not on $x,y$. 

For a given  $\epsilon$, let $\mathcal C^{\mathrm{hor}}_\epsilon$  denote the set of points $\xi\in \mathcal C^{\mathrm{hor}}$ for which (\ref{BusemanConvError}) holds.  Thus, $\mathcal C^{\mathrm{hor}}=\bigcup_{\epsilon\ge 0}\mathcal C^{\mathrm{hor}}_\epsilon$.  
\end{assumpD}

In practice, the assumption (\ref{BusemanConvError})  could allow to take the following concrete definition:
\begin{align*}
B_{\xi}(x,y)&=\limsup_{z_n\to \xi} \left[d(x,z_n)-d(y,z_n)\right]   
\end{align*}
The following facts shall be used implicitly.
\begin{equation}\label{BusemanCocylesP1}
\begin{array}{l}
B_{\xi}(x,y)\le d(x,y)\\
B_{\xi}(x,y)=B_{g\xi}(gx,gy),\; \forall g\in \isom(\U)
\end{array}
\end{equation}

\paragraph{\textbf{Horofunctions in convergence boundary}} 
We now  define Buseman quasi-cocycles at a $[\cdot]$--class $[\xi]$. Namely, given $\xi\in \mathcal C^{\mathrm{hor}}_\epsilon$, define  a   \textit{Busemann quasi-cocycle} at $\xi$ (resp. $[\xi]$)  as follows
$$\begin{aligned}
B_{[\xi]}(x,y)=\limsup_{z_n\to [\xi]} \left[d(x,z_n)-d(y,z_n)\right]
\end{aligned}$$ 
where the convergence  $z_n\in \U\to \xi$ (resp. $z_n\to[\xi]$ ) takes place in $\bU=\U\cup\pU$. The equations in (\ref{BusemanCocylesP1}) hold for $B_{[\xi]}(x,y)$, and moreover, for all $\xi\in [\xi]$,
\begin{equation}\label{Horofunctionatclass}
\begin{array}{rl}
 |B_{\xi}(x,y) -B_{[\xi]}(x,y)|&\le \epsilon\\
|B_{[\xi]}(x,y)-B_{[\xi]}(x,z)-B_{[\xi]}(z,y)|&\le 2\epsilon    
\end{array}
\end{equation} 

\paragraph{\textbf{Horoballs at $[\cdot]$--classes}} 

We give the following tentative definition of horoballs at a $[\cdot]$--class in a general convergence boundary. Given  a real number $L$,  we define the horoball of (algebraic) depth $L$. We take the convention that the center $\xi$ has depth $-\infty$. 

\begin{defn}\label{HoroballDefn}
Let $\xi\in \mathcal C^{\mathrm{hor}}_\epsilon$ for $\epsilon\ge 0$.  We define a horoball  centered at  $[\xi]$ of depth $L$ as follows
$$
\mathcal{HB}([\xi],L) =\{y\in \U: B_{[\xi]}(y,o)\le L\}
$$
We omit   $L$ if $L=0$ or it does not matter in context.  

As the limit supper of continuous functions,   $B_{[\xi]}(x,y)$ is  lower semi-continuous, {so   $\mathcal{HB}([\xi],L)$ is a closed subset}. By abuse of language, we say the level set $\{y\in\U: B_{[\xi]}(y,o)= L\}$ is a \textit{horosphere} or the \textit{boundary} of $\mathcal{HB}([\xi],L)$.
\end{defn}
\begin{lem}\label{HoroballProperty}
For $L_1>L_2$, we have \begin{enumerate}
    \item $\mathcal{HB}([\xi],L_1)\subseteq N_L(\mathcal{HB}([\xi],L_2))$ for $L=L_1-L_2$. 
    \item 
    $\mathcal{HB}([\xi],L_2)$ has distance at least $L-2\epsilon$ to the complement $\U\setminus \mathcal{HB}([\xi],L_1)$.  
\end{enumerate}
\end{lem}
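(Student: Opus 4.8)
The plan is to deduce both items from the quasi-cocycle inequality in \eqref{Horofunctionatclass}, namely $|B_{[\xi]}(x,o)-B_{[\xi]}(x,z)-B_{[\xi]}(z,o)|\le 2\epsilon$, together with the bound $B_{[\xi]}(x,y)\le d(x,y)$ from \eqref{BusemanCocylesP1}.

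Item (2) is the quick one. Given $y\in\mathcal{HB}([\xi],L_2)$ and $z\in\U\setminus\mathcal{HB}([\xi],L_1)$, so $B_{[\xi]}(y,o)\le L_2$ and $B_{[\xi]}(z,o)>L_1$, I will apply $|B_{[\xi]}(z,o)-B_{[\xi]}(z,y)-B_{[\xi]}(y,o)|\le 2\epsilon$ to obtain
$$L_1<B_{[\xi]}(z,o)\le B_{[\xi]}(z,y)+B_{[\xi]}(y,o)+2\epsilon\le d(z,y)+L_2+2\epsilon,$$
hence $d(z,y)>L_1-L_2-2\epsilon=L-2\epsilon$; as $y$ and $z$ are arbitrary, this gives the asserted lower bound on the set distance.

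For item (1) I would argue by an explicit construction. Fix $y$ with $B_{[\xi]}(y,o)\le L_1$; if already $B_{[\xi]}(y,o)\le L_2$ take $w=y$, so assume $L_2<B_{[\xi]}(y,o)$. Using the definition of $B_{[\xi]}(y,o)$ as a limsup, I pick $z_n\in\U$ with $z_n\to[\xi]$ in $\bU$ and $d(y,z_n)-d(o,z_n)\to B_{[\xi]}(y,o)$. Since $z_n\to[\xi]$ forces $d(o,z_n)\to\infty$, for large $n$ we have $d(y,z_n)>L$, so I may choose $w_n$ on a geodesic $[y,z_n]$ with $d(y,w_n)=L$; then $d(w_n,z_n)=d(y,z_n)-L$. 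Properness of $\U$ lets me pass to a subsequence along which $w_n\to w$, with $d(y,w)=L$, and since $|d(w,z_n)-d(w_n,z_n)|\le d(w,w_n)\to 0$ one gets, along this subsequence,
$$d(w,z_n)-d(o,z_n)=d(y,z_n)-L-d(o,z_n)+o(1)\ \longrightarrow\ B_{[\xi]}(y,o)-L\le L_2 .$$
The remaining task is to upgrade this computation along the single sequence $(z_n)$ to the bound $B_{[\xi]}(w,o)\le L_2$ for the full limsup defining $B_{[\xi]}(w,o)$, i.e.\ to control $d(w,z')-d(o,z')$ for an arbitrary $z'\to[\xi]$. For this I would invoke the quasi-continuity of Busemann cocycles near $[\xi]$ (Assumption D, with $\xi$ restricted to $\mathcal{C}^{\mathrm{hor}}_\epsilon$) on the compact set $\{w_n:n\ge 1\}\cup\{w\}$, together with the fact that $B_{[\xi]}(z_n,o)\to-\infty$ as $z_n\to[\xi]$, which uses that $\xi$ is non-pinched so that geodesics between points heading to $[\xi]$ escape. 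In the model case of the horofunction boundary ($\epsilon=0$, where $b_{z_n}\to b_\xi$ uniformly on compacta) this step is clean: $b_\xi(w)=\lim_n b_{z_n}(w_n)=B_{[\xi]}(y,o)-L\le L_2$, so $w\in\mathcal{HB}([\xi],L_2)$ with $d(y,w)=L$, which is (1).

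The main obstacle is exactly this last step of (1): because $B_{[\xi]}(\cdot,o)$ is a limsup over \emph{all} sequences approaching $[\xi]$, the geodesic construction controls its value only along one chosen sequence, and one must exclude a strictly larger value along another sequence; the non-pinched hypothesis and the weak continuity of Busemann cocycles (Assumption D) are precisely what close this gap, whereas item (2) uses neither.
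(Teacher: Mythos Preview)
Your proof of item~(2) is correct and is exactly the paper's argument (contrapositive of the quasi-cocycle estimate).

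For item~(1) your approach is also essentially the paper's: the paper passes to a limiting geodesic ray $\gamma$ of $[x,z_n]$ as $z_n\to\xi$ (calling it a \emph{gradient line}, along which $B_\xi(x,\cdot)=d(x,\cdot)$), which is the same Arzel\`a--Ascoli construction you carry out with your points $w_n\to w$. The gap you flag---that controlling $d(w,z_n)-d(o,z_n)$ along the chosen sequence does not immediately bound the full $\limsup$ defining $B_{[\xi]}(w,o)$---is genuine, and the paper's one-line proof does not address it either. Your instinct to close it via Assumption~D (the $\epsilon$-continuity of Busemann cocycles) is the right one: from \eqref{BusemanConvError} one gets $B_\xi(w,o)\le \liminf_n[d(w,z_n)-d(o,z_n)]+\epsilon\le L_2+\epsilon$, hence $B_{[\xi]}(w,o)\le L_2+2\epsilon$. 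This yields (1) up to an additive $O(\epsilon)$, and is exact on the horofunction boundary where $\epsilon=0$ and $b_{z_n}\to b_\xi$ locally uniformly---which is the model case you correctly single out. The side remark about $B_{[\xi]}(z_n,o)\to-\infty$ and non-pinchedness is not needed for this closure and can be dropped.
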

\begin{proof}
(1). Observe that,  for any $x\in \U$ and $\xi\in \pU$, there exists a geodesic ray $\gamma$ starting at $x$ so that $B_\xi(x,y)=d(x,y)$ for any $y\in\gamma$. ($\gamma$ is called  a gradient line in some literature). By an Ascoli-Arzela argument,  $\gamma$ is obtained as a limiting ray of $[x,z_n]$ when $z_n\to \xi$. Thus, any point $x\in \mathcal{HB}([\xi],L_1)$ has a distance $L$ to some $y\in \mathcal{HB}([\xi],L_2)$. 

(2). If $d(x,y)\le L-2\epsilon$ for some $x\in \mathcal{HB}([\xi],L_2)$ and $y\in \U$, we obtain $B_{[\xi]}(y,o)\le B_{[\xi]}(y,x)+B_{[\xi]}(x,o)+2\epsilon\le L+L_2\le L_1$ so  $y\in \mathcal{HB}([\xi],L_1)$. The conclusion follows. 
\end{proof}
By abuse of language, we also call the set $\mathcal{HB}_L([\xi],x)$ a horoball in any $\eta\in [\xi]$. In other words, all points in the same class $[\xi]$ share a common horoball.


Let $ \mathcal M^+(\bU)$ be the set of finite positive Radon measures on $\bU:=\pU\cup\U$, on which  $\Gamma$ acts  by push-forward: for any Borel set $A$, $$g_\star\mu(A)=\mu(g^{-1}A).$$ 
\begin{defn}\label{ConformalDensityDefn}

Let $\omega \in [0, \infty[$. A  map 
\begin{align*}
    \mu:\quad \U \quad \longrightarrow &\quad \mathcal M^+(\bU)\\
    x \quad \longmapsto &\quad \mu_x
\end{align*} is a
\textit{$\omega$--dimensional, $\Gamma$--quasi-equivariant, quasi-conformal density}  if  for any $ g\in \Gamma$ and $x,y\in \U$,
\begin{align}\label{almostInv}
\mu_{gx}-\mathrm{a.e.}\; \xi\in \pU: & \quad
\frac{dg_\star\mu_{x}}{d\mu_{gx}}(\xi) \in [\frac{1}{\lambda}, \lambda],\\
\label{confDeriv}
\mu_y-\mathrm{a.e.}\;
\xi\in \pU: & \quad
\frac{1}{\lambda} e^{-\omega B_\xi (x, y)}  \le  \frac{d\mu_{x}}{d\mu_{y}}(\xi) \le \lambda e^{-\omega B_\xi (x, y)}
\end{align}
for a  universal constant $\lambda\ge 1$.  We normalize $\mu_o$ to be a probability: its mass $\|\mu_o\|=\mu_o(\bU)=1$.

If   $\{\mu_x:x\in\U\}$ is supported on non-pinched boundary points (\emph{i.e.}: $\mu_o(\mathcal C)=1$), we say it is a \textit{non-trivial} quasi-conformal density.
\end{defn}
\begin{rem}
A {non-trivial} quasi-conformal density is much weaker than saying that $\mu_o$ is supported on the conical points. Take for  instance the horofunction boundary $\pU$, where $\mathcal C=\pU$ is the whole boundary. Moreover, all Patterson-Sullivan measures are {non-trivial} quasi-conformal density in Examples \ref{ConformalDensityExamples}.

If $\lambda=1$   for (\ref{almostInv}), the map $\mu: \U \to \mathcal M^+(\bU)$ is $\Gamma$--equivariant: that is, 
$\mu_{gx} = g_{\star}\mu_x$ (equivalently, $\mu_{gx}(gA)=\mu_x(A)$).
If in both (\ref{almostInv}) and (\ref{confDeriv}), $\lambda=1$, we call $\mu$ a conformal density.     
\end{rem}
\paragraph{\textbf{Patterson's construction of quasi-conformal density}}
Fix a basepoint $o \in \U$. Consider the orbital points in   the ball of radius $R>0$:
\begin{equation}\label{BallEQ}
N_\Gamma(o, n)=\{v\in \Gamma o: d(o, v)\le n\}
\end{equation}   
The \textit{critical exponent} $\e A$ for a subset $A \subseteq \Gamma$   is independent of the choice of $o \in \U$:
\begin{equation}\label{criticalexpo}
\omega_A = \limsup\limits_{R \to \infty} \frac{\log \sharp(N_\Gamma(o, R)\cap A o)}{R},
\end{equation}
which is intimately related  to the (partial) Poincar\'e series 
\begin{equation}\label{PoincareEQ}
s\ge 0, x,y\in\U, \quad \p_A(s,x, y) = \sum\limits_{g \in A} e^{-sd(x, gy)}
\end{equation}
as $\p_{A}(s,x,y)$ diverges for $s<\e A$ and converges for $s>\e A$. The   action   $\Gamma\act \U$  is called of \textit{divergence type} (resp.
\textit{convergence type})   if $\p_{\Gamma}(s,x,y)$ is divergent (resp. convergent) at
$s=\e \Gamma$.  

Fix $\Delta\ge 1$. The family of  the annulus-like sets of radius $n$ centered at $o\in \U$
\begin{equation}\label{AnnulusEQ}
A_\Gamma(o, n, \Delta)=\{v\in \Gamma o: |d(o, v)-n|\le\Delta\}
\end{equation} 
covers $Go$ with  multiplicity at most $2\Delta$. It is useful to keep in mind that for $s> \e A, x,y\in\U,$
\begin{equation}\label{AnnulusPoincareEQ}
 \quad \p_A(s,o, o) \asymp_\Delta \sum\limits_{g \in A\cap A_\Gamma(o, n, \Delta)} e^{-sd(o, go)}
\end{equation}

Fix $y\in \U$. We start by constructing a family of measures $\{\mu_x^{s,y}\}_{x \in \U}$ supported on $\Gamma y$ for any given $s >\e \Gamma$. Assume that $\p_\Gamma(s, x,y)$ is divergent at $s=\e \Gamma$. Set
\begin{equation}\label{PattersonEQ}
\mu_{x}^{s, y} = \frac{1}{\p_\Gamma(s, o, y)} \sum\limits_{g \in \Gamma} e^{-sd(x, gy)} \cdot \dirac{gy},
\end{equation}
where $s >\e \Gamma$ and $x \in \U$. Note that $\mu^{s, y}_o$ is a probability measure
 supported on $\Gamma y$. 
If $\p_\Gamma(s, x,y)$ is convergent at $s=\e \Gamma$, the Poincar\'e series in (\ref{PattersonEQ}) needs to be replaced by a modified series as in \cite{Patt}.

Choose $s_i \to \e \Gamma$ such that for each $x\in \U$, $\mu_x^{s_i, y}$ is a convergent sequence in
$\mathcal M(\pG)$.  The family of limit
measures $\mu_x^y = \lim \mu_x^{s_i,y}$ ($x\in\U$) are called \textit{Patterson-Sullivan measures}. 

By construction, Patterson-Sullivan measures are by no means unique a priori, depending on the choice of $y\in \U$ and  $s_i\to \e \Gamma$. Note that $\mu_o^y(\pG) = 1$ is a normalized condition, where $o\in \U$ is a priori chosen basepoint. In what follows, we usually write $\mu_x=\mu_x^o$ for $x\in \U$ (\emph{i.e.}: $y=o$).

There are other sources of conformal densities, not necessarily coming from Patterson's construction.  Here we describe in some details the construction of conformal densities on Thurston boundary. See \cite{AFT07} for other examples constructed on ends of hyperbolic 3-manifolds. 
\begin{example}\label{ThurstonMeasure}
Consider the Teichm\"{u}ller space $\T_g$ of a closed oriented surface $\Sigma_g$ ($g\ge 2$). The space $\mf$ of measured foliations on $\Sigma_g$, which is homeomorphic to $\mathbb R^{6g-6}$, admits a $\mathrm{Mod}(\Sigma_g)$--invariant ergodic measure,  $\mu_{\mathrm{Th}}$, called Thurston measure by the work of Masur and Veech. Positive reals $\mathbb R_{>0}$ act on $\mf$ by scaling, and let   $$\pi:\mf\longmapsto \pmf=\mf/\mathbb R_{>0}$$  
be the natural quotient map.
 
$\pmf$ is also called the Thurston boundary, and it gives  a convergence boundary for $\T_g$  with equipped with the Teichm\"{u}ller metric. Here we take the Kaimanovich-Masur partition $[\cdot]$, whose precise description is not relevant here, but the set $\ue$ of uniquely ergodic points in $\pmf$ is  partitioned into singletons. It is a well-known fact that $\mu_{\mathrm{Th}}$ is supported on $\pi^{-1}(\ue)$.  
 
The $\mu_{\mathrm{Th}}$ induces a $(6g-6)$--dimensional, $\mathrm{Mod}(\Sigma_g)$--equivariant, conformal density on $\pmf$ as follows. Given $x\in \T_g$, consider the extremal length function $$\mathrm{Ext}_x: \mf\longmapsto \mathbb R_{\ge 0}$$ which is square homogeneous, $\mathrm{Ext}_x(t\xi)=t^2\mathrm{Ext}_x(\xi)$. Take the  ball-like set $B_\mathrm{Ext}(x)=\{\xi\in \mf: \mathrm{Ext}_x(\xi)\le 1\}.$ For any $A\subseteq \pmf$, set $\mu_x(A)=\mu_{\mathrm{Th}}(B_\mathrm{Ext}(x)\cap \pi^{-1}{A})$. Direct computation gives
$$\mu_y-\mathrm{a.e. }\; \xi\in \pmf:\;\frac{d\mu_x}{d\mu_y}(\xi)=\left[\frac{\sqrt{\mathrm{Ext}_x(\xi)}}{\sqrt{\mathrm{Ext}_y(\xi)}}\right]^{6g-6}
$$
The family $\{\mu_x\}$ forms a conformal density of dimension $6g-6$: the term in the right-hand bracket coincides with $\mathrm{e}^{-B_\xi(x,y)}$ for  $\xi\in \ue$, and $\mu_x$ charges the full measure to $\ue$.
Note that this family is the same as the conformal density obtained from the action $\mathrm{Mod}(\Sigma_g)\act \T_g$ through the Patterson construction above (see \cite{YANG22}). 
\end{example}

\begin{thm}\label{ConformalDensityExists}
Suppose that $G$ acts properly on a proper geodesic space  $\U$ compactified with horofunction boundary $\hU$. Then the family $\{\mu_x:x\in\U\}$  of Patterson-Sullivan measures is  a $\e G$-dimensional $G$-equivariant conformal density supported on $[\Lambda Go]$.     
\end{thm}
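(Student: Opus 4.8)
The plan is to run Patterson's construction directly, exploiting the single feature of the horofunction compactification that we need: writing $b_z(x):=d(x,z)-d(o,z)$ for $z\in\U$ and letting $b_z:=b_\xi$ be the horofunction itself when $z=\xi\in\hU$, the map $(z,x)\mapsto b_z(x)$ is jointly continuous on $\bU\times\U$ and satisfies $|b_z(x)|\le d(o,x)$. I would first treat the divergence type case. Fix the basepoint $o$; for $s>\e G$ the measures $\mu_o^{s,o}=\p_G(s,o,o)^{-1}\sum_{g\in G}e^{-sd(o,go)}\dirac{go}$ are probability measures on the compact metrizable space $\bU$, so by weak-$*$ compactness there is a sequence $s_i\downarrow\e G$ with $\mu_o^{s_i,o}\to\mu_o$. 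The identity making everything work is the factorization $e^{-sd(x,go)}=e^{-sb_{go}(x)}e^{-sd(o,go)}$, which yields $\int\phi\,d\mu_x^{s,o}=\int\phi(z)e^{-sb_z(x)}\,d\mu_o^{s,o}(z)$ for every $x\in\U$ and $\phi\in C(\bU)$. Since $e^{-s_ib_z(x)}\to e^{-\e G\,b_z(x)}$ uniformly in $z\in\bU$ (boundedness of $b_z(x)$) and $z\mapsto\phi(z)e^{-\e G\,b_z(x)}$ is continuous on $\bU$, it follows that $\mu_x^{s_i,o}$ converges weakly to the measure $\mu_x$ determined by $d\mu_x(z)=e^{-\e G\,b_z(x)}\,d\mu_o(z)$; this is the family of Patterson--Sullivan measures.

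Next I would verify the asserted properties, all with optimal constant $\lambda=1$. Conformality and the value of the dimension are read off the formula for $\mu_x$: for $\mu_o$--a.e.\ $\xi$ one has $\tfrac{d\mu_x}{d\mu_y}(\xi)=e^{-\e G(b_\xi(x)-b_\xi(y))}=e^{-\e G B_\xi(x,y)}$. Equivariance is exact already before the limit: reindexing the defining sum gives $\mu_{gx}^{s,o}=g_\star\mu_x^{s,o}$ for all $s$, and as $z\mapsto gz$ is a homeomorphism of $\bU$ the map $g_\star$ is weak-$*$ continuous, whence $\mu_{gx}=g_\star\mu_x$. For the support, each $\mu_o^{s,o}$ is carried by $Go$, so $\mu_o$ is carried by the closure $\overline{Go}=Go\sqcup\Lambda(Go)$ in $\bU$; and testing $\mu_o$ against a continuous bump function equal to $1$ on $\overline{B}(o,R)$ and supported in $\U$ bounds $\mu_o(\overline{B}(o,R))$ above by $\limsup_i(\text{const}_R)\cdot\p_G(s_i,o,o)^{-1}=0$, because $\p_G(s,o,o)\to\infty$ as $s\to\e G$ in the divergence type case. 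As $\U$ is $\sigma$-compact this gives $\mu_o(\U)=0$, so $\mu_o$ is concentrated on $\Lambda(Go)\subseteq[\Lambda(Go)]$, and the same holds for every $\mu_x$ since $\mu_x$ and $\mu_o$ are mutually absolutely continuous. Finally $\|\mu_o\|=1$ is inherited from the probability measures $\mu_o^{s,o}$.

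For the convergence type case I would invoke Patterson's device \cite{Patt}: there is a non-decreasing, slowly growing function $\theta\colon\mathbb R_{\ge0}\to\mathbb R_{>0}$ such that $\sum_{g\in G}\theta(d(o,go))e^{-sd(o,go)}$ still has critical exponent $\e G$ but diverges at $s=\e G$. Using this modified series in place of $\p_G(s,o,o)$ and the weights $\theta(d(o,go))e^{-sd(o,go)}$ in place of $e^{-sd(o,go)}$ merely absorbs $\theta$ into new reference measures $\tilde\mu_o^{s,o}$, so the factorization identity, hence the entire argument above, applies verbatim and again produces a genuine ($\lambda=1$) $\e G$--dimensional $G$--equivariant conformal density supported on $[\Lambda(Go)]$.

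The construction is classical and each step is short; the only points needing genuine care are (i) checking that the weak limit loses no mass into the interior $\U$, so that it truly is a boundary measure carried by $\Lambda(Go)$ — this is exactly where divergence of the (modified) Poincar\'e series at the critical exponent enters — and (ii) verifying Patterson's slow-growth lemma in the present metric setting so that the modification restores divergence without changing $\e G$. I expect the support computation together with this bookkeeping to be the main, albeit routine, obstacle.
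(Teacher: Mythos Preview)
The paper does not supply a proof of this theorem; it is stated as a background result (the classical Patterson construction on the horofunction boundary) and the exposition moves on immediately. Your proposal is the standard argument and is correct: the joint continuity of $(z,x)\mapsto b_z(x)$ on $\bU\times\U$ is exactly what makes the horofunction boundary special, and it yields genuine (not just quasi-)conformality and equivariance, matching the statement.

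One small point: in the convergence-type case the argument does not quite apply ``verbatim'' to equivariance. After inserting the Patterson weight $\theta(d(o,ho))$, reindexing gives $\theta(d(o,gho))$ rather than $\theta(d(o,ho))$, so $g_\star\tilde\mu_x^{s,o}\ne\tilde\mu_{gx}^{s,o}$ before the limit. You recover exact equivariance only in the limit, using the slow-growth property $\theta(t+a)/\theta(t)\to 1$ together with $|d(o,gho)-d(o,ho)|\le d(o,go)$ and the fact that the limit measure has no mass on $\U$. This is routine and well known, but worth flagging since you listed the equivariance check as trivial.
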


In the sequel, we write PS-measures as shorthand for
Patterson-Sullivan measures.
 
Let $\{\mu_x:x\in \U \}$ be a \emph{nontrivial} $\omega$--dimensional $\Gamma$--equivariant quasiconformal density on a convergence compactification $\U\cup\pU$: $\mu_o$ is supported on the set $\mathcal C$ of non-pinched points in Definition \ref{ConvBdryDefn}(C). 
 
\subsection{Shadow Principle}

Let  $F\subseteq \Gamma$ be a set of three (mutually) independent contracting elements  $f_i$ ($i=1,2,3$), which form a $C$-contracting system  
\begin{equation}\label{SystemFDef}
\f =\{g\cdot \ax(f_i):   g\in \Gamma \}
\end{equation}
where  the axis $\ax(f_i)$  in (\ref{axisdefn}) is $C$-contracting with $C$  depending on the choice of the basepoint $o\in \U$. We may often assume $d(o,fo)$ is large as possible, by taking sufficiently high power of $f\in F$. The contracting  constant $C$ is not effected.  

First of all, define the usual cone and shadow:  
$$\Omega_{x}(y, r) := \{z\in \U: \exists [x,z]\cap B(y,r)\ne\emptyset\}$$
and $\Pi_{x}(y, r) \subseteq \pU$ be the topological closure  in $\pU$ of $\Omega_{x}(y, r)$.

The partial shadows $\Pi_o^F(go, r)$ and cones $\Omega_o^F(go, r)$   defined in the following depend on the choice of a contracting system $\f$  in (\ref{SystemFDef}).

\begin{defn}[Partial cone and shadow]\label{ShadowDef}
For $x\in \U, y\in \Gamma o$, the \textit{$(r, F)$--cone} $\Omega_{x}^F(y, r)$ is the set of elements $z\in \U$ such that $y$ is a $(r, F)$--barrier for some geodesic $[x, z]$.  

The \textit{$(r, F)$--shadow} $\Pi_{x}^F(y, r) \subseteq \pU$ is the topological closure  in $\pU$ of the cone $\Omega_{x}^F(y, r)$.
\end{defn}

The follow terminology is from Roblin \cite{Roblin2}.
\begin{defn}\label{ShadowPrincipleDef}   
We say that $\{\mu_x:x\in \U \}$ satisfies \textit{Shadow Principle} over a subset $Z\subseteq \U$  if there is  some large constant $r_0>0$ so that the following holds 
\begin{align*}
\|\mu_y\| \mathrm{e}^{-\omega \cdot  d(x, y)}  \prec    \mu_x(\Pi_x(y,r))   \prec_r  \|\mu_y\|   \mathrm{e}^{-\omega \cdot  d(x, y)}   
\end{align*}
for any $x,y\in Z$ and $r>r_0$, where the implied constant depends on $Z$.
\end{defn}
  
The most fundamental example is provided by the Sullivan's Shadow Lemma, where $Z$ could be taken to be any $\Gamma$--cocompact subset (by $\Gamma$--equivariance, $\|\mu_x\|$ thus remains uniformly bounded over $x\in Z$). Here are some other examples.
\begin{examples}
\begin{enumerate}
    \item 
    Roblin realized that $Z$ could be enlarged to $Z:=N(\Gamma)o$, provided that the normalizer $N(\Gamma)$ is a discrete subgroup. 
    
    \item 
    Let $\U$ be a rank-1 symmetric space. If $\omega$ is the Hausdorff dimension of the visual boundary $\pU$, there exists  a unique (up to scaling) $\omega$--dimensional $\Gamma$--equivariant conformal density $\{\mu_x:x\in \U \}$  on $\pU$.  It satisfies the Shadow Principle over the whole space $Z=\U$, where $\|\mu_x\|=1$ for any $x\in \U$. 
    \item 
    We shall establish the Shadow Principle for confined subgroups in \textsection\ref{secshadow}.
    \item 
    We conjecture that the unique $(6g-6)$--dimensional conformal density on $\pmf$ also satisfies the Shadow Principle over $Z= \T_g$. See the relevance to Question \ref{questHorlimitset} as explained in Remark \ref{ExtensionMCG}.
\end{enumerate}    
\end{examples}

We now recall the following shadow lemma on the convergence boundary.
\begin{lem}[{\cite[Lemma 6.3]{YANG22}}]\label{ShadowLem}
Let $\{\mu_x\}_{x\in \U}$ be a \emph{nontrivial} $\omega$--dimensional $G$--equivariant quasi-conformal density  for some $\omega>0$ (\emph{i.e.} supported on the set $\mathcal C\subseteq \pU$ of non-pinched points). Then there exist $r_0,  L_0 > 0$ with the following property. 

Assume that $d(o,fo)>L_0$ for each $f\in F$.  For given $r \ge  r_0$, there exist $C_0=C_0(F),C_1=C_1(F, r)$ such that  
$$
\begin{array}{rl}
   C_0 \mathrm{e}^{-\omega \cdot  d(o, go)}  \le   \mu_o(\Pi_o^F(go,r))  \le \mu_o(\Pi_o(go,r))   \le C_1     \mathrm{e}^{-\omega \cdot  d(o, go)} 
\end{array}
$$
for any $go\in Go$.
\end{lem}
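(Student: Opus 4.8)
The plan is to prove the two inequalities separately, both via the conformal transformation rule \eqref{confDeriv} combined with a geometric estimate on the Busemann cocycle $B_\xi(o, go)$ for $\xi$ in the relevant shadow. The starting observation, valid for any $\omega$-dimensional quasi-conformal density, is that for every measurable $A \subseteq \pU$ one has
\[
\mu_o(A) = \int_A \frac{d\mu_o}{d\mu_{go}}(\xi)\, d\mu_{go}(\xi) \asymp_\lambda \int_A \mathrm{e}^{-\omega B_\xi(go, o)}\, d\mu_{go}(\xi),
\]
so that estimating $\mu_o(\Pi_o^F(go,r))$ and $\mu_o(\Pi_o(go,r))$ reduces to (i) controlling $B_\xi(go,o)$ from above and below on these shadows, and (ii) controlling $\mu_{go}$ of the preimage shadow $g^{-1}\Pi_o^F(go,r)$, which by quasi-equivariance \eqref{almostInv} is comparable to $\mu_o(g^{-1}\Pi_o^F(go,r))$.

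First I would prove the \emph{upper bound} $\mu_o(\Pi_o(go,r)) \le C_1 \mathrm{e}^{-\omega d(o,go)}$. The key geometric point: if $\xi \in \Pi_o(go,r)$, i.e. $\xi$ is a limit of points $z$ with some geodesic $[o,z]$ passing within $r$ of $go$, then any geodesic ray or sequence converging to $[\xi]$ must pass near $go$, and hence $B_\xi(o,go) \ge d(o,go) - 2r - O(\epsilon)$, equivalently $B_\xi(go,o) \le -d(o,go) + 2r + O(\epsilon)$. This uses properness of $\U$, the definition of the Busemann quasi-cocycle as a $\limsup$ over $z_n \to [\xi]$, and the triangle-type inequality \eqref{Horofunctionatclass}. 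Then
\[
\mu_o(\Pi_o(go,r)) \asymp_\lambda \int_{\Pi_o(go,r)} \mathrm{e}^{-\omega B_\xi(go,o)}\, d\mu_{go}(\xi) \le \lambda\, \mathrm{e}^{\omega(2r + O(\epsilon))}\, \mathrm{e}^{-\omega d(o,go)}\, \|\mu_{go}\|,
\]
and since $\|\mu_{go}\| \asymp_\lambda \|\mu_o\| = 1$ by \eqref{almostInv} applied to $g$, this gives the upper bound with $C_1 = C_1(F,r)$ (here the dependence on $F$ enters only through the contracting constant $C$ controlling how "near" means "near").

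For the \emph{lower bound} $\mu_o(\Pi_o^F(go,r)) \ge C_0 \mathrm{e}^{-\omega d(o,go)}$, I would argue as follows. Pull back by $g$: $g^{-1}\Pi_o^F(go,r)$ contains a shadow $\Pi_{g^{-1}o}^F(o, r)$, and since the density is non-trivial (supported on non-pinched points) and $\Gamma$ is non-elementary with a contracting element, by Lemma \ref{FixptsDense} and the Extension Lemma \ref{extend3} there is a \emph{uniform} (over $g$) lower bound: the set of $[\cdot]$-classes of limit points that can be "captured into the shadow of $o$ as seen from $g^{-1}o$" by prepending/appending a barrier element $f \in F$ has $\mu_{g^{-1}o}$-measure bounded below. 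Concretely: for $\mu_o$-a.e.\ $\xi \in \mathcal C$, the Extension Lemma produces $f \in F$ so that a geodesic from $g^{-1}o$ toward a point in $[f^{-1}\xi]$-direction passes near $o$ with a barrier; one then transfers this to a positive measure statement using \eqref{almostInv}, \eqref{confDeriv} and the geometric estimate $B_\xi(o,go) \le d(o,go) + O(1)$ valid for $\xi$ in a partial shadow (barriers force the Busemann cocycle to be close to $d(o,go)$, not merely bounded above by it). Summing the contributions over the finitely many $f \in F$ and using that $\mu_o$ charges full measure to $\mathcal C$ yields $\mu_o(g^{-1}\Pi_o^F(o,r)) \ge c_0 > 0$ uniformly, hence $\mu_o(\Pi_o^F(go,r)) \asymp_\lambda \|\mu_{go}\| \cdot c_0 \cdot \mathrm{e}^{-\omega d(o,go)} \asymp \mathrm{e}^{-\omega d(o,go)}$.

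The main obstacle I anticipate is the lower bound, specifically making the "positive proportion of $\mu_o$ can be pushed into the partial shadow" step precise and \emph{uniform in $g$}. On a hyperbolic space this is routine (shadows of distinct directions cover the boundary with bounded multiplicity), but on a general convergence boundary one must work with $[\cdot]$-classes, cope with the $\epsilon$-slack in the Busemann quasi-cocycle, and use the non-pinched hypothesis essentially — it is what guarantees that geodesics between points converging to $[\xi]$ escape, so that the barrier produced by the Extension Lemma genuinely forces the geodesic $[g^{-1}o, z]$ near $o$. The choice of $r_0, L_0$ is dictated by Proposition \ref{admisProp} (so that $(L,\tau)$-admissible paths fellow-travel geodesics) and by the bounded-projection constant of the system $\f$; I would fix these first, then all subsequent constants depend only on $F$ (through $C$) and on $r$.
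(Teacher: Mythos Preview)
The paper does not supply its own proof of Lemma~\ref{ShadowLem}; the result is quoted from \cite[Lemma~6.3]{YANG22}. That said, your strategy is the correct one and matches the argument carried out in the paper for the closely related Shadow Principle (Lemma~\ref{ShadowPrinciple}), which is a generalization of this lemma to $H$--conformal densities over a larger range $x,y\in\Gamma o$.

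A couple of small points worth sharpening. For the lower bound you write that $g^{-1}\Pi_o^F(go,r)$ \emph{contains} $\Pi_{g^{-1}o}^F(o,r)$; in fact these are equal by $G$--equivariance of the barrier condition. More substantively, your description of ``pushing measure into the shadow'' via the Extension Lemma is right in spirit but a bit vague in the mechanism. The precise step (compare the proof of Lemma~\ref{ShadowPrinciple}) is: for $\xi\in\mathcal C$ choose $z_n\to\xi$; apply Lemma~\ref{extend3} to the pair $(g,z_n)$ (after possibly passing to orbit points) to obtain $f\in F$ with $go$ an $(r,f)$--barrier on $[o,gfz_n]$; deduce $gf\xi\in\Pi_o^F(go,r)$; partition $\mathcal C=\bigsqcup_{f\in F} U_f$ according to which $f$ works; then use \eqref{confDeriv} and \eqref{almostInv} to compare $\mu_{go}(gfU_f)$ with $\mu_o(U_f)$. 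Summing over the three elements of $F$ gives the uniform positive lower bound on $\mu_{go}(\Pi_o^F(go,r))$, and conformality finishes. The non-pinched hypothesis enters exactly where you say, to ensure that $gfz_n\to gf\xi$ lands in the closed shadow.
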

\begin{rem}
We emphasize that the $\omega$--dimensional $\Gamma$--conformal density exists only for $\omega\ge \e \Gamma$ (\cite[Prop. 6.6]{YANG22}). Even if the action $\Gamma \act \U$ is of divergence action (\emph{i.e.} a non-uniform lattice in $\isom(\mathbb H^n)$), a $\omega$--dimensional conformal density may exist for $\omega>\e \Gamma$. See \cite{AFT07} for relevant discussions.
\end{rem}

In what follows, when speaking of $\Pi_o^F(go,r)$, we assume $r, F$ satisfy the Shadow Lemma \ref{ShadowLem}. 
\subsection{Conical points}
We now give  the    definition  of a conical point. Recall that $\mathcal C\subseteq \pU$ is the  set of non-pinched boundary points in Definition \ref{ConvBdryDefn}.  Roughly speaking, conical points are non-pinched and shadowed by infinitely many contracting segments with fixed parameter.


\begin{defn}\label{ConicalDef2}
A point $\xi \in \mathcal C$ is called \textit{$(r, F)$--conical point}   if for some $x\in \Gamma o$, the point $\xi$ lies in infinitely many $(r, F)$--shadows $\Pi_x^{F}(y_n, r)$ for $y_n\in \Gamma o$.  We denote by  $\cG$ the set of $(r, F)$--conical points.

We also denote by $\ccG$ the set of \textit{conical points} $\xi\in \mathcal C$ for which there exists $g_no\in \Gamma o$ satisfying $\xi\in \Pi_o(g_no, r)$ for some large $r>0$. By definition, $\cG\subseteq \ccG$.
\end{defn}

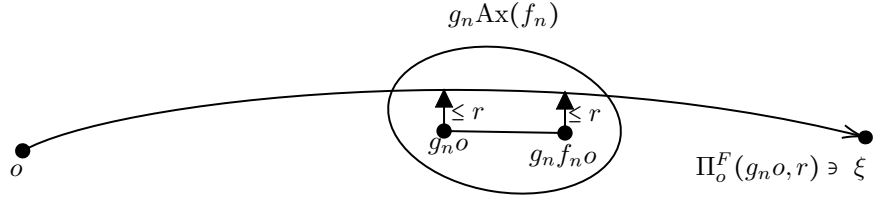
\begin{figure}
    \centering

\tikzset{every picture/.style={line width=0.75pt}} 

\begin{tikzpicture}[x=0.75pt,y=0.75pt,yscale=-1,xscale=1]

\draw    (100,103) .. controls (144.5,78) and (352.5,52) .. (524.5,96) ;
\draw [shift={(524.5,96)}, rotate = 194.35] [color={rgb, 255:red, 0; green, 0; blue, 0 }  ][line width=0.75]    (10.93,-3.29) .. controls (6.95,-1.4) and (3.31,-0.3) .. (0,0) .. controls (3.31,0.3) and (6.95,1.4) .. (10.93,3.29)   ;
\draw [shift={(100,103)}, rotate = 330.67] [color={rgb, 255:red, 0; green, 0; blue, 0 }  ][fill={rgb, 255:red, 0; green, 0; blue, 0 }  ][line width=0.75]      (0, 0) circle [x radius= 3.35, y radius= 3.35]   ;
\draw   (284.6,78.45) .. controls (287.68,58.59) and (316.32,46.54) .. (348.57,51.54) .. controls (380.83,56.54) and (404.48,76.69) .. (401.4,96.55) .. controls (398.32,116.41) and (369.68,128.46) .. (337.43,123.46) .. controls (305.17,118.46) and (281.52,98.31) .. (284.6,78.45) -- cycle ;
\draw    (312.5,93) -- (373.5,94) ;
\draw [shift={(373.5,94)}, rotate = 0.94] [color={rgb, 255:red, 0; green, 0; blue, 0 }  ][fill={rgb, 255:red, 0; green, 0; blue, 0 }  ][line width=0.75]      (0, 0) circle [x radius= 3.35, y radius= 3.35]   ;
\draw [shift={(312.5,93)}, rotate = 0.94] [color={rgb, 255:red, 0; green, 0; blue, 0 }  ][fill={rgb, 255:red, 0; green, 0; blue, 0 }  ][line width=0.75]      (0, 0) circle [x radius= 3.35, y radius= 3.35]   ;
\draw    (312.5,93) -- (312.5,75) ;
\draw [shift={(312.5,72)}, rotate = 90] [fill={rgb, 255:red, 0; green, 0; blue, 0 }  ][line width=0.08]  [draw opacity=0] (8.93,-4.29) -- (0,0) -- (8.93,4.29) -- cycle    ;
\draw    (373.5,94) -- (373.5,76) ;
\draw [shift={(373.5,73)}, rotate = 90] [fill={rgb, 255:red, 0; green, 0; blue, 0 }  ][line width=0.08]  [draw opacity=0] (8.93,-4.29) -- (0,0) -- (8.93,4.29) -- cycle    ;
\draw  [line width=5.25] [line join = round][line cap = round] (524.8,96.22) .. controls (524.8,96.22) and (524.8,96.22) .. (524.8,96.22) ;

\draw (303.28,94.91) node [anchor=north west][inner sep=0.75pt]  [rotate=-1.93]  {$g_{n} o$};
\draw (354.29,97.42) node [anchor=north west][inner sep=0.75pt]  [rotate=-1.93]  {$g_{n} f_{n} o$};
\draw (313,26.4) node [anchor=north west][inner sep=0.75pt]    {$g_{n}\mathrm{Ax}( f_{n})$};
\draw (92,108.4) node [anchor=north west][inner sep=0.75pt]    {$o$};
\draw (438,102.4) node [anchor=north west][inner sep=0.75pt]    {$\Pi _{o}^{F}( g_{n} o,r) \ni \ \xi $};
\draw (314.5,78.4) node [anchor=north west][inner sep=0.75pt]    {$\leq r$};
\draw (373.5,79.4) node [anchor=north west][inner sep=0.75pt]    {$\leq r$};

\end{tikzpicture}
    \caption{Conical points}
    \label{fig:conicalpts}
\end{figure}
The following is the Borel-Cantelli Lemma stated in terms of conical points. 
\begin{lem}\label{ConvLimitNull}
Let $\mu_{0}$ be a probability measure on $\partial X$ that satisfies the shadow lemma as in Lemma \ref{ShadowLem}, and let $A\subseteq \Gamma$ be a subset so that the  partial Poincar\'e series $\p_A(\e \Gamma,x, y)<\infty$. Then the following set 
$$
\Lambda := \limsup_{g\in A} \Pi_o^F(go, r)
$$
is a $\mu_o$--null set, for any $r\gg 0$.
\end{lem}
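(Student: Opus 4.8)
The plan is to deduce this directly from the upper bound in the Shadow Lemma \ref{ShadowLem} together with the first Borel--Cantelli lemma; there is no serious difficulty here, only a small amount of measure-theoretic bookkeeping.

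First I would record that each partial shadow $\Pi_o^F(go,r)$ is closed in $\pU$ --- it is the topological closure of the partial cone $\Omega_o^F(go,r)$, cf.\ Definition \ref{ShadowDef} --- hence Borel; and since $A\subseteq\Gamma$ is countable, $\Lambda=\limsup_{g\in A}\Pi_o^F(go,r)=\bigcap_{N\ge 1}\bigcup_{g\in A,\,d(o,go)\ge N}\Pi_o^F(go,r)$ is a well-defined Borel set, independent of any enumeration of $A$.

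Next, choosing $r$ large enough that Lemma \ref{ShadowLem} applies --- this is what ``$r\gg 0$'' means, and it also presupposes that $F$ has been taken with $d(o,fo)>L_0$ for each $f\in F$ --- I would invoke the inequality $\mu_o(\Pi_o^F(go,r))\le\mu_o(\Pi_o(go,r))\le C_1\,\mathrm{e}^{-\e\Gamma\, d(o,go)}$ with $C_1=C_1(F,r)$ independent of $g\in\Gamma$. Summing over $g\in A$ and comparing with the definition (\ref{PoincareEQ}) of the partial Poincar\'e series gives $\sum_{g\in A}\mu_o(\Pi_o^F(go,r))\le C_1\,\p_A(\e\Gamma,o,o)<\infty$ by hypothesis. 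The Borel--Cantelli lemma (applied to any enumeration of $A$) then yields $\mu_o(\Lambda)=0$.

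The only point requiring the slightest care --- which I would flag as the ``hard part'', though it is entirely routine --- is that the implied constant in the Shadow Lemma depends on $r$ and on $F$ but not on the group element $g$; this uniformity is exactly what lets the sum of shadow-measures collapse to a constant multiple of the Poincar\'e series. I would also note that the statement is insensitive to the basepoint, since $\p_A(\e\Gamma,x,y)$ converges for one choice of $x,y\in\U$ if and only if it converges for all choices (the ratios being bounded above and below via the triangle inequality); hence the hypothesis and conclusion transfer between basepoints without change.
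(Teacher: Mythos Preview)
Your proof is correct and takes essentially the same approach as the paper: both bound $\mu_o(\Pi_o^F(go,r))$ via the Shadow Lemma and then apply Borel--Cantelli. The only cosmetic difference is that the paper unwinds Borel--Cantelli by hand as a contradiction argument (assuming $\mu_o(\Lambda)>0$ forces the tails of the Poincar\'e series to stay bounded below), whereas you invoke the lemma directly.
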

\begin{proof}
By definition, we can write $\Lambda =\cap_{n\ge 1}\Lambda_n$ where
$$
\Lambda_n =\bigcup_{g\in A: d(o,go)\ge n} \Pi_o^F(go, r)
$$
If $\mu_1(\Lambda)>0$ then $\mu_1(\Lambda_n)>\mu_1(\Lambda)/2$ for all $n\gg 0$. On the other hand, the shadow lemma implies 
$$
\mu_o(\Lambda_n)\le \sum_{g\in A: d(o,go)\ge n} \mu_o(\Pi_o^F(go, r))\le \sum_{g\in A: d(o,go)\ge n} \mathrm{e}^{-\e \Gamma d(o,go)}
$$
so the tails of the series $\p_A(\e \Gamma,x, y)$ has a uniform positive lower bound, contradicting to the assumption. The proof is complete.
\end{proof}

We list the following useful properties from \cite[Lemma 4.6, Theorem 1.10, Lemma 5.5]{YANG22} about conical points.
\begin{lem}\label{ConicalPointsLem}
The following holds for any $\xi\in \cG$:
    \begin{enumerate}
        \item 
        $\xi$ is {visual}:   for any basepoint $o\in \U$ there exists a geodesic ray starting at $o$ ending at $[\xi]$. 
\item 
If $G\act \U$ is of divergence type, $\mu_o$ charges full measure on $[\cG]$ and every $[\xi]$--class is $\mu_o$--null.
        \item 
        Let $y_n, z_n\in \U$ be two sequences tending to $[\xi]$ in $\bU=\U\cup\pU$. Then for any $x\in \U$,
         $$\limsup_{n \to \infty} |B_{y_n}(x)-B_{z_n}(x)|\le 20C.$$
         In particular, $[\cG]$ is a subset of $\mathcal C^{\mathrm{hor}}_{20C}$ (defined in \ref{AssumpE}).
    \end{enumerate}
\end{lem}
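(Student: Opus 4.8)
The plan is to prove the three clauses separately, in the order (1), (3), (2); only the last requires real work.

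\textbf{Visuality (1).} I would start from a conical $\xi\in\cG$: by definition there is a basepoint $x\in\Gamma o$ and infinitely many $y_n\in\Gamma o$ with $\xi\in\Pi_x^F(y_n,r)$, witnessed by points $z_n\to\xi$ on whose connecting geodesics $[x,z_n]$ the point $y_n$ is an $(r,F)$-barrier. Properness of $\Gamma\act\U$ forces $d(x,y_n)\to\infty$ (a fixed-radius shadow of a bounded barrier stays bounded, and each ball contains only finitely many orbit points), so the barriers $y_n\ax(f_{i_n})$ form an escaping family of uniformly contracting quasi-geodesics. I would then feed these barriers into the fellow-travelling machinery (Proposition \ref{admisProp}): the $[x,z_n]$ agree along ever longer contracting admissible initial segments, so an Arzel\`a--Ascoli extraction produces a limit ray $\gamma$ from $x$ that runs within $r$ of infinitely many escaping barriers. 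Hence $\gamma$ is contracting, Assumption (A) makes it terminate at some $[\eta]$, and comparing $\gamma$ with the $[x,z_n]$ through the shared (escaping-projection) barriers gives $z_n\to[\gamma^+]$, so $[\eta]=[\xi]$. To change the basepoint to an arbitrary $o$ I would rerun the extraction along the escaping contracting segments $[o,y_n]$ and invoke Assumption (B).

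\textbf{Busemann estimate (3).} I would fix a $C$-contracting ray $\gamma$ from $o$ with $\gamma^+\in[\xi]$ (from (1)) and sequences $y_n,z_n\to[\xi]$. The crucial sub-claim is that $\pi_\gamma(y_n)$ escapes: if it stayed in $B(o,R)$ along a subsequence, then picking $p\in\gamma$ far out makes $\pi_\gamma(p)\cup\pi_\gamma(y_n)$ large in diameter, so any geodesic $[p,y_n]$ would pass $2C$-close to $\pi_\gamma(y_n)$ and hence within $R+2C$ of $o$ -- contradicting that $[\xi]\in\mathcal C$ is non-pinched while $p\to[\xi]$ and $y_n\to[\xi]$. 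Granting this, with $p_n\in\pi_\gamma(y_n)$ and using that $\pi_\gamma(x)$ is bounded while $p_n\to\infty$, the contracting property puts both $[o,y_n]$ and $[x,y_n]$ within $2C$ of $p_n$, so
\[
B_{y_n}(x)=d(x,y_n)-d(o,y_n)=(d(x,p_n)-d(o,p_n))\pm 8C,
\]
and since $p_n$ exits along $\gamma$ the right-hand side converges to the Busemann value of $\gamma$. The same holds for $z_n$, so subtracting gives $\limsup_n|B_{y_n}(x)-B_{z_n}(x)|\le 20C$ (I would not optimize the constant). Taking one of the two sequences to be the defining sequence of $B_\xi$ then shows $\xi\in\mathcal C^{\mathrm{hor}}_{20C}$, hence $[\cG]\subseteq\mathcal C^{\mathrm{hor}}_{20C}$.

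\textbf{Full measure and atomlessness (2) -- the main obstacle.} Writing $\omega=\e\Gamma$, the Shadow Lemma \ref{ShadowLem} gives $\mu_o(\Pi_o^F(go,r))\asymp e^{-\omega d(o,go)}$ for $r$ large, so $\sum_{g\in\Gamma}\mu_o(\Pi_o^F(go,r))\asymp_\Delta\p_\Gamma(\omega,o,o)$, which diverges by hypothesis. The plan is a ``second Borel--Cantelli'': using the Extension Lemma \ref{extend3} to insert $F$-barriers, one shows that at every large radius $n$ the shadows $\{\Pi_o^F(go,r):go\in A_\Gamma(o,n,\Delta)\}$ cover $\mathrm{supp}\,\mu_o$ with multiplicity bounded independently of $n$, and then an inclusion--exclusion (quasi-independence) estimate upgrades the divergence of the masses to $\mu_o(\limsup_g\Pi_o^F(go,r))=1$, i.e. $\mu_o(\cG)=1$ and $\mu_o([\cG])=1$ (the convergence-type converse is the easy direction, Lemma \ref{ConvLimitNull}). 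Establishing the bounded-multiplicity covering -- the analogue of the hard half of the Hopf--Tsuji--Sullivan dichotomy, and where the quasi-conformal density theory is genuinely used -- is the delicate step I expect to cost the most. Finally, to rule out atoms on $[\cdot]$-classes: if $\mu_o([\eta])=a>0$, then (\ref{almostInv})--(\ref{confDeriv}) give $\mu_o(g[\eta])\asymp_\lambda a\,e^{-\omega B_{[\eta]}(o,go)}$ for all $g\in\Gamma$; since $\Gamma$ is non-elementary it carries a contracting element $f$ with $[f^\pm]\cap[\eta]=\emptyset$ (Lemmas \ref{IndepElemExists} and \ref{DisjointFixpts}), and along $g=f^n\to[f^+]$ one has $B_{[\eta]}(o,f^no)\to-\infty$, forcing $\mu_o(f^n[\eta])\to\infty$ -- absurd.
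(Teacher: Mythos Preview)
The paper does not give its own proof here; the lemma is stated as a direct citation of \cite[Lemma 4.6, Theorem 1.10, Lemma 5.5]{YANG22}. Your outlines for (1) and (3) are sound and match the arguments in that reference (the visuality argument is in fact recapped later in the paper, in the proof of Lemma \ref{DirichletIsVisual}). Your plan for the full-measure half of (2) --- a second Borel--Cantelli argument driven by the Shadow Lemma and bounded-multiplicity covers --- is also the right strategy, and you correctly flag it as the substantive HTS-type step.

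Your atomlessness argument, however, contains a genuine error. The formula $\mu_o(g[\eta])\asymp_\lambda a\,e^{-\omega B_{[\eta]}(o,go)}$ is not what (\ref{almostInv})--(\ref{confDeriv}) yield: the Radon--Nikodym derivative $d\mu_o/d\mu_{go}$ at a point $\xi\in g[\eta]$ is governed by $B_\xi(o,go)=B_{g^{-1}\xi}(g^{-1}o,o)$ with $g^{-1}\xi\in[\eta]$, so the correct estimate is $\mu_o(g[\eta])\asymp a\,e^{-\omega B_{[\eta]}(g^{-1}o,o)}$. With your choice $g=f^n$ and $[f^\pm]\cap[\eta]=\emptyset$, the contracting property gives $B_{[\eta]}(f^{-n}o,o)\approx d(f^{-n}o,o)\to+\infty$, hence $\mu_o(f^n[\eta])\to 0$, not $\infty$: pushing the orbit point \emph{away} from $[\eta]$ shrinks the translated class, as it must for a finite measure. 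The repair is to run the argument in the opposite direction. Once the full-measure statement is established, any atom $[\eta]$ lies in $[\cG]$; conicality then supplies $g_n\in\Gamma$ with $g_no$ $r$-close to a ray into $[\eta]$, so $B_{[\eta]}(g_no,o)\approx -d(o,g_no)$ (exactly the estimate from your part (3)), and now $\mu_o(g_n^{-1}[\eta])\asymp a\,e^{\omega d(o,g_no)}\to\infty$, contradicting $\|\mu_o\|=1$. Note too that the implicit step ``$B_\xi$ is roughly constant for $\xi\in[\eta]$'' already uses part (3), hence conicality of $\eta$; the atomlessness cannot be argued for an arbitrary $[\eta]$ independently of the full-measure claim.
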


It is well known that in hyperbolic spaces, a horoball  centered at   a point  in the Gromov boundary has the unique limit point at the center. This could be proved for uniquely ergodic points   in Thurston boundary of Teichm\"{u}ller spaces. Analogous to these facts, we have the following.

\begin{lem}\label{HoroballUniqueLimit}
Let $\xi\in \cG$. Consider a horoball $\mathcal {HB}([\xi])$ centered at $[\xi]$ defined as Definition \ref{HoroballDefn}. Then any escaping sequence $x_n\in \mathcal {HB}([\xi])$ has the accumulation points in $[\xi]$.
\end{lem}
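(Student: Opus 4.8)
The plan is to detect membership in a horoball at the conical point $\xi$ via the closest-point projection onto a suitable contracting ray, and then feed this into Assumption (A). Concretely: I would first produce a $C'$--contracting geodesic ray $\gamma\colon[0,\infty)\to\U$ with $\gamma(0)=o$ and $[\gamma^+]=[\xi]$; then show that every $x\in\mathcal{HB}([\xi],L)$ satisfies $d\big(o,\pi_\gamma(x)\big)\ge\frac12\big(d(o,x)-L-\kappa\big)$ for a constant $\kappa=\kappa(C')$; and finally conclude that for an escaping sequence $x_n\in\mathcal{HB}([\xi],L)$ the projection sets $\pi_\gamma(x_n)$ form an escaping sequence, so by Assumption (A) the $x_n$ accumulate only in $[\xi]$.

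For the first step, recall that $\xi\in\cG$ is by definition non-pinched and lies in infinitely many $(r,F)$--shadows $\Pi_{x_0}^{F}(g_no,r)$ with $d(x_0,g_no)\to\infty$. Concatenating geodesics through the barriers $g_n\ax(f_n)$ via the Extension Lemma \ref{extend3} produces $(L_0,\tau)$--admissible paths, which by Proposition \ref{admisProp} are $C'$--contracting quasi-geodesics with $C'=C'(r,F,C)$ uniform; an Arzela--Ascoli limit of these, together with the fact that $[\cdot]$ restricted to $G[\xi]$ is a closed relation (Assumption (C), applicable since $\xi\in\mathcal C$), yields a $C'$--contracting geodesic ray $\gamma$ from $o$ with $[\gamma^+]=[\xi]$ (cf.\ the analysis of conical points in \cite{YANG22}). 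In particular, by Assumption (A), $\gamma(t)$ accumulates only in $[\xi]$ as $t\to\infty$, so directly from the definition of $B_{[\xi]}$ one has $B_{[\xi]}(x,o)\ge\limsup_{t\to\infty}\big(d(x,\gamma(t))-d(o,\gamma(t))\big)$ for every $x\in\U$.

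The middle step is a short computation with the contracting property. Fix $x$ with $B_{[\xi]}(x,o)\le L$, pick $p\in\pi_\gamma(x)$, and set $s=d(o,p)$. Using that $\gamma$ is $C'$--contracting there is $\kappa=\kappa(C')$ with $d(x,\gamma(t))\ge d(x,p)+\proj_\gamma\big(x,\gamma(t)\big)-\kappa\ge d(x,p)+(t-s)-\kappa$ for all $t\ge s$; since $d(o,\gamma(t))=t$ this gives $d(x,\gamma(t))-d(o,\gamma(t))\ge d(x,p)-s-\kappa\ge d(o,x)-2s-\kappa$. Letting $t\to\infty$ and combining with the displayed bound for $B_{[\xi]}(x,o)$ yields $L\ge d(o,x)-2\,d(o,p)-\kappa$, i.e.\ $d(o,p)\ge\frac12\big(d(o,x)-L-\kappa\big)$; since $p\in\pi_\gamma(x)$ was arbitrary this bounds $d(o,\pi_\gamma(x))$ from below. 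Applying this with $x=x_n$ and using $d(o,x_n)\to\infty$ shows $\pi_\gamma(x_n)$ is escaping. Finally, from any subsequence of $(x_n)$ extract, by compactness of $\bU$, a convergent sub-subsequence; its projections still escape, so by Assumption (A) its limit lies in $[\xi]$. Hence every accumulation point of $(x_n)$ in $\pU$ lies in $[\xi]$, which is the assertion.

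The step I expect to be the real obstacle is the first one: upgrading ``$\xi$ is visual'' (Lemma \ref{ConicalPointsLem}(1)) to the existence of a genuinely $C'$--contracting geodesic ray from $o$ to $[\xi]$ with a \emph{uniform} contracting constant. This forces one back to the defining $(r,F)$--shadows of a conical point, to the admissible-path machinery, and to the closedness clause of Assumption (C) in order to pin down the endpoint class of the limiting ray; once $\gamma$ is in place, the remaining estimates are routine instances of the strong contraction inequalities already in use throughout the paper.
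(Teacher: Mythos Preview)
Your Steps 2 and 3 are fine: once a $C'$--contracting geodesic ray $\gamma$ from $o$ with $[\gamma^+]=[\xi]$ is in hand, the projection estimate and Assumption~(A) finish the argument cleanly. The problem is exactly where you flagged it, and it is a genuine gap rather than a technicality.

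Proposition~\ref{admisProp} does \emph{not} assert that $(L,\tau)$--admissible paths are contracting; it only yields the fellow-travel property and the quasi-geodesic conclusion. Admissible paths happen to be contracting when the connecting segments $q_i$ have uniformly bounded length (as in the periodic paths of Lemma~\ref{UniformContractingElem}), but in general they need not be. For a point $\xi\in\cG$ the definition only guarantees infinitely many barriers $g_n\ax(f_n)$ along a ray $[o,\xi)$, with no control whatsoever on the gaps between them. In a space with large flat pieces separating the barriers (e.g.\ a CAT(0) tree of flats, which is within the scope of Examples~\ref{SCCexamples}), the ray $[o,\xi)$ is not contracting, and no alternative geodesic ray to $[\xi]$ will be either. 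So the upgrade from ``$\xi$ is visual'' to ``$\xi$ admits a uniformly contracting ray'' simply fails at this level of generality, and with it your appeal to Assumption~(A).

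The paper's proof sidesteps this by never asking for a single contracting object. It keeps the (possibly non-contracting) geodesic ray $\gamma$ and works barrier by barrier: for each fixed $X=g_n\ax(f_n)$, the horoball inequality combined with the contracting property of $X$ and the non-pinched hypothesis $\xi\in\mathcal C$ forces $\pi_X(x_k)$ to lie near the exit point of $\gamma$ from $N_C(X)$ for all but finitely many $k$, hence $[o,x_k]\cap N_C(X)\ne\emptyset$, i.e.\ $x_k\in\Omega_o(N_C(X))$. Since the $X_n$ form an escaping sequence of $C$--contracting quasi-geodesics, Assumption~(B) --- not~(A) --- then yields $x_k\to[\xi]$. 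In short: replace your global contracting ray by the local contracting barriers, and trade Assumption~(A) for Assumption~(B); this is precisely the situation Assumption~(B) is designed for.
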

\begin{proof} 
According to definition, there exists a sequence of elements $g_n\in\Gamma$ so that $\xi\in \Pi_r^F(g_no)$. By \cite[Lemma 4.4]{YANG22}, there exists a geodesic ray $\gamma$ starting from $o$ terminating at $[\xi]$ so that $$\|\gamma\cap N_r(g_n\ax(f_n))\|\ge L$$
where $f_n\in F$ and $L:=\min\{d(o,fo):f\in F\}-2r$.

Fix any $X:=g_n\ax(f_n)$, and consider the exit point $y$ of $\gamma$ at $N_C(X)$.
Let $z_m\in \gamma\to [\xi]$ as $m\to\infty$. Given $x_n\in \mathcal {HB}([\xi])$,  the following holds  for $m\gg 0$ by Lemma \ref{ConicalPointsLem}(3): 
\begin{equation}\label{HoroballEQ1}
B_{z_m}(x_n)=d(x_n, z_m)-d(o,z_m)\le 21C.    
\end{equation} 
 We claim that the projection $y_n\in X$ of all but finitely many $x_n$  to $X$  lies in a $10C$--neighborhood of $y$. Indeed, if not, $d(y_n,y)>10C$ holds for infinitely many $n$.   The contracting property of $X$ implies that $[z_m,x_n]$ intersects $N_C(X)$ for all $n, m\gg 0$, so $d(y,[x_n,x_m])\le 2C$.  Combined with (\ref{HoroballEQ1}), we obtain $d(y,x_n)\le d(y,o)+100C$. This is a contradiction, as $x_n$ is a unbounded sequence in a fixed ball  around $y$.

If $L>100C$ is assumed,  the above claim implies the corresponding projections of $o$ and $x_n$ to $X$ have distance at least $90C$, so the contracting property shows $[o,x_n]\cap N_C(X)\ne\emptyset$. That is, $x_n\in \Omega_o(N_C(X))$ for infinitely many $x_n$. As $X=g_n\ax(f_n)$ is chosen arbitrarily along $\gamma$,   the assumption (B) in Definition \ref{ConvBdryDefn} implies that $x_n$ and $g_no$ have the same limit, so $x_n\to [\xi]$.   
\end{proof}


\paragraph{\textbf{Uniqueness of quasi-conformal measures}} Suppose that $\Gamma\act \U$  is of divergence type, where $\U$ is compactified by the horofunction boundary $\hU$. The quasi-conformal measures on  $\hU$ are in general neither ergodic nor unique.  To remedy this, we shall push the measures to the reduced horofunction boundary $[\hU]$, modding out the finite difference relation. However, $[\hU]$ is a pathological topological object (\emph{i.e.} may be non-Hausdorff). The remedy is to consider the reduced Myrberg limit set, which provides better topological property. Define
\begin{equation}\label{MyrbergDefn}
\mG=\bigcap \cG    
\end{equation} 
where the intersection is taken over  all possible choice of three independent contracting elements $F$ in $\Gamma$. By \cite[Lemma 2.27 \& Lemma 5.6]{YANG22}, the quotient map 
\begin{align*}
\mG\quad \longrightarrow&\quad [\mG]\\
\xi\quad \longmapsto &\quad [\xi]
\end{align*} is a closed map with compact fibers, thus $[\mG]$ is a completely metrizable and second countable topological space.

Let $\{\mu_x\}_{x\in \U}$ be a $\e \Gamma$--dimensional $\Gamma$--equivariant conformal density    on   $\hU$.
This is pushed forward to  a $\e \Gamma$--dimensional $\Gamma$--quasi-equivariant quasi-conformal density,  denoted by $\{[\mu_x]\}_{x\in \U}$, on $[\mG]$.

\begin{lem}[{\cite[Lemma 8.4]{YANG22}}]\label{Unique}
Let $\{[\mu_x]\}_{x\in \U}$ and $\{[\mu_x']\}_{x\in \U}$ be two $\e \Gamma$--dimensional $\Gamma$--quasi-equivariant quasi-conformal densities    on the quotient  $[\mG]$. Then the
Radon-Nikodym derivative $d[\mu_o]/d[\mu_o']$ is bounded from above and below by a constant depending only on    $\lambda$ in Definition \ref{ConformalDensityDefn}.

\end{lem}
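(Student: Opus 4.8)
The plan is to run the classical ``Shadow Lemma $\Rightarrow$ uniqueness'' scheme in two stages: first obtain mutual absolute continuity of $[\mu_o]$ and $[\mu_o']$ with an essentially bounded Radon--Nikodym derivative, and then sharpen the bound so that it depends only on $\lambda$.

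\emph{Stage 1: comparison of shadows and a covering argument.} Fix a set $F$ of three independent contracting elements of $\Gamma$ and a radius $r\ge r_0$ satisfying the Shadow Lemma~\ref{ShadowLem}, which holds for each of $[\mu_o]$ and $[\mu_o']$ on $[\mG]$ since both are $\e\Gamma$--dimensional, $\Gamma$--quasi-equivariant and carried by $[\mG]\subseteq\mathcal C$. For $g\in\Gamma$ one has $\|[\mu_{go}]\|\asymp_\lambda 1$ (from~(\ref{almostInv}) and $\|[\mu_o]\|=1$), hence $[\mu_o](\Pi_o^F(go,r))\asymp[\mu_o](\Pi_o(go,r))\asymp\mathrm{e}^{-\e\Gamma d(o,go)}$ and the same for $[\mu_o']$; so there is $C=C(\lambda,F,r)$ with $C^{-1}[\mu_o'](\Pi_o^F(go,r))\le[\mu_o](\Pi_o^F(go,r))\le C\,[\mu_o'](\Pi_o(go,r))$ for all $g\in\Gamma$. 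Every point of $[\mG]$ is $(r,F)$--conical (after enlarging $r$ if necessary), so it lies in shadows $\Pi_o^F(g_no,r)$ with $d(o,g_no)\to\infty$, whose diameters tend to $0$ by the contracting property; thus $\{\Pi_o^F(go,r):g\in\Gamma\}$ is a Vitali--fine family covering $[\mG]$. Feeding the displayed comparison into a Vitali--type covering theorem for this family on the (completely metrizable, second countable) space $[\mG]$ --- cover a $[\mu_o']$--null set, resp.\ an arbitrary Borel set, by such shadows inside a small open neighbourhood and sum with bounded overlap --- yields $[\mu_o]\ll[\mu_o']$, $[\mu_o']\ll[\mu_o]$ and $\Phi:=d[\mu_o]/d[\mu_o']\in[C^{-1},C]$ almost everywhere.

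\emph{Stage 2: sharpening via quasi-conformality and ergodicity.} The chain rule applied to~(\ref{almostInv}) and~(\ref{confDeriv}) shows that, for each $g\in\Gamma$, the Radon--Nikodym derivative of $g_\star[\mu_o]$ with respect to $[\mu_o]$ equals $\mathrm{e}^{-\e\Gamma B_\xi(go,o)}$ up to a factor $\lambda^{2}$ (for $[\mu_o]$--a.e.\ $\xi$), and the analogous statement holds for $[\mu_o']$; hence the two cocycles agree up to a factor $\lambda^{4}$, so $\Phi(g\xi)\asymp_{\lambda^{4}}\Phi(\xi)$ for $[\mu_o']$--a.e.\ $\xi$ and every $g\in\Gamma$. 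Since $\Gamma\act\U$ is of divergence type, $[\mG]$ has full $[\mu_o']$--measure and the action $\Gamma\act([\mG],[\mu_o'])$ is ergodic (the divergence-type half of the HTS dichotomy, modelled on Theorem~\ref{HTSCAT(-1)} and available in this generality). For $\varepsilon>0$ the set $\bigcup_{g\in\Gamma}g\{\Phi<\essinf\Phi+\varepsilon\}$ is $\Gamma$--invariant and of positive measure, hence conull, forcing $\esssup\Phi\le\lambda^{4}(\essinf\Phi+\varepsilon)$; letting $\varepsilon\to0$ and using $\essinf\Phi\le1\le\esssup\Phi$ (both measures being probabilities) gives $\lambda^{-4}\le\Phi\le\lambda^{4}$ a.e., which is the asserted bound depending only on $\lambda$.

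\emph{Main obstacle.} The one genuinely geometric --- and delicate --- ingredient is the Vitali covering used in Stage 1: one must verify that the partial shadows form a Vitali relation on the non-compact space $[\mG]$, which rests on the fact that two partial shadows of comparable size that intersect are coarsely nested (each lies in a uniformly bounded enlargement of the other). This has to be extracted from the contracting property and the bounded-projection axioms of $\f$. Everything else is soft once the Shadow Lemma is in hand, and the upgrade from the constant $C(\lambda,F,r)$ to one depending only on $\lambda$ is purely measure-theoretic, relying on ergodicity of the boundary action.
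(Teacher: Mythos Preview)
The paper does not prove this lemma; it is quoted from \cite[Lemma 8.4]{YANG22} without argument, so there is no in-paper proof to compare against. Your two-stage outline---shadow comparison via Lemma~\ref{ShadowLem} plus a Vitali covering to obtain mutual absolute continuity with some bounded derivative, followed by almost-invariance of $\Phi=d[\mu_o]/d[\mu_o']$ and ergodicity (the divergence-type side of the HTS dichotomy on $[\mG]$, available in \cite{YANG22,Coulon22}) to squeeze the bound down to $\lambda^4$---is exactly the classical Patterson--Sullivan uniqueness scheme and is how the cited reference proceeds.

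The obstacle you isolate is the right one: the covering step needs that intersecting partial shadows of comparable depth are coarsely nested, which is extracted in \cite{YANG22} from the contracting property and bounded projection of $\f$, just as you indicate. One small wrinkle you pass over is that Lemma~\ref{ShadowLem} is stated for densities on $\pU$, not on the quotient $[\mG]$; you should either pull $[\mu_x]$ back along $\mG\to[\mG]$ (which has compact fibers) or observe that the shadow estimate descends. This is routine but should be said.
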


\subsection{Regularly contracting limit points}\label{SSecRegConPts}
In this subsection, we first introduce a class of statistically convex-cocompact actions in \cite{YANG10}  as a generalization of convex-cocompact actions, encompassing Examples in \ref{SCCexamples}. This notion was independently introduced by Schapira-Tapie \cite{ST21} as \textit{strongly positively recurrent} manifold in a dynamical context.

Given constants $0\leq M_1\leq M_2$, let $\mathcal{O}_{M_1,M_2}$ be the set of elements $g\in \Gamma$ such that there exists some geodesic $\gamma$ between $N_{M_2}(o)$ and $N_{M_2}(go)$ with the property that the interior of $\gamma$ lies outside $N_{M_1}(\Gamma o)$.

\begin{defn}[SCC Action]\label{SCCDefn}
If there exist positive constants $M_1,M_2>0$ such that $\e {\mathcal{O}_{M_1,M_2}}<\e\Gamma<\infty$, then the proper action of $\Gamma$ on $\U$ is called \textit{statistically convex-cocompact} (SCC).
\end{defn}
\begin{rem}
The motivation for defining the set $\mathcal{O}_{M_1,M_2}$ comes from the action of  the fundamental group of a finite volume negatively curved Hadamard manifold on its universal cover. In that situation, it is rather easy to see that for appropriate constants $M_1, M_2>0$, the set $\mathcal{O}_{M_1,M_2}$ coincides with  the union of the orbits of cusp subgroups up to a  finite Hausdorff distance. The assumption in SCC actions is called the \textit{parabolic gap condition} by Dal'bo, Otal and Peign\'{e} in \cite{DOP}. The growth rate $\e {\mathcal{O}_{M_1,M_2}}$ is called \textit{complementary growth exponent} in \cite{ACTao} and \textit{entropy at infinity} in \cite{ST21}. 
\end{rem}

SCC actions have purely exponential growth, and thus are of divergence type. Therefore, we have the unique  conformal density on a convergence boundary by Lemma \ref{Unique}. 
\begin{lem}
Suppose that $\Gamma\act \U$ is a non-elementary SCC action with contracting elements. Then $\Gamma$ has purely exponential growth: for any $n\ge 1$, $\sharp N_\Gamma(o,n)\asymp \mathrm{e}^{\e \Gamma n}$.    
\end{lem}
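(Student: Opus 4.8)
The plan is to prove the two comparisons $\sharp N_\Gamma(o,n)\succeq e^{\e\Gamma n}$ and $\sharp N_\Gamma(o,n)\preceq e^{\e\Gamma n}$ separately. Write $\delta:=\e\Gamma$ and $B_n:=\sharp N_\Gamma(o,n)$, and fix a large $\Delta\ge 1$. Since $B_n\ge\sharp A_\Gamma(o,n-\Delta,\Delta)$ while $B_n\le\sum_{j\ge 0}\sharp A_\Gamma(o,(2\Delta+1)j,\Delta)+O(1)$, where $A_\Gamma(o,n,\Delta)$ is the annulus of (\ref{AnnulusEQ}), it is enough to show $a_n:=\sharp A_\Gamma(o,n,\Delta)\asymp e^{\delta n}$.

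For the upper bound $a_n\preceq e^{\delta n}$ I would not even need the SCC hypothesis. Compactify $\U$ by its horofunction boundary, a convergence boundary all of whose points are non-pinched (Theorem~\ref{HorobdryConvergence}), and take the $\delta$--dimensional $\Gamma$--equivariant conformal density $\{\mu_x\}$ from Patterson's construction (Theorem~\ref{ConformalDensityExists}), normalized so $\|\mu_o\|=1$; it is nontrivial, so the Shadow Lemma~\ref{ShadowLem} applies with a suitable finite set $F$ of independent contracting elements and $r\ge r_0$, giving $\mu_o(\Pi_o^F(go,r))\ge C_0 e^{-\delta d(o,go)}$ for every $g\in\Gamma$. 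The point is that the partial shadows $\{\Pi_o^F(go,r):g\in A_\Gamma(o,n,\Delta)\}$ have \emph{uniformly bounded overlap}: if $\xi$ lies in two of them, the corresponding orbit points are $(r,F)$--barriers on geodesics issuing from $o$ toward $[\xi]$ and sit at comparable distance $\simeq n$ from $o$, so Lemma~\ref{InjectiveExtMap} together with the bounded intersection of the system $\f$ forces them within distance $R=R(F,r,\Delta)$; hence each $\xi$ lies in at most $\sharp N_\Gamma(o,2R)$ of them. Integrating the lower shadow estimate against $\mu_o$ yields $a_n\cdot C_0 e^{-\delta(n+\Delta)}\le\sharp N_\Gamma(o,2R)$, i.e. $a_n\preceq e^{\delta n}$.

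For the lower bound $a_n\succeq e^{\delta n}$ the SCC hypothesis is used. Fix $M_1<M_2$ with $\e{\mathcal O_{M_1,M_2}}<\delta$ (Definition~\ref{SCCDefn}), call $g\in\Gamma$ \emph{cobounded} if some geodesic $[o,go]$ stays inside $N_{M_1}(\Gamma o)$, and let $c_n$ count the cobounded $g$ with $|d(o,go)-n|\le\Delta$. Splitting a cobounded geodesic at an orbit point near its midpoint (after enlarging $M_1$ once to absorb coarse defects of geodesics running along $\Gamma o$) makes $\{c_n\}$ quasi--submultiplicative, so a Fekete-type argument gives $c_n\succeq e^{\omega_{\mathrm{cob}}n}$ with $\omega_{\mathrm{cob}}:=\limsup_n\tfrac1n\log c_n\le\delta$. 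The crux is that in fact $\omega_{\mathrm{cob}}=\delta$: suppose not and pick $\epsilon>0$ with $\max\{\omega_{\mathrm{cob}},\e{\mathcal O_{M_1,M_2}}\}<\delta-2\epsilon$. Any geodesic $[o,go]$ breaks into maximal subarcs inside $N_{M_1}(\Gamma o)$ and maximal excursions outside it; recording only the subarcs longer than a threshold $L_0$ (and merging short ones into their neighbours) writes $g$ as an ordered product of $O(n/L_0)$ ``block elements'', each either cobounded or an element of $\mathcal O_{M_1,M_2}$ of essentially the same length as its block. For large $\ell$ there are at most $e^{(\delta-\epsilon)\ell}$ choices of a block element of length $\ell$ in either case; the number of ways to cut $n$ into pieces $\ge L_0$ is at most $(1+\eta_{L_0})^n$ with $\eta_{L_0}\to0$; and the ambiguity introduced at the $O(n/L_0)$ joints is at most $K_1^{\,n/L_0}$. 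Taking $L_0$ large, these combine to $B_n\le e^{o(n)}e^{(\delta-\epsilon)n}$, forcing $\e\Gamma=\limsup_n\tfrac1n\log B_n\le\delta-\epsilon/2$, a contradiction. Hence $\omega_{\mathrm{cob}}=\delta$, and since a cobounded $g$ with $|d(o,go)-n|\le\Delta$ lies in $A_\Gamma(o,n,\Delta)$ we get $a_n\ge c_n\succeq e^{\delta n}$. Combining the two bounds gives $a_n\asymp e^{\delta n}$, hence $\sharp N_\Gamma(o,n)\asymp e^{\e\Gamma n}$.

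I expect the main obstacle to be the lower bound, specifically the identity $\omega_{\mathrm{cob}}=\e\Gamma$: one must choose $L_0$ and organize the excursion decomposition so that neither the number of blocks, nor the number of cutting patterns, nor the joint ambiguities erodes the exponent, and it is exactly the SCC gap $\e{\mathcal O_{M_1,M_2}}<\e\Gamma$ that makes an excursion block exponentially cheaper than a naive estimate would allow, so that ``excursions may be forbidden without changing the growth rate''. A secondary technical nuisance is that outside the Gromov hyperbolic case geodesics need not be Morse, so the neighbourhood $N_{M_1}(\Gamma o)$ defining ``cobounded'' must be chosen stable under the splitting operation, and the bounded--overlap estimate for partial shadows should be re-examined in that generality.
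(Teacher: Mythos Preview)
The paper does not actually prove this lemma; it is stated as a known fact, implicitly deferring to \cite{YANG10} (the sentence immediately preceding the lemma reads ``SCC actions have purely exponential growth''). So there is no in-text proof to compare against, and what matters is whether your sketch stands on its own.

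Your upper bound is fine. The argument via the Shadow Lemma and bounded overlap of shadows from a fixed annulus is exactly what the paper uses elsewhere (see Lemma~\ref{OverlapAnnulus} and the proof of Proposition~\ref{TightDivCase}); your appeal to Lemma~\ref{InjectiveExtMap} for the overlap bound is not quite the right citation, but the conclusion is correct.

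The lower bound has a real gap, and it is precisely the point you flag at the end as a ``secondary technical nuisance''. Quasi--submultiplicativity of $c_n$ requires that when you split a cobounded geodesic $[o,go]$ at an orbit point $g_1o$ near its midpoint, the element $g_1$ is again cobounded. But you only get a geodesic from $o$ to a point $m$ \emph{near} $g_1o$ inside $N_{M_1}(\Gamma o)$; an honest geodesic $[o,g_1o]$ need not fellow--travel this sub-segment in a general proper geodesic space, so $g_1$ need not lie in the cobounded set for any fixed enlarged $M_1'$. Enlarging $M_1$ ``once'' does not close this: each split degrades the constants, and a Fekete argument needs a single fixed set. The same mismatch recurs in your block decomposition: the cobounded blocks you produce are witnessed only by $(1,2M_1)$--quasi-geodesics in $N_{2M_1}(\Gamma o)$, which is a potentially larger class than the one whose growth rate $\omega_{\mathrm{cob}}$ you assumed small. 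The cure used in \cite{YANG10} is to split not at arbitrary nearby orbit points but at $(r,F)$--\emph{barriers}: thanks to the contracting property and Proposition~\ref{admisProp}, any geodesic between the resulting pieces is forced to fellow--travel the original segment near the barrier, so the decomposition is stable with fixed constants. One first shows (using the SCC gap) that barrier--free elements form a growth--tight set, and then runs the multiplicativity/counting argument on barriered elements. Your overall strategy is the right shape, but replacing ``midpoint in the thick part'' by ``contracting barrier'' is the missing structural ingredient.
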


For the remainder of this subsection, assume that $\Gamma\act \U$ is an SCC action with a contracting element. Fix a set $F$ of contracting elements in $\Gamma$. 

We now recall another specific class of conical limit points, introduced in \cite{QYANG}, called \textit{regularly contracting} limit points, which have been shown to be generic for PS measures there. 
This notion is modeled on the following purely metric notion of regularly contracting geodesics introduced earlier in \cite{GQR22}.   For any ratio $\theta\in [0,1]$, a \textit{$\theta$--interval} of a geodesic segment $\gamma$ means a connected subsegment of $\gamma$ with length $\theta \ell(\gamma)$. 

\begin{defn}
Fix constants $\theta, r, C, L > 0$.
We say that a geodesic $\gamma$  is \textit{$(r, C, L)$--contracting at $\theta$–frequency}     if  every $\theta$--interval of $\gamma$ contains a segment of length $L$ that is $r$--close to a  $C$–contracting geodesic. 

A geodesic ray $\gamma$  is \textit{$(r, C, L)$--contracting at $\theta$–frequency} if any sufficiently long initial segment of $\gamma$ (\emph{i.e.} $\gamma[0,t]$ for  $t\gg 0$) is $(r, C, L)$--contracting at $\theta$–frequency.  

Furthermore, $\gamma$ is \textit{frequently $(r, C, L)$--contracting} if it is $(r, C, L)$--contracting at $\theta$–frequency for any $\theta\in (0,1)$.
\end{defn}

The above notion is not used in this paper, but motivates  the following analogous notion   involving a proper action $\Gamma\act \U$.

\begin{defn}
Fix $\theta\in (0,1]$ and $r>0$ and $f\in \Gamma$. We say that a geodesic $\gamma$ contains \textit{$(r, f)$--barriers at $\theta$--frequency} if 
for  every $\theta$--segment of $\gamma$ has $(r, f)$--barriers. 

An element $g\in G$ has \textit{$(r, f)$--barriers at $\theta$--frequency} if there exists a geodesic $\gamma$ between $B(o, M)$ and $B(go, M)$ such that $\gamma$ has  $(r, f)$--barriers at $\theta$--frequency.
\end{defn}


Let $\mathcal W(\theta,r,F)$ denote the set of elements in $\Gamma$ having \textbf{no} $(r, f)$--barriers at $\theta$--frequency for some $f\in F$. That is, an element $g$ of $\Gamma$ belongs to $\mathcal{W}(\theta, r, F)$ if and only if any geodesic between $B(o, M)$ and $B(go, M)$ contains a  $\theta$--interval that  has no  $(r, f)$--barriers.

\begin{lem}\label{frequentbarriers}\cite[Lemma 4.7]{QYANG}
Fix $\theta\in (0,1], r>M$. Then  $\mathcal W(\theta,r,F)$ is growth tight.
\end{lem}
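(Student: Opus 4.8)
The plan is to establish the quantitative bound $\omega_{\mathcal W(\theta,r,F)}\le\omega_\Gamma-\theta(\omega_\Gamma-\omega_0)$ for a constant $\omega_0<\omega_\Gamma$; since $\theta>0$ this is growth tightness. First I would reduce to a single generator of the barrier set: $F$ is finite and $\mathcal W(\theta,r,F)=\bigcup_{f\in F}\mathcal W(\theta,r,\{f\})$, so it suffices to bound $\omega_{\mathcal W(\theta,r,\{f\})}$ for a fixed contracting $f\in F$. The engine of the argument is the case $\theta=1$, i.e.\ the growth tightness of barrier-free elements: the set $\mathcal N$ of $g\in\Gamma$ admitting a geodesic between $B(o,M')$ and $B(go,M')$ carrying no $(r,f)$--barrier satisfies $\omega_0:=\omega_{\mathcal N}<\omega_\Gamma$, for every fixed auxiliary constant $M'$. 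This is the combinatorial heart of purely exponential growth for SCC actions, proved in \cite{YANG10} through the Extension Lemma \ref{extend3}, and it is exactly where statistical convex-cocompactness is used; combined with the fact that SCC actions have purely exponential growth, it gives, for every $\varepsilon>0$,
\[
\sharp A_\Gamma(o,m,\Delta)\ \prec_\Delta\ \mathrm e^{\omega_\Gamma m},\qquad \sharp\bigl(\mathcal N\cap A_\Gamma(o,m,\Delta)\bigr)\ \prec_{\Delta,\varepsilon}\ \mathrm e^{(\omega_0+\varepsilon)m}.
\]

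Next I would run a position-decomposition of the elements of $\mathcal W(\theta,r,\{f\})$. Fix $n$ and let $g\in\mathcal W(\theta,r,\{f\})$ with $d(o,go)\in[n-\Delta,n]$. Applying the definition to the geodesic $\gamma=[o,go]$ yields a subsegment of length $\theta\,d(o,go)$ with no $(r,f)$--barrier; enlarging it, let $J=[p,q]\subseteq\gamma$ be a maximal barrier-free subsegment containing it, so $\ell:=d(p,q)\ge\theta\,d(o,go)$. If $p\neq o$, maximality of $J$ produces an $(r,f)$--barrier of $\gamma$ just before $p$, hence an element $u\in\Gamma$ with $d(uo,p)\le r+C_0$ for a constant $C_0$ depending only on $(r,f)$ and the contracting constant; similarly there is $v\in\Gamma$ with $d(vo,q)\le r+C_0$ when $q\neq go$ (the cases $p=o$, $q=go$ are only easier). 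Setting $a=u$, $b=u^{-1}v$, $c=v^{-1}g$, so $g=abc$, and since $o,p,q,go$ lie in this order on $\gamma$ one has $d(o,ao)\approx d(o,p)$, $d(o,bo)\approx\ell$, $d(o,co)\approx d(q,go)$ and $d(o,ao)+d(o,bo)+d(o,co)\le d(o,go)+O(r)$. Moreover the translate $u^{-1}J$ is a geodesic between $B(o,M')$ and $B(bo,M')$ with $M'=r+C_0$, and it has no $(r,f)$--barrier because barrier-freeness is $\Gamma$--invariant; hence $b\in\mathcal N$. Thus every such $g$ is a product $abc$ with $b\in\mathcal N\cap A_\Gamma(o,m_2,O(r))$ for some $m_2\ge\theta n$, with $a\in A_\Gamma(o,m_1,O(r))$, $c\in A_\Gamma(o,m_3,O(r))$, $m_1+m_2+m_3\le n+O(r)$, and $c$ determined by $a,b,g$.

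Counting these products and summing over the $O(n^2)$ choices of $(m_1,m_2,m_3)$,
\[
\sharp\bigl(\mathcal W(\theta,r,\{f\})\cap A_\Gamma(o,n,\Delta)\bigr)\ \prec\ \sum_{\substack{m_1+m_2+m_3\le n+O(r)\\ m_2\ge\theta n}}\sharp A_\Gamma(o,m_1,O(r))\,\sharp\!\left(\mathcal N\cap A_\Gamma(o,m_2,O(r))\right)\,\sharp A_\Gamma(o,m_3,O(r)),
\]
which by the two estimates above is $\prec_\varepsilon n^2\,\mathrm e^{\omega_\Gamma n}\sup_{m_2\ge\theta n}\mathrm e^{-(\omega_\Gamma-\omega_0-\varepsilon)m_2}\prec_\varepsilon n^2\,\mathrm e^{(\omega_\Gamma-(\omega_\Gamma-\omega_0-\varepsilon)\theta)n}$ once $\varepsilon<\omega_\Gamma-\omega_0$. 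Taking $\limsup\frac1n\log(\cdot)$ and then $\varepsilon\to0$ gives $\omega_{\mathcal W(\theta,r,\{f\})}\le\omega_\Gamma-(\omega_\Gamma-\omega_0)\theta<\omega_\Gamma$, and maximizing over the finitely many $f\in F$ concludes. I expect the main obstacle to be the base estimate $\omega_0<\omega_\Gamma$ for barrier-free elements relative to a single contracting direction — the point where the SCC hypothesis and the Extension Lemma genuinely do the work — whereas the bootstrap to arbitrary $\theta$ is a routine decomposition; a secondary technical point is the passage from the barrier-free subsegment $J$ to an honest group element $b\in\mathcal N$, handled above via the barriers flanking a maximal $J$ at the cost of enlarging the auxiliary constant $M$, while keeping the additive $O(r)$ errors under control.
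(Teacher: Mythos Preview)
The paper does not supply its own proof of this lemma; it is quoted verbatim from \cite[Lemma 4.7]{QYANG}. Your argument is correct and follows exactly the expected route: reduce to a single $f\in F$ via the finite union $\mathcal W(\theta,r,F)=\bigcup_{f\in F}\mathcal W(\theta,r,\{f\})$, invoke the base case $\omega_{\mathcal N}<\omega_\Gamma$ for barrier-free elements from \cite{YANG10} (this is precisely where the SCC hypothesis enters), and bootstrap to general $\theta$ by decomposing $g=abc$ along a maximal barrier-free subsegment and counting. This is the standard proof one finds in the cited reference, so there is nothing to compare against.
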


Set $V:=\Gamma\setminus \mathcal W(\theta,r,F)$. Let  $\mathcal{FC}(\theta,r, F)$  denote the set of  limit points $\xi\in \pG $ that is contained in infinitely many shadows at elements in $V$.  More precisely, $\mathcal{FC}({\theta,r, F})$ is the limit superior of the following sequence, as $n\to \infty$,  $$\bigcup\limits_{v \in   A_\Gamma(o, n,\Delta) \cap {V}}  \Pi_{o}^F(v,r)$$ where   $A_\Gamma(o,n,\Delta)$ is defined in (\ref{AnnulusEQ}). Hence,  $$\Lambda_n:=  \bigcup\limits_{k\ge n}\Bigg(\bigcup\limits_{v \in   A_\Gamma(o, k,\Delta)\cap {V} }  \Pi_{o}^F(v,r)\Bigg) \searrow \mathcal{FC}(\theta,r, F) \quad \textrm{as $n \rightarrow \infty$}.$$ 

Recall $F^n=\{f^n: f\in F\}$ for an integer $n\ge 1$. Define the  set of \textit{regularly contracting} points as follows $$\rcG :=\bigcap_{n\in \mathbb N}\bigcap_{\theta\in (0,1]\cap \mathbb Q} \mathcal{FC}(\theta,r, F^n)$$
We could take a further countable intersection over all possible $F$ of three independent contracting elements. 
In general, this is a proper subset of Mryberg set in \cite{YANG22}.

It is proved in \cite[Lemma 4.12]{QYANG} that the set of $(r, C,L)$--regularly contracting rays lies in  $\mathcal{FC}(\theta,r, F)$ for certain $F$. 
The following result thus implies  \cite[Theorem A]{QYANG}, saying that $(r, C,L)$--regularly contracting rays is $\mu_o$--full. 
\begin{prop}\cite{QYANG}\label{FreqContFull}
Assume that $\Gamma\act \U$ is an SCC action with contracting elements. Then the regularly contracting limit set $[\rcG]$ is a $\mu_o$--full subset of $[\cG]$.    
\end{prop}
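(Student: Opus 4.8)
The plan is to reduce the statement to a Borel--Cantelli estimate for each fixed choice of data $(\theta,n,F)$ and then take a countable intersection. First I would fix a triple $F$ of independent contracting elements (with $d(o,fo)$ chosen large enough that $F^{n}$ satisfies the Shadow Lemma~\ref{ShadowLem} for every $n\ge 1$), an integer $n\ge 1$, and a rational ratio $\theta\in(0,1]\cap\mathbb Q$, and then prove that $\mathcal{FC}(\theta,r,F^{n})$ is $\mu_{o}$--full. Since, over the countably many admissible $F$, the countably many $n$, and the countably many rational $\theta$, the set $\rcG$ is by definition the intersection of all these $\mathcal{FC}(\theta,r,F^{n})$, a countable intersection of full sets is full, and hence $\mu_{o}([\rcG])\ge\mu_{o}(\rcG)=1$. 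The inclusion $[\rcG]\subseteq[\cG]$ is immediate from the definitions, since $\mathcal{FC}(\theta,r,F^{n})$ is contained in the $\limsup$ of the shadows $\Pi_{o}^{F^{n}}(go,r)$ over all $g\in\Gamma$, hence in the set $\cG$ of $(r,F^{n})$--conical points.

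For the full-measure claim, write $W:=\mathcal{W}(\theta,r,F^{n})$ and $V:=\Gamma\setminus W$, so that $\mathcal{FC}(\theta,r,F^{n})=\limsup_{v\in V}\Pi_{o}^{F^{n}}(vo,r)$. The key step is to exhibit two $\mu_{o}$--null sets whose union contains the complement of this set. (i) Because $\Gamma\act\U$ is SCC, Lemma~\ref{frequentbarriers} gives that $W$ is growth tight, i.e. $\e{W}<\e\Gamma$; hence the partial Poincar\'e series $\p_{W}(\e\Gamma,o,o)$ converges, and the Borel--Cantelli Lemma~\ref{ConvLimitNull} yields $\mu_{o}\bigl(\limsup_{g\in W}\Pi_{o}^{F^{n}}(go,r)\bigr)=0$. (ii) An SCC action has purely exponential growth and in particular is of divergence type, so by Lemma~\ref{ConicalPointsLem}(2) the set of $(r,F^{n})$--conical points carries full $\mu_{o}$--measure; concretely, for $\mu_{o}$--a.e.\ $\xi$ there is a sequence $g_{m}\in\Gamma$ with $d(o,g_{m}o)\to\infty$ and $\xi\in\Pi_{o}^{F^{n}}(g_{m}o,r)$. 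Combining (i) and (ii): outside a $\mu_{o}$--null set, $\xi$ lies in infinitely many shadows $\Pi_{o}^{F^{n}}(g_{m}o,r)$ but in only finitely many with $g_{m}\in W$, so infinitely many of the $g_{m}$ lie in $V$, i.e.\ $\xi\in\mathcal{FC}(\theta,r,F^{n})$. This proves $\mu_{o}\bigl(\mathcal{FC}(\theta,r,F^{n})\bigr)=1$, and intersecting over all $F,n,\theta$ finishes the argument.

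Since Lemmas~\ref{frequentbarriers}, \ref{ConvLimitNull} and~\ref{ConicalPointsLem} do the heavy lifting, the main obstacle I anticipate is the uniform bookkeeping of constants: one must fix $r$ once and for all, large enough that the Shadow Lemma~\ref{ShadowLem} and the Borel--Cantelli Lemma~\ref{ConvLimitNull} apply simultaneously to every $F^{n}$ and to every $W=\mathcal{W}(\theta,r,F^{n})$, and one must reconcile the basepoint $x\in\Gamma o$ permitted in the definition of an $(r,F^{n})$--conical point (Definition~\ref{ConicalDef2}) with the basepoint $o$ appearing in Lemma~\ref{ConvLimitNull}; this is the standard basepoint change, absorbed into an enlargement of $r$, and introduces no new geometry. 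Once these parameters are pinned down the three cited inputs assemble mechanically, and no estimate beyond them is needed.
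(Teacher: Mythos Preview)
Your proposal is correct and follows essentially the same route as the paper: fix $(\theta,n)$, use Lemma~\ref{frequentbarriers} to get growth tightness of $W=\mathcal W(\theta,r,F^n)$, apply the Borel--Cantelli Lemma~\ref{ConvLimitNull} to kill $\limsup_{g\in W}\Pi_o^{F^n}(go,r)$, and combine with the $\mu_o$--fullness of conical points from Lemma~\ref{ConicalPointsLem} to conclude that $\mathcal{FC}(\theta,r,F^n)$ is full; then intersect countably. The only cosmetic difference is that you also intersect over countably many choices of $F$, which the paper mentions as an optional refinement but does not build into the definition of $\rcG$.
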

\begin{proof}
Recall that $[\cG]$ is the limit super of $\{\Pi_o^F(go,r): g\in \Gamma \}$, and it is $\mu_o$--full  by Lemma \ref{ConicalPointsLem}. The set $W=\mathcal W(\theta,r,F)$ is growth tight by Lemma \ref{frequentbarriers}, so the limit super $\Lambda$ of $\{\Pi_o^F(go,r): g\in W\}$ is $\mu_o$--null by Lemma \ref{ConvLimitNull}. Noting   $V=\Gamma\setminus W$ as in the above definition,  $[\rcG]$ is contained in the complementary subset of $\Lambda$ in $[\cG]$. Hence, $\mu_o([\rcG])=1$ is proved.    
\end{proof}
\begin{rem}\label{ConjPEGRem}
If $\U$ is a negatively curved Riemannian manifold with finite BMS measure on geodesic flow, then the PS measure is supported on the frequently contracting limit points. The same is true for certain CAT(0) groups with rank-1  elements and mapping class groups in \cite[Theorems 5.1 \& 7.1]{GQR22}. In these settings, finiteness of the BMS measure is equivalent to having purely exponential growth (\cite{Roblin}).  

In the current coarse setting, we expect that the SCC action assumption could be replaced in Proposition \ref{FreqContFull} with purely exponential growth. 
\end{rem}

\subsection{Hopf decomposition}
In this subsection, we consider a (possibly non-discrete) countable subgroup $H<\isom(\U)$. In particular, $H\act \U$ is not necessarily a proper action with discrete orbits. 

Assume $H$  admits a measure class preserving action on $(\pU, m)$.  We say that the action  $H\act (\pU,m)$ is \textit{(infinitely) conservative}, if for any $A\subseteq \pU$ with $m(A)>0$, there are (infinitely many) $h\in H\setminus \{1\}$ so that $hA\cap A\ne\emptyset.$ It is called \textit{dissipative} if there is a  measurable wandering set $A$: $gA\cap A=\emptyset$ for each $1\ne h\in H$. If in addition, $\cup_{h\in H}hA=\pU$, then it is \textit{completely dissipative}. 

Let $\mathbf C\subseteq \pU$ be the union of purely atomic ergodic components, and $\mathbf D=\pU\setminus \mathbf C$. Denote $\mathbf D_{1}$ (resp. $\mathbf D_{>1}$) the subset of $\mathbf D$ with trivial (resp. nontrivial) stabilizers,  $\mathbf D_{\infty}$ the subset of $\mathbf D$ with infinite   stabilizers.  The partition   $\mathbf{Cons}=\mathbf C\cup \mathbf D_{>1}$ and $\mathbf{Diss}=\mathbf D_1$ (mod 0)  gives the \textit{Hopf decomposition} $H\act (\pU, m)$ as the disjoint union of conservative and completely dissipative components. 
If $H\act \U$ is proper on a hyperbolic space, $\mathcal D_\infty$ consists of parabolic points with infinite stabilizer. If $H$ is torsion-free, parabolic points are countable.

By \cite[Theorem 29]{Kai10}, the infinite conservative part $\mathbf{Cons}_\infty=\mathbf C\cup\mathbf D_\infty$ coincides (mod 0) with the following set
$$
\begin{aligned}
\mathbf{Cons}_\infty&=\{\xi\in \pU: \exists t>0 \sharp\{dg_\star m(\xi)/dm(\xi)>t\}=\infty\}\\
&=\{\xi\in \pU: \sum_{g\in H}dg_\star m(\xi)/dm(\xi)=\infty \}  
\end{aligned}
$$
We now consider the $\Gamma$--equivariant $\e\Gamma$--dimensional conformal density $\{\mu_x:x\in \U\}$ on $\pU$. We emphasize that $H$ is not necessarily contained in $\Gamma$, but is assumed to preserve the measure class of $m:=\mu_o$, so that the following holds for $g_\star \mu_o=\mu_{go}$:
$$
\frac{dg_\star \mu_o}{d\mu_o}(\xi) \asymp \mathrm{e}^{-\e\Gamma B_\xi(go,o)}
$$
Recall that $\mu_o$ is supported on $[\pG]$.

We do not assume $H$ to be a discrete group, so $Ho$ may not be a discrete subset in $\U$. We still denote by $\Lambda(Ho)$ the accumulation points of the orbit $H o$ in the convergence boundary $\pU$. It depends on the basepoint $o$, but $[\Lambda(Ho)]$ does not, because of Assumption (B) in Definition \ref{ConvBdryDefn}. 

Let $\mathcal C^{\mathrm{hor}}$ be defined in \ref{AssumpE} as the set of points $\xi\in \mathcal C$ at which Busemann cocycles satisfy some weak form of continuity. 
According to the above discussion, $\mathbf{Cons}_\infty$ is exactly the so-called big horospheric limit point defined as follows.

\begin{defn}\label{HoroLimitPtsDef}
A point $\xi\in [\Lambda(Ho)]\cap \mathcal C^{\mathrm{hor}}$ is called \textit{big horospheric limit point} if there exists $h_n\in H$ such that $B_{[\xi]}(h_no,o)\ge L$ for some $L\in\mathbb R$. If, in addition,  $B_{[\xi]}(h_no,o)\ge L$ holds for any given $L\in\mathbb R$, then $\xi$ is called \textit{small horospheric limit point}.       Denote by $\HG$ (resp. $\hG$) the set of the big (resp. small) horospheric limit points.
\end{defn}
\begin{rem}
By definition, a big or small horospheric limit point is a property of a $[\cdot]$--class.  Thus, $\HG=[\HG]$ is $[\cdot]$--saturated. In terms of horoballs (\ref{HoroballDefn}), $\xi$  is a big (resp. small) horospheric limit point if some (resp. any) horoball centered at $[\xi]$ contains infinitely many $h_no$ with $h_no\to \xi$.  
\end{rem}
 

Let $\pU_{>1}$ (resp. $\pU_{1}$) denote the set of boundary points with nontrivial (resp. trivial) stabilizers in $H$.
Summarizing the above discussion, we have the Hopf decomposition for conformal measures.
\begin{lem}\label{HopfdecompLem}
Let $H$ act properly on $\U$ and preserve the measure class of  $\mu_o$. Then  $\HG$ is the infinite conservative part,  $\mathbf{Cons}=\HG\cup \pU_{>1}$ and $\mathbf{Diss}=\pU\setminus \mathbf{Cons}= \pU_{1}\setminus \HG$.    
\end{lem}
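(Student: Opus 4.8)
The plan is to identify the Hopf decomposition of $H\act(\pU,\mu_o)$ with the set-theoretic partition described, by combining the abstract Hopf decomposition theory (as recalled from \cite{Kai10}) with the conformal-density identity that controls Radon--Nikodym cocycles. First I would recall that, since $H$ preserves the measure class of $\mu_o$, the Hopf decomposition exists and that the infinite conservative part $\mathbf{Cons}_\infty = \mathbf C\cup\mathbf D_\infty$ agrees $\bmod\,0$ with the set
$$
\Big\{\xi\in\pU:\ \sum_{g\in H}\frac{dg_\star\mu_o}{d\mu_o}(\xi)=\infty\Big\},
$$
as stated just before Definition \ref{HoroLimitPtsDef}. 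The key input is that $\mu_o$ is a $\Gamma$--equivariant $\e\Gamma$--dimensional conformal density, so for $g\in H$ we have $\dfrac{dg_\star\mu_o}{d\mu_o}(\xi)\asymp e^{-\e\Gamma B_\xi(go,o)}$, where the implied constant is uniform. Hence $\sum_{g\in H}e^{-\e\Gamma B_{[\xi]}(go,o)}=\infty$ if and only if $\xi\in\mathbf{Cons}_\infty$, which by the very Definition \ref{HoroLimitPtsDef} (``some horoball centered at $[\xi]$ contains infinitely many $h_no$'') is equivalent to $\xi\in\HG$. This is the first and main identification: $\HG = \mathbf{Cons}_\infty\ \bmod\,0$.

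Second, I would account for the remaining conservative mass coming from atoms with finite but nontrivial stabilizers. By definition $\mathbf{Cons}=\mathbf C\cup\mathbf D_{>1}$, where $\mathbf C$ is the union of purely atomic ergodic components. Any atom of $\mu_o$ whose $H$--orbit is finite contributes a purely atomic ergodic component, hence lies in $\mathbf C\subseteq\mathbf{Cons}$; such a point has a nontrivial (indeed infinite-index-complement, finite) stabilizer, so it lies in $\pU_{>1}$. Conversely, a point $\xi\in\pU_{>1}$ has a nontrivial stabilizer: if the stabilizer is infinite then $\xi\in\mathbf D_\infty\subseteq\mathbf{Cons}_\infty=\HG$, and if it is finite and nontrivial, then since $H\act\U$ is proper the orbit $H\xi$ accumulates only in $\pU$ in a controlled way and the finite stabilizer forces $\xi$ into a purely atomic component — more precisely, a point with nontrivial finite stabilizer cannot lie in the dissipative part $\mathbf D_1$ (which by definition has trivial stabilizers), so $\pU_{>1}\subseteq\mathbf C\cup\mathbf D_{>1}=\mathbf{Cons}$. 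Combining, $\mathbf{Cons}=\mathbf{Cons}_\infty\cup\pU_{>1}=\HG\cup\pU_{>1}$, and therefore $\mathbf{Diss}=\pU\setminus\mathbf{Cons}=\mathbf D_1\setminus\HG=\pU_1\setminus\HG\ \bmod\,0$, using that $\mathbf D_1$ consists exactly of the trivial-stabilizer points of the dissipative part and that $\HG\subseteq\mathbf{Cons}$ is disjoint from $\mathbf D_1$ up to null sets.

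The step I expect to be the main obstacle is the careful bookkeeping of atoms and finite stabilizers in the non-discrete setting: although $H$ is countable, $H\act\U$ need not be proper with discrete orbits, so one must be slightly careful that ``nontrivial stabilizer'' points are genuinely conservative and do not secretly hide in $\mathbf D_1$, and that the identification $\mathbf C\cup\mathbf D_\infty=\HG$ is not spoiled by a null set of pathological atoms. The resolution is that these are all measure-zero subtleties: the decomposition is only claimed $\bmod\,0$, the conformal identity $\dfrac{dg_\star\mu_o}{d\mu_o}(\xi)\asymp e^{-\e\Gamma B_\xi(go,o)}$ holds $\mu_o$--a.e.\ for each $g$ (hence simultaneously for all $g\in H$ off a null set, $H$ being countable), and $\mu_o$ is supported on $[\pG]\cap\mathcal C^{\mathrm{hor}}$ by Lemma \ref{ConicalPointsLem}, where the Busemann quasi-cocycle $B_{[\xi]}$ is well defined and the horoball language of Definition \ref{HoroballDefn} applies. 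With these a.e.\ statements in hand the three equalities $\HG=\mathbf{Cons}_\infty$, $\mathbf{Cons}=\HG\cup\pU_{>1}$, and $\mathbf{Diss}=\pU_1\setminus\HG$ follow, and conservativity of $H\act(\pU,\mu_o)$ whenever $\mu_o(\HG)=1$ is then immediate since $\mathbf{Diss}$ is null.
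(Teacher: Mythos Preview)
Your proposal is correct and follows the same route as the paper, which presents the lemma as a summary of the discussion immediately preceding it (no separate proof is given). One small sharpening: the identification $\HG=\mathbf{Cons}_\infty$ is most transparent via the \emph{threshold} characterization $\{\xi:\exists\, t>0,\ \sharp\{g\in H:\tfrac{dg_\star\mu_o}{d\mu_o}(\xi)>t\}=\infty\}$, which under $\tfrac{dg_\star\mu_o}{d\mu_o}(\xi)\asymp e^{-\e\Gamma B_\xi(go,o)}$ translates \emph{literally} into ``some horoball at $[\xi]$ contains infinitely many $ho$''; the divergent-sum criterion you emphasize only matches $\HG$ after invoking the equality of the two characterizations from \cite{Kai10}, so the phrase ``by the very Definition'' slightly overstates what is immediate.
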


It is  wide  open whether the  big horospheric limit set differs from the small one only in a negligible set (cf. Question \ref{questHorlimitset}). This has been confirmed for Kleinian groups \cite{Sul81} for spherical measures, free subgroups \cite{GKN} for visual measures and  normal subgroups of divergent type actions \cite{FM20} for general conformal measures without atoms. Here is an example of atomic conformal measures in which the large and small horospheric sets differ by a positive measure set. 
\begin{example}\label{requirenoatoms}
Let $\Gamma$ be a non-uniform lattice in $\isom(\mathbb H^m)$ with $\omega_\Gamma=m-1\ge 1$. We can put a conformal measure on the boundary at infinity, supported on the set $\mathbf{P}$ of countably many parabolic fixed points. Indeed, fix 
any $\omega>(m-1)$ and $\mu_o(\xi)=c>0$ for given $\xi\in \mathbf{P}$. The value of $c$ will be adjusted below. The other points $\eta=g\xi$ in $\Gamma \xi$ is then determined by $$\mu_o(\eta)=c\cdot \mathrm{e}^{-\omega B_\xi(g^{-1}o,o)}$$ where $\eta=g\xi$ which does not depend on $g\in \Gamma$ (for $B_\xi(\cdot,\cdot)$ is  invariant under the stabilizer  $\Gamma_\xi$). Fix $g_\eta\in \Gamma$ so that $g_\eta \xi=\eta$. Observe that $\mu_o(\Gamma \xi)$ can realize any value in $(0,1]$, by  adjusting $c$ with the following fact for given $\omega>\e\Gamma $,
$$
\sum_{\eta\in \Gamma\xi} \mathrm{e}^{-\omega B_\eta(o,g_\eta o)} <\infty
$$
which, in turn, follows from purely exponential growth of double cosets of the parabolic subgroup $\Gamma_\xi$ (\emph{e.g.} \cite{HYZ}) $$\sharp \{g\mathcal{HB}(\xi): d(\mathcal{HB}(\xi),g\mathcal{HB}(\xi))\le n\}\asymp \mathrm{e}^{\e\Gamma n}$$
As $\sharp \mathbf{P}/\Gamma$ is finite, we are able to achieve that $\mu_o(\mathbf{P})=1$.

The set of parabolic points is the infinite conservative part. If the number of cusps is at least 2, the action is not ergodic. Moreover,  parabolic points are the big horospheric limit set (mod 0), but  not  small horospheric limit points. So $\mu_o(\HG)=1$ and $\mu_o(\hG)=0$. 
\end{example}

We shall prove that $\mu_o(\HG\setminus \hG)=0$ for subgroups in a hyperbolic group, where $\mu_o$ is the Patterson-Sullivan measure for $\Gamma$.      

At last, we collect the notations of various limit sets studied in this paper: 
\begin{itemize}
    \item 
    $\pG$ is the limit set of $\Gamma o$, while $[\pG]$ is the $[\cdot]$--classes over $\pG$.
    \item $\cG$ is the set of $(r,F)$--conical points, and $\ccG$ denotes the set of usual conical points.
    \item 
    $\Lambda^\mathrm{Hor} (\Gamma o)$ (resp. $\Lambda^\mathrm{hor} (\Gamma o)$) is the big/small horospheric limit set. 
    \item 
    $\mG$ is the Myrberg limit set, and $\rcG$ is the regularly contracting limit set. 
    
\end{itemize}
They are related as
$$\rcG\subseteq \mG\subseteq \cG\subseteq \ccG\subseteq \Lambda^\mathrm{hor} (\Gamma o)\subseteq \Lambda^\mathrm{Hor} (\Gamma o)\subseteq \pG$$
where each inclusion is proper in general.

\part{Hopf decomposition of confined subgroups}
\section{Prelude: proof of selected results in the hyperbolic setting}\label{SecWarmup}

The goal of this section is two-fold: to illustrate some of our key tools via well-known facts in hyperbolic spaces; and present in this setting the proofs for (essentially all) theorems in \textsection\ref{secdiss} and \textsection\ref{seccons} and a key tool used in subsequent sections. This section could be skipped without affecting the remaining ones.  

Throughout this section, we assume that $\U$ is a Gromov hyperbolic space and $H$, $\Gamma$, $G$ are subgroups of $\isom(\U)$ with $H$ and $\Gamma$ being discrete. Let $\{\mu_x:x\in\U\}$ be Patterson-Sullivan measures  on the Gromov boundary $\pU$ constructed from $\Gamma\act \U$. If $\Gamma\act \U$ is of divergence type, then $\mu_o$ is atomless, and charges on the conical limit set (Lemma \ref{ConicalPointsLem}), and Shadow Lemma \ref{ShadowLem} holds. We assume the reader is familiar with these facts (\emph{e.g.} which could be found in \cite{MYJ20}). 

Since $\U$ is Gromov hyperbolic, there exists $\delta>0$ such that any geodesic triangle admits a \textit{$\delta$--center}, that is, a point within a $\delta$--distance to each side. 

\subsection{Preliminary}
The notion of an admissible path (Def. \ref{AdmDef}) is a generalization of the $L$--local quasi-geodesic paths:
$$
p_0q_1p_1\cdots q_np_n$$
where any two consecutive paths give a quasi-geodesic by (\textbf{BP}), of length at least $\ell(p_i)>L$ (\textbf{LL}). It is well known that, in hyperbolic spaces, if $L\gg 0$, a local quasi-geodesic  is a global quasi-geodesic.

Assume that $\Gamma\act \U$ is a proper non-elementary action, so there exist at least three loxodromic elements with pairwise disjoint fixed points. The following result, which is a special case of the Extension Lemma \ref{extend3}, is a consequence of the fact on quasi-geodesics mentioned above.
\begin{lem}\label{extend3inhyp}

There exists a set $F$ of three loxodromic elements of $\Gamma$ and constants $L, c>0$ that depend only on $F$ with the following property.  For any $g,h\in \Gamma$, there exists an element $f \in F$ such that   the path  $$\gamma:=[o, go]\cdot(g[o, fo])\cdot(gf[o,ho])$$ is a $c$--quasigeodesic. 
\end{lem}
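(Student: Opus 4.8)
The statement is a special case of the Extension Lemma \ref{extend3}, so in the hyperbolic setting we want a self-contained proof that does not invoke the full machinery of admissible paths and contracting subsets. The plan is to use the elementary fact that, in a $\delta$--hyperbolic space, an $L$--local $(1,\epsilon)$--quasi-geodesic (for $L$ large relative to $\delta,\epsilon$) is a global $c$--quasi-geodesic. The concatenation $\gamma = [o,go]\cdot g[o,fo]\cdot gf[o,ho]$ consists of three geodesic pieces; the middle piece $g[o,fo]$ has length $d(o,fo)$, which we can make as large as we like by passing to a high power of a loxodromic element. The issue is that we cannot control the angles at which $[o,go]$ enters $g\cdot o$ and at which $gf[o,ho]$ leaves $gf\cdot o$. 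So the real content is: \emph{given} the (uncontrolled) incoming and outgoing directions, we must \emph{choose} $f$ from a fixed finite pool $F$ so that the middle segment $g[o,fo]$ makes a ``definite turn'' at both endpoints, i.e. the Gromov products $(o \mid fo)_o$-type quantities stay bounded, making the three-fold concatenation a local quasi-geodesic with uniform constants.

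\textbf{Choosing $F$.} First I would fix three loxodromic elements $a_1,a_2,a_3\in\Gamma$ with pairwise disjoint fixed point pairs in $\pU$ (these exist because $\Gamma$ is non-elementary with a contracting, hence loxodromic, element — use Lemma \ref{IndepElemExists} twice, or the standard ping-pong argument). Replace each $a_i$ by a sufficiently high power so that $d(o, a_i o)$ is larger than a threshold $L_0 = L_0(\delta)$ to be determined, and so that the quasi-geodesic axes $\ax(a_i)$ have pairwise ``bounded overlap'': there is $B=B(\delta)$ with $\operatorname{diam}\big(N_{10\delta}(\ax(a_i))\cap N_{10\delta}(\ax(a_j))\big)\le B$ for $i\ne j$, and similarly $\operatorname{diam}\big(N_{10\delta}(\ax(a_i))\cap N_{10\delta}(g\ax(a_j))\big)\le B$ for any $g\in\Gamma$ with $g\ax(a_j)\ne \ax(a_i)$. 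Set $F=\{f_1,f_2,f_3\}$ with $f_i$ a high power of $a_i$.

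\textbf{The selection argument.} Fix $g,h\in\Gamma$. Consider the three candidate paths $\gamma_i = [o,go]\cdot g[o,f_io]\cdot gf_i[o,ho]$, $i=1,2,3$. The potential failure of $\gamma_i$ to be a local quasi-geodesic can only occur near the two ``corners'' $g\cdot o$ and $gf_i\cdot o$: a corner is bad exactly when a geodesic representing the concatenation of the two adjacent pieces backtracks along the middle segment by more than, say, $\tfrac{1}{10}d(o,f_io)$. At the corner $g\cdot o$, a bad corner forces $[o,go]$ (pushed back by $g^{-1}$: the geodesic $g^{-1}[o,go]=[g^{-1}o, o]$) to fellow-travel an initial segment of $[o, f_i o]$ of definite length; since $[o,f_io]$ lies within bounded distance of $\ax(f_i)$, this means $g^{-1}o$ ``shadows'' a long initial piece of $\ax(f_i)$ as seen from $o$. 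By the bounded-overlap property of the three axes $\ax(f_1),\ax(f_2),\ax(f_3)$, the incoming direction $g^{-1}o$ can produce a bad corner for at most one value of $i$ — the long overlap with $\ax(f_i)$ and a long overlap with $\ax(f_j)$, $j\ne i$, would contradict $\operatorname{diam}(\text{overlap})\le B$ once $L_0 > 100(B+\delta)$. The symmetric argument at the corner $gf_i\cdot o$, using the outgoing direction towards $gf_ih\cdot o$ (equivalently $f_i^{-1}$ applied, giving a long overlap of the terminal part of $\ax(f_i)$ with the relevant geodesic), again rules out at most one index $i$. Two indices are eliminated in the worst case, leaving at least one $f\in F$ for which both corners of $\gamma$ are good, hence $\gamma$ is an $L_0$--local $(1,O(\delta))$--quasi-geodesic, hence (taking $L_0$ large) a global $c$--quasi-geodesic with $c=c(\delta)$.

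\textbf{Main obstacle.} The delicate point is making precise ``a bad corner forces a long overlap with $\ax(f_i)$'' in terms of the \emph{translated} axes $g\ax(f_i)$ that actually appear in $\gamma$, and verifying that the relevant bounded-overlap estimate holds for $N_{r}(\ax(f_i))\cap N_r(g\ax(f_i))$ when $g\notin E(f_i)$ — i.e. one must also handle the degenerate case where the incoming geodesic $[o,go]$ happens to run along a $\Gamma$--translate of $\ax(f_i)$ itself. This is exactly where independence (bounded intersection of the family $\{g\ax(f_i)\}$, part of Definition of independent contracting elements) is used, and it is the same technical heart as in the proof of Lemma \ref{extend3}; in the hyperbolic case it reduces to the standard fact that high powers of loxodromics with distinct axes have uniformly bounded coarse intersection. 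I would isolate this as a short preliminary claim before running the counting/elimination argument above.
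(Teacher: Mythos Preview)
Your proposal is correct. The paper does not actually prove this lemma: it simply records that the result is a special case of the Extension Lemma \ref{extend3}, is well known to experts, and first appeared in \cite[Lemma~3]{AL}. Your argument --- choose three loxodromics with pairwise disjoint fixed-point pairs, observe that a bad corner at $go$ (respectively $gf_io$) forces the direction $g^{-1}o$ (respectively $ho$) to fellow-travel one particular half-axis near $o$, so each corner eliminates at most one index, then invoke local-to-global for quasi-geodesics in hyperbolic spaces --- is exactly the standard proof the paper is alluding to when it says the lemma ``is a consequence of the fact on quasi-geodesics mentioned above.''

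One small remark: your ``main obstacle'' paragraph overstates what is needed. The elimination at each corner only requires bounded overlap of the three \emph{fixed} axes $\ax(f_1),\ax(f_2),\ax(f_3)$ near $o$, which is immediate from pairwise disjointness of their endpoints in $\pU$. The stronger uniform bounded-intersection of the whole translated family $\{g\ax(f_i):g\in\Gamma\}$ is what the general Lemma \ref{extend3} uses to obtain the barrier conclusion (its item (2)), but that conclusion is not part of the present statement and you do not need it here.
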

This result is well known to experts in the field and, to the best of our knowledge, appeared first in \cite[Lemma 3]{AL}. It was subsequently reproved or implicitly used in many works, which we do not attempt to track down here.     

\subsection{Completely dissipative actions}

In this subsection, we further assume that $\U$ is proper and $\Gamma$ acts on $\U$ cocompactly. The goal is to prove the conclusion of Theorem \ref{HalfgrowthimplyDiss} in this setting.
\begin{lem}\label{CloserInHoroball}
Let $\xi\in \HG$ be a big horospheric point. Then for any $\epsilon>0$, there exist infinitely many orbit points $h_no\in Ho$ such that $d(o,z_n)>d(o, h_no)(1/2-\epsilon)$, where $z_n$ is a $\delta$--center of $\Delta(o,\xi,h_no)$ for some $\delta>0$.    
\end{lem}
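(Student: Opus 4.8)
The plan is to unwind the definition of the big horospheric limit set and then reduce the estimate to the standard identification, in a $\delta$--hyperbolic space, of the distance from the basepoint to the center of a triangle with a Gromov product.

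First I would make concrete what $\xi\in\HG$ provides. By the horoball reformulation of Definition \ref{HoroLimitPtsDef} given in the Remark that follows it, there is a constant $L_0\in\mathbb R$ and an infinite sequence of elements $h_n\in H$ with $h_no\to\xi$ and $h_no\in\mathcal{HB}([\xi],L_0)$; since $\U$ is hyperbolic we have $[\xi]=\{\xi\}$, and by Definition \ref{HoroballDefn} this means
$$
B_\xi(h_no,o)\ \le\ L_0 .
$$
As the $h_no$ converge to the boundary point $\xi$, we automatically have $d(o,h_no)\to\infty$, so in particular the $h_no$ are eventually pairwise distinct as points of $Ho$.

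Next I would invoke two elementary facts about $\delta$--hyperbolic spaces. First, if $z_n$ is a $\delta$--center of the triangle $\Delta(o,\xi,h_no)$ (with ideal vertex $\xi$), then $|d(o,z_n)-(h_no\mid\xi)_o|\le\delta_1$ for a constant $\delta_1=\delta_1(\delta)$, where $(\cdot\mid\cdot)_o$ denotes the Gromov product based at $o$ extended to the boundary. Second, directly from the definitions of the Gromov product and of the Busemann cocycle (as a limit of $d(\cdot,y)-d(o,y)$), one has $(h_no\mid\xi)_o=\tfrac12\big(d(o,h_no)-B_\xi(h_no,o)\big)$. Combining these with $B_\xi(h_no,o)\le L_0$ gives
$$
d(o,z_n)\ \ge\ \tfrac12\,d(o,h_no)-\tfrac{L_0}{2}-\delta_1 .
$$

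Finally, fix $\epsilon>0$. Since $d(o,h_no)\to\infty$, we have $\epsilon\,d(o,h_no)>\tfrac{L_0}{2}+\delta_1$ for all large $n$, and for such $n$ the previous inequality yields $d(o,z_n)>(\tfrac12-\epsilon)\,d(o,h_no)$. These $h_no$ form an infinite subset of $Ho$, which is exactly the claim. I do not anticipate a genuine obstacle here: the argument is bookkeeping with additive $\delta$--errors, and the only point requiring care is the sign convention, namely that "lying in a fixed horoball centered at $\xi$'' means $B_\xi(h_no,o)$ is bounded \emph{above}, which is precisely what forces the center $z_n$ toward the midpoint of the geodesic $[o,h_no]$ as $n\to\infty$.
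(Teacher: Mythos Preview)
Your proposal is correct and follows essentially the same approach as the paper: both arguments extract from the horoball condition the inequality $d(o,z_n)\ge \tfrac12 d(o,h_no)-C(\delta,L_0)$ and then absorb the additive constant into $\epsilon\,d(o,h_no)$ using $d(o,h_no)\to\infty$. The paper carries out the estimate by hand via the thin-triangle inequality at a point $z\in[o,\xi]$ far out on the ray, whereas you package the same computation through the Gromov-product identity $(h_no\mid\xi)_o\approx\tfrac12\big(d(o,h_no)-B_\xi(h_no,o)\big)$ and the fact that a $\delta$--center realizes this product up to bounded error; the two are equivalent reformulations.
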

\begin{proof}
By the definition of a big horospheric point, there is an infinite sequence of points $h_no \in Ho\to \xi$ such that $h_no$ lies in some horoball $\mathcal{HB}(\xi, L)$ for all $n\ge 1$ and for some $L$ depending on $\xi$. Recall the definition of a horoball:  $\mathcal{HB}(\xi, L)=\{z\in\U: B_{\xi}(z,o)\le L\}$, so we obtain  $$B_z(h_no,o)=d(z,h_no)-d(z,o)\le L+10\delta$$ for $z\in [o,\xi]$ with $d(o,z)$ sufficiently large. By hyperbolicity, if $z_n\in [o,\xi]$ is a $\delta$--center of $\Delta(o,z,h_no)$ for some $\delta>0$, then $d(z,z_n)+d(z_n,h_no)\le 2\delta+d(z,h_no)$.  Combining these inequalities gives $$
\begin{aligned}
d(z,z_n)+d(z_n,h_no)&\le 12\delta+L+d(z,o)\\
&\le 12\delta+L+d(o,z_n)+d(z_n,z)    
\end{aligned}$$ yielding $d(z_n,h_no)\le 12\delta+L+d(o,z_n)$. Thus,  $d(o,h_no)\le d(o,z_n)+d(z_n,h_no)\le 12\delta+L+2d(o,z_n)$. Note however that $L$ depends on $\xi$. Given any $\epsilon>0$, we can still obtain $(1/2-\epsilon)d(o,h_no)\le d(o,z_n)$ after dropping finitely many $h_no$. The proof is complete.
\end{proof}


\begin{thm}[Theorem \ref{HalfGrowthDisThm} in hyp. setup]\label{smallgrowthhorhaszeromeasure}
If $\e H<\e \Gamma/2$,   then $\mu_o(\HG)=0$. 
\end{thm}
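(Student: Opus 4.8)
The strategy is to combine the geometric fact from Lemma \ref{CloserInHoroball} with the Shadow Lemma \ref{ShadowLem} to get a covering estimate, and then use the growth gap $\e H<\e\Gamma/2$ to conclude the shadows have total measure zero. First I would fix a small $\epsilon>0$ with $\e H<\e \Gamma(1/2-\epsilon)$ still holding (so $\e H + \e\Gamma\epsilon < \e\Gamma/2$, or more precisely choose $\epsilon$ so that $\e H < (1/2-\epsilon)\e\Gamma$), and fix a finite set $F$ of three independent loxodromic elements so that the Shadow Lemma applies. For each big horospheric limit point $\xi\in \HG$, Lemma \ref{CloserInHoroball} provides infinitely many $h_no\in Ho$ and $\delta$-centers $z_n$ of $\Delta(o,\xi,h_no)$ with $d(o,z_n)>(1/2-\epsilon)d(o,h_no)$; since $z_n$ lies $\delta$-close to $[o,\xi]$ and $\delta$-close to $[o,h_no]$, the point $\xi$ lies in a shadow $\Pi_o(h_no, r)$ for a uniform $r$ depending only on $\delta$ and the hyperbolicity constant.

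Next I would set up the Borel--Cantelli / covering argument. Let me organize $Ho$ into annuli: for each $m$, let $A_m = \{h\in H : |d(o,ho)-m|\le \Delta\}$. By the definition of $\e H$ and purely exponential growth considerations (or just the definition of $\limsup$), for any $s>\e H$ we have $\sum_{h\in H} e^{-s\, d(o,ho)}<\infty$; choose such an $s$ with $(1/2-\epsilon)\e\Gamma > s > \e H$ — this is possible precisely because $\e H<\e\Gamma/2$ and $\epsilon$ is small. Now the key point: if $\xi\in\HG$, then $\xi\in \Pi_o(ho,r)$ for infinitely many $h\in H$, and for each such $h$ we have, by the Shadow Lemma, $\mu_o(\Pi_o(ho,r))\prec_r e^{-\e\Gamma d(o,ho)}$. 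I want to relabel the ``radius'' of the shadow: since $d(o,z_n) > (1/2-\epsilon)d(o,h_no)$ and $z_n$ is close to $[o,\xi]$, we can actually say $\xi$ lies in a shadow $\Pi_o(v_n, r')$ where $v_n$ is a point on $[o,h_no]$ at distance roughly $d(o,z_n)$ from $o$ — but $v_n$ need not be an orbit point. The cleaner route, following the free-group proof of \cite{GKN} recast geometrically, is: $\HG \subseteq \bigcap_{N}\bigcup_{h\in H, d(o,ho)\ge N}\Pi_o(ho,r)$, and then estimate
$$
\mu_o\Big(\bigcup_{h\in H, d(o,ho)\ge N}\Pi_o(ho,r)\Big)\le \sum_{h\in H, d(o,ho)\ge N}\mu_o(\Pi_o(ho,r))\prec_r \sum_{h\in H, d(o,ho)\ge N} e^{-\e\Gamma d(o,ho)},
$$
and since $\e\Gamma > \e H$ (indeed $\e\Gamma>2\e H$) this tail sum goes to $0$ as $N\to\infty$, giving $\mu_o(\HG)=0$.

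\textbf{The main obstacle.} The subtlety — and the reason the hypothesis is $\e H<\e\Gamma/2$ rather than merely $\e H<\e\Gamma$ — is that the naive bound above actually only needs $\e H<\e\Gamma$, so I must be misusing the geometric input; the correct argument must exploit that $\xi$ being a \emph{big horospheric} limit point forces the orbit points to come \emph{close to the geodesic ray} $[o,\xi]$ only up to distance $\approx \frac12 d(o,h_no)$, not all the way. So the real covering is by shadows $\Pi_o(z_n, r)$ where $z_n$ ranges over \emph{centers}, and one must bound the number of orbit points $h_no$ whose center $z_n$ lands near a given point $v$ at distance $t$ from $o$: such $h_no$ satisfy $d(v,h_no)\approx d(o,h_no)-t$ and $d(o,h_no)\approx 2t$ (up to the additive constants and $\epsilon t$), so there are at most $\sharp N_H(o, 2t + \epsilon' t) \prec e^{(\e H+\epsilon'')\cdot 2t}$ of them, and each contributes a shadow near $v$ of $\mu_o$-size $\approx e^{-\e\Gamma t}$. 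Summing the $\mu_o$-mass contributed at scale $t$ gives roughly $e^{2\e H t}\cdot e^{-\e\Gamma t}$ per unit ``center'', which decays iff $2\e H<\e\Gamma$; then one sums over a net of points $v$ at distance $t$, whose cardinality is controlled by $e^{\e\Gamma t}$ (cocompactness, purely exponential growth of $\Gamma$), and after the dust settles the total $\mu_o$-mass of $\HG$ captured at scales $\ge N$ is dominated by $\sum_{t\ge N} e^{(2\e H-\e\Gamma)t}\to 0$. Making this double-counting precise — choosing the net, controlling the overlap multiplicity of the shadows, and bookkeeping the $\epsilon$'s and additive constants so that the exponent $2\e H-\e\Gamma$ survives strictly negative — is the technical heart, and is exactly the ``combinatorial proof in free groups recast in geometric terms'' alluded to in the introduction.
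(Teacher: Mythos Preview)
Your overall strategy is right and matches the paper's: cover $\HG$ by shadows centered at the $\delta$-centers $z_n$ (not at the orbit points $h_no$), invoke the Shadow Lemma, and apply Borel--Cantelli. You also correctly diagnose why the naive covering by $\Pi_o(h_no,r)$ fails: the point $h_no$ need not be close to the ray $[o,\xi]$ at all, so $\xi\notin\Pi_o(h_no,r)$ in general --- that argument is simply wrong, not merely too weak.

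However, the double-counting you set up in the ``main obstacle'' paragraph is broken. For a net point $v$ at distance $t$ from $o$, you bound the number of $h\in H$ whose associated center lands near $v$ by the \emph{total} count $\sharp N_H(o,2t+\epsilon't)\prec e^{2\e H t}$, completely ignoring the constraint that the center be near the specific point $v$. You then propose to sum this over all $\asymp e^{\e\Gamma t}$ net points at distance $t$. Doing both gives $e^{\e\Gamma t}\cdot e^{2\e H t}\cdot e^{-\e\Gamma t}=e^{2\e H t}$ per scale $t$, a divergent series; your asserted final bound $\sum_{t\ge N}e^{(2\e H-\e\Gamma)t}$ does not follow from what you wrote.

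The paper's counting is much simpler and avoids this pitfall by indexing over $h\in H$ rather than over net points. The key geometric observation is that for a fixed $h$, \emph{every} $\delta$-center of a triangle $\Delta(o,ho,\xi)$ (as $\xi$ varies) lies in the $\delta$-neighborhood of the single geodesic segment $[o,ho]$. Hence an $r$-separated net $Z$ (taken inside $\Gamma o$ by cocompactness, so the Shadow Lemma applies) meets this neighborhood in at most $C\,d(o,ho)$ points. Each such net point $z$ satisfies $d(o,z)>(1/2-\epsilon)d(o,ho)$ by Lemma~\ref{CloserInHoroball}, hence $\mu_o(\Pi_o(z,2r))\prec e^{-\e\Gamma(1/2-\epsilon)d(o,ho)}$. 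Summing,
\[
\sum_{z\in Z}\mu_o(\Pi_o(z,2r))\;\prec\;\sum_{h\in H}d(o,ho)\cdot e^{-\e\Gamma(1/2-\epsilon)d(o,ho)}\;<\;\infty,
\]
since $\e H<\e\Gamma(1/2-\epsilon)$ and the linear factor $d(o,ho)$ is absorbed into the exponential gap. Borel--Cantelli finishes. The bookkeeping insight you are missing is precisely that the centers are constrained to lie along $[o,ho]$, making the per-$h$ multiplicity linear rather than exponential; once you see this, there is no need to sum over a spatial net at all.
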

\begin{proof}
Fix a small $\epsilon>0$, so $\e H<\e \Gamma(1/2-\epsilon)$. Let $\hat Z$ be the set of points $z\in \U$ that satisfy $d(o,z)>d(o,ho)(1/2-\epsilon)$, where $z$ is a $\delta$ center of the triangle $\Delta(o, ho,\xi)$ for some $h\in H$ and some $\xi\in \HG$. 

Choose a sufficiently large constant $r>18\delta$ such that the Shadow Lemma \ref{ShadowLem} applies. If $Z\subseteq \hat Z$ is a maximal $r$--separated net in $\hat  Z$, then by Lemma \ref{CloserInHoroball}, $\HG$ is contained in the limit superior  $\limsup_{z\in Z} \Pi_o(z,2r)$ of shadows. As the action $\Gamma\act \U$ is cocompact, we may assume $Z\subseteq \Gamma o$ upon increasing $r$ again. 

Note that, for each $h \in H$, the corresponding $z$ in the first paragraph is lying on the $\delta$--neighborhood of $[o, ho]$. This neighborhood contains at most $Cd(o, ho)$ points in the $r$--separated set $Z$ where $C>0$ is a universal constant.

By the Shadow Lemma \ref{ShadowLem} for the $\Gamma$--conformal density, we compute 
$$
\begin{aligned}
\sum_{z\in  Z} \mu_o(\Pi_o(z,2r)) & \prec \sum_{h\in H}d(o, ho) \cdot 
 \mathrm{e}^{-\e \Gamma  d(o,ho)(1/2-\epsilon)} \\
&\prec  \sum_{h\in H} \mathrm{e}^{-\omega   d(o,ho)} <\infty 
\end{aligned}
$$
which is finite by definition of $\e H$ and $\e H<\e \Gamma(1/2-\epsilon)$. The Borel-Cantelli lemma implies that $\limsup_{z\in Z} \Pi_o(z,2r))$  is $\mu_o$--null.  Hence, $\mu_o(\HG)=0$, and the theorem is proved. 
\end{proof}

\subsection{Conservative actions for confined subgroups}
This subsection is   an abridged version of Sections \textsection\ref{secconfine} and \textsection\ref{seccons} in hyperbolic setup. In this restricted setting, we will give two proofs that the horospheric limit set of a confined subgroup $H$ has full Patterson-Sullivan measure. One of the proofs will work whenever $H$ has a compact confining set, whereas the other requires a finite confining set. However, the latter proof is more readily adapted to the setting of general actions on metric spaces with contracting elements, which we will consider in Section 6. Therefore, we find it instructive to include both proofs.

We first state a key tool that allows us to use a finite confining set geometrically in hyperbolic spaces.   

\begin{lem}[$\ll$ Lemma \ref{EllipticRadical}]\label{EllipticRadicalinHyp}
Let $P\subseteq \isom(\U)$ be a finite set, so that no element in $P$ fixes pointwise the limit set $\Lambda(\Gamma o)$. Then there exists a finite set $F\subseteq \Gamma$ of loxodromic elements and a constant $D>0$ with the following property. 

For any $g, h\in G$, one can find $f\in F$ so that for any $p\in P$, the word $(g, f, p, f^{-1}, h)$ labels a quasi-geodesic:  
$$
|d(o, gfpf^{-1}ho) - d(o,go)-d(o,ho)|\le D
$$ 
\end{lem}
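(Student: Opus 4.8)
The plan is to leverage the Extension Lemma (in its hyperbolic incarnation, Lemma \ref{extend3inhyp}) together with the hypothesis that no element of the finite set $P$ fixes $\Lambda(\Gamma o)$ pointwise. First I would fix a non-elementary set of three loxodromic elements $F_0 = \{h_1, h_2, h_3\}$ with pairwise disjoint fixed point sets, and recall that Lemma \ref{extend3inhyp} provides constants $L_0, c_0$ so that for any $a, b \in \Gamma$ some $f \in F_0$ makes $[o,ao]\cdot a[o,fo]\cdot af[o,bo]$ a $c_0$--quasi-geodesic. The point of the hypothesis on $P$ is the following: for each $p \in P$, since $p$ does not fix $\Lambda(\Gamma o)$ pointwise, there is a loxodromic element whose axis is ``moved off itself'' by $p$ — more precisely, one can choose high powers $f = h_i^N$ so that the axis $\ax(f)$, its translate $p\ax(f)$, and the relevant connecting geodesics all fit together into a long local quasi-geodesic. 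The uniformity over the finite set $P$ is what lets us pick a single finite $F$ (consisting of suitable powers $h_i^N$) and a single constant.

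The key steps, in order: (1) For each $p \in P$ and each generator $h_i$, analyze the configuration of $\ax(h_i^N)$ and $p\,\ax(h_i^N)$; using that $p$ does not fix $\Lambda(\Gamma o)$, show that for all large $N$ and for at least one choice of $i$ (depending on $p$), the concatenation $[o, fo]\cdot f[o, po]\cdot fp[o, f^{-1}o]$-type path built from $f = h_i^N$ has the bounded-projection property at the relevant contracting sets — i.e. $p$ genuinely "turns the corner'' rather than backtracking along the axis. This is where one invokes that an element fixing both endpoints of a contracting axis lies in its stabilizer (cf. Lemma \ref{NonPinchedStabilizer} / Lemma \ref{elementarygroup}), so $p \notin E(h_i)$ forces a definite turn. (2) Take $F$ to be the collection of these powers $h_i^N$ for $N$ large enough to work simultaneously for every $p \in P$ and to satisfy the Long-Local threshold $L$ of Proposition \ref{admisProp}; this is a finite set. (3) Given arbitrary $g, h \in G$, apply the Extension Lemma (in the multi-element form of Remark after Lemma \ref{extend3}, or iteratively) to insert, between $g$ and $p$ and between $p$ and $h$, an appropriate $f \in F$ so that the word $(g, f, p, f^{-1}, h)$ labels an $(L,\tau)$--admissible path. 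Here one uses that the $q_i$ connecting segments (which may be trivial) only need $X_i \neq X_{i+1}$, and $f\ax(f) \ne fp\ax(f)$ precisely by Step (1). (4) Conclude by Proposition \ref{admisProp} that this admissible path is a $c$--quasi-geodesic, hence $|d(o, gfpf^{-1}ho) - (d(o,go) + d(o,ho))| \le D$ for $D = D(F, P, c)$, using that the $f, f^{-1}, p$ pieces contribute bounded length absorbed into $D$.

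The main obstacle I anticipate is Step (1): making precise, uniformly over the finite set $P$, the claim that a suitable power of some $h_i$ has an axis that $p$ "turns'' rather than "preserves.'' The subtlety is that $p$ might fix one endpoint of some axes but not both, or might permute the fixed points of the $h_i$; one has to argue that among three loxodromics with pairwise disjoint fixed-point sets, at least one has \emph{both} its endpoints moved by $p$ (since $p$ has at most two fixed points on $\partial X$, it can "half-preserve'' at most two of the three axes, and cannot fully preserve any axis without lying in its elementary closure, contradicting that $P$ intersects $E(\Gamma)$-type stabilizers trivially in the relevant sense — actually here we only need that $p$ doesn't fix $\Lambda(\Gamma o)$ pointwise, which is weaker, so a counting argument on fixed points is needed). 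Once that combinatorial point is nailed down, the bounded-projection estimates are routine hyperbolic geometry, and the rest follows mechanically from the admissible-path machinery already set up in the preliminaries.
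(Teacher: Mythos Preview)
Your overall architecture is the same as the paper's: build $F$ so that each $p \in P$ genuinely turns the corner at the chosen axis, verify bounded projection at both junctions, and conclude via the admissible-path/local-quasi-geodesic machinery. Steps (3)--(4) are essentially identical to what the paper does.

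The gap you yourself flag in Step~(1) is real, and your proposed fix does not work. You suggest arguing that among three fixed independent loxodromics $h_1,h_2,h_3$, each $p$ must move both endpoints of at least one of them ``since $p$ has at most two fixed points on $\partial X$.'' But $p$ is an arbitrary isometry of a hyperbolic space and may be elliptic with a large fixed-point set on the boundary (think of a rotation in $\mathbb H^n$ for $n\ge 3$, or a reflection). There is no a priori bound on $\#\mathrm{Fix}(p)$, so the counting argument collapses, and starting from three \emph{prechosen} loxodromics cannot be salvaged this way.

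The paper resolves this differently. Instead of fixing $h_1,h_2,h_3$ in advance, it first uses the density of loxodromic fixed-point pairs in $\Lambda(\Gamma o)$: since $p$ does not fix $\Lambda(\Gamma o)$ pointwise, it moves some point of it, hence moves both fixed points of \emph{some} loxodromic $f$. To get a single loxodromic working for all of the finite set $P$ simultaneously, it uses a product trick (Lemma~\ref{DoubleDense}): if $p$ moves $f^\pm$ and $q$ moves $g^\pm$, then for large $n$ the loxodromic $f^n g^n$ has fixed points near $f^+$ and $g^-$, so both $p$ and $q$ move its endpoints; iterating over the finite $P$ yields one loxodromic whose axis is moved by every $p$. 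Repeating this with different seeds produces three independent such elements, and from there your Steps~(2)--(4) go through exactly as you describe.
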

\begin{proof}
Let $p\in P$. By assumption, as all fixed points of loxodromic elements are dense in $\Lambda(\Gamma o)$, $p$ moves the two fixed points of some $f$. If $q\in P$ moves $g^\pm$ for another $g$, then $p,q$ must move the two fixed points of $f^ng^n$, which tends to $f^+$ and $g^-$ as $n\to\infty$.   As $P$ is a finite set, a common $f$ could be chosen for each $p\in P$: $pf^\pm\cap f^\pm\ne\emptyset$. Equivalently, $p\ax(f)$ has bounded projection with $\ax(f)$ for each $f\in P\setminus 1$. 

With a bit more effort, we produce $F=\{f_1,f_2,f_3\}$ where $p\in P$ moves the two endpoints of each $f\in F$, and the set of axis $\{p\ax(f): p\in P, f\in F\}$ has bounded projection.  This implies that each $g\in \Gamma$ has bounded projection to at least two of $\ax(f_1), \ax(f_2), \ax(f_3)$. Consequently, for any $g,h\in \Gamma$, there exists a common $f\in F$  so that 
$$
\max\{\pi_{\ax(f)}([o,go]),\pi_{\ax(f)}([o,ho])\}\le \tau
$$
Set $L=\min\{d(o,fo): f\in F\}$.
The word $(g, f,p, f^{-1}, h)$ labels an $L$--local $c$--quasi-geodesic path, denoted by $\gamma$, where $c$ depends on $\tau$. This concludes the proof.    
\end{proof}

The following is an immediate consequence of Lemma \ref{EllipticRadicalinHyp} applied to $(g, g^{-1})$, and  $p\in P$ is chosen for $gf$ according to the definition of confined subgroups.

\begin{lem}[$\ll$ Lemma \ref{GoodConfiningSetP}]\label{GoodConfiningSetPinHyp}
Under the assumption of Lemma \ref{EllipticRadicalinHyp},
for any $g\in G$, there exist $f\in F$ and $p\in P$ such that $gfpf^{-1}g^{-1}$ lies in $H$ and
$$
|d(o, gfpf^{-1}g^{-1}o) - 2d(o,go)|\le D
$$ 
\end{lem}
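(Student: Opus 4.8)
The plan is to derive Lemma \ref{GoodConfiningSetPinHyp} directly from Lemma \ref{EllipticRadicalinHyp} by exploiting the definition of a confined subgroup. Recall that $H$ being confined by $\Gamma$ (with finite confining set $P$) means that for every $g\in\Gamma$ there is a nontrivial element of $g^{-1}Hg$ lying in $P$; equivalently, $gPg^{-1}\cap H\setminus\{1\}\neq\emptyset$ for every $g$. The idea is: given $g\in\Gamma$, first apply Lemma \ref{EllipticRadicalinHyp} to the pair $(g,g^{-1})$ to obtain $f\in F$ such that the word $(g,f,p,f^{-1},g^{-1})$ labels a $c$--quasi-geodesic \emph{for every} $p\in P$, with $|d(o,gfpf^{-1}g^{-1}o)-2d(o,go)|\le D$. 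The only remaining point is to choose $p$ so that $gfpf^{-1}g^{-1}\in H$, and for this we invoke confinement applied to the element $gf$ rather than $g$: since $H$ is confined by $\Gamma$ with confining set $P$, there exists $p\in P\setminus\{1\}$ with $(gf)p(gf)^{-1}\in H$, i.e. $gfpf^{-1}g^{-1}\in H$. Substituting this specific $p$ into the quasi-geodesic estimate from Lemma \ref{EllipticRadicalinHyp} yields the claimed inequality.

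Concretely, the steps in order are as follows. First I would recall the equivalent reformulation of confinement: there is a compact (here finite) set $P\subseteq G$ with $1\in P$ such that $g^{-1}Hg\cap(P\setminus\{1\})\neq\emptyset$ for all $g\in\Gamma$, or, conjugating, $gPg^{-1}\cap(H\setminus\{1\})\neq\emptyset$ for all $g\in\Gamma$; I should be careful with the direction of conjugation so that it matches the statement. Second, I would verify that the set $P$ appearing in Lemma \ref{EllipticRadicalinHyp} may be taken to be this confining set (possibly after discarding the identity, which is irrelevant), using the hypothesis that no element of $P\setminus\{1\}$ fixes $\Lambda(\Gamma o)$ pointwise — this is exactly the hypothesis ``$P$ finite, intersecting trivially $E(\Gamma)$'' from Theorem \ref{ConfinedConsThm}(ii). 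Third, apply Lemma \ref{EllipticRadicalinHyp} with $h=g^{-1}$ to get the common $f\in F$ and constant $D$. Fourth, apply confinement to $gf$ to extract the appropriate $p$. Fifth, read off the inequality $|d(o,gfpf^{-1}g^{-1}o)-2d(o,go)|\le D$, noting $d(o,g^{-1}o)=d(o,go)$.

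A minor subtlety I would address is a possible mismatch between the confining element chosen for $g$ versus the one needed for $gf$: since $f$ ranges over a finite set $F$ and $P$ is finite, there is no real difficulty — for each fixed $f$, confinement applied to $gf$ directly produces a valid $p$, and Lemma \ref{EllipticRadicalinHyp} already guarantees the quasi-geodesic property holds uniformly over all $p\in P$ with the \emph{same} $f$. So one fixes $f$ from Lemma \ref{EllipticRadicalinHyp}, then picks $p$ from confinement of $gf$; the order matters and works in this direction. I expect the main obstacle (already handled inside Lemma \ref{EllipticRadicalinHyp}) to be the construction of the finite set $F$ of loxodromics whose axes, together with their $P$--translates, form a system with bounded projection — but since that lemma is assumed, the present deduction is essentially a one-line substitution once the bookkeeping on the direction of conjugation in the definition of confinement is settled. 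The proof therefore reads: apply Lemma \ref{EllipticRadicalinHyp} to $(g,g^{-1})$, use confinement of $gf$ to select $p\in P$ with $gfpf^{-1}g^{-1}\in H$, and conclude by the displayed inequality of Lemma \ref{EllipticRadicalinHyp}.
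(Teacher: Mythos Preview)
Your proposal is correct and matches the paper's own proof essentially verbatim: apply Lemma~\ref{EllipticRadicalinHyp} to the pair $(g,g^{-1})$ to obtain $f\in F$ working for every $p\in P$, then use confinement at $gf$ to select $p$ with $(gf)p(gf)^{-1}\in H$, and read off the inequality using $d(o,g^{-1}o)=d(o,go)$. Your observation that the order---first fix $f$, then pick $p$---is what makes this work is exactly the point.
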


Recall that a point $\xi\in \ccG$ is a \textit{conical point} if there exist a sequence of $g_n \in \Gamma$ so that $g_no$ lies in a $r$--neighborhood of a geodesic ray $\gamma=[o,\xi]$. 
 
The next two results prove  Theorem \ref{ConfinedConsThm} in hyperbolic setup under the condition (i) and (ii) accordingly. 
The following does not use Lemmas \ref{GoodConfiningSetPinHyp} and \ref{EllipticRadicalinHyp}.
\begin{thm}[Theorem \ref{ConfinedConsThm}(i) in hyp. setup]\label{ConInHorLimitSetBaby}
Assume that $H<\isom(\U)$ is a torsion-free discrete subgroup confined by $\Gamma$ with a compact confining subset $P$.   Then the big horospheric limit set $\HG$  contains all but countably many points of $\ccG$. 
\end{thm}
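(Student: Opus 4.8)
The plan is to realize every conical point as a big horospheric limit point of $H$ by conjugating a fixed confining element ``out to'' that point, following the push‑forward strategy of \cite{Kat02,FM20}. Fix the basepoint $o$ and set $D_0:=\sup_{p\in P}d(o,po)$, which is finite because $P$ is compact in $\isom(\U)$ and $p\mapsto po$ is continuous; let $\delta$ be a hyperbolicity constant of $\U$. The only geometric input needed is the following elementary observation: \emph{if a point $x\in\U$ lies within distance $r$ of a geodesic ray $\gamma=[o,\xi]$ and $h\in\isom(\U)$ satisfies $d(x,hx)\le D_0$, then $ho$ lies in the horoball $\mathcal{HB}([\xi],L)$ with $L:=D_0+2r+c\delta$ independent of $x$}. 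Indeed, letting $\gamma(t)$ be a nearest point of $\gamma$ to $x$ and evaluating along $z=\gamma(T)$ with $T\gg t$, one has $d(ho,z)-d(o,z)\le\big(d(o,x)+D_0+d(x,z)\big)-T\le(t+r)+D_0+(r+T-t)-T=D_0+2r$, and $B_{[\xi]}(ho,o)$ differs from this quantity by at most $c\delta$ by hyperbolicity.

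Now fix $\xi\in\ccG$. By definition there are distinct $g_n\in\Gamma$ with $d(g_no,\gamma)\le r$ for a ray $\gamma=[o,\xi]$ and $d(o,g_no)\to\infty$, so $g_no\to\xi$. Since $H$ is confined by $\Gamma$ with confining set $P$, for each $n$ we may pick $p_n\in P\setminus\{1\}$ with $h_n:=g_np_ng_n^{-1}\in H\setminus\{1\}$. Then $d(g_no,h_ng_no)=d(g_no,g_np_no)=d(o,p_no)\le D_0$, so the observation above gives $h_no\in\mathcal{HB}([\xi],L)$ for all $n$, with $L$ as above.

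It remains to pass from a sequence of points in a fixed horoball to a genuine big horospheric limit point, and this is where the \textbf{main difficulty} lies: one has no control on the location of $g_n^{-1}o$, so nothing prevents the conjugates $h_n$ from collapsing to finitely many elements. We split into two cases. If $\{h_n:n\ge1\}$ is infinite, then, $H$ being torsion‑free (so $\mathrm{Stab}_H(o)=\{1\}$), the points $h_no$ are infinitely many distinct elements of the discrete set $Ho$ lying in the single horoball $\mathcal{HB}([\xi],L)$; since the only boundary point of a horoball in a proper hyperbolic space is its center, these points escape to $\xi$, so $\xi\in[\Lambda(Ho)]$ and $\xi\in\HG$ by Definition \ref{HoroLimitPtsDef} (note $\xi\in\ccG$ lies in $\mathcal C^{\mathrm{hor}}$ by Lemma \ref{ConicalPointsLem}). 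If instead $\{h_n\}$ is finite, some $h^\ast\in H\setminus\{1\}$ satisfies $h^\ast=h_n$, hence $g_n^{-1}h^\ast g_n=p_n\in P$, for infinitely many $n$; then $d(g_no,h^\ast g_no)=d(o,p_no)\le D_0$, and letting $n\to\infty$ along these indices, $g_no\to\xi$ and continuity of $h^\ast$ on $\bU$ force $h^\ast\xi=\xi$.

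Consequently every $\xi\in\ccG$ either belongs to $\HG$ or is fixed by a nontrivial element of $H$. Because $H$ is a torsion‑free discrete group acting properly on a proper hyperbolic space, every $1\ne h\in H$ is loxodromic or parabolic and therefore fixes at most two points of $\pU$; as $H$ is countable, the set $\mathcal N:=\bigcup_{1\ne h\in H}\mathrm{Fix}_{\pU}(h)$ is countable, and $\ccG\setminus\mathcal N\subseteq\HG$, which is the assertion.
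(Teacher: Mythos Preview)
Your proof is correct and follows essentially the same approach as the paper's: both arguments conjugate confining elements along a conical sequence $g_no\to\xi$ to produce $h_n=g_np_ng_n^{-1}\in H$ with $h_no$ trapped in a fixed horoball at $\xi$, then use torsion-freeness to handle the degenerate case where the $h_n$ collapse to a single element (forcing $\xi$ into the countable set of fixed points of loxodromic/parabolic elements of $H$). The only cosmetic difference is that the paper defines the countable exceptional set $\Lambda_0$ up front and argues by contradiction, whereas you run the dichotomy first and collect the exceptional set at the end.
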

\begin{proof}
We denote by $\Lambda_0 \in \ccG$ the countable union of fixed points of all loxodromic and parabolic elements $h \in H$.  We shall prove that any conical point  $\xi\in \ccG\setminus \Lambda_0$ is contained in $\HG$.   By definition, there exists a sequence of $g_n \in \Gamma$ so that $g_no$ lies in a $r$--neighborhood of a geodesic ray $\gamma=[o,\xi]$.

Set $D:=\max\{d(o,po): p\in P\}<\infty$. The confined subgroup $H$ implies the existence of $p_n\in P$ so that $h_n:=g_n p_n g_n^{-1}\in H$. Now, let $z\in [o,\xi]$ so that $d(g_no, z)\le r$. Thus, $d(h_no,z) \le r+D+d(o,g_no)\le 2r+D+d(o,z)$. Direct computation shows that $h_n o \in Ho$ lies in the horoball $\mathcal {HB}(\xi,o, 2r+D+\epsilon)$ (or see Lemma \ref{SameHoroball} for this general fact).  

A horoball $\mathcal {HB}(\xi)$ only accumulates at the center $\xi$. As the orbit $Ho\subset X$ is  discrete, it suffices to prove that   $\{h_no: n\ge 1\}$ is an infinite subset. Arguing by contradiction,  assume now that $\{h_no: n\ge 1\}$ is a finite set. By taking a subsequence,  we may assume that $h:=h_n=h_m$ for any $n, m\ge 1$.

By assumption,  $H$ is torsion-free, so $p_n$ must be of infinite order. According to the classification of isometries, $p_n$ is either hyperbolic or parabolic, so is $h_n$.  

As $d(hg_no,g_no)=d(o,p_no)<D$, the convergence $g_no\to \xi$ implies that $hg_no\to \xi$ and then $h$ fixes $\xi$.  So $\xi\in \Lambda_0$ gives a contradiction, which completes the proof. 
\end{proof}

We now demonstrate how to use Lemma \ref{GoodConfiningSetPinHyp} to deal with finite confining subset $P$.
\begin{thm}[Theorem \ref{ConfinedConsThm}(ii) in hyp. setup]\label{ConInHorLimitSetBaby2}
Assume that $H<\isom(\U)$ is a discrete subgroup confined by $\Gamma$ with a finite confining subset $P$. Assume that no element in $P$ fixes pointwise $\Lambda(\Gamma o)$.   Then the big horospheric limit set $\HG$  contains $\ccG$.     
\end{thm}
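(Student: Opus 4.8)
The plan is to mirror the proof of Theorem \ref{ConInHorLimitSetBaby} but replace the "push a conjugate $g_npg_n^{-1}$ into a horoball by conical approximation" step with the quantitative estimate of Lemma \ref{GoodConfiningSetPinHyp}, so that we never need to worry about finitely many bad orbit points or torsion. Fix a conical point $\xi\in\ccG$; by definition there exist $g_n\in\Gamma$ with $g_no$ in the $r$--neighborhood of a geodesic ray $\gamma=[o,\xi]$, and we may assume $d(o,g_no)\to\infty$. Apply Lemma \ref{GoodConfiningSetPinHyp} to each $g_n$: we obtain $f_n\in F$ and $p_n\in P$ with $h_n:=g_nf_np_nf_n^{-1}g_n^{-1}\in H$ and $|d(o,h_no)-2d(o,g_no)|\le D$. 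The point is that the word $(g_n,f_n,p_n,f_n^{-1},g_n^{-1})$ labels a $c$--quasigeodesic through $h_no$, with the first half $(g_n,f_n)$ fellow-travelling $[o,g_nf_no]$ and hence, up to bounded error, fellow-travelling an initial segment of $\gamma$ (since $g_no$ is $r$--close to $\gamma$, and $f_n$ has bounded length, so $g_nf_no$ stays within $r+\max_{f\in F}d(o,fo)$ of $\gamma$).

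First I would make precise that $h_no$ lies in a fixed horoball centered at $\xi$. Since $(g_n,f_n,p_n,f_n^{-1},g_n^{-1})$ is a $c$--quasigeodesic and $\U$ is hyperbolic, any geodesic $[o,h_no]$ passes within bounded distance $R=R(c,\delta)$ of the midpoint region, i.e.\ of $g_nf_no$, which is within $r'=r+\max_{f\in F}d(o,fo)$ of $\gamma$. Let $z_n\in\gamma$ with $d(z_n,g_nf_no)\le r'$. Then $d(o,z_n)$ and $d(o,g_nf_no)$ differ by at most $r'$, and $d(h_no,z_n)\le d(h_no,g_nf_no)+r'$, while $d(h_no,g_nf_no)=d(o,f_np_nf_n^{-1}o)\le 2\max_{f\in F}d(o,fo)+D=:D'$. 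Hence $B_{z_n}(h_no,o)=d(z_n,h_no)-d(z_n,o)\le D'+2r'$. Letting $z_n\to\xi$ along $\gamma$ (which we can do since $d(o,g_nf_no)\to\infty$) and using Lemma \ref{ConicalPointsLem}(3) (or the elementary estimate for horoballs in hyperbolic spaces) shows $B_\xi(h_no,o)\le D'+2r'+20\delta=:L$, a constant independent of $n$. So all $h_no$ lie in $\mathcal{HB}(\xi,L)$.

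Next I would argue $\{h_no:n\ge 1\}$ is infinite. This is where $2d(o,g_no)-D\le d(o,h_no)$ does the work that torsion-freeness did before: since $d(o,g_no)\to\infty$, we get $d(o,h_no)\to\infty$, so $\{h_no\}$ cannot be a finite set. (If one worries that distinct $h_n$ could give the same orbit point, note a single orbit point $h o$ has a single distance $d(o,ho)$, incompatible with $d(o,h_no)\to\infty$.) Therefore the horoball $\mathcal{HB}(\xi,L)$ contains infinitely many orbit points $h_no\in Ho$ with $d(o,h_no)\to\infty$; since a horoball in a hyperbolic space accumulates only at its center, $h_no\to\xi$, and by Definition \ref{HoroLimitPtsDef} $\xi$ is a big horospheric limit point, i.e.\ $\xi\in\HG$. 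As $\xi\in\ccG$ was arbitrary, $\ccG\subseteq\HG$.

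The main obstacle, and the reason this argument is cleaner than the compact-confining-set one, is precisely packaged inside Lemma \ref{GoodConfiningSetPinHyp}: one must know that a \emph{finite} confining set whose elements do not fix $\Lambda(\Gamma o)$ pointwise can be "straightened" via a common $f\in F$ into a quasigeodesic word, so that conjugation by $g_n$ produces orbit points at distance $\approx 2d(o,g_no)$ rather than possibly much smaller (the latter is exactly what can fail for elliptic $p$ fixing the limit set, where $g_npg_n^{-1}o$ could be close to $o$). Once that lemma is in hand the rest is a routine hyperbolic-geometry bookkeeping of fellow-traveling constants. I would also remark at the end that combined with Lemma \ref{ConicalPointsLem} (full measure on $[\ccG]$ for divergence type actions) and Lemma \ref{HopfdecompLem}, this yields $\mu_o(\HG)=1$ and hence conservativity of $H\act(\pU,\mu_o)$, completing the proof of Theorem \ref{ConfinedConsThm}(ii) in the hyperbolic setting.
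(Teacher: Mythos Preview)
Your approach matches the paper's almost exactly: apply Lemma \ref{GoodConfiningSetPinHyp} to the conical approximants $g_n$, obtain $h_n=g_nf_np_nf_n^{-1}g_n^{-1}\in H$, show all $h_no$ lie in a fixed horoball at $\xi$, and deduce infiniteness from $d(o,h_no)\to\infty$. The paper's argument for infiniteness is phrased slightly differently (via the Morse Lemma distinguishing $h_no$ from $h_mo$ when $d(g_no,g_mo)\gg 0$), but your version using $d(o,h_no)\ge 2d(o,g_no)-D\to\infty$ is equally valid and arguably simpler.

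There is, however, one genuine computational slip in your horoball estimate. You assert $d(h_no,g_nf_no)=d(o,f_np_nf_n^{-1}o)$, but applying $(g_nf_n)^{-1}$ gives
\[
d(h_no,g_nf_no)=d(p_nf_n^{-1}g_n^{-1}o,\,o),
\]
which is of order $d(o,g_no)$, not bounded. The fix is to anchor at $g_no$ rather than $g_nf_no$: since $h_n\cdot g_nf_no=g_nf_np_no$, one has $d(h_no,g_nf_np_no)=d(o,g_nf_no)\le d(o,g_no)+\|Fo\|$, and then $d(g_nf_np_no,g_no)\le \|Fo\|+\|Po\|$, giving $d(h_no,g_no)\le d(o,g_no)+2D$ with $D=\max_{f\in F}d(o,fo)+\max_{p\in P}d(o,po)$. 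Taking $z_n\in\gamma$ with $d(z_n,g_no)\le r$ then yields $d(h_no,z_n)-d(o,z_n)\le 2r+2D$, exactly the bound the paper records. The rest of your write-up goes through unchanged.
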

\begin{proof}
Given $\xi\in \ccG$, there exists  a sequence of $g_n \in \Gamma$ such that   $d(g_no, [o,\xi])\le r$. By Lemma \ref{GoodConfiningSetPinHyp}, we can choose $f_n\in F$ and $p_n\in P$ so that $h_n:=g_n f_n p_n f_n^{-1} g_n^{-1}\in H$ labels a quasi-geodesic path.  

If $d(g_no, g_mo)\gg 0$ for any $n\ne m$, then $h_no\ne h_mo$ follows by Morse Lemma. Thus $\{h_no: n\ge 1\}$ is an infinite  subset. Setting $$D=\max_{f\in F}\{d(o,fo)\}+\max_{p\in P}\{d(o,po)\}$$ we argue exactly as in the proof of Theorem \ref{ConInHorLimitSet} and obtain  that  $h_n o \in Ho$ lies in the horoball $\mathcal {HB}(\xi, 2r+2D+\epsilon)$. Hence,  $\{h_no: n\ge 1\}$  converges to $\xi$, so $\xi$ is a big horospheric limit point. 
\end{proof}

\section{Completely dissipative actions}\label{secdiss}
Our goal of this section is to prove Theorem \ref{HalfGrowthDisThm} under the following setup. 
\begin{itemize}
    \item 
    The auxiliary proper action $\Gamma\act \U$ is assumed to be SCC, in particular it is of divergence type.
    \item 
    Let $\pU$ be a convergence boundary for $\U$ and $\{\mu_x:x\in \U\}$ be the unique quasi-conformal, $\Gamma$--equivariant density of dimension $\e \Gamma$ on $\pU$.
    \item 
    Let $H<\isom(\U)$ be a discrete subgroup, which is confined by $\Gamma$ with a  compact confining subset $P$ in $\isom(\U)$. 
\end{itemize}

We emphasize that $H$ is not necessarily contained in $\Gamma$, but preserves the measure class of $\mu_o$. This  is motivated by the following example. If $\U$ is the rank-1 symmetric space equipped with the Lebesgue measure $\mu_o$ on the visual boundary, any subgroup $H<\isom(\U)$ preserves the measure class of $\mu_0$. More generally, we have.  
\begin{lem}\label{PreserveMeasureClass}
Suppose that $\U$ is a hyperbolic space on which $\Gamma <\isom(\U)$ acts properly and co-compactly. Let $\{\mu_x:x\in \U\}$ be the unique Patterson-Sullivan measure class of dimension $\e\Gamma$ on $\pU$. Then $\isom(\U)$ preserves the measure class of $\mu_o$.    
\end{lem}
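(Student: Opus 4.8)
The plan is to exploit the uniqueness (up to bounded Radon--Nikodym derivative) of the $\e\Gamma$--dimensional quasi-conformal density, together with the fact that an isometry $\phi \in \isom(\U)$ conjugates the $\Gamma$--action to the $\phi\Gamma\phi^{-1}$--action without changing the relevant coarse geometry, since $\Gamma$ acts co-compactly. First I would observe that for any $\phi\in\isom(\U)$, the push-forward family $\{\phi_\star\mu_x : x\in\U\}$, suitably re-indexed as $x\mapsto \phi_\star\mu_{\phi^{-1}x}$, is again an $\e\Gamma$--dimensional conformal density, but now equivariant with respect to the conjugate group $\phi\Gamma\phi^{-1}$ rather than $\Gamma$ itself; this is immediate from the transformation rule for Busemann cocycles $B_{\phi\xi}(\phi x,\phi y)=B_\xi(x,y)$ in (\ref{BusemanCocylesP1}) and the definition of a conformal density. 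So the issue is to compare the Patterson--Sullivan measure of $\Gamma$ with that of its conjugate $\phi\Gamma\phi^{-1}$.

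The key step is that co-compactness makes $\Gamma$ and $\phi\Gamma\phi^{-1}$ quasi-isometrically indistinguishable from the point of view of the orbit counting and shadow estimates: both orbits $\Gamma o$ and $\phi\Gamma\phi^{-1} o$ are $R$--dense in $\U$ for $R$ the diameter of a fundamental domain, hence lie at finite Hausdorff distance from one another, so $\e\Gamma=\e{\phi\Gamma\phi^{-1}}$ and the Shadow Lemma \ref{ShadowLem} gives comparable shadow measures. More directly, I would invoke the uniqueness statement: since $\U$ is hyperbolic and $\Gamma$ acts properly and co-compactly, $\Gamma\act\U$ is of divergence type, so by Lemma \ref{Unique} (applied on the reduced Myrberg limit set, which for a hyperbolic space with maximal partition is just the conical limit set) the $\e\Gamma$--dimensional $\Gamma$--quasi-equivariant quasi-conformal density is unique up to a globally bounded multiplicative constant. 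The same holds for $\phi\Gamma\phi^{-1}$, and because $\Lambda(\Gamma o)=\Lambda(\phi\Gamma\phi^{-1}o)$ (the orbits are at finite Hausdorff distance, so their limit sets on $\pU$ coincide), these two density classes live on the same boundary set. It remains to check that the $\phi\Gamma\phi^{-1}$--density $\{\phi_\star\mu_{\phi^{-1}x}\}$ is in fact equivalent to the $\Gamma$--density $\{\mu_x\}$; this follows because both are $\e\Gamma$--dimensional conformal densities supported on the common conical limit set and the orbit-counting Poincar\'e series of $\Gamma$ and $\phi\Gamma\phi^{-1}$ are coarsely equal, so a shadow-covering argument (as in the proof of Lemma \ref{Unique}) bounds $d(\phi_\star\mu_o)/d\mu_o$ from above and below. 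Consequently $\phi_\star\mu_o$ and $\mu_o$ are in the same measure class, which is precisely the statement that $\isom(\U)$ preserves the measure class of $\mu_o$.

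The main obstacle I anticipate is not the uniqueness machinery itself, which is already packaged in Lemma \ref{Unique}, but rather making precise that the push-forward density $\{\phi_\star\mu_{\phi^{-1}x}\}$ genuinely satisfies the axioms of an $\e{\phi\Gamma\phi^{-1}}$--dimensional quasi-conformal density \emph{and} that it can be compared to the $\Gamma$--density despite being equivariant for a \emph{different} (though conjugate, hence still co-compact) group. One clean way around this is to note that since $\Gamma$ acts co-compactly, $\phi^{-1}\Gamma\phi$ and $\Gamma$ generate a group in which $\Gamma$ has finite index is \emph{false} in general, so instead I would argue directly with the Shadow Lemma: for $go\in\Gamma o$ one estimates $\mu_o(\Pi_o(go,r))\asymp e^{-\e\Gamma d(o,go)}$ and $\phi_\star\mu_o(\Pi_o(go,r))=\mu_o(\Pi_{\phi^{-1}o}(\phi^{-1}go,r))\asymp e^{-\e\Gamma d(\phi^{-1}o,\phi^{-1}go)}=e^{-\e\Gamma d(o,go)}$, and since these shadows generate the Borel $\sigma$--algebra on the limit set and every conical point lies in infinitely many of them, a Vitali-type covering argument upgrades this to mutual absolute continuity of $\phi_\star\mu_o$ and $\mu_o$. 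This last covering step is the technical heart of the argument; everything else is bookkeeping with the equivariance and cocycle identities.
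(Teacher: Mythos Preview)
Your approach is correct but takes a considerably more elaborate route than the paper's. The paper dispatches the lemma in three lines by invoking Coornaert's theorem that, for a cocompact $\Gamma$--action on a hyperbolic space, the Patterson--Sullivan measure $\mu_o$ coincides (up to bounded ratio) with the Hausdorff measure $\mathcal H^{\e\Gamma/\epsilon}_{\rho_\epsilon}$ for the visual metric $\rho_\epsilon$ on $\pU$; since every isometry of $\U$ extends to a bi-Lipschitz homeomorphism of $(\pU,\rho_\epsilon)$, it preserves the Hausdorff measure class, hence the PS measure class. Your argument instead reconstructs this from first principles: push forward the density by $\phi$, observe it becomes a $\phi\Gamma\phi^{-1}$--conformal density of the same dimension, use cocompactness to get matching shadow estimates $\mu_o(\Pi_o(x,r))\asymp\phi_\star\mu_o(\Pi_o(x,r))\asymp e^{-\e\Gamma d(o,x)}$ for all $x\in\U$, and then run Lebesgue differentiation (valid since $\mu_o$ is doubling on the visual boundary) to conclude equivalence. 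Both routes are ultimately grounded in the same Ahlfors regularity of $\mu_o$, but the paper packages this into a citation while you unpack it. Your version has the mild advantage of being more self-contained and of making explicit the mechanism (shadow estimates survive conjugation because the orbit is coarsely dense), which could in principle adapt to settings where the Hausdorff-measure identification is unavailable; the paper's version is far shorter and avoids the Vitali step you rightly flag as the technical heart.
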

\begin{proof}
It is known that $\mu_o$ coincides with the Hausdorff measure of $\pU$ with respect to the visual metric. Namely, if $\rho_\epsilon$ is the visual metric for parameter $\epsilon$, then $\mu_o(A)\asymp \mathcal H^d_{\rho_\epsilon}(A)$ where $d=\e\Gamma/\epsilon$ is the Hausdorff dimension.  As an isometry on hyperbolic spaces induces a bi-Lipschitz map on boundary, it preserves the Hausdorff measure class. The proof is complete.
\end{proof}
If $H$ contains no torsion, the set $\mathcal D_{<\infty}$ of points with nontrivial stabilizer is empty. Thus the conservative component is exactly the infinite conservative part, which by Lemma \ref{HopfdecompLem} coincides with the big horospherical limit set.      Hence, we shall prove $\mu_o(\HG)=0$, provided that $\e H <\e \Gamma/2$. 

Our argument is essentially a geometric interpretation of the corresponding one in \cite{GKN}, where the same conclusion is proven for free groups. The proof in \cite{GKN} is more combinatorial: it crucially uses the so-called Nielsen-Schreier system given by a spanning tree in the Schreier graph.


Recall that $\mathcal C_\epsilon^{\mathrm{hor}}$ is a subset in $\mathcal C$ (\ref{AssumpE}) on which the Busemann cocycles converge up to an additive error $\epsilon$ in (\ref{BusemanConvError}).

\begin{thm}\label{HalfgrowthimplyDiss}
If $\e H<\e \Gamma/2$, then $\mu_o(\HG)=0$. In particular, if $H$ is torsion-free, the action of $H$ on $(\pU,\mu_o)$ is completely dissipative.  
\end{thm}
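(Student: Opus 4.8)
The plan is to establish the vanishing $\mu_o(\HG)=0$ directly via a Borel--Cantelli argument, exactly mirroring the hyperbolic warm-up Theorem~\ref{smallgrowthhorhaszeromeasure} but working on the convergence boundary with the Shadow Lemma~\ref{ShadowLem} in place of the classical shadow lemma. Fix a small $\epsilon>0$ with $\e H<\e\Gamma(1/2-\epsilon)$, and fix a set $F$ of three independent contracting elements in $\Gamma$ so that the Shadow Lemma applies. The first step is the analogue of Lemma~\ref{CloserInHoroball}: if $[\xi]\in\HG$, then there are infinitely many $h_no\in Ho$ with $h_no\to[\xi]$ lying in a fixed horoball $\mathcal{HB}([\xi],L)$, and I want to extract, along a geodesic from $o$ to $[\xi]$, a point $z_n$ that is within bounded distance of a contracting sub-segment of $[o,h_no]$ and satisfies $d(o,z_n)\ge (1/2-\epsilon)d(o,h_no)$ after discarding finitely many $n$. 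Here is where the convergence-boundary subtleties enter: the geodesic ray $[o,[\xi]]$ need not pass near $h_no$ in the naive hyperbolic sense, so I would instead use the contracting geometry — since $[\xi]$ is approached by the escaping orbit, and one may first reduce to $[\xi]$ being a conical (or regularly contracting) point, because by Proposition~\ref{FreqContFull} the regularly contracting limit set $[\rcG]$ is $\mu_o$--full, hence it suffices to show $\mu_o(\HG\cap[\rcG])=0$. On $[\rcG]$ there is a contracting geodesic ray $\gamma$ from $o$ terminating at $[\xi]$ with $(r,f)$-barriers at every frequency; combining a barrier with the horoball containment via the Busemann-cocycle estimates (\ref{Horofunctionatclass}) and the contracting property yields a point $z_n$ on (a bounded neighborhood of) $\gamma$ with $B_{[\xi]}(h_no,z_n)$ and $B_{[\xi]}(z_n,o)$ both controlled, from which $d(o,z_n)\ge(1/2-\epsilon)d(o,h_no)$ follows by the triangle-type inequalities for $B_{[\xi]}$.

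The second step is a covering argument. Having produced, for each $h\in H$ with $ho$ deep enough in the horoball, a point $z=z(h)$ on $\gamma$ with $d(o,z)\ge(1/2-\epsilon)d(o,ho)$ and with $z$ lying near a contracting segment of $[o,ho]$, I take $Z$ a maximal $r$-separated subset of the set of all such $z$; since the action $\Gamma\act\U$ is SCC, hence has purely exponential growth, after enlarging $r$ I may assume $Z\subseteq\Gamma o$ and that $\HG\cap[\rcG]\subseteq\limsup_{z\in Z}\Pi_o^F(z,2r)$. Each $h\in H$ contributes at most $C\,d(o,ho)$ points $z\in Z$ to this collection (the points $z(h)$ all lie within bounded distance of the single geodesic $[o,ho]$, which meets an $r$-separated set in at most linearly many points). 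Then the Shadow Lemma~\ref{ShadowLem} gives
\begin{equation*}
\sum_{z\in Z}\mu_o\!\big(\Pi_o^F(z,2r)\big)\;\prec\;\sum_{h\in H} d(o,ho)\,\mathrm e^{-\e\Gamma(1/2-\epsilon)d(o,ho)}\;\prec\;\sum_{h\in H}\mathrm e^{-\omega\, d(o,ho)}\;<\;\infty,
\end{equation*}
where $\omega$ is any number strictly between $\e H$ and $\e\Gamma(1/2-\epsilon)$; finiteness holds because $\p_H(\omega,o,o)$ converges for $\omega>\e H$. The Borel--Cantelli Lemma (in the form of Lemma~\ref{ConvLimitNull}, or directly) then forces $\mu_o(\limsup_{z\in Z}\Pi_o^F(z,2r))=0$, whence $\mu_o(\HG)=\mu_o(\HG\cap[\rcG])=0$.

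For the ``in particular'' clause: when $H$ is torsion-free, Lemma~\ref{HopfdecompLem} identifies $\mathbf{Cons}_\infty=\HG$ and, there being no points with nontrivial finite stabilizer, $\mathbf{Cons}=\HG$ and $\mathbf{Diss}=\pU\setminus\HG$ (mod $0$). Since $\mu_o(\HG)=0$, the dissipative part is full, i.e.\ $H\act(\pU,\mu_o)$ is completely dissipative.

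The main obstacle I anticipate is Step~1, i.e.\ replacing the clean hyperbolic $\delta$-center estimate of Lemma~\ref{CloserInHoroball} by a contracting-geometry argument on a general convergence boundary: one must be careful that ``$ho$ lies in a horoball centered at $[\xi]$'' genuinely produces a point of $[o,ho]$ at comparative depth along a ray to $[\xi]$, which is why I would first restrict to the $\mu_o$-full regularly-contracting limit set so that an honest contracting ray $\gamma=[o,[\xi]]$ with frequent barriers is available, and then invoke Lemma~\ref{HoroballUniqueLimit} together with the contracting property to locate $z_n$. The rest is a routine adaptation of the free-group/hyperbolic computation of \cite{GKN} and Theorem~\ref{smallgrowthhorhaszeromeasure}.
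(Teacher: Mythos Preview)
Your overall strategy matches the paper's: reduce to $\HG\cap[\rcG]$ via Proposition~\ref{FreqContFull}, produce for each large-horospheric point $\xi$ and each $h_no\to[\xi]$ an orbit point at depth roughly $(1/2-\epsilon)d(o,h_no)$ along a ray to $[\xi]$, and conclude by the Shadow Lemma and Borel--Cantelli. The ``in particular'' clause is handled exactly as in the paper.

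There is, however, one genuine gap in your Step~2. You write: ``since the action $\Gamma\act\U$ is SCC, hence has purely exponential growth, after enlarging $r$ I may assume $Z\subseteq\Gamma o$.'' This inference is invalid: SCC (or PEG) does \emph{not} imply cocompactness, so an arbitrary $r$-separated net in $\U$ need not lie in a bounded neighborhood of $\Gamma o$. Without $Z\subseteq\Gamma o$ the Shadow Lemma~\ref{ShadowLem} (stated for $go\in\Gamma o$) and the partial shadows $\Pi_o^F(\cdot,r)$ (which require the center in $\Gamma o$, cf.\ Definition~\ref{ShadowDef}) are unavailable. This is precisely the point where the general argument must diverge from the cocompact warm-up in \S\ref{SecWarmup}.

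The paper fixes this not by passing to a net but by choosing the point in $\Gamma o$ from the outset: it takes the \emph{barrier element} $t\in\Gamma$ itself (guaranteed by the regularly-contracting condition on the middle $\theta$-interval of a suitable initial segment $\alpha=[o,z]\subset[o,\xi]$) and shows via a careful triangle argument with the contracting axis $t\,\ax(f)$ that $[to,tfo]$ must meet $N_C([o,h_no])$ while being disjoint from $N_C([z,h_no])$, yielding both $d(to,[o,\xi])\le r$, $d(to,[o,h_no])\le C+\|Fo\|$, and $d(o,to)\ge(1/2-\theta)d(o,h_no)$. Thus $Z=\{t\}\subseteq\Gamma$ automatically, and your Borel--Cantelli computation then goes through verbatim. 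Your Step~1 sketch is in the right spirit but underestimates this; the disjointness from $N_C([z,h_no])$ is the delicate part, requiring the auxiliary constant $L\gg M$ depending on $\xi$.
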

\begin{proof}
By Proposition \ref{FreqContFull}, $\mu_o$ has full measure on $\rcG$, and $\rcG$ is a subset of $\mathcal C^{\mathrm{hor}}_{20C}$. By taking the intersection $\rcG\cap \HG$, we may assume in addition that every point  $\xi\in \HG$ is a frequently contracting point. This is only the place in the proof where we use the SCC action.

For any sufficiently small $\theta\in (0,1]$, instead of Lemma \ref{CloserInHoroball}, we now prove 
\begin{claim}
Let $\xi\in \HG$. There exist two sequences of elements $h_n, t_n\in G$  such that   
\begin{align}
\label{tlengthEQ} d(o,t_no)>(1/2-\theta) d(o,h_no)\\
\label{tcenter1EQ}d(t_no, [o, \xi])\le r\\
\label{tcenter2EQ}d(t_no, [o, h_no])\le C+\|Fo\|
\end{align}
\end{claim}
\begin{proof}[Proof of the claim]
As $\xi$ is a big horospheric point, there exists $h_no\in \mathcal{HB}([\xi],L_0)$ tending to $\xi$, where $L_0$ depends on $\xi$. That is, $B_{[\xi]}(h_no, o)\le L_0$ where $\xi\in \mathcal C^{\mathrm{hor}}_{20C}$. Setting  $M=L_0+20C$, we obtain from (\ref{BusemanConvError}):  
\begin{equation}\label{InBallEq}
\forall z\in [o,\xi]: \quad d(h_no,z)\le d(o,z)+M    
\end{equation}

Fix a big $L\gg M$ to be decided below, which also depends on $\xi$. 

Choose a point $z\in [o,\xi]$ so that  $d(o,z)=d(o,h_no)-L$ for $n\gg 0$. 
Consider the  initial segment $\alpha:=[o,z]$ of $[o,\xi]$ 
and one $\theta$--interval $[x, y]\subseteq \alpha$ at the middle satisfying $d(o,x)=\ell(\alpha)/2$ and $d(y,\alpha_+)=(1/2+\theta)\ell(\alpha)$. By definition of $
\xi \in \rcG$, any $\theta$--interval of $\alpha$ contains $(r, f)$--barrier $t\in G$, so $$
\max\{d(to, [x,y]), d(tfo, [x,y])\}\le r
$$
where $f\in F$ is a contracting element. Up to enlarge $r$, we assume $[to,tfo]\subseteq N_r([x,y]),$ so (\ref{tcenter1EQ}) is satisfied.

\begin{figure}
    \centering
\tikzset{every picture/.style={line width=0.75pt}} 

\begin{tikzpicture}[x=0.75pt,y=0.75pt,yscale=-1,xscale=1]

\draw    (49.5,39) .. controls (123.5,95) and (211.5,95) .. (326.5,222) ;
\draw [shift={(326.5,222)}, rotate = 47.84] [color={rgb, 255:red, 0; green, 0; blue, 0 }  ][fill={rgb, 255:red, 0; green, 0; blue, 0 }  ][line width=0.75]      (0, 0) circle [x radius= 3.35, y radius= 3.35]   ;
\draw    (491,38) .. controls (373.5,59.5) and (271.5,79) .. (326.5,222) ;
\draw [shift={(326.5,222)}, rotate = 68.96] [color={rgb, 255:red, 0; green, 0; blue, 0 }  ][fill={rgb, 255:red, 0; green, 0; blue, 0 }  ][line width=0.75]      (0, 0) circle [x radius= 3.35, y radius= 3.35]   ;
\draw    (49.5,39) -- (237.5,40) -- (387.5,38) -- (591.5,38) ;
\draw [shift={(594.5,38)}, rotate = 180] [fill={rgb, 255:red, 0; green, 0; blue, 0 }  ][line width=0.08]  [draw opacity=0] (10.72,-5.15) -- (0,0) -- (10.72,5.15) -- (7.12,0) -- cycle    ;
\draw [shift={(143.5,39.5)}, rotate = 0.3] [color={rgb, 255:red, 0; green, 0; blue, 0 }  ][fill={rgb, 255:red, 0; green, 0; blue, 0 }  ][line width=0.75]      (0, 0) circle [x radius= 3.35, y radius= 3.35]   ;
\draw [shift={(312.5,39)}, rotate = 359.24] [color={rgb, 255:red, 0; green, 0; blue, 0 }  ][fill={rgb, 255:red, 0; green, 0; blue, 0 }  ][line width=0.75]      (0, 0) circle [x radius= 3.35, y radius= 3.35]   ;
\draw [shift={(491,38)}, rotate = 0] [color={rgb, 255:red, 0; green, 0; blue, 0 }  ][fill={rgb, 255:red, 0; green, 0; blue, 0 }  ][line width=0.75]      (0, 0) circle [x radius= 3.35, y radius= 3.35]   ;
\draw [shift={(49.5,39)}, rotate = 0.3] [color={rgb, 255:red, 0; green, 0; blue, 0 }  ][fill={rgb, 255:red, 0; green, 0; blue, 0 }  ][line width=0.75]      (0, 0) circle [x radius= 3.35, y radius= 3.35]   ;
\draw    (170.5,60) -- (276.5,61) ;
\draw [shift={(276.5,61)}, rotate = 0.54] [color={rgb, 255:red, 0; green, 0; blue, 0 }  ][fill={rgb, 255:red, 0; green, 0; blue, 0 }  ][line width=0.75]      (0, 0) circle [x radius= 3.35, y radius= 3.35]   ;
\draw [shift={(170.5,60)}, rotate = 0.54] [color={rgb, 255:red, 0; green, 0; blue, 0 }  ][fill={rgb, 255:red, 0; green, 0; blue, 0 }  ][line width=0.75]      (0, 0) circle [x radius= 3.35, y radius= 3.35]   ;
\draw    (223.5,60.5) .. controls (248,109.01) and (291.71,78.29) .. (319.8,106.22) ;
\draw [shift={(321.5,108)}, rotate = 227.91] [fill={rgb, 255:red, 0; green, 0; blue, 0 }  ][line width=0.08]  [draw opacity=0] (10.72,-5.15) -- (0,0) -- (10.72,5.15) -- (7.12,0) -- cycle    ;
\draw [shift={(223.5,60.5)}, rotate = 63.2] [color={rgb, 255:red, 0; green, 0; blue, 0 }  ][fill={rgb, 255:red, 0; green, 0; blue, 0 }  ][line width=0.75]      (0, 0) circle [x radius= 3.35, y radius= 3.35]   ;
\draw  [line width=0.75]  (313.5,37) .. controls (313.47,32.33) and (311.13,30.01) .. (306.46,30.04) -- (233.53,30.47) .. controls (226.86,30.51) and (223.52,28.2) .. (223.49,23.53) .. controls (223.52,28.2) and (220.2,30.55) .. (213.53,30.59)(216.53,30.58) -- (150.46,30.97) .. controls (145.79,31) and (143.47,33.34) .. (143.5,38.01) ;

\draw (125,19.4) node [anchor=north west][inner sep=0.75pt]    {$x$};
\draw (331,16.4) node [anchor=north west][inner sep=0.75pt]    {$y$};
\draw (486,14.4) node [anchor=north west][inner sep=0.75pt]    {$z$};
\draw (283,207.4) node [anchor=north west][inner sep=0.75pt]    {$h_{n} o$};
\draw (216,64.4) node [anchor=north west][inner sep=0.75pt]    {$u$};
\draw (330,110.4) node [anchor=north west][inner sep=0.75pt]    {$v$};
\draw (150,55.4) node [anchor=north west][inner sep=0.75pt]    {$to$};
\draw (292,59.4) node [anchor=north west][inner sep=0.75pt]    {$tfo$};
\draw (42,45.4) node [anchor=north west][inner sep=0.75pt]    {$o$};
\draw (580,44.4) node [anchor=north west][inner sep=0.75pt]    {$\xi $};
\draw (201,4.4) node [anchor=north west][inner sep=0.75pt]    {$\theta \ell ( \alpha )$};
\draw (378,15.4) node [anchor=north west][inner sep=0.75pt]    {$\alpha =[ o,z]$};

\end{tikzpicture}

    \caption{Schematic configuration in the proof of Theorem \ref{HalfgrowthimplyDiss}}
    \label{fig:triangle}
\end{figure}
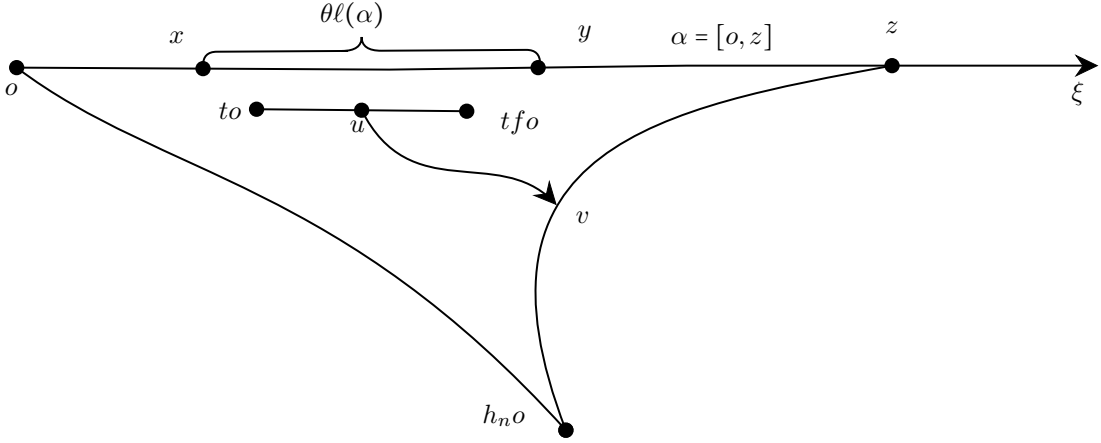
Look at the triangle with vertices $(o,h_no,z)$ for $z\in [o,\xi]$. See Fig. (\ref{fig:triangle}).
By the contracting property, $t\ax(f)$ intersects non-trivially either $N_C([o,h_no])$ or $N_C([z,h_no])$. Indeed, if this is false, the  projection of $[o,h_no]$ and $[z,h_no]$ to $t\ax(f)$ together has diameter at most $2C$. Meanwhile, the corresponding initial and terminal segments of $\alpha$ entering and leaving $N_C(t\ax(f))$  both project to $t\ax(f)$ with diameter at most $C$ as well. Hence, we obtain $\|\alpha\cap N_C(t\ax(f))\|\le 4C$. On the other hand,  noting the above $\|\alpha\cap N_r(t\ax(f))\|\ge d(o,fo)$, we would get a contradiction if $d(o,fo)\gg 4C+2r$.   

Moreover, as  $[to,tfo]\subseteq N_r(\alpha)\cap t\ax(f)$, the above argument further shows that $[to,tfo]$ intersects non-trivially either $N_C([o,h_no])$ or $N_C([z,h_no])$. 

We now claim   that $[to,tfo] \cap N_C([z,h_no])=\emptyset$. Indeed, if not, we have  $d(u, v)\le C$  for some $u\in [to,tfo]$ and $v\in [z,h_no]$.  Thus, 
\[
\begin{aligned}
d(v,h_no)&= d(z, h_no)-d(z,v)\\
&\le d(z,h_no)-d(z,u)+C\\
&\le d(z,o)-d(z,u)+C+M    
\end{aligned}
\]
where the last inequality uses (\ref{InBallEq}).
On the other hand, as $d(o, h_no)=d(o,z)+L$, we have 
\[
\begin{aligned}
d(v,h_no)&\ge d(o,h_no)-d(o,v)\\
&\ge d(o,z)-d(o,u)+L-C
\end{aligned}
\]If $L>2C+M$ is assumed, this would give a contradiction, so the above claim is proved. 

By the claim, let us choose $u\in [to,tfo] \cap N_C([o,h_no])$.  Thus, we have  $d(to, [o,h_no])\le C+d(o,fo)$, so (\ref{tcenter2EQ}) is satisfied.

Recall that $\ell(\alpha)=d(o,h_no)-L$ and $d(to,tfo)\le d(x,y)+2r$. For $u\in [to,tfo]$, we have    
$$d(u,to)\le d(x,y)+2r\le \theta \alpha+2r\le \theta(d(o,h_no)-L)+2r$$ and  $$d(o,u)\ge d(o,x)-r\ge \ell(\alpha)/2-r\ge (d(o,h_no)-L)/2-r$$ which together give 
$$
d(o,to) \ge d(o,u)-d(u,to)\ge (d(o,h_no)-L)(1/2-\theta)-3r.
$$
As $d(o,h_no)\to \infty$, we may drop finitely many $h_no$ to remove $L$ (which depends on $\xi$ though), so that $d(o,to) \ge  d(o,h_no)(1/2-2\theta).$  
This shows that  $t_n:=t$ is the desired element with (\ref{tlengthEQ},\ref{tcenter1EQ},\ref{tcenter2EQ}).   
\end{proof}
Now we choose $\theta, \epsilon>0$ so that $\e H+\epsilon<\e \Gamma(1/2-\theta)$. Let $Z$ be the set of  points $t\in \Gamma$  with the above property in the above Claim. Note that $\rcG\cap \HG$ is contained in the limit superior of $\{\Pi_o^F(to, r): t\in Z\}$. Note again that $h$ is over used by $t$ at most $C_0d(o,ho)$ times for a universal constant $C_0$, as $to$ in (\ref{tcenter2EQ}) lies in a fixed neighborhood of $[o,ho]$.  Shadow lemma hence allows  to compute 
$$
\sum_{t\in Z} \mu_o(\Pi_o^F(to, r)) \le  \sum_{h\in H} d(o,ho) \mathrm{e}^{-\e \Gamma (1/2-\theta)d(o,ho)}\prec \sum_{h\in H} d(o,ho) \mathrm{e}^{-(\e H +\epsilon)d(o,ho)} <\infty.
$$
Hence, $$\mu_o(\rcG\cap \HG)=0$$   by Borel-Cantelli Lemma. Thus, $\mu_o(\rcG)=1$ implies  $\mu_o(\HG)=0$, completing  the proof.
\end{proof}
\section{Preliminaries on confined subgroups}\label{secconfine} 
From this section to \textsection \ref{secinequality}, we study the boundary actions of confined subgroups, growth and co-growth inequalities. In this section we first recall the notion of confined subgroups and review some known facts for later reference. Then we show an enhanced version of the Extension Lemma, which will be crucial in the later sections. 
\subsection{Confined subgroups} 
\begin{defn}
Let $ G$ be a locally compact, metrizable, topological group and $\Gamma< G$.
We say that  $H< G$ is  \textit{confined} by $\Gamma$ if  there exists a compact subset $P$   in $ G$ such that  $P$ intersects \textit{non-trivially} with any conjugate $g^{-1}Hg$ for $g\in \Gamma$:  $$g^{-1}Hg\cap (P\setminus \{1\})\ne\emptyset.$$ 
We refer   $P$  as a \textit{confining subset} for $H$. If $\Gamma= G$, we say that $H$ is a confined subgroup in $G$.
\end{defn}
In this paper, we are mainly interested in the case where $G=\isom(\U)$ or $G=\Gamma$.

Note that any nontrivial normal subgroup is confined. Here are a few remarks in order.
\begin{rem}
If $P$ is the confining subset, so is  $P^n=\{p^n: p\in P\}$ for $n\ge 1$.    
If $P$ is finite, we may further assume that $P$ is \textit{minimal}: for any $p\in P$, there exists $g\in G$ so that $gpg^{-1}\in H$.

If $H$ is a confined subgroup in $G$, then any subgroup containing $H$ is also confined. Note that being confined is inherited to finite index subgroups. Indeed,  if $K$ is a finite index subgroup of $H$, then any element $h\in H\setminus K$ admits a sufficiently large power in $K$, so $K$ is a confined subgroup in $G$.
\end{rem}

Let $H$ be a discrete subgroup of $\isom(\U)$. We define the injectivity radius of the action at a point $x\in \U$ as follows:
$$
\textrm{Inj}_H(x)=\min\{d(x,hx): h\in H\setminus H_x \}
$$
where $H_x:=\{h\in H: hx=x\}$ is finite by the proper action. By definition, $\textrm{Inj}_H(hx)=\textrm{Inj}_H(x)$ for $h\in H$, so the injectivity radius function descends to the quotient $\U/H$. When $H$ is torsion free, this coincides with the usual notion of injectivity radius, that is the maximal radius of the  disk centered at $x$ in $\U$   injecting into $\U/H$.  It is clear that $\textrm{Inj}_G(Hx)\ge \textrm{Inj}_G(Gx)$ for any $H<G$.

\begin{lem}\label{CharConfinedSubgroups}
Assume that $\Gamma<\isom(\U)$ acts cocompactly (and not necessarily properly) on a $\Gamma$--invariant subset $Z\subseteq \U$. Denote $\pi: \U\to \U/H$. Then the subset    $\pi(Z)$ of $\U/H$ has  injectivity radius  bounded from above if and only if $H$ is confined by $\Gamma$.    
\end{lem}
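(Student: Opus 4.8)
\textbf{Proof proposal for Lemma \ref{CharConfinedSubgroups}.}

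The plan is to prove the two implications separately, translating the dynamical condition ``$H$ is confined by $\Gamma$'' into the geometric condition ``$\textrm{Inj}_H$ is bounded above on $\pi(Z)$'' via a compactness/cocompactness argument. Throughout, write $D>0$ for a constant such that $Z\subseteq N_D(\Gamma z_0)$ for some fixed $z_0\in Z$ (this exists since $\Gamma$ acts cocompactly on $Z$); we will be comparing the injectivity radius at an arbitrary point of $Z$ with the injectivity radius at a nearby translate $\gamma z_0$.

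First I would do the direction ``confined $\Rightarrow$ bounded injectivity radius''. Suppose $P\subseteq\isom(\U)$ is a compact confining subset, so for every $g\in\Gamma$ there is $p_g\in P\setminus\{1\}$ with $gp_gg^{-1}\in H$. Set $R:=\sup_{p\in P} d(z_0,pz_0)<\infty$, which is finite because $P$ is compact and the orbital map $p\mapsto pz_0$ is continuous. Then for $g=\gamma\in\Gamma$ the element $h_\gamma:=\gamma p_\gamma\gamma^{-1}\in H$ satisfies $d(\gamma z_0, h_\gamma \gamma z_0)=d(\gamma z_0,\gamma p_\gamma z_0)=d(z_0,p_\gamma z_0)\le R$. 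To conclude $\textrm{Inj}_H(\gamma z_0)\le R$ we must also check $h_\gamma\notin H_{\gamma z_0}$, i.e. that $h_\gamma\gamma z_0\ne\gamma z_0$, equivalently $p_\gamma z_0\ne z_0$; this requires a small additional argument since a priori $p_\gamma$ could fix $z_0$ while being nontrivial. The fix is to replace $P$ by $P^n$ for large $n$ (allowed by the Remark after the definition of confined subgroup) and to observe that a nontrivial isometry fixing $z_0$ can fix $z_0$ for all its powers; so instead one argues that, since $H$ is discrete hence acts properly, the set $H_{\gamma z_0}$ is finite, and by the minimality assumption we may take $P$ to contain only elements $p$ with $p z_0$ distinct from $z_0$ after possibly enlarging — more cleanly, one notes that confinement with compact $P$ forces, for each $g$, a nontrivial $H$-element moving $g z_0$ a bounded amount, and if that element happened to fix $g z_0$ then since finite stabilizers are uniformly bounded in a proper cocompact-on-$Z$ situation one can pass to a power staying in $H$ and moving the point. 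Thus $\textrm{Inj}_H(\gamma z_0)\le R$ for all $\gamma$, and since any point $z\in Z$ lies within $D$ of some $\gamma z_0$ and $\textrm{Inj}_H$ is $1$-Lipschitz-ish under bounded displacement (more precisely $|\textrm{Inj}_H(x)-\textrm{Inj}_H(y)|\le 2d(x,y)$ by the triangle inequality), we get $\textrm{Inj}_H(z)\le R+2D$ on all of $Z$, hence on $\pi(Z)$.

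For the converse ``bounded injectivity radius $\Rightarrow$ confined'', suppose $\textrm{Inj}_H \le R$ on $\pi(Z)$. Fix $g\in\Gamma$. Since $gz_0\in Z$, there is $h\in H\setminus H_{gz_0}$ with $d(gz_0, h gz_0)\le R+1$; then $g^{-1}hg\in g^{-1}Hg$ satisfies $d(z_0, (g^{-1}hg)z_0)\le R+1$ and $(g^{-1}hg)z_0\ne z_0$, so $g^{-1}hg\ne 1$. Hence each conjugate $g^{-1}Hg$ meets the set $Q:=\{\phi\in\isom(\U): d(z_0,\phi z_0)\le R+1\}\setminus\{1\}$ nontrivially. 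The key point is that $Q\cup\{1\}$ is a \emph{compact} subset of $\isom(\U)$: in the compact-open topology on the isometry group of a proper geodesic space, the set of isometries moving a fixed point a bounded amount is compact (this is the standard fact that $\isom(\U)$ is locally compact with this topology, e.g. \cite[Theorem 5.3.5]{Rat06} and the surrounding discussion). Taking $P:=Q\cup\{1\}$ (or just its closure) gives a compact confining subset, so $H$ is confined by $\Gamma$.

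\textbf{Main obstacle.} The genuinely delicate point is the torsion/stabilizer bookkeeping in the ``confined $\Rightarrow$ bounded'' direction: ensuring the bounded-displacement $H$-element one extracts actually lies \emph{outside} the point stabilizer $H_x$, so that it witnesses a bound on $\textrm{Inj}_H(x)$ rather than being absorbed into the (finite) stabilizer. This is handled by exploiting that $P^n$ is again confining for every $n$, that proper actions have finite and (in the cocompact-on-$Z$ setting) boundedly-sized stabilizers, and that a nontrivial isometry of infinite order moving $z_0$ a bounded amount cannot have all its powers fix $z_0$; one then either passes to an appropriate power or enlarges $P$ to a still-compact set witnessing non-fixing. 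Everything else is a routine triangle-inequality and compactness argument.
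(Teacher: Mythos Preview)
Your approach is the same as the paper's: both directions are proved by fixing a basepoint in $Z$, using cocompactness of $\Gamma\act Z$ to reduce to the orbit $\Gamma z_0$, and invoking compactness of $\{\phi\in\isom(\U):d(z_0,\phi z_0)\le R\}$ in a proper space. Your ``bounded injectivity $\Rightarrow$ confined'' direction is correct and in fact slightly cleaner than the paper's (you use $gz_0\in Z$ directly rather than a nearby point).

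The stabilizer issue you flag in the ``confined $\Rightarrow$ bounded injectivity'' direction is a genuine gap, and the paper's proof simply ignores it: the paper produces a nontrivial $h=gpg^{-1}\in H$ with $d(x,hx)\le D$ and immediately concludes $\textrm{Inj}_H(Hx)\le D$, without ever checking $h\notin H_x$. Your proposed repairs, however, do not work. Passing to $P^n$ is useless, since if $p$ fixes $z_0$ (or $\gamma z_0$) then so do all its powers; and there is no reason for confining elements to have infinite order, so the ``pass to a high power that moves the point'' manoeuvre has no force. Uniform finiteness of the stabilizers $H_x$ also does not help: even if $|H_x|\le N$ for all $x$, nothing prevents the specific conjugate $gp_g g^{-1}$ from landing in $H_{\gamma z_0}$ every time. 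Indeed the implication is literally false in the edge case where $H$ is a nontrivial finite subgroup fixing $Z$ pointwise: such an $H$ is trivially confined by $\Gamma$ (take $P=H$), yet $H\setminus H_x=\emptyset$ for every $x\in Z$, so $\textrm{Inj}_H$ is $+\infty$ on $\pi(Z)$. The lemma should therefore be read modulo this degenerate situation, which never arises in the paper's applications (where $H$ is infinite discrete with contracting elements); your instinct that something needs to be said here is correct, but there is no clean fix at this level of generality.
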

\begin{proof}
(1). Suppose that $\textrm{Inj}_H:\U/H\to \mathbb R$ is  bounded from above by $D>0$ over the set $\pi(Z)$. That is, for any $x\in Z$,  there exists $h\in H\setminus H_x$ such that $d(x,hx)\le D$.  Fixing a basepoint $o\in Z$, as $\Gamma$ acts cocompactly on $X$, there exists  a constant $M>0$ such that $Z\subseteq N_M(\Gamma o)$. Given  $f\in \Gamma$, let $x\in Z$ so that $d(fo, x)\le M$, so we have $d(o,f^{-1}hfo)\le 2D+M$.  The set $P:=\{g\in \isom(\U): d(o,go)\le 2D+M\}$ is compact, so $H$ is confined by $\Gamma$. 

(2). Suppose that $H$ is confined by $\Gamma$ with a confining subset $P$, we need to bound $\textrm{Inj}_G(Z/H)$.   
For any $x\in Z$, the cocompact action of $\Gamma$ on $Z$ implies the existence of $g\in \Gamma$ such that $d(go,x)\le M$. As $H$ is confined by $\Gamma$, there exist $h\in H$ and  $p\in P$   such that $g^{-1}hg=p$. Thus, we have $d(x, hx)\le d(go, hgo)+2M\le d(o,po)+2M$. Setting $D=2M+\max\{d(o,po): p\in P\}<\infty$ completes the other direction of the proof: $\textrm{Inj}_H(Hx)\le D$ over  $x\in Z$.
\end{proof}

A direct corollary is the following.
\begin{lem}
Assume that a group $\Gamma<\isom(\U)$ acts cocompactly on $\U$. Then for a discrete subgroup $H<G:=\isom(\U)$, the following are equivalent:
\begin{enumerate}
    \item 
    $H$ is confined  by $\Gamma$;
    \item 
    $H$ is confined  in $G$;
    \item
    $\U/H$ has  injectivity radius bounded from above.
\end{enumerate}       
\end{lem}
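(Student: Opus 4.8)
The plan is to reduce everything to Lemma~\ref{CharConfinedSubgroups}, which already provides the geometric dictionary "confined $\Leftrightarrow$ bounded injectivity radius on a cocompact set". The one preliminary observation I would make is that the hypothesis — that \emph{some} subgroup $\Gamma<\isom(\U)$ acts cocompactly on $\U$ — automatically upgrades to: the full isometry group $G=\isom(\U)$ acts cocompactly on $\U$ as well. Indeed, if $\Gamma K=\U$ for a compact $K\subseteq\U$, then a fortiori $GK=\U$. Consequently Lemma~\ref{CharConfinedSubgroups} is available both in the form stated (with the given $\Gamma$) and with $\Gamma$ replaced by $G$, in each case taking the $\Gamma$--invariant (resp. $G$--invariant) subset to be $Z=\U$, on which the relevant group acts cocompactly.

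With that in hand, I would argue in a short cycle. First, $(2)\Rightarrow(1)$ is immediate from the definitions: since $\Gamma\subseteq G$, any compact $P\subseteq G$ meeting $g^{-1}Hg\setminus\{1\}$ for every $g\in G$ in particular meets it for every $g\in\Gamma$, so a confining subset in $G$ is a confining subset for the conjugation action of $\Gamma$. Next, for $(1)\Rightarrow(3)$, I would invoke Lemma~\ref{CharConfinedSubgroups} with $Z=\U$: since $\pi(\U)=\U/H$, the lemma says that $\U/H$ has injectivity radius bounded from above precisely when $H$ is confined by $\Gamma$. Finally, for $(3)\Rightarrow(2)$, I would apply the same lemma but with $\Gamma$ replaced by $G=\isom(\U)$ and again $Z=\U$ — legitimate by the preliminary observation — concluding that bounded injectivity radius on $\U/H$ forces $H$ to be confined by $G$, i.e.\ $H$ is a confined subgroup of $G$. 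Chaining $(2)\Rightarrow(1)\Rightarrow(3)\Rightarrow(2)$ closes the loop and yields the three-way equivalence.

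I do not expect any genuine obstacle: the statement is a formal corollary. The only point deserving an explicit sentence is the remark that cocompactness of the $\Gamma$--action implies cocompactness of the ambient $\isom(\U)$--action, which is exactly what licenses the use of Lemma~\ref{CharConfinedSubgroups} in the "$G$" direction; everything else is a direct citation of that lemma with the choice $Z=\U$.
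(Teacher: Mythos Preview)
Your argument is correct and matches the paper's approach: the paper simply declares this lemma ``a direct corollary'' of Lemma~\ref{CharConfinedSubgroups} without further proof, and you have supplied exactly the intended cycle $(2)\Rightarrow(1)\Rightarrow(3)\Rightarrow(2)$. The only substantive remark you make beyond a bare citation---that cocompactness of $\Gamma\curvearrowright\U$ passes to $G=\isom(\U)$---is precisely what is needed to invoke the lemma with $G$ in place of $\Gamma$.
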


\subsection{Elliptic radical}
According to the definition, the product of any subgroup and a non-trivial finite normal subgroup is confined.  To eliminate such pathological examples, we consider the notion of elliptic radical which, roughly speaking, consists of elements that fix the boundary pointwise. 
 
Consider a convergence boundary $\pU$ of $\U$, where the action of $\isom(\U)$ extends to $\pU$ by homeomorphisms and preserves the partition $[\cdot]$ (see Definition \ref{ConvBdryDefn}). Assume that $\Gamma$ is a non-elementary discrete subgroup of $\isom(\U)$ with non-pinched contracting elements. 

\begin{defn}\label{EllipticRadicalDefn}
Let $G<\isom(\U)$ be a group. The \textit{elliptic radical} $E_G(\Gamma)$  consists of elements $g\in G$ that induces an identity map on the limit set $[\pG]$: 
$$E_G(\Gamma):=\{g\in G: g[\xi]=[\xi], \forall [\xi]\subseteq [\pG] \}.$$
Note that $g\in E_G(\Gamma)$ may not fix pointwise $\pG$. If $G=\isom(\U)$ we write $E(\Gamma)=E_G(\Gamma)$.   
\end{defn}
\begin{rem}
Recall that, given a contracting element $f\in \Gamma$, $E_G(f)$ is the maximal elementary subgroup in $G$ that contains $f$. By Lemma \ref{NonPinchedStabilizer}, if $f$ is non-pinched, then $E_G(f)$ is the set stabilizer of $[f^\pm]$ in $G$. 

If $G=\Gamma$, $E_G(\Gamma)$ is the intersection of the maximal elementary subgroups $E_G(f)$ (in Lemma \ref{elementarygroup}) for all contracting elements $f\in G$. This is the unique maximal finite normal subgroup of $G$.    
\end{rem}

\subsection{An enhanced extension lemma}

The following result will be a crucial tool in the next sections, which could be thought of as an enhanced version of Lemma \ref{extend3}.
\begin{lem}\label{EllipticRadical}
Let $P\subseteq \isom(\U)$ be a finite set  disjoint with $ E(\Gamma)$. Then there exists a finite set $F\subseteq \Gamma$ of contracting elements and constants $\tau, D>0$ with the following property.  Set $L=\min\{d(o,fo): f\in F\}$.

For any $g, h\in \Gamma$, one can find $f\in F$ so that for any $p\in P$, the word $(g, f, p, f^{-1}, h)$ labels an $(L, \tau)$--admissible path with associated contracting subsets $g\ax(f)$ and $gfp\ax(f)$. Moreover,  
$$
|d(o, gfpf^{-1}ho) - d(o,go)-d(o,ho)|\le D
$$ 
\end{lem}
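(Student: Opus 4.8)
\textbf{Proof proposal for Lemma \ref{EllipticRadical}.}

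The plan is to build the finite set $F$ of contracting elements so that \emph{every} element $p$ of the finite set $P$ ``moves'' the pair of fixed points $[f^{\pm}]$ for each $f \in F$, and then to run the Extension Lemma \ref{extend3} twice (once to insert $f$ between $g$ and $p$, once to insert $f^{-1}$ between $p$ and $h$), verifying that the resulting concatenation $(g,f,p,f^{-1},h)$ satisfies the Long Local and Bounded Projection conditions of Definition \ref{AdmDef}. First I would fix a contracting element $f_0 \in \Gamma$ and, using Lemma \ref{IndepElemExists} and Lemma \ref{DoubleDense}, pass to conjugates $g_i f_0 g_i^{-1}$ ($i=1,2,3$) of $f_0$ that are pairwise independent, so that $\f = \{g\,\ax(f_i): g \in \Gamma\}$ is a $C$--contracting system with bounded projection constant $B$. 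The key point is the following separation claim: since $P$ is disjoint from $E(\Gamma)$, each $p \in P$ induces a \emph{non}-identity map on $[\pG]$, hence there is some contracting element $f'$ of $\Gamma$ with $p[f'^{\pm}] \neq [f'^{\pm}]$; combined with Lemma \ref{FixptsDense} (density of $[\Gamma\xi]$ in $[\pG]$ for $\xi \in [f'^{\pm}]$) and Lemma \ref{DisjointFixpts} (dichotomy: fixed-point sets of independent non-pinched contracting elements either coincide or are disjoint), one upgrades this to: for each $p \in P$ there is a contracting $f_p$ with $p[f_p^{\pm}] \cap [f_p^{\pm}] = \emptyset$. Finally, using Lemma \ref{DoubleDense} once more to take products $f_p^n f_q^n$ and taking high powers, I would produce a \emph{single} finite set $F = \{f_1, f_2, f_3\}$ of three independent contracting elements such that for every $p \in P$ and every $f \in F$, the translate $p\,\ax(f)$ has uniformly bounded projection to $\ax(f)$ (and to every other element of $\f$). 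The quantitative input here is exactly as in the hyperbolic warm-up Lemma \ref{EllipticRadicalinHyp}.

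With such an $F$ in hand, the admissibility verification is essentially the three-element Extension Lemma \ref{extend3} applied with the contracting subsets $X_1 = g\,\ax(f)$ and $X_2 = gfp\,\ax(f)$. For given $g,h \in \Gamma$, since each of $g$ and $h$ has bounded $\pi_{\ax(f)}$--projection for at least two choices of $f \in F$ (the system $\f$ has bounded intersection, so the three axes cannot all catch a long overlap with a single geodesic), there is a common $f \in F$ with $\max\{\|\pi_{\ax(f)}([o,g^{-1}o])\|, \|\pi_{\ax(f)}([o,ho])\|\} \le \tau$ for a uniform $\tau$; translating by $g$ and inserting $p$ (whose axis $p\,\ax(f)$ was arranged to have bounded projection to $\ax(f)$), the word $(g,f,p,f^{-1},h)$ satisfies (LL) with $L = \min\{d(o,f o): f \in F\}$ (taken large by passing to powers) and (BP) with constant $\tau$ depending only on $B$ and the ``moving'' constants of $P$. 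Proposition \ref{admisProp} then gives the $r$--fellow-travel property and the quasi-geodesic conclusion. The additive estimate $|d(o,gfpf^{-1}ho) - d(o,go) - d(o,ho)| \le D$ follows because the admissible path, being a $c$--quasi-geodesic that $r$--fellow-travels a genuine geodesic $[\gamma^-,\gamma^+]$, has length comparable to $d(\gamma^-,\gamma^+)$ up to an additive constant depending only on $c, r$ and on $\max_{f \in F} d(o,fo)$, $\max_{p \in P} d(o,po)$; one reads off $D$ from the fellow-travel points $z_i, w_i$ of Definition \ref{Fellow}.

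I expect the main obstacle to be the separation claim — producing the \emph{single} finite set $F$ that works simultaneously for \emph{all} $p \in P$ and all three $f \in F$, while keeping $F$ a set of \emph{independent} contracting elements (so that $\f$ genuinely has bounded intersection and the Extension Lemma applies). Getting one $f_p$ per $p$ is a soft consequence of $p \notin E(\Gamma)$; the work is in the ``with a bit more effort'' step of Lemma \ref{EllipticRadicalinHyp}: one must iterate Lemma \ref{DoubleDense} and take sufficiently high powers so that the finitely many conditions ``$p\,\ax(f_i)$ has bounded projection to $\ax(f_j)$'' for all $p \in P$, $i,j \in \{1,2,3\}$, hold at once, which requires that the overlaps created in forming the products do not interfere across different $p$'s. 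Since $P$ is finite this is a finite bookkeeping argument, but it is the only genuinely non-formal part; everything after it is a routine application of the admissible-path machinery (Definition \ref{AdmDef}, Proposition \ref{admisProp}, Lemma \ref{extend3}) already developed in Section \ref{prelim}.
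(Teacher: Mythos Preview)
Your proposal is correct and follows essentially the same route as the paper's proof: build an infinite family of independent contracting elements whose fixed-point pairs are all moved by every $p\in P$ (via density Lemma~\ref{FixptsDense} and the limiting construction Lemma~\ref{DoubleDense}, iterated over the finite set $P$), then select three of them and run the admissible-path verification exactly as in Lemma~\ref{extend3}, reading off $D$ from the fellow-travel constant of Proposition~\ref{admisProp}. One small correction: your appeal to Lemma~\ref{DisjointFixpts} for the upgrade ``$p[f'^{\pm}]\neq[f'^{\pm}]$'' $\Rightarrow$ ``$p[f_p^{\pm}]\cap[f_p^{\pm}]=\emptyset$'' is not quite on target, since that lemma concerns two contracting elements of $\Gamma$ whereas $p$ is an arbitrary isometry possibly outside $\Gamma$; the paper instead uses that $[f^+]$ and $[pf^+]$ are disjoint closed subsets of $\pU$ (Definition~\ref{ConvBdryDefn}(A)), separates them by open neighborhoods, and then invokes Lemma~\ref{DoubleDense} so that for $n\gg 0$ the element $h_n=f^ng^n$ has both fixed points moved by $p$---this is the substitute you need, and it fits seamlessly into the rest of your outline.
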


\begin{proof}
First of all, fixing a non-pinched contracting element $f\in \Gamma$, we claim that for any $p\in P$, there exists  $g\in \Gamma$ so that   $pg[f^\pm]\ne g[f^\pm]$.  

Indeed, if not, then $[pg\eta]=[g\eta]$ for any $g\in \Gamma$ and for  any $\eta\in [f^\pm]$. By Lemma \ref{FixptsDense},  the set of $\Gamma$--translates of its fixed points $[f^\pm]$ is dense in $\pG$: $[\pG]=[\overline{\Gamma\eta}]$. To be more precise, for any $\xi\in\pG$, there exists $g_n\in \Gamma$ such that $g_n\eta\to \xi'$ for some $\xi'\in[\xi]$.  By assumption, $[pg_n\eta]=[g_n\eta]$.  According to Definition \ref{ConvBdryDefn}(C), $[\cdot]$  on $\Gamma[g^\pm]$ forms a closed relation, this implies $[p\xi']=[\xi']$:  that is, $p$ fixes every $[\cdot]$--class in $[\pG]$, so $p\in E(\Gamma)$. This gives a contradiction. Hence, we see that $p\in P$ does not stabilize $g[f^\pm]$ for any $g\in \Gamma$.   

By Lemma \ref{DoubleDense},  if $f,g$ are non-pinched contracting elements in $\Gamma$, then $h_n:=f^ng^n$ for any $n\gg 0$ is a contracting element so that the attracting fixed point $[h^+]$ tends to $[f^+]$ and the repelling fixed point $[h^+]$ tends to $[g^-]$ as $n\to \infty$.  By the previous paragraphs, for any $p,q\notin E(\Gamma)$, we can find two non-pinched contracting elements $f, g\in \Gamma$, so that $p$ does not fix $[f^+]$ and $q$ does not fix $[g^-]$. As $[f_+]$ and $[pf_+]$ are disjoint closed subsets in $\pU$ by Definition \ref{ConvBdryDefn}(A), choose disjoint open neighbourhoods such that  Lemma \ref{DoubleDense} shows  that $p, q$ do not stabilize the fixed points $[h_n^{\pm}]$ of $h_n$    for any $n\gg 0$.  We do not conclude here that $p, q\in E_\Gamma(h_n)$, as $E_\Gamma(h_n)$ is the subgroup in $\Gamma$ (not in $\isom(\U)$) stabilizing $[h_n^{\pm}]$.

As $P$ is a finite set, a repeated application of the previous paragraph produces an infinite set $\tilde F\subseteq \Gamma$ of independent contracting elements  such that $p[f^\pm]\cap [f^\pm]=\emptyset$ for any $p\in P$ and $f\in \tilde F$. 

Now to conclude the proof, it suffices to invoke the same arguments in the proof  of Lemma \ref{extend3}. Namely, for any finite set $F\subseteq \tilde F$, there is a bounded intersection function $\tau: \mathbb R\to \mathbb R$ so that for any $f\ne f'\in F$ and any $R>0$, 
$$
\|N_R(\ax(f))\cap N_R(\ax(f'))\|\le \tau(R).
$$
The function $\tau$ depends only on the configuration of  the axis $\ax(f)=E_\Gamma(f)\cdot o$ for $f\in F$. That is to say, $\tau $ remains the same if $F'$ is chosen as a different set of representatives  $f'\in E_\Gamma(f)$ for $f\in F$. Recall that $E_\Gamma(f)<\Gamma$ is the maximal elementary subgroup in $\Gamma$ defined in Lemma \ref{elementarygroup}.

By the contracting property, for any $g\in \Gamma$ and any given $R\gg 0$, the diameter $\|\pi_{\ax(f)}([o,go])\|$ is comparable with $\|N_R(\ax(f))\cap [o,go]\|$. Hence, with   at most one exception $f_0\in F$,  
\begin{equation}\label{pboundedEQ}
\forall f\in F\setminus \{f_0\}:\quad \pi_{\ax(f)}([o,go])\le \tau, 
\end{equation}
and, since $p\ax(f)\ne \ax(f)$ for any $f\in F$ and a finite set of $p\in P$,
\begin{equation}\label{pboundedEQ2}
\forall f\in F:\quad \pi_{\ax(f)}([o,po])\le \tau, 
\end{equation}
where $\tau=\tau(R)$ is a constant  depending on the axis $\ax(f)$ for $f\in F$ as above. 
Consequently, for any $g,h\in \Gamma$, there exists a common $f\in F$ satisfying (\ref{pboundedEQ})  so that 
\begin{equation}\label{gboundedEQ}
\max\{\pi_{\ax(f)}([o,go]),\pi_{\ax(f)}([o,ho])\}\le \tau
\end{equation}
Setting $L=\min\{d(o,fo): f\in F\}$, the above equations (\ref{pboundedEQ})(\ref{pboundedEQ2})(\ref{gboundedEQ}) verify that for any $p\in P$, the word $(g, f,p, f^{-1}, h)$ labels an $(L,\tau)$--admissible path, denoted by $\gamma$, associated with the contracting sets $g\ax(f)$ and $gfp\ax(f)$. Taking a high power of $f\in F$ if necessary, the constant $L$ can be  large enough to satisfy Lemma \ref{extend3}, so any geodesic $\alpha$ with same endpoints as    $\gamma$ $r$--fellow travels $\gamma$, so 
$go, gfo$ and $gfpo, gfpf^{-1}o$ have at most a distance $r$ to $\alpha$. Letting $D=8r+L+\max\{d(o,po):p\in P\}$ concludes the proof of the lemma. 
\end{proof}

\subsection{First consequences on confined subgroups}
Until the end of this subsection, consider a  subgroup  $H<G$, which is  confined by $\Gamma$ with a finite confining subset $P$. Assume that $E_G(\Gamma)\cap P\setminus \{1\}\ne \emptyset$. The main situations we keep in mind are  $G=\isom(\U)$ or $G=\Gamma$.

The following is an immediate consequence of Lemma \ref{EllipticRadical} applied to $(g, g^{-1})$, and  $p\in P$ is chosen for $gf$ according to the definition of confined subgroups.

\begin{lem}\label{GoodConfiningSetP}
There exists a finite subset $F$ of contracting elements in $\Gamma$ with the following property:
for any $g\in \Gamma$, there exist $f\in F$ and $p\in P$ such that $gfpf^{-1}g^{-1}$ lies in $H$ and
$$
|d(o, gfpf^{-1}g^{-1}o) - 2d(o,go)|\le D
$$
where $D$ depends only on $F$ and $P$.
\end{lem}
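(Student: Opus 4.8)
The plan is to apply Lemma~\ref{EllipticRadical} with the pair $(g,h):=(g,g^{-1})$, so that the word $(g,f,p,f^{-1},g^{-1})$ labels an $(L,\tau)$--admissible path and the length estimate $|d(o,gfpf^{-1}g^{-1}o)-2d(o,go)|\le D$ holds once $p$ is chosen appropriately. The only extra point over a verbatim invocation of Lemma~\ref{EllipticRadical} is that we need the conjugating element $gf$ (rather than $g$) to witness confinement, i.e.\ we need $(gf)^{-1}H(gf)\cap(P\setminus\{1\})\ne\emptyset$. This is exactly what the definition of ``confined by $\Gamma$'' supplies, provided $gf\in\Gamma$; and this holds because $F\subseteq\Gamma$ and $g\in\Gamma$, so $gf\in\Gamma$.

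So the steps, in order, are as follows. First, fix the finite set $F\subseteq\Gamma$ of contracting elements and the constants $\tau,D>0$ produced by Lemma~\ref{EllipticRadical} applied to the finite set $P$ (which is disjoint from $E(\Gamma)$ by the standing assumption of this subsection); set $L=\min\{d(o,fo):f\in F\}$. Second, given an arbitrary $g\in\Gamma$, apply Lemma~\ref{EllipticRadical} to the pair $(g,g^{-1})$: this yields an element $f\in F$ such that for \emph{every} $q\in P$ the word $(g,f,q,f^{-1},g^{-1})$ labels an $(L,\tau)$--admissible path with associated contracting subsets $g\ax(f)$ and $gfq\ax(f)$, and moreover
\[
|d(o,gfqf^{-1}g^{-1}o)-d(o,go)-d(o,g^{-1}o)|\le D .
\]
Since $d(o,g^{-1}o)=d(go,o)=d(o,go)$, the right-hand estimate reads $|d(o,gfqf^{-1}g^{-1}o)-2d(o,go)|\le D$, uniformly in $q\in P$. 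Third, use confinement: $gf\in\Gamma$, so there exists $p\in P\setminus\{1\}$ with $(gf)^{-1}H(gf)\ni$ some nontrivial element of $P$; equivalently $gfpf^{-1}g^{-1}\in H$ for this particular $p$. Fourth, feed this $p$ into the estimate from the second step to conclude $|d(o,gfpf^{-1}g^{-1}o)-2d(o,go)|\le D$, which is the assertion; note $D$ depends only on $F$ and $P$ as required.

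I do not expect any genuine obstacle here: the lemma is essentially a one-line corollary of Lemma~\ref{EllipticRadical} combined with the definition of confinement, and the only thing to be careful about is that the conjugator realizing confinement is $gf$, not $g$, so one must record that $gf\in\Gamma$ and that Lemma~\ref{EllipticRadical} already handles \emph{all} $q\in P$ simultaneously (it is the same $f$ that works for every element of $P$), which is precisely what lets us postpone the choice of $p$ until after $f$ is fixed. If one wanted the ``admissible path'' conclusion explicitly carried along (it is implicit and used in later sections), one simply also records that $(g,f,p,f^{-1},g^{-1})$ labels an $(L,\tau)$--admissible path with contracting subsets $g\ax(f)$ and $gfp\ax(f)$; this is again a direct output of Lemma~\ref{EllipticRadical}.
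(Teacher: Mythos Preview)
Your proposal is correct and follows exactly the paper's own argument: the paper states this lemma as ``an immediate consequence of Lemma~\ref{EllipticRadical} applied to $(g,g^{-1})$, and $p\in P$ is chosen for $gf$ according to the definition of confined subgroups.'' You have unpacked precisely this, including the key observation that Lemma~\ref{EllipticRadical} yields a single $f$ working for all $q\in P$, so the choice of $p$ can be deferred until after $f$ is fixed and confinement is applied at $gf\in\Gamma$.
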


This allows us to derive the following desirable property.
\begin{lem}\label{UniformContractingElem}
There exists a finite set $F$ of pairwise independent contracting elements in $\Gamma$ with the following property. Fix a conjugate $gHg^{-1}$ for $g\in \Gamma$. For any $f_1\ne f_2\in F$ there exists $p_1, p_2\in P$ such that $f_ip_if_i^{-1}\in gHg^{-1}$ and their product $c(f_1,f_2):=(f_1p_1f_1^{-1})\cdot (f_2p_2f_2^{-1})$ is a contracting element. Moreover, any two such $c(f_1,f_2)$ for distinct pairs $(f_1,f_2)\in F\times F$ are independent.  
\end{lem}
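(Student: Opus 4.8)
\textbf{Proof plan for Lemma \ref{UniformContractingElem}.}

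The plan is to bootstrap from Lemma \ref{GoodConfiningSetP}. First, note that the finite set $F$ produced by Lemma \ref{EllipticRadical} (hence by Lemma \ref{GoodConfiningSetP}) can be assumed to consist of pairwise independent contracting elements in $\Gamma$, all with $d(o,fo)$ as large as we like (replace each $f$ by a high power; this does not affect the bounded projection constant $\tau$ of the associated system $\f$, and the family $\{p\ax(f): p\in P, f\in F\}$ still has uniformly bounded projection by the argument in Lemma \ref{EllipticRadical}). For a fixed $g\in\Gamma$ and each $f\in F$, apply the confined condition to $gf$: there is $p\in P$ with $gfp(gf)^{-1}\in H$, i.e. $fpf^{-1}\in g^{-1}Hg$. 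Conjugating, for the conjugate $gHg^{-1}$ we instead apply the confining condition to $g^{-1}f$ to get $p_f\in P$ with $f p_f f^{-1}\in gHg^{-1}$; so for each $f\in F$ we obtain an element $fp_ff^{-1}\in gHg^{-1}$, and by Lemma \ref{EllipticRadical} applied along the words $(f, p_f, f^{-1})$ we control $d(o, fp_ff^{-1}o)$ up to the additive constant $D$, and moreover the geodesic $[o, fp_ff^{-1}o]$ fellow-travels the admissible path with contracting subset $\ax(f)$ and $fp_f\ax(f)$. In particular $fp_ff^{-1}$ is a contracting element with axis $r$-close to $\ax(f)$ over a long initial (and terminal) segment (taking the power of $f$ large enough, $d(o,fo)\gg r$, so the overlap is long).

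Next, given $f_1\ne f_2\in F$ with the corresponding $p_1 := p_{f_1}, p_2 := p_{f_2}\in P$, form $c(f_1,f_2) = (f_1p_1f_1^{-1})(f_2p_2f_2^{-1})$. The key step is to realize $c(f_1,f_2)$ as a contracting element by building an admissible path for the word $(f_1, p_1, f_1^{-1}, f_2, p_2, f_2^{-1})$. Here the point $f_1p_1\ax(f_1)$ (coming from the first factor) and $f_1p_1f_1^{-1}f_2\cdot\ax(f_2)$ (coming from the second) will serve as consecutive contracting subsets; since $f_1, f_2$ are independent and the $p_i$-translates of axes have bounded projection, the subpaths $[o, f_1o]$, $f_1[o,p_1o]$, etc. satisfy the (LL) and (BP) conditions once $L=\min_f d(o,fo)$ is taken large, exactly as in the proof of Lemma \ref{EllipticRadical}. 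Then Proposition \ref{admisProp} shows the path is a quasi-geodesic, and concatenating translates of it under powers of $c(f_1,f_2)$ (using Remark \ref{ConcatenationAdmPath}(1): consecutive copies glue because the long segment $p_if_i^{-1}\cdots$ appears at the seam) gives that $n\mapsto c(f_1,f_2)^no$ is a quasi-geodesic with contracting image. Hence $c(f_1,f_2)$ is contracting, with an axis that fellow-travels, over fundamental domains, the contracting subsets $\ax(f_1)$ and (a translate of) $\ax(f_2)$.

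For the ``moreover'' part — independence of $c(f_1,f_2)$ and $c(f_1',f_2')$ for distinct pairs — the argument is the standard bounded-intersection bookkeeping: the axis of $c(f_1,f_2)$ is, up to bounded Hausdorff distance on each period, a concatenation of translates drawn from the contracting system $\f' := \{g\ax(f): g\in\Gamma, f\in F\}$, which has the bounded intersection property. Two such axes built from distinct pairs of elements of $F$ necessarily use distinct elements of $\f'$ in at least one slot (since the generators differ), so by bounded intersection they overlap in a uniformly bounded set; since their fundamental domains have length $\asymp d(o,f_io)$, which we have taken large, the axes have bounded intersection, i.e. $c(f_1,f_2)$ and $c(f_1',f_2')$ are independent. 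One must be slightly careful that the specific $p_i\in P$ chosen for $gHg^{-1}$ may vary, but the lengths $d(o,p_io)$ are bounded (as $P$ is finite), so they only perturb things by a bounded amount and do not affect the overlap estimates.

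The main obstacle I expect is the verification that $n\mapsto c(f_1,f_2)^no$ is genuinely a quasi-geodesic with contracting image — i.e. that the admissible-path structure for the word $(f_1,p_1,f_1^{-1},f_2,p_2,f_2^{-1})$ survives concatenation at the seams. This requires checking that the segment appearing at the gluing point ($f_2^{-1}$ followed by the next copy's $f_1$) is long and that the (BP) condition holds there, which is where the independence of $f_1,f_2$ and the bounded projection of $p\ax(f)$ onto $\ax(f')$ are both essential; choosing the powers of the $f_i$ large (uniformly over $F$) is what makes this go through, and one should track that $L$, $\tau$ can be fixed before $g$ is chosen so that the constants are uniform in the conjugate $gHg^{-1}$.
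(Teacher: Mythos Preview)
Your proposal is correct and follows essentially the same route as the paper: obtain $p_i\in P$ from the confining property applied to $g^{-1}f_i$, build the $(L,\tau)$--admissible path for the word $(f_1,p_1,f_1^{-1},f_2,p_2,f_2^{-1})$ with contracting subsets the translated axes of $f_1,f_2$, concatenate under powers of $c(f_1,f_2)$ to get a contracting quasi-geodesic, and deduce independence of distinct $c(f_1,f_2)$ and $c(f_1',f_2')$ from the bounded intersection of the system $\{g\ax(f):g\in\Gamma,\ f\in F\}$. The paper phrases the independence step as a contradiction argument (if the two axes had finite Hausdorff distance they would fellow-travel a common bi-infinite geodesic, forcing a large overlap between an $\ax(f_1)$--translate and an $\ax(f_1')$-- or $\ax(f_2')$--translate), which is exactly the mechanism behind your ``bounded-intersection bookkeeping''; your seam check at the gluing $f_2^{-1}\cdot f_1$ is the right concern and is handled, as you say, by the independence of $f_1,f_2$ together with the bound on $\pi_{\ax(f)}([o,po])$ from Lemma \ref{EllipticRadical}.
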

\begin{proof}
For $i=1,2$, applying Lemma \ref{GoodConfiningSetP} to $g^{-1}$ with $f_i$ yields the choice of $p_i$ in $P$ so that $h_i:=f_ip_if_i^{-1}$ lies in $gHg^{-1}$. It remains to note that $h:=h_1\cdot h_2$ is contracting. To this end, as $\ax(f_1)$ and $\ax(f_2)$ have bounded intersection, setting $L:=\min\{d(o,fo): f\in F\}$, we have that the power $h^n$ labels an $(L,\tau)$--admissible path as follows
$$
\gamma:=\cup_{n\in \mathbb Z} h^n ([o,f_1o]f_1[o, p_1o]f_{1}p_{1}[o,f_1^{-1}o]\cdot h_1[o,f_2o]f_2[o, p_2o]f_{2}p_{2}[o,f_2^{-1}o]) 
$$
where the associated contracting subsets are given by the corresponding translated axis of $f_1$ and $f_2$.
Choosing $L$ sufficiently large, the path $\gamma$ is contracting, so $h$ is contracting. 

Finally, if $h=c(f_1,f_2)$ and $h'=c(f_1',f_2')$ for $(f_1,f_2)\ne (f_1',f_2')$, then we need to show that $h$ and $h'$ are independent. That amounts to proving that their axes $\gamma$ and $\gamma'$ have infinite Hausdorff distance. Indeed, if not, then $\gamma$ lies in a finite neighborhood of  $\gamma'$, so they fellow travel a common bi-infinite geodesic $\alpha$ (which could be obtained by applying Cantor argument with Ascoli-Arzela Lemma). For definiteness, say  $f_1'\ne f_1\ne f_2'$.  The fellow travel property by Proposition \ref{admisProp} then forces a large intersection of some axis of $f_1$ with the axis of $f_1'$ or the axis of $f_2'$. This would lead to a contraction with bounded intersection of $F$.  
\end{proof}

We get the following corollary on confined subgroups.

\begin{lem}\label{ConfinedNonElementary}
Assume that $H<G$ is a subgroup confined by $\Gamma$ with a finite confining subset $P$ that intersects trivially $E_G(\Gamma)$.  Then  $H$ must be a non-elementary subgroup with a contracting element. Moreover, $\Lambda(\Gamma o)\subseteq [\Lambda(H o)]$. 
\end{lem}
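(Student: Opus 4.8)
\textbf{Proof strategy for Lemma \ref{ConfinedNonElementary}.}

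The plan is to use Lemma \ref{UniformContractingElem} to manufacture, inside $H$ itself, an infinite family of pairwise independent contracting elements, and then deduce that the limit set of $H$ fills up the limit set of $\Gamma$. First I would fix $g=1$ in Lemma \ref{UniformContractingElem}, which produces a finite set $F=\{f_1,\dots,f_k\}$ of pairwise independent contracting elements of $\Gamma$ (we may assume $k\ge 3$ by taking $F$ large enough, say using Lemma \ref{IndepElemExists} and the paragraph preceding Lemma \ref{EllipticRadical} to guarantee enough independent elements) and, for each ordered pair $(f_i,f_j)$ with $i\ne j$, elements $p_i,p_j\in P$ with $f_ip_if_i^{-1}\in H$ and $f_jp_jf_j^{-1}\in H$, such that $c(f_i,f_j)=(f_ip_if_i^{-1})(f_jp_jf_j^{-1})$ is a contracting element lying in $H$, and distinct pairs give independent contracting elements. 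In particular $H$ contains at least two independent contracting elements, so $H$ is non-elementary with a contracting element; this settles the first assertion.

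For the inclusion $\Lambda(\Gamma o)\subseteq[\Lambda(Ho)]$, I would argue as follows. Set $h:=c(f_1,f_2)\in H$. By Lemma \ref{DisjointFixpts} and the explicit admissible-path description of the axis of $h$ in the proof of Lemma \ref{UniformContractingElem}, the attracting and repelling fixed points $[h^\pm]$ of $h$ are non-pinched (the axis $\gamma$ fellow-travels translated axes of the non-pinched elements $f_1,f_2$, so one can locate $[h^+]$ near $[f_1^+]$-type data; more precisely apply the non-pinched hypothesis on $f_1,f_2$ together with Assumption (A) to conclude each ray of $\gamma$ accumulates in a non-pinched class). Since $h\in H$ is a non-pinched contracting element, Lemma \ref{FixptsDense} applied inside $H$ — note $H$ is non-elementary by the previous paragraph — gives $[\Lambda(Ho)]=[\overline{H\eta}]$ for any $\eta\in[h^\pm]$. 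But $h\in\Gamma$ as well, so the same Lemma \ref{FixptsDense} applied inside $\Gamma$ gives $[\Lambda(\Gamma o)]=[\overline{\Gamma\eta}]$ for the same $\eta$. Since $H<\Gamma$, we have $\overline{H\eta}\subseteq\overline{\Gamma\eta}$; and conversely, to get the reverse containment of $[\cdot]$-closures I would use that $[\overline{\Gamma\eta}]$ is the \emph{minimal} non-empty closed $\Gamma$-invariant $[\cdot]$-saturated subset — this is the content of Lemma \ref{FixptsDense}, which says \emph{any} $\Gamma$-orbit closure of a fixed point of a contracting element equals the whole $[\Lambda(\Gamma o)]$. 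Applying this with $\eta\in[h^\pm]$ but viewing the $\Gamma$-orbit: since $\eta\in\Lambda(Ho)\subseteq\Lambda(\Gamma o)$, minimality forces $[\Lambda(\Gamma o)]=[\overline{\Gamma\eta}]$, while the same statement relative to $H$ forces $[\Lambda(Ho)]=[\overline{H\eta}]$, and it remains only to see $[\overline{\Gamma\eta}]\subseteq[\Lambda(Ho)]$.

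The last containment is where the real work sits, and I expect it to be the main obstacle. The point is that $[\Lambda(Ho)]$ is closed and $H$-invariant but \emph{not} obviously $\Gamma$-invariant, so one cannot directly invoke minimality of $[\Lambda(\Gamma o)]$ against it. Instead I would use the family $\{c(f_i,f_j)\}$ of \emph{pairwise independent} contracting elements of $H$ more carefully: by Lemma \ref{UniformContractingElem} their fixed point pairs $[c(f_i,f_j)^\pm]$ are pairwise disjoint (Lemma \ref{DisjointFixpts}), so $H$ has infinitely many distinct fixed points in its limit set; and by construction each such fixed point is $[\cdot]$-close to a translate of an $[f_m^\pm]$. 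Since the $f_m\in\Gamma$ and one can conjugate these constructions by arbitrary $g\in\Gamma$ (Lemma \ref{UniformContractingElem} allows $gHg^{-1}$ for any $g$, equivalently produces contracting elements of $H$ near $g$-translates of the axes), the set of fixed points of contracting elements of $H$ is dense — in the $[\cdot]$-quotient — in the full set of $\Gamma$-translates of $[f_1^\pm]$, which by Lemma \ref{FixptsDense} is all of $[\Lambda(\Gamma o)]$. Concretely: for any $\xi\in\Lambda(\Gamma o)$ pick $g_n\in\Gamma$ with $g_n[f_1^\pm]\to[\xi]$ (Lemma \ref{FixptsDense} and Assumption (C) on closedness of $[\cdot]$ on $\Gamma[f_1^\pm]$); by Lemma \ref{GoodConfiningSetP} applied to $g_n^{-1}$ one gets $h_n\in H$ with $h_n$ contracting and its axis shadowing the $g_n$-translate of an axis in $F$, hence $[h_n^\pm]$ approximating $g_n[f_1^\pm]$ and therefore accumulating to $[\xi]$; since $[h_n^\pm]\subseteq[\Lambda(Ho)]$ and the latter is closed, $[\xi]\in[\Lambda(Ho)]$. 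This yields $[\Lambda(\Gamma o)]\subseteq[\Lambda(Ho)]$, completing the proof. The delicate points to check are the non-pinchedness of the constructed elements (needed to apply Lemmas \ref{NonPinchedStabilizer}, \ref{DisjointFixpts}, \ref{FixptsDense}) and the convergence statement ``$[h_n^\pm]\to[\xi]$'', which requires controlling how well the axis of $h_n$ tracks the $g_n$-translate of $\ax(f)$ — this is exactly the fellow-travelling estimate packaged in the admissible-path machinery of Proposition \ref{admisProp} and Lemma \ref{EllipticRadical}, so it should go through but deserves care.
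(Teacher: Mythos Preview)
Your treatment of the first assertion (non-elementary with a contracting element) via Lemma \ref{UniformContractingElem} is correct and matches the paper's implicit argument.

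For the inclusion $\Lambda(\Gamma o)\subseteq[\Lambda(Ho)]$, however, you take a substantial detour and introduce errors along the way. First, the claim ``$h\in\Gamma$ as well'' is false in general: $h=c(f_1,f_2)=(f_1p_1f_1^{-1})(f_2p_2f_2^{-1})$ has $f_i\in\Gamma$ but $p_i\in P\subseteq G$, and $H$ is not assumed to sit inside $\Gamma$. Second, and more seriously, Lemma \ref{GoodConfiningSetP} does \emph{not} assert that the element $gfpf^{-1}g^{-1}$ it produces is contracting; it only gives an element of $H$ together with a distance estimate. So ``by Lemma \ref{GoodConfiningSetP} applied to $g_n^{-1}$ one gets $h_n\in H$ with $h_n$ contracting'' is unjustified. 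You could try to repair this by invoking Lemma \ref{UniformContractingElem} with varying $g$, but then you would still owe the convergence $[h_n^\pm]\to[\xi]$ and the non-pinchedness of each $h_n$, and the argument becomes heavy.

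The paper's proof avoids all of this by a direct and much simpler route. Given $\xi\in\Lambda(\Gamma o)$, take $g_n\in\Gamma$ with $g_no\to\xi$. Lemma \ref{GoodConfiningSetP} produces $h_n:=g_nf_np_nf_n^{-1}g_n^{-1}\in H$ such that the word $(g_n,f_n,p_n,f_n^{-1},g_n^{-1})$ labels an $(L,\tau)$--admissible path. By the fellow-travel property (Proposition \ref{admisProp}), both $g_no$ and $h_no$ lie in the cone $\Omega_o(N_r(X_n))$ over the escaping contracting set $X_n=g_n\ax(f_n)$. Assumption (B) of Definition \ref{ConvBdryDefn} then forces $h_no$ to accumulate in the same $[\cdot]$--class as $g_no$, i.e.\ $h_no\to[\xi]$. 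Thus $\xi\in[\Lambda(Ho)]$. No contracting property of $h_n$, no fixed-point density, and no non-pinchedness verification is needed.
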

\begin{proof}
Let $g_no\to \xi\in \Lambda (\Gamma o)$ for $g_n\in \Gamma$. As above, we then choose $f_n\in F$ and $p_n\in P$ so that $h_n:=g_n f_n p_n f_n^{-1} g_n^{-1}\in H$ labels an $(L,\tau)$--admissible path. Consider the sequence of contracting quasi-geodesic $X_n=g_n\ax(f_n)$,  so it follows that $h_no\cap N_r(X_n)\ne\emptyset$ where $r$ is given by Proposition \ref{admisProp}. Hence, we proved that $g_no, h_no\in \Omega(N_r(X_n))$, so the Assumption (B) in Definition \ref{ConvBdryDefn} implies that $h_no$ also tends to $[\xi]$. The proof is now complete.    
\end{proof}

\section{Conservativity of boundary actions of confined subgroups}\label{seccons} 
Our goal is to prove Theorem \ref{ConfinedConsThm}. In this section we retain the same setting as \textsection \ref{secdiss}.

\subsection{Some preliminary geometric lemmas}
We first prepare some geometric lemmas. Recall that $\mathcal C_\epsilon^{\mathrm{hor}}$ is a subset in $\mathcal C$ (\ref{AssumpE}) on which the Busemann cocycles converge up to an additive error $\epsilon$ in (\ref{BusemanConvError}). 
\begin{lem}\label{SameHoroball}  
Let $\gamma=[o,\xi]$ be a geodesic ray ending at $[\xi]$ for some $\xi\in \mathcal C_\epsilon^{\mathrm{hor}}$. If $y\in \U$ is a point that satisfies $d(y,z)\le d(o,z)+D$ for some $z\in \gamma$ and a real number $D\in \mathbb R$, then $y\in \mathcal {HB}(\xi, \epsilon+D)$. 
\end{lem}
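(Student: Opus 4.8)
The statement is essentially the definition of the horoball $\mathcal{HB}([\xi], L)$ unwound through the definition of the Busemann quasi-cocycle $B_{[\xi]}$, so the plan is to estimate $B_{[\xi]}(y, o)$ directly. First I would recall that $\mathcal{HB}([\xi], \epsilon + D) = \{y \in \U : B_{[\xi]}(y, o) \le \epsilon + D\}$, so it suffices to show $B_{[\xi]}(y, o) \le \epsilon + D$ under the hypothesis that $d(y, z) \le d(o, z) + D$ for some $z \in \gamma = [o, \xi]$.

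\textbf{Key steps.} Pick a sequence $z_n \in \gamma$ with $z_n \to \xi$ in $\bU$; since $\gamma$ is a geodesic ray ending at $[\xi]$ we have $B_{[\xi]}(y, o) = \limsup_n [d(y, z_n) - d(o, z_n)]$ (up to the additive error $\epsilon$ built into Assumption D, via \eqref{Horofunctionatclass}). Using that $z \in \gamma$ lies on the geodesic ray $\gamma$ from $o$, for $z_n$ far enough along we have $d(o, z_n) = d(o, z) + d(z, z_n)$, while the triangle inequality gives $d(y, z_n) \le d(y, z) + d(z, z_n)$. Subtracting,
\[
d(y, z_n) - d(o, z_n) \le d(y, z) - d(o, z) \le D,
\]
where the last inequality is exactly the hypothesis. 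Taking $\limsup$ over $n$ yields $B_{[\xi]}(y, o) \le D$ if $B_{[\xi]}$ were an honest Busemann function at $\xi$; but since we are working with the quasi-cocycle attached to the $[\cdot]$--class and $\xi \in \mathcal{C}^{\mathrm{hor}}_\epsilon$, the passage from $B_\xi$ to $B_{[\xi]}$ (or from the $\limsup$ along $\gamma$ to the value $B_{[\xi]}(y,o)$) costs an additive $\epsilon$ by \eqref{BusemanConvError} and \eqref{Horofunctionatclass}. Hence $B_{[\xi]}(y, o) \le D + \epsilon$, i.e.\ $y \in \mathcal{HB}(\xi, \epsilon + D)$.

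\textbf{Main obstacle.} The only subtlety — and it is genuinely a bookkeeping point rather than a conceptual one — is tracking which version of the Busemann function is being used: the hypothesis is phrased with an honest point $z$ on a genuine geodesic ray $\gamma$, but the horoball is defined via the class-level quasi-cocycle $B_{[\xi]}$, whose relation to $d(\cdot, z_n) - d(\cdot, z_n)$ along $\gamma$ is only controlled up to $\epsilon$. I would make sure to invoke Assumption D (equation \eqref{BusemanConvError}) and the inequalities \eqref{Horofunctionatclass} at precisely the step where I replace the limsup of the distance differences by $B_{[\xi]}(y, o)$, so that the $\epsilon$ in the conclusion is accounted for exactly once and not doubled. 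Everything else is the triangle inequality together with the defining identity $d(o, z_n) = d(o, z) + d(z, z_n)$ for points on a geodesic ray.
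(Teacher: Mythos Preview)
Your proposal is correct and matches the paper's own proof essentially line for line: the paper parametrizes $\gamma$ by arc length, uses $d(\gamma(t),z)+d(z,o)=t$ together with the triangle inequality to get $d(\gamma(t),y)-t\le d(z,y)-d(z,o)\le D$, and then invokes \eqref{BusemanConvError} once to pass to $B_{[\xi]}(y,o)\le \epsilon+D$. Your identification of the single $\epsilon$--bookkeeping step as the only subtlety is exactly right.
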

\begin{proof}
According to Definition \ref{HoroballDefn}, the horoball $\mathcal {HB}(\xi, L)$ consists of points $x\in \U$ such that $B_{[\xi]}(x,o)\le L$. Noting that $d(\gamma(t),z)+d(z,o)=t$ for $z\in \gamma$ and $t\ge d(o,z)$, so   $d(\gamma(t),y)-t \le  [d(\gamma(t),z)+d(z,y)]-d(\gamma(t),z)-d(z,o)\le d(z,y)-d(z,o) \le D.$ Taking $t\to\infty$ shows $B_{[\xi]}(y,o)\le \epsilon+D$ by (\ref{BusemanConvError}).  That is, $y\in \mathcal {HB}(\xi, \epsilon+D)$.   
\end{proof}

We now arrive at a crucial observation in the proof of Theorem \ref{ConInHorLimitSet}. 
\begin{lem}
Given $\tau, C>0$ there exists $L=L(C,\tau)$ with the following property.
Assume that $\ax(f)$ is $C$--contracting.
Consider a distinct axis  $t\ax(f)\ne \ax(f)$ for some $t\in G$. Let   $g_1o\in \ax(f), g_2o\in  t\ax(f)$ so that $[g_1o, g_2o]$ intersects $\ax(f)$ in a diameter at least $L$, and $g_2o$ lies in a $\tau$--neighborhood of the projection of $\ax(f)$ to $t\ax(f)$. Then $h:=g_1^{-1}g_2$ is a contracting element.   
\end{lem}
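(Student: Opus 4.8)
The statement asserts that if two distinct translates $\ax(f)$ and $t\ax(f)$ are joined by a geodesic $[g_1o,g_2o]$ that overlaps $\ax(f)$ in a long segment (length $\ge L$) and whose endpoint $g_2o$ sits near the projection of $\ax(f)$ onto $t\ax(f)$, then $h:=g_1^{-1}g_2$ is a contracting element. The natural strategy is to realize the axis of $h$ as a bi-infinite $(L,\tau)$--admissible path obtained by concatenating translates of (pieces of) $\ax(f)$ under powers of $h$, and then invoke Proposition \ref{admisProp} to conclude that this path is a quasi-geodesic with contracting image.

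First I would set up the building block. Consider the concatenation
$$
\beta := [o, g_1 o] \cdot g_1[o, (g_1^{-1}g_2) o]
$$
viewed relative to the contracting system $\f$: the initial geodesic $[o,g_1o]$ terminates on $\ax(f)$ (since $g_1o\in\ax(f)$, and in fact we may take a subsegment lying along $\ax(f)$ of length $\ge L$ by the overlap hypothesis), while $g_1[o,(g_1^{-1}g_2)o]=[g_1o,g_2o]$ runs from $\ax(f)$ to $t\ax(f)$. The key points to verify are the Long Local and Bounded Projection conditions of Definition \ref{AdmDef}: the piece along $\ax(f)$ has length $\ge L$ by hypothesis; the two contracting subsets involved, $\ax(f)$ and $t\ax(f)$, are distinct; and the projections of the ``entering'' and ``leaving'' geodesic stubs to each of $\ax(f),\,t\ax(f)$ are $\tau$--bounded. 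The latter holds because $g_2o$ is $\tau$--close to $\pi_{t\ax(f)}(\ax(f))$ by hypothesis, and the complementary projection bounds follow from the contracting property of $\ax(f)$ together with the fact that the overlap with $\ax(f)$ is long (so the geodesic $[g_1o,g_2o]$ leaves $\ax(f)$ with $C$--bounded projection, and symmetrically). One then forms the bi-infinite path
$$
\gamma := \bigcup_{n\in\mathbb Z} h^n\,\beta,
$$
which by Remark \ref{ConcatenationAdmPath}(1) inherits a natural $(L,\tau)$--admissible structure provided the gluing segments (the translates of the long sub-segment of $\ax(f)$) have length $\ge L$ — which is exactly the overlap hypothesis — and provided consecutive contracting subsets along $\gamma$ are distinct. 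The latter distinctness ($h^n\ax(f)\ne h^n t\ax(f)=h^{n+1}\ax(f)$) is immediate from $\ax(f)\ne t\ax(f)$.

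Choosing $L=L(C,\tau)$ large enough to satisfy Proposition \ref{admisProp}, the admissible path $\gamma$ has the $r$--fellow travel property and is therefore a $c$--quasi-geodesic; moreover its image is contracting, being a uniformly bounded Hausdorff distance from a sequence of $C$--contracting segments glued end to end (one can appeal to the standard fact that an $(L,\tau)$--admissible path with $L\gg 0$ has $C'$--contracting image for $C'=C'(C,\tau)$, as used throughout \cite{YANG10, YANG22}). Since $h$ acts by translation along $\gamma$ — it shifts $\beta$ to $h\beta$ — it acts cocompactly on the contracting bi-infinite quasi-geodesic $\gamma$, so by Definition \ref{Def:Contracting} $h$ is a contracting element. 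The one point requiring slight care, and the main obstacle, is ensuring that $\gamma$ is genuinely bi-infinite and $h$ has infinite order: this follows because the orbit map $n\mapsto h^no$ traces out $\gamma$, which is a quasi-geodesic of infinite length (each period $\beta$ contributes length at least $L>0$, bounded away from $0$), so $\{h^no\}$ is unbounded and $h$ cannot be torsion. Hence $h=g_1^{-1}g_2$ is contracting, as claimed.
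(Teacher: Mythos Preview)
Your overall strategy---build a bi-infinite $(L,\tau)$--admissible path from $h$-translates of a building block and invoke Proposition \ref{admisProp}---is exactly the paper's approach. However, your execution has a genuine gap in the choice of building block.

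Your $\beta = [o, g_1o]\cdot[g_1o,g_2o]$ runs from $o$ to $g_2o$, so $h\beta$ runs from $ho=g_1^{-1}g_2o$ to $hg_2o$. For $\gamma=\bigcup_n h^n\beta$ to be a path you need $(\beta)^+=(h\beta)^-$, i.e.\ $g_2o=g_1^{-1}g_2o$, which fails whenever $g_1\neq 1$. The gap between consecutive pieces is $d(g_2o, ho)=d(g_1\cdot ho, ho)$, which is not controlled by any of the hypotheses (indeed $g_1$ can be a high power of $f$). So Remark \ref{ConcatenationAdmPath}(1) does not apply, and you never actually produce an admissible path. A related error: your assertion $h^{n}t\ax(f)=h^{n+1}\ax(f)$ amounts to $t\ax(f)=h\ax(f)=g_1^{-1}t\ax(f)$, which would force $g_1$ to stabilise $t\ax(f)$; there is no reason for this.

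The paper avoids both problems by dropping the superfluous prefix $[o,g_1o]$ and taking the building block to be $[o,ho]$ itself (equivalently $g_1^{-1}[g_1o,g_2o]$). Then $\gamma=\bigcup_n h^n[o,ho]$ genuinely concatenates, the associated contracting subsets are $\{h^n\ax(f)\}$ (with $h\ax(f)=g_1^{-1}t\ax(f)$), and the long overlap of $[o,ho]$ with $\ax(f)$ supplies the (LL) pieces. Note also that the segment of length $\ge L$ along $\ax(f)$ comes from the initial portion of $[g_1o,g_2o]$ (the overlap hypothesis), not from $[o,g_1o]$ as your write-up suggests; the hypothesis says nothing about $d(o,g_1o)$.
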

\begin{proof}
The $C$--contracting property of $\ax(f)$ implies that any geodesic outside $N_C(\ax(f))$ has $C$--bounded projection, so $[o,ho]$ has $2C$--bounded projection to $\ax(f)$.  
According to the assumption, $[o,ho]$ has $\tau$--bounded projection to $h\ax(f)$, so this verifies that the path $$\gamma=\cup_{n\in \mathbb Z} h^n[o,ho]$$ is an $(L,\tau+2C)$--admissible path  with  associated contracting subsets $\{h^n\ax(f), h^nt\ax(f): n\in \mathbb Z\}$. If $L$ is sufficiently large, then $\gamma$ is a contracting quasi-geodesic, concluding the proof that $g$ is contracting.   
\end{proof}

As a corollary, we have.
\begin{lem}\label{TightContractingTransition}
Assume that $\xi\in \cG$ is an $(r,F)$--conical point. Let  $[o,\xi]$ be a geodesic ray ending at $[\xi]$ with two distinct  $(r,f)$--barriers $X\ne Y$. Let $g_1o\in X, g_2o\in Y$ be $C$--close to the corresponding entry points of $[o,\xi]$ in $N_C(X)$ and $N_C(Y)$. Then $g_1^{-1}g_2$ is a contracting element.       
\end{lem}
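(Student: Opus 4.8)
The plan is to show that $h:=g_1^{-1}g_2$ acts cocompactly on a contracting bi-infinite quasi-geodesic, namely the periodic path $\gamma_h:=\bigcup_{n\in\mathbb Z}h^n[o,ho]$, which by definition makes $h$ a contracting element. Concretely I would equip $\gamma_h$ with an $(L,\tau)$--admissible structure relative to the contracting system $\f$ (Definition \ref{AdmDef}) and then invoke Proposition \ref{admisProp}. First a normalisation: since the barrier elements may be taken in $\Gamma$ and $g_1o\in X$, $g_2o\in Y$, we take $g_1,g_2\in\Gamma$, so translating by $g_1^{-1}$ gives $o\in X':=g_1^{-1}X$ and $ho\in Y':=g_1^{-1}Y$, with $X'\ne Y'$ two distinct members of $\f$; both are $C$--contracting and have $B$--bounded projection onto each other. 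Recall also that, replacing the elements of $F$ by suitable powers, we may assume every $f\in F$ (hence every barrier $f\in F$ carries) has translation length $\ge L_0$ with $L_0$ as large as needed.

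The two geometric inputs to establish are: (a) $[o,ho]=g_1^{-1}[g_1o,g_2o]$ runs alongside $N_C(X')$ for diameter at least $L$, with $o$ at bounded distance from one end of this overlap, for a controlled $L$ that can be made arbitrarily large; and (b) $ho$ lies within bounded distance $\tau'$ of $\pi_{Y'}(X')$. For (a): because $X$ is an $(r,f)$--barrier with $f$ of large translation length, $\|[o,\xi]\cap N_C(X)\|\ge L_0-O(r)$ is large; up to replacing $h$ by $h^{-1}$ (which preserves contraction) I may assume $X$ precedes $Y$ along $[o,\xi]$ — the two long overlaps $[o,\xi]\cap N_C(X)$ and $[o,\xi]\cap N_C(Y)$ cannot substantially interleave, since any stretch lying in both would project to a $B$--bounded set on each. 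Now $g_1o$ is $O(C)$--close to the entry point $u$ of $[o,\xi]$ into $N_C(X)$, and the portion of $[o,\xi]$ after its exit point $u'$ from $N_C(X)$ stays essentially outside $N_C(X)$, hence projects to an $O(C)$--bounded subset of $X$ near $u'$; since $g_2o$ lies near the (later) entry point of $[o,\xi]$ into $N_C(Y)$, we get $\pi_X(g_2o)$ near $u'$, so $\proj_X(g_1o,g_2o)\asymp d(u,u')$ is large, and the contracting property of $X$ forces $[g_1o,g_2o]$ to track $N_C(X)$ over diameter $\asymp d(u,u')-O(C)$, starting near $\pi_X(g_1o)=g_1o$. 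For (b): that same intermediate stretch of $[o,\xi]$ lies outside $N_C(Y)$, so it projects to an $O(C)$--bounded subset of $Y$ near the entry point of $[o,\xi]$ into $N_C(Y)$, i.e. near $g_2o$; combined with $\|\pi_Y(X)\|\le B$ (both lie in $\f$) this places $g_2o$ within $\tau':=O(B+C+r)$ of $\pi_Y(X)$, which translates to (b).

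Given (a) and (b) I would finish as follows. Decompose $[o,ho]=p\cdot q$ with $p$ the initial overlap with $N_C(X')$ from (a) (of length $\ge L$, both endpoints in $N_C(X')$) and $q$ the remainder, which runs from $N_C(X')$ to $ho\in Y'$. Then $q$ has $\le 2C$--bounded projection to $X'$ (it is essentially disjoint from $N_C(X')$), and by (b) its projection to $Y'$ is $O(B+\tau')$--bounded (one endpoint projects near $\pi_{Y'}(X')$, the other endpoint $ho$ is itself near $\pi_{Y'}(X')$). Hence $\gamma_h=\cdots(h^{-1}p)(h^{-1}q)\,p\,q\,(hp)(hq)\cdots$ has the Long Local property, with long pieces $h^np$, and the Bounded Projection property with associated contracting subsets $\{h^nX',\,h^nY':n\in\mathbb Z\}$ — all members of $\f$, hence uniformly $C$--contracting and, being pairwise $B$--boundedly projecting, consecutive ones are distinct. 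Thus $\gamma_h$ is an $(L,\tau'')$--admissible path with $\tau''=2C+O(B+\tau')$; taking $L$ (equivalently the translation lengths in $F$) large enough for Proposition \ref{admisProp}, $\gamma_h$ is a contracting $c$--quasi-geodesic on which $h$ acts cocompactly, so $h=g_1^{-1}g_2$ is contracting.

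The main obstacle is input (a): in a general geodesic space two geodesics with nearby endpoints need not fellow-travel, so the assertion that $[g_1o,g_2o]$ runs alongside $X$ for a long diameter genuinely rests on the contracting property of $X$, and the delicate part is to locate $\pi_X(g_1o)$ and $\pi_X(g_2o)$ accurately (near the entry and exit points of $[o,\xi]$ in $N_C(X)$); this in turn needs the two barriers to be ordered along $[o,\xi]$ and their overlaps with $[o,\xi]$ to be long relative to the bounded--intersection constant $B$, so that the stretch of $[o,\xi]$ between them lies outside $N_C(X)$. A secondary nuisance is checking that the periodic admissible structure genuinely closes up, i.e.\ that $h^nX'$, $h^nY'$ and $h^{n+1}X'$ are pairwise distinct with uniformly bounded projections — which follows from all of them lying in the single $C$--contracting, $B$--boundedly-projecting system $\f$.
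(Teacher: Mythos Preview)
Your approach is the paper's: this lemma is stated as a corollary of the unnumbered lemma immediately preceding it, whose two hypotheses are exactly your inputs (a) and (b), and whose proof likewise exhibits $\bigcup_n h^n[o,ho]$ as an admissible path with associated contracting subsets the $h$-translates of the axis. Your verification of (a) and (b) from the barrier condition is correct.

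There is one real slip in your admissible-path bookkeeping. You list the associated contracting subsets as $\{h^nX',h^nY'\}$ and assert that $h^nX',h^nY',h^{n+1}X'$ are pairwise distinct. In fact $hX'=Y'$. Since $X,Y\in\f$ are $\Gamma$-translates of $\ax(f)$, one takes $g_1,g_2$ with $g_1\ax(f)=X$ and $g_2\ax(f)=Y$ (this is precisely how barriers arise, and how the lemma is applied in the proof of Theorem~\ref{ConInHorLimitSet}); then $X'=g_1^{-1}X=\ax(f)$ and $hX'=g_1^{-1}g_2\ax(f)=g_1^{-1}Y=Y'$. Once this is noted, the contracting family collapses to the single orbit $\{h^nX'\}$, your $h^np$ are the long pieces, and your (BP) check for $q$ against $Y'$ \emph{is} the required check against $hX'$, so Proposition~\ref{admisProp} applies. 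Without the identity $hX'=Y'$ the structure you wrote down fails (LL): a genuinely separate contracting subset $h^nY'$ sitting between $h^nX'$ and $h^{n+1}X'$ would need its own long $p$-piece, and you have only the single point $h^{n+1}o$ lying there; equivalently, you verified (BP) for $q$ against $Y'$ but not against $hX'$, and only $hX'=Y'$ reconciles the two. Making this identity explicit is the one missing line.
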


Recall that a geodesic metric space is geodesically complete, if any geodesic segment extends to a (possibly non-unique) bi-infinite geodesic. A smooth Hadamard manifold is geodesically complete.
\begin{lem}\label{UniqueFixedPt4Parabolic}
Assume that $\U$ is a proper, geodesically complete, CAT(0) space. Let $p\in \isom(\U)$ be a parabolic isometry. If $p$ fixes a conical point $\xi$ in $\cG$ for a proper action of $\Gamma$ on $\U$, then $p$ has the unique fixed point.    
\end{lem}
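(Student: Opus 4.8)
The plan is to show that $p$ has no fixed point in $\U$ and exactly one fixed point on $\pU$, namely $\xi$. Since $\U$ is CAT(0) its visual boundary carries the trivial partition, so $[\xi]=\{\xi\}$ and $B_{[\zeta]}$ is the genuine Busemann cocycle of $\zeta$ (the additive constant $\epsilon$ is $0$ here). Being parabolic, $p$ does not attain $\inf_x d(x,px)=0$, hence fixes no point of $\U$. For any $\zeta\in\pU$ with $p\zeta=\zeta$, the invariance $B_{\zeta}(x,y)=B_{p\zeta}(px,py)=B_{\zeta}(px,py)$ together with the cocycle identity forces $B_{\zeta}(px,x)=B_{\zeta}(po,o)$ to be independent of $x$; as $|B_\zeta(px,x)|\le d(px,x)$, this common value is $0$, so $p$ preserves every horoball $\mathcal{HB}([\zeta],L)$. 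In particular the orbit $\{p^no:n\in\mathbb Z\}$ lies in $\mathcal{HB}([\xi],0)$ (it contains $o$); and were this orbit bounded, its closed convex hull would be a bounded $p$-invariant set whose Bruhat--Tits circumcentre is fixed by $p$, contradicting parabolicity. So the orbit is unbounded; choosing an escaping subsequence inside $\mathcal{HB}([\xi],0)$, Lemma \ref{HoroballUniqueLimit} and $[\xi]=\{\xi\}$ give, after passing to a further subsequence, $p^{n_k}o\to\xi$.

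Now suppose, towards a contradiction, that $p$ fixes some $\eta\in\pU$ with $\eta\ne\xi$. Then $p$ preserves $\mathcal{HB}([\eta],0)$, which contains $o$ and is closed and convex (horoballs are sublevel sets of convex Busemann functions in CAT(0)); hence $[o,p^{n}o]\subseteq\mathcal{HB}([\eta],0)$ for every $n$, and since $p^{n_k}o\to\xi$ with $\mathcal{HB}([\eta],0)$ closed, the ray $\gamma:=[o,\xi]$, which is the limit of the segments $[o,p^{n_k}o]$, lies in $\mathcal{HB}([\eta],0)$; that is, $b_\eta(\gamma(t))\le 0$ for all $t\ge 0$. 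Writing $\rho:=[o,\eta)$, the definition of $b_\eta$ then gives, for each $t$ and all sufficiently large $s$, the estimate $d(\gamma(t),\rho(s))\le s+1$.

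The remaining step, which is the crux, is to see that this cannot happen when $\xi$ is conical. By Lemma \ref{ConicalPointsLem}(1), $\gamma$ is a geodesic ray from $o$ to $\xi$, and since $\xi\in\cG$ it carries infinitely many $(r,F)$-barriers; by Lemma \ref{DisjointFixpts} at most one of these has $\eta$ as a boundary point, so fix a barrier $X=g\ax(f)$ with $\eta\notin\{g[f^+],g[f^-]\}$ and with $d(o,fo)$ as large as we please (passing to a power of $f$), while $X$ remains at a bounded distance from $o$. Then $\gamma$ shadows $X$ over a sub-segment of length $\ge d(o,fo)-2r$, so for every far point $\gamma(T)$ the projection $\pi_X(\gamma(T))$ lies at distance $\ge d(o,fo)-O(r,C)$ from $\pi_X(o)$, whereas $\pi_X(\rho(s))$ stays within a constant $D_X$ of $\pi_X(o)$ as $s\to\infty$ (by Assumption (A) of Definition \ref{ConvBdryDefn}, as $\eta$ is not an endpoint of $X$). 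Choosing $d(o,fo)$ large enough, the $C$-contracting property of $X$ forces the geodesic $[\gamma(T),\rho(s)]$ to enter $N_C(X)$ near $\pi_X(\rho(s))$, hence within a constant $R_0=R_0(X)$ of $o$; on the other hand the CAT(0) inequality $d(o,[x,y])\ge (x\mid y)_o$ together with $d(\gamma(T),\rho(s))\le s+1$ yields $d(o,[\gamma(T),\rho(s)])\ge\tfrac12(T-1)$. As $T$ may be taken arbitrarily large, $\tfrac12(T-1)\le R_0$ is absurd. Therefore $\xi$ is the unique fixed point of $p$ on $\pU$, and, together with the first paragraph, $p$ has a unique fixed point.

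The hard part is the last paragraph: one must ensure that the contracting bookkeeping genuinely overcomes the linear lower bound $\tfrac12 T$. This works because the "amount of negative curvature" available at the barrier is measured by $d(o,fo)$, which can be made larger than the contracting threshold of $X$ while $X$ itself stays at bounded distance from $o$; the only nuisance, that $\eta$ might be a boundary point of a barrier, is removed by Lemma \ref{DisjointFixpts}. Everything else — the vanishing of the translation length at a fixed boundary point, convexity and closedness of horoballs, the circumcentre argument, and the passage from the orbit to the ray $\gamma$ — is routine CAT(0) geometry.
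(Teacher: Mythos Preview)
Your first two paragraphs are correct, and the overall strategy is genuinely different from the paper's: the paper simply invokes a result of Fujiwara--Nagano--Shioya that in a proper geodesically complete CAT(0) space the fixed set of a parabolic isometry has Tits diameter at most $\pi/2$, and then notes that a conical point is visible from every other boundary point, hence at Tits distance at least $\pi$ from it.

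There is, however, a genuine gap at the crux of your third paragraph. The move ``passing to a power of $f$'' to enlarge $d(o,fo)$ does not preserve the $(r,f)$--barrier condition: you only know $d(gfo,\gamma)\le r$, not $d(gf^{k}o,\gamma)\le r$, so the length of $\gamma$'s passage near $X$ stays bounded by the original $d(o,fo)$. To force $[\gamma(T),\rho(s)]$ into $N_C(X)$ near $\pi_X(\rho(s))$ you need $d(o,fo)$ to beat $D_X+O(r,C)$; but $D_X$ depends on the unknown point $\eta$ and may well exceed the fixed value of $d(o,fo)$. The argument can be repaired by a dichotomy over the infinitely many barriers $X_n$: if $\rho\cap N_C(X_n)=\emptyset$ then a standard contracting estimate gives $D_{X_n}\le O(C)$ and your argument closes (provided the fixed $F$ satisfies $d(o,fo)\gg C,r$); otherwise $\rho$ must enter $N_C(X_n)$ near $\pi_{X_n}(o)$, giving $d(\gamma(t_n),\rho(t_n))\le O(C)$ at $t_n\approx d(o,X_n)$, and convexity of $t\mapsto d(\gamma(t),\rho(t))$ together with $\xi\ne\eta$ shows this second case can occur only finitely often. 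This dichotomy is the missing content and should replace the ``passing to a power'' shortcut. (Minor: Lemma~\ref{DisjointFixpts} is not the right citation; that $\eta$ is an endpoint of at most one barrier follows instead from the bounded intersection property of $\mathscr{F}$.)
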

\begin{proof}
By \cite{FNS06}, under the assumption on $\U$, the fixed point set  $\mathrm{Fix}(p)$ of a parabolic element has diameter at most $\pi/2$ in the Tits metric on the visual boundary $\pU$. A conical point $\xi$ is visible from any other point $\xi\ne\eta\in \pU$; that is, there is a bi-infinite geodesic between $\xi$ and $\eta$. Thus, the angular metric between  $\xi$ and $\eta$ is $\pi$, so being the induced length metric, the Tits metric from $\xi$ to $\eta$ is  at least $\pi$. Hence, $\mathrm{Fix}(p)$ is singleton, completing the proof.     
\end{proof}

\subsection{Proof of Theorem \ref{ConfinedConsThm}}

Assume that $H$ preserves the measure class of $\mu_o$. By Lemma \ref{ConicalPointsLem}, $\mu_o$ is supported on the set of conical points $[\cG]$. By Lemma \ref{ConicalPointsLem}, $[\cG]$ is a subset of $\mathcal C_{20C}^{\mathrm{hor}}$ (defined in \ref{AssumpE}). 

The conservativity of the action of $H$ on $(\pU, \mu_o)$ as stated in Theorem \ref{ConfinedConsThm},  follows from a combination of the next two theorems \ref{ConInHorLimitSet} and \ref{ConInHorLimitSet2}, which applies assuming the first condition (i) and second condition (ii) respectively. Note that there are situations (torsion-free $H$ with a finite confining set) where both theorems apply, but we stress that the arguments are of different flavors. The first result applies under the assumption of a compact confining subset $P$ without torsion; the second result crucially uses the Lemma \ref{GoodConfiningSetP} based on the finiteness of $P$ with torsion allowed.

\begin{thm}\label{ConInHorLimitSet}
Assume that $H<\isom(\U)$ is a torsion-free discrete confined subgroup. If   $\U$ is   neither hyperbolic nor geodesically complete \textrm{CAT(0)}, assume, in addition, that $P$ is finite. Then the big horospheric limit set $\HG$  contains a $\mu_o$--full subset of $[\cG]$.    
\end{thm}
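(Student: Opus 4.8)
The plan is to show that every conical point $\xi \in [\cG]$ outside a $\mu_o$--null exceptional set lies in $\HG$. The exceptional set will be the countable union $\Lambda_0$ of the fixed-point sets of all nontrivial elements of $H$; since $H$ is torsion-free and discrete, each such fixed-point set is either empty, two points (if the element is contracting/loxodromic), or a single point (if it is parabolic, using Lemma \ref{UniqueFixedPt4Parabolic} in the geodesically complete CAT(0) case and standard facts in the hyperbolic case), and in any case $\Lambda_0$ is a countable, hence $\mu_o$--null, subset of $\pU$ because $\mu_o$ is atomless on conical points by Lemma \ref{ConicalPointsLem}(2). So it suffices to prove: if $\xi \in \cG \setminus \Lambda_0$ then $\xi \in \HG$.

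First I would fix $\xi \in \cG$ and use the definition of a conical point to produce a geodesic ray $\gamma = [o,\xi]$ terminating at $[\xi]$ and a sequence $g_n \in \Gamma$ with $g_n o$ lying within bounded distance $r$ of $\gamma$ (Lemma \ref{ConicalPointsLem}(1) gives the ray, and the $(r,F)$--conical condition gives the $g_n$ with $d(g_no,\gamma)\le r$, escaping along $\gamma$). Next, apply the confinement hypothesis: there exists $p_n \in P$ with $h_n := g_n p_n g_n^{-1} \in H$. Set $D := \max\{d(o,po) : p \in P\} < \infty$ (finite since $P$ is compact). Then $d(h_n o, g_n o) = d(g_n p_n o, g_n o) = d(o, p_n o) \le D$, so $h_n o$ is within $r+D$ of $\gamma$. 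Choosing $z_n \in \gamma$ close to $g_n o$, one gets $d(h_n o, z_n) \le d(o,z_n) + (2r+D)$, and Lemma \ref{SameHoroball} then places $h_n o$ in the horoball $\mathcal{HB}([\xi], 20C + 2r + D)$. Thus all $h_n o$ lie in a single horoball centered at $[\xi]$.

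The remaining point is to show $\{h_n o : n \ge 1\}$ is an \emph{infinite} set and that it accumulates only at $[\xi]$; then $\xi$ is by definition a big horospheric limit point. Since $\xi \in \cG$, Lemma \ref{HoroballUniqueLimit} guarantees that any escaping sequence inside $\mathcal{HB}([\xi], L)$ accumulates in $[\xi]$, so it is enough to verify that $\{h_n o\}$ is infinite; and since $Ho$ is discrete ($H$ is a discrete subgroup), it is enough to rule out $\{h_n o\}$ being finite. Arguing by contradiction, suppose after passing to a subsequence that $h := h_n = h_m$ for all $n,m$. Because $H$ is torsion-free, $p_n$ has infinite order, hence (by the classification of isometries on the relevant spaces) is contracting/loxodromic or parabolic, and so is $h = g_n p_n g_n^{-1}$. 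From $d(h g_n o, g_n o) = d(o, p_n o) \le D$ uniformly in $n$ and $g_n o \to [\xi]$, the isometry $h$ moves points converging to $[\xi]$ by bounded amounts, which forces $h$ to fix the $[\cdot]$--class $[\xi]$; since $\xi$ is non-pinched and a conical point, standard convergence-dynamics arguments (together with Lemma \ref{NonPinchedStabilizer} when $h$ is contracting, and Lemma \ref{UniqueFixedPt4Parabolic} when $h$ is parabolic in the CAT(0) case) show $\xi$ is an actual fixed point of $h$, so $\xi \in \Lambda_0$, contradicting our choice of $\xi$.

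\textbf{Main obstacle.} The delicate step is the contradiction argument: passing from "$h$ sends a sequence converging to $[\xi]$ to another sequence at bounded distance" to "$\xi$ itself is a fixed point of $h$" is clean in Gromov hyperbolic spaces but requires care on a general convergence boundary, where fixing the $[\cdot]$--class is weaker than fixing the point. This is exactly why the torsion-free hypothesis (forcing $p_n$, hence $h$, to be loxodromic or parabolic rather than elliptic) and the restriction to hyperbolic or geodesically complete CAT(0) spaces — or, alternatively, the finiteness of $P$ — enter: they supply enough rigidity (Lemmas \ref{NonPinchedStabilizer}, \ref{UniqueFixedPt4Parabolic}, \ref{HoroballUniqueLimit}) to pin down the fixed point and keep $\Lambda_0$ countable. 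I would organize the write-up so that the hyperbolic and CAT(0) cases are handled by a single uniform argument invoking these lemmas, and note separately that when $P$ is finite the conclusion of Theorem \ref{ConfinedConsThm}(ii) gives an independent route via Lemma \ref{GoodConfiningSetP}.
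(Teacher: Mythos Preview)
Your outline matches the paper's argument almost exactly in the hyperbolic and geodesically complete CAT(0) cases (the paper's Case 2): define the exceptional set $\Lambda_0$, use confinement to produce $h_n=g_np_ng_n^{-1}\in H$ with $h_no$ in a fixed horoball, and if $\{h_no\}$ were finite argue that the resulting $h$ fixes $[\xi]$, hence $\xi\in\Lambda_0$. One small correction: Lemma \ref{UniqueFixedPt4Parabolic} is conditional (it says a parabolic fixing a \emph{conical} point has that as its unique fixed point), so you should phrase $\Lambda_0$ as the union of contracting fixed points together with, in the CAT(0) case, the conical fixed points of parabolics; this still gives a countable, hence $\mu_o$--null, set.

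There is, however, a genuine gap in your treatment of the remaining case, namely general $\U$ with finite $P$. You propose to invoke Theorem \ref{ConfinedConsThm}(ii) via Lemma \ref{GoodConfiningSetP}, but those results assume $P\cap E(\Gamma)=\{1\}$, which is \emph{not} part of the hypotheses of Theorem \ref{ConInHorLimitSet}; here one only knows $H$ is torsion-free and $P$ is finite. Your main ``$h$ fixes $[\xi]$'' argument also does not transfer: on a general convergence boundary there is no clean trichotomy loxodromic/parabolic/elliptic, and passing from ``$h$ moves $g_no$ boundedly'' to ``$h$ fixes the boundary class'' is exactly the step that fails.

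The paper handles this case (its Case 1) by a different mechanism that you should incorporate. Since $P$ is finite, after a subsequence one may assume $p_n=p$ is constant, whence $h=g_npg_n^{-1}$ for all $n$ forces $g_ng_m^{-1}$ to commute with $h$. Now the $(r,F)$--conical structure is used more heavily than in your write-up: the $g_n$ are chosen as $(r,f)$--\emph{barriers} along $[o,\xi]$ with distinct axes $X_n=g_n\ax(f)$ (if the $X_n$ were not distinct, $\xi$ would already lie in $\Lambda_0$), and Lemma \ref{TightContractingTransition} then shows that $g_m^{-1}g_n$ is a contracting element of $\Gamma$. An infinite-order element commuting with a contracting element must lie in its maximal elementary subgroup and hence itself be contracting with the same pair of fixed points; running this over all $n$ traps $g_no$ near the axis of $h$ and forces $\xi\in[h^\pm]\subseteq\Lambda_0$, the desired contradiction. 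This commutation-plus-barrier argument is the missing ingredient in your proposal.
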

\begin{rem}
The proof shows that the conclusion of Theorem \ref{ConInHorLimitSet} remains valid for any atomless measure $\nu$ supported on $[\cG]$.  If $\U$ is hyperbolic, $\cG$ could be replaced with $\ccG$. In particular, harmonic measures on horofunction boundary which can arise as the hitting measure of a random walk on $\Gamma$ are among such examples. 
\end{rem}
\begin{proof}
Let $\Lambda_0$ be the countable union of fixed points $[h^\pm]$ of all contracting elements $h \in H$. If $\U$ is hyperbolic or CAT(0), we adjoin into $\Lambda_0$ the  fixed points of all parabolic elements in $H$, which are countably many by Lemma \ref{UniqueFixedPt4Parabolic}. Note that  $\mu_o(\Lambda_0)=0$ for $\mu_0$ has no atom. By Lemma \ref{ConicalPointsLem}, we will prove the $\mu_o$--full set of points $\xi\in [\cG]\setminus \Lambda_0$ is contained in $[\HG]$.   By definition of an $(r,F)$--conical point, there exists  a sequence of $(r, f_n)$--barriers $g_n \in \Gamma$ on a geodesic ray $\gamma=[o,\xi]$,
where $f_n\in F$ is taken over the finite set $F\subseteq \Gamma$. By definition, we have $d(g_no, [o,\xi]), d(g_n f_no, [o,\xi])\le r$. Passing to a subsequence, we may assume that $f:=f_n$ for all $n\ge 1$. Moreover, we can assume that $X_n$ are all distinct; otherwise $\xi$ lies in the limit set of the same $X_n$ for  $n\gg 0$, so is  fixed by a contracting element $g_nfg_n^{-1}$. This implies $\xi\in \Lambda_0$,    contradicting the assumption.   


Let $P\subseteq \isom(\U,d)$ be a  compact confining subset for $H$, so $D:=\max\{d(o,po): p\in P\}<\infty$. 

Choose $p_n\in P$ so that $h_n:=g_n p_n g_n^{-1}\in H$. Now, let $z\in [o,\xi]$ so that $d(g_no, z)\le r$. Thus, $d(h_no,z) \le r+D+d(o,g_no)\le 2r+D+d(o,z)$. According to  Lemma \ref{SameHoroball}, we see that  $h_n o \in Ho$ lies in the horoball $\mathcal {HB}(\xi,o, 2r+D+\epsilon)$.  

We shall prove that   $\{h_no: n\ge 1\}$ is an infinite (discrete) subset, which thus converges to $\xi$ by Lemma \ref{HoroballUniqueLimit}. Arguing by contradiction,  assume now that $\{h_no: n\ge 1\}$ is a finite set. By taking a subsequence,   the proper action of $H$ allows us to assume that $h:=h_n=h_m$ for any $n, m\ge 1$.

\textbf{Case 1}. {$P$ is finite.} We may then assume  that $p_n=p$ for all $n\ge 1$. We thus obtain $g_m^{-1}g_n h=h g_m^{-1}g_n$. By Lemma \ref{TightContractingTransition}, $g_m^{-1}g_n$ is a contracting element for any two $n\ne m\ge 1$. If $h$ is of infinite order, $h\in E(g_m^{-1}g_n)$ gives a contradiction, as  any infinite order element in $E(g_m^{-1}g_n)$ is a contracting element. 

\textbf{Case 2}.  {$P$ may be an infinite compact set, but $\U$ is assumed to be hyperbolic or geodesically complete CAT(0).}  Note  that  $p_n$ might be distinct in general.  As $H$ is torsion-free, $p_n$ must be of infinite order. According to classification of isometries, $p_n$ is either hyperbolic or parabolic.    

As $d(hg_no,g_no)=d(o,p_no)<D$, the convergence $g_no\to [\xi]$ implies that $hg_no\to [\xi]$ and then $h$ fixes $[\xi]$.  So $\xi\in \Lambda_0$ gives a contradiction. The proof is complete. 
\end{proof}

In the following statement, we emphasize that $H<\isom(\U)$ is not necessarily a discrete subgroup.  The proof relies on Lemma \ref{GoodConfiningSetP} applied with $G=\isom(\U)$ here, which does not use the proper action $H\act\U$ as well.
Compared with Theorem \ref{ConInHorLimitSet}, $H$ may contain torsion elements, but $P\cap E(\Gamma)$ is assumed to be trivial.  
\begin{thm}\label{ConInHorLimitSet2}
Assume that $H<\isom(\U)$ is a subgroup confined by $\Gamma$ with a finite confining subset $P$ that intersects trivially $E(\Gamma)$.  Then  $\HG$  contains $[\cG]$ as a subset. Moreover, $\Lambda(\Gamma o)\subseteq [\Lambda(H o)]$. 
\end{thm}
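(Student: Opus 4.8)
The strategy is to push a generic conical point into the big horospheric limit set of $H$ using the enhanced Extension Lemma \ref{GoodConfiningSetP}, which is the key technical device permitting a finite confining set (possibly with torsion) to be used geometrically. Let $\xi\in[\cG]$ and fix a geodesic ray $\gamma=[o,\xi]$. Since $\xi$ is an $(r,F')$--conical point for some finite set $F'$ of independent contracting elements, there is a sequence $g_n\in\Gamma$ with $g_no\to\xi$ along $\gamma$, more precisely with $d(g_no,\gamma)\le r$. Replacing $F'$ by the finite set $F\subseteq\Gamma$ of contracting elements furnished by Lemma \ref{GoodConfiningSetP}, I would, for each $n$, apply that lemma to $g_n^{-1}$: this produces $f_n\in F$ and $p_n\in P$ with $h_n:=g_nf_np_nf_n^{-1}g_n^{-1}\in H$ and
$$
\bigl|d(o,h_no)-2d(o,g_no)\bigr|\le D,
$$
where $D$ depends only on $F$ and $P$. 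Moreover, by the construction underlying Lemma \ref{GoodConfiningSetP} (it is obtained from Lemma \ref{EllipticRadical} applied to the pair $(g_n,g_n^{-1})$), the word $(g_n,f_n,p_n,f_n^{-1},g_n^{-1})$ labels an $(L,\tau)$--admissible path, so by Proposition \ref{admisProp} it $r_0$--fellow travels a geodesic $[o,h_no]$ for a uniform $r_0$.

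The next step is to place $h_no$ in a fixed horoball at $[\xi]$. Set $D':=\max_{f\in F}d(o,fo)+\max_{p\in P}d(o,po)$. From the admissible/fellow-travel structure, $g_no$ lies within a uniform distance (say $r_0+D'$) of $[o,h_no]$; combined with $d(g_no,\gamma)\le r$, there is a point $z\in\gamma$ with $d(h_no,z)\le d(o,z)+D''$ for a constant $D''$ depending only on $r,r_0,D'$ — here one uses the triangle inequality together with $d(o,h_no)\le 2d(o,g_no)+D$ and $d(o,g_no)\approx d(o,z)$. By Lemma \ref{SameHoroball}, this gives $h_no\in\mathcal{HB}(\xi,D''+20C)$ for all $n$, since $\xi\in[\cG]\subseteq\mathcal C^{\mathrm{hor}}_{20C}$ by Lemma \ref{ConicalPointsLem}(3). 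Thus infinitely many orbit points $h_no$ sit in a single horoball centered at $[\xi]$, which by Definition \ref{HoroLimitPtsDef} is exactly the condition that $\xi$ be a big horospheric limit point — provided the set $\{h_no:n\ge1\}$ is infinite. That last point is where I must be a little careful: unlike the normal-subgroup situation, $H$ need not act properly and need not be discrete, so I cannot invoke discreteness of $Ho$. Instead I would argue that if $\{h_no\}$ were bounded, then after passing to a subsequence $h:=h_n$ is fixed, and since $d(hg_no,g_no)=d(o,f_np_nf_n^{-1}o)$ is bounded while $g_no\to[\xi]$, the element $h$ would fix $[\xi]$; but then infinitely many of the contracting barrier axes $g_nf_n\ax(f_n)$ would be $h$--translates of one another with unbounded overlap, contradicting bounded intersection of the contracting system $\{g\ax(f):g\in\Gamma,f\in F\}$ (alternatively, one excludes at the outset the countable set $\Lambda_0$ of fixed $[\cdot]$--classes of elements whose presence would force such coincidences, exactly as in the proof of Theorem \ref{ConInHorLimitSet}). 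This shows $\xi\in\HG$, hence $[\cG]\subseteq\HG$; since $\mu_o$ charges $[\cG]$ fully (Lemma \ref{ConicalPointsLem}(2)), $\mu_o(\HG)=1$ and the action $H\act(\pU,\mu_o)$ is conservative.

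For the "moreover" statement $\Lambda(\Gamma o)\subseteq[\Lambda(Ho)]$: this is precisely the content of Lemma \ref{ConfinedNonElementary}, so strictly speaking it has already been established. For completeness one repeats the argument directly: given $g_no\to\xi\in\Lambda(\Gamma o)$, apply Lemma \ref{GoodConfiningSetP} to get $h_n=g_nf_np_nf_n^{-1}g_n^{-1}\in H$ labeling an $(L,\tau)$--admissible path, set $X_n:=g_n\ax(f_n)$, an escaping sequence of $C$--contracting quasi-geodesics, and observe $g_no,h_no\in\Omega_o(N_{r_0}(X_n))$; Assumption (B) of Definition \ref{ConvBdryDefn} then forces $h_no$ to accumulate on $[\xi]$, so $[\xi]\subseteq[\Lambda(Ho)]$.

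\textbf{Main obstacle.} The routine geometry (admissibility, fellow-traveling, the horoball estimate) goes through mechanically once Lemma \ref{GoodConfiningSetP} is in hand; the genuinely delicate point, as flagged above, is showing $\{h_no:n\ge1\}$ is infinite without a proper or discrete action of $H$. The resolution is to run the argument on the complement of the $[\cdot]$--class of any point fixed by an element arising as a product of contracting elements conjugated by the $g_n$'s — a set which is a countable union of $[\cdot]$--classes and hence $\mu_o$--null by Lemma \ref{ConicalPointsLem}(2) — together with the bounded-intersection property of the contracting system, which prevents two distinct barrier axes from being translates under a single bounded-displacement isometry.
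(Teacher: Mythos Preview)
Your overall strategy matches the paper's: apply Lemma \ref{GoodConfiningSetP} to the barrier elements $g_n$ to produce $h_n=g_nf_np_nf_n^{-1}g_n^{-1}\in H$, place $h_no$ in a fixed horoball via Lemma \ref{SameHoroball}, then invoke Lemma \ref{HoroballUniqueLimit}. The ``moreover'' part is correctly identified as Lemma \ref{ConfinedNonElementary}.

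The problem lies entirely in your ``main obstacle'' and its proposed resolution. First, the obstacle is illusory: you already recorded the estimate $|d(o,h_no)-2d(o,g_no)|\le D$ from Lemma \ref{GoodConfiningSetP}, and since $d(o,g_no)\to\infty$ this gives $d(o,h_no)\to\infty$ immediately. So $\{h_no\}$ is an escaping sequence regardless of whether $H$ is discrete, and Lemma \ref{HoroballUniqueLimit} applies directly. Second, your proposed fix is genuinely gappy: the step ``after passing to a subsequence $h:=h_n$ is fixed'' is precisely what fails when $H$ is not discrete, as you yourself note; and your alternative of excluding a countable set $\Lambda_0$ would only yield $[\cG]\setminus\Lambda_0\subseteq\HG$, strictly weaker than the theorem's set-containment $[\cG]\subseteq\HG$. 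Borrowing the $\Lambda_0$ device from Theorem \ref{ConInHorLimitSet} does not transfer here, because that proof relied on $H$ being discrete and torsion-free to pin down a single $h$.

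For comparison, the paper handles the infiniteness of $\{h_no\}$ via Lemma \ref{InjectiveExtMap}: if $h_no=h_mo$ then the two admissible words $(g_n,f_n,p_n,f_n^{-1},g_n^{-1})$ and $(g_m,f_m,p_m,f_m^{-1},g_m^{-1})$ share endpoints; the distance estimate forces $|d(o,g_no)-d(o,g_mo)|\le D$, and then $d(g_no,g_mo)>R$ gives $g_n=g_m$. This is a purely geometric fellow-travel argument requiring no hypothesis on $H$ whatsoever, and it gives the full set-theoretic containment.
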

 
\begin{proof}
By definition of an $(r,F)$--conical point, there exists  a sequence of $(r, F)$--barriers $g_n \in \Gamma$ on a geodesic ray $\gamma=[o,\xi]$. By definition, we have $d(g_no, [o,\xi])\le r$.

By Lemma \ref{GoodConfiningSetP}, we can choose $f_n\in F$ and $p_n\in P$ so that $h_n:=g_n f_n p_n f_n^{-1} g_n^{-1}\in H$ labels an $(L,\tau)$--admissible path.  If $d(g_no, g_mo)\gg 0$ for any $n\ne m$, then $h_no\ne h_mo$ by Lemma \ref{InjectiveExtMap}. Thus $\{h_no: n\ge 1\}$ is an infinite  subset. Setting $$D=\max_{f\in F}\{d(o,fo)\}+\max_{p\in P}\{d(o,po)\}$$ we argue exactly as in the proof of Theorem \ref{ConInHorLimitSet} and obtain  that  $h_n o \in Ho$ lies in the horoball $\mathcal {HB}([\xi], 2r+2D+\epsilon)$ (the main issue there was proving the infiniteness of $\{h_no:n\ge 1\}$). Hence,  $\{h_no: n\ge 1\}$  converges to $[\xi]$ by Lemma \ref{HoroballUniqueLimit}, so $\xi$ is a big horospheric limit point. 
\end{proof}
 
\begin{cor}\label{FullLimitSet}
In the setting  of Theorems \ref{ConInHorLimitSet} or  \ref{ConInHorLimitSet2}, if $H$ is a subgroup of $\Gamma$, then  $[\pG] = [\Lambda (Ho)]$.    
\end{cor}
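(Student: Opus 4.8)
The plan is to derive Corollary~\ref{FullLimitSet} as an essentially immediate consequence of Theorems~\ref{ConInHorLimitSet} and~\ref{ConInHorLimitSet2}, using the fact that horospheric limit points lie in the limit set and that the ambient limit set is the closure (up to $[\cdot]$) of any orbit of conical points. First I would recall that $\HG\subseteq[\Lambda(Ho)]$ by definition (Definition~\ref{HoroLimitPtsDef} requires $\xi\in[\Lambda(Ho)]$), so whichever of the two theorems applies, we obtain $[\cG]\subseteq[\Lambda(Ho)]$ in the case (ii), and $[\cG]$ up to a $\mu_o$--null set in case (i). Then, since $H<\Gamma$ by hypothesis, we always have the reverse inclusion $[\Lambda(Ho)]\subseteq[\pG]$, so it remains to show $[\pG]\subseteq[\Lambda(Ho)]$.

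The key step is to pass from ``$[\cG]$ is contained in $[\Lambda(Ho)]$'' (possibly modulo a $\mu_o$--null set) to ``$[\pG]$ is contained in $[\Lambda(Ho)]$''. For case (ii) this is direct: Theorem~\ref{ConInHorLimitSet2} already records $\Lambda(\Gamma o)\subseteq[\Lambda(Ho)]$, hence $[\pG]=[\Lambda(\Gamma o)]\subseteq[\Lambda(Ho)]$, and combined with $[\Lambda(Ho)]\subseteq[\pG]$ we get equality. For case (i) I would argue as follows. Fix a non-pinched contracting element $f\in\Gamma$ (which exists since $\Gamma$ is non-elementary with contracting elements). By Lemma~\ref{ConicalPointsLem}(2), $\mu_o$ charges full measure on $[\cG]$ and every $[\cdot]$--class is $\mu_o$--null; since the subset of $[\cG]$ contained in $[\Lambda(Ho)]$ produced by Theorem~\ref{ConInHorLimitSet} is $\mu_o$--full, and $\mu_o$ is $\Gamma$--quasi-invariant, its intersection with every $\Gamma$--orbit is non-empty and in fact the $[\cdot]$--saturated $\mu_o$--full set meets the orbit $[\overline{\Gamma[f^\pm]}]$ densely. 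By Lemma~\ref{FixptsDense}, $[\pG]=[\overline{\Gamma\xi}]$ for any $\xi\in[f^\pm]$, so any boundary point of $\Gamma o$ is approximated (in the quotient $[\pG]$) by translates $g_n\xi$ lying in the $\mu_o$--full subset of $[\Lambda(Ho)]$. Finally, $[\Lambda(Ho)]$ is closed in $[\pG]$: it is the $[\cdot]$--locus of the accumulation set of the discrete orbit $Ho$, and accumulation sets are closed, while the quotient map $\cG\to[\cG]$ (or the relevant restriction of $[\cdot]$) is closed by Definition~\ref{ConvBdryDefn}(C) restricted to $\mathcal C$. Hence $[\pG]\subseteq[\Lambda(Ho)]$, giving equality.

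I expect the main obstacle to be the closedness argument in case (i): one must be careful that $[\Lambda(Ho)]$ is genuinely a closed subset of $[\pG]$ so that a dense subset forces the whole set. The cleanest route is probably to avoid topological closure entirely and instead use the $\mu_o$--full subset more cleverly --- namely, combine Theorem~\ref{ConInHorLimitSet} with Corollary~\ref{Nonstrict} or with the group-theoretic fact that $H<\Gamma$ implies $\Lambda(Ho)$ is $H$--invariant, and then invoke minimality of the $\Gamma$--action on $[\pG]$ together with the observation that $[\Lambda(Ho)]$ is $H$--invariant and non-empty, but this only gives a conclusion for the $\Gamma$--action, not the $H$--action. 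So the honest argument should run through density of $\Gamma$--translates of a fixed conical point (Lemma~\ref{FixptsDense}) inside the $\mu_o$--full set, plus the closedness of the limit-set locus; I would state the latter as a short lemma if it is not already isolated in the preliminaries. Since for the intended applications (Theorems~\ref{mainthmhyperbolicandCAT(0)}, \ref{mainthmRANK1}, \ref{mainthmMCG}) one actually wants the full-limit-set conclusion precisely when $H<\Gamma$, and there the hypotheses of (ii) hold automatically for torsion-free $H$, the case-(i) subtlety may be sidestepped in practice, but for a clean statement of the corollary I would still include the density-plus-closedness argument.
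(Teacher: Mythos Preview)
Your case (ii) argument is correct and matches the paper: Theorem~\ref{ConInHorLimitSet2} (via Lemma~\ref{ConfinedNonElementary}) already gives $\Lambda(\Gamma o)\subseteq[\Lambda(Ho)]$, and this is exactly how the paper handles the finite-$P$ case.

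Your primary argument for case (i), however, has a genuine gap. You write that ``any boundary point of $\Gamma o$ is approximated by translates $g_n\xi$ lying in the $\mu_o$--full subset of $[\Lambda(Ho)]$''. But the orbit $\Gamma\xi$ of a fixed point of a contracting element is countable, hence $\mu_o$--null (Lemma~\ref{ConicalPointsLem}(2)), so there is no reason any of its points lie in a prescribed $\mu_o$--full set. Density of $\Gamma\xi$ in $[\pG]$ (Lemma~\ref{FixptsDense}) alone does not place these approximating points inside $[\Lambda(Ho)]$.

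You do mention minimality of the $\Gamma$--action, and dismiss it because $[\Lambda(Ho)]$ is only $H$--invariant. This objection is well taken as stated, but the paper's trick circumvents it: starting from the $\mu_o$--full set $\Lambda\subseteq\HG\subseteq[\Lambda(Ho)]$, form the countable intersection $\Lambda':=\bigcap_{g\in\Gamma}g\Lambda$. By $\Gamma$--quasi-invariance of $\mu_o$, each $g\Lambda$ is $\mu_o$--full, so $\Lambda'$ is $\mu_o$--full, in particular nonempty; and $\Lambda'\subseteq\Lambda\subseteq[\Lambda(Ho)]$ still holds. Now $\Lambda'$ is $\Gamma$--invariant, so its closure is a nonempty $\Gamma$--invariant closed subset of $\Lambda(\Gamma o)$. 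In the hyperbolic and CAT(0) settings of case (i) the partition $[\cdot]$ is trivial and $\Lambda(\Gamma o)$ is well known to be $\Gamma$--minimal, forcing $\overline{\Lambda'}=\Lambda(\Gamma o)$. Since $\Lambda(Ho)$ is closed, this gives $\Lambda(\Gamma o)\subseteq\Lambda(Ho)$. The missing idea is thus to make the full-measure set $\Gamma$--invariant \emph{before} invoking minimality, rather than applying minimality to $[\Lambda(Ho)]$ itself.
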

\begin{proof}
Under the assumptions of Theorem \ref{ConInHorLimitSet}, we proved that a $\mu_o$--full subset $\Lambda$ of $[\cG]$ is contained in the horospheric limit set $\HG$. Taking a countable intersection $\Gamma:=\cap_{g\in\Gamma} g\Lambda$ allows to assume that $\Lambda$ is $\Gamma$--invariant. If $\U$ is hyperbolic or CAT(0),  it is well-known that the limit set $\Lambda(\Gamma o)$ is a $\Gamma$--invariant minimal subset. Hence, the topological closure of $\Lambda$ recovers $\Lambda(\Gamma o)$, thus verifying $\Lambda(\Gamma o)\subseteq \Lambda(Ho)$, so the proof is completed in this case.  

Otherwise, $P$ is a finite set by assumption. As $H<\Gamma$ is assumed, the conclusion follows immediately from  Lemma \ref{ConfinedNonElementary}.   
\end{proof}

To conclude this subsection, we record a stronger statement, provided that $H$ is a normal subgroup. The proof strategy is due to \cite{FM20}. 

\begin{thm}
Suppose that $H$ is an infinite normal subgroup of $\Gamma$. Then $\hG$ contains $\mG$ as a subset.   
\end{thm}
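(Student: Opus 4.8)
The plan is to show that every Myrberg point $\xi\in\mG$ is a small horospheric limit point of $H$, i.e. that for \emph{every} $L\in\mathbb R$ there exist $h_n\in H$ with $h_no\to\xi$ and $B_{[\xi]}(h_no,o)\ge L$. Since $H\unlhd\Gamma$, we have $[\Lambda(Ho)]=[\pG]\supseteq[\mG]$ by Corollary \ref{FullLimitSet} (or directly by minimality of the limit set and normality), so the containment $\xi\in[\Lambda(Ho)]\cap\mathcal C^{\mathrm{hor}}$ needed in Definition \ref{HoroLimitPtsDef} is for free (recall $[\mG]\subseteq[\cG]\subseteq\mathcal C^{\mathrm{hor}}_{20C}$ by Lemma \ref{ConicalPointsLem}). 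The key point is the strengthening from ``big'' to ``small'', which is exactly where normality of $H$ does the work: a Myrberg point is by construction shadowed by infinitely many barriers $g_nfg_n^{-1}$-type configurations coming from \emph{every} finite independent set $F$, and we must push orbit points of $H$ arbitrarily deep into every horoball at $[\xi]$.

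First I would fix a geodesic ray $\gamma=[o,\xi]$ and, using the definition of $\mG=\bigcap\cG$, record that for any $r\gg 0$ and any choice of three independent contracting elements $F\subseteq\Gamma$, the ray $\gamma$ carries infinitely many distinct $(r,F)$--barriers $g_n\in\Gamma$ with $f_n\in F$, linearly ordered along $\gamma$ and escaping to $\xi$. Because $H$ is normal in $\Gamma$, for each contracting $f\in F$ and each $g\in\Gamma$ we have $g f^k g^{-1}\in g\langle f\rangle g^{-1}$, and replacing $f$ by a power $f^k\in H\cap E_\Gamma(f)$ when available is too much to hope for in general; instead the right move is: since $H$ is infinite and normal, pick any $1\ne h_0\in H$ and form the conjugates $g_n h_0^{m} g_n^{-1}\in H$. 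The deeper and cleaner approach, following \cite{FM20}, is to use the barrier elements themselves: arrange (via Lemma \ref{DoubleDense} and the Extension Lemma \ref{extend3}, or Lemma \ref{EllipticRadical}) that one can produce contracting elements $c_n$ supported near the $n$-th barrier along $\gamma$ with $c_n\in H$ — this is possible precisely because a sufficiently high power of a contracting element lies in the infinite normal subgroup $H$ (using that $H$ normal and non-trivial forces $H$ to contain a power of every contracting element, cf.\ the argument that an infinite normal subgroup of a group with a contracting element contains a contracting element and in fact $H\cap E_\Gamma(f)$ is infinite for every contracting $f$). Then $h_n:=c_n$ or suitable products $c_1c_2\cdots c_n$ have orbit points marching down $\gamma$.

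Concretely, the main step is: using Lemma \ref{SameHoroball} together with the fellow-travelling of admissible paths (Proposition \ref{admisProp}), show $d(h_no,z_n)\le d(o,z_n)+D$ for a point $z_n\in\gamma$ with $d(o,z_n)\to\infty$, where $D$ depends only on $F$ and the fixed data, hence $h_no\in\mathcal{HB}([\xi],D+\epsilon)$; but by choosing the barriers further and further out along $\gamma$ and concatenating (Remark \ref{ConcatenationAdmPath}), one gets $h_no$ with $B_{[\xi]}(h_no,o)$ as negative as desired, i.e.\ in $\mathcal{HB}([\xi],L)$ for every $L$. The infiniteness and escape $h_no\to[\xi]$ then follows from Lemma \ref{HoroballUniqueLimit} (any escaping sequence in a horoball at a conical point accumulates at the center), exactly as in the proof of Theorem \ref{ConInHorLimitSet2}. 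This gives $\xi\in\hG$, so $\mG\subseteq\hG$.

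The main obstacle I anticipate is verifying that an infinite normal subgroup $H$ contains, for every contracting $f\in\Gamma$, a contracting element with axis close to $\ax(f)$ — equivalently that $H\cap E_\Gamma(f)$ is infinite, or at least that $H$ contains a contracting element we can place at each barrier. In the discrete hyperbolic-like setting this is standard (an infinite normal subgroup meets every maximal elementary subgroup $E_\Gamma(f)$ in an infinite group, since otherwise $H$ would be finite by a ping-pong/malnormality argument on the collection $\f$), and one should be able to run it here using Lemma \ref{elementarygroup}, Lemma \ref{IndepElemExists} and bounded intersection of $\f$; but care is needed because $\Gamma\act\U$ is only assumed to have purely exponential growth, not to be cocompact, and because the barrier data $F$ must be chosen \emph{uniformly} (independent of $\xi$) while the depth bound $L$ must be made $\xi$-uniform as well. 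Reconciling the uniformity of $F$ with the requirement that the barriers along $\gamma$ all produce $H$-elements — which may a priori force different powers at different barriers — is the delicate bookkeeping, and I would handle it by fixing in advance a single finite $F$ all of whose elements already lie in $H$ (replacing each $f$ by a fixed power $f^N\in H$, using $H\cap E_\Gamma(f)\ne\{1\}$), so that every $(r,F)$--barrier along $\gamma$ is automatically an $H$-configuration.
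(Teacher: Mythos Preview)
Your overall strategy is correct and close to the paper's, but there is a genuine gap in the step where you pass from ``big'' to ``small'' horospheric. With a \emph{fixed} finite $F\subseteq H$, the computation you sketch gives only $h_no\in\mathcal{HB}([\xi],D+\epsilon)$ with $D$ depending on $\max_{f\in F}d(o,fo)$: if $g_n$ is an $(r,f)$--barrier and $h_n=g_nfg_n^{-1}$, then $B_{[\xi]}(h_no,o)$ is bounded below by roughly $-d(o,fo)$, \emph{independently of how far out along $\gamma$ the barrier $g_n$ sits}. So ``choosing the barriers further and further out'' does not by itself push $h_no$ into deeper horoballs, and the appeal to concatenation via Remark~\ref{ConcatenationAdmPath} is not a substitute --- concatenating such conjugates does not produce an admissible path that travels radially along $\gamma$. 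Your final proposed fix (take $F\subseteq H$ once and for all) therefore yields only $\xi\in\HG$, not $\xi\in\hG$.

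The missing observation, which is exactly what the paper uses, is to exploit the \emph{full} strength of the Myrberg definition $\mG=\bigcap_F\cG$: pick once a contracting element $f\in H$ (an infinite normal subgroup of $\Gamma$ contains one), and for each $n\ge1$ apply the Myrberg property to a triple $F$ containing $f^n$. This gives an $(r,f^n)$--barrier $g_n$ on $\gamma$, and by normality $h_n:=g_nf^ng_n^{-1}\in H$. Now the depth estimate reads $B_{[\xi]}(h_no,o)\lesssim -d(o,f^no)+O(r)\to-\infty$, so $h_no$ enters every horoball; Lemma~\ref{HoroballUniqueLimit} finishes as you say. Note this also dissolves your anticipated obstacle: you do not need $H\cap E_\Gamma(f)$ to be infinite for \emph{every} contracting $f$, only that $H$ contains \emph{some} contracting element, after which all its powers are automatically in $H$.
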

\begin{proof}
Given $\xi\in \mG$, let $\gamma$ be a geodesic ray starting at $o$ and ending at $[\xi]$. Let us  take a contracting element $f\in H$, as  an infinite normal subgroup contains infinitely many ones. By definition of $\mG$ in (\ref{MyrbergDefn}),  $\gamma$ contains infinitely many  $(r, f^n)$--barriers $g_n$ for any $n\ge 1$.  By normality of $H$, we have $g_nf^ng_n^{-1}\in H$ forms a sequence of elements, which enters any given horoball based on $[\xi]$. With Lemma \ref{HoroballUniqueLimit}, this implies that $\mG\subseteq [\hG]$.
\end{proof}

\part{Growth   inequalities for confined subgroups}
\section{Shadow Principle for confined subgroups}\label{secshadow}
We shall establish in this section a Shadow Principle for confined subgroups, which is crucial for the next two sections \textsection \ref{section:cogrowthTight} and \ref{secinequality}. The basic setup is as follows. 
\begin{itemize}
    \item 
    The auxiliary proper action $\Gamma\act \U$ is assumed to be  of divergence type with contracting elements.
    \item 
    Let $\pU$ be a convergence boundary for $\U$ and $\{\mu_x:x\in \U\}$ be a  quasi-conformal, $\Gamma$--equivariant density of dimension $\e \Gamma$ on $\pU$.
    \item 
    Let $H$ be a discrete subgroup of a group $G<\isom(\U)$,  confined by $\Gamma$, with a finite confining subset $P$ that intersects trivially $E(\Gamma)\setminus \{1\}$.  
\end{itemize}
Recall that $E(\Gamma)$ is the $[\cdot]$--class stabilizer of $[\pG]$ in $\isom(\U)$ (Def. \ref{EllipticRadicalDefn}). 


In this section, let $F\subseteq \Gamma$ be a set of three independent contracting elements, and the constants $L, \tau, r$ given by Lemma \ref{EllipticRadical}.

\subsection{Shadow Principle for conformal density supported on boundary}

Below, the constant $\lambda$ is given   in Definition \ref{ConformalDensityDefn}, and $ \epsilon$ is the convergence error of the horofunction in \ref{AssumpE}. Note that  $\epsilon=0$ for the horofunction boundary. By Corollary \ref{FullLimitSet}, we have $[\Lambda (Ho)=[\Lambda (\Gamma o)]$. 

\begin{lem}[Shadow Principle]\label{ShadowPrinciple}
Let $\{\mu_x\}_{x\in \U}$ be a $\e H$--dimensional $H$--quasi-equivariant quasi-conformal density supported on $\pU$.   Then there exists $r_0 > 0$ such that  
$$
\begin{array}{rl}
\|\mu_y\| e^{-\e H \cdot d(x, y)} \quad \prec_\lambda   \quad \mu_x(\Pi_{x}^F(y,r))\quad  \prec_{\lambda,\epsilon, r} \quad \|\mu_y\| e^{-\e H \cdot  d(x, y)}\\
\end{array}
$$
for any $x,y\in \Gamma o$ and $r \ge  r_0$.
\end{lem}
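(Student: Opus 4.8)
The strategy is to prove the two inequalities separately, exploiting the two versions of the Shadow Lemma already available: the usual Shadow Lemma \ref{ShadowLem} for the $H$--equivariant density $\{\mu_x\}$ (which only controls shadows based at points of $Ho$), and the key Extension Lemma \ref{EllipticRadical} together with its consequence Lemma \ref{GoodConfiningSetP}, which let us ``straighten'' an arbitrary orbit point $go\in\Gamma o$ into an admissible path through $H$. Throughout I fix $x=o$ without loss of generality, since the general case follows by applying an element of $\Gamma$ and absorbing the bounded multiplicative distortion into $\lambda$; so it suffices to prove
$$
\|\mu_{go}\| e^{-\e H\, d(o, go)}\ \prec_\lambda\ \mu_o(\Pi_o^F(go,r))\ \prec_{\lambda,\epsilon,r}\ \|\mu_{go}\| e^{-\e H\, d(o,go)}
$$
for all $go\in\Gamma o$ and $r$ large.

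\textbf{Upper bound.} For the right-hand inequality I would argue as in the proof of the ordinary Shadow Lemma. Any point $\xi\in\Pi_o^F(go,r)$ is the endpoint of a geodesic ray from $o$ that $r$--fellow-travels a barrier at $go$; following such a ray past $go$, one gets for $\mu_{go}$--a.e.\ such $\xi$ a lower bound $B_\xi(o,go)\ge d(o,go)-2r-2\epsilon$ (using the weak continuity of Busemann cocycles from Assumption D and the fellow-travelling from Proposition \ref{admisProp}). Combining the conformality relation \eqref{confDeriv} with this estimate,
$$
\mu_o(\Pi_o^F(go,r))=\int_{\Pi_o^F(go,r)}\frac{d\mu_o}{d\mu_{go}}(\xi)\,d\mu_{go}(\xi)\ \le\ \lambda\, e^{-\e H(d(o,go)-2r-2\epsilon)}\,\|\mu_{go}\|,
$$
which gives the upper bound with implied constant depending only on $\lambda,\epsilon,r$. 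This direction does not need confinedness at all.

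\textbf{Lower bound — the main obstacle.} This is where the confinedness of $H$ enters decisively and is the crux of the argument. The ordinary Shadow Lemma \ref{ShadowLem} gives $\mu_o(\Pi_o^F(ho,r))\succ_\lambda e^{-\e H d(o,ho)}$ only for $ho\in Ho$, where $\|\mu_{ho}\|\asymp 1$; for a general $go\in\Gamma o$ we have no a priori control on $\Pi_o^F(go,r)$ from below. The idea is to use Lemma \ref{GoodConfiningSetP}: there is a finite set $F$ of contracting elements and a constant $D$ so that for our $go$ we can pick $f\in F$, $p\in P$ with $h:=gfpf^{-1}g^{-1}\in H$ and the word $(g,f,p,f^{-1},g^{-1})$ labelling an $(L,\tau)$--admissible path, whence $d(o,ho)\in[2d(o,go)-D,\,2d(o,go)+D]$; moreover the first half of this admissible path, labelled $(g,f)$, is itself admissible and its terminal point $gfo$ lies within bounded distance of the geodesic $[o,ho]$, with $go$ being an $(r,f)$--barrier on $[o,ho]$ by the fellow-travel property. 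The point is then that the shadow $\Pi_o^F(ho,r)$ ``factors through'' $go$: every geodesic ray realizing a barrier at $ho$ must first pass a barrier near $go$, so there is an inclusion-type relation $\Pi_o^F(ho, r)\ \subseteq\ \Pi_o^F(go, r')$ for a controlled $r'=r'(r,F,\tau)$ (this uses the contracting/fellow-travelling geometry of admissible paths: the ray stays near $g\ax(f)$ on its first stretch, which is what ``$(r',f)$--barrier at $go$'' means). Consequently, using the ordinary Shadow Lemma for $ho\in Ho$ and the distance estimate $d(o,ho)\le 2d(o,go)+D$,
$$
\mu_o(\Pi_o^F(go,r'))\ \ge\ \mu_o(\Pi_o^F(ho,r))\ \succ_\lambda\ e^{-\e H d(o,ho)}\ \succ_{\lambda,D}\ e^{-2\e H d(o,go)}.
$$
To upgrade the exponent $2\e H d(o,go)$ to $\e H d(o,go) + \log\|\mu_{go}\|$ — i.e.\ to produce the factor $\|\mu_{go}\|$ — I would invoke the conformality/quasi-equivariance of $\{\mu_x\}$: expand $\mu_o(\Pi_o^F(ho,r))$ in terms of $\mu_{go}$ via \eqref{confDeriv}, noting that for $\xi$ in this shadow $B_\xi(o,go)\ge d(o,go)-O(1)$ because $go$ is a barrier, so $\frac{d\mu_o}{d\mu_{go}}(\xi)\asymp e^{-\e H d(o,go)}$; this converts the bound into $\mu_{go}(\Pi_o^F(ho,r))\succ e^{-\e H d(o,go)}$, and since $\Pi_o^F(ho,r)\subseteq\Pi_o^F(go,r')$ one gets $\mu_{go}(\Pi_o^F(go,r'))\succ e^{-\e H d(o,go)}$; translating back to $\mu_o$ yields $\mu_o(\Pi_o^F(go,r'))\succ_\lambda \|\mu_{go}\| e^{-\e H d(o,go)}$, after one more application of \eqref{confDeriv} in the opposite direction together with the observation that the relevant Radon–Nikodym derivative is comparable to $e^{-\e H d(o,go)}$ on the shadow. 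Finally, replacing $r'$ by the original $r$ and absorbing the discrepancy: shadows with larger radius dominate, so it suffices to take $r_0$ at least as large as the $r'$ produced above, and monotonicity $\Pi_o^F(go,r)\supseteq\Pi_o^F(go,r_0)$ for $r\ge r_0$ (after a harmless enlargement of parameters) closes the gap. The delicate points I expect to spend real effort on are: (i) making the inclusion $\Pi_o^F(ho,r)\subseteq\Pi_o^F(go,r')$ precise — i.e.\ verifying from the admissible-path structure that a ray exhibiting a barrier at $ho$ genuinely exhibits a barrier at $go$ — and (ii) the bookkeeping in the two changes of measure so that the $\|\mu_{go}\|$ factor comes out cleanly rather than an uncontrolled ratio of Poincaré series; here the uniqueness-up-to-constants statement (Lemma \ref{Unique}) and the quasi-equivariance \eqref{almostInv} should keep everything within the constant $\lambda$.
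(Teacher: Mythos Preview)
Your upper bound is fine and matches the paper's. The lower bound, however, has a genuine gap that your bookkeeping does not close.

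Tracing your measure changes: from the ordinary Shadow Lemma at $ho\in Ho$ you get $\mu_o(\Pi_o^F(ho,r))\succ e^{-\e H d(o,ho)}\asymp e^{-2\e H d(o,go)}$. Converting to $\mu_{go}$ on the shadow (where indeed $\frac{d\mu_o}{d\mu_{go}}\asymp e^{-\e H d(o,go)}$) gives $\mu_{go}(\Pi_o^F(ho,r))\succ e^{-\e H d(o,go)}$, hence $\mu_{go}(\Pi_o^F(go,r'))\succ e^{-\e H d(o,go)}$. Converting back to $\mu_o$ on the larger shadow uses the \emph{same} derivative $\asymp e^{-\e H d(o,go)}$, so you only recover $\mu_o(\Pi_o^F(go,r'))\succ e^{-2\e H d(o,go)}$. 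The factor $\|\mu_{go}\|$ never appears; your two measure changes exactly cancel. Since a priori $\|\mu_{go}\|$ can be anywhere in $[e^{-\e H d(o,go)},\,e^{\e H d(o,go)}]$, the bound you obtain is strictly weaker than the one claimed. Neither Lemma~\ref{Unique} (which compares two densities, not the mass at different basepoints of one density) nor quasi-equivariance \eqref{almostInv} rescues this: the single element $h$ and its shadow simply carry too little mass.

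The paper's lower bound uses a different idea. Rather than producing one element of $H$, it transports the \emph{entire} support $U=[\Lambda(Ho)]=[\Lambda(\Gamma o)]$ into the shadow. For each $\xi\in U$, Lemma~\ref{EllipticRadical} gives $f\in F$ and $p\in P$ (with $g_2fpf^{-1}g_2^{-1}\in H$) such that $g_2fpf^{-1}\xi\in\Pi_{g_1o}^F(g_2o,r)$. Splitting $U=U_1\cup U_2\cup U_3$ according to which $f_i$ works, one has $g_2f_ip_if_i^{-1}U_i\subseteq\Pi_{g_1o}^F(g_2o,r)$. The map $\xi\mapsto g_2f_ip_if_i^{-1}\xi$ is the composition of an element of $H$ with a translation of bounded length $L$, so by $H$--quasi-equivariance and conformality,
\[
\mu_{g_2o}\bigl(g_2f_ip_if_i^{-1}U_i\bigr)\ \succ\ \mu_{g_2o}(g_2U_i),
\]
and summing over $i$ gives $\mu_{g_2o}(\Pi_{g_1o}^F(g_2o,r))\succ \mu_{g_2o}(g_2U)=\mu_{g_2o}(U)=\|\mu_{g_2o}\|$ (using $\Gamma$--invariance of $U$ and that $\mu$ is supported on $U$). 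One final change of measure then yields the lower bound with the correct factor $\|\mu_{g_2o}\|$. The point you are missing is that capturing $\|\mu_{go}\|$ requires pushing a set of \emph{full} $\mu_{go}$--mass into the shadow, not just one shadow at a single $ho$.

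A secondary issue: your reduction to $x=o$ ``by applying an element of $\Gamma$'' is not available, since $\{\mu_x\}$ is only $H$--quasi-equivariant. The paper works directly with general $x=g_1o$, $y=g_2o$.
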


\begin{proof}
We start by proving the lower bound, which is the key part of the proof.
Write explicitly $x=g_1o$ and $y=g_2o$ for $g_1,g_2\in \Gamma$.  Let $g=g_1^{-1}g_2$. 

\textbf{1. Lower Bound.}  Fix a Borel subset $U:=[\Lambda(Ho)]$ such that $\mu_o(U)>0$. For given $\xi \in U$,  there exists a sequence of points $h_n o\in \U$ such that $h_no \to \xi$. Since $F$  is a finite set,   up to taking a subsequence of $h_n o$, there exist $f\in F$ and $r>0$ given by Lemma \ref{EllipticRadical} such that for any $p\in P$, $g$ is an $(r, f)$--barrier for any geodesic $[o, gfpf^{-1}h_no]$. This implies $gfpf^{-1}h_no\in \Omega_o^F(go,  r)$, which tends to $gfpf^{-1}\xi$. In addition, we can choose $p_f\in P$ according to definition of the confined subgroup $H$  so that $g_2fp_f(g_2f)^{-1}\in H$. By definition of the shadow, 
\begin{equation}\label{TransferEQ}
gfp_ff^{-1}\xi\in \Pi_o^F(go,  r)
\end{equation} 
Note that  $f\in F$   depends on $\xi\in U$, and $F$ consists of three elements.

Consequently,  the set $U$ can be decomposed as a disjoint  union of three sets $U_1, U_2, U_3$: for each $U_i$, there exist $f_i\in F, p_i\in P$ such that $gf_ip_if_i^{-1}U_i \subseteq  \Pi_o^F(go, r)$; equivalently,
\begin{equation}\label{MoveSetEQ}
g_2f_ip_if_i^{-1}U_i\subseteq \Pi_{g_1o}^F(g_2o,  r).
\end{equation}

Denote $L=\max\{2d(o,fo)+d(o,po):f\in F, p\in P\}<\infty$. As $d(o,f_ip_if_i^{-1}o)\le L,$ there exists    $\theta=\theta(L,\e H)>0$ by quasi-conformality (\ref{confDeriv}) such that 
\begin{equation}\label{BddDiffEQ}
\mu_{g_2o} (g_2f_ip_if_i^{-1}U_i)\ge \theta \cdot \mu_{g_2f_ip_if_i^{-1}o} (g_2f_ip_if_i^{-1}U_i).
\end{equation}
\begin{figure}
    \centering

\tikzset{every picture/.style={line width=0.75pt}} 

\begin{tikzpicture}[x=0.75pt,y=0.75pt,yscale=-1,xscale=1]

\draw    (122.5,64) .. controls (279.71,110.77) and (465.63,87.24) .. (554.17,93.9) ;
\draw [shift={(555.5,94)}, rotate = 184.55] [color={rgb, 255:red, 0; green, 0; blue, 0 }  ][line width=0.75]    (10.93,-3.29) .. controls (6.95,-1.4) and (3.31,-0.3) .. (0,0) .. controls (3.31,0.3) and (6.95,1.4) .. (10.93,3.29)   ;
\draw [shift={(122.5,64)}, rotate = 16.57] [color={rgb, 255:red, 0; green, 0; blue, 0 }  ][fill={rgb, 255:red, 0; green, 0; blue, 0 }  ][line width=0.75]      (0, 0) circle [x radius= 3.35, y radius= 3.35]   ;
\draw   (240.6,98.45) .. controls (243.68,78.59) and (272.32,66.54) .. (304.57,71.54) .. controls (336.83,76.54) and (360.48,96.69) .. (357.4,116.55) .. controls (354.32,136.41) and (325.68,148.46) .. (293.43,143.46) .. controls (261.17,138.46) and (237.52,118.31) .. (240.6,98.45) -- cycle ;
\draw    (268.5,113) -- (329.5,114) ;
\draw [shift={(329.5,114)}, rotate = 0.94] [color={rgb, 255:red, 0; green, 0; blue, 0 }  ][fill={rgb, 255:red, 0; green, 0; blue, 0 }  ][line width=0.75]      (0, 0) circle [x radius= 3.35, y radius= 3.35]   ;
\draw [shift={(268.5,113)}, rotate = 0.94] [color={rgb, 255:red, 0; green, 0; blue, 0 }  ][fill={rgb, 255:red, 0; green, 0; blue, 0 }  ][line width=0.75]      (0, 0) circle [x radius= 3.35, y radius= 3.35]   ;
\draw    (268.5,113) -- (266.76,92.99) ;
\draw [shift={(266.5,90)}, rotate = 85.03] [fill={rgb, 255:red, 0; green, 0; blue, 0 }  ][line width=0.08]  [draw opacity=0] (8.93,-4.29) -- (0,0) -- (8.93,4.29) -- cycle    ;
\draw    (330.5,113) -- (330.5,95) ;
\draw [shift={(330.5,92)}, rotate = 90] [fill={rgb, 255:red, 0; green, 0; blue, 0 }  ][line width=0.08]  [draw opacity=0] (8.93,-4.29) -- (0,0) -- (8.93,4.29) -- cycle    ;
\draw    (122.5,64) -- (249.09,190.59) ;
\draw [shift={(250.5,192)}, rotate = 225] [color={rgb, 255:red, 0; green, 0; blue, 0 }  ][line width=0.75]    (10.93,-3.29) .. controls (6.95,-1.4) and (3.31,-0.3) .. (0,0) .. controls (3.31,0.3) and (6.95,1.4) .. (10.93,3.29)   ;
\draw    (329.5,114) -- (378.5,115.92) ;
\draw [shift={(380.5,116)}, rotate = 182.25] [color={rgb, 255:red, 0; green, 0; blue, 0 }  ][line width=0.75]    (10.93,-3.29) .. controls (6.95,-1.4) and (3.31,-0.3) .. (0,0) .. controls (3.31,0.3) and (6.95,1.4) .. (10.93,3.29)   ;
\draw    (380.5,116) -- (441.5,117) ;
\draw [shift={(441.5,117)}, rotate = 0.94] [color={rgb, 255:red, 0; green, 0; blue, 0 }  ][fill={rgb, 255:red, 0; green, 0; blue, 0 }  ][line width=0.75]      (0, 0) circle [x radius= 3.35, y radius= 3.35]   ;
\draw [shift={(380.5,116)}, rotate = 0.94] [color={rgb, 255:red, 0; green, 0; blue, 0 }  ][fill={rgb, 255:red, 0; green, 0; blue, 0 }  ][line width=0.75]      (0, 0) circle [x radius= 3.35, y radius= 3.35]   ;
\draw    (441.5,117) -- (510.53,104.36) ;
\draw [shift={(512.5,104)}, rotate = 169.62] [color={rgb, 255:red, 0; green, 0; blue, 0 }  ][line width=0.75]    (10.93,-3.29) .. controls (6.95,-1.4) and (3.31,-0.3) .. (0,0) .. controls (3.31,0.3) and (6.95,1.4) .. (10.93,3.29)   ;
\draw    (357.5,164) .. controls (377.45,153.55) and (370.33,135.88) .. (364.42,118.71) ;
\draw [shift={(363.5,116)}, rotate = 71.57] [fill={rgb, 255:red, 0; green, 0; blue, 0 }  ][line width=0.08]  [draw opacity=0] (8.93,-4.29) -- (0,0) -- (8.93,4.29) -- cycle    ;

\draw (261.28,115.91) node [anchor=north west][inner sep=0.75pt]  [rotate=-1.93]  {$go$};
\draw (312.29,117.42) node [anchor=north west][inner sep=0.75pt]  [rotate=-1.93]  {$gf o$};
\draw (134,42.4) node [anchor=north west][inner sep=0.75pt]    {$o$};
\draw (402,62.4) node [anchor=north west][inner sep=0.75pt]    {$\Pi _{o}^{F}( g o,r) \ni \ gfpf^{-1} h_{n} \xi $};
\draw (270.5,98.4) node [anchor=north west][inner sep=0.75pt]    {$\leq r$};
\draw (329.5,99.4) node [anchor=north west][inner sep=0.75pt]    {$\leq r$};
\draw (254.5,189.4) node [anchor=north west][inner sep=0.75pt]    {$h_{n} o\rightarrow \xi \in U$};
\draw (468.5,115.4) node [anchor=north west][inner sep=0.75pt]    {$gfpf^{-1} h_{n} o$};
\draw (395,99.4) node [anchor=north west][inner sep=0.75pt]    {$f^{-1}$};
\draw (347,164.4) node [anchor=north west][inner sep=0.75pt]    {$p\in P$};

\end{tikzpicture}
    \caption{Upper bound in Shadow Principle: transporting a large set $U$ into the shadow by Lemma \ref{EllipticRadical}.}
    \label{fig:shadowprinciple}
\end{figure}
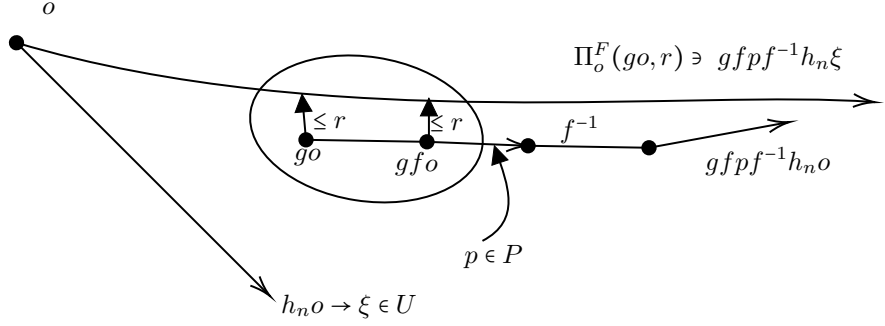
We apply the $H$--quasi-equivariance (\ref{almostInv})  with $g_2f_ip_i^{-1} f_i^{-1}g_2^{-1}\in H$ to the right-hand side:  
\begin{equation}\label{GEquivEQ}
\mu_{g_2f_ip_if_i^{-1}o} (g_2f_ip_if_i^{-1}U_i) \ge \lambda^{-1} \cdot  \mu_{g_2o} (g_2U_i).  
\end{equation} 
Combining together (\ref{MoveSetEQ}), (\ref{BddDiffEQ}) and (\ref{GEquivEQ}), we have
\[
\begin{aligned}
\mu_{g_2o}(\Pi_{g_1o}^F(g_2o, r))&\ge \displaystyle\sum_{1\le i\le 3} \mu_{g_2o} (g_2U_i)/3 \\
&\ge \lambda^{-1}\theta\cdot\mu_{g_2o}(U)/3    
\end{aligned} 
\]
where the last line uses the $H$--invariance of $U=[\Lambda(Ho)]=[\Lambda(\Gamma o)]$ by Lemma \ref{ConfinedNonElementary}.

Denote $M:=  \lambda^{-2}\cdot \theta/3$. We now conclude the proof for the lower bound via  quasi-conformality (\ref{confDeriv}): 
$$\begin{array}{rl}
\mu_{g_1o}(\Pi_{g_1o}^F(g_2o,   r)) &\ge \lambda^{-1}  \cdot e^{-\omega\cdot d(g_1o, g_2o)} \cdot \mu_{g_2o}( \Pi_{g_1o}^F(g_2o, r)) \\
\\
&\ge M \cdot \mu_{g_2o}(U) \cdot e^{-\omega\cdot d(g_1o, g_2o)}
\end{array}
$$

\textbf{2. Upper Bound.} Fix $r\ge r_0$.
Given $\xi \in   \Pi_{g_1o}^F(g_2o,r)$, there is a sequence of   $z_n\in \U$ tending to $\xi$ such that $\gamma_n \cap B(g_2o, r)
\neq \emptyset$ for $\gamma_n:=[g_1^{-1}o, z_n]$.  Since Buseman cocyles extend continuously to the horofunction boundary, we obtain  $$|B_\xi(g_1o, g_2o)
- d(g_1o, g_2o))| \le 2r.$$
The upper bound is given as follows:
$$
\begin{array}{rl}
\mu_{g_1o}(\Pi_{g_1o}^F(g_2o,r)) 
&\le     \displaystyle \int_{\Pi_{g_1o}^F(g_2o,r)}e^{-\omega B_\xi(g_1o, g_2o)} d \mu_{g_2o}(\xi)  \\
\\
& \le  \lambda e^{2\omega r} \|\mu_{g_2o}\| \cdot e^{-\omega d(g_1o, g_2o)}
\end{array}
$$
The proof of lemma is complete.
\end{proof}

\subsection{Shadow Principle for conformal density supported inside}
Fix a basepoint $o\in \U$ and $\omega>\e H$. Given $x\in \U$, define  $\mu_x=\frac{1}{\p_{H}(\omega,o,o)} \sum_{h\in H} \mathrm{e}^{-\omega d(x,ho)}$  supported on $Ho$, we have 
$$
\forall z\in Ho:\quad \frac{d\mu_x}{d\mu_y}(z) = \mathrm{e}^{-\omega [d(x,z)-d(y,z)]}
$$


\begin{lem}\label{lem:shadowBaby}
There are   constants $\theta, D > 0$ such that, given any $\Delta>0$ and, $g_1, g_2 \in G$ we have
\[
\# H \cap A_\Gamma(g_2o, n, \Delta) \le \theta \#  H \cap A_\Gamma(g_2o, n, \Delta + D) \cap \Omega_{g_1o}^F(g_2o,D).
\]
\end{lem}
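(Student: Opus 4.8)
The statement is the counting/combinatorial analogue of the lower bound in the Shadow Principle (Lemma \ref{ShadowPrinciple}), and the proof should run parallel to that argument, replacing measures by counting measures on $Ho$. Fix $g_1,g_2\in G$ and write $g=g_1^{-1}g_2$. The goal is to inject, up to bounded multiplicity, the set of $h\in H$ with $|d(g_2o,ho)-n|\le\Delta$ into the set of $h'\in H$ with $|d(g_2o,h'o)-n|\le\Delta+D$ whose orbit point $h'o$ moreover lies in the cone $\Omega_{g_1o}^F(g_2o,D)$, i.e.\ $g_2o$ is an $(r,F)$--barrier on a geodesic $[g_1o,h'o]$.

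First I would fix a set $F\subseteq\Gamma$ of three independent contracting elements and constants $L,\tau,r,D_0$ from Lemma \ref{EllipticRadical} (and Proposition \ref{admisProp}), and set $D$ large enough to absorb all the additive geometric constants that appear below. Given $h\in H$ with $h o\in A_\Gamma(g_2o,n,\Delta)$, apply Lemma \ref{EllipticRadical} to the pair $(g,g^{-1}ho)$ (equivalently, to build an admissible path from $g_1 o$ to $h o$ passing near $g_2 o$): there is $f\in F$, depending on $h$, and $p_f\in P$ chosen by the definition of confinedness so that $g_2 f p_f (g_2 f)^{-1}\in H$, and the word $(g, f, p_f, f^{-1}, g^{-1}ho)$ labels an $(L,\tau)$--admissible path from $o$ to $gfp_ff^{-1}ho$ with $go$ as an associated contracting barrier. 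Translating by $g_1$, the point $h':=g_2 f p_f f^{-1} g_2^{-1}\cdot h\in H$ satisfies $h' o\in\Omega_{g_1o}^F(g_2o,D)$, and since $d(o,fp_ff^{-1}o)\le L$ for all $f\in F,p\in P$, we get $|d(g_2o,h'o)-n|\le\Delta+D$ for a suitable $D$. So $h\mapsto h'$ sends $H\cap A_\Gamma(g_2o,n,\Delta)$ into $H\cap A_\Gamma(g_2o,n,\Delta+D)\cap\Omega_{g_1o}^F(g_2o,D)$.

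The remaining point is that this map is at most boundedly-to-one. Here I would use Lemma \ref{InjectiveExtMap}: if $h_1,h_2\in H$ map to the same $h'$ via elements $g_2 f_1 p_1 f_1^{-1}g_2^{-1}$ and $g_2 f_2 p_2 f_2^{-1}g_2^{-1}$, then $h_1 o$ and $h_2 o$ are endpoints of admissible paths of the form above with the \emph{same} endpoints, comparable lengths (both within $2\Delta$), so by Lemma \ref{InjectiveExtMap} once $d(h_1o,h_2o)>R(\Delta,C,\tau)$ we must have $h_1o=h_2o$; discreteness of $Ho$ (or finiteness of point stabilizers) then bounds the fibers by a constant depending only on $\Delta, F, P$. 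Since $F$ has only three elements and $P$ is finite, collecting over the (at most $3\cdot\#P$) choices of $(f,p_f)$ gives the constant $\theta$. The main obstacle I anticipate is bookkeeping the constants so that a \emph{single} $D$ works simultaneously for the length estimate, the cone membership, and the injectivity radius $R$ coming from Lemma \ref{InjectiveExtMap}; this is exactly the kind of uniformity already handled in the proof of Lemma \ref{ShadowPrinciple}, so the route is clear, but one must be careful that $\Delta$ is arbitrary and $D$ must not depend on it while $R$ and $\theta$ are allowed to.
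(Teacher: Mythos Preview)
Your proposal is correct and follows essentially the same route as the paper's proof: build the map $h\mapsto h'=g_2f p f^{-1}g_2^{-1}h$ using Lemma~\ref{EllipticRadical}, verify $h'o\in\Omega_{g_1o}^F(g_2o,D)\cap A_\Gamma(g_2o,n,\Delta+D)$, and use the fellow-travel/injectivity statement of Lemma~\ref{InjectiveExtMap} to bound the fibers. The only cosmetic difference is that the paper applies confinedness to the element $h^{-1}g_2f_h$ (so $p_h$ depends on $h$), whereas you apply it to $g_2f$; both choices place $h'$ in $H$ and the rest of the argument is identical. Your observation about the dependence of $\theta$ on $\Delta$ through $R=R(\Delta,C,\tau)$ is accurate and matches what happens in the paper's own proof.
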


\begin{proof}

Let $h \in H$ and $g:=g_1^{-1}g_2$. According to Lemma \ref{EllipticRadical}, for given $g$ and $g_2^{-1}h$, we pick  $f_{h} \in F$     such that  for any $p\in P$, the word $(g, f_h, p, f_h^{-1}, g_2^{-1}h)$ labels  an $(L,\tau)$--admissible path, where $go$ is an $(r,F)$--barrier for $f_h\in F$. This implies $g f_h p f_h^{-1} g_2^{-1}h\in \Omega_{o}^F(go,r)$, or equivalently $\mathbf{h}:=g_2 f_h p f_h^{-1} g_2^{-1}h\in \Omega_{g_1o}^F(g_2o,r)$. 
As $p\in P$ can be arbitrarily chosen, the definition of confined subgroup $H$ allows us to choose    $p_h\in P$ for the element $g_2^{-1}h\in G$  so that $h^{-1}g_2f_h\cdot p_h\cdot (f_h^{-1}g_2^{-1}h)$ lies in $H$ and so $\mathbf{h}=g_2f_hp_hf_h^{-1}g_2^{-1}h$ lies in $H$. 

Set $D=\max\{d(o,fo)+d(o,po): f\in F, p\in P\}$.
If $d(ho, g_2o)\in [n-\Delta, n+\Delta]$ we have $d(\mathbf{h}o, g_2o)\in [n-\Delta-D, n+\Delta+D]$.
We claim that this assignment 
$$
\begin{aligned}
\pi:H \cap A_\Gamma(g_2o, n, \Delta)&\longrightarrow H \cap A_\Gamma(g_2o, n, \Delta + D) \cap \Omega_{g_1o}^F(g_2o,r) \\
 h &\longmapsto \mathbf{h}
\end{aligned}
$$ is uniformly finite to one. Indeed, assume $\mathbf{h}=\mathbf{h'}$ for some $h\ne h'$.  Applying multiplication by $g_1^{-1}$ on the left  for $\mathbf{h}=\mathbf{h'}$ gives    $gf_hp_hf_h^{-1}g_2^{-1}h=gf_{h'}p_{h'}f_{h'}^{-1}g_2^{-1}h'$. As mentioned above, these two words label two $(L,\tau)$--admissible paths with the same endpoints. Noting that $g^{-1}h, g^{-1}h'\in A_\Gamma(o,n,\Delta)$, Lemma \ref{InjectiveExtMap} implies $d(g^{-1}ho,g^{-1}h'o)=d(ho,h'o)\le R$. Consequently, at most $N:=\sharp \{h\in H: d(o,ho)\le R\}$ elements have the same image under the map $\pi$. Setting $\theta=1/N$ thus completes the proof. 
\end{proof}

The following is the Shadow Principle we want for all $s > w_{H}$.

\begin{lem}\label{lem:shadowLemCogrowth}
Fix any $\omega> \e H$.
Let $\{\mu_x\}_{x\in \U}$ be a $\omega$--dimensional $H$--quasi-equivariant quasi-conformal density supported on $Ho\subseteq \U$.   Then there exists $r_0 > 0$ such that  
$$
\begin{array}{rl}
\|\mu_y\| e^{-\omega \cdot d(x, y)} \quad \prec_\lambda   \quad \mu_x(\Omega_{x}^F(y,r))\quad  \prec_{\lambda,\epsilon, r} \quad \|\mu_y\| e^{-\omega \cdot  d(x, y)}\\
\end{array}
$$
for any $x,y\in Go$ and $r \ge  r_0$. 
\end{lem}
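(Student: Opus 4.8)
The statement is the analogue of the Shadow Lemma \ref{ShadowLem}, but stated over the larger set $Go$ rather than merely over $Ho$, with the normalization factor $\|\mu_y\|$ reinstated because $\mu_y$ is now no longer a probability measure when $y\notin Ho$. The plan is to follow the structure of the proof of Lemma \ref{ShadowPrinciple}, only now the conformal density is supported on $Ho\subseteq \U$ rather than on $\pU$, so the geometric input is Lemma \ref{lem:shadowBaby} (the finite-to-one push-forward $\pi\colon h\mapsto \mathbf h$) in place of the weak-continuity argument on the boundary.

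\emph{Lower bound.} Write $x=g_1o$, $y=g_2o$, $g=g_1^{-1}g_2$. Fix $r\ge r_0$ with $r\ge D$ where $D$ is the constant of Lemma \ref{lem:shadowBaby}. By that lemma there is a uniform constant $\theta>0$ and an assignment $h\mapsto \mathbf h=g_2f_hp_hf_h^{-1}g_2^{-1}h\in H$ which maps $H\cap A_\Gamma(g_2o,n,\Delta)$ into $H\cap A_\Gamma(g_2o,n,\Delta+D)\cap \Omega_{g_1o}^F(g_2o,r)$ and is at most $N=1/\theta$--to--one. Since $d(x,\mathbf ho)$ and $d(x,g_2o)+d(g_2o,\mathbf ho)$ differ by a bounded amount (Lemma \ref{EllipticRadical}: the word $(g,f_h,p_h,f_h^{-1},g_2^{-1}h)$ labels an $(L,\tau)$--admissible path, hence a quasi-geodesic through $g_2o$ that fellow-travels $[x,\mathbf ho]$), we get
\[
\mathrm e^{-\omega d(x,\mathbf ho)}\ \succ_{\lambda,r}\ \mathrm e^{-\omega d(x,g_2o)}\cdot \mathrm e^{-\omega d(g_2o,\mathbf ho)},
\]
with the error absorbed into $\succ_{\lambda,r}$. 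Summing the defining Poincaré series of $\mu_x$ over the image of $\pi$, using that $\pi$ is $N$--to--one and that $d(g_2o,\mathbf ho)=d(y,\mathbf ho)$, one obtains after rearranging
\[
\mu_x(\Omega_{g_1o}^F(g_2o,r))\ \succ_{\lambda,r}\ \mathrm e^{-\omega d(x,y)}\cdot \frac{1}{\p_H(\omega,o,o)}\sum_{h\in H}\mathrm e^{-\omega d(y,ho)}\ =\ \mathrm e^{-\omega d(x,y)}\,\|\mu_y\|,
\]
where one must be slightly careful to run the estimate over a fixed annulus $A_\Gamma(g_2o,n,\Delta)$ and then sum over $n$ with bounded overlap (as in \eqref{AnnulusPoincareEQ}); the quasi-conformality constant $\lambda$ enters when comparing $d\mu_x/d\mu_y$ with $\mathrm e^{-\omega[d(x,\cdot)-d(y,\cdot)]}$.

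\emph{Upper bound.} This direction is soft and does not use confinedness. For $z\in \Omega_{g_1o}^F(g_2o,r)\cap Ho$, some geodesic $[x,z]$ passes within $r$ of $y$, so $d(x,z)\ge d(x,y)+d(y,z)-2r$, i.e. $d(x,z)-d(y,z)\ge d(x,y)-2r$. Hence $\frac{d\mu_x}{d\mu_y}(z)=\mathrm e^{-\omega[d(x,z)-d(y,z)]}\le \mathrm e^{2\omega r}\mathrm e^{-\omega d(x,y)}$, and integrating over $\Omega_{g_1o}^F(g_2o,r)$ against $\mu_y$ gives $\mu_x(\Omega_{g_1o}^F(g_2o,r))\le \lambda\,\mathrm e^{2\omega r}\,\|\mu_y\|\,\mathrm e^{-\omega d(x,y)}$ (the $\lambda$ from \eqref{almostInv} if one wants quasi-equivariance; it can be dropped in the equivariant case).

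\emph{Main obstacle.} The delicate point is the lower bound: Lemma \ref{lem:shadowBaby} only controls, for each fixed $n$, the count of $h$ in a single annulus mapping into a slightly fatter annulus inside the cone, so to recover the full mass $\|\mu_y\|=\p_H(\omega,o,o)^{-1}\sum_h \mathrm e^{-\omega d(y,ho)}$ one needs to sum over $n$ and check that the various errors — the $\pm D$ spreading of annuli, the bounded discrepancy between $d(x,\mathbf ho)$ and $d(x,y)+d(y,\mathbf ho)$, and the $N$--to--one multiplicity — are all uniform in $g_1,g_2\in G$ and combine to only a multiplicative constant depending on $\lambda,r$ (and $\epsilon$ for the quasi-cocycle error, which is $0$ on the horofunction boundary). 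This uniformity over all of $Go$, not just over $Ho$, is exactly the content that makes this a ``Shadow Principle'' rather than the classical Shadow Lemma, and it rests on the fact that $F$ and $P$ are finite so the constants in Lemma \ref{EllipticRadical} and Lemma \ref{lem:shadowBaby} do not degenerate as $g_1,g_2$ vary.
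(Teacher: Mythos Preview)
Your proposal is correct and follows essentially the same approach as the paper. The paper organizes the lower bound by first factoring out $e^{-\omega d(x,y)}$ via the triangle inequality (reducing to $\mu_y(\Omega_x^F(y,r))\ge\Theta\|\mu_y\|$) and then invoking Lemma~\ref{lem:shadowBaby} annulus by annulus, whereas you combine these two steps into a single direct estimate on $\mu_x$; the geometric content and the role of Lemma~\ref{lem:shadowBaby} are identical, and your identification of the uniformity over $Go$ (through the finiteness of $F$ and $P$) as the crux of the ``Shadow Principle'' is exactly right.
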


\begin{proof}
If $z\in \Omega_x^F(y,r)$ we have $d(x,z)+2r\ge d(x,y)+d(y,z)$, so  $\mu_x(\Omega_{x}^F(y,r))  \le \mathrm{e}^{2r\omega} \|\mu_y\| e^{-\omega \cdot  d(x, y)}$. Thus, it remains to prove the lower bound. As the triangle inequality implies  $\mu_x(\Omega_x^F(y,r))\ge e^{-\omega d(x, y)}\mu_y(\Omega_x^F(y,r))$, it suffices to show $\mu_y(\Pi_x^F(y,r))\ge \Theta \|\mu_y\|$ for some $\Theta>0$.

By definition, 
$$
\p_{H}(\omega,o,o)\|\mu_y\|=\sum_{h\in H} \mathrm{e}^{-\omega d(y,ho)} <\infty
$$
Assume that $x=g_1o, y=g_2o$, and set $g=g_1^{-1}g_2$ for simplicity. Note that  $\Omega_x^F(y,r)=g_1\Omega_o^F(go,r)$. We remark, however, that $\mu_y(\Omega_x^F(y,r))=\mu_{go}(\Omega_o^F(go,r))$ may not hold, as $\mu_x$ is only $H$--conformal density but $g_1$ may not be in $H$.

By Lemma \ref{lem:shadowBaby}, we have \[\begin{aligned}
\sum_{h \in H} e^{-s d(g_2o, ho)} &\le C_{1}(s) \sum_{n =0}^{\infty} e^{-s n} \#  H \cap A(g_2o, n, \Delta) \\
&\le \theta C_{1}(s) \sum_{n =0}^{\infty} e^{-s n} \cdot\# \left( H \cap A(g_2o, n, \Delta + D) \cap \Omega_{g_1o}^F(g_2o,r) \right) \\
&\le \theta C_{1}(s) C_{2}(s) \sum_{h \in  H \cap \Omega_{g_1o}^F(g_2o,r)} e^{-s d(g_2o, ho) }.
\end{aligned}
\]
Here, $C_{1}(s) = \Delta \cdot e^{-s \Delta}, C_{2}(s) = (\Delta + D) \cdot e^{-s (\Delta + D)}$. Setting $\Theta=\theta C_{1}(s) C_{2}(s)$, this is  equivalent to the following
$$
\p_{H}(\omega,o,o)\mu_y(\Omega_x^F(y,r)) =\sum_{h\in \Omega_x^F(y,r)} \mathrm{e}^{-\omega d(y,ho)} \ge \Theta\sum_{h\in \Omega_x^F(y,r)} \mathrm{e}^{-\omega d(y,ho)} =  \p_{H}(\omega,o,o)\|\mu_y\|
$$ 
Hence, the shadow lemma is proved.
\end{proof}

\section{Cogrowth tightness of confined subgroups}\label{section:cogrowthTight}

We continue the setup of \textsection\ref{secshadow}, but with the first item replaced with a stronger assumption 
\begin{itemize}
    \item The proper action $\Gamma\act \U$ has purely exponential growth.
\end{itemize}




We first give a uniform upper bound for all conjugates of a confined subgroup $H$.
\begin{lem}\label{ExpUpperBound}
There exist constants $C,\Delta>0$ so that the following holds 
$$
\sharp (gHg^{-1}\cap A_\Gamma(o,n,\Delta))\le C \mathrm{e}^{n\e H}
$$
for any $g\in G$.
\end{lem}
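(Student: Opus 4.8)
The plan is to reduce the upper bound for an arbitrary conjugate $gHg^{-1}$ to the corresponding bound for $H$ itself, exploiting the Extension Lemma machinery (Lemma~\ref{EllipticRadical}, Lemma~\ref{GoodConfiningSetP}) to trade an element of $gHg^{-1}$ for an element of $H$ of controlled length, and then invoking purely exponential growth of $\Gamma\act\U$ to control the ``error'' introduced by the trade. First I would fix the finite set $F\subseteq\Gamma$ of contracting elements and the constant $D$ produced by Lemma~\ref{GoodConfiningSetP} applied with the confining set $P$ (which, by hypothesis, intersects $E(\Gamma)$ trivially), and fix $\Delta\ge 1$. The starting point is the definition of the critical exponent: since $\p_H(s,o,o)$ converges for every $s>\e H$ and $\Gamma\act\U$ has purely exponential growth, we have $\sharp(H\cap A_\Gamma(o,n,\Delta))\le C_0\,\mathrm{e}^{n\e H}$ for a uniform constant $C_0=C_0(\Delta)$; this is the target statement in the case $g=1$, and I want to promote it to all conjugates with a constant independent of $g$.

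The key step is to build, for each fixed $g\in G$, a map from $gHg^{-1}\cap A_\Gamma(o,n,\Delta)$ into $H\cap A_\Gamma(o, \text{(something like }n\text{)},\Delta')$ that is boundedly-to-one, with both the target radius and the multiplicity bounded independently of $g$ and $n$. Concretely, given $\mathbf h=ghg^{-1}\in gHg^{-1}$ with $d(o,\mathbf ho)\in[n-\Delta,n+\Delta]$, I would apply Lemma~\ref{EllipticRadical} to the pair $(g^{-1},\,hg^{-1})$ (or the appropriate pair so that the admissible word is $(g^{-1}, f, p, f^{-1}, hg^{-1})$ with $f\in F$) and then use the definition of the confined subgroup to select $p\in P$ with $g^{-1}fpf^{-1}\cdot$(correction)$\in H$; the precise bookkeeping mirrors the proof of Lemma~\ref{lem:shadowBaby}. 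The admissibility of the resulting word, combined with Proposition~\ref{admisProp} and Lemma~\ref{InjectiveExtMap}, gives on one hand the length estimate $d(o, h'o)\le n + D'$ for the produced element $h'\in H$ (so $h'$ lands in $H\cap A_\Gamma(o,m,\Delta+D')$ for some $m\le n$), and on the other hand that the assignment $\mathbf h\mapsto h'$ is at most $N$-to-one, where $N=\sharp\{h\in H: d(o,ho)\le R\}$ with $R$ the constant from Lemma~\ref{InjectiveExtMap} — crucially $R$, hence $N$, depends only on $F,\tau,\Delta$ and not on $g$. Summing the contributions over $m\le n+D'$ and using $\sharp(H\cap A_\Gamma(o,m,\Delta+D'))\le C_0'\,\mathrm{e}^{m\e H}$ together with the geometric series $\sum_{m\le n}\mathrm{e}^{m\e H}\asymp \mathrm{e}^{n\e H}$ then yields $\sharp(gHg^{-1}\cap A_\Gamma(o,n,\Delta))\le C\,\mathrm{e}^{n\e H}$ with $C$ uniform in $g$.

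The main obstacle I anticipate is ensuring genuine uniformity in $g$ at two points: (i) the constant $R$ in Lemma~\ref{InjectiveExtMap} must not degrade when $g$ is large — this is fine because that lemma's $R$ depends only on $\Delta$, $C$, and $\tau$, which are all fixed once $F$ is chosen, but it needs to be spelled out carefully; and (ii) the length bookkeeping must actually produce $d(o,h'o)$ bounded \emph{above} by $n+O(1)$ rather than merely $2n$ or worse — here one has to be slightly careful about which conjugate one feeds into Lemma~\ref{EllipticRadical} (it is the pair controlling $d(o,\mathbf ho)$, not $d(o,go)$, that must appear). I would double-check that the admissible word can be arranged so that its length is comparable to $d(o,\mathbf ho)$ up to an additive constant depending only on $F$ and $P$; this is exactly the content of the ``Moreover'' clause of Lemma~\ref{EllipticRadical}, so the argument should go through, but it is the point where a careless application would give a useless bound. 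A secondary subtlety is that $H$ need not be contained in $\Gamma$, so one should work in $G=\isom(\U)$ throughout and only use that $\Gamma\act\U$ (not $H\act\U$) has purely exponential growth; the orbit $Ho$ is still a discrete set because $H$ is a discrete subgroup, which is all that the finiteness of $N$ requires.
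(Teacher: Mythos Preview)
Your approach has a genuine gap at the central reduction step. You propose a boundedly-to-one map $gHg^{-1}\cap A_\Gamma(o,n,\Delta)\to H$ landing in annuli of radius $\le n+D'$, with $D'$ and the multiplicity independent of $g$, but no such map is produced. The word $(g^{-1},f,p,f^{-1},hg^{-1})$ you write down represents the element $g^{-1}fpf^{-1}hg^{-1}$; the ``Moreover'' clause of Lemma~\ref{EllipticRadical} controls its length as $d(o,g^{-1}o)+d(o,hg^{-1}o)=d(o,go)+d(go,ho)$ up to $D$, which is \emph{not} $d(o,\mathbf ho)=d(o,ghg^{-1}o)\approx n$. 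Worse, there is no choice of $p\in P$ forcing this element into $H$: the confining property only manufactures elements of the shape $\gamma fpf^{-1}\gamma^{-1}\in H$, and multiplying $\mathbf h\in gHg^{-1}$ by such an element does not produce an element of $H$ unless $g$ normalises $H$. The underlying obstruction is that conjugation by $g$ distorts lengths by up to $2d(o,go)$, so there is no a~priori length-preserving comparison between $gHg^{-1}$ and $H$ uniform in $g$. A smaller point: your base case $g=1$ is also not established---convergence of $\p_H(s,o,o)$ for $s>\e H$ together with purely exponential growth of $\Gamma$ only gives $\sharp(H\cap A_\Gamma(o,n,\Delta))=o(e^{sn})$ for each fixed $s>\e H$, not $\le C_0 e^{n\e H}$.

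The paper avoids any comparison with $H$ and works \emph{inside} each conjugate. By Lemma~\ref{UniformContractingElem}, every $gHg^{-1}$ contains a set $F_g$ of three pairwise independent contracting elements drawn from a single finite set $\tilde F$ independent of $g$. Applying the Extension Lemma~\ref{extend3} with $F_g$ and the map $(a,b)\mapsto afb$ for $a\in S_n$, $b\in S_m$ gives the submultiplicative inequality $\sharp S_n\cdot\sharp S_m\le\theta\sum_{|j|\le k}\sharp S_{n+m+j}$ for $S_n:=gHg^{-1}\cap A_\Gamma(o,n,\Delta)$. Since $F_g\subseteq\tilde F$, the constants $\theta,k$ depend only on $\tilde F$, and the standard argument then yields $\sharp S_n\le Ce^{n\e{gHg^{-1}}}=Ce^{n\e H}$ with $C$ uniform in $g$.
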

\begin{proof}
By Lemma \ref{UniformContractingElem}, there exists a finite set of contracting elements $\tilde F$ such that   any $gHg^{-1}$ contains  a set, denoted by $F_g$, of three pairwise independent contracting elements  from $\tilde F$. Applying Lemma \ref{extend3} with $F_g$, the conclusion follows by the same argument as in \cite[Proposition 5.2]{YANG10}, where $C$ does not depend on $g$ since there are only finitely many choices of $F_g\subseteq \tilde F$ independent of $g$. We include the sketch below and refer to \cite[Proposition 5.2]{YANG10} for full details.    

Set $S_n:=gHg^{-1}\cap A_\Gamma(o,n,\Delta)$.  The idea is to prove the following for any $n, m\ge 1$
$$
\sharp S_n \sharp S_m\le \theta \sum_{-k\le j\le k}\sharp S(n+m+j)
$$
where $\theta, k$ do not  depend on $g$. This is proved by  considering the map 
$$
\begin{aligned}
\pi:\quad S_n \times S_m &\quad \longrightarrow \quad gHg^{-1}\\
(a,b)&\quad \longmapsto\quad  afb
\end{aligned}
$$
where $f\in F_g$ is chosen by Lemma \ref{extend3}. In particular, $afb$ labels admissible path so $|d(o,afb)-d(o,ao)-d(o,bo)\le k$ where $k$ depends on $d(o,fo)$. We proved there (also see Lemma \ref{InjectiveExtMap}) that $\pi$ fails to be injective, only if $d(ao,bo)$ is greater than a constant depending only on $F_g$. This constant determines the value of $\theta$, so the proof of the above inequality is proved.
\end{proof}


\begin{lem}\label{ExpLowerBound}
There exist constants $C,\Delta>0$ with the following property. For any $g\in \Gamma$ and any $n\ge 1$, we have 
$$
\sharp (gHg^{-1}\cap A_\Gamma(o,n,\Delta)) \ge C \mathrm{e}^{n\e \Gamma /2}
$$ 
\end{lem}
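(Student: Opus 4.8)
The goal is a uniform lower bound $\sharp(gHg^{-1}\cap A_\Gamma(o,n,\Delta))\ge C\mathrm{e}^{n\e\Gamma/2}$ for \emph{every} conjugate, matching the (trivial) fact that $\e{gHg^{-1}}=\e H$ and eventually feeding into the contradiction argument of \textsection\ref{section:cogrowthTight}. The engine is Lemma~\ref{GoodConfiningSetP}: for each $k\in\Gamma$ there are $f\in F$ and $p\in P$ with $kfpf^{-1}k^{-1}\in H$ and $|d(o,kfpf^{-1}k^{-1}o)-2d(o,ko)|\le D$. Conjugating by $g$, the element $\mathbf{k}:=gkfpf^{-1}k^{-1}g^{-1}$ lies in $gHg^{-1}$, and — writing the word $(g,k,f,p,f^{-1},k^{-1},g^{-1})$ and appealing to the admissible-path structure from Lemma~\ref{EllipticRadical} together with the fellow-travelling of Proposition~\ref{admisProp} — one gets $|d(o,\mathbf{k}o)-2d(o,go)-2d(o,ko)|\le D'$ for a constant $D'$ depending only on $F,P$ (here I'd need the enhanced extension statement applied to the pair $(gk,(gk)^{-1})$ rather than to $k$ alone, so that $g$ is absorbed into the estimate). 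In particular, if I let $k$ range over $A_\Gamma(o,m,\Delta)$ where $2d(o,go)+2m\approx n$, then $\mathbf{k}o$ lands in $A_\Gamma(o,n,\Delta+D')$.

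\textbf{Key steps, in order.} First, fix $g$ and set $m$ so that $n-2d(o,go)\in[2m-\Delta,2m+\Delta]$ (this requires $n\ge 2d(o,go)$; for smaller $n$ the bound is handled separately, see below). Second, for each $k\in A_\Gamma(o,m,\Delta)$ use Lemma~\ref{GoodConfiningSetP} (in the conjugated form) to produce $f_k\in F$, $p_k\in P$ and the element $\mathbf{k}=g k f_k p_k f_k^{-1}k^{-1}g^{-1}\in gHg^{-1}\cap A_\Gamma(o,n,\Delta+D')$. Third, show the assignment $k\mapsto\mathbf{k}$ is uniformly finite-to-one: if $\mathbf{k}=\mathbf{k'}$, then left/right multiplication by $g^{-1}$ and $g$ gives $kf_kp_kf_k^{-1}k^{-1}=k'f_{k'}p_{k'}f_{k'}^{-1}k'^{-1}$, both sides labelling $(L,\tau)$--admissible paths with the same endpoints; since $k,k'\in A_\Gamma(o,m,\Delta)$, Lemma~\ref{InjectiveExtMap} forces $d(ko,k'o)\le R$, so at most $\sharp\{k\in\Gamma:d(o,ko)\le R\}$ preimages occur. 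Fourth, since $\Gamma\act\U$ has purely exponential growth, $\sharp A_\Gamma(o,m,\Delta)\asymp\mathrm{e}^{m\e\Gamma}\asymp\mathrm{e}^{(n-2d(o,go))\e\Gamma/2}$; dividing by the finite-to-one constant and absorbing the $\mathrm{e}^{-d(o,go)\e\Gamma}$ factor would only give $\mathrm{e}^{n\e\Gamma/2}$ up to a $g$--dependent constant $\mathrm{e}^{-d(o,go)\e\Gamma}$, which is \emph{not} uniform — so this naive count is not quite enough.

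\textbf{The main obstacle and how to get around it.} The difficulty is exactly the $\mathrm{e}^{-d(o,go)\e\Gamma}$ loss: a single ``base'' conjugating word $g$ eats an exponential factor. The fix is to not fix one $g$, but to exploit that $gHg^{-1}$ is itself a confined subgroup with confining set $P$, and re-run the argument intrinsically: apply Lemma~\ref{GoodConfiningSetP} directly to the group $gHg^{-1}$ and the \emph{identity} coset, i.e. for each $k\in\Gamma$ find $f\in F$, $p\in P$ with $kfpf^{-1}k^{-1}\in gHg^{-1}$ and $|d(o,kfpf^{-1}k^{-1}o)-2d(o,ko)|\le D$ — the point being that Lemma~\ref{GoodConfiningSetP} was proved for an arbitrary subgroup confined by $\Gamma$ with finite confining set $P$, and $gHg^{-1}$ is such a subgroup with the \emph{same} $F$ and $D$ (the finite set $F$ from Lemma~\ref{EllipticRadical} depends only on $\Gamma$ and $P$, not on the confined subgroup). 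Then ranging $k$ over $A_\Gamma(o,\lceil n/2\rceil,\Delta)$ produces $\asymp\mathrm{e}^{(n/2)\e\Gamma}$ elements of $gHg^{-1}\cap A_\Gamma(o,n,\Delta+D)$, finite-to-one as above, with all constants depending only on $\Gamma,P$. Adjusting $\Delta$ and $C$ absorbs the $+D$ and the small-$n$ cases, yielding $\sharp(gHg^{-1}\cap A_\Gamma(o,n,\Delta))\ge C\mathrm{e}^{n\e\Gamma/2}$ uniformly in $g$. Combined with Lemma~\ref{ExpUpperBound}, this pins down $\p_{gHg^{-1}}(\e H,o,o)\asymp\p_{H^g}(\e H,o,o)$ needed for (\ref{PSEquations}), once one also knows (from $2\e H=\e\Gamma$, the standing hypothesis of the contradiction) that these lower and upper exponents agree.
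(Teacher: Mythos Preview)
Your proposal is correct and, after your self-correction in the final paragraph, takes essentially the same approach as the paper. The paper's map $\pi_{g_0}\colon A_\Gamma(g_0^{-1}o,n,\Delta)\to g_0Hg_0^{-1}$, $g\mapsto (g_0g)fpf^{-1}(g_0g)^{-1}$, is exactly your map $k\mapsto kfpf^{-1}k^{-1}$ under the substitution $k=g_0g$ (which identifies $A_\Gamma(g_0^{-1}o,n,\Delta)$ with $A_\Gamma(o,n,\Delta)$); your explicit observation that the $F$ and $D$ of Lemma~\ref{GoodConfiningSetP} depend only on $\Gamma$ and $P$, hence apply verbatim to every conjugate $gHg^{-1}$, is precisely what underlies the paper's claim that the finite-to-one constant is ``clearly uniform independent of $g_0$''.
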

\begin{proof}
Let us consider the conjugate $g_0Hg_0^{-1}$ for given $g_0\in \Gamma$. 
Define a map as follows
$$
\begin{aligned}
\pi_{g_0}:\quad  A_\Gamma(g_0^{-1}o,n,\Delta)&\quad  \longrightarrow \quad  g_{0} H g_{0}^{-1}  \\
 g&\quad \longmapsto \quad  g_0gfp(g_0gf)^{-1}
\end{aligned}
$$ 
where $f$  and $p$  are chosen for $g_0g$  according to Lemma \ref{GoodConfiningSetP}.  Hence, $|d(o,\pi_{g_0}(g)o)-2d(o,g_0go)|\le D$ so we obtain $\pi_{g_0}(g)\in A_\Gamma(o,2n,\Delta')$ where $\Delta':=2\Delta+D$.

We shall prove that $\pi_{g_0}$ is uniformly finite to one. That is to say, there exists a constant $N>1$ independent of $g_0$ and $g\in H$ so that $\pi_{g_0}^{-1}(g)$ contains at most $N$ elements. 

Indeed, if $\pi_{g_0}(g)=\pi_{g_0}(g')$ then $g_0gfp(g_0gf)^{-1}=g_0g'fp(g_0g'f)^{-1}$. According to Lemma \ref{GoodConfiningSetP}, the  words $(g_0g,f,p,f^{-1},(g_0g)^{-1})$ and $(g_0g',f,p,f^{-1},(g_0g')^{-1})$ label respectively two  $(L,\tau)$--admissible paths with the same endpoints. Thus, any geodesic $\alpha$ with the same endpoints $r$--fellow travels them, so  $g_0go$ and $g_0g'o$ have a distance  at most $r$ to $\alpha$, where $r$ is given by Proposition \ref{admisProp}.  For $g,g'\in A_\Gamma(g_0^{-1}o,n,\Delta)$, we have $|d(o,g_0go)-d(o,g_0g'o)|\le 2\Delta$. Hence, the $r$--closeness then implies that $d(g_0go,g_0g'o)\le 4r+2\Delta$. Thus, if $d(go, g'o)$ is  larger than a constant $R=R(r, \Delta)$ given by Lemma \ref{InjectiveExtMap},  we would obtain a contradiction.  The finite number $N:=\sharp \{go: d(o,go)\le R\}$ is hence the desired upper bound on the preimage of $\pi_{g_0}$, which is clearly uniform independent of $g_0\in \Gamma$. 

To conclude the proof, since $\Gamma\act \U$ is assumed to   have purely exponential growth, we have $\sharp A_\Gamma(o,n,\Delta) \ge C_0 \mathrm{e}^{n\e\Gamma}$ for some $C_0, \Delta$. Setting $C=C_0/N$, we have $\sharp (gHg^{-1}\cap A_\Gamma(o,n,\Delta')) \ge C \mathrm{e}^{n\e \Gamma /2}$. The lemma is proved. 
\end{proof}

An immediate corollary is as follows. If $\Gamma\act \U$ is SCC, this  could be also derived from   Theorems \ref{ConfinedConsThm} and \ref{HalfGrowthDisThm}, for more general confined subgroups with compact confining subsets. See Corollary \ref{Nonstrict}.
\begin{cor}\label{ConfinedDivergeAtHalf}
The Poincar\'e series associated to $H$ diverges at $s=\e \Gamma/2:$
$$\p_H(\e \Gamma/2, o,o)=\infty$$
In particular, $\e H \ge \e \Gamma/2$.
\end{cor}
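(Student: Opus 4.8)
The statement is the immediate corollary stated right after Lemma \ref{ExpLowerBound}, asserting that $\p_H(\e\Gamma/2, o, o) = \infty$ and hence $\e H \ge \e\Gamma/2$. The plan is to read off the divergence directly from the lower bound established in Lemma \ref{ExpLowerBound}, applied with $g = 1$ (or any fixed $g \in \Gamma$; the estimate is uniform).

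First I would recall from Lemma \ref{ExpLowerBound} that there are constants $C, \Delta > 0$ such that
\[
\sharp\big(H \cap A_\Gamma(o, n, \Delta)\big) \ge C\, \mathrm{e}^{n\e\Gamma/2}
\]
for all $n \ge 1$ (taking $g = 1$ there, noting $1HG 1^{-1} = H$). Next I would use the standard comparison between the Poincar\'e series and its decomposition over annuli: since the annuli $A_\Gamma(o, n, \Delta)$ for $n \in \mathbb N$ cover $Ho$ (indeed $\Gamma o$) with bounded multiplicity, one has for any $s \ge 0$
\[
\p_H(s, o, o) = \sum_{h \in H} \mathrm{e}^{-s d(o, ho)} \;\succ_\Delta\; \sum_{n \ge 1} \mathrm{e}^{-s(n+\Delta)} \,\sharp\big(H \cap A_\Gamma(o, n, \Delta)\big).
\]
Substituting $s = \e\Gamma/2$ and the lower bound from Lemma \ref{ExpLowerBound} gives
\[
\p_H(\e\Gamma/2, o, o) \;\succ_\Delta\; \sum_{n \ge 1} \mathrm{e}^{-(\e\Gamma/2)(n+\Delta)} \cdot C\, \mathrm{e}^{n\e\Gamma/2} = C\,\mathrm{e}^{-(\e\Gamma/2)\Delta} \sum_{n \ge 1} 1 = \infty,
\]
which is the first assertion. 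For the second assertion, since $\p_H(s, o, o)$ converges for every $s > \e H$ (by the defining property of the critical exponent recalled around equation (\ref{PoincareEQ})), the divergence at $s = \e\Gamma/2$ forces $\e\Gamma/2 \le \e H$.

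This argument is essentially a one-line deduction, so there is no real obstacle here: all the work has already been done in Lemma \ref{ExpLowerBound}, whose proof combines Lemma \ref{GoodConfiningSetP} (the enhanced Extension Lemma producing, for each $g$, elements $f \in F$ and $p \in P$ with $gfpf^{-1}g^{-1} \in H$ and $d(o, gfpf^{-1}g^{-1}o) \approx 2d(o, go)$) with the purely exponential growth of $\Gamma \act \U$ and the uniform finite-to-one property of the map $\pi_{g_0}$ via Lemma \ref{InjectiveExtMap}. The only point worth stating carefully is that the implied constant in the annulus comparison depends only on $\Delta$, so that the geometric-series computation is legitimate; this is exactly the content of (\ref{AnnulusPoincareEQ}).
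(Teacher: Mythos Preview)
Your proof is correct and is precisely the one-line deduction the paper has in mind when it calls this ``an immediate corollary'' of Lemma \ref{ExpLowerBound}: take $g=1$ there, feed the exponential lower bound on $\sharp(H\cap A_\Gamma(o,n,\Delta))$ into the annulus decomposition of $\p_H$, and observe that the terms at $s=\e\Gamma/2$ do not decay. The only cosmetic point is that in this section $H$ need not lie in $\Gamma$, so $A_\Gamma(o,n,\Delta)$ is being read (as in the proof of Lemma \ref{ExpLowerBound}) simply as the annulus condition $|d(o,ho)-n|\le\Delta$; your argument uses nothing beyond that.
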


Here is another corollary from the proof.
For any $g\in G$, there exists $p_g\in P$ such that $gp_gg^{-1}\in H$. This defines a map as follows:
$$
\pi_1: g\in G\longmapsto gp_gg^{-1}
$$

\begin{lem}\label{ConjugateDivSeries}
There exists   a subset $K\subseteq \Gamma$, on which the above map $\pi_1$ is injective, so that $\sum_{g\in K} \mathrm e^{-\e \Gamma d(o,go)}=\infty$. 
\end{lem}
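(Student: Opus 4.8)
\textbf{Proof plan for Lemma \ref{ConjugateDivSeries}.}

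The statement asks for a subset $K \subseteq \Gamma$ on which the assignment $g \mapsto g p_g g^{-1} \in H$ is injective and yet $\sum_{g \in K} \mathrm e^{-\e\Gamma d(o,go)} = \infty$. The natural approach is to reuse the map $\pi_{g_0}$ (with $g_0 = 1$, so $\pi_1$ in the notation above) from the proof of Lemma \ref{ExpLowerBound} and extract an injective ``section''. Recall that in that proof we showed $\pi_1$ (after absorbing the extension element $f \in F$, so really $g \mapsto g f p f^{-1} g^{-1}$) is \emph{uniformly finite-to-one}: each fiber has at most $N$ elements, where $N = \sharp\{go : d(o,go) \le R\}$ is a constant independent of everything. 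The plan is: first reduce to the version with the $f$-insertion, then use finite-to-one-ness to pass to a large injective subset, then check the series on that subset diverges.

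\textbf{Step 1 (Reduce to the $f$-twisted map).} For each $g \in \Gamma$ pick, via Lemma \ref{GoodConfiningSetP}, elements $f_g \in F$ and $p_g \in P$ with $g f_g p_g f_g^{-1} g^{-1} \in H$ and $|d(o, g f_g p_g f_g^{-1} g^{-1} o) - 2 d(o, go)| \le D$. Define $\tilde\pi(g) = g f_g p_g f_g^{-1} g^{-1}$. It suffices to find $K$ on which $\tilde\pi$ is injective with divergent series, because composing with right-multiplication by $f_g^{-1}$ does not change injectivity up to replacing $g$ by $g f_g$, and the set $F$ is finite so one loses only a bounded factor; alternatively, just prove the statement directly for $\tilde\pi$ — the map $g \mapsto g p_g g^{-1}$ in the lemma statement should be read in this twisted sense since that is how confined subgroups are used throughout (this is exactly the interpretation in Lemma \ref{ExpLowerBound}). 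I would phrase the final $K$ as a subset of $\{g f_g : g \in \Gamma\}$, or simply note $\pi_1$ in the lemma is shorthand for this twisted map as defined just above its statement.

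\textbf{Step 2 (Extract an injective subset).} Work annulus by annulus. Fix $\Delta$ and set $A_n := A_\Gamma(o, n, \Delta)$. Since $\tilde\pi|_{A_n}$ is at most $N$-to-one (the constant $N$ from Lemma \ref{ExpLowerBound}'s proof, uniform in $n$), greedily choose $K_n \subseteq A_n$ with $\tilde\pi|_{K_n}$ injective and $\sharp K_n \ge \sharp A_n / N$: indeed any maximal subset of $A_n$ on which $\tilde\pi$ is injective hits every fiber at least once, hence has size $\ge \sharp A_n / N$. The images $\tilde\pi(K_n)$ land in $A_\Gamma(o, 2n, \Delta')$ with $\Delta' = 2\Delta + D$; since $n \mapsto A_\Gamma(o, 2n, \Delta')$ has bounded overlap multiplicity in the target, one can then thin out further across $n$ (keeping, say, $n$ in a fixed residue class mod $2\Delta'+1$) so that the combined map $\tilde\pi$ on $K := \bigcup_n K_n$ is globally injective, losing only another bounded multiplicative factor. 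Call the resulting set $K$.

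\textbf{Step 3 (Divergence of the series on $K$).} Purely exponential growth gives $\sharp A_n \asymp \mathrm e^{n\e\Gamma}$, so $\sharp K_n \succ \mathrm e^{n\e\Gamma}$ for the retained values of $n$. Then
\[
\sum_{g \in K} \mathrm e^{-\e\Gamma d(o,go)} \ge \sum_{n} \mathrm e^{-\e\Gamma (n+\Delta)} \sharp K_n \succ \sum_{n} \mathrm e^{-\e\Gamma n} \mathrm e^{n \e\Gamma} = \sum_n 1 = \infty,
\]
where the sum over $n$ runs over the retained arithmetic progression, which is still infinite. This proves the claim.

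\textbf{Main obstacle.} The only genuinely delicate point is bookkeeping in Step 2: making the map injective \emph{globally} on $K$, not just on each annulus, while retaining enough mass. The twisted images $\tilde\pi(g)$ for $g \in A_n$ sit near radius $2n$, so two different annuli $A_n, A_m$ can only collide when $2n \approx 2m$, i.e. when $|n - m|$ is bounded by $\Delta'$; restricting $n$ to one residue class modulo $2\Delta' + 1$ eliminates this, and within a single annulus the finite-to-one bound handles the rest. One should also double-check that the interpretation of $\pi_1$ in the lemma statement matches the twisted map — if a referee insists on the literal map $g \mapsto g p_g g^{-1}$, the same argument applies verbatim using the finite-to-one property of $\pi_{g_0}$ with $g_0 = 1$ as established in the proof of Lemma \ref{ExpLowerBound} (there the fiber bound $N$ was derived precisely for this map after the $f$-insertion, and the insertion element ranges over the finite set $F$, so the literal map is finite-to-one with a possibly larger but still uniform constant).
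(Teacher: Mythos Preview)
Your proposal is correct and follows the same approach as the paper: identify $\pi_1$ with the map $\pi_{g_0}$ (for $g_0=1$) from Lemma \ref{ExpLowerBound}, use its uniform finite-to-one property, and extract an injective subset supporting a divergent series. Two simplifications are available: first, since $|d(o,\tilde\pi(g)o)-2d(o,go)|\le D$, any two elements with the same image automatically satisfy $|d(o,go)-d(o,g'o)|\le D$, so the map is globally (not just annulus-wise) uniformly finite-to-one and the residue-class bookkeeping in Step~2 is unnecessary; second, once global finite-to-one is established, choosing from each fiber the element minimizing $d(o,go)$ gives $K$ with $N\sum_{g\in K}\mathrm e^{-\e\Gamma d(o,go)}\ge \sum_{g\in\Gamma}\mathrm e^{-\e\Gamma d(o,go)}=\infty$ by divergence type alone, so PEG is not needed in Step~3 (this is what the paper's one-line proof invokes).
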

\begin{proof}
The above defined map $\pi_1$ is exactly the one $\pi_{g_0}$ for $g_0=1$ in the proof of Lemma \ref{ExpLowerBound}. As $\pi_{g_0}$ is uniformly finite to one and $\Gamma\act \U$ is of divergent type, we can then find $K$ with the desired divergence property.    
\end{proof}
 
\begin{lem}\label{EqualPSSeries}
Suppose that  $\e H=\e \Gamma/2$. Then for any $s>\e H$ and $g\in \Gamma$, we have 
$$
\sum_{h\in H} \mathrm{e}^{-sd(o, ghg^{-1}o)} \asymp \sum_{h\in H} \mathrm{e}^{-sd(o, ho)}
$$
where the implicit constant does not depend on $g$.
\end{lem}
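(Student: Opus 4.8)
The plan is to prove the coarse equality of Poincaré series for $H$ and its conjugate $H^g := gHg^{-1}$ by establishing two-sided growth bounds, matching those already obtained for $H$ in Corollary \ref{ConfinedDivergeAtHalf} and Lemma \ref{ExpLowerBound}. Recall from Lemma \ref{ExpUpperBound} that $\sharp\bigl(gHg^{-1}\cap A_\Gamma(o,n,\Delta)\bigr)\le C\mathrm{e}^{n\e H}$ with $C$ independent of $g$, since $\e H = \e\Gamma/2$. On the other hand, Lemma \ref{ExpLowerBound} gives $\sharp\bigl(gHg^{-1}\cap A_\Gamma(o,n,\Delta)\bigr)\ge C'\mathrm{e}^{n\e\Gamma/2} = C'\mathrm{e}^{n\e H}$, again with $C'$ independent of $g$. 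So for \emph{every} conjugate $gHg^{-1}$, and in particular for $H$ itself (take $g = 1$), we have the uniform two-sided bound
$$
C'\mathrm{e}^{n\e H}\ \le\ \sharp\bigl(gHg^{-1}\cap A_\Gamma(o,n,\Delta)\bigr)\ \le\ C\mathrm{e}^{n\e H}.
$$

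First I would rewrite each Poincaré series as a sum over the annuli $A_\Gamma(o,n,\Delta)$. Using the standard fact (\ref{AnnulusPoincareEQ}) that for $s > \e H$ one has
$$
\sum_{h\in H}\mathrm{e}^{-sd(o,ghg^{-1}o)}\ \asymp_\Delta\ \sum_{n\ge 0}\mathrm{e}^{-sn}\,\sharp\bigl(gHg^{-1}\cap A_\Gamma(o,n,\Delta)\bigr),
$$
where the implicit constant depends only on $\Delta$ and not on $g$. Substituting the uniform two-sided bound above into both the series for $H^g$ and the series for $H$, both are sandwiched between $C'\sum_{n\ge 0}\mathrm{e}^{(\e H - s)n}$ and $C\sum_{n\ge 0}\mathrm{e}^{(\e H - s)n}$, a convergent geometric series for $s > \e H$ with value depending only on $s$. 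Dividing the two comparisons gives
$$
\sum_{h\in H}\mathrm{e}^{-sd(o,ghg^{-1}o)}\ \asymp\ \sum_{h\in H}\mathrm{e}^{-sd(o,ho)}
$$
with implicit constant $C/C'$ depending only on $s$ (through the geometric series) and $\Delta$, but crucially not on $g$. Since $d(o,ghg^{-1}o)$ ranges over $gHg^{-1}$ as $h$ ranges over $H$, this is exactly the claimed estimate.

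I do not anticipate a genuine obstacle here; the real content has already been extracted in Lemmas \ref{ExpUpperBound} and \ref{ExpLowerBound}, whose proofs supply the $g$-uniformity (finiteness of the set $\tilde F$ of contracting elements and the uniform finite-to-one bound $N$). The only point requiring a modicum of care is to make sure the comparison constants genuinely do not depend on $g$: the upper bound constant comes from Lemma \ref{ExpUpperBound}, the lower bound constant from Lemma \ref{ExpLowerBound}, and the annulus-to-series comparison (\ref{AnnulusPoincareEQ}) depends only on $\Delta$; none of these involves $g$. One should also note that the hypothesis $\e H = \e\Gamma/2$ is what makes the exponents in the upper and lower bounds coincide — without it the two bounds would have different exponential rates and no such coarse equality could hold. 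If one wanted, the same argument shows more: every conjugate $H^g$ has purely exponential growth with the same rate $\e H$, uniformly in $g$.
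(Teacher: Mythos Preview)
Your proposal is correct and follows essentially the same approach as the paper's proof: rewrite each Poincar\'e series as a sum over annuli via (\ref{AnnulusPoincareEQ}), then insert the uniform two-sided growth bounds from Lemmas \ref{ExpUpperBound} and \ref{ExpLowerBound} (the hypothesis $\e H=\e\Gamma/2$ being exactly what aligns the two exponents). Your write-up is more explicit about the $g$-independence of the constants, but the argument is the same.
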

\begin{proof}
We can write for any $g\in \Gamma$ and any $s>\e H$,
$$
\sum_{h\in H} \mathrm{e}^{-sd(o, ghg^{-1}o)} \asymp \sum_{n\ge 1} \sharp (gHg^{-1}\cap A_\Gamma(o,n,\Delta)) \mathrm{e}^{-s n}
$$
where the implicit constant depends on the width $\Delta$ of the annulus, but is independent of $g$.

As $H$ contains contracting elements by Lemma \ref{ConfinedNonElementary}, we have the upper bound by Lemma \ref{ExpUpperBound}
$$
C \mathrm{e}^{n\e H}\le \sharp (gHg^{-1}\cap A_\Gamma(o,n,\Delta))\le C' \mathrm{e}^{n\e H}
$$
where the lower bound is given by Lemma \ref{ExpLowerBound}. The conclusion follows by substituting the growth estimates   into the above series. 
\end{proof}

\subsection{Confined subgroups of divergence type}

We say that a subset intersects trivially a subgroup if their intersection is contained in the trivial subgroup.

\begin{prop}\label{TightDivCase}
If  a discrete group $H<\isom(\U)$ of divergence type  is  confined by $\Gamma$ with a  finite confining subset $P$ that intersects trivially with $E(\Gamma)$, then $\e H>\e \Gamma/2$. 
\end{prop}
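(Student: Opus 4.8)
The plan is to argue by contradiction: suppose $\e H = \e \Gamma/2$. The goal is to derive a uniform bound $\|\mu_{go}\|\le M$ for all $g\in \Gamma$ on the masses of the (quasi-)conformal density of dimension $\e H$ built from the divergence-type action $H\act\U$, and then run a standard covering argument with the Shadow Principle (Lemma \ref{ShadowPrinciple}) to conclude $\e H\ge \e\Gamma$, which contradicts $\e H=\e\Gamma/2<\e\Gamma$ (the ambient action being non-elementary with a contracting element, so $\e\Gamma>0$).

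\textbf{Main steps.} First I would fix $s>\e H$ and, following the normal-subgroup argument in \cite{YANG22}, write the Patterson-type measure $\mu_{go}^{s,o}$ supported on $Ho$ and record the identity
\begin{equation*}
\|\mu_{go}^{s,go}\| = \frac{\p_H(s,go,go)}{\p_H(s,o,go)} = \left[\frac{\p_H(s,go,o)}{\p_{H^g}(s,o,o)}\right]^{-1},
\end{equation*}
using $\p_H(s,go,o)=\p_{g^{-1}Hg}(s,o,g^{-1}o)$ and relabeling. The crux is then Lemma \ref{EqualPSSeries}, whose hypothesis $\e H=\e\Gamma/2$ is exactly our contradiction assumption: it gives $\p_{H}(s,o,o)\asymp \p_{H^g}(s,o,o)$ uniformly in $g$. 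Combined with a coarse comparison $\p_H(s,go,o)\asymp \p_H(s,o,o)$ valid for $s$ near $\e H$ (which itself follows from the purely exponential growth of $\Gamma\act\U$ together with the growth estimates of Lemmas \ref{ExpUpperBound} and \ref{ExpLowerBound} applied to $H$), one obtains $\|\mu_{go}^{s,go}\|\asymp 1$ uniformly in $g\in\Gamma$, hence after passing to a weak-$*$ limit $s_i\to\e H$ a uniform bound $\|\mu_{go}\|\le M$. Here one should invoke Lemma \ref{Unique} (uniqueness up to bounded multiplicative constants of the $\e\Gamma$-dimensional, equivalently $2\e H$... — more precisely the $\e H$-dimensional density on $[\mG]$ coming from the divergence-type action $H\act\U$) to control the limiting object, so that the mass bound survives the limit.

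\textbf{Concluding the contradiction.} With $\|\mu_{go}\|\le M$ in hand, I would apply the Shadow Principle (Lemma \ref{ShadowPrinciple}): for all $x=o$, $y=go$ with $go\in A_\Gamma(o,n,\Delta)$ and $r\ge r_0$,
\begin{equation*}
\mu_o(\Pi_o^F(go,r)) \prec_{\lambda,\epsilon,r} \|\mu_{go}\|\, e^{-\e H\, d(o,go)} \le M e^{-\e H n}.
\end{equation*}
Since any given point of $\pU$ lies in at most a uniformly bounded number $C$ of shadows $\Pi_o^F(go,r)$ for $go\in A_\Gamma(o,n,\Delta)$ (bounded multiplicity of the annuli together with the fellow-travel/bounded-intersection geometry), summing gives $e^{-\e H n}\,\sharp A_\Gamma(o,n,\Delta)\prec CM$, whence $\sharp A_\Gamma(o,n,\Delta)\prec e^{\e H n}$ and $\e\Gamma\le \e H$. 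This contradicts $\e H=\e\Gamma/2$, so $\e H>\e\Gamma/2$.

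\textbf{Anticipated obstacle.} The delicate point is the uniform-in-$g$ comparison of Poincaré series and the passage to the limit: one must ensure that the implicit constants in $\p_H(s,o,o)\asymp\p_{H^g}(s,o,o)$ and in $\p_H(s,go,o)\asymp\p_H(s,o,o)$ are genuinely independent of $g$ (this is where the finiteness of $P$, the finiteness of the extension set $F$ from Lemma \ref{GoodConfiningSetP}, and purely exponential growth of $\Gamma$ all enter), and that the weak-$*$ limits of $\mu_{go}^{s_i,o}$ remain in the same quasi-conformal class so that Lemma \ref{Unique} applies with a $g$-independent constant. Handling the case where the limit measures a priori depend on the choice of subsequence $s_i\to\e H$, and checking that the mass estimate is inherited by every such limit, is the technical heart of the argument.
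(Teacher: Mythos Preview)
Your overall strategy matches the paper's: assume $\e H=\e\Gamma/2$, derive a uniform mass bound $\|\mu_{go}\|\le M$ for $g\in\Gamma$, then use the Shadow Principle and the bounded-multiplicity lemma (Lemma \ref{OverlapAnnulus}) to force $\e\Gamma\le\e H$. The concluding covering argument is fine.

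The gap is in how you obtain the mass bound. You claim a ``coarse comparison $\p_H(s,go,o)\asymp\p_H(s,o,o)$ valid for $s$ near $\e H$'' and say it follows from purely exponential growth of $\Gamma$ together with Lemmas \ref{ExpUpperBound} and \ref{ExpLowerBound}. This is unjustified and in fact circular: $\p_H(s,go,o)/\p_H(s,o,o)=\|\mu_{go}^{s,o}\|$, so this comparison \emph{is} the uniform mass bound you are trying to prove. Lemmas \ref{ExpUpperBound} and \ref{ExpLowerBound} control the annular growth of the \emph{conjugates} $gHg^{-1}$ at the basepoint $o$, i.e.\ $\p_{H^g}(s,o,o)$; they say nothing about the series $\p_H(s,go,o)=\sum_{h\in H}e^{-sd(o,g^{-1}ho)}$, which sums over the \emph{coset} $g^{-1}H$ and has no reason to be uniformly comparable to $\p_H(s,o,o)$.

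The paper closes this gap by a different mechanism. From your identity and Lemma \ref{EqualPSSeries} one gets only the \emph{reciprocal} relation
\[
\|\mu_{go}^{s,go}\|\;=\;\frac{\p_{H^g}(s,o,o)}{\p_H(s,go,o)}\;\asymp\;\frac{\p_H(s,o,o)}{\p_H(s,go,o)}\;=\;\|\mu_{go}^{s,o}\|^{-1}.
\]
Passing to weak-$*$ limits along a common sequence $s_i\searrow\e H$ yields two $H$--conformal densities $\{\mu_x^{go}\}$ and $\{\mu_x^o\}$ of dimension $\e H$ with $\|\mu_{go}^{go}\|\asymp\|\mu_{go}^{o}\|^{-1}$. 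Now the divergence-type hypothesis enters decisively: by Lemma \ref{Unique} the pushforwards to $[\mG]$ of any two such densities are boundedly equivalent with a constant depending only on $\lambda$, hence $\|\mu_{go}^{go}\|\asymp\|\mu_{go}^{o}\|$ uniformly in $g$. Multiplying the two relations gives $\|\mu_{go}^{o}\|^2\asymp 1$, which is the desired uniform bound. In your write-up Lemma \ref{Unique} appears only as a device to pass to the limit; its real role is to supply the \emph{second} independent relation between $\|\mu_{go}^{go}\|$ and $\|\mu_{go}^{o}\|$ that, combined with the reciprocal one, pins both down.
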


The proof   presented here is similar to the proof of \cite[Theorem 9.1(Case 1)]{YANG22}, where $H$ is assumed to be normal in $\Gamma$. We emphasize that $H$ is not necessarily to be contained in $\Gamma$. We need some preparation before starting the proof.

Fix $go\in \Gamma o$. Let $ \{\mu_x^{go}\}_{x\in\U} $ be a PS measure on the horofunction boundary $\bX$, which is an accumulation point of $\{\mu_{x}^{s,o}\}$ in (\ref{PattersonEQ}) supported on $Hgo$ for a sequence $s\searrow \e H$. This is a $\e H$--dimensional $H$--equivariant conformal density.   
 
As $H\act \U$ is of divergence type,  Lemma \ref{Unique} implies that push forwarding $ \{\mu_x^{go}\}_{x\in\U} $ for different $go\in \Gamma o$ almost gives the unique quasi-conformal density on the reduced horofunction boundary. That is to say, there are absolutely continuous with respect to each other,  with uniformly bounded  derivatives.  Now, the Patterson's construction  gives 
$$
\forall x\in \U:\quad \|\mu_x^{s, go}\| = \lim_{s\to \e H+} \frac{\p_H(s, x, go)}{\p_H(s, o, go)},\quad \|\mu_x^{s, o}\| = \lim_{s\to \e H+} \frac{\p_H(s, x, o)}{\p_H(s, o, o)}.
$$
Recall that $H^g=g^{-1}Hg$. From the definition of Poincar\'e series (\ref{PoincareEQ}), we have 
$$
\forall s>\e H:\quad \frac{\p_H(s, go, go)}{\p_H(s, o, go)} =  \Bigg[ \frac{\p_{H}(s, go, o)}{\p_{H^g}(s, o, o)}\Bigg]^{-1}
$$

On account of Lemma \ref{Unique}, we fix in   the following statement, a PS measure $ \{\mu_x\}_{x\in\U} $  on the horofunction boundary $\bX$. This is a limit point of $\{\mu_{x}^{s,o}\}$ in (\ref{PattersonEQ}) supported on $Ho$ for a sequence $s\searrow \e H$.
\begin{lem}\label{EqualMass}
If $\e H=\e \Gamma/2$, then there exists       a constant $M>0$ such that    
\begin{equation}\label{BoundedMassEQ}
\forall x\in \Gamma o: \quad \|\mu_x\|\le M    
\end{equation}
\end{lem}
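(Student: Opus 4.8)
\textbf{Proof plan for Lemma \ref{EqualMass}.}

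The plan is to argue by contradiction, exactly parallel to the normal-subgroup case in \cite{YANG22}, but feeding in the new growth estimates for conjugates. Assume $\e H = \e \Gamma/2$. Recall that $\mu_x$ is obtained as a limit point of the normalized Patterson measures $\mu_x^{s,o}$ as $s \searrow \e H$, and that for each $go\in \Gamma o$ one also has a PS measure $\{\mu_x^{go}\}_{x\in \U}$ supported on $Hgo$, which is a limit point of $\mu_x^{s,go}$. The starting identity is the elementary computation of masses from the definition of the Poincar\'e series: for $s > \e H$,
\begin{equation}\label{MassIdentityEQ}
\|\mu_{go}^{s, go}\| = \frac{\p_H(s, go, go)}{\p_H(s, o, go)} = \Bigg[\frac{\p_{H}(s, go, o)}{\p_{H^g}(s, o, o)}\Bigg]^{-1} = \frac{\p_{H^g}(s, o, o)}{\p_{H}(s, go, o)}.
\end{equation}
First I would observe that, since $H \act \U$ is of divergence type, Lemma \ref{Unique} (applied after pushing forward to the reduced horofunction boundary, where the $\e H$--dimensional quasi-conformal density is unique up to bounded multiplicative constants depending only on $\lambda$) shows that $\|\mu_{go}^{go}\|$ and $\|\mu_{go}\|$ differ by a uniform multiplicative constant. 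So it suffices to bound $\|\mu_{go}^{go}\|$, i.e.\ to bound the limit as $s\searrow \e H$ of the right-hand side of \eqref{MassIdentityEQ}.

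The key input is Lemma \ref{EqualPSSeries}: under the hypothesis $\e H = \e \Gamma/2$, for all $s > \e H$ and all $g \in \Gamma$,
$$
\p_{H^g}(s, o, o) = \sum_{h\in H}\mathrm{e}^{-sd(o, g^{-1}hg o)} \asymp \sum_{h\in H}\mathrm{e}^{-sd(o, ho)} = \p_H(s, o, o),
$$
with implicit constant independent of $g$ (here I use $d(o, g^{-1}hgo) = d(go, hgo)$ and the fact that Lemma \ref{EqualPSSeries} is symmetric under $g \leftrightarrow g^{-1}$). Thus the numerator of the right-hand side of \eqref{MassIdentityEQ} is comparable to $\p_H(s,o,o)$, uniformly in $g$. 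For the denominator I would use the elementary lower bound $\p_H(s, go, o) = \sum_{h\in H}\mathrm{e}^{-sd(go,ho)} \ge \mathrm{e}^{-sd(o,go)}\p_H(s,o,o)$, wait — that goes the wrong way; instead the useful direction is the triangle-inequality bound $\p_H(s,go,o) \ge \p_H(s,o,o)\cdot \mathrm{e}^{-s d(o,go)}$ for the \emph{lower} bound on the denominator, which combined with \eqref{MassIdentityEQ} would give an \emph{upper} bound $\|\mu_{go}^{s,go}\| \le C \mathrm{e}^{s d(o,go)}$ that blows up — not good enough. So the correct route, following \cite{YANG22} closely, is to run the contradiction differently: suppose $\|\mu_{g_no}\| \to \infty$ along some sequence $g_n o \in \Gamma o$; then from the comparison above $\p_{H}(s, g_no, o) / \p_H(s,o,o) \to 0$ as a ratio of limits, and one derives from Lemma \ref{EqualPSSeries} that $\p_H(\e H, g_no, o)$ is forced to be too small relative to what the Shadow Principle (Lemma \ref{ShadowPrinciple}, applied with $x = g_no$, $y = o$) permits: indeed the Shadow Principle gives $\mu_{g_no}(\Pi_{g_no}^F(o, r)) \asymp_{\lambda,\epsilon,r} \|\mu_o\|\mathrm{e}^{-\e H d(o, g_no)} = \mathrm{e}^{-\e H d(o,g_no)}$, while $\mu_{g_no}(\Pi_{g_no}^F(o,r)) \le \|\mu_{g_no}\|$, and crossing these with the mass identity and Lemma \ref{EqualPSSeries} pins $\|\mu_{g_no}\|$ between two constants. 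I would set this up carefully so that the divergence-type hypothesis enters precisely where one takes $s \searrow \e H$ and uses that $\p_H(\e H, o, o) = \infty$ to control the limiting ratios (via Lemma \ref{ConfinedDivergeAtHalf} one also knows $\p_H(\e\Gamma/2, o, o) = \infty$, consistent with the standing assumption).

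The main obstacle I anticipate is the bookkeeping in passing to the limit $s \searrow \e H$ in \eqref{MassIdentityEQ}: the masses $\|\mu_{go}^{s,go}\|$ are ratios of two Poincar\'e series each diverging at $s = \e H$, so one must extract the comparison \emph{before} taking limits and check that the implicit constants in Lemma \ref{EqualPSSeries} (which are uniform in $g$ but a priori depend on $s$ through the annular-decomposition width) can be taken uniform in $s$ as well — this is where the growth estimates $C\mathrm{e}^{n\e H} \le \sharp(gHg^{-1}\cap A_\Gamma(o,n,\Delta)) \le C'\mathrm{e}^{n\e H}$ from Lemmas \ref{ExpUpperBound} and \ref{ExpLowerBound} are essential, since they give $s$--independent control of the tails. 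Once that uniformity is in hand, combining the mass identity \eqref{MassIdentityEQ}, the comparison $\p_{H^g} \asymp \p_H$, the Shadow Principle, and the uniqueness Lemma \ref{Unique} yields the uniform bound $\|\mu_x\| \le M$ for all $x \in \Gamma o$, which is the desired contradiction with $\|\mu_{g_no}\|\to\infty$ and completes the proof.
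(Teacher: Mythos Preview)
Your plan has the right ingredients but misses the key closing step, and the detour through the Shadow Principle does not salvage it.

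You correctly set up the mass identity \eqref{MassIdentityEQ} and invoke Lemma \ref{EqualPSSeries} to replace $\p_{H^g}(s,o,o)$ by $\p_H(s,o,o)$ up to a uniform constant. This yields
\[
\|\mu_{go}^{s,go}\| \asymp \frac{\p_H(s,o,o)}{\p_H(s,go,o)} = \Bigg[\frac{\p_H(s,go,o)}{\p_H(s,o,o)}\Bigg]^{-1}.
\]
What you fail to recognize is that the ratio on the right is \emph{itself} the mass of a Patterson--Sullivan measure: by definition $\p_H(s,go,o)/\p_H(s,o,o) = \|\mu_{go}^{s,o}\|$. Passing to the limit along the same sequence $s\searrow \e H$, this becomes $\|\mu_{go}^{go}\| \asymp \|\mu_{go}^o\|^{-1}$. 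Now apply Lemma \ref{Unique} a \emph{second} time, not to compare $\mu_{go}^{go}$ with the fixed $\mu_{go}$, but to compare $\mu_{go}^{go}$ with $\mu_{go}^o$: both are $\e H$--dimensional $H$--quasi-conformal densities on $[\mG]$, so $\|\mu_{go}^{go}\|\asymp_\lambda \|\mu_{go}^o\|$. Combining the two relations gives $\|\mu_{go}^o\|^2$ uniformly bounded, hence $\|\mu_{go}^o\|$ bounded, and one more application of Lemma \ref{Unique} (now comparing $\mu^o$ with the fixed $\mu$) finishes. This is the self-referential trick that closes the argument in the paper, and it requires no contradiction and no Shadow Principle.

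Your attempted route through the Shadow Principle does not work: applying Lemma \ref{ShadowPrinciple} with $x=g_no$, $y=o$ gives $\mu_{g_no}(\Pi_{g_no}^F(o,r)) \asymp \|\mu_o\|\mathrm{e}^{-\e H d(o,g_no)} = \mathrm{e}^{-\e H d(o,g_no)}$, and bounding this above by $\|\mu_{g_no}\|$ only yields $\|\mu_{g_no}\| \succ \mathrm{e}^{-\e H d(o,g_no)}$, which is vacuous. Swapping the roles of $x$ and $y$ gives $\|\mu_{g_no}\| \asymp \mathrm{e}^{\e H d(o,g_no)}\mu_o(\Pi_o^F(g_no,r))$, from which $\mu_o\le 1$ only produces the blowing-up upper bound you already rejected. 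There is no way to ``cross'' these with the mass identity to pin $\|\mu_{g_no}\|$ between constants without the missing observation above.
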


\begin{proof}

By Lemma \ref{EqualPSSeries}, if $\e H=\e \Gamma/2$, then $\p_{H^g}(s, o, o)\asymp \p_{H}(s, o, o)$ for $s>\e H$. Hence, we have 
$$
\forall s>\e H:\quad \frac{\p_H(s, go, go)}{\p_H(s, o, go)} \asymp  \Bigg[ \frac{\p_{H}(s, go, o)}{\p_{H}(s, o, o)}\Bigg]^{-1}
$$
where the implicit constant does not depend on $g$.

For given $go\in \Gamma o$, let us take the same sequence of $s$ (depending on $go$) so that   $\{\mu_{x}^{s, go}\}_{x\in\U}$  and $\{\mu_{x}^{s, o}\}_{x\in\U}$  converge to an $H$--conformal density $ \{\mu_x^{go}\}_{x\in\U} $ and $H$--conformal density $ \{\mu_x^{o}\}_{x\in\U} $  respectively. The above relation then gives 
\begin{equation}\label{MassEqualityEQ}\|\mu_{go}^{go} \|\asymp \|\mu_{go}^o \|^{-1}
\end{equation}

Push forward the limiting measures $ \{ \mu_x^o\}_{x\in\U} $ and $ \{\mu_x^{go}\}_{x\in\U} $  to  the  quasi-conformal densities on the quotient $[\partial_m H]$, which we keep by the same notation.  The constant $\lambda$ in Definition \ref{ConformalDensityDefn} is universal for any $ \{\mu_x^{go}\}_{x\in\U} $ with $go\in \Gamma o$, as the difference of two horofunctions in the same locus of a Myrberg point is universal. Thus, Lemma \ref{Unique}  gives a constant $M_0=M(\lambda)$ independent of  $go\in \Gamma o$: $$M^{-1} \|\mu_{go}^{go}\| \le \| \mu_{go}^o\|\le M \|\mu_{go}^{go}\|$$
so   the relation (\ref{MassEqualityEQ}) implies $\|\mu_{go}^o\|^2\le M$.  

Recall that $\{\mu_x\}$ and $\{\mu_x^o\}$ may be different limit points of $\{\mu_{x}^{s, o}\}_{x\in\U}$, where $\{\mu_x\}$ is the PS measure fixed at the beginning of the proof. However, applying Lemma \ref{Unique} again gives  $\|\mu_{x}\|\asymp_\lambda \|\mu_{x}^o\|$ for any $x\in \U$ and thus for $x=go$.  Combinning with the above bound $\|\mu_{go}^o\|^2\le M$, we obtain the desired upper bound on $\|\mu_{go}\|$  in (\ref{BoundedMassEQ}) depending only on $\lambda, M_0$. The proof   is completed.
\end{proof} 

We now recall the following result, which says that a boundary point lies in a uniformly finite many  shadows from a fixed annulus.   
\begin{lem}\label{OverlapAnnulus}\cite[Lemma. 6.8]{YANG22}
For given $\Delta>0$, there exists $N=N(\Delta)$ such that  for every $n\ge 1$,  any $\xi\in \pU$ is contained in at most $N$   shadows $\Pi_{o}^F(v, r)$ where $v\in A_\Gamma(o,n,\Delta)$.
\end{lem}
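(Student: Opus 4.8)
The plan is to reduce the assertion to a pairwise estimate: it suffices to produce a constant $D_1=D_1(\Delta)$ so that whenever $\xi\in\Pi_{o}^F(u,r)\cap\Pi_{o}^F(v,r)$ with $u,v\in A_\Gamma(o,n,\Delta)$ one has $d(uo,vo)\le D_1$. Granting this, fix any $u_0$ with $\xi\in\Pi_{o}^F(u_0,r)$; then $vo\in B(u_0o,D_1)$ for every $v\in A_\Gamma(o,n,\Delta)$ whose shadow contains $\xi$, and since $\Gamma$ acts properly the ball $B(u_0o,D_1)$ meets $\Gamma o$ in at most $N_0=N_0(D_1)$ points, each the image of at most $|\Gamma_o|$ elements of $\Gamma$; counting also the finitely many barrier colours $f\in F$, this gives $N:=N_0\,|\Gamma_o|\,|F|$, depending only on $\Delta$ once $r,C,F$ are fixed.

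For the pairwise estimate, recall that a barrier at $u$ on a geodesic $\alpha=[o,z]$ means $\alpha$ passes within $r$ of both $uo$ and $ufo$ for some $f\in F$; since $d(uo,ufo)\in[L,L']$ with $L=\min_{f\in F}d(o,fo)$ and $L'=\max_{f\in F}d(o,fo)$, the subsegment $Y_u:=[uo,ufo]$ is, by the Morse property, a $C'$--contracting geodesic of length $\le L'$ along which $\alpha$ fellow-travels over length $\ge L-2r$, met by $\alpha$ at arc-length (from $o$) inside the window $W_n:=[\,n-\Delta-L'-2r,\;n+\Delta+L'+2r\,]$ because $d(o,uo)\in[n-\Delta,n+\Delta]$. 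Now suppose $\xi$ lies in shadows at $u_1,\dots,u_N$ from $A_\Gamma(o,n,\Delta)$, with $\gamma_k=[o,z_{k,m_k}]$ a geodesic carrying a barrier at $u_k$ and $z_{k,m_k}\to\xi$. If $\xi$ is non-pinched, for each pair $(k,j)$ the segment $[z_{k,m_k},z_{j,m_j}]$ escapes by Definition \ref{ConvBdryDefn}(C), hence for large indices misses $N_{C'}(Y_j)$ (a bounded set at distance $\approx n$ from $o$); applying the contracting property of $Y_j$ in the triangle $\Delta(o,z_{k,m_k},z_{j,m_j})$, with $L-2r$ large (achieved by replacing $F$ with a high power), forces $\gamma_k$ itself to pass through $N_{C'}(Y_j)$, at arc-length again in a window of size $O(\Delta,L',C')$ about $n$. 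Therefore $u_ko$ and $u_jo$ each lie within $C'+L'+r$ of a point of $\gamma_k$ whose arc-length is in this window, so $d(u_ko,u_jo)\le D_1$ for all $k,j$; this is the pairwise bound, and it even shows all the $u_ko$ lie in one ball of radius $D_1$, so by properness $N$ itself is bounded.

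The main obstacle is removing the non-pinched hypothesis on $\xi$, i.e.\ passing from "geodesics toward a common non-pinched class fellow-travel on a long initial segment" to the general boundary point. Here one cannot compare $\gamma_k$ and $\gamma_j$ directly, since a single point of $\U$ near $\xi$ need not realise both barriers at once; instead one unwinds the shadow $\Pi_{o}^F(u,r)$ through the escaping sequence of contracting quasi-geodesics underlying it and invokes Assumption (B) of Definition \ref{ConvBdryDefn} together with the uniform fellow-travel property of admissible paths (Proposition \ref{admisProp}) to reduce, along a diagonal subsequence, to exactly the configuration treated above. This is the bookkeeping carried out in \cite[Lemma 6.8]{YANG22}, and I would follow that argument once the geometric picture of the second paragraph is in place; the counting conclusion of the first paragraph then finishes the proof.
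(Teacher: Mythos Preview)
The paper does not give its own proof of this lemma; it is quoted verbatim from \cite[Lemma 6.8]{YANG22} as an input. So there is no in-paper argument to compare against, and your outline is in fact a reasonable reconstruction of what that external proof does: reduce to a pairwise bound on $d(u_ko,u_jo)$ via the contracting barrier axes, and then count using properness of $\Gamma\act\U$.

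Two comments on the execution. First, a small correction: when you run the triangle argument you should use the full axis $X_j=u_j\ax(f_j)$ as the $C$--contracting set, not the short segment $Y_j=[u_jo,u_jf_jo]$; subsegments of contracting sets are not $C$--contracting with the same constant, so ``the contracting property of $Y_j$'' is not quite the right invocation. Once you use $X_j$, the escaping of $[z_{k,m_k},z_{j,m_j}]$ only tells you it misses a bounded piece of $N_C(X_j)$, and you still need to check that $\pi_{X_j}(z_{k,m_k})$ lands on the far side of that piece from $\pi_{X_j}(o)$. This is routine, but your write-up skips it.

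Second, your treatment of general (possibly pinched) $\xi$ is circular: you defer to \cite[Lemma 6.8]{YANG22}, which is exactly the result being cited. This is not a real defect for the present paper, however. In every application here (notably the proof of Proposition~\ref{TightDivCase}) the lemma is only used at points in the support of the PS measure, which are conical and in particular non-pinched; and in the primary setting of the horofunction boundary, Theorem~\ref{HorobdryConvergence} says every boundary point is non-pinched, so Assumption~(C) is available everywhere and your second paragraph already suffices.
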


We are ready to complete the proof of Proposition \ref{TightDivCase}.
\begin{proof}[Proof of Proposition \ref{TightDivCase}]
We assume $\e H=\e \Gamma/2$ by way of contradiction, so Lemma \ref{EqualMass} could apply. With the Shadow Principal     \ref{ShadowPrinciple}, we obtain 
$$
\mu_o(\Pi_o^F(go, r)) \asymp \mathrm{e}^{-\e H d(o, go)}$$
for any $go\in \Gamma o$. The same argument as in \cite[Prop. 6.6]{YANG22} proves $\e H\ge \e \Gamma$. Indeed,  by Lemma \ref{OverlapAnnulus},  that every   point $\xi\in \hU$ are contained in at most $N_0$ shadows for some uniform $N_0>0$.  Hence,
$$
\sum_{v\in A_\Gamma(o, n,\Delta)} \mu_o(\Pi_{o}^F(v, r))\le N_0 \mu_o(\hU)\le N_0
$$ 
which by the Shadow Lemma \ref{ShadowLem} gives $\sharp A_\Gamma(o,n,\Delta)\prec \mathrm{e}^{n\e H}$, and thus $\e \Gamma \le \e H$ gives  a contradiction.    
\end{proof}

\subsection{Completion of proof of Theorem \ref{ConvTightThm}}
First of all, the non-strict inequality $\e H\geq  \frac{\e G}{2}$  follows from Corollary \ref{ConfinedDivergeAtHalf}.  The difficulty is the strict part of this inequality.

If $H\act \U$ is of divergence type, then the conclusion follows by Proposition \ref{TightDivCase}. If $H\act \U$ is of convergence type, then 
$$
\sum_{h\in H} \mathrm{e}^{-\e G d(o,ho)/2}<\infty$$
and assume for the contradiction that    $\e H= \frac{\e G}{2}$. Let  $K$ be given by Lemma \ref{ConjugateDivSeries}, so we obtain  that
$$
\sum_{h\in H} \mathrm{e}^{-\e G d(o,ho)/2} \ge \sum_{g\in K} \mathrm{e}^{-\e G d(o,gp_gg^{-1}o)/2} \succ \sum_{g\in K} \mathrm{e}^{-\e G d(o,go)}
$$ 
This contradicts  the divergence action of $G\act \U$. This completes the proof of Theorem \ref{ConvTightThm}.

\section{Growth and co-growth Inequality}\label{secinequality}
Under the setup of \textsection\ref{secshadow}, we now prove Theorem \ref{CoulonInequality} following closely Coulon's argument in \cite{Coulon22}. 

Again, $H$ is only assumed to be confined by $\Gamma$, but not necessarily contained in $\Gamma$. Denote $\Gamma/H=\{Hg: g\in \Gamma\}$ the collection of right $H$--cosets with representatives in $\Gamma$. Note that  $H$ and $Hg$ may  be not necessarily  contained entirely in $\Gamma$. Consider the Hilbert space $\mathcal H=\ell^2(\Gamma/H)$ with over the coset space $\Gamma/H$.

For each $t>0$, define 
$$
\begin{aligned}
\varphi_t:\quad \Gamma/H\quad\longrightarrow&\quad\mathbb R\\
  Hg\quad\longmapsto   &\quad \mathrm{e}^{-td(Ho, Hgo)}  
\end{aligned}
$$
Recall that $\e{\Gamma/H}$ is the convergence radius of the series
$$
\sum_{g\in \Gamma}\mathrm{e}^{-td(Ho, Hgo)}
$$
Thus, if $t>\e{\Gamma/H}/2$ then $\varphi_t\in \mathcal H$; if $t<\e{\Gamma/H}/2$ then $\varphi_t\notin \mathcal H$.
For each $s>0$, define 
$$
\begin{aligned}
\phi_s:\quad \Gamma/H\quad\longrightarrow &\quad \mathbb R\\
  Hg\quad\longmapsto  &\quad \sum_{h\in H} \mathrm{e}^{-sd(o, hgo)}  
\end{aligned}
$$
We now prove the following. 
\begin{lem}
For any $s>\e G$, we have $\phi_s\in \mathcal H$.    
\end{lem}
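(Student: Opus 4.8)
The plan is to show $\|\phi_s\|_{\mathcal H}^2 = \sum_{g \in \Gamma} \phi_s(Hg)^2 < \infty$ by rewriting the sum over cosets and applying the Shadow Principle in the form of Lemma \ref{lem:shadowLemCogrowth}, exactly along the lines of Coulon's argument. First I would fix, for each coset $Hg \in \Gamma/H$, a representative $g \in \Gamma$; then $\phi_s(Hg) = \sum_{h \in H} e^{-s d(o, hgo)}$, and this is comparable to $\|\mu^{s,o}_{go}\| \cdot \mathcal P_H(s,o,o)$, the total mass of the Patterson measure of dimension $s$ based at $go$ (up to the factor $\mathcal P_H(s,o,o)$ which is a finite constant since $s > \e G \ge \e H$). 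So up to a multiplicative constant depending only on $s$, $\|\phi_s\|_{\mathcal H}^2$ is comparable to $\sum_{g} \|\mu^{s,o}_{go}\|^2$, where $g$ runs over a set of coset representatives.

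Next I would use Lemma \ref{lem:shadowLemCogrowth}: with the $s$-dimensional $H$-conformal density $\{\mu^{s,o}_x\}$, for $r \ge r_0$ one has $\mu^{s,o}_{o}(\Omega^F_o(go,r)) \succ_\lambda \|\mu^{s,o}_{go}\| e^{-s d(o,go)}$. The key structural input is that the cones $\Omega^F_o(go,r)$, as $Hg$ ranges over \emph{distinct} cosets and $g$ is chosen to (nearly) realize $d(Ho,Hgo)$, have bounded overlap in the sense of Lemma \ref{OverlapAnnulus}: any point of $\pU$ lies in only boundedly many shadows $\Pi_o^F(v,r)$ with $v$ in a fixed annulus $A_\Gamma(o,n,\Delta)$. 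Summing the lower bound over cosets and grouping by $n = \lfloor d(Ho,Hgo) \rfloor$, bounded overlap gives
$$
\sum_{Hg \in \Gamma/H} \|\mu^{s,o}_{go}\| e^{-s d(o,go)} \;\prec\; \sum_{n \ge 0} N(\Delta)\, \mu^{s,o}_o(\pU) \;\cdot\; (\text{something finite}),
$$
but more carefully one wants to bound $\sum_g \|\mu_{go}^{s,o}\|^2$, so I would instead pair the Shadow Principle lower bound $\|\mu_{go}^{s,o}\| e^{-s d(o,go)} \prec \mu^{s,o}_o(\Omega^F_o(go,r))$ with the a priori bound $\|\mu_{go}^{s,o}\| \prec e^{s d(o, go)} \|\mu_o^{s,o}\|$ (conformality) and with $d(o,go) \le d(Ho,Hgo) + 2d(o,\cdot)$-type comparisons, reducing $\sum_g \|\mu_{go}^{s,o}\|^2$ to a constant times $\sum_g \mu_o^{s,o}(\Omega_o^F(go,r)) \le N(\Delta) \sum_n \mu_o^{s,o}(\pU)$ — and here the crucial point is that $s > \e G$ forces the relevant geometric series in $n$ (coming from $\sharp A_\Gamma(o,n,\Delta) \prec e^{\e \Gamma n}$, purely exponential growth) to converge, since the shadow masses decay like $e^{-s n}$ while the number of annular representatives grows like $e^{\e \Gamma n}$ and $s > \e\Gamma = \e G$.

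The main obstacle, and the step requiring care, is organizing the double counting correctly: one must choose coset representatives $g$ so that $d(o,go) \approx d(Ho,Hgo)$, verify that distinct cosets give ``essentially distinct'' barriers/shadows so that Lemma \ref{OverlapAnnulus} applies (this is where the finiteness of $P$, the triviality of $P \cap E(\Gamma)$, and the admissible-path machinery of Lemma \ref{EllipticRadical}/Lemma \ref{GoodConfiningSetP} enter, to guarantee the relevant elements $gfpf^{-1}g^{-1} \in H$ produce genuine barriers on geodesics to $go$), and then sum over annuli using purely exponential growth of $\Gamma \act \U$ together with $s > \e\Gamma$. Once the inequality $\|\phi_s\|_{\mathcal H} < \infty$ is established, the paper will combine it with the obvious estimate $\langle \varphi_t, \phi_s\rangle$-type pairing (or a Cauchy–Schwarz / operator-norm argument as in Coulon) to deduce: if $\phi_s \in \mathcal H$ for all $s > \e G$ but $\varphi_t \notin \mathcal H$ for $t < \e{\Gamma/H}/2$, then the interaction between the two forces $\e H + \e{\Gamma/H}/2 \ge \e G$. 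So for the present lemma, I would present only the summation argument above; I do not expect any genuinely new idea beyond a faithful adaptation of \cite{Coulon22}, with the Shadow Principle for the confined (non-normal) subgroup $H$ supplied by Lemma \ref{lem:shadowLemCogrowth} in place of Coulon's normal-subgroup shadow lemma.
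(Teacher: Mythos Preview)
Your proposal has the right ingredients but misses the key algebraic step, and the attempted workaround leads to a divergent sum. You correctly observe $\phi_s(Hg)=\mathcal P_H(s,o,o)\,\|\mu_{go}^s\|$, so $\|\phi_s\|^2=\mathcal P_H(s,o,o)^2\sum_{Hg}\|\mu_{go}^s\|^2$. But trying to bound $\|\mu_{go}^s\|^2$ by pairing the Shadow Principle with the crude conformality bound $\|\mu_{go}^s\|\prec e^{sd(o,go)}$ gives $\|\mu_{go}^s\|^2\prec e^{2sd(o,go)}\mu_o^s(\Omega_o^F(go,r))$; after bounded overlap this still leaves a factor $e^{2sn}$ in the annular sum, which diverges. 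Your appeal to ``$\sharp A_\Gamma(o,n,\Delta)\prec e^{\e\Gamma n}$ and $s>\e\Gamma$'' does not rescue this, because the shadows are measured by $\mu_o^s$ supported on $Ho$, not by a counting measure on $\Gamma o$.

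The paper's step you are missing is to \emph{unfold the square over cosets into a single sum over all of $\Gamma$}: since $\|\mu_{go}^s\|$ is constant on each coset and $\mathcal P_H(s,o,o)\|\mu_{go}^s\|=\sum_{g'\in Hg}e^{-sd(o,g'o)}$, one gets
\[
\|\phi_s\|^2=\mathcal P_H(s,o,o)\sum_{g\in\Gamma}e^{-sd(o,go)}\|\mu_{go}^s\|.
\]
Now each summand is exactly $\|\mu_{go}^s\|e^{-sd(o,go)}\prec\mu_o^s(\Omega_o^F(go,r))$ by Lemma~\ref{lem:shadowLemCogrowth}; bounded overlap over $A_\Gamma(o,n,\Delta)$ (not over coset representatives) gives $\sum_{g\in A_\Gamma(o,n,\Delta)}\mu_o^s(\Omega_o^F(go,r))\prec\mu_o^s(\{ho:d(o,ho)>n\})$, and summing over $n$ yields $\sum_{h\in H}d(o,ho)e^{-sd(o,ho)}$, the derivative of the $H$--Poincar\'e series. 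This is finite for $s>\e H$ --- no purely exponential growth of $\Gamma$ is used (the ambient setup of \S\ref{secinequality} only assumes divergence type), and the confining machinery enters only through the Shadow Principle already established.
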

\begin{proof}
Consider the PS-measure $\mu_x^s$ for $s>\e H$ supported on $Ho$:
$$
\mu^s_x=  \frac{1}{\p_H(s, o, o)} \sum\limits_{h \in H} e^{-sd(x, ho)} \cdot \dirac{ho}.
$$
We compute the norm 
\begin{equation}\label{phiNormEq}
\begin{aligned}
\|\phi_s\|^2&=\sum_{\underset{g_1,g_2\in \Gamma}{Hg_1=Hg_2}} \mathrm{e}^{-s(d(o,g_1o)+d(o,g_2o))}\\
&=\sum_{g\in \Gamma}  \mathrm{e}^{-sd(o,go)} \left(\sum_{h\in H} \mathrm{e}^{-sd(go, ho)}\right)\\
& = \p_G(s, o, o) \sum_{g\in \Gamma}  \mathrm{e}^{-sd(o,go)}   \|\mu_{go}^s\|   
\end{aligned}
\end{equation}
   
{By Shadow Principle \ref{lem:shadowLemCogrowth}, we have 
$$
\|\mu_{go}^s\| \mathrm{e}^{-sd(o,go)}    \prec  \mu_{o}^s(\Omega_o^F(go, r))
$$}

Given $n\ge 1$,  any element in the following set  $$\Omega(o,n)=\{h\in G: d(o,ho)>n\}$$ is  contained at most  $N$ members from the family of cones $\{\Omega_o^F(go, r):g\in A_\Gamma(o, n, \Delta)\}$, where  $N$ is a uniform number depending on $r,\Delta$.  

It follows that 
$$
\sum_{g\in A(o, n, \Delta)} \|\mu_{go}^s\| \mathrm{e}^{-sd(o,go)}    \prec \sum_{g\in A(o, n, \Delta)}  \mu_{o}^s(\Omega_o^F(go, r)) \prec \mu_o^s( \Omega(o,n))
$$
From (\ref{phiNormEq}), we thus obtain 
$$
\begin{aligned}
\|\phi_s\|^2 &\prec \sum_{n\ge 1} \left( \sum_{h\in\Omega(o,n)} \mathrm{e}^{-sd(o, ho)}\right)   \\
& \prec   \sum_{h\in\Omega(o,n)} d(o,ho) \mathrm{e}^{-sd(o, ho)}.
\end{aligned}
$$
The last term is the derivative of the Poincar\'e series $\p_H(s,o,o)$, so is finite for $s>\e H$. 
\end{proof}

Let $s>\e H$ and $t>\e {\Gamma/H}/2$. Recall that $\p_\Gamma(s+t,o,o)=\sum_{g\in \Gamma} \mathrm{e}^{-(s+t)d(o,go)}$.  Note that $\Gamma$ may be properly contained in the union of $Hg$ over $g\in \Gamma$. Summing up the elements of $\Gamma$ in the same $Hg$ and noting $d(Ho, Hgo)\le d(o,Hgo)$, we have  $$\p_\Gamma(s+t,o,o)\le \sum_{Hg\in \Gamma/H} \left(\mathrm{e}^{-sd(Ho,Hgo)} \sum_{h\in H} \mathrm{e}^{-td(o,hgo)}\right)$$ 
The Cauchy-Schwartz inequality gives the finiteness on the scalar product of $\phi_s$ and $\varphi_t$:
$$
\begin{aligned}
(\phi_s, \varphi_r) &=\sum_{Hg\in \Gamma/H}  \left(\mathrm{e}^{-td(Ho, Hgo)} \sum_{h\in G} \mathrm{e}^{-td(o, hgo)}\right)\\
&\le \|\phi_s\|^2\|\varphi_t\|^2 <\infty  
\end{aligned}
$$
This implies that $s+t\ge \e H$ and thus $\e H+\e {\Gamma/H}/2 \ge \e \Gamma$.
Theorem \ref{CoulonInequality} is proved.

\part{More about Hopf decomposition, and a relation with quotient growth} 

\section{Characterization of maximal quotient growth}\label{secmaxgrowth}

In this section, we relate the conservative/dissipative boundary actions to the growth of quotient spaces. The main result is a dichotomy of quotient growth for any subgroup in a hyperbolic group, where the slower growth is equivalent to the conservative action on the Gromov boundary.  

We start with a general development on the Dirichlet domain and its relation with small horospheric limit set.

\subsection{Dirichlet domain and small horospheric limit set}
The construction of Dirichlet domain tessellates the real hyperbolic spaces into  convex polyhedrons (also known as \textit{Voronoi tessellation}), so provides an important tool to study  Kleinian groups with Poincar\'e polyhedron theorem. In general, the Dirichlet domain can fail to be convex even in other symmetric spaces. The construction of Dirichlet domain is general and relies purely on the metric; in particular, it can be discussed in the setting of coarse metric geometry. Notably, the Nielsen spanning tree is a Dirichlet domain in disguise in the work on free groups of \cite{GKN}.
We first give some variant of the Dirichlet domain in our setting, which relates to the small horospheric limit set (Definition \ref{HoroLimitPtsDef}). This is a key tool for analyzing the quotient growth. 

Assume that $H$ acts properly on a proper geodesic space $\U$ with a contracting element.
Fix a basepoint $o\in \U$. Without loss of generality, we may assume that the stabilizer of $o$ in $H$ is trivial: as we can always embed isometrically $\U$ into a larger one (say, attaching a cone at a point with nontrivial stabilizer to make the finite group action free on the base).

Given a (possibly negative) real number $R$, let $\mathbf D_R(o)$ be the set of points $x\in \U$ that are $R$--closer to $o$ than any point in $Ho$. Namely, $x\in \mathbf D_R(o)$ if and only if $d(x,o)\le d(x, Ho)+R$. Equivalently, $\mathbf D_R(o)=\cap_{1\ne h\in H} \overleftarrow{\mathbf H}_R(o, ho)$ is a countable intersection of  \textit{$R$--half spaces} defined as follows $$\overleftarrow{\mathbf H}_R(x, y):=\{z\in\U: d(z,x)\le d(z,y)+R\}$$  This forms a locally finite family of closed sets (via the same proof of \cite[Theorem 6.6.13]{Rat06}), so  $\mathbf D_R(o)$ is a closed subset. It is obvious that $\mathbf D_R(o)\subseteq \mathbf D_{R'}(o)$ for $R\le R'$, and $\mathbf D_{-R}(o)\subseteq \mathbf D_{0}(o)\subseteq \mathbf D_{R}(o)$ for $R>0$. See Fig. \ref{fig:Dirichletdomain} for an illustration of these notions. 

We remark that the introduction of $\mathbf D_R(o)$ with negative $R$ is an essential novelty here, which is crucial in the Claim 2 in proof of Theorem \ref{CharConsAction}.
\\
\paragraph{\textbf{Dirichlet domain}} For $R=0$,  $\mathbf D(o):=\mathbf D_0(o)$ is the so-called Dirichlet domain centered at $o$ for the action $H\act \U$. It is a fundamental domain in the following sense: $H\cdot \mathbf D(o)=\U$ and $h\cdot \textrm{Int}(\mathbf D(o))\cap \textrm{Int}(\mathbf D(o))=\emptyset$ for any $h\ne 1$. Denote by $\pi: \U\to \U/H$ the  quotient map, whose quotient topology is induced by the metric $\bar d$.  By \cite[Theorem 6.6.7]{Rat06}, $\U/H$ is homeomorphic to $\pi(\mathbf D(o))$, where the latter is equipped with quotient topology via the restriction $\pi: \mathbf D(o)\to \pi(\mathbf D(o))$. See \cite[Ch. 6]{Rat06} for relevant discussion.

In a real hyperbolic space, the notion of a bisector  is useful in analyzing the Dirichlet domain, as it intersects the convex polyhedron $\mathbf{D}(o)$ in faces. We here adopt a more general version of bisectors. For $R\ge 0$, set $\mathrm{Bis}(x,y;R):=\{z\in\U: |d(z,x)-d(z,y)|\le R\}$, and then $\overleftarrow{\mathbf H}_R(x, y)=\overleftarrow{\mathbf H}_0(x, y)\cup \mathrm{Bis}(x,y;R)$.  
\begin{examples}
Here are two examples to clarify the notion of bisectors.
\begin{enumerate}
    \item In trees,  it is readily checked that         $\overleftarrow{\mathbf H}_{R}(x, y)$ is not contained in  $N_T(\overleftarrow{\mathbf H}_{0}(x, y))$ for any $T>0$. Indeed, the limit set of the former set properly contains that of the latter set  in the end boundary.  
    \item 
    In a simply connected negatively pinched Riemannian manifold, it can be shown that $\mathrm{Bis}(x,y;R)$ has finite Hausdorff distance (depending on $R$) to $\mathrm{Bis}(x,y;0)$ (using crucially the lower bound of curvature, \emph{i.e.} the fatness of the comparison triangle). It follows that $\mathrm{Bis}(x,y;R)$ is quasiconvex and its limit set remains the same  for any $R\ge 0$.
\end{enumerate}    
    
\end{examples}
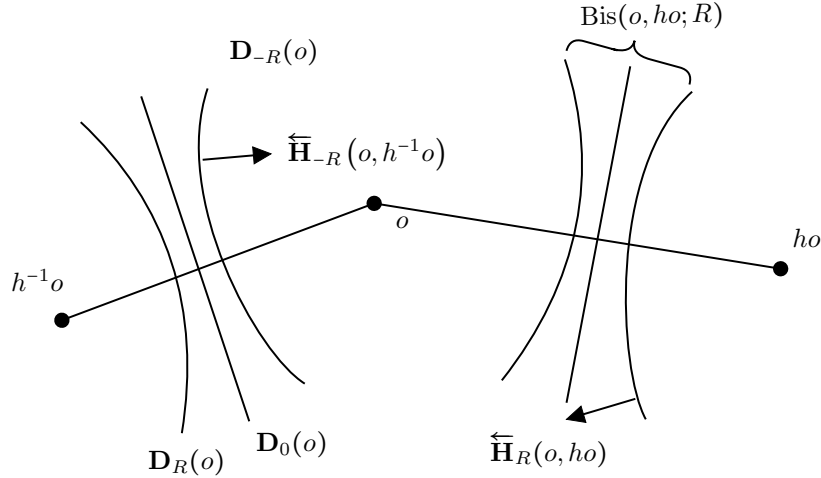
\begin{figure}
    \centering

\tikzset{every picture/.style={line width=0.75pt}} 

\begin{tikzpicture}[x=0.75pt,y=0.75pt,yscale=-1,xscale=1]

\draw    (188.5,53) .. controls (170,108) and (213.5,187) .. (237.5,202) ;
\draw    (124.5,70) .. controls (168.5,111) and (185.5,165) .. (175.5,227) ;
\draw    (155,57) -- (209.5,221) ;
\draw    (432.86,54.63) .. controls (389.95,93.69) and (395.82,195.22) .. (409.5,220) ;
\draw    (368.68,38.36) .. controls (387.18,95.58) and (375.77,151.03) .. (336.88,200.34) ;
\draw    (401.65,41.83) -- (369.51,211.64) ;
\draw    (115,170) -- (272.5,111) ;
\draw [shift={(115,170)}, rotate = 339.46] [color={rgb, 255:red, 0; green, 0; blue, 0 }  ][fill={rgb, 255:red, 0; green, 0; blue, 0 }  ][line width=0.75]      (0, 0) circle [x radius= 3.35, y radius= 3.35]   ;
\draw    (272.5,111) -- (477.5,144) ;
\draw [shift={(477.5,144)}, rotate = 9.14] [color={rgb, 255:red, 0; green, 0; blue, 0 }  ][fill={rgb, 255:red, 0; green, 0; blue, 0 }  ][line width=0.75]      (0, 0) circle [x radius= 3.35, y radius= 3.35]   ;
\draw [shift={(272.5,111)}, rotate = 9.14] [color={rgb, 255:red, 0; green, 0; blue, 0 }  ][fill={rgb, 255:red, 0; green, 0; blue, 0 }  ][line width=0.75]      (0, 0) circle [x radius= 3.35, y radius= 3.35]   ;
\draw    (186,89) -- (217.51,86.26) ;
\draw [shift={(220.5,86)}, rotate = 175.03] [fill={rgb, 255:red, 0; green, 0; blue, 0 }  ][line width=0.08]  [draw opacity=0] (8.93,-4.29) -- (0,0) -- (8.93,4.29) -- cycle    ;
\draw    (404.22,209.65) -- (372.37,219.14) ;
\draw [shift={(369.5,220)}, rotate = 343.4] [fill={rgb, 255:red, 0; green, 0; blue, 0 }  ][line width=0.08]  [draw opacity=0] (8.93,-4.29) -- (0,0) -- (8.93,4.29) -- cycle    ;
\draw  [line width=0.75]  (431.5,51) .. controls (432.73,46.5) and (431.1,43.63) .. (426.6,42.4) -- (411.65,38.3) .. controls (405.22,36.54) and (402.63,33.41) .. (403.86,28.91) .. controls (402.63,33.41) and (398.79,34.78) .. (392.36,33.01)(395.26,33.81) -- (378.1,29.1) .. controls (373.6,27.87) and (370.73,29.5) .. (369.5,34) ;

\draw (282,116.4) node [anchor=north west][inner sep=0.75pt]    {$o$};
\draw (88,142.4) node [anchor=north west][inner sep=0.75pt]    {$h^{-1} o$};
\draw (482,122.4) node [anchor=north west][inner sep=0.75pt]    {$ho$};
\draw (157,233.4) node [anchor=north west][inner sep=0.75pt]    {$\mathbf{D}_{R}( o)$};
\draw (198,26.4) node [anchor=north west][inner sep=0.75pt]    {$\mathbf{D}_{-R}( o)$};
\draw (211.5,224.4) node [anchor=north west][inner sep=0.75pt]    {$\mathbf{D}_{0}( o)$};
\draw (227,73.4) node [anchor=north west][inner sep=0.75pt]    {$\overleftarrow{\mathbf{H}}_{-R}\left( o,h^{-1} o\right)$};
\draw (329,224.4) node [anchor=north west][inner sep=0.75pt]    {$\overleftarrow{\mathbf{H}}_{R}( o,ho)$};
\draw (374.86,9.25) node [anchor=north west][inner sep=0.75pt]  [rotate=-358.87]  {$\mathrm{Bis}( o,ho;R)$};

\end{tikzpicture}
    \caption{Illustrating $R$--half spaces for possible $R$ and their resulted Dirichlet domains}
    \label{fig:Dirichletdomain}
\end{figure}
Let $S^{+R}$ denote the closed $R$--neighborhood of a subset $S$ in $\U$.

\begin{lem}\label{StarShape}
The following holds for any real number $R\in \mathbb R$.
\begin{enumerate}
    \item $\mathbf D_R(o)$ is star-shaped at $o$: any geodesic $[o,x]$ for $x\in \mathbf D_R(o)$ is contained in $\mathbf D_R(o)$.
    \item If $R\ge 0$, then $\mathbf D^{+R}(o))\subseteq \mathbf D_{2R}(o)$.
\end{enumerate}    
\end{lem}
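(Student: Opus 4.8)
The plan is to prove the two items of Lemma \ref{StarShape} by direct use of the triangle inequality, exploiting the description of $\mathbf D_R(o)$ as the intersection of $R$--half spaces $\overleftarrow{\mathbf H}_R(o, ho)$ over $1\ne h\in H$.

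For item (1), I would first reduce to showing that each $R$--half space $\overleftarrow{\mathbf H}_R(o, ho)$ is star-shaped at $o$, since $\mathbf D_R(o)$ is an intersection of these (and an intersection of subsets star-shaped at a common point $o$ is again star-shaped at $o$). So fix $x\in \overleftarrow{\mathbf H}_R(o, ho)$, i.e. $d(x,o)\le d(x,ho)+R$, and let $z\in [o,x]$. The goal is $d(z,o)\le d(z,ho)+R$. Since $z$ lies on a geodesic from $o$ to $x$, we have $d(z,o)=d(o,x)-d(z,x)$. Combining with the hypothesis on $x$ gives $d(z,o)\le d(x,ho)+R-d(z,x)$, and then the triangle inequality $d(x,ho)\le d(x,z)+d(z,ho)$ yields $d(z,o)\le d(z,ho)+R$, as desired. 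This shows $z\in \overleftarrow{\mathbf H}_R(o, ho)$, completing item (1).

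For item (2), assume $R\ge 0$ and take $x\in \mathbf D^{+R}(o)$, so there is $x_0\in \mathbf D_R(o)$ with $d(x,x_0)\le R$. Wait --- here I should double-check the intended reading: $\mathbf D^{+R}(o)$ presumably denotes the closed $R$--neighborhood of $\mathbf D_R(o)$ (consistent with the notation $S^{+R}$ introduced just before the lemma, applied to $S=\mathbf D_R(o)$; note the potential ambiguity with $\mathbf D(o)=\mathbf D_0(o)$, which I would resolve in favor of whichever makes the statement correct). For every $1\ne h\in H$ we have $d(x_0,o)\le d(x_0,ho)+R$. Then
$$
d(x,o)\le d(x,x_0)+d(x_0,o)\le R + d(x_0,ho)+R \le R + \big(d(x_0,x)+d(x,ho)\big)+R \le d(x,ho)+3R.
$$
This gives $\mathbf D^{+R}(o)\subseteq \mathbf D_{3R}(o)$, which is slightly weaker than the claimed $\mathbf D_{2R}(o)$; I expect the sharp constant $2R$ follows by being more careful --- e.g. if $\mathbf D^{+R}(o)$ instead means the $R$--neighborhood of $\mathbf D_0(o)$ inside $\mathbf D_R(o)$, or by a cleaner choice of which distances to bound. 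The main (minor) obstacle is therefore pinning down the precise meaning of the superscript notation $\mathbf D^{+R}(o)$ versus $\mathbf D_R(o)$ and chasing the constant to exactly $2R$; once the notation is fixed, both parts are immediate triangle-inequality arguments with no geometric input beyond star-shapedness of half-spaces.
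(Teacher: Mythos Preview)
Your argument for item (1) is correct and is exactly the paper's proof (the paper phrases it by contradiction, you do it directly, but the triangle-inequality content is identical).

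For item (2) the only issue is the one you flagged yourself: the notation $\mathbf D^{+R}(o)$ in the paper stands for $(\mathbf D(o))^{+R}$, the closed $R$--neighborhood of $\mathbf D_0(o)$, \emph{not} of $\mathbf D_R(o)$. With that reading your own chain of inequalities gives the sharp constant: pick $y\in \mathbf D_0(o)$ with $d(x,y)\le R$, so $d(y,o)\le d(y,Ho)$, and then
\[
d(x,o)\le d(x,y)+d(y,o)\le R+d(y,Ho)\le R+d(x,y)+d(x,Ho)\le d(x,Ho)+2R,
\]
which is precisely the paper's computation. Your $3R$ bound was correct for the other reading, but that is not what was intended.
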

\begin{proof}
(1). For any $x\in \mathbf D_R(o)$, we wish to prove that any geodesic $[x,o]$ is contained in $\mathbf D_R(o)$. Indeed, if $y\in (x,o)$ is not in $\mathbf D_R(o)$, there exists $h\in H$ so that $d(y,ho)<d(y,o)+R$. This implies $d(x,ho)\le d(x,y)+d(y,ho)<d(x,o)+R$, a contradiction with $x\in \mathbf D_R(o)$.   

(2). If $x\in \mathbf D^{+R}(o)$, then for some $y\in \mathbf D(o)$, we have $d(x,y)\le R$ and $d(y,o)\le d(y, Ho)$. So $d(x,o)\le d(y,Ho)+R\le d(x, Ho)+2R$, \emph{i.e.}: $x\in \mathbf D_{2R}(o)$.
\end{proof}



We now fix a convergence boundary $\pU$ for $\U$, denote by $\Lambda(S)$ the set of accumulations points of $S$ in $\pU$.    By  Definition \ref{ConvBdryDefn}(B), we have $[\Lambda(S^{+R})]=[\Lambda(S)]$.

The main result of this subsection is the following.

\begin{thm}\label{NonHor=DirichletDomain}
For any large $R>0$, \begin{enumerate}
    \item $[\cG]\setminus \HG = H\cdot \left([\Lambda \mathbf D_{R}(o)]\cap [\cG]\right).$
    \item 
    $\hG\cap [\Lambda \mathbf D_{R}(o)]=\emptyset$.
\end{enumerate}
\end{thm}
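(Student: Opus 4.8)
The plan is to establish the two set inclusions making up item (1) separately and to prove item (2) by a short direct argument. Throughout I will use that $\HG$ and $[\cG]$ are $H$--invariant — the former because $B_{[g\xi]}(gx,gy)=B_{[\xi]}(x,y)$ and $gH=H$ for $g\in H$ — so that the ``$H\cdot$'' on the right of (1) is forced and it suffices to show $[\cG]\setminus\HG\subseteq H\cdot([\Lambda\mathbf D_R(o)]\cap[\cG])$ and, conversely, $[\Lambda\mathbf D_R(o)]\cap[\cG]\subseteq[\cG]\setminus\HG$. I will set $\epsilon:=20C$, so that $[\cG]\subseteq\mathcal C^{\mathrm{hor}}_\epsilon$ by Lemma~\ref{ConicalPointsLem}(3), and take $R$ bigger than $2\epsilon$ plus the contracting and fellow--travelling constants occurring below — this is all ``large $R$'' will mean. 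The two workhorses are: $\mathbf D_R(o)$ is star--shaped at $o$ with $\mathbf D^{+R}(o)\subseteq\mathbf D_{2R}(o)$ (Lemma~\ref{StarShape}), and $d(y,z)\le d(o,z)+D$ for some $z$ on a ray $[o,\xi]$ forces $y\in\mathcal{HB}([\xi],\epsilon+D)$ (Lemma~\ref{SameHoroball}).

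For item (2) I would argue by contradiction. If $\xi\in\hG\cap[\Lambda\mathbf D_R(o)]$, then $\xi\in\mathcal C^{\mathrm{hor}}_{\epsilon_0}$ for some $\epsilon_0$ and, being a \emph{small} horospheric limit point, it admits for $L:=-(R+3\epsilon_0+1)$ some $h\in H\setminus\{1\}$ with $B_{[\xi]}(ho,o)\le L$ (here $h\neq 1$ since $L<0=B_{[\xi]}(o,o)$). Picking $x_n\in\mathbf D_R(o)$ with $x_n\to[\xi]$ in $\bU$ and using the weak continuity of Busemann cocycles (\ref{BusemanConvError})--(\ref{Horofunctionatclass}) along that sequence, I get for large $n$ that $d(ho,x_n)-d(o,x_n)\le B_{[\xi]}(ho,o)+3\epsilon_0\le-R-1$, i.e. $d(x_n,o)>d(x_n,ho)+R$, contradicting $x_n\in\mathbf D_R(o)\subseteq\overleftarrow{\mathbf H}_R(o,ho)$. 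This already works for every $R>0$.

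For the inclusion ``$\subseteq$'' of item (1): given $\xi\in\cG\setminus\HG$, fix a ray $\gamma=[o,\xi]$ terminating at $[\xi]$ (Lemma~\ref{ConicalPointsLem}(1)) and, for each $t>0$, an $h_t\in H$ realising $d(\gamma(t),Ho)=d(\gamma(t),h_to)$ (possible since $H\act\U$ is proper and $\mathrm{Stab}_H(o)=1$); then $h_t^{-1}\gamma(t)\in\mathbf D_0(o)\subseteq\mathbf D_R(o)$. If the orbit points $h_to$ took infinitely many values, then by Lemma~\ref{SameHoroball} with $D=0$ they would all lie in $\mathcal{HB}([\xi],\epsilon)$; as an escaping sequence in that horoball, Lemma~\ref{HoroballUniqueLimit} would make them accumulate in $[\xi]$, i.e. $\xi\in\HG$ — a contradiction. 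Hence $h_t\equiv h$ along an unbounded set of $t$, so $h^{-1}\gamma(t)\in\mathbf D_R(o)$ with $h^{-1}\gamma(t)\to[h^{-1}\xi]$, a boundary accumulation point lands in $\Lambda\mathbf D_R(o)$, and $\xi=h\cdot(h^{-1}\xi)\in H\cdot([\Lambda\mathbf D_R(o)]\cap[\cG])$.

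The remaining inclusion ``$\supseteq$'' of item (1) is where I expect the real difficulty to lie: I must show $\eta\in[\Lambda\mathbf D_R(o)]\cap\cG\Rightarrow\eta\notin\HG$. Star--shapedness turns a sequence in $\mathbf D_R(o)$ tending to $[\eta]$ into a ray $\rho\subseteq\mathbf D_R(o)$ terminating at $[\eta]$ (conicality of $\eta$ gives that rays to $[\eta]$ are well defined up to fellow travelling), and $\mathbf D^{+R}(o)\subseteq\mathbf D_{2R}(o)$ then lets me assume every ray to $[\eta]$ lies in $\mathbf D_R(o)$. If $\eta\in\HG$, I would take $h_n\in H$ with $h_no\to[\eta]$, $B_{[\eta]}(h_no,o)\le L$ for a fixed $L$, and $y_n:=\pi_\rho(h_no)$; the contracting estimate gives $d(o,y_n)=\tfrac12\big(d(o,h_no)-B_{[\eta]}(h_no,o)\big)+O(C)$, while $y_n\in\mathbf D_R(o)$ applied to the conjugate $h_n$ gives $d(o,y_n)\le\tfrac12\big(d(o,h_no)+R\big)+O(C)$. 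These two bounds are, on their own, compatible — the one--conjugate comparison does not close — so the crux is to derive the contradiction by exploiting the Dirichlet condition along the \emph{whole} ray $\rho$ (that $Ho$ stays uniformly away from the cut pattern $\rho$ determines), together with a second comparison of $h_no$ against a point of $\rho$ lying strictly deeper in $\mathcal{HB}([\eta])$ than $y_n$. This is the coarse analogue of Sullivan's argument \cite{Sul81} (compare \cite{GKN,FM20}) that the big and small horospheric limit sets differ only negligibly; everything else — the Busemann errors, the fellow--travelling constants, the passage to $[\cdot]$--classes — is routine bookkeeping.
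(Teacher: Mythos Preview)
Your arguments for item~(2) and for the inclusion $[\cG]\setminus\HG\subseteq H\cdot([\Lambda\mathbf D_R(o)]\cap[\cG])$ in item~(1) are correct and match the paper's own proofs (Lemmas~\ref{DirichletDomainOutSmallHor} and~\ref{NonHorInDirichletDomain}). The packaging differs only slightly: for the $\subseteq$ inclusion the paper directly selects $h\in H$ realising $\min_{h'}B_{[\xi]}(h'o,o)$ --- this minimum is finite precisely because $\xi\notin\HG$ --- rather than your pigeonhole on nearest orbit points along the ray; for item~(2) the paper first manufactures, via the preparatory Lemma~\ref{DirichletIsVisual}, a full geodesic ray inside $\mathbf D_R(o)$ and runs the contradiction along it, whereas your sequence argument bypasses that lemma entirely.

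The difficulty you isolate in the reverse inclusion $[\Lambda\mathbf D_R(o)]\cap[\cG]\subseteq[\cG]\setminus\HG$ is real, and the paper does not address it: the only lemma invoked for assertion~(1) is Lemma~\ref{NonHorInDirichletDomain}, which yields exactly $\subseteq$ and nothing more. In fact the reverse inclusion can fail pointwise. Take any $\xi\in(\HG\setminus\hG)\cap[\cG]$ at which $\inf_{h}B_{[\xi]}(ho,o)$ is attained --- equivalently, $\xi$ is not a Garnett point in the sense of \S\ref{secmaxgrowth} --- and let $h_0$ be a minimiser; the computation in the proof of Lemma~\ref{NonHorInDirichletDomain} runs verbatim (it uses only that the minimum exists, not that $\xi\notin\HG$) and places $\xi$ in $h_0[\Lambda\mathbf D_R(o)]$, while $\xi\in\HG$ by hypothesis. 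Your two ``compatible'' inequalities are compatible exactly because of such points. What the paper actually proves, and actually uses (first line of the proof of Theorem~\ref{CharConsAction}), is the chain
\[
[\cG]\setminus\HG\ \subseteq\ H\cdot\bigl([\Lambda\mathbf D_R(o)]\cap[\cG]\bigr)\ \subseteq\ [\cG]\setminus\hG,
\]
the right inclusion being item~(2). Your intuition that the Sullivan-type machinery of Theorem~\ref{BigSmallEqualInHyp} is relevant is correct, but that machinery upgrades this chain only up to $\mu_o$-null sets, not setwise; so the ``$=$'' in item~(1) should be read as ``$\subseteq$''.
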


It is worth noting the following much simplified statement in hyperbolic or CAT(0) spaces.  We remark that the proof in this case could be quite short and straightforward (cf. \cite[Cor. 2.14]{MT98}). We recommend the readers to figure out the proof themselves, instead of reading the following one which  deals mainly with general metric spaces.  
\begin{thm}\label{NonHor=DirichletDomainHyp}
Let $\U$ be Gromov hyperbolic with Gromov boundary $\pU$ or CAT(0) with visual boundary $\pU$. Then for any large $R>0$.
\begin{enumerate}
    \item $\pU\setminus \HG = H\cdot \Lambda \mathbf D_{R}(o).$
    \item 
    $\hG\cap \Lambda \mathbf D_{R}(o)=\emptyset$.
\end{enumerate}
\end{thm}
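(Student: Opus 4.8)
\textbf{Proof strategy for Theorem \ref{NonHor=DirichletDomainHyp}.}
The plan is to extract the essential geometric content from the general proof of Theorem \ref{NonHor=DirichletDomain} and record how it simplifies when $\U$ is hyperbolic (or CAT(0)) with a maximal partition, so that $[\cdot]$--classes are singletons and $[\cG]=\pU$ after throwing away a countable/negligible remainder; in fact here $\ccG=\pU$ in the CAT(0) rank-1 case modulo the usual caveats, and more to the point every boundary point is visual, so the dichotomy is cleaner. First I would recall that in this setting a point $\xi\in\pU$ is a small (resp. big) horospheric limit point of $Ho$ precisely when \emph{every} (resp. \emph{some}) horoball centered at $\xi$ contains infinitely many orbit points $h_no\to\xi$, using Definition \ref{HoroLimitPtsDef} and the fact that in hyperbolic/CAT(0) spaces horoballs based at a boundary point accumulate only at that point (Lemma \ref{HoroballUniqueLimit}, whose hypotheses are automatic here).

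For item (2): suppose $\xi\in\hG\cap\Lambda\mathbf D_R(o)$. Pick $x_n\in\mathbf D_R(o)$ with $x_n\to\xi$; by Lemma \ref{StarShape}(1) the geodesic ray $[o,\xi]$ (obtained as a limit of the $[o,x_n]$, which lie in $\mathbf D_R(o)$) is itself contained in $\mathbf D_R(o)$. Being in $\mathbf D_R(o)$ means $d(z,o)\le d(z,Ho)+R$ for all $z$ on that ray, i.e. $d(z,ho)\ge d(z,o)-R$ for all $h\in H$. Feeding this into the Busemann cocycle, for any $h\in H$ one gets $B_\xi(ho,o)=\lim_{z\to\xi}(d(z,ho)-d(z,o))\ge -R$, so $ho\notin\mathcal{HB}(\xi,-R-1)$ for every $h$; hence the horoball $\mathcal{HB}(\xi,-R-1)$ contains no orbit point at all, contradicting $\xi\in\hG$ (small horospheric points have orbit points in \emph{every} horoball). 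This proves (2). Note this also explains why $R>0$ large is harmless: the argument only needs $\mathbf D_R(o)$ to be star-shaped and $R$ finite.

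For item (1): the inclusion $\supseteq$ follows from (2) together with $H$--invariance of $\pU\setminus\HG$ (if $\xi\in H\cdot\Lambda\mathbf D_R(o)$, write $\xi=h\eta$ with $\eta\in\Lambda\mathbf D_R(o)$, and $\eta\notin\hG$ by (2); but in the hyperbolic/CAT(0) setting $\pU\setminus\HG$ — equivalently the complement of the big horospheric limit set — needs the sharper input that $\HG$ and $\hG$ differ only trivially here, which is false in general; so instead one argues directly that $\eta\notin\HG$). The cleanest route: show $\eta\notin\HG$ directly from $\eta\in\Lambda\mathbf D_R(o)$ by the same Busemann computation — if $\eta\in\HG$ then $B_\eta(h_no,o)\ge L$ for some $L$ and infinitely many $h_n$, i.e. $d(z,h_no)\le d(z,o)+L+o(1)$ along $[o,\eta]$; but $[o,\eta]\subseteq\mathbf D_R(o)$ forces $d(z,h_no)\ge d(z,o)-R$, so the $h_no$ lie in a bounded neighborhood of $[o,\eta]$ while tending to $\eta$, which is compatible; the contradiction instead comes from noting $\eta\in\mathbf D_R(o)$'s limit set means $\eta$ is ``closest-point'' to $o$, forcing $d(o,h_no)\ge 2d(o,z_n)-2R-L$ for the projection $z_n$ of $h_no$, so $z_n\to\eta$ and $h_no\to\eta$ with $h_no$ deep in a horoball — revisiting, the correct contradiction is with the Dirichlet property at the level of $h_no$ itself not lying in $\mathbf D_R(o)$. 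For the reverse inclusion $\subseteq$: given $\xi\in\pU\setminus\HG$ which is visual, take the ray $\gamma=[o,\xi]$; since $\xi\notin\HG$, for every $L$ only finitely many $h_no$ satisfy $B_\xi(h_no,o)\le L$, so for $z$ far out on $\gamma$ one has $d(z,ho)\ge d(z,o)-R$ for all but finitely many $h$, and by increasing $R$ to absorb the finitely many exceptions one gets $\gamma\subseteq\mathbf D_R(o)$ eventually, hence $\xi\in\Lambda\mathbf D_R(o)$ (up to applying an element of $H$ to handle the finitely many bad $h$, which is where the factor $H\cdot$ enters). The main obstacle is bookkeeping the finitely many exceptional $h\in H$ and the choice of how large $R$ must be: one must verify that a single $R$ works uniformly over the relevant $\xi$, which in the hyperbolic/CAT(0) case is handled by $\delta$--hyperbolicity (stability of horoballs and the Morse lemma) rather than the more delicate contracting-geodesic arguments needed for Theorem \ref{NonHor=DirichletDomain}. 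I would then remark that (1) combined with (2) gives the clean partition statement and refer to the general Theorem \ref{NonHor=DirichletDomain} for the proof in arbitrary convergence boundaries.
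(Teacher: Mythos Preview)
Your argument for (2) is correct and is exactly the paper's Lemma~\ref{DirichletDomainOutSmallHor}: the star-shaped $\mathbf D_R(o)$ contains a ray $[o,\xi]$, the Dirichlet inequality along this ray forces $B_\xi(ho,o)\ge -R$ for every $h\in H$, so the horoball of depth $-R-1$ misses $Ho$ and $\xi\notin\hG$.

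For the inclusion $\pU\setminus\HG\subseteq H\cdot\Lambda\mathbf D_R(o)$ your idea is right but roundabout. The paper's Lemma~\ref{NonHorInDirichletDomain} is cleaner: since $\xi\notin\HG$, the minimum $L=\min_{h\in H}B_\xi(ho,o)$ is finite and is attained at some $h_0$; then $B_\xi(h_0o,h'o)\le 0$ for every $h'\in H$, and along the ray $[h_0o,\xi]$ this yields $d(z,h'o)\ge d(z,h_0o)-O(\delta)$, so the whole ray lies in $\mathbf D_R(h_0o)=h_0\mathbf D_R(o)$ with $R$ depending only on the hyperbolicity constant. There are no ``finitely many exceptional $h$'' to absorb and no need to enlarge $R$ depending on $\xi$: the single translation by $h_0$ normalises the picture uniformly.

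Where your proposal genuinely breaks down is the reverse inclusion $H\cdot\Lambda\mathbf D_R(o)\subseteq\pU\setminus\HG$. You correctly notice that (2) only excludes $\hG$, not $\HG$, and your direct attempts at a contradiction do not land---because the Busemann inequality $B_\xi(ho,o)\ge -R$ obtained from $[o,\xi]\subseteq\mathbf D_R(o)$ is perfectly compatible with infinitely many $h_no$ sitting on a \emph{fixed} horosphere and converging to $\xi$. Concretely, for $H=\langle h\rangle$ with $h$ parabolic in $\mathbb H^2$ fixing $\xi$, the vertical ray $[o,\xi]$ lies in $\mathbf D_0(o)$, yet $\xi\in\HG\setminus\hG$. (This $H$ lacks a contracting element, so strictly violates the standing hypothesis of \S\ref{secmaxgrowth}, but it shows the Busemann computation you attempt cannot by itself yield the reverse inclusion.) The paper's proof invokes only Lemmas~\ref{NonHorInDirichletDomain} and~\ref{DirichletDomainOutSmallHor}, which give the inclusion in (1) and the statement (2), and never addresses the reverse inclusion; the equality sign in (1) should be read as an inclusion. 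Only that inclusion together with (2) is needed downstream: in Theorem~\ref{CharConsAction}, the implication $\mu_o(\hG)=1\Rightarrow\mu_o(\Lambda\mathbf D_R(o))=0$ follows from (2), and the converse from the inclusion in (1).
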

The proof is achieved by  a series of elementary lemmas. 

Recall that a point $\xi$ is \textit{visual} if a geodesic ray $\gamma$ originating at $o$ terminates at $[\xi]$, \emph{i.e.} all the accumulation points of $\gamma$ are contained in $[\xi]$. We emphasize that $\xi$ may not be an accumulation point of $\gamma$, so it is necessary to make statements on $[\cdot]$--classes rather than boundary points.   Any $(r,F)$--conical point is visual by Lemma \ref{ConicalPointsLem}.  
\begin{rem}\label{GromovVisualFine}
The assumption $\xi\in \cG$  in the next three results is  used to guarantee the following. If $x_n\to \xi$, then any limiting geodesic ray of (a subsequence of) $[o,x_n]$ accumulates into $[\xi]$. If $\U$ is  hyperbolic or CAT(0), this fact holds for any $\xi \in \pU$ with $\pU$ being   Gromov boundary or  visual boundary. Therefore, $\cG$ could be replaced with $\pU$ in these setups.     
\end{rem}

Recall the definition of horoballs in (\ref{HoroballDefn}). The assertion (1) in Theorem  \ref{NonHor=DirichletDomain} is proved by the following.
\begin{lem}\label{NonHorInDirichletDomain}
There exists a constant    $R=R(F)>0$ with the following property. Let $\xi\in [\cG] \setminus \HG$ be  not a \textbf{big} horospheric limit point.  Then there exist some $h\in H$ and a geodesic $[ho,\xi]$ which is contained in   $\mathbf D_{R}(ho)$.   

\end{lem}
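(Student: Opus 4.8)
\textbf{Plan of proof for Lemma~\ref{NonHorInDirichletDomain}.}
The plan is to exploit the contrapositive of the definition of a big horospheric limit point: if $\xi\in[\cG]$ is \emph{not} big horospheric, then for every $h\in H$ and every depth $L$, only finitely many orbit points $h'o\in Ho$ lie in the horoball $\mathcal{HB}([\xi],L)$. Since $\xi$ is an $(r,F)$--conical point, it is visual (Lemma~\ref{ConicalPointsLem}), so fix a geodesic ray $\gamma=[o,\xi]$; by Remark~\ref{GromovVisualFine} any limiting ray of the $[o,x_n]$ with $x_n\to\xi$ accumulates in $[\xi]$. The first step is to observe that the function $t\mapsto d(\gamma(t),Ho)$ along $\gamma$ is \emph{bounded}: indeed, if it were unbounded, then along some escaping subsequence the closest orbit point $h_no$ would satisfy $d(\gamma(t_n),h_no)\le d(\gamma(t_n),o)+O(1)$, and Lemma~\ref{SameHoroball} would put infinitely many $h_no$ into a fixed horoball $\mathcal{HB}([\xi],\cdot)$, with $h_no\to[\xi]$ by Lemma~\ref{HoroballUniqueLimit} — contradicting that $\xi$ is not big horospheric. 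Wait: more carefully, what we need is that the \emph{projection-distance} of $\gamma(t)$ to $Ho$ does not grow; a clean way is to argue that if $d(\gamma(t_n),Ho)\to\infty$ then the nearest-point projections $z_n\in\pi_{Ho}(\gamma(t_n))$ give rise, via the triangle inequality $d(\gamma(t_n),z_n)\le d(\gamma(t_n),o)$ (taking $z_n$ closer than $o$ fails only boundedly since $o\in Ho$), to points $z_n$ with $B_{\gamma(t_n)}(z_n,o)\le 0$, hence $z_n\in\mathcal{HB}([\xi],\epsilon)$ in the limit and $z_n\to[\xi]$, again a contradiction.

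Once we know $\sup_t d(\gamma(t),Ho)=:R_0<\infty$, the second step is to \emph{relocate the basepoint}. Pick $t_k\to\infty$ and orbit points $h_ko$ with $d(\gamma(t_k),h_ko)\le R_0$. Pass to a subsequence so that $h_k=h$ is constant — this is possible precisely because $\xi$ is not big horospheric: if infinitely many $h_k$ were distinct, they would accumulate in the horoball $\mathcal{HB}([\xi],R_0+\epsilon)$ (by Lemma~\ref{SameHoroball} applied at $o$) and hence form an infinite subset tending to $[\xi]$ by Lemma~\ref{HoroballUniqueLimit}, contradicting the hypothesis. Fix this $h$. The third step is to show $\gamma$ eventually stays in $\mathbf D_R(ho)$ for $R=2R_0+C$ with $C$ a contracting/fellow-travel constant depending on $F$, and then to translate by $h^{-1}$: the geodesic ray $h^{-1}\gamma$ starts near $o$, terminates at $[h^{-1}\xi]$, and lies in $\mathbf D_R(o)$, which after renaming gives the statement (replacing $\xi$ by $h^{-1}\xi$ and $h$ by $h^{-1}$, legitimate since $[\cG]$ and $[\cG]\setminus\HG$ are $H$--invariant).

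For the containment $\gamma\subseteq\mathbf D_R(ho)$, the argument is: suppose $\gamma(s)\notin\mathbf D_R(ho)$ for some large $s$, i.e.\ there is $h'\in H$ with $d(\gamma(s),h'o)<d(\gamma(s),ho)-R$. Combined with $d(\gamma(t_k),ho)\le R_0$ for $t_k$ beyond $s$, and the fact that $\gamma$ is a geodesic so $d(\gamma(s),\gamma(t_k))=t_k-s$, one derives $d(\gamma(t_k),h'o)\le t_k-s+d(\gamma(s),h'o)< d(\gamma(t_k),ho)+\text{(something that stays bounded)}-R$, forcing $h'o$ to also be within bounded distance of $\gamma$ with a \emph{different} group element; iterating/quantifying this produces infinitely many distinct orbit points near $\gamma$ inside a fixed horoball, contradicting the non-big-horospheric hypothesis again. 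I expect the bookkeeping of constants here — keeping $R$ uniform, independent of $\xi$ and of the chosen $h$, and showing the ``bounded'' terms are genuinely bounded by $F$-data via the Shadow/Extension machinery — to be the main obstacle; this is where one must be careful that $R_0$ is \emph{a priori} $\xi$-dependent but can be absorbed by dropping finitely many points of $\gamma$ (as in the proof of Theorem~\ref{HalfgrowthimplyDiss}), so that the final $R$ depends only on $F$. Assertion (2) of Theorem~\ref{NonHor=DirichletDomain}, which I would fold into the same circle of ideas, is easier: a small horospheric limit point forces $d(\gamma(t),Ho)\to 0$ along \emph{every} ray to $[\xi]$, in particular infinitely many orbit points enter every horoball, which directly violates membership of $[\Lambda\mathbf D_R(o)]$ by the star-shapedness of $\mathbf D_R(o)$ (Lemma~\ref{StarShape}) together with the horoball-unique-limit Lemma~\ref{HoroballUniqueLimit}.
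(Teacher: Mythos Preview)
Your approach has a genuine gap at Step~1: the claim that $\sup_t d(\gamma(t), Ho) < \infty$ is \emph{false} in general. For a concrete counterexample, take $H$ to be the cyclic group generated by a single contracting element $g$, and let $\xi \in [\cG]$ be any conical point outside $[g^\pm]$. Then $\xi \notin [\Lambda(Ho)]$, hence $\xi \notin \HG$; but along $\gamma = [o,\xi]$ the orbit $Ho$ (which stays near the axis of $g$) drifts away, so $d(\gamma(t), Ho) \to \infty$. Your justification breaks exactly where you suspect: the nearest-point projections $z_n \in \pi_{Ho}(\gamma(t_n))$ do lie in a fixed horoball (since $d(\gamma(t_n), z_n) \le d(\gamma(t_n), o)$), but nothing forces the $z_n$ to be \emph{escaping} --- in the counterexample they are eventually constant --- so Lemma~\ref{HoroballUniqueLimit} does not apply and no contradiction arises. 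The later steps (extracting a constant $h$, bounding $R$ uniformly) inherit this problem, and your remark that ``$R_0$ is a priori $\xi$-dependent but can be absorbed'' cannot help, since $R_0 = \infty$ is possible.

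The paper's proof avoids this by choosing $h$ differently and earlier. Since $\xi \notin \HG$, every horoball $\mathcal{HB}([\xi],L)$ contains only finitely many points of $Ho$, so $L := \min_{h \in H} B_{[\xi]}(ho,o)$ is a finite minimum, achieved at some $h$. For this $h$ one has $B_{[\xi]}(ho, h'o) \le 0$ (up to cocycle error) for \emph{every} $h' \in H$, and the weak continuity of Busemann cocycles on $\mathcal C^{\mathrm{hor}}_{20C}$ (Lemma~\ref{ConicalPointsLem}(3)) converts this directly into $d(x, ho) \le d(x, h'o) + 21C$ for $x$ far along any ray to $[\xi]$. Star-shapedness (Lemma~\ref{StarShape}) then gives the whole ray $[ho,\xi] \subseteq \mathbf D_R(ho)$ with $R = 21C$, uniform in $\xi$. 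The key point you are missing is to select $h$ via the Busemann function --- a horospherical rather than metric notion of ``closest orbit point'' --- which is precisely the quantity controlled by the hypothesis $\xi \notin \HG$.
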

\begin{proof}
According to definition, if $\xi\notin \HG$,  some  horoball centered at $[\xi]$ contains only finitely many elements   from $Ho$, so  $L=\min\{B_{[\xi]}(ho, o):h\in H\}$ is a finite value. Moreover,  the horoball $\mathcal{HB}([\xi], L)$ contains some $ho\in Ho$, but  $\mathcal{B}_{[\xi]}(ho,h'o)\le 0$ for any $h'o\in Ho$. Let   $C$ be the common contracting constant for elements in $F$. Note that $[\cG]$ is a subset of $\mathcal C^{\mathrm{hor}}_{20C}$ by Lemma \ref{ConicalPointsLem}. For any sequence of $x_n\to \xi\in \cG$,  we obtain from (\ref{BusemanConvError}) for $n\gg 0$: $$B_{x_n}(h_no,h_n'o)=d(x_n,ho)-d(x_n,h'o)\le 21C$$  That is, setting $R=21C$,  $x_n\in \overleftarrow{\mathbf{H}}_R(ho,h'o)$ for any $h'\in H$, so  we obtain $x_n\in \mathbf D_R(o)$ by definition. See Fig. \ref{fig:horoLimitDirichlet}. Now, take  a geodesic ray  $\gamma$ starting at $ho$ and ending at $[\xi]$ by Lemma \ref{ConicalPointsLem} (cf. Remark \ref{GromovVisualFine}).  If $x_n$ is taken on $\gamma$ tending to $[\xi]$, the star-sharpedness implies $\gamma\subseteq \mathbf D_R(o)$ by Lemma \ref{StarShape}. Thus $\xi\in [\Lambda \mathbf D_R(ho)]$.
\end{proof}
\begin{figure}
    \centering

\tikzset{every picture/.style={line width=0.75pt}} 

\begin{tikzpicture}[x=0.75pt,y=0.75pt,yscale=-1,xscale=1]

\draw  [dash pattern={on 4.5pt off 4.5pt}]  (397.5,49) .. controls (439.5,54) and (481.5,92) .. (494.5,133) ;
\draw   (373.86,110.86) .. controls (375.78,83.86) and (399.23,63.53) .. (426.23,65.45) .. controls (453.23,67.37) and (473.56,90.82) .. (471.64,117.82) .. controls (469.72,144.82) and (446.27,165.15) .. (419.27,163.23) .. controls (392.27,161.31) and (371.94,137.86) .. (373.86,110.86) -- cycle ;
\draw  [line width=4.5] [line join = round][line cap = round] (263.5,163) .. controls (263.5,163) and (263.5,163) .. (263.5,163) ;
\draw [line width=3] [line join = round][line cap = round]    ;
\draw  [line width=4.5] [line join = round][line cap = round] (418.5,180) .. controls (418.5,180) and (418.5,180) .. (418.5,180) ;
\draw  [line width=4.5] [line join = round][line cap = round] (393.5,153) .. controls (393.5,153) and (393.5,153) .. (393.5,153) ;
\draw    (394.5,153.34) -- (451.29,78.59) ;
\draw [shift={(452.5,77)}, rotate = 127.23] [color={rgb, 255:red, 0; green, 0; blue, 0 }  ][line width=0.75]    (10.93,-3.29) .. controls (6.95,-1.4) and (3.31,-0.3) .. (0,0) .. controls (3.31,0.3) and (6.95,1.4) .. (10.93,3.29)   ;
\draw  [dash pattern={on 4.5pt off 4.5pt}]  (152.5,148) .. controls (156.5,106) and (187.5,75) .. (213.5,71) ;
\draw    (263.5,161) -- (173.19,104.07) ;
\draw [shift={(171.5,103)}, rotate = 32.23] [color={rgb, 255:red, 0; green, 0; blue, 0 }  ][line width=0.75]    (10.93,-3.29) .. controls (6.95,-1.4) and (3.31,-0.3) .. (0,0) .. controls (3.31,0.3) and (6.95,1.4) .. (10.93,3.29)   ;
\draw  [line width=4.5] [line join = round][line cap = round] (170.5,103) .. controls (170.5,103) and (170.5,103) .. (170.5,103) ;
\draw    (152.5,148) .. controls (205.5,153) and (228.5,166) .. (248.5,209) ;
\draw    (213.5,71) .. controls (228.5,104) and (243.5,131) .. (294.5,127) ;
\draw  [dash pattern={on 4.5pt off 4.5pt}]  (300.5,168) .. controls (353.47,162.06) and (307.44,179.64) .. (356.97,167.38) ;
\draw [shift={(358.5,167)}, rotate = 165.96] [color={rgb, 255:red, 0; green, 0; blue, 0 }  ][line width=0.75]    (10.93,-3.29) .. controls (6.95,-1.4) and (3.31,-0.3) .. (0,0) .. controls (3.31,0.3) and (6.95,1.4) .. (10.93,3.29)   ;
\draw  [line width=4.5] [line join = round][line cap = round] (433.5,103) .. controls (433.5,103) and (433.5,103) .. (433.5,103) ;
\draw  [line width=0.75]  (200.5,60) .. controls (196.81,57.14) and (193.54,57.55) .. (190.68,61.24) -- (168.6,89.73) .. controls (164.51,95) and (160.63,96.2) .. (156.94,93.35) .. controls (160.63,96.2) and (160.43,100.27) .. (156.35,105.54)(158.18,103.17) -- (137.25,130.17) .. controls (134.39,133.86) and (134.8,137.14) .. (138.49,140) ;

\draw (460,57.4) node [anchor=north west][inner sep=0.75pt]    {$\xi $};
\draw (306,75.4) node [anchor=north west][inner sep=0.75pt]    {$\mathcal{HB}( \xi ,L)$};
\draw (275,143.4) node [anchor=north west][inner sep=0.75pt]    {$o$};
\draw (364,144.4) node [anchor=north west][inner sep=0.75pt]    {$ho$};
\draw (429,170.4) node [anchor=north west][inner sep=0.75pt]    {$h'o\in Ho$};
\draw (179,90.4) node [anchor=north west][inner sep=0.75pt]    {$h^{-1} \xi $};
\draw (188,135.4) node [anchor=north west][inner sep=0.75pt]    {$\mathbf{D}_{R}( o)$};
\draw (320,168.4) node [anchor=north west][inner sep=0.75pt]    {$h$};
\draw (439,99.4) node [anchor=north west][inner sep=0.75pt]    {$x_{n}$};
\draw (101,73.4) node [anchor=north west][inner sep=0.75pt]    {$\Lambda \mathbf{D}_{R}( o)$};

\end{tikzpicture}
    \caption{Proof of Lemma \ref{NonHorInDirichletDomain}}
    \label{fig:horoLimitDirichlet}
\end{figure}



The following preparatory result is required in proving the assertion (2)  in Theorem  \ref{NonHor=DirichletDomain}. This follows easily in hyperbolic or CAT(0) spaces, as explained in Remark \ref{GromovVisualFine}.
\begin{lem}\label{DirichletIsVisual}
Let $\xi\in [\Lambda \mathbf D_R(o)]\cap [\cG]$ for some $R>0$. Then  $\mathbf D_R(o)$ contains a geodesic ray ending at $[\xi]$ starting at $o$.    
\end{lem}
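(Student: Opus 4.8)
\textbf{Proof plan for Lemma \ref{DirichletIsVisual}.}

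The plan is to exhibit a sequence of points in $\mathbf D_R(o)$ that converges to $[\xi]$, extract from the geodesics $[o,x_n]$ a limiting ray using Arzel\`a--Ascoli, and then argue that this limiting ray (i) stays inside $\mathbf D_R(o)$ and (ii) terminates at $[\xi]$. First I would unwind the hypothesis $\xi\in[\Lambda\mathbf D_R(o)]$: by Definition \ref{ConvBdryDefn}(B) the $[\cdot]$--locus of the limit set of $\mathbf D_R(o)$ does not depend on the basepoint, so there is a sequence $x_n\in\mathbf D_R(o)$ with $x_n\to\eta$ for some $\eta\in[\xi]$, where the convergence takes place in $\bU=\U\cup\pU$. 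Since $x_n$ leaves every bounded set (otherwise the limit would lie in $\U$, not $\pU$), the geodesics $\alpha_n:=[o,x_n]$ are a sequence of segments through $o$ of unbounded length, and by properness of $\U$ and the Arzel\`a--Ascoli argument a subsequence converges uniformly on compact sets to a geodesic ray $\gamma$ starting at $o$.

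The key point (ii) is that $\gamma$ terminates at $[\xi]$. Here I would invoke the hypothesis $\xi\in\cG$ through Remark \ref{GromovVisualFine}: for $\xi$ an $(r,F)$--conical point, any limiting ray of $[o,x_n]$ with $x_n\to[\xi]$ accumulates into $[\xi]$. This is precisely the statement recorded in that remark (and it is why the hypothesis $\xi\in\cG$ cannot be dropped in a general convergence boundary; in the hyperbolic or CAT(0) cases it is automatic for every boundary point, as noted in Remark \ref{GromovVisualFine}). Concretely, one takes the entry/exit points of $\gamma$ into the contracting barriers witnessing conicality of $\xi$ and uses the contracting property to force $[o,x_n]$ to fellow-travel $\gamma$ on longer and longer initial segments, so that $\gamma$ and the $x_n$ share the same limit $[\xi]$.

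For point (i), that $\gamma\subseteq\mathbf D_R(o)$: I would first observe $\gamma\subseteq\mathbf D_{2R}(o)$ by a limiting argument. Indeed, each $\alpha_n\subseteq\mathbf D_R(o)$ by the star-shapedness of $\mathbf D_R(o)$ at $o$ (Lemma \ref{StarShape}(1)), since $x_n\in\mathbf D_R(o)$; passing to the limit, every point $z\in\gamma$ is a limit of points $z_n\in\alpha_n\subseteq\mathbf D_R(o)$, and as $\mathbf D_R(o)$ is closed (it is a locally finite intersection of closed $R$--half spaces, as recalled before Lemma \ref{StarShape}) we get $z\in\mathbf D_R(o)$ directly — so in fact $\gamma\subseteq\mathbf D_R(o)$ with no loss in the constant. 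The main obstacle I anticipate is the bookkeeping in step (ii): one must be careful that the limiting ray $\gamma$ obtained from a diagonal subsequence genuinely accumulates in the single $[\cdot]$--class $[\xi]$ rather than drifting between classes, and this is exactly where the non-pinched/conical hypothesis and the closedness of the partition on $\cG$ (Lemma \ref{ConicalPointsLem}, via $\cG\subseteq\mathcal C^{\mathrm{hor}}_{20C}$) must be used; in the hyperbolic and CAT(0) settings this subtlety disappears and the proof collapses to two lines, which is the remark I would make to the reader.
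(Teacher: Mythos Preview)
Your proposal is correct and follows essentially the same route as the paper: take $x_n\in\mathbf D_R(o)$ converging to $[\xi]$, extract a limiting ray via Arzel\`a--Ascoli, use star-shapedness and closedness of $\mathbf D_R(o)$ for containment, and use the contracting barriers witnessing conicality for the endpoint. The paper makes your step (ii) slightly more explicit by arguing directly that the $C$--contracting property forces each $[o,x_n]$ to intersect $N_C(g_m\ax(f))$ for the barriers $g_m\ax(f)$ shadowing $\xi$, and then invoking Definition~\ref{ConvBdryDefn}(B) rather than phrasing it as a fellow-travel statement.
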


\begin{proof}
By \cite[Lemma 4.4]{YANG22},  such a geodesic ray  exists for any $\xi\in \cG$. We briefly recall the proof, and then explain $\gamma$ can be taken into $\mathbf D_R(o)$ by using the star-shaped $\mathbf D_R(o)$. First,  a conical point  $\xi\in \cG$ by definition lies in infinitely many partial shadows $\Pi_o^F(g_no,r)$ for $g_n\in \Gamma$. By the proof of \cite[Lemma 4.4]{YANG22}, we can find a sequence of $y_n\in \Pi_o^F(g_no,r)$ tending to $\xi$, so that $[o,y_n]$ intersects  $N_C(g_n\ax(f))$ for  a common $f\in F$ in a diameter comparable with $d(o,fo)$ (as $\sharp F=3$). By Ascoli-Arzela Lemma, the limiting geodesic ray of (a subsequence of) $[o,y_n]$  tends to $[\xi]$ by  Definition \ref{ConvBdryDefn}(B).

Now take $x_n\in \mathbf D_R(o)$ tending to $[\xi]$. If $x_n\in \Pi_o^F(g_no,r)$,  then $[o,x_n]\subseteq \mathbf D_R(o)$ converges locally uniformly to a geodesic ray $\gamma\subseteq \mathbf D_R(o)$ by Lemma \ref{StarShape}.   In the general case, if $d(o,fo)$ is sufficiently large,  the $C$--contracting property of $g_n\ax(f)$ implies that $[o,x_n]$ intersects $N_C(g_n\ax(f))$ in a diameter comparable with $d(o,fo)$. Hence, a subsequence of $[o,x_n]$ converges locally uniformly to a geodesic ray $\alpha$ ending at $[\xi]$.  As we wanted,  $\alpha$ is contained in the star-shaped $D_R(o)$. \end{proof}

We now prove the assertion (2) in Theorem \ref{NonHor=DirichletDomain}
\begin{lem}\label{DirichletDomainOutSmallHor}
Let $\xi\in [\cG]\cap [\Lambda \mathbf D_R(o)]$ for some $R>0$.  Then $\xi\notin \hG$.
\end{lem}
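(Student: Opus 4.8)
The goal is to show that a conical point $\xi\in[\cG]$ whose $[\cdot]$--class meets $\Lambda\mathbf D_R(o)$ cannot be a small horospheric limit point, i.e. there must exist $L\in\mathbb R$ and $h\in H$ with $B_{[\xi]}(ho,o)>L$ failing---equivalently, some horoball centered at $[\xi]$ is missed by all but finitely many orbit points $ho$, or at least does not contain orbit points of arbitrarily negative Busemann value. The plan is to exploit the geometry of the Dirichlet domain $\mathbf D_R(o)$ established in Lemma \ref{StarShape} together with the ``ray into the domain'' statement of Lemma \ref{DirichletIsVisual}. First I would invoke Lemma \ref{DirichletIsVisual} to produce a geodesic ray $\gamma\subseteq\mathbf D_R(o)$ starting at $o$ and terminating at $[\xi]$. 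For a point $x=\gamma(t)$ far out along $\gamma$, the defining property of $\mathbf D_R(o)$ gives $d(x,o)\le d(x,ho)+R$ for every $h\in H$, that is $d(x,ho)-d(x,o)\ge -R$ uniformly in $h$. Letting $t\to\infty$ and using that $\xi\in\cG\subseteq\mathcal C^{\mathrm{hor}}_{20C}$ (Lemma \ref{ConicalPointsLem}(3)), the Busemann cocycle at $[\xi]$ satisfies $B_{[\xi]}(ho,o)=\lim_{t\to\infty}\big(d(\gamma(t),ho)-d(\gamma(t),o)\big)\ge -R-20C$ for all $h\in H$.

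This uniform lower bound is exactly the obstruction to being a small horospheric limit point: by Definition \ref{HoroLimitPtsDef}, $\xi\in\hG$ would require $h_n\in H$ with $B_{[\xi]}(h_no,o)\ge L$ \emph{failing to the other side}---wait, let me re-read. A small horospheric limit point needs: for every $L\in\mathbb R$ there is $h_n\in H$ with $B_{[\xi]}(h_no,o)\le L$ (the orbit enters every horoball, equivalently $h_no$ with arbitrarily negative Busemann value). But we have just shown $B_{[\xi]}(ho,o)\ge -R-20C$ for \emph{all} $h\in H$. Taking $L<-R-20C$ shows no such $h_n$ exists. Hence $\xi\notin\hG$. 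So the structure is: (i) apply Lemma \ref{DirichletIsVisual} to get $\gamma\subseteq\mathbf D_R(o)$ ending at $[\xi]$; (ii) read off from $x\in\mathbf D_R(o)$ the inequality $d(x,ho)\ge d(x,o)-R$; (iii) pass to the limit along $\gamma$, controlling the error via (\ref{BusemanConvError}) and Lemma \ref{ConicalPointsLem}(3) to get $B_{[\xi]}(ho,o)\ge -R-20C$ for all $h\in H$; (iv) conclude by definition of small horospheric limit point.

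One technical point to be careful about: in step (iii) the quantity $B_{[\xi]}(ho,o)$ is defined as a $\limsup$ over \emph{all} sequences $z_n\to[\xi]$, not just along $\gamma$, so I need to argue the lower bound holds for the $\limsup$. Since $[\cG]\subseteq\mathcal C^{\mathrm{hor}}_{20C}$, any two sequences $z_n,w_n\to[\xi]$ satisfy $\limsup|B_{z_n}(x,o)-B_{w_n}(x,o)|\le 20C$ by Lemma \ref{ConicalPointsLem}(3); so evaluating $B_{[\xi]}$ along $\gamma$ computes it up to an additive $20C$, which is harmless. A second point: $\mathbf D_R(o)$ is defined relative to the orbit $Ho$ and I implicitly used $d(x,ho)\ge d(x,Ho)\ge d(x,o)-R$ for $x\in\mathbf D_R(o)$, which is immediate from the definition $\mathbf D_R(o)=\{x: d(x,o)\le d(x,Ho)+R\}$.

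The main obstacle I anticipate is purely bookkeeping: making sure the additive constants ($R$ from the domain, $20C$ from the horofunction convergence error, and the $\epsilon$ appearing in Assumption D) are handled correctly so that one genuinely gets a uniform two-sided-free lower bound $B_{[\xi]}(ho,o)\ge -(R+20C+\epsilon)$ valid for every $h\in H$ simultaneously, with the constant independent of $h$. Once that uniform bound is in hand, the contradiction with smallness is a one-line consequence. No part of this requires the non-pinched hypothesis beyond what is already baked into $\xi\in\cG\subseteq\mathcal C$, and no new input from $\Gamma$ is needed, so the argument is self-contained given Lemmas \ref{StarShape}, \ref{DirichletIsVisual} and \ref{ConicalPointsLem}.
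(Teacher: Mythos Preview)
Your proposal is correct and follows essentially the same approach as the paper's own proof: both invoke Lemma \ref{DirichletIsVisual} to obtain a geodesic ray $\gamma\subseteq\mathbf D_R(o)$ ending at $[\xi]$, then use the defining inequality $d(x,ho)\ge d(x,o)-R$ along $\gamma$ together with the horofunction convergence control from Lemma \ref{ConicalPointsLem}(3) to bound $B_{[\xi]}(ho,o)$ below uniformly in $h$, contradicting smallness. The only cosmetic difference is that the paper phrases this as a contradiction (assuming $\xi\in\hG$ gives $B_\xi(h_no,o)\to-\infty$, forcing $x\notin\mathbf D_R(o)$), whereas you obtain the uniform lower bound directly; the mathematical content is identical.
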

\begin{proof}
By Lemma \ref{DirichletIsVisual}, take  a geodesic ray $\gamma\subseteq \mathbf D_R(o)$  starting at $o$ and terminating at $[\xi]$.

By way of contradiction, assume that $\xi\in \hG$ is a small horospheric limit point. Hence, any   horoball $\mathcal {HB}([\xi])$ centered at $[\xi]$ contains  a sequence of $h_no\in Go$ tending to $\xi$. That is, $B_\xi(h_no,o)\to -\infty$ as $n\to\infty$. Note that the Busemann cocycle associated to $\gamma$ differs from $B_{[\xi]}(h_no, o)$ up to a uniform error. This implies that $d(x,h_no)-d(x,o)\to -\infty$ as $x\in \gamma$ tends to $[\xi]$. Consequently, the definition of $\mathbf D_R(o)$ shows that $x\notin \mathbf D_R(o)$ for any fixed $R$. This contradicts the choice of $\gamma$ in $\mathbf D_R(o)$.
\end{proof}

Finally, the proof of Theorem \ref{NonHor=DirichletDomainHyp} follows from Lemma \ref{NonHorInDirichletDomain} and Lemma \ref{DirichletDomainOutSmallHor}, where $[\cG]$ could be replaced with $\pU$ by Remark \ref{GromovVisualFine}. 
\subsection{Big and small horospheric limit set}
In this and next subsections,  assume that 
\begin{itemize}
    \item 
    The proper action $\Gamma\act \U$ is cocompact on a proper hyperbolic  space $(\U,d)$, which is compactified with Gromov boundary $\pU$ endowed with maximal partition (\emph{i.e.} each $[\cdot]$-class is singleton, so $[]$ is omitted in what follows).
    \item 
    Let $\{\mu_x:x\in \U\}$ be the family of PS measures on $\Lambda(\Gamma o)=\pU$ constructed from $\Gamma\act \U$. Then $\mu_o$ is a doubling measure without atoms; indeed it is the Hausdorff measure with respect to visual metric at the correct dimension (see \cite{Coor}). Any subgroup $H<\isom(\U)$ preserves the measure class of $\mu_o$ by Lemma \ref{PreserveMeasureClass}.
\end{itemize}  Let a countable group $H$ (possibly not contained in $\Gamma$) act properly on $\U$.  We shall prove the following by elaborating on the proof of Sullivan  \cite{Sul81} in Kleinian groups (see also \cite[Lemma 5.7]{MT98}).
\begin{thm}\label{BigSmallEqualInHyp}
$\mu_o(\HG \setminus \hG)=0$.   
\end{thm}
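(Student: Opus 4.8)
The goal is to show that the big horospheric limit set differs from the small one by a $\mu_o$--null set, for \emph{any} countable group $H$ acting properly on a proper hyperbolic space $\U$ on which $\Gamma$ acts properly and cocompactly, $\mu_o$ being the associated (doubling, atomless) Patterson--Sullivan measure. My plan is to adapt Sullivan's argument from the Kleinian setting, replacing the round-ball geometry of $\mathbf H^n$ by the shadow/Dirichlet-domain machinery already developed in this section. The basic dichotomy to exploit is: a point $\xi\in\HG$ which is \emph{not} in $\hG$ witnesses a horoball at $\xi$ that contains infinitely many $h_n o\in Ho$ but whose depth is bounded below; by the structure theory of Theorem \ref{NonHor=DirichletDomainHyp}(1), a large translate $h^{-1}\xi$ of such a $\xi$ lies in the limit set $\Lambda\mathbf D_R(o)$ of a Dirichlet domain, and by (2) of that theorem, $\Lambda\mathbf D_R(o)$ is disjoint from $\hG$. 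So $\HG\setminus\hG$ is covered by the $H$--orbit of $\Lambda\mathbf D_R(o)\cap(\HG\setminus\hG)$, and it suffices to prove this latter set is $\mu_o$--null.

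First I would set up the covering. Fix large $R>0$ so Theorem \ref{NonHor=DirichletDomainHyp} applies (here $[\cG]=\pU$ since $\Gamma$ acts cocompactly and the boundary is the Gromov boundary, per Remark \ref{GromovVisualFine}). By part (1), $\pU\setminus\HG=H\cdot\Lambda\mathbf D_R(o)$; complementing, $\HG=\pU\setminus H\cdot\Lambda\mathbf D_R(o)$ — but this is too crude. Instead, I will show directly: for each $h\in H$ the set $h\cdot\Lambda\mathbf D_R(o)$ contains no small horospheric limit points by part (2) (small horospheric limit set is $H$--invariant), and every big-but-not-small limit point $\xi$ has the property that some $h_n o$ enters a fixed-depth horoball at $\xi$. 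The key geometric claim, which I will extract from Lemma \ref{NonHorInDirichletDomain}, is that for $\xi\in\HG\setminus\hG$ there exists $h\in H$ with a geodesic $[ho,\xi]\subseteq\mathbf D_R(ho)$, i.e. $h^{-1}\xi\in\Lambda\mathbf D_R(o)$. Therefore $\HG\setminus\hG\subseteq\bigcup_{h\in H}h\big(\Lambda\mathbf D_R(o)\cap(\HG\setminus\hG)\big)$, and since $\mu_o$ is $H$--quasi-invariant it is enough to show
\[
\mu_o\big(\Lambda\mathbf D_R(o)\cap\HG\big)=0.
\]

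The heart of the argument is this last measure estimate, and I expect it to be the main obstacle. The strategy mirrors Sullivan: the Dirichlet domain $\mathbf D_R(o)$ is star-shaped at $o$ (Lemma \ref{StarShape}), so its limit set is captured by geodesic rays from $o$ staying $R$--closer to $o$ than to any $Ho$--point; a point $\xi$ in this set which is \emph{also} a big horospheric limit point forces a near-coincidence — some $h_n o$ lies both in a bounded-depth horoball at $\xi$ and, because $[o,\xi]\subseteq\mathbf D_R(o)$, is "far on the wrong side". Quantitatively I would: (i) for each $h\in H\setminus\{1\}$ with $d(o,ho)\approx n$, consider the shadow $\Pi_o(ho,r)$ and observe that the portion of $\Lambda\mathbf D_R(o)\cap\HG$ "detected at scale $n$ via $h$" is contained in a shadow of controlled size; (ii) use the Shadow Lemma \ref{ShadowLem} for the $\Gamma$--conformal $\mu_o$, giving $\mu_o(\Pi_o(ho,r))\asymp e^{-\e\Gamma d(o,ho)}$; (iii) show that the relevant points of $\Lambda\mathbf D_R(o)\cap\HG$ near $ho$ are confined to a sub-shadow of \emph{exponentially smaller} $\mu_o$--measure, because being in $\mathbf D_R(o)$ up to $ho$ means the bisector $\mathrm{Bis}(o,ho;R)$ cuts off only a thin slice — here I invoke that in hyperbolic space $\mathrm{Bis}(o,ho;R)$ is quasiconvex with limit set of $\mu_o$--measure zero (it is the limit set of a quasiconvex subset of infinite codimension, hence porous), and that $\mu_o$ is doubling/atomless so such porous sets are null; (iv) a Borel--Cantelli / covering argument over $h\in H$ then gives $\mu_o(\Lambda\mathbf D_R(o)\cap\HG)=0$. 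The delicate point is step (iii): making precise that "$[o,\xi]\subseteq\mathbf D_R(o)$ and $\xi\in\HG$" forces $\xi$ into a measure-zero family of bisector limit sets; I would handle it by noting that $\xi\in\HG$ gives $h_n o$ with $d(x,h_no)-d(x,o)$ bounded above for $x\in[o,\xi]$ going to infinity, while $[o,\xi]\subseteq\mathbf D_R(o)$ gives $d(x,h_no)-d(x,o)\ge -R$; so $\xi$ lies in the limit set of the bisector $\mathrm{Bis}(o,h_no;R')$ for some $R'$, for infinitely many $n$, hence in $\limsup_n\Lambda\mathrm{Bis}(o,h_no;R')$, which is $\mu_o$--null by the shadow estimate since $\sum_n\mu_o(\Lambda\mathrm{Bis}(o,h_no;R'))<\infty$ (each bisector limit set sits inside a fixed-size shadow but the summation is over distinct $h_n$ with $d(o,h_no)\to\infty$, and one shows the effective count is controlled using cocompactness of $\Gamma$ exactly as in Lemma \ref{OverlapAnnulus}). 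Once the null-set claim is established, the theorem follows immediately by $H$--quasi-invariance of $\mu_o$ and the orbit covering above.
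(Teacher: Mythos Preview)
Your reduction to showing $\mu_o(\Lambda\mathbf D_R(o)\cap\HG)=0$ is reasonable, but note that Lemma~\ref{NonHorInDirichletDomain} is stated for $\xi\notin\HG$, not for $\xi\in\HG\setminus\hG$; the covering you want follows instead from the weaker fact that $\xi\notin\hG$ forces $\inf_{h\in H}B_\xi(ho,o)>-\infty$, so one can pick $h_0$ near--minimizing and argue as in that lemma. This is a patchable issue.

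The real gap is in your step~(iii)--(iv). The claim that each $\Lambda\mathrm{Bis}(o,ho;R')$ is $\mu_o$--null is true only for the \emph{unthickened} bisector $R'=0$; the thickened bisector is exactly what you need, and its limit set has positive measure. Concretely, $\{\xi:|B_\xi(o,ho)|\le R'\}=\{\xi:\langle\xi,ho\rangle_o\in[(d-R')/2,(d+R')/2]\}$ with $d=d(o,ho)$, which is the shadow of the midpoint of $[o,ho]$ minus a slightly deeper shadow. By the Shadow Lemma this has measure $\asymp_{R'} e^{-\e\Gamma d(o,ho)/2}$. Hence
\[
\sum_{h\in H}\mu_o\big(\Lambda\mathrm{Bis}(o,ho;R')\big)\ \asymp\ \mathcal P_H(\e\Gamma/2,o,o),
\]
and Borel--Cantelli only applies when $\e H<\e\Gamma/2$. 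But the theorem is claimed for \emph{all} $H$, and this paper's own results (Theorem~\ref{ConvTightThm}, Corollary~\ref{ConfinedDivergeAtHalf}) show that the interesting subgroups---confined ones---satisfy $\e H\ge\e\Gamma/2$, where your sum diverges. Lemma~\ref{OverlapAnnulus} does not help: it bounds multiplicity of shadows from a fixed annulus of $\Gamma o$, not a sum over $H$.

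The paper's proof takes a genuinely different route, following Sullivan and Tukia. It splits $\HG\setminus\hG$ into \emph{Garnett points} (where the critical horoball has no $Ho$--point on its boundary but infinitely many just outside) and the rest. The non-Garnett part is handled by Tukia's observation: these points are indexed by the nonempty finite subset of $Ho$ on the critical horosphere, which gives a measurable $H$--fundamental domain, and since $\HG\subseteq\mathbf{Cons}$ this forces the measure to vanish. The Garnett set is handled by a Lebesgue density argument: if it had positive measure one defines a height function $\mathfrak h(\xi)=d(o,\mathcal H(\xi))$, approximately continuous at density points, and produces near each density point a ``forbidden region'' $C_n$ of definite $\mu_o$--proportion that can contain no Garnett points (any Garnett horoball there would be forced to swallow an $h_no$), contradicting density. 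The doubling property of $\mu_o$ is used here, not a summability hypothesis on $H$.
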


The key part is to prove that the set of Garnett points is $\mu_o$--null. Recall that a point $\xi\in \HG$ is called \textit{Garnett point} if there exists a unique horoball $\mathcal {HB}(\xi,L)$ (Definition \ref{HoroballDefn}) at $\xi$ so that
\begin{itemize}
    \item 
    The interior of $\mathcal {HB}(\xi,L)$ contains no point from $Ho$: $B_{\xi}(ho,o)\ge L$ for any $ho\in Ho$;
    \item 
    any larger horoball contains infinitely many points of $Ho$: $B_{\xi}(ho,o)\in [L,L+\epsilon]$ for infinitely many $ho\in Ho$ and any $\epsilon<0$; 
\end{itemize}
where $B_{\xi}(ho,o)=\limsup_{z\to \xi} [d(z,ho)-d(z,o)]$.
Denote by $\mathbf {Gar}$ the set of Garnett points for $H$.

We first recall a useful observation of Tukia \cite[Appendix]{Tu97}.
\begin{lem}\label{Tukia}
The set $A=\HG \setminus (\hG\cup \mathbf {Gar})$ is $\mu_o$--null.     
\end{lem}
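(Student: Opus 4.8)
The plan is to use the Dirichlet‑domain structure theorem of this subsection (Theorem~\ref{NonHor=DirichletDomain}) to pin down exactly which limit points can lie in $A$, and to show there are $\mu_o$--few of them. Since $\Gamma\act\U$ is cocompact, hence of divergence type, $\mu_o$ is carried by the conical set $[\cG]$ (Lemma~\ref{ConicalPointsLem}(2)), so it suffices to prove $\mu_o(A\cap[\cG])=0$; in fact I expect $A\cap[\cG]=\emptyset$. The first step is to analyse the \emph{critical depth} $L'(\xi):=\inf\{\,B_{[\xi]}(ho,o):h\in H\,\}$ of a point $\xi\in\HG$. For $\xi\in A$ I claim $L'(\xi)$ is finite and attained: if $L'(\xi)=-\infty$, a sequence of orbit points enters every horoball centred at $[\xi]$, so $\xi\in\hG$; and if $L'(\xi)$ is finite but not attained, then $B_{[\xi]}(ho,o)\ge L'(\xi)$ for all $h$ while infinitely many orbit points satisfy $B_{[\xi]}(ho,o)\in[L'(\xi),L'(\xi)+\varepsilon]$ for every $\varepsilon>0$ and none lies strictly inside $\mathcal{HB}([\xi],L'(\xi))$, which is precisely the Garnett condition (with the optimal level being unique). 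Either outcome contradicts $\xi\in A$, so one may fix $h_\xi\in H$ realizing $L'(\xi)$.

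The second step is to divide out by $H$. The sets $\HG$, $\hG$ and $\mathbf{Gar}$ are $H$--saturated, since they are phrased through the orbit $Ho$ together with the cocycle identity $B_{[g\xi]}(gx,gy)=B_{[\xi]}(x,y)$; the only mild point is that ``depth'' is equivariant only up to the correction term $B_{[g\xi]}(go,o)$, which is constant in the orbit variable and drops out of the infimum. Hence $A$ is $H$--invariant. Setting $A_1:=\{\xi\in A : B_{[\xi]}(ho,o)\ge 0 \text{ for all } h\in H\}$, the same cocycle computation shows $h_\xi^{-1}\xi\in A_1$ for each $\xi\in A$, so $A\subseteq\bigcup_{h\in H}hA_1$; since $\mu_o$ is $H$--quasi-invariant, it then suffices to prove $\mu_o(A_1)=0$.

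The geometric heart is to show that $A_1\cap[\cG]$ lies in the limit set of a Dirichlet domain. Given $\xi\in A_1\cap[\cG]$, take a geodesic ray $\gamma=[o,\xi]$ (conical points are visual, Lemma~\ref{ConicalPointsLem}(1)). For each $h\in H$ the function $t\mapsto d(\gamma(t),ho)-t$ is non-increasing, hence bounded below by its limit, which for the conical point $\xi$ differs from $B_{[\xi]}(ho,o)\ge 0$ by at most $20C$ (Lemma~\ref{ConicalPointsLem}(3)). Thus $d(\gamma(t),o)\le d(\gamma(t),ho)+20C$ for every $t\ge 0$ and every $h\in H$, i.e.\ $\gamma\subseteq\mathbf{D}_{20C}(o)$ and $\xi\in[\Lambda\,\mathbf{D}_{20C}(o)]$. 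Choosing $R$ large (at least $20C$ and the constant of Lemma~\ref{NonHorInDirichletDomain}) in Theorem~\ref{NonHor=DirichletDomain}(1) gives $[\Lambda\,\mathbf{D}_{R}(o)]\cap[\cG]\cap\HG=\emptyset$; since $A_1\subseteq\HG$, this forces $A_1\cap[\cG]=\emptyset$. Therefore $\mu_o(A_1)=\mu_o(A_1\cap[\cG])=0$, and $\mu_o(A)\le\sum_{h\in H}\mu_o(hA_1)=0$.

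The main obstacle I anticipate is the bookkeeping in the critical‑depth trichotomy of the first step — making sure the three alternatives (small point, Garnett point, genuine member of $A$) are exhaustive, and in particular that $L'(\xi)$ is attained for $\xi\in A$, which is exactly where the hypothesis $\xi\notin\mathbf{Gar}$ enters — together with the careful verification that the horoball description of $\HG$, $\hG$ and $\mathbf{Gar}$ is stable under the orbit partition and the $H$--action despite $o$ not being $H$--fixed. Everything else reduces either to direct citation of the Dirichlet‑domain lemmas already established or to routine hyperbolic geometry.
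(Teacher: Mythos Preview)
Your argument has a gap at the final step. You invoke Theorem~\ref{NonHor=DirichletDomain}(1) to conclude $[\Lambda\mathbf{D}_R(o)]\cap[\cG]\cap\HG=\emptyset$, but this is the \emph{reverse} inclusion in (1), namely $H\cdot([\Lambda\mathbf{D}_R(o)]\cap[\cG])\subseteq[\cG]\setminus\HG$. The only inclusion supplied by Lemma~\ref{NonHorInDirichletDomain} is the forward one $[\cG]\setminus\HG\subseteq H\cdot[\Lambda\mathbf{D}_R(o)]$, while Lemma~\ref{DirichletDomainOutSmallHor} only excludes $\hG$, not $\HG$, from $[\Lambda\mathbf{D}_R(o)]$. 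In fact your own computation shows that any $\xi\in A_1\cap[\cG]$ lies in $[\Lambda\mathbf{D}_R(o)]\cap[\cG]\cap\HG$, so if this intersection were known to be empty you would be done --- but that emptiness is precisely what is in question, and invoking it here is circular. The inclusion $A_1\cap[\cG]\subseteq[\Lambda\mathbf{D}_R(o)]$ by itself yields nothing, since $[\Lambda\mathbf{D}_R(o)]$ can carry positive measure (this is exactly the situation $\mu_o(\hG)<1$ in Theorem~\ref{CharConsAction}).

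The paper's argument bypasses the Dirichlet domain and uses instead the ergodic-theoretic content of $\HG$. Your first step already shows that for $\xi\in A$ the infimum $L'(\xi)$ is attained; pushing one step further, the failure of the Garnett condition gives that the set $F(\xi):=\{ho\in Ho:B_{[\xi]}(ho,o)=L'(\xi)\}$ is \emph{finite} and nonempty. The assignment $\xi\mapsto F(\xi)$ is $H$-equivariant, so $A$ decomposes into measurable pieces $A_F$ permuted by $H$; choosing one $F$ from each $H$-orbit of finite subsets of $Ho$ yields a measurable fundamental domain for $H\curvearrowright A$. But $A\subseteq\HG\subseteq\mathbf{Cons}$ by Lemma~\ref{HopfdecompLem}, and a conservative action admits no positive-measure wandering set; hence $\mu_o(A)=0$. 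Your trichotomy and $H$-invariance computations are correct and are precisely the ingredients this argument needs --- the missing idea is to exploit the \emph{finiteness} of the minimizing set, rather than a single minimizer, to manufacture a wandering decomposition.
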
 
\begin{proof}
We include a short proof for completeness. Indeed, let $\xi \in \HG \setminus \hG$  be a non-Garnett point, so there exists a horoball $\mathcal {HB}(\xi)$ whose boundary contains a non-empty finite set of elements in $Ho$ but whose interior contains no point in $Ho$. Consequently, $A$ is a disjoint union of measurable subsets indexed over finite subsets in $Ho$, which are invariant under $H$, so we could choose a measurable fundamental domain for $H\act A$. By Lemma \ref{HopfdecompLem}, $\HG$ is a subset of the conservative part $\mathbf{Cons}$, so $\mu_o(A)=0$.    
\end{proof}

To complete the proof of Theorem \ref{BigSmallEqualInHyp}, it suffices to prove that $\mathbf {Gar}$ is a $\mu_o$--null set. 
\begin{lem}\label{Garnett}
$\mu_o(\mathbf {Gar})=0$.    
\end{lem}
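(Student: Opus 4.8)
The plan is to adapt Sullivan's argument, in the form given by Tukia \cite{Tu97}, reducing the statement to an absolute-continuity estimate for shadows of orbit points clustering on a critical horosphere. Recall that by Lemma \ref{Tukia} the set $\HG\setminus(\hG\cup\mathbf{Gar})$ is already $\mu_o$--null, so proving $\mu_o(\mathbf{Gar})=0$ will also finish Theorem \ref{BigSmallEqualInHyp}.

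\textbf{Step 1 (normalizing the critical depth).} First I would record how critical horoballs transform under $H$. If $\xi\in\mathbf{Gar}$ has critical horoball $\mathcal{HB}(\xi,L_\xi)=\{y:B_{[\xi]}(y,o)\le L_\xi\}$ as in Definition \ref{HoroballDefn}, then using the cocycle identity and the equivariance $B_{h\xi}(hx,hy)=B_\xi(x,y)$ one computes $h\cdot\mathcal{HB}(\xi,L_\xi)=\mathcal{HB}\big(h\xi,\;L_\xi-B_\xi(h^{-1}o,o)\big)$; since $h$ permutes $Ho$, the point $h\xi$ is again a Garnett point and $L_{h\xi}=L_\xi-B_\xi(h^{-1}o,o)$. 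If $h_n\in H$ is the sequence with $h_no$ accumulating on $\partial\mathcal{HB}(\xi,L_\xi)$, so $B_\xi(h_no,o)\to L_\xi$, then $L_{h_n^{-1}\xi}\to 0$. Hence every $H$--orbit inside $\mathbf{Gar}$ meets the set $\mathbf{Gar}_0$ of Garnett points whose critical depth lies in $[-1,1]$. Since $H$ preserves the measure class of $\mu_o$ (Lemma \ref{PreserveMeasureClass}) and $\mathbf{Gar}=\bigcup_{h\in H}h\,\mathbf{Gar}_0$ is a countable union, it suffices to show $\mu_o(\mathbf{Gar}_0)=0$.

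\textbf{Step 2 (a density point).} Suppose $\mu_o(\mathbf{Gar}_0)=\alpha>0$. As $\mu_o$ is an atomless doubling measure on $(\pU,\rho)$ for a visual metric $\rho$, the Lebesgue differentiation theorem applies; moreover, by the Shadow Lemma \ref{ShadowLem}, valid over all of $\U$ by cocompactness, shadows $\Pi_o(v,r)$ ($v\in\U$, $r\gg0$) are comparable to $\rho$--balls and satisfy $\mu_o(\Pi_o(v,r))\asymp e^{-\omega_\Gamma d(o,v)}$. Consequently $\mu_o$--a.e. $\xi\in\mathbf{Gar}_0$ is a shadow density point: $\mu_o(\mathbf{Gar}_0\cap\Pi_o(v_k,r))/\mu_o(\Pi_o(v_k,r))\to1$ along any sequence $v_k$ with $d(o,v_k)=t_k\to\infty$ lying within bounded distance of a fixed ray $[o,\xi]$. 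I would fix such a $\xi$, with critical horoball $\mathcal{HB}(\xi,L_\xi)$, $L_\xi\in[-1,1]$, and accumulating orbit points $h_no\in\partial\mathcal{HB}(\xi,L_\xi)$ with $d(o,h_no)\to\infty$.

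\textbf{Step 3 (the geometric heart, and the main obstacle).} The plan is to contradict Step 2 by producing a uniform $c\in(0,1)$ and, at each scale $t_k$, a sub-shadow $W_k\subseteq S_k:=\Pi_o(v_k,r)$ with $\mu_o(W_k)\ge c\,\mu_o(S_k)$ and $W_k\cap\mathbf{Gar}_0=\emptyset$. For the second property: I would take $W_k$ to be built from the shadows $\Pi_o(h_no,r')$ of accumulating orbit points with $d(o,h_no)\in[t_k,2t_k]$ whose shadow falls inside $S_k$; for $\zeta\in\Pi_o(h_no,r')$ the ray $[o,\zeta]$ passes within $r'$ of $h_no$, forcing $B_\zeta(h_no,o)\le -d(o,h_no)+C\le -t_k+C<-1$ once $t_k$ is large, so that $\zeta$ cannot have critical depth in $[-1,1]$, i.e. $\zeta\notin\mathbf{Gar}_0$ (and, as $\zeta$ lies in such shadows at arbitrarily deep scales across $k$, $\zeta$ is in fact a small horospheric limit point, dovetailing with Lemma \ref{Tukia}). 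The hard part is the first property — that $W_k$ has \emph{uniformly positive} relative measure in $S_k$: the orbit points clustering on the critical horosphere need not lie near $[o,\xi]$ nor be evenly spread, so one cannot read $W_k$ off a single orbit point. I would organize these infinitely many orbit points into a Vitali cover of a definite portion of $S_k$, using the doubling property of $\mu_o$ and the shadow estimate $\mu_o(\Pi_o(h_no,r'))\asymp e^{-\omega_\Gamma d(o,h_no)}$ to bound overlaps and compare masses, with cocompactness of $\Gamma\act\U$ entering to ensure enough orbit points at every scale. This covering estimate is exactly the crux of the Sullivan--Tukia argument and I expect it to be the main obstacle. Once it is in place, the density contradiction gives $\mu_o(\mathbf{Gar}_0)=0$, hence $\mu_o(\mathbf{Gar})=0$, and combined with Lemma \ref{Tukia} this yields Theorem \ref{BigSmallEqualInHyp}.
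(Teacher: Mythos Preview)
Your Steps 1 and 2 are fine, and the $H$--action normalization reducing to $\mathbf{Gar}_0=\{\xi\in\mathbf{Gar}:L_\xi\in[-1,1]\}$ is a clean substitute for tracking depths. The gap is in Step~3: the covering estimate you flag as ``the main obstacle'' is not merely hard --- there is no reason it should hold. The Garnett condition supplies infinitely many orbit points $h_no$ with $B_\xi(h_no,o)\in[L_\xi,L_\xi+\epsilon]$, but gives \emph{no} growth rate; they may be arbitrarily sparse in any annulus. Moreover, for $\Pi_o(h_no,r')\subseteq S_k=\Pi_o(v_k,r)$ with $v_k$ at depth $t_k$ on $[o,\xi]$ you actually need $d(o,h_no)\gtrsim 2t_k$ (since $\langle h_no,\xi\rangle_o\approx d(o,h_no)/2$ when $|L_\xi|\le1$), and then each such shadow has mass $\asymp e^{-2\omega_\Gamma t_k}$ against $\mu_o(S_k)\asymp e^{-\omega_\Gamma t_k}$; covering a fixed fraction would require on the order of $e^{\omega_\Gamma t_k}$ well-spread $H$--orbit points at that scale, which nothing in the hypotheses forces. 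Your appeal to cocompactness of $\Gamma\curvearrowright\U$ produces $\Gamma$--orbit points, not $H$--orbit points, so it does not help here.

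The paper avoids any covering by working with a \emph{single} orbit point via a midpoint trick. It introduces the height $\mathfrak h(\xi)=d(o,\mathcal H(\xi))$ and uses that $\mathfrak h$ is approximately continuous at density points of $\mathbf{Gar}$. Given one $h_no$ near the critical horosphere, take $p_n,q_n\in[x,h_no]$ at distances $L_n/2$ and $L_n/2+D$ from the foot $x$ of $o$ on the horosphere; then $B_n=\Pi_o(p_n,r)\ni\xi$ and $C_n=\Pi_o(q_n,r)$ satisfy $\mu_o(C_n)\asymp\mu_o(B_n)$ automatically since $d(p_n,q_n)=D$. For any Garnett density point $\zeta\in C_n$, approximate continuity gives $\mathfrak h(\zeta)\approx\mathfrak h(\xi)$, and a short thin-triangle computation then forces $h_no$ inside the critical horoball at $\zeta$ --- contradicting that $\zeta$ is Garnett. (Your normalization actually meshes with this: after restricting to $\mathbf{Gar}_0$ one checks directly that $B_\zeta(h_no,o)\approx -2D$ for $\zeta\in C_n$, hence $L_\zeta<-1$ and $\zeta\notin\mathbf{Gar}_0$, without invoking approximate continuity.) The missing idea in your Step~3 is this midpoint construction producing a forbidden region of definite relative mass from one orbit point, not a Vitali cover by many.
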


\begin{proof}
By way of contradiction, assume that $\mu_o(\mathbf {Gar})>0$.

We define a height function $\mathfrak{h}: \mathbf {Gar}\to \mathbb R_{\ge 0}$. Set $\mathfrak{h}(\xi)=d(o,\mathcal {H}(\xi))$, where $\mathcal {H}(\xi)$ is  the unique horoball centered at $\xi$ whose interior does not contain any point of $Ho$ and any horoball centered at $\xi$ larger than that contains infinitely many points of $ho$.  We verify that $\mathfrak{h}$ is a measurable function. Indeed, if $\xi\in \Lambda \mathrm{Bis}(x,y;\epsilon)$, then $|B_\xi(x,y)|\le \epsilon$; if $|B_\xi(x,y)|\le \epsilon$, then $\xi\in \Lambda \mathrm{Bis}(x,y;\epsilon+1/n)$ for any $n>1$. This implies that the set $\xi\in \pU$ with $|B_\xi(x,y)|\le \epsilon$ is a countable intersection of closed subsets, so is Borel measurable. By definition, $\mathbf {Gar}$ is  the limit supremum  of such measurable sets. Moreover, the set of $\xi\in \mathbf {Gar}$ with $\mathfrak h(\xi)\in (r_1,r_2)$ is measurable. 

As $\mu_o$ is a doubling measure,   the Lebesgue density theorem holds: Let $\mathbf {G}$ be the $\mu_o$--full set of density points $\xi\in \mathbf {Gar}$, so 
\begin{enumerate}
    \item 
    $\mathfrak h: \mathbf {G}\to \mathbb R$ is approximately continuous: $\xi_n\in\mathbf {G} \to \xi\in \mathbf {G}$ implies $\mathfrak h(\xi_n)\to \mathfrak h(\xi)$;
    \item 
    for any sequence of metric balls $B_n\subseteq \pU$ centered at $\xi \in \mathbf{G}$ with radius tending to 0, we have 
$$
\frac{\mu_o(\mathbf {Gar}\cap B_n)}{\mu_o(B_n)}\to 1.
$$
\end{enumerate}    
As in \cite{Sul81}, we are going to find a ``forbidden" region $C_n\subseteq B_n$, consisting of non-Garnett points, with a definite  $\mu_o$--measure. This  contradicts the item (2).  The co-compact action is crucial to apply Shadow Lemma \ref{ShadowLem}: for any $x\in \U$ and any fixed large constant $r$, we have \begin{equation}\label{FullShadowPrincipleEQ}
\mu_o(\Pi_o(x,r))\asymp_r \mathrm{e}^{-\e \Gamma d(o,x)}.    
\end{equation}
\begin{figure}
    \centering

\tikzset{every picture/.style={line width=0.75pt}} 

\begin{tikzpicture}[x=0.75pt,y=0.75pt,yscale=-1,xscale=1]

\draw    (69.5,267) -- (176.64,40.81) ;
\draw [shift={(177.5,39)}, rotate = 115.35] [color={rgb, 255:red, 0; green, 0; blue, 0 }  ][line width=0.75]    (10.93,-3.29) .. controls (6.95,-1.4) and (3.31,-0.3) .. (0,0) .. controls (3.31,0.3) and (6.95,1.4) .. (10.93,3.29)   ;
\draw [shift={(69.5,267)}, rotate = 295.35] [color={rgb, 255:red, 0; green, 0; blue, 0 }  ][fill={rgb, 255:red, 0; green, 0; blue, 0 }  ][line width=0.75]      (0, 0) circle [x radius= 3.35, y radius= 3.35]   ;
\draw    (134.5,135) -- (232.5,127) ;
\draw [shift={(232.5,127)}, rotate = 355.33] [color={rgb, 255:red, 0; green, 0; blue, 0 }  ][fill={rgb, 255:red, 0; green, 0; blue, 0 }  ][line width=0.75]      (0, 0) circle [x radius= 3.35, y radius= 3.35]   ;
\draw [shift={(134.5,135)}, rotate = 355.33] [color={rgb, 255:red, 0; green, 0; blue, 0 }  ][fill={rgb, 255:red, 0; green, 0; blue, 0 }  ][line width=0.75]      (0, 0) circle [x radius= 3.35, y radius= 3.35]   ;
\draw  [line width=0.75]  (193.5,128) .. controls (193.16,123.35) and (190.66,121.19) .. (186.01,121.53) -- (176.14,122.24) .. controls (169.49,122.73) and (166,120.64) .. (165.66,115.99) .. controls (166,120.64) and (162.84,123.21) .. (156.19,123.69)(159.19,123.48) -- (144.97,124.51) .. controls (140.32,124.85) and (138.16,127.35) .. (138.5,132) ;
\draw  [dash pattern={on 4.5pt off 4.5pt}]  (195.5,132) -- (350.56,172.49) ;
\draw [shift={(352.5,173)}, rotate = 194.64] [color={rgb, 255:red, 0; green, 0; blue, 0 }  ][line width=0.75]    (10.93,-3.29) .. controls (6.95,-1.4) and (3.31,-0.3) .. (0,0) .. controls (3.31,0.3) and (6.95,1.4) .. (10.93,3.29)   ;
\draw  [line width=6] [line join = round][line cap = round] (178.8,38.22) .. controls (178.8,38.22) and (178.8,38.22) .. (178.8,38.22) ;
\draw  [line width=6] [line join = round][line cap = round] (342.8,144.22) .. controls (342.8,144.22) and (342.8,144.22) .. (342.8,144.22) ;
\draw  [dash pattern={on 4.5pt off 4.5pt}]  (134.5,135) -- (78.43,28.77) ;
\draw [shift={(77.5,27)}, rotate = 62.18] [color={rgb, 255:red, 0; green, 0; blue, 0 }  ][line width=0.75]    (10.93,-3.29) .. controls (6.95,-1.4) and (3.31,-0.3) .. (0,0) .. controls (3.31,0.3) and (6.95,1.4) .. (10.93,3.29)   ;
\draw  [dash pattern={on 4.5pt off 4.5pt}]  (134.5,135) -- (349.57,192.48) ;
\draw [shift={(351.5,193)}, rotate = 194.96] [color={rgb, 255:red, 0; green, 0; blue, 0 }  ][line width=0.75]    (10.93,-3.29) .. controls (6.95,-1.4) and (3.31,-0.3) .. (0,0) .. controls (3.31,0.3) and (6.95,1.4) .. (10.93,3.29)   ;
\draw  [dash pattern={on 4.5pt off 4.5pt}]  (195.5,132) -- (327.82,46.09) ;
\draw [shift={(329.5,45)}, rotate = 147.01] [color={rgb, 255:red, 0; green, 0; blue, 0 }  ][line width=0.75]    (10.93,-3.29) .. controls (6.95,-1.4) and (3.31,-0.3) .. (0,0) .. controls (3.31,0.3) and (6.95,1.4) .. (10.93,3.29)   ;
\draw [shift={(195.5,132)}, rotate = 327.01] [color={rgb, 255:red, 0; green, 0; blue, 0 }  ][fill={rgb, 255:red, 0; green, 0; blue, 0 }  ][line width=0.75]      (0, 0) circle [x radius= 3.35, y radius= 3.35]   ;
\draw    (195.5,132) -- (339.51,143.84) ;
\draw [shift={(341.5,144)}, rotate = 184.7] [color={rgb, 255:red, 0; green, 0; blue, 0 }  ][line width=0.75]    (10.93,-3.29) .. controls (6.95,-1.4) and (3.31,-0.3) .. (0,0) .. controls (3.31,0.3) and (6.95,1.4) .. (10.93,3.29)   ;
\draw  [line width=0.75]  (363.5,172) .. controls (368.17,172.03) and (370.52,169.72) .. (370.55,165.05) -- (370.99,105.24) .. controls (371.04,98.57) and (373.39,95.26) .. (378.06,95.3) .. controls (373.39,95.26) and (371.09,91.91) .. (371.14,85.24)(371.11,88.24) -- (371.45,43.05) .. controls (371.48,38.38) and (369.17,36.03) .. (364.5,36) ;
\draw  [line width=6] [line join = round][line cap = round] (93.8,216.22) .. controls (93.8,216.22) and (93.8,216.22) .. (93.8,216.22) ;
\draw    (94.5,215) .. controls (4.5,174) and (32.5,-2) .. (177.5,39) ;
\draw    (94.5,215) .. controls (232.5,266) and (274.5,91) .. (177.5,39) ;
\draw  [line width=0.75]  (73.5,270) .. controls (77.69,272.05) and (80.82,270.99) .. (82.87,266.8) -- (89.52,253.21) .. controls (92.45,247.23) and (96.02,245.27) .. (100.21,247.32) .. controls (96.02,245.27) and (95.39,241.25) .. (98.32,235.26)(97,237.95) -- (101.7,228.36) .. controls (103.75,224.17) and (102.69,221.05) .. (98.5,219) ;
\draw  [line width=0.75]  (101.5,214) .. controls (105.72,215.99) and (108.83,214.88) .. (110.82,210.66) -- (119.94,191.36) .. controls (122.79,185.33) and (126.32,183.31) .. (130.54,185.3) .. controls (126.32,183.31) and (125.63,179.3) .. (128.48,173.27)(127.2,175.98) -- (138.84,151.32) .. controls (140.83,147.1) and (139.72,143.99) .. (135.5,142) ;

\draw (240,113.4) node [anchor=north west][inner sep=0.75pt]    {$h_{n} o\in \mathcal{HB}( \zeta ,L)$};
\draw (55,246.4) node [anchor=north west][inner sep=0.75pt]    {$o$};
\draw (111.5,121.9) node [anchor=north west][inner sep=0.75pt]    {$p_{n}$};
\draw (192,136.4) node [anchor=north west][inner sep=0.75pt]    {$q_{n}$};
\draw (162.39,100.04) node [anchor=north west][inner sep=0.75pt]  [rotate=-2.84]  {$D$};
\draw (179,11.4) node [anchor=north west][inner sep=0.75pt]    {$\xi $};
\draw (347,141.4) node [anchor=north west][inner sep=0.75pt]    {$\zeta $};
\draw (382,87.4) node [anchor=north west][inner sep=0.75pt]    {$C_{n}$};
\draw (277,219.4) node [anchor=north west][inner sep=0.75pt]    {$B_{n} :=\Pi _{o}( p_{n} ,r)$};
\draw (277,244.4) node [anchor=north west][inner sep=0.75pt]    {$C_{n} :=\Pi _{o}( q_{n} ,r)$};
\draw (74,209.4) node [anchor=north west][inner sep=0.75pt]    {$x$};
\draw (50,143.4) node [anchor=north west][inner sep=0.75pt]    {$\mathcal{HB}( \xi ,L)$};
\draw (104,241.4) node [anchor=north west][inner sep=0.75pt]    {$-L_{\xi }$};
\draw (133,173.4) node [anchor=north west][inner sep=0.75pt]    {$L_{n} /2$};

\end{tikzpicture}
    \caption{The proof of Lemma \ref{Garnett} where $\U$ is a tree and $r=0$. The forbidden region $C_n$ contains no Garnett point: a horoball at $\zeta\in C_n$ containing $x$ must contain $h_no$}
    \label{fig:forbiddenregion}
\end{figure}
For $\xi \in \mathbf{Gar}$, there exists a unique horoball $\mathcal{HB}(\xi,L_\xi)$ for some real number $L_\xi$, so that $B_\xi(o,ho)>L_\xi$ for all $h\in H$, but $L_\xi+1>B_\xi(o,h_no)>L_\xi$ for  infinitely many distinct $h_no\in Ho$. Let $x\in \mathcal{HB}(\xi,L_\xi)$ be a projection point of $o$ onto $\mathcal{HB}$, so $\mathfrak{h}(\xi)=d(o,x)$ by definition. As horoballs in hyperbolic spaces are quasi-convex, we have $|L_\xi-\mathfrak{h}(\xi)|\le 100\delta$, and $d(x,[o,h_no])\le 100\delta$. 

Denote $L_n:=d(x,h_no)$. 
 Choose $p_n, q_n\in [x,h_no]$ with  $d(x, p_n)=L_n/2$ and $d(x, q_n)=L_n/2+D$ for a large constant $D\gg r$.  Set $B_n:=\Pi_o(p_n, r)$ and $C_n:=\Pi_o(q_n, r)$. See Fig. \ref{fig:forbiddenregion} for approximate tree configuration of these points. 
 
As $d(x,[o,h_no])\le 100\delta$ and $|d(x,p_n)-d(x,q_n)|\le D$, we see that $|d(o,p_n)-d(o,q_n)|\le D+200\delta$.  Shadow estimates (\ref{FullShadowPrincipleEQ}) implies the existence of a positive  constant $c$ depending only on $D$: $${\mu_o(C_n)}\ge c {\mu_o(B_n)}$$  

We claim  that for all $n\gg 0$, $C_n$ consists of non-Garnett points up to a $\mu_o$-null set. If not, take a density point $\zeta\in  C_n\cap \mathbf{G}$. Let $\mathcal{HB}(\zeta,L_\zeta)$ be the unique horoball associated with $\zeta$, so $|L_\zeta-\mathfrak{h}(\zeta)|\le 100\delta$ follows as above. Let $y\in \mathcal{HB}(\zeta,L_\zeta)$ so that $d(o,y)=\mathfrak{h}(\zeta)$.  Our goal is to prove $\mathcal{HB}(\zeta,L_\zeta)$ must contain $h_no$,  contradicting the definition of a Garnett point.

Indeed, the approximate continuity implies $|\mathfrak{h}(\xi)-\mathfrak{h}(\zeta)|=|d(o,x)-d(o,y)|\le 1$ for $\zeta\in C_n\to \xi$. As $ \zeta\in \Pi_o(q_n, r)$, we have $d(q_n,[o,\zeta))\le r$. If $D\gg r$, the thin-triangle inequality shows that $d(x,[o,\zeta])\le 10\delta$ and thus $d(x,y)\le 100\delta$.    Now, by the choice of $d(x, p_n)=L_n/2$ and $d(h_no,q_n)= L_n/2-D$, we have $d(y,q_n)\ge d(x,q_n)-d(x,y)\ge L_n/2+D-100\delta$, implying $$d(h_no,q_n)\le L_n/2-D \le  d(y,q_n)-2D+100\delta$$
Noting $d(q_n,z)\le r$ for some $z\in [o,\zeta)$, we choose $D\gg r$ so that $d(h_no,z) \le  d(y,z)-100\delta$. Lemma \ref{SameHoroball} implies $h_no\in \mathcal{HB}(\zeta,L_\zeta)$. Hence, we conclude that $C_n\cap \mathcal{G}=\emptyset$.  Recall that ${\mu_o(C_n)}\ge c {\mu_o(B_n)}$,  this contradicts the property that $\xi$ is a density point as described in (2). The proof of the lemma is complete.    
\end{proof}

We conclude this subsection with a remark on possible extensions of Theorem \ref{BigSmallEqualInHyp} to mapping class groups.
\begin{rem}\label{ExtensionMCG}
Consider the (non-cocompact) action of the mapping class group $\Gamma=\mathbf{Mod}(\Sigma_g)$ on Teichm\"{u}ller space $\U$ for $g\ge 2$. 
The proof of Lemma \ref{Garnett} could be adapted to its subgroup $H<\Gamma$, provided that (\ref{FullShadowPrincipleEQ}) holds for all $x\in \U$. (Hyperbolicity could be circumvented by using partial shadows). By the Shadow Lemma \ref{ShadowLem}, it is known that (\ref{FullShadowPrincipleEQ}) holds for a cocompact subset of points. By work of \cite{ABEM}, the conformal density $\mu_x$ gives the same mass to Thurston boundary $\pmf$, and the volume growth in Teichm\"{u}ller space does not depend on the basepoint. This provides positive evidence that (\ref{FullShadowPrincipleEQ}) holds for all $x\in \U$. 
\end{rem}

\subsection{Characterizing maximal quotient growth}

Let $B(o,n)\subseteq \U$ and $\bar B(\pi(o),n)\subseteq \U/H$ denote respectively the   $d$--ball and $\bar d$--ball at $o$ and $\pi(o)$ of radius $n\ge 0$. Note that $\pi(B(o,n))=\bar B(\pi(o),n)$:  $Hgo \in B(\pi(o),n)$ if and only if $d(Ho, Hgo)\le n$. We say that $\U/H$ has \textit{slower growth} than $\U$ if given any maximal $R$--separated subset $Z$ (resp. $W$) of $\U$ (resp. $\U/H$) for some $R>0$, we have 
$$
\frac{\sharp W\cap B(\pi o,n)}{\sharp Z\cap  B(o,n)}\to 0,\;\text{as } n\to\infty.$$
This definition is independent of the choice of $Z, W$ and $R$, as different choice yields comparable ball growth function up to multiplicative constants. 
By the Shadow Lemma, the notion of slower growth in orbit points could be re-formulated using volume. For example, if $\U$ is a simply connected rank-1 manifold with a co-compact action, $\sharp Z\cap  B(o,n)$ is comparable to the volume of $B(o,n)$. In a non-cocompact action, an important potential example could be given by mapping class groups, if the full shadow principle  holds as stated in Remark \ref{ExtensionMCG}.  

The following result in Kleinian groups was first obtained in \cite[Theorem VI]{Sul81} and later in free groups in \cite[Theorem 4.1]{GKN} and \cite[Theorem 7]{BO10}. Recall from the previous subsection, $\Gamma\act\U$ is assumed to be geometric action on a hyperbolic space, and $\mu_o$ is the corresponding Hausdorff measure on the Gromov boundary $\pU$.

\begin{thm}\label{CharConsAction}
Assume that $H$ acts properly on a proper hyperbolic space $\U$.  Then  the small/big horospheric limit set   is $\mu_o$--full, if and only if  $\U/H$ grows slower  than $\U$. Moreover,   $\mu_o(\hG)<1$ is equivalent to the following
$$
\sharp \{Hgo: \bar d(Ho,Hgo)\le n, g\in \Gamma\} \asymp \mathrm{e}^{\e \Gamma n}.
$$
\end{thm}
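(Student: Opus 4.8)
\emph{Plan.} The strategy is to route both equivalences through the limit sets of the Dirichlet-type regions $\mathbf{D}_R(o)$, using the structural identities of Theorem~\ref{NonHor=DirichletDomainHyp} together with the coincidence (mod $\mu_o$-null) of the big and small horospheric limit sets from Theorem~\ref{BigSmallEqualInHyp}. By Theorem~\ref{NonHor=DirichletDomainHyp}, for every large $R$ one has $\pU\setminus\HG=H\cdot\Lambda\mathbf{D}_R(o)$ and $\hG\cap\Lambda\mathbf{D}_R(o)=\emptyset$; since $H$ is countable and preserves the measure class of the atomless $\mu_o$, a countable union of null sets is null, so $\mu_o(\HG)=1\iff\mu_o(\Lambda\mathbf{D}_R(o))=0$, and combined with $\mu_o(\HG\setminus\hG)=0$ this makes ``$\mu_o(\hG)=1$'', ``$\mu_o(\HG)=1$'' and ``$\mu_o(\Lambda\mathbf{D}_R(o))=0$'' all equivalent (for every large $R$), as are their negations. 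Since $\pi(\Gamma o)$ is coarsely dense in $\U/H$, while $\Gamma o$ is coarsely dense in $\U$ with $\sharp(\Gamma o\cap B(o,n))\asymp e^{\e\Gamma n}$ (cocompactness plus purely exponential growth), ``$\U/H$ grows slower than $\U$'' means $N(n):=\sharp\{Hgo:g\in\Gamma,\ \bar d(Ho,Hgo)\le n\}=o(e^{\e\Gamma n})$ and ``maximal quotient growth'' means $N(n)\asymp e^{\e\Gamma n}$. So it suffices to prove: \textbf{(C)} if $\mu_o(\Lambda\mathbf{D}_R(o))=0$ for all large $R$ then $N(n)=o(e^{\e\Gamma n})$; and \textbf{(D)} if $\mu_o(\Lambda\mathbf{D}_R(o))>0$ then $N(n)\asymp e^{\e\Gamma n}$.

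Both (C) and (D) come from a shadow count against the cocompact Shadow Lemma $\mu_o(\Pi_o(y,r))\asymp_r e^{-\e\Gamma d(o,y)}$ of~\eqref{FullShadowPrincipleEQ}; fix $r$ large (at least the constant there and at least the cocompactness constant $M$). For (D): since $\mathbf{D}_R(o)$ is star-shaped (Lemma~\ref{StarShape}) and every boundary point of a hyperbolic space is visual, each $\xi\in\Lambda\mathbf{D}_R(o)$ is the endpoint of a ray $\gamma_\xi\subseteq\mathbf{D}_R(o)$ from $o$ (cf. Lemma~\ref{DirichletIsVisual}); cocompactness then gives, for \emph{every} $n$, an orbit point $y_\xi\in\Gamma o$ within $M$ of $\gamma_\xi(n)$, hence $y_\xi\in\mathbf{D}_{R+2M}(o)\cap A_\Gamma(o,n,M)$ and $\xi\in\Pi_o(y_\xi,r)$, so for every $n$
$$0<\mu_o(\Lambda\mathbf{D}_R(o))\ \le\ \sum_{y\in\mathbf{D}_{R+2M}(o)\cap\Gamma o\cap A_\Gamma(o,n,M)}\mu_o(\Pi_o(y,r))\ \asymp\ e^{-\e\Gamma n}\,\sharp\!\big(\mathbf{D}_{R+2M}(o)\cap\Gamma o\cap A_\Gamma(o,n,M)\big),$$
yielding the lower bound $\sharp(\mathbf{D}_{R+2M}(o)\cap\Gamma o\cap A_\Gamma(o,n,M))\gtrsim e^{\e\Gamma n}$ for every $n$ (the matching upper bound being automatic). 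For (C): conversely, if a point lies in infinitely many $\Pi_o(y,r)$ with $y\in\mathbf{D}_R(o)\cap\Gamma o$ and $d(o,y)\to\infty$, then by hyperbolicity and star-shapedness a ray to it is eventually trapped in $\mathbf{D}_{R'}(o)$ with $R'=R+O(\delta)+r$, so $\limsup_y\Pi_o(y,r)\subseteq\Lambda\mathbf{D}_{R'}(o)$, which is $\mu_o$-null by hypothesis (Theorem~\ref{NonHor=DirichletDomainHyp} applies to $R'$ too); a Borel--Cantelli argument using the bounded overlap of shadows from an annulus (Lemma~\ref{OverlapAnnulus}) together with~\eqref{FullShadowPrincipleEQ} then forces $e^{-\e\Gamma n}\,\sharp(\mathbf{D}_R(o)\cap\Gamma o\cap A_\Gamma(o,n,\Delta))\to0$.

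The main obstacle will be converting these interior counts into the quotient count $N(n)$. The comparison needed for (D) cannot use $\mathbf{D}_R(o)$ with positive slack, since — as the tree example in the text shows — $\mathbf{D}_R(o)$ is not a bounded-multiplicity coarse fundamental domain, so the orbit-to-coset map $y\mapsto Hy$ need not be finite-to-one on it. This is exactly where the negative-slack region enters: a short computation with the defining inequalities of $\mathbf{D}_{-R}(o)\subseteq\mathbf{D}_0(o)$ shows $y\mapsto Hy$ is \emph{injective} on $\mathbf{D}_{-R}(o)\cap\Gamma o$, hence $\sharp(\mathbf{D}_{-R}(o)\cap\Gamma o\cap A_\Gamma(o,n,M))\le N(n+M)$, and re-running the covering of (D) with $\mathbf{D}_{-R}(o)$ in place of $\mathbf{D}_{R+2M}(o)$ upgrades the interior lower bound to $N(n)\gtrsim e^{\e\Gamma n}$. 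What must then be checked is that $\Lambda\mathbf{D}_{-R}(o)$ still carries positive $\mu_o$-measure whenever $\mu_o(\HG)<1$, which I would obtain by comparing $\Lambda\mathbf{D}_{-R}(o)$ with $\Lambda\mathbf{D}_R(o)$ on the boundary and invoking Theorem~\ref{BigSmallEqualInHyp} once more; symmetrically, for (C) one bounds $N(n)$ from above by an interior count in some $\mathbf{D}_R(o)$ using coarse density of $\Gamma o$ and the analogous injectivity inside $\mathbf{D}_0(o)$. This bookkeeping around the two slacks is the delicate part; the rest is a direct application of the shadow estimate and the reductions above (and, in the present cocompact hyperbolic setting, Remark~\ref{GromovVisualFine} lets one work with all of $\pU$ rather than with a conical subset throughout).
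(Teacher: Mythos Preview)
Your proposal is correct and follows essentially the same route as the paper: reduce both equivalences to whether $\mu_o(\Lambda\mathbf{D}_R(o))$ vanishes, then run a shadow count with bounded overlap for the ``slower growth'' direction and an injective lift via $\mathbf{D}_{-R}(o)$ for the ``maximal growth'' direction. The one step you flag as delicate---passing from $\mu_o(\Lambda\mathbf{D}_R(o))>0$ to $\mu_o(\Lambda\mathbf{D}_{-R}(o))>0$---is handled in the paper exactly as you anticipate: one writes $\mathbf{D}_R(o)\setminus\mathbf{D}_{-R}(o)$ as the union of the countably many facet-like sets $F(h)=\mathrm{Bis}(o,ho;R)\cap\mathbf{D}_R(o)$, observes that each $\Lambda F(h)\subseteq\Lambda\mathbf{D}_R(o)$ misses $\hG$ by Theorem~\ref{NonHor=DirichletDomainHyp}(2), and then invokes Theorem~\ref{BigSmallEqualInHyp} to conclude these contribute $\mu_o$-null mass.
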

As an immediate corollary, we obtain a characterization of conservative actions.
\begin{cor}
Assume that $H$ is torsion-free. Then    the action $H\act (\pU,\mu_o)$ is conservative if and only if  the action $H\act \U$ grows slower than $\Gamma\act \U$.   
\end{cor}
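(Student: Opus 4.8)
\textbf{Deduction of the Corollary from Theorem \ref{CharConsAction}.} The plan is to identify the conservative part of $H\act(\pU,\mu_o)$ with the big horospheric limit set. First, since $\Gamma$ acts cocompactly, $\isom(\U)$ preserves the measure class of $\mu_o$ by Lemma \ref{PreserveMeasureClass}, so the Hopf decomposition of Lemma \ref{HopfdecompLem} applies and gives $\mathbf{Cons}=\HG\cup\pU_{>1}$ and $\mathbf{Diss}=\pU_{1}\setminus\HG$. When $H$ is torsion free, any point $\xi\in\pU_{>1}$ has an \emph{infinite} stabilizer $H_\xi$, and then the orbit $H_\xi o$ enters every horoball centered at $[\xi]$, so $\xi\in\HG$; hence $\pU_{>1}\subseteq\HG$ and $\mathbf{Cons}=\HG$. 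Consequently the action is conservative if and only if $\mu_o(\mathbf{Diss})=0$, i.e.\ if and only if $\mu_o(\HG)=1$, i.e.\ the (big, equivalently small) horospheric limit set is $\mu_o$--full. By Theorem \ref{CharConsAction} this is equivalent to $\U/H$ growing slower than $\U$, which is exactly the statement that $H\act\U$ grows slower than $\Gamma\act\U$. This finishes the Corollary, modulo Theorem \ref{CharConsAction}.

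\textbf{Sketch of Theorem \ref{CharConsAction} (the substantive part).} The first step is to eliminate the horospheric limit sets in favour of the Dirichlet domain: by Theorem \ref{NonHor=DirichletDomainHyp} one has $\pU\setminus\HG=H\cdot\Lambda\mathbf{D}_R(o)$ for every large $R$, and by Theorem \ref{BigSmallEqualInHyp} $\mu_o(\HG\setminus\hG)=0$; since $\mu_o$ is $H$--quasi-invariant, $\mu_o(\HG)=\mu_o(\hG)$ and the quantity $\mu_o(\Lambda\mathbf{D}_R(o))$ being $0$ or positive is independent of the choice of large $R$. So the whole theorem reduces to showing
$$\mu_o\big(\Lambda\mathbf{D}_R(o)\big)=0 \iff \sharp N_{\Gamma/H}(o,n)=o(e^{\e\Gamma n}),\qquad \mu_o\big(\Lambda\mathbf{D}_R(o)\big)>0 \iff \sharp N_{\Gamma/H}(o,n)\asymp e^{\e\Gamma n},$$
these being complementary because $\sharp N_{\Gamma/H}(o,n)\le\sharp N_\Gamma(o,n)\asymp e^{\e\Gamma n}$ always. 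The second step sets up a dictionary between cosets and the Dirichlet domain: $\mathbf{D}(o)$ is a strict fundamental domain, $\pi\colon\mathbf{D}(o)\to\U/H$ is a homeomorphism with $\bar d(\pi o,\pi x)=d(o,x)$ on $\mathbf{D}(o)$, and $\mathbf{D}_R(o)$ is star-shaped at $o$ (Lemma \ref{StarShape}), so that every $\xi\in\Lambda\mathbf{D}_R(o)$ is joined to $o$ by a geodesic ray lying in $\mathbf{D}_R(o)$. One records $\sharp N_{\Gamma/H}(o,n)$ as the number of points in $\overline B(o,n)$ of the transversal $\{y_{Hg}\colon Hg\in\Gamma/H\}\subseteq\mathbf{D}(o)$, where $y_{Hg}$ is the point of $Hgo$ closest to $o$; because $H<\Gamma$ and $\Gamma$ is discrete, this transversal is $\delta_0$--separated in $\U$ for a fixed $\delta_0>0$. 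The third step is the measure estimate: using the full Shadow Lemma over $\U$ (available since $\Gamma\act\U$ is cocompact, giving $\mu_o(\Pi_o(x,r))\asymp e^{-\e\Gamma d(o,x)}$ for all $x\in\U$) and the bounded overlap of shadows of $\delta_0$--separated points at a fixed distance from $o$ (the variant of Lemma \ref{OverlapAnnulus} for separated nets), one compares, level by level, the annular transversal counts with the $\mu_o$--mass of the corresponding unions of shadows; by Theorem \ref{NonHor=DirichletDomainHyp} together with a Borel--Cantelli / mass--trapping argument this mass is positive iff $\mu_o(\Lambda\mathbf{D}_R(o))>0$, and then a pigeonhole passage between ball counts and annular counts (each annulus count is $\le\sharp A_\Gamma(o,n,\Delta)\asymp e^{\e\Gamma n}$, so an exponentially large ball count forces an exponentially large annular count nearby) upgrades this to the maximal--growth statement; the complementary implication follows similarly by Borel--Cantelli on the dissipative side.

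\textbf{Main obstacle.} The technical heart is the third step: making precise the comparison between the coset--growth function $\sharp N_{\Gamma/H}(o,n)$ and the Patterson--Sullivan shadow estimates through the Dirichlet domain, in particular controlling the contribution of a single coset when $H$ has arbitrarily small injectivity radius (so that many orbit points of one coset sit in one Dirichlet--domain annulus). This is exactly what forces one to work with the $\delta_0$--separated transversal $\{y_{Hg}\}$ and to use the hypothesis $H<\Gamma$ rather than merely $H<\isom(\U)$. A companion point needing care is that conical approach inside $\mathbf{D}_R(o)$ precludes being a small horospheric limit point (Lemma \ref{DirichletDomainOutSmallHor}), so that the limit superior of the annular shadow--unions really lands in $\pU\setminus\hG$; this is the coarse--geometry incarnation of Sullivan's argument for Kleinian groups \cite{Sul81}.
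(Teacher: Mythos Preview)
Your deduction of the Corollary from Theorem \ref{CharConsAction} is correct and is exactly the paper's (implicit) reasoning via Lemma \ref{HopfdecompLem}: torsion-freeness forces any nontrivial stabilizer to be infinite, so $\mathbf D_{>1}=\mathbf D_\infty$ and hence $\mathbf{Cons}=\mathbf{Cons}_\infty=\HG$ (mod $0$). One cosmetic point: for a parabolic stabilizer the orbit stays on a fixed horosphere, so ``enters \emph{every} horoball'' is too strong; but only ``some horoball'' is needed for $\HG$, so your conclusion $\pU_{>1}\subseteq\HG$ stands.

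Your sketch of Theorem \ref{CharConsAction} follows the paper's strategy (Dirichlet domain, star-shapedness, Shadow Lemma, and Theorem \ref{BigSmallEqualInHyp}), but diverges on one point worth flagging. You claim the argument forces $H<\Gamma$ so that a transversal $\{y_{Hg}\}$ is $\delta_0$--separated; the paper does \emph{not} assume $H<\Gamma$. For the upper bound it simply uses $B_n:=A_\Gamma(o,n,\Delta)\cap\mathbf D(o)$, which is separated because $\Gamma o$ is discrete. For the harder direction ($\mu_o(\Lambda\mathbf D_R(o))>0\Rightarrow$ maximal growth) the paper's device is the \emph{negative}-radius Dirichlet domain $\mathbf D_{-R}(o)\subseteq\operatorname{Int}\mathbf D(o)$: one shows $\mu_o(\Lambda\mathbf D_{-R}(o))=\mu_o(\Lambda\mathbf D_R(o))>0$ (the difference is covered by countably many bisector-facets, which are $\mu_o$--null by Theorems \ref{NonHor=DirichletDomain} and \ref{BigSmallEqualInHyp}), and on $\mathbf D_{-R}(o)$ the projection $\pi$ is injective, so $\hat B_n:=A_\Gamma(o,n,\Delta)\cap\mathbf D_{-R}(o)$ injects into $\U/H$. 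This replaces your ``$\delta_0$--separated transversal'' step and is what makes the proof go through for arbitrary proper $H<\isom(\U)$.
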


\begin{proof}[Proof of Theorem \ref{CharConsAction}]
By Theorems \ref{BigSmallEqualInHyp} and \ref{NonHor=DirichletDomain}, $\mu_o(\hG)=\mu_o(\HG)=1$ if and only if  $\mu_o(\Lambda \mathbf D_R(o))=0$ for any sufficiently large $R>0$. 

Let $\pi: \U\to \U/H$ be the projection. Fix a basepoint $o\in \U$ and consider the orbit $A=\Gamma o$.  Then $N_R(A)=\U$ for some $R>0$ as the action  $\Gamma\act \U$ is  co-compact by assumption. The image  $\bar A=\pi(A)$ is equipped with the metric $\bar d(Hx,Hy)=d(Hx,Hy)$ for $x,y\in A$, and write  
$$
A_{\Gamma/H}(o,n,\Delta) =\{Hgo: |d(go, Ho)-n|\le \Delta\}
$$  
According to (\ref{QuotientGrowthRateDefn}), we have 
$$\e{\Gamma/H}=\limsup_{n\to\infty} \frac{\log A_{\Gamma/H}(o,n,\Delta)}{n}$$  
Recall $A_\Gamma(o,n,\Delta)=\{go\in \Gamma o: |d(o,go)-n|\le \Delta \}$. For each $Hgo\in A_{\Gamma/H}(o,n,\Delta)$, choose $go\in \Gamma o$ so that $d(go, o)=d(Hgo,Ho)$. That is, $go$ is a closest point in $Hgo$ to $o$, so $go$ is an element   in $\mathbf D(o)$. Thus, every element in $A_{\Gamma/H}(o,n,\Delta)$ admits a (possibly non-unique) lift in the following intersection:
$$B_n:=A_\Gamma(o,n, \Delta)\cap \mathbf D(o)$$
Thus, $\sharp  A_{\Gamma/H}(o,n,\Delta)\le \sharp B_n$. 


\begin{claim}
If  $\mu_o(\Lambda \mathbf D_R(o))=0$, then $\sharp B_n= o(\mathrm{e}^{\e \Gamma n})$ and   $\sharp A_{\Gamma/H}(o,n,\Delta)=o(\mathrm{e}^{\e \Gamma n})$    
\end{claim}
\begin{proof}[Proof of the Claim]
Define  $$\Lambda_n:=\bigcup_{go\in B_n} \Pi_o^F(go,r).$$ 
Let $\Lambda$  be the limit supremum of $\Lambda_n$. By construction, $\Lambda \mathbf D_{ R}(o)$ coincides with $\Lambda$. Indeed,  any point $\xi\in \Lambda \mathbf D_{ R}(o)$, there exists a geodesic ray in $\mathbf D_{R}(o)$ ending at $\xi$. Then we can choose $g_n\in B_n$ so that $d(g_no, \gamma)\le R$. This implies $\Lambda \mathbf D_{ R}(o)\subseteq \Lambda$. The converse inclusion follows from the definition of $B_n$. Hence, $\mu_o(\Lambda_n)\to 0$ as $n\to \infty$.

As the family of shadows $\Pi_o^F(x,r)$ for $x\in B_n$ has  uniformly bounded multiplicity, we have by the Shadow Lemma
$$
\mu_o(\Lambda_n)\asymp  \sum_{x\in B_n}  \mu_o(\Pi_o^F(x,r)) \asymp  \mathrm{e}^{-\e \Gamma n}\sharp B_n
$$
where the implicit constant depends on $r$. Hence, the claim follows.
\end{proof}

\begin{claim}
If   $\mu_o(\Lambda \mathbf D_R(o))>0$, then $\sharp A_{\Gamma/H}(o,n,\Delta) \asymp \mathrm{e}^{\e \Gamma n}$.    
\end{claim}
\begin{proof}[Proof of the Claim]
Recall that $\mathbf{D}(o)$ is a fundamental domain for $H\act \U$, so any point $x\in \U$ admits at least one translate $hx$ into $\mathbf{D}(o)$. If $h_1x\ne  h_2x\in \mathbf{D}(o)$, we  have $d(h_1x,o)=d(h_2x,o)$ and then $x$ lies on the bisector  $\mathrm{Bis}(h_1^{-1}o,h_2^{-1}o;0)$.  

Consider the facet-like set  $$F(h):=\mathrm{Bis}(o,ho;R)\cap \mathbf{D}_R(o)$$ and   the set $\Lambda(h)$ of accumulation points of $F(h)$  in $\pU$. 
By Theorem \ref{NonHor=DirichletDomain}, the intersection $\Lambda(h)\cap \HG$ has no small horospheric limit point, so is $\mu_o$--null by Theorem \ref{BigSmallEqualInHyp}. 

Note that $\mathbf{D}_{-R}(o)$  for $R>0$ is contained in the interior of $\mathbf{D}(o)$, where    $\pi: \U\to \U/H$ restricts as an injective map. See Fig. \ref{fig:Dirichletdomain}. By definition, $\Lambda \mathbf D_R(o)$ is the union of $\Lambda \mathbf D_{-R}(o)$ with  countably many facet-like sets $F(h)$, which are $\mu_o$-null, so we obtain $\mu_o(\Lambda \mathbf D_{-R}(o))=\mu_o(\Lambda \mathbf D_R(o))>0$.  

By injectivity of $\pi: \mathbf D_{-R}(o) \to \U/H$ restricted as  map, $\hat B_n:=A_\Gamma(o,n, \Delta)\cap \mathbf D_{-R}(o)$ injects into $\U/H$.    Consider similarly  $$\hat \Lambda_n:=\bigcup_{go\in \hat B_n} \Pi_o^F(go,r).$$ 
Let $\tilde \Lambda$  be the limit  of $\tilde \Lambda_m:= \cup_{m\ge n}\hat \Lambda_m$.  As the above claim, the star-shapedness of $\mathbf D_{-R}(o)$ shows that $\tilde \Lambda=\Lambda \mathbf D_{-R}(o)$, and $\tilde \Lambda_m$ is  covered with a uniform multiplicity by the family of $\Pi_o^F(go,r)$ for ${go\in \hat B_n}$.

Consequently,  $\mu_o(\hat B_n)>c$ for some uniform $c>0$ independent of $n$, and 
$$
\mu_o(\hat B_n) \asymp \sum_{x\in \hat B_n}  \mu_o(\Pi_o(x,r)) \asymp  \mathrm{e}^{-\e \Gamma n} \sharp \hat B_n
$$
We thus obtain
$\sharp \hat B_n \asymp \mathrm{e}^{\e \Gamma n}$, and then the conclusion follows by $\sharp \hat B_n\le \sharp A_\Gamma(o,n, \Delta)$.
\end{proof}
The proof of the theorem is completed by the above two claims. 
\end{proof}
The first claim actually does not make use of the assumption that $\Gamma\act \U$ is cocompact and $\mu_o$ is doubling. These two assumptions are only required in Lemma \ref{Garnett}. So we have. 
\begin{cor}\label{Zerohorlimitset}
Assume that $\Gamma \act \U$ is a proper action on a hyperbolic space and $\mu_x$ be the $\e \Gamma$--dimensional $\Gamma$--equivariant quasi-conformal density on $\pU$. Let $H$ be a subgroup in $\Gamma$. If  $\mu_o(\hG)=0$, then $\sharp A_{\Gamma/H}(o,n,\Delta)=o(\mathrm{e}^{\e \Gamma n})$.  
\end{cor}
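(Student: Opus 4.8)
\textbf{Proof plan for Corollary \ref{Zerohorlimitset}.}

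The plan is to revisit the argument of the first Claim inside the proof of Theorem \ref{CharConsAction} and observe that it only used the hyperbolicity of $\U$ and the properness of $H\act\U$, together with the existence of an $\e\Gamma$--dimensional $\Gamma$--equivariant quasi-conformal density $\mu_x$ on $\pU$ and the Shadow Lemma \ref{ShadowLem} for it. In particular neither cocompactness of $\Gamma\act\U$ nor the doubling property of $\mu_o$ is needed for that implication; those were only invoked in Lemma \ref{Garnett} to pass between the big and small horospheric limit sets (Theorem \ref{BigSmallEqualInHyp}). So the first step is to fix a set $F\subseteq\Gamma$ of three independent contracting elements with $d(o,fo)$ large enough that the Shadow Lemma \ref{ShadowLem} applies, and to fix $R>0$ large (the constant from Lemma \ref{NonHorInDirichletDomain}), so that by Theorem \ref{NonHor=DirichletDomain}(2) the Dirichlet-type limit set $\Lambda\mathbf D_R(o)$ misses $\hG$ modulo the $[\cdot]$--partition, which here is trivial. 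Since $H<\Gamma$, we have $[\Lambda(Ho)]=\Lambda(Ho)$ as genuine boundary points, and the hypothesis $\mu_o(\hG)=0$ is exactly what we will feed into the estimate.

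Next I would set up the covering exactly as in the proof of the first Claim: put $B_n:=A_\Gamma(o,n,\Delta)\cap \mathbf D(o)$, note that each $Hgo\in A_{\Gamma/H}(o,n,\Delta)$ admits a lift $go\in B_n$ (choosing a closest representative of $Hgo$ to $o$, which lies in the Dirichlet domain $\mathbf D(o)$), so that $\sharp A_{\Gamma/H}(o,n,\Delta)\le \sharp B_n$. Define $\Lambda_n:=\bigcup_{go\in B_n}\Pi_o^F(go,r)$ and let $\Lambda$ be the limit superior of the $\Lambda_n$. The star-shapedness of $\mathbf D_R(o)$ (Lemma \ref{StarShape}) together with Lemma \ref{DirichletIsVisual} and Definition \ref{ConvBdryDefn}(B) identifies $\Lambda$ with $\Lambda\mathbf D_R(o)$ up to a set that, by Theorem \ref{NonHor=DirichletDomain}(2), contains no small horospheric limit point. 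Actually, to avoid invoking Theorem \ref{BigSmallEqualInHyp}, the cleaner route is: by Lemma \ref{NonHorInDirichletDomain} every $\xi\in[\cG]\setminus\HG$ lies in $H\cdot(\Lambda\mathbf D_R(o)\cap[\cG])$, and by Lemma \ref{DirichletDomainOutSmallHor} every conical point of $\Lambda\mathbf D_R(o)$ is outside $\hG$; hence $\Lambda\mathbf D_R(o)\cap[\cG]\subseteq [\cG]\setminus\hG$, which has $\mu_o$--measure zero by hypothesis, while $\mu_o$ is supported on $[\cG]$ by Lemma \ref{ConicalPointsLem} (divergence type following from the standing hypotheses, or more directly $\mu_o$ charges conical points whenever it exists; if one wants to be safe, replace $[\cG]$ by the full conical set and use that $\mu_o(\Lambda\mathbf D_R(o))\le \mu_o(\text{non-}\hG)=1-\mu_o(\hG)+\mu_o(\hG\setminus\ldots)$). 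The upshot is $\mu_o(\Lambda\mathbf D_R(o))=0$, hence $\mu_o(\Lambda_n)\to 0$ as $n\to\infty$.

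Finally I would run the Borel--Cantelli / bounded-multiplicity estimate: by Lemma \ref{OverlapAnnulus} any $\xi\in\pU$ lies in at most $N=N(\Delta)$ of the shadows $\Pi_o^F(x,r)$ with $x\in A_\Gamma(o,n,\Delta)$, so
$$
\mu_o(\Lambda_n)\ \asymp_{r}\ \sum_{x\in B_n}\mu_o(\Pi_o^F(x,r))\ \asymp_{r}\ \mathrm e^{-\e\Gamma n}\,\sharp B_n,
$$
using the Shadow Lemma \ref{ShadowLem} for the upper and lower bounds on each $\mu_o(\Pi_o^F(x,r))$. Since $\mu_o(\Lambda_n)\to 0$, this forces $\mathrm e^{-\e\Gamma n}\sharp B_n\to 0$, i.e. $\sharp B_n=o(\mathrm e^{\e\Gamma n})$, and therefore $\sharp A_{\Gamma/H}(o,n,\Delta)\le \sharp B_n=o(\mathrm e^{\e\Gamma n})$, as claimed. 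The only delicate point is the measure-zero step: one must be careful that the $\mu_o$--null set $[\cG]\setminus\hG$ really does capture $\Lambda\mathbf D_R(o)$ up to a $\mu_o$--null set without secretly re-using Theorem \ref{BigSmallEqualInHyp} (which needs cocompactness and doubling); this is handled purely by Lemmas \ref{NonHorInDirichletDomain}, \ref{DirichletIsVisual} and \ref{DirichletDomainOutSmallHor}, all of which are valid for an arbitrary proper action of $H$ on a hyperbolic $\U$ and a $\Gamma$--equivariant quasi-conformal density, so no extra hypotheses creep in. I expect that verifying this independence from Theorem \ref{BigSmallEqualInHyp} is the main thing to get right; the rest is the standard covering computation already carried out in the proof of Theorem \ref{CharConsAction}.
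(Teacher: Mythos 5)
Your overall route is exactly the paper's: the intended proof of Corollary \ref{Zerohorlimitset} is nothing more than the observation that the first Claim in the proof of Theorem \ref{CharConsAction} (lifting cosets to $B_n=A_\Gamma(o,n,\Delta)\cap\mathbf D(o)$, covering $\Lambda\mathbf D_R(o)$ by the shadows $\Pi_o^F(go,r)$ with $go\in B_n$, bounded multiplicity from Lemma \ref{OverlapAnnulus}, and the Shadow Lemma \ref{ShadowLem}) uses neither the cocompactness of $\Gamma\act\U$ nor the doubling property of $\mu_o$, those being needed only for Lemma \ref{Garnett}. So your covering computation in the last paragraph is precisely the paper's argument, and your instinct that the whole issue is the independence from Theorem \ref{BigSmallEqualInHyp} is right.

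The gap is in the one step where the hypothesis is actually used. You assert that $\Lambda\mathbf D_R(o)\cap[\cG]\subseteq[\cG]\setminus\hG$ ``has $\mu_o$--measure zero by hypothesis''. It does not: the stated hypothesis is $\mu_o(\hG)=0$, which gives $\mu_o([\cG]\setminus\hG)=\mu_o([\cG])$, i.e.\ \emph{full} measure, so the inclusion from Lemma \ref{DirichletDomainOutSmallHor} yields no control on $\mu_o(\Lambda\mathbf D_R(o))$ and the reduction to the first Claim collapses; the hedging parenthetical does not repair this. Your argument is correct verbatim under the hypothesis $\mu_o(\hG)=1$ (equivalently $\mu_o(\HG)=1$, via Theorem \ref{NonHor=DirichletDomain}(1) and $H$--invariance), and that is the only hypothesis consistent with Theorem \ref{CharConsActioninHyp}: in the cocompact case, $\mu_o(\hG)=0<1$ is there shown to be equivalent to \emph{maximal} quotient growth (take $H$ an infinite cyclic loxodromic subgroup of a cocompact $\Gamma$ for a concrete counterexample to the literal statement). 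So either the corollary's hypothesis is to be read as $\mu_o(\hG)=1$, in which case your proof is the paper's proof, or it is taken literally, in which case neither your argument nor the paper's two-sentence justification establishes it. A secondary point: passing from $\mu_o(\Lambda\mathbf D_R(o)\cap[\cG])=0$ to $\mu_o(\Lambda\mathbf D_R(o))=0$ requires $\mu_o$ to be supported on $[\cG]$, which by Lemma \ref{ConicalPointsLem}(2) needs the action to be of divergence type; that assumption is absent from the corollary's hypotheses and must either be added or circumvented by handling the non-conical part of $\Lambda\mathbf D_R(o)$ directly.
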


\section{Subgroups with nontrivial conservative and dissipative components}\label{secNontrivialHopf}

To complement the part 1,  we construct in  this section examples with non-trivial Hopf decomposition. Through this section, suppose that 
\begin{itemize}
    \item 
    $\Gamma$ admits a proper SCC action on $\U$ with contracting elements.
    \item 
    Let $\pU$ be a convergence boundary for $\U$ and $\{\mu_x:x\in \U\}$ be a  quasi-conformal, $\Gamma$--equivariant density of dimension $\e \Gamma$ on $\pU$.
\end{itemize}  We are going to  construct a free subgroup of infinite rank with nontrivial conservative and dissipative components. The construction is motivated by examples in free groups given in \cite{GKN}.

\subsection{Preparation}
Pick up any non-pinched contracting element $f\in\Gamma$ and form the contracting system $\f =\{g\ax(f):g\in \Gamma\}$, whose stabilizers are accordingly $gE(f)g^{-1}$.  
\\
\paragraph{\textbf{Projection complex}} For a sufficiently large $K>0$,   the projection complex $\p_K (\f)$ defined by  Bestvina-Bromberg-Fujiwara \cite{BBF} is a quasi-tree of infinite diameter (\cite{BBF}):
\begin{itemize}
    \item The vertex set consists of all elements in $\f$.
    \item Two vertices $u\ne v\in \f$ are connected by one edge if for any $w\in \f$ with $u\ne w\ne v$, we have $\proj_w(u,v)>K$. Here the projection is understood between  $w,u,v\in \f$  as axes in $\U$.
\end{itemize}
\begin{rem}
Actually, the function ``$\proj_w$" is a slight modification of $\proj_w(u,v)=\|\pi_w(u)\cup\pi_w(v)\|$, up to an additive amount. See \cite[Def. 3.1]{BBF} and \cite[Sec. 4]{BBFS} for relevant discussions. We shall use the later one, which  does not affect the estimates in practice.     
\end{rem}
Any two vertices $u\ne v\in \f$ are connected by a \textit{standard path} $\gamma$ consisting of consecutive vertices $$w_0:=u, w_1,\cdots, w_n:=v$$ where $\proj_{w_i}(u,v)>K$. See \cite[Theorem 3.3]{BBF}.  This does not certifies the connectedness of $\p_K(\f)$, but also plays a crucial role in the whole theory. Among the useful facts, we mention
\begin{enumerate}
    \item Standard paths are $2$--quasi-geodesics (\cite[Corollary 3.7]{BBFS}).
    \item 
    A triangle formed by three standard paths $\alpha,\beta,\gamma$ is almost a tripod:  $\gamma$ is contained in the union $\alpha\cup\beta$, with an exception of at most two consecutive vertices (\cite[Lemma 3.6]{BBFS}). 
\end{enumerate}

By the rotating family theory, Dahmani-Guirardel-Osin \cite{DGO} proved that 
\begin{thm}
There exists a (not possibly every) large $n>0$ so that the normal closure $G:=\llangle f^n\rrangle $ is a free group of infinite rank.    
\end{thm}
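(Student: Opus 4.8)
The plan is to exhibit the cyclic group $\langle f^n\rangle$ together with all its $\Gamma$--conjugates as a \emph{very rotating family} acting on the quasi-tree $\p_K(\f)$, in the sense of Dahmani--Guirardel--Osin \cite{DGO}, and then to read off freeness of the normal closure from the structure theorem for such families, in the projection-complex formulation developed in \cite{CMM21, BDDKPS}. First I would fix, for each $n\ge 1$, the $\Gamma$--equivariant collection
\[
\mathcal R_n=\big\{\,(\,g\ax(f),\;g\langle f^n\rangle g^{-1}\,):\ g\in\Gamma\,\big\},
\]
in which the \emph{apex} $g\ax(f)$ is a vertex of $\p_K(\f)$ and the attached \emph{rotation subgroup} $g\langle f^n\rangle g^{-1}\le gE(f)g^{-1}$ is infinite cyclic and stabilizes that vertex. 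Because $f$ is non-pinched, the $\Gamma$--stabilizer of the vertex $\ax(f)$ is exactly $E(f)$ (Lemma \ref{NonPinchedStabilizer}), and the bounded projection property of $\f$ used to build $\p_K(\f)$ guarantees that distinct apexes are uniformly far apart in $\p_K(\f)$; this is the ``separation'' axiom of a rotating family, and it holds independently of $n$.

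Next I would check that $\mathcal R_n$ is \emph{very rotating} once $n$ is large, with rotation parameter tending to $\infty$ as $n\to\infty$. The metric input is that $f^n$ translates its own axis by $\len_n:=d(o,f^no)\asymp n\,d(o,fo)$; hence for any vertex $w\in\f$ with $w\ne\ax(f)$ and any nonzero integer $k$, the sets $\pi_{\ax(f)}(w)$ and $\pi_{\ax(f)}(f^{nk}w)=f^{nk}\pi_{\ax(f)}(w)$ are separated along $\ax(f)$ by at least $|k|\len_n-O(1)$. Choosing $n$ so that $\len_n$ exceeds a fixed multiple of the projection constant $K$ and of the constants in the projection-complex axioms, this says precisely that for $\gamma\in g\langle f^n\rangle g^{-1}\setminus\{1\}$ and for two vertices $x\ne y$ whose projections to the apex $g\ax(f)$ are both $\ge K$, one has $\proj_{g\ax(f)}(x,\gamma y)$ large --- the defining inequality of a very rotating family. (This is the quasi-tree counterpart of the familiar fact that high powers of a contracting element generate a rotating family; compare the barrier/extension estimates in \cite{YANG22} and \cite{BBFS}.)

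Finally I would invoke the structure theorem for very rotating families: for $n$ large enough, the normal closure $G=\llangle f^n\rrangle=\llangle\bigcup_{g\in\Gamma}g\langle f^n\rangle g^{-1}\rrangle$ acts on the associated cone-off tree with \emph{trivial edge stabilizers}, and the $G$--stabilizer of each vertex belonging to $G$ is one of the conjugates $g\langle f^n\rangle g^{-1}\cong\mathbb Z$ (a Greendlinger-type lemma rules out short relations in $G$ once the rotation is large). By Bass--Serre theory $G$ is then a free product of a family of infinite cyclic groups with a free group, and in particular $G$ is free. Since $\Gamma$ is non-elementary and $f$ is contracting, $\f$ is infinite and $\Gamma/G$ is infinite for $n$ large (see \cite{DGO}), so $\f$ breaks into infinitely many $G$--orbits; hence the free product decomposition has infinitely many (infinite cyclic) free factors, and $G$ is free of infinite rank.

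The step I expect to be the real obstacle is the last one: extracting from the rotating-family machinery the precise assertions that the cone-off tree has trivial edge stabilizers and that vertex stabilizers inside $G$ are exactly the cyclic rotation subgroups. This is exactly where the hypothesis ``$n$ large'' is used, and it is the content of \cite{DGO} and its projection-complex refinements \cite{CMM21, BDDKPS}; in the write-up one would quote those results rather than reprove them, leaving only the (routine) verification of the very-rotating axioms for $\mathcal R_n$ in the model $\p_K(\f)$ as genuinely new bookkeeping.
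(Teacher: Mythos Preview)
Your outline is essentially correct and matches the approach the paper attributes to this result. Note, however, that the paper does not give its own proof of this theorem: it is stated as a result of Dahmani--Guirardel--Osin \cite{DGO}, and the paper then \emph{recalls} (rather than reproves) the projection-complex formulation from \cite{BDDKPS}, inspired by \cite{CMM21}, for later use in analyzing the subgroup $H$ and the limit-set decomposition. What the paper actually spells out is the $L$--spinning family condition $\proj_{v_0}(w,f^n w)>L$ (your ``very rotating'' verification), the inductive construction of a free basis via the sets $S_i$, and the canoeing-path technology showing these generate freely. Your Bass--Serre/cone-off-tree packaging is the original DGO viewpoint; the BDDKPS version the paper follows builds the free basis directly inside the projection complex without passing through a cone-off. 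Both lead to the same conclusion.

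One small omission worth flagging: for the family $\mathcal R_n$ to be well-defined on vertices of $\p_K(\f)$ (so that the rotation subgroup depends only on the apex $g\ax(f)$ and not on the coset representative $g$), you need $\langle f^n\rangle$ to be normal in the full vertex stabilizer $E(f)$. The paper notes this explicitly; you should choose $n$ accordingly (e.g.\ divisible by the index $[E(f):\langle f\rangle]$). Your infinite-rank argument via infinitely many $G$--orbits on $\f$ is fine once you observe that $\Gamma$ acts transitively on $\f$ with point stabilizer $E(f)$, so $G$--orbits correspond to cosets of the image of $E(f)$ in the infinite quotient $\Gamma/G$.
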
 
We recall some ingredients in the proof using projection complex recently given in \cite{BDDKPS} inspired by Clay-Mangahas-Margalit \cite{CMM21}.

Fix a basepoint $v_0:=\ax(f)$ at $\p_K(\f)$, so $v_0$   is   fixed by $E(f)<\Gamma$. By definition, $G$ is generated by all conjugates of $\langle f^n\rangle$ in $\Gamma$. If $n$ is chosen so $\langle f^n\rangle$ is normal in $E(f)$, then $G$ acts  on $\p_K (\f)$ with the stabilizers exactly from those conjugates of $\langle f^n\rangle$. In particular, the stabilizer of $v_0$ is now $\langle f^n\rangle$.

If such  $n\gg 0$ is large enough,  the action $G\act \p_K(\f)$ is proved in \cite[Thm 3.2]{BDDKPS} to be an \textit{$L$--spinning family} in sense of \cite{CMM21}: 
\begin{equation}\label{spinningEQ}
\proj_{v_0}(w,f^nw)>L, \;\forall v_0\ne w\in \f    
\end{equation} 
We follow closely the account of \cite{BDDKPS} to explain.
\\
\paragraph{\textbf{Construction of free base}} We do it inductively and start with $G_0=\langle f^n\rangle$ and $S_0=\{v_0\}$. Let $N_1$ be the $1$--neighborhood of $W_0:=S_0$ and $S_1$ be the set of vertices up to $G_0$--conjugacy in $N_1\setminus W_0$. Set  $G_1=\langle G_v: v\in S_1\cup S_0\rangle$ and $W_1=G_1 N_1$. Then $S_0\cup S_1$ is a fundamental set for the action $G_1\act W_1^0$, \emph{i.e.} containing exactly one point from each orbit. Here $W_1^0$ denotes the vertex set of $W_1$. 

For $i\ge 2$, let $W_i=G_i\cdot N_{i-1}$ and define $N_{i+1}$ to be the  $1$--neighborhood of $W_i$. Let $S_{i+1}$ be the set of vertices up to $G_i$--conjugacy in $N_{i+1}\setminus W_i$, so that each vertex in $S_{i+1}$ is adjacent to some in $S_i$. This can be done, as $W_i^0=G_i\cdot (\cup_{0\le j< i} S_j)$. Then $G_{i+1}:=\langle G_v: v\in \cup_{0\le j\le i} S_j\rangle$  is generated by the stabilizers of vertices in $N_{i+1}$. 
\\
\paragraph{\textbf{Canoeing path}}
Recall that $K$ is a large constant chosen for $\p_K(\f)$. Any two vertices $u,v$ in $W_i$ can be connected by an \textit{$L$--canoeing path} $\gamma$ for some $L=L(K)>0$ (see \cite[Def. 4.3 \& Lemma 4.9]{BDDKPS}): 
\begin{enumerate}
    \item $\gamma$ is a concatenation  $\gamma_1\cdot \gamma_2\cdot\cdots \cdot\gamma_n$ where $\gamma_i$ is a geodesic or the union of two geodesics. 
    \item 
    The common endpoint $v$ of  $\gamma_i$ and $\gamma_{i+1}$ has \textit{$L$--large angle}: if $v_-\in \gamma_i, v_+\in \gamma_{i+1}$ are previous and next vertices, then $\proj_{v}(v_-,v_+)>L$, where $v, v_-,v_+\in \f$ are understood as axes in $\U$. 
\end{enumerate}By \cite[Prop. 4.4]{BDDKPS}, $L$--large angle points are contained in the standard path from $u$ to $v$. 

In \cite{BDDKPS}, through the canoe path, it is inductively proven that $G_{i+1}$ is generated by $G_i$ with a free group generated by $S_i$. Exhausting $\p_K(\f)$ by $W_i$, $G$ is obtained as the direct limit of $G_i$, which is a free group generated by stabilizers of vertices in $S:=\cup_{0\le j<\infty} S_j$.  Recall that $G_0=\langle f^n\rangle$ and $S_0=\{v_0\}$, and consider the splitting   $$G=G_0 \star H$$ where $H:=\langle G_v: v\in S\setminus S_0  \rangle$ is a free group. We also need the associated Bass-Serre tree $T$. More precisely, as the quotient graph $T/G$ is an interval, we partition $T^0=V_1\cup V_2$ with the stabilizers of vertices in $V_1$ being conjugate to $G_0$ and the stabilizers of vertices in $V_2$ to $H$. Moreover,
\begin{itemize}
    \item 
    the set of $G_0$--vertices (resp. $H$--vertices) in $V_1$ (resp. $V_2$) are labeled by  left $G_0$--cosets (resp.  $H$--cosets) in $G$;
    \item 
    the set of edges adjacent to $u\in V_1$ (resp. $u\in V_2$) are bijective to the elements in $G_0$ (resp. $H$).
\end{itemize} 

\subsection{Subgroups with nontrivial conservative part}  
We now state the main theorem in this section.
\begin{thm}\label{PositiveHorLimitset}
The subgroup $H$ has proper limit set $[\Lambda(Ho)]\subsetneq [\Lambda(\Gamma o)]$. Moreover, If $\e G<\e \Gamma$, then $\mu_o(\HG)\ge \mu_o(\hG)>0$.  
\end{thm}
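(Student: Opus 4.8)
The statement splits into two parts: first, that $H$ is a subgroup of \emph{second kind} inside $G$ (equivalently inside $\Gamma$), i.e.\ $[\Lambda(Ho)]\subsetneq[\Lambda(\Gamma o)]$; and second, that under the additional hypothesis $\e G<\e\Gamma$ the small horospheric limit set $\hG$ has positive $\mu_o$--measure (the inequality $\mu_o(\HG)\ge\mu_o(\hG)$ being automatic since $\hG\subseteq\HG$). The plan is to handle the two parts separately, using the projection-complex/Bass--Serre machinery set up above for the first and the Hopf-decomposition results of Part~1 together with growth-rate bookkeeping for the second.

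For the proper-limit-set claim, the idea is to exploit the free splitting $G=G_0\star H$ and its Bass--Serre tree $T$, together with the fact that the action $G\act\p_K(\f)$ is an $L$--spinning family, so that the projection complex and $T$ are tightly linked. First I would fix the base vertex $v_0=\ax(f)$ and recall that the stabilizer of $v_0$ in $G$ is exactly $G_0=\langle f^n\rangle$, while $H$ is generated by the vertex stabilizers $G_v$ for $v\in S\setminus S_0$; every such $v$ lies in $W_i$ for some $i$ and is connected to $v_0$ by an $L$--canoeing path whose $L$--large-angle vertices lie on the standard path from $v_0$ to $v$. I would then translate a geodesic ray in $\U$ landing at a point of $[\Lambda(Ho)]$ into a sequence $h_k o$ with $h_k\in H$; using that each $h_k$ is a product of elements fixing vertices at bounded combinatorial distance, the standard-path / almost-tripod estimates in $\p_K(\f)$ force every such ray to avoid a definite combinatorial neighborhood of $v_0$ — more precisely, the axis $\ax(f)$ (equivalently the $[f^\pm]$--class) is not an accumulation point of $Ho$, nor is any of a cocompact family of $\Gamma$--translates. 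Concretely, one shows $[f^+]\notin[\Lambda(Ho)]$ while $[f^+]\in[\Lambda(\Gamma o)]$ by Lemma~\ref{FixptsDense}; this already gives $[\Lambda(Ho)]\subsetneq[\Lambda(\Gamma o)]$. The technical content is bounding projections: I would invoke that $G_0$ acts on the tree of left $G_0$--cosets with edges bijective to $G_0$, so any $H$--orbit point sits in a ``half'' of $T$ missing the $v_0$--vertex, and transfer this to a barrier/contracting estimate in $\U$ via the Extension Lemma~\ref{extend3} and Proposition~\ref{admisProp}.

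For the positive-measure claim I would argue as follows. Since $\Gamma\act\U$ is SCC, the normal subgroup $G=\llangle f^n\rrangle\lhd\Gamma$ is infinite and of divergence type (indeed normal subgroups of divergence-type SCC actions attain the full growth rate), and by the normal-subgroup statement at the end of \textsection\ref{seccons} (the theorem asserting $\mG\subseteq[\hG]$ for infinite normal $H$), applied with $G$ in place of $H$, we get $\mu_o(\Lambda^{\mathrm{hor}}(Go))=1$, hence also $\mu_o(\Lambda^{\mathrm{Hor}}(Go))=1$. Now $H<G$ has index-related structure $G=G_0\star H$, and the quotient growth of $H$ inside $G$ is governed by the Schreier graph $\U/H$; the hypothesis $\e G<\e\Gamma$ is what prevents the Dirichlet domain $\mathbf D_R(o)$ for $H\act\U$ from being $\mu_o$--null. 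More precisely, I would run the following chain: if $\mu_o(\hG)=0$ then by Theorem~\ref{NonHor=DirichletDomain}(2) and the identification in Theorem~\ref{NonHor=DirichletDomain}(1) (pushed into the hyperbolic setting via Theorem~\ref{BigSmallEqualInHyp} where applicable, or directly in the convergence-boundary setting) one would get $\mu_o(\HG)=0$ as well, forcing $\U/H$ to have maximal quotient growth, i.e.\ $\e{G/H}=\e G$; but $G/H\cong G_0=\langle f^n\rangle\cong\mathbb Z$ has subexponential (linear) growth, so $\e{G/H}=0<\e G$ — contradiction. Thus $\mu_o(\hG)>0$. I would need to be careful here to use the correct ambient group for the conformal density: the density $\{\mu_x\}$ is $\Gamma$--equivariant of dimension $\e\Gamma$, so one works with $\hG=\Lambda^{\mathrm{hor}}(Ho)$ measured by this $\mu_o$, and the relevant comparison is between $\e\Gamma$ and the $\Gamma$--quotient growth $\e{\Gamma/H}$; the inequality $\e G<\e\Gamma$ enters to separate $H$'s behavior from $\Gamma$'s, via $\e{\Gamma/H}\le \e{\Gamma/G}+\e{G/H}$-type bookkeeping, combined with $\e{\Gamma/G}<\e\Gamma$ (growth tightness / the amenability criterion) and $\e{G/H}=0$.

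The main obstacle I anticipate is the second part's measure estimate: making rigorous that $\mu_o(\hG)=0$ would force maximal quotient growth in the convergence-boundary generality (rather than just the cocompact-hyperbolic setting of Theorem~\ref{CharConsAction}). In the general setting I only have Theorem~\ref{NonHor=DirichletDomain} relating $\hG$, $\HG$ and the Dirichlet domain, plus the Shadow Lemma~\ref{ShadowLem}; the clean dichotomy ``$\mu_o(\hG)<1\iff$ maximal quotient growth'' is proved only cocompactly. So the honest route is probably the more direct one: assume $\mu_o(\hG)=0$, use Theorem~\ref{NonHor=DirichletDomain} to put a $\mu_o$--full set inside $H\cdot[\Lambda\mathbf D_R(o)]$ modulo the conical locus, cover by shadows of the lifted ball-like set $A_\Gamma(o,n,\Delta)\cap\mathbf D(o)$, and run the Shadow-Lemma counting to conclude $\sharp A_{\Gamma/H}(o,n,\Delta)=o(e^{\e\Gamma n})$; this is exactly Corollary~\ref{Zerohorlimitset}. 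Then the contradiction is with a \emph{lower} bound $\sharp A_{\Gamma/H}(o,n,\Delta)\succ e^{(\e\Gamma-\epsilon)n}$ for every $\epsilon>0$, which I would derive from $\e G=\e\Gamma$ (divergence type of the normal subgroup) together with $G/H\cong\mathbb Z$ having only linearly many cosets, so that the $G$--orbit — which already grows like $e^{\e G n}=e^{\e\Gamma n}$ — projects to $\U/H$ with at most linear collapsing, forcing $\e{\Gamma/H}\ge\e{G/H\backslash\text{-part}}=\e G=\e\Gamma$. Assembling these pieces cleanly, and in particular verifying the lower bound on quotient growth in the coarse setting, is where the real work lies.
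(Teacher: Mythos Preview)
Your plan for the proper-limit-set statement is fine and matches the paper's: one shows $[f^\pm]\notin[\Lambda(Ho)]$ using the admissible-path/Bass--Serre structure (in the paper this is Lemma~\ref{DisjointHLimitSet}(3)).

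Your plan for $\mu_o(\hG)>0$, however, has two genuine errors and does not match the paper's argument.

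\emph{First error.} The assertion ``$G/H\cong G_0=\langle f^n\rangle\cong\mathbb Z$'' is false. In the free product $G=G_0\star H$ the subgroup $H$ is a free factor, not a normal subgroup; the coset space $H\backslash G$ is indexed by alternating normal-form words $a_1h_1a_2h_2\cdots$ with $a_i\in G_0\setminus\{1\}$, $h_i\in H\setminus\{1\}$, and has exponential growth, not linear. (What \emph{is} true is $G/\llangle H\rrangle_G\cong G_0$, but that is irrelevant here.) So the claimed contradiction ``$\e{G/H}=0$'' does not hold, and the first attempted route collapses.

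\emph{Second error.} In your ``honest route'' you invoke $\e G=\e\Gamma$ to get a lower bound on quotient growth; but the hypothesis of the theorem is $\e G<\e\Gamma$. These are incompatible, and indeed $\e G<\e\Gamma$ is exactly what is expected for $\Gamma/G$ non-amenable. So that route also fails.

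The paper's proof is entirely different and does not go through quotient growth. It establishes a limit-set decomposition (Proposition~\ref{limitsetdecomp}):
\[
[\Lambda(Go)]=\Bigl[\bigcup_{g\in G} g[\Lambda(Ho)]\Bigr]\cup\Bigl[\bigcup_{g\in G} g[f^\pm]\Bigr]\cup\iota(\partial T),
\]
with the last two pieces forming an ``exceptional'' set $E$. Since $G\lhd\Gamma$ is an infinite normal subgroup, Theorem~\ref{NormalsubgroupsHor} gives $\mu_o(\Lambda^{\mathrm{hor}}(Go))=1$. The hypothesis $\e G<\e\Gamma$ enters precisely to make $E$ null: the image $\iota(\partial T)$ lies in the conical set shadowed by $Go$, and $\mathcal P_G(\e\Gamma,o,o)<\infty$ lets Lemma~\ref{ConvLimitNull} kill it. The key remaining step is a short Claim: any $\xi\in\Lambda^{\mathrm{hor}}(Go)\setminus E$ is a horospheric limit point of some $G$--translate of $Ho$, because the approximating sequence $g_no\to[\xi]$ in a horoball must eventually lie in a single right $H$--coset (otherwise $\xi$ would land in $\iota(\partial T)$ or a translate of $[f^\pm]$, i.e.\ in $E$). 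Since the $G$--translates $g[\Lambda^{\mathrm{hor}}(Ho)]$ then cover a full-measure set, $\mu_o(\Lambda^{\mathrm{hor}}(Ho))>0$ by countable subadditivity.

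In short: the missing idea is the limit-set decomposition of $[\Lambda(Go)]$ into $G$--translates of $[\Lambda(Ho)]$ plus a $\mu_o$--null exceptional set, combined with the normal-subgroup conservativity of $G$. Your approach tries to leverage quotient-growth dichotomies, but the coset-space bookkeeping you rely on is incorrect.
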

In particular, since $H$ has a proper limit set, it has maximal quotient growth by \cite{HYZ}. By Corollary \ref{CorSlower}, which will be proven in Section \ref{secmaxgrowth}, Theorem \ref{PositiveHorLimitset} implies that the action of $H$ has non-trivial Hopf decomposition. 
\begin{rem}
In the construction, we have that $\Gamma/G$ is non-elementary acylindrically hyperbolic, in particular it is non-amenable. The amenability conjecture states that for a normal subgroup $G$, $\e G=\e \Gamma$ if and only if the quotient group $\Gamma/G$ is amenable. If the amenability conjecture is verified for any SCC action, then the assumption that $\e G<\e \Gamma$ could be removed.
\end{rem}
The remainder of this subsection is devoted to the proof of Theorem \ref{PositiveHorLimitset}, which relies on the following decomposition of the limit set of $G$. 

The end boundary $\partial T$ of  the Bass-Serre tree $T$ associated to $G_0\star H$ consists of all geodesic rays from a fixed point. As $T$ is locally infinite, it is non-compact. Note that  $[\Lambda(G_0o)]=[f^-]\cup[f^+]$ for $G_0=\langle f^n\rangle$. In what follows, we understand $[\pG]$ as the quotient space by identifying each $[\cdot]$--class as one point. Recall that $\mathcal C$ is the set of non-pinched points in $\pU$ (Definition \ref{ConvBdryDefn}(C)), and the points outside  $\mathcal C$ are \textit{pinched}.
\begin{prop}\label{limitsetdecomp} 
We have the following covering \begin{equation}\label{LimitSetCover}
[\Lambda(Go)] = \left[\bigcup_{g\in G} g[\Lambda(Ho)]\right] \bigcup  \left[\bigcup_{g\in G} g[f^\pm]\right] \bigcup \iota(\partial T)    
\end{equation}
where  a map $\iota: \partial T\to [\pG]$ is defined below in Lemma \ref{BSTreeEnds}. 
Moreover, this covering becomes a disjoint union if every term on RHS and LHS is replaced by its intersection with the set of non-pinched points $\mathcal C$.
\end{prop}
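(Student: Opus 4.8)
\textit{Proof plan.}
The plan is to read off $\Lambda(Go)$ from the $G$--action on the Bass--Serre tree $T$ of $G=G_0\star H$, matching an orbit sequence $\gamma_m o\to\xi$ against its ``footprint'' in $T$. First I would set up the map $\iota$ promised in Lemma \ref{BSTreeEnds}. Given an end $\eta\in\partial T$ represented by a vertex ray $u_0,u_1,u_2,\dots$, concatenate the canoeing paths of \cite{BDDKPS} along this ray in $\p_K(\f)$ and record the resulting sequence of $L$--large‑angle turning vertices $X_1^\eta,X_2^\eta,\dots\in\f$; these lie on the standard path, are pairwise distinct, and escape ($d(o,X_i^\eta)\to\infty$) because a canoeing/standard path is a $2$--quasi-geodesic in the infinite-diameter quasi-tree $\p_K(\f)$. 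Assumption \textbf{(B)} of Definition \ref{ConvBdryDefn} then attaches to the escaping sequence of contracting quasi-geodesics $\{X_i^\eta\}$ a $[\cdot]$--class $[\xi_\eta]\in\pU$, and one sets $\iota(\eta):=[\xi_\eta]$. Independence of the choices (ray representative, choices of canoeing paths) follows since two rays for $\eta$ have a common subray, hence cofinal axis sequences; $G$--equivariance of $\iota$ is built in.

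For the inclusion ``$\supseteq$'', each $gHg^{-1}$ and $gG_0g^{-1}$ lies in $G$, so $[\Lambda(gHg^{-1}o)]\subseteq[\Lambda(Go)]$ and likewise for $G_0$; moreover $[\Lambda(gHg^{-1}o)]=g[\Lambda(Ho)]$ and $[\Lambda(gG_0g^{-1}o)]=g[f^\pm]$ because $\isom(\U)$ acts continuously and $[\cdot]$--preservingly on $\pU$ and $[\Lambda(Ho)]$ is basepoint independent (Definition \ref{ConvBdryDefn}(B)). For $\iota(\partial T)\subseteq[\Lambda(Go)]$, write $X_i^\eta=g_i\ax(f)$ with $g_i\in G$; the orbit points $g_i f^{k}o$ with $n\mid k$ lie in $Go$, and choosing such points inside the long segments the canoeing path spends near $X_i^\eta$ and letting $i\to\infty$, Assumption \textbf{(B)} produces a sequence in $Go$ converging to $[\xi_\eta]$.

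The inclusion ``$\subseteq$'' is the substantive step. Take $\gamma_m o\to\xi$ with $\gamma_m\in G$ and dichotomize on $d_T(u_0,\gamma_m u_0)$. If this stays bounded along a subsequence, an induction on syllable length in $G_0\star H$ (passing to further subsequences, using that $H$ acts properly so any fixed free factor contributes only finitely many bounded-orbit elements) lets us write $\gamma_m=w\,\tilde\gamma_m$ with $w\in G$ fixed and $\tilde\gamma_m$ a single-syllable element with $\tilde\gamma_m o$ escaping; then $\tilde\gamma_m o\to\zeta$ with $[\zeta]\in[\Lambda(G_0o)]=[f^\pm]$ or $[\zeta]\in[\Lambda(Ho)]$, and continuity gives $[\xi]=[w\zeta]$, which lies in $w[f^\pm]$ or $w[\Lambda(Ho)]$. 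If instead $d_T(u_0,\gamma_m u_0)\to\infty$, the $T$--geodesics $[u_0,\gamma_m u_0]$ subconverge to an end $\eta$; the standard path from $v_0$ to $\gamma_m v_0$ then passes through $X_1^\eta,\dots,X_{N_m}^\eta$ with $N_m\to\infty$, and concatenating geodesics between consecutive axes yields an $(L',\tau)$--admissible path (Definition \ref{AdmDef}) from near $o$ to near $\gamma_m o$, which $r$--fellow travels $[o,\gamma_m o]$ by Proposition \ref{admisProp}. Hence $\gamma_m o\in\Omega_o(N_C(X_i^\eta))$ for each fixed $i$ and all large $m$, so Assumption \textbf{(B)} forces $\gamma_m o\to[\xi_\eta]$, i.e. $[\xi]=\iota(\eta)$.

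Finally, disjointness on the non-pinched locus $\mathcal C$. The tool is Definition \ref{ConvBdryDefn}(C): if $\xi\in\mathcal C$ and $x_n,y_n\to[\xi]$ then $[x_n,y_n]$ escapes. Suppose a non-pinched $\xi$ lay in $g[\Lambda(Ho)]$ and $g'[\Lambda(Ho)]$ with $gH\ne g'H$ (the mixed cases with $[f^\pm]$ and with $\iota(\partial T)$ are handled the same way). Since $H$ fixes the $H$--vertex $u_H$ of $T$ and $G_0$ fixes the $G_0$--vertex, the orbits $gHg^{-1}o$ and $g'Hg'^{-1}o$ stay within bounded $T$--distance of the distinct vertices $gu_H$, $g'u_H$, so a $T$--geodesic between two such points crosses a fixed edge; the corresponding canoeing path in $\p_K(\f)$ passes through a fixed axis $X$, and translating to $\U$ via the admissible-path machinery, the geodesics joining the two approximating sequences pass uniformly near the bounded set $X$ --- contradicting that they escape. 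A point of $\iota(\partial T)\cap\mathcal C$ cannot also lie in some $g[\Lambda(Ho)]$ or $g[f^\pm]$ for the same reason: the end $\eta$ is not approachable by the bounded-$T$--distance orbit of a single conjugate. I expect this last step --- converting ``distinct cosets/distinct ends'' into a genuinely fixed bounded separating set in $\U$ and making the escaping-geodesic contradiction uniform by combining the projection-complex and admissible-path estimates --- to be the main obstacle; the rest is a routine, if lengthy, translation of the tree picture.
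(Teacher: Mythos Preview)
Your proposal is correct and follows the same architecture as the paper: define $\iota$ via the canoeing/standard-path machinery and Assumption~\textbf{(B)}; prove the covering by a dichotomy on the behaviour of $\gamma_m$ in the Bass--Serre tree; and deduce disjointness on $\mathcal C$ by producing, for two sequences approaching a common non-pinched class from distinct cosets (or from a coset and an end), a fixed $G_0$--axis that every joining geodesic must pass near, contradicting Assumption~\textbf{(C)}. This is exactly the content of Lemmas~\ref{BSTreeEnds} and~\ref{DisjointHLimitSet} together with the short paragraph that constitutes the paper's proof of Proposition~\ref{limitsetdecomp}; in fact your bounded/unbounded dichotomy on $d_T(u_0,\gamma_m u_0)$ makes explicit a step the paper leaves to the reader.

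One imprecision in the bounded case: your claim that one can write $\gamma_m=w\,\tilde\gamma_m$ with $w$ fixed and $\tilde\gamma_m$ a \emph{single} syllable is not literally true. After fixing the initial bounded syllables as $w$, the first escaping syllable $a$ of $w^{-1}\gamma_m$ need not be the last one, so $\gamma_m\ne w a$. What is true (and what the induction actually yields) is that the \emph{prefix} $w a$ has $w a\,o\to[\xi]$. To see this, note that $E(f)\cap H=\{1\}$ forces $d(o,\,a^{(m)}\ax(f))\to\infty$ when $a^{(m)}\in H$ with $d(o,a^{(m)}o)\to\infty$; then the $(L',\tau)$--admissible path places both $w^{-1}\gamma_m o$ and $a^{(m)}o$ in the cone $\Omega_o(N_r(a^{(m)}\ax(f)))$, and Assumption~\textbf{(B)} forces them to the same $[\cdot]$--class. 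With this correction your bounded-case argument goes through.
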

In a variety of examples, it is known that $\pU=\mathcal C$ consists of non-pinched points, so the union in (\ref{LimitSetCover}) becomes a disjoint union. We record this for further reference.  
\begin{cor}\label{limitsetdecompcor}
Suppose that $(\U, \pU)$ is one of the items (1),(2),(3) and (5) in Examples \ref{ConvbdryExamples}. Then the following holds: \begin{equation}\label{LimitSetDisjCover}
[\Lambda(Go)] = \left[\bigsqcup_{g\in G} g[\Lambda(Ho)]\right] \bigsqcup  \left[\bigsqcup_{g\in G} g[f^\pm]\right] \bigcup \iota(\partial T)    
\end{equation}
where  a map $\iota: \partial T\to [\pG]$ is defined below in Lemma \ref{BSTreeEnds}. 
\end{cor}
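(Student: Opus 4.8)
The plan is to obtain Corollary \ref{limitsetdecompcor} directly from Proposition \ref{limitsetdecomp}, whose ``moreover'' clause already asserts that the covering (\ref{LimitSetCover}) becomes disjoint once every term is intersected with the set $\mathcal C$ of non-pinched points. Concretely, that clause says the two families $\{g[\Lambda(Ho)]\}_{g\in G}$ and $\{g[f^\pm]\}_{g\in G}$ are pairwise disjoint and mutually disjoint after passing to the $[\cdot]$-quotient, with the remaining points of $[\Lambda(Go)]$ absorbed into $\iota(\partial T)$. So it suffices to check, in each of the examples (1), (2), (3), (5) of Examples \ref{ConvbdryExamples}, that intersecting with $\mathcal C$ discards none of the points appearing in the two $\bigsqcup$-families, i.e. that $\bigcup_{g\in G}g[\Lambda(Ho)]$ and $\bigcup_{g\in G}g[f^\pm]$ are contained in $\mathcal C$.

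First I would dispose of the easy cases. For a hyperbolic space with the Gromov boundary (1), a CAT(0) space with the visual boundary (2), and any proper space with the horofunction boundary (5), every boundary point is non-pinched, so $\mathcal C=\pU$ and there is nothing to check; the same holds for item (3) when $\pU$ is the end boundary. The only case needing an argument is item (3) with the Bowditch or Floyd boundary, where the pinched points are precisely the non-conical (parabolic) points. Here I would argue: $f$ is a non-pinched contracting element and $\mathcal C$ is $\isom(\U)$-invariant, so each $g[f^\pm]\subseteq\mathcal C$; and every nontrivial element of $H$ is contracting in $\U$, because $H$ is freely generated by conjugates of $\langle f^n\rangle$ and the axes $g\ax(f)$ have bounded projections, so a ping-pong argument via the Extension Lemma \ref{extend3} shows that the orbit map of each $h\in H$ is a quasi-geodesic with contracting image; hence $[\Lambda(Ho)]$ consists of conical points and $g[\Lambda(Ho)]\subseteq\mathcal C$. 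It follows that any parabolic point of $\Gamma$ lying in $[\Lambda(Go)]$ belongs to neither $\bigsqcup$-family, so by the covering (\ref{LimitSetCover}) it is accounted for by $\iota(\partial T)$, and the disjointness of Proposition \ref{limitsetdecomp} among non-pinched points then yields (\ref{LimitSetDisjCover}).

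The main obstacle is precisely the relatively hyperbolic case: one must confirm that $[\Lambda(Ho)]$ avoids all parabolic (pinched) points, equivalently that no infinite-order element of the free group $H$ is parabolic in $\U$. This is where the projection-complex / rotating-family structure of $G=\llangle f^n\rrangle$ is used — the vertex stabilizers realizing the free generators of $H$ are conjugates of $\langle f^n\rangle$ with $f$ contracting, and the freeness of the splitting $G=G_0\star H$ together with the bounded-projection property of the system $\{g\ax(f):g\in\Gamma\}$ forces every nontrivial $h\in H$ to act as a contracting isometry. Once this is secured, the rest of the corollary is a routine consequence of Proposition \ref{limitsetdecomp}, with no further work needed for items (1), (2) and (5).
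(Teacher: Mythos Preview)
The paper's own justification is the single sentence preceding the corollary: in each of the listed examples it asserts that $\mathcal C=\pU$ (every boundary point is non-pinched), so the ``moreover'' clause of Proposition \ref{limitsetdecomp} applies verbatim and nothing further is needed. Your treatment of items (1), (2), (5), and the end-boundary variant of (3) matches this exactly.

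For the Bowditch/Floyd case of item (3) you are in fact more cautious than the paper---you notice (correctly, cf.\ the remark after Lemma \ref{DisjointFixpts}) that parabolic points there can be pinched, and you try to argue around this. But the argument has a gap. From the claim that every nontrivial $h\in H$ is a contracting isometry you conclude that $[\Lambda(Ho)]$ consists of conical points; this inference is unjustified. Knowing that each individual element of $H$ is contracting tells you about the fixed points of those elements, not about which boundary points the full orbit $Ho$ can accumulate on: a sequence $h_no$ with all $h_n$ contracting can perfectly well converge to a parabolic point of $\Gamma$. (Also, the appeal to Extension Lemma \ref{extend3} is not the right tool for showing all $h\in H$ are contracting; the relevant input is the canoeing/admissible-path lift described in Steps 2--3 before Lemma \ref{BSTreeEnds}.) To salvage your route in case (3) you would need a direct argument that no orbit sequence in $Ho$ limits to a parabolic point---for instance by showing, via those admissible-path lifts, that any $h_no\to\xi$ forces $[o,h_no]$ to cross longer and longer segments in $N_C$-neighbourhoods of translated axes of $f$, which rules out parabolic limits. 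The paper simply sidesteps all of this by taking $\mathcal C=\pU$ as a blanket assertion for the four listed items.
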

As shown in (\ref{LimitSetDisjCover}), the first points of any contracting elements other than $f$ must be contained in some $G$--translate of $\Lambda(Ho)$.

In the next three steps, we shall describe in detail how a geodesic on the Bass-Serre tree $T$ produces a canoeing path in the projection complex $\p_K(\f)$, which is then lifted to an admissible path in the space $\U$. This is the crux of the proof of Proposition \ref{limitsetdecomp}.

Let $U$ be a non-empty reduced  word over the alphabet $(H\cup G_0)\setminus 1$. We decompose $U$ into nonempty subwords $U_i$ separated by letters in $G_0=\langle f^n \rangle$: 
\begin{align}\label{AlternatingWord}
U=(h_1, f^{n_1}, h_2,f^{n_2},\cdots, h_k, f^{n_k})    
\end{align} where  $h_i\in H$ and $n|n_i\ne 0$ unless the last one $i=k$. For each $h_i$, we have a minimal $j=j(U_i)$ so that  $h_i$  represents an element in $G_{j}\setminus G_0\subseteq H$. Then the element $g=h_1f^{n_1}\cdots h_kf^{n_k}$ is represented by $U$. If $g$ is not in $H$, then $k\ge 2$ and $n_1\ne 0$.  

Fix an oriented edge $e=[\mathbf v_0,\mathbf v_1]$ in $T$ with endpoint stabilizers $G_0$ and $H$ respectively. Denote   $\overleftarrow e=[\mathbf v_1,\mathbf v_0]$ the reversed edge. We consider  the basepoint $\mathbf v_0$ corresponding to the coset $G_0$ in $T$, and  the basepoint  $v_0=\ax(f)$ in $\p_K(\f)$ stabilized by $G_0$.
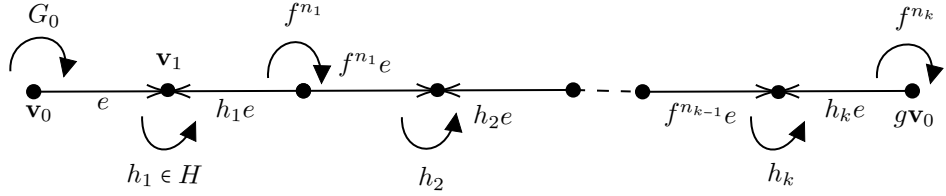
\begin{figure}
    \centering

\tikzset{every picture/.style={line width=0.75pt}} 

\begin{tikzpicture}[x=0.75pt,y=0.75pt,yscale=-1,xscale=1]

\draw    (73.41,80.04) -- (139.44,79.82) ;
\draw [shift={(141.44,79.82)}, rotate = 179.82] [color={rgb, 255:red, 0; green, 0; blue, 0 }  ][line width=0.75]    (10.93,-3.29) .. controls (6.95,-1.4) and (3.31,-0.3) .. (0,0) .. controls (3.31,0.3) and (6.95,1.4) .. (10.93,3.29)   ;
\draw [shift={(73.41,80.04)}, rotate = 359.82] [color={rgb, 255:red, 0; green, 0; blue, 0 }  ][fill={rgb, 255:red, 0; green, 0; blue, 0 }  ][line width=0.75]      (0, 0) circle [x radius= 3.35, y radius= 3.35]   ;
\draw    (143.44,79.81) -- (209.47,79.6) ;
\draw [shift={(209.47,79.6)}, rotate = 359.82] [color={rgb, 255:red, 0; green, 0; blue, 0 }  ][fill={rgb, 255:red, 0; green, 0; blue, 0 }  ][line width=0.75]      (0, 0) circle [x radius= 3.35, y radius= 3.35]   ;
\draw [shift={(141.44,79.82)}, rotate = 359.82] [color={rgb, 255:red, 0; green, 0; blue, 0 }  ][line width=0.75]    (10.93,-3.29) .. controls (6.95,-1.4) and (3.31,-0.3) .. (0,0) .. controls (3.31,0.3) and (6.95,1.4) .. (10.93,3.29)   ;
\draw    (61.67,69.73) .. controls (60.3,48.45) and (92.91,45.77) .. (89.17,69.72) ;
\draw [shift={(88.61,72.43)}, rotate = 284.03] [fill={rgb, 255:red, 0; green, 0; blue, 0 }  ][line width=0.08]  [draw opacity=0] (8.93,-4.29) -- (0,0) -- (8.93,4.29) -- cycle    ;
\draw    (128.1,92.47) .. controls (127.6,111.57) and (143.92,115.59) .. (153.7,97.83) ;
\draw [shift={(155.04,95.18)}, rotate = 114.7] [fill={rgb, 255:red, 0; green, 0; blue, 0 }  ][line width=0.08]  [draw opacity=0] (8.93,-4.29) -- (0,0) -- (8.93,4.29) -- cycle    ;
\draw    (191.63,73.13) .. controls (190.25,51.75) and (217.77,46.32) .. (218.57,73.24) ;
\draw [shift={(218.57,75.84)}, rotate = 271.79] [fill={rgb, 255:red, 0; green, 0; blue, 0 }  ][line width=0.08]  [draw opacity=0] (8.93,-4.29) -- (0,0) -- (8.93,4.29) -- cycle    ;
\draw    (209.47,79.6) -- (275.5,79.39) ;
\draw [shift={(277.5,79.38)}, rotate = 179.82] [color={rgb, 255:red, 0; green, 0; blue, 0 }  ][line width=0.75]    (10.93,-3.29) .. controls (6.95,-1.4) and (3.31,-0.3) .. (0,0) .. controls (3.31,0.3) and (6.95,1.4) .. (10.93,3.29)   ;
\draw [shift={(209.47,79.6)}, rotate = 359.82] [color={rgb, 255:red, 0; green, 0; blue, 0 }  ][fill={rgb, 255:red, 0; green, 0; blue, 0 }  ][line width=0.75]      (0, 0) circle [x radius= 3.35, y radius= 3.35]   ;
\draw    (279.5,79.37) -- (345.52,79.16) ;
\draw [shift={(345.52,79.16)}, rotate = 359.82] [color={rgb, 255:red, 0; green, 0; blue, 0 }  ][fill={rgb, 255:red, 0; green, 0; blue, 0 }  ][line width=0.75]      (0, 0) circle [x radius= 3.35, y radius= 3.35]   ;
\draw [shift={(277.5,79.38)}, rotate = 359.82] [color={rgb, 255:red, 0; green, 0; blue, 0 }  ][line width=0.75]    (10.93,-3.29) .. controls (6.95,-1.4) and (3.31,-0.3) .. (0,0) .. controls (3.31,0.3) and (6.95,1.4) .. (10.93,3.29)   ;
\draw    (259.13,92.9) .. controls (258.63,112.1) and (280.63,115.72) .. (285.93,93.87) ;
\draw [shift={(286.5,91)}, rotate = 99.09] [fill={rgb, 255:red, 0; green, 0; blue, 0 }  ][line width=0.08]  [draw opacity=0] (8.93,-4.29) -- (0,0) -- (8.93,4.29) -- cycle    ;
\draw  [dash pattern={on 4.5pt off 4.5pt}]  (345.52,79.16) -- (374.51,79.92) ;
\draw    (380.51,80.08) -- (446.54,79.86) ;
\draw [shift={(448.54,79.86)}, rotate = 179.82] [color={rgb, 255:red, 0; green, 0; blue, 0 }  ][line width=0.75]    (10.93,-3.29) .. controls (6.95,-1.4) and (3.31,-0.3) .. (0,0) .. controls (3.31,0.3) and (6.95,1.4) .. (10.93,3.29)   ;
\draw [shift={(380.51,80.08)}, rotate = 359.82] [color={rgb, 255:red, 0; green, 0; blue, 0 }  ][fill={rgb, 255:red, 0; green, 0; blue, 0 }  ][line width=0.75]      (0, 0) circle [x radius= 3.35, y radius= 3.35]   ;
\draw    (450.54,79.85) -- (516.57,79.64) ;
\draw [shift={(516.57,79.64)}, rotate = 359.82] [color={rgb, 255:red, 0; green, 0; blue, 0 }  ][fill={rgb, 255:red, 0; green, 0; blue, 0 }  ][line width=0.75]      (0, 0) circle [x radius= 3.35, y radius= 3.35]   ;
\draw [shift={(448.54,79.86)}, rotate = 359.82] [color={rgb, 255:red, 0; green, 0; blue, 0 }  ][line width=0.75]    (10.93,-3.29) .. controls (6.95,-1.4) and (3.31,-0.3) .. (0,0) .. controls (3.31,0.3) and (6.95,1.4) .. (10.93,3.29)   ;
\draw    (435.21,92.51) .. controls (434.71,111.61) and (451.02,115.63) .. (460.81,97.87) ;
\draw [shift={(462.14,95.22)}, rotate = 114.7] [fill={rgb, 255:red, 0; green, 0; blue, 0 }  ][line width=0.08]  [draw opacity=0] (8.93,-4.29) -- (0,0) -- (8.93,4.29) -- cycle    ;
\draw    (498.73,73.17) .. controls (497.36,51.9) and (529.97,49.21) .. (526.23,73.16) ;
\draw [shift={(525.67,75.87)}, rotate = 284.03] [fill={rgb, 255:red, 0; green, 0; blue, 0 }  ][line width=0.08]  [draw opacity=0] (8.93,-4.29) -- (0,0) -- (8.93,4.29) -- cycle    ;
\draw  [line width=5.25] [line join = round][line cap = round] (140.8,79.22) .. controls (140.8,79.22) and (140.8,79.22) .. (140.8,79.22) ;
\draw  [line width=5.25] [line join = round][line cap = round] (276.8,79.22) .. controls (276.8,79.22) and (276.8,79.22) .. (276.8,79.22) ;
\draw  [line width=5.25] [line join = round][line cap = round] (448.8,80.22) .. controls (448.8,80.22) and (448.8,80.22) .. (448.8,80.22) ;

\draw (103.89,81.24) node [anchor=north west][inner sep=0.75pt]  [rotate=-1.5]  {$e$};
\draw (119.02,114.64) node [anchor=north west][inner sep=0.75pt]  [rotate=-1.5]  {$h_{1} \in H$};
\draw (163.89,81.81) node [anchor=north west][inner sep=0.75pt]  [rotate=-1.5]  {$h_{1} e$};
\draw (69.31,34.39) node [anchor=north west][inner sep=0.75pt]  [rotate=-1.5]  {$G_{0}$};
\draw (198.32,33.69) node [anchor=north west][inner sep=0.75pt]  [rotate=-1.5]  {$f^{n_{1}}$};
\draw (225.76,58.55) node [anchor=north west][inner sep=0.75pt]  [rotate=-1.5]  {$f^{n_{1}} e$};
\draw (266.02,116.49) node [anchor=north west][inner sep=0.75pt]  [rotate=-1.5]  {$h_{2}$};
\draw (293.85,85.21) node [anchor=north west][inner sep=0.75pt]  [rotate=-1.5]  {$h_{2} e$};
\draw (389.99,84.27) node [anchor=north west][inner sep=0.75pt]  [rotate=-1.5]  {$f^{n_{k-1}} e$};
\draw (442.14,114.1) node [anchor=north west][inner sep=0.75pt]  [rotate=-1.5]  {$h_{k}$};
\draw (471,81.85) node [anchor=north west][inner sep=0.75pt]  [rotate=-1.5]  {$h_{k} e$};
\draw (506.43,35.73) node [anchor=north west][inner sep=0.75pt]  [rotate=-1.5]  {$f^{n_{k}}$};
\draw (67.77,85.29) node [anchor=north west][inner sep=0.75pt]  [rotate=-1.5]  {$\mathbf{v}_{0}$};
\draw (133.61,59) node [anchor=north west][inner sep=0.75pt]  [rotate=-1.5]  {$\mathbf{v}_{1}$};
\draw (505.88,87.76) node [anchor=north west][inner sep=0.75pt]  [rotate=-1.5]  {$g\mathbf{v}_{0}$};

\end{tikzpicture}
    \caption{Unfolding the word $U$ as a geodesic $\alpha$ in Bass-Serre tree $T$. The reader should be cautioned that the labels indicate the action of stabilizers on vertices understood up to appropriate conjugation.}
    \label{fig:geodesicBStree}
\end{figure}
\begin{enumerate}
    \item [Step 1] The word $U$  gets unfolded  as  a geodesic $\alpha$ in $T$ starting with the edge $ e$, at which $h_1\in H$ rotates the edge $e$ at $\mathbf v_1$ to $h_1  e$. Rotating edges consecutively in this way, we write down $$\alpha= e\cdot (h_1 \overleftarrow  e)\cdot (h_1f^{n_1}  e)\cdot (h_1f^{n_1}h_2  \overleftarrow e)\cdot\cdots\cdot (h_1f^{n_1}\cdots h_k  \overleftarrow e)$$
where the last edge  terminates at the endpoint fixed by corresponding conjugate of $f^{n_k}$. See Fig. \ref{fig:geodesicBStree}.
\item [Step 2]
Recall $W_{j(i)}=G_{j(i)}\cdot N_{j(i)-1}$ and $h_i\in G_{j(i)}$.
As mentioned above, there exists an $L$--canoeing path $\gamma_i$ in $W_{j(i)}$ from $v_0$ to $h_iv_0$. These  $h_i^{-1}\gamma_{i}$ and $\gamma_{i+1}$ are adjacent at $v_0$, around which  $f^{n_i}$ rotates $\gamma_{i+1}$. By (\ref{spinningEQ}),  $d_{v_0}(v,f^{n_i}v)>L$ for any $v\ne v_0$, so $h_iv_0$ is a $L$--large angle point of the concatenation $\gamma_{i}\cdot(h_if^{n_i}\gamma_{i+1})$.  We then obtain an $L$--canoeing path $\gamma$ from $v_0$ to $gv_0$ as the concatenation of $\gamma_i$ $(1\le i\le n)$ up to appropriate translation, 
$$
\gamma =\gamma_1 \cdot (h_1f^{n_1} \gamma_2)\cdot (h_1f^{n_1}h_2 f^{n_2}\gamma_3)\cdot \cdots\cdot (h_1f^{n_1}\cdots h_{k-1}f^{n_{k-1}}\gamma_k) 
$$
where the common endpoints of two consecutive paths are $L$--large angle points fixed by the corresponding conjugates of $f^{n_i}$. See Fig. \ref{fig:canoeingpath}.
\begin{figure}
    \centering

\tikzset{every picture/.style={line width=0.75pt}} 

\begin{tikzpicture}[x=0.75pt,y=0.75pt,yscale=-1,xscale=1]

\draw    (100,114) .. controls (132.5,79) and (170.5,89) .. (200,114) ;
\draw [shift={(200,114)}, rotate = 40.28] [color={rgb, 255:red, 0; green, 0; blue, 0 }  ][fill={rgb, 255:red, 0; green, 0; blue, 0 }  ][line width=0.75]      (0, 0) circle [x radius= 3.35, y radius= 3.35]   ;
\draw [shift={(100,114)}, rotate = 312.88] [color={rgb, 255:red, 0; green, 0; blue, 0 }  ][fill={rgb, 255:red, 0; green, 0; blue, 0 }  ][line width=0.75]      (0, 0) circle [x radius= 3.35, y radius= 3.35]   ;
\draw    (200,114) .. controls (229.5,171) and (277.5,151) .. (300,114) ;
\draw [shift={(300,114)}, rotate = 301.3] [color={rgb, 255:red, 0; green, 0; blue, 0 }  ][fill={rgb, 255:red, 0; green, 0; blue, 0 }  ][line width=0.75]      (0, 0) circle [x radius= 3.35, y radius= 3.35]   ;
\draw    (391,96) .. controls (423.5,61) and (461.5,71) .. (491,96) ;
\draw [shift={(491,96)}, rotate = 40.28] [color={rgb, 255:red, 0; green, 0; blue, 0 }  ][fill={rgb, 255:red, 0; green, 0; blue, 0 }  ][line width=0.75]      (0, 0) circle [x radius= 3.35, y radius= 3.35]   ;
\draw [shift={(391,96)}, rotate = 312.88] [color={rgb, 255:red, 0; green, 0; blue, 0 }  ][fill={rgb, 255:red, 0; green, 0; blue, 0 }  ][line width=0.75]      (0, 0) circle [x radius= 3.35, y radius= 3.35]   ;
\draw    (197.5,102) .. controls (205.22,87.53) and (229.7,94.47) .. (216.13,116.54) ;
\draw [shift={(214.5,119)}, rotate = 305.31] [fill={rgb, 255:red, 0; green, 0; blue, 0 }  ][line width=0.08]  [draw opacity=0] (8.93,-4.29) -- (0,0) -- (8.93,4.29) -- cycle    ;
\draw    (376.5,97) .. controls (368.78,84.45) and (375.03,66.32) .. (393.45,83.04) ;
\draw [shift={(395.5,85)}, rotate = 225] [fill={rgb, 255:red, 0; green, 0; blue, 0 }  ][line width=0.08]  [draw opacity=0] (8.93,-4.29) -- (0,0) -- (8.93,4.29) -- cycle    ;
\draw  [dash pattern={on 4.5pt off 4.5pt}]  (300,114) -- (389.04,96.39) ;
\draw [shift={(391,96)}, rotate = 168.81] [color={rgb, 255:red, 0; green, 0; blue, 0 }  ][line width=0.75]    (10.93,-3.29) .. controls (6.95,-1.4) and (3.31,-0.3) .. (0,0) .. controls (3.31,0.3) and (6.95,1.4) .. (10.93,3.29)   ;
\draw    (390,136) .. controls (402.31,120.24) and (374.36,136.5) .. (391.68,104.5) ;
\draw [shift={(392.5,103)}, rotate = 119.2] [color={rgb, 255:red, 0; green, 0; blue, 0 }  ][line width=0.75]    (10.93,-3.29) .. controls (6.95,-1.4) and (3.31,-0.3) .. (0,0) .. controls (3.31,0.3) and (6.95,1.4) .. (10.93,3.29)   ;

\draw (137,71.4) node [anchor=north west][inner sep=0.75pt]    {$\gamma _{1}$};
\draw (236,150.4) node [anchor=north west][inner sep=0.75pt]    {$\gamma _{2}$};
\draw (206.32,77.69) node [anchor=north west][inner sep=0.75pt]  [rotate=-1.5]  {$f^{n_{1}}$};
\draw (92.77,119.29) node [anchor=north west][inner sep=0.75pt]  [rotate=-1.5]  {$v_{0}$};
\draw (170,113.4) node [anchor=north west][inner sep=0.75pt]    {$h_{1} v_{0}$};
\draw (265,92.4) node [anchor=north west][inner sep=0.75pt]    {$h_{1} f^{n_{1}} h_{2} v_{0}$};
\draw (425,76.4) node [anchor=north west][inner sep=0.75pt]    {$\gamma _{k}$};
\draw (484.77,70.29) node [anchor=north west][inner sep=0.75pt]  [rotate=-1.5]  {$gv_{0}$};
\draw (334,135.4) node [anchor=north west][inner sep=0.75pt]    {$h_{1} f^{n_{1}} \cdots h_{k-1} v_{0}$};
\draw (363,52.4) node [anchor=north west][inner sep=0.75pt]    {$f^{n_{k-1}}$};

\end{tikzpicture}
    \caption{Canoeing path $\gamma$ in $\p_K(\f)$: join canoeing paths $\gamma_i$ at large angle points $h_1f^{n_1}\cdots h_if^{n_i}v_0$. Those large points are contained in the standard path $\beta$ (possibly $\beta\ne\gamma$) from $v_0$ to $gv_0$.}
    \label{fig:canoeingpath}
\end{figure}
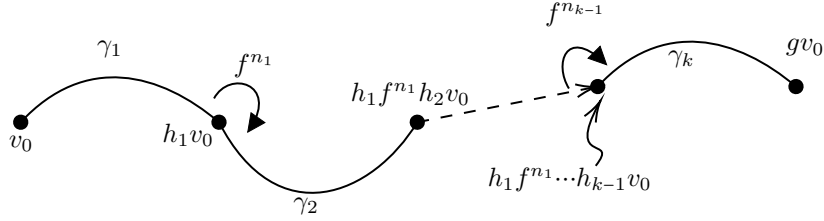
\item [Step 3]
By \cite[Prop. 4.4]{BDDKPS}, those $L$--large angle points lie  on the standard path $\beta$ from $v_0$ to $gv_0$.
We lift the standard path  $\beta$ to get an $(\hat L,\tau)$--admissible path $\tilde \beta$ in $\U$  as follows, with contracting sets given by the axis $w_i:=h_1f^{n_1}\cdots h_i\ax(f)$ where $w_0=\ax(f), w_{k+1}=g\ax(f)$:

Choose two points $o\in w_0$ and $go\in w_{k+1}$ in $\U$. Connect $o,go$ by concatenating geodesics in consecutive axis $w_i$ and then geodesics from $\pi_{w_{i-1}}(w_{i})$ to $\pi_{w_i}(w_{i-1})$. See a schematic illustration (\ref{fig:liftstandardpath}) of a  lifted path. 

By \cite[Lemma 2.22]{YANG22}, this gives an $(\hat L,\tau)$--admissible path from $o$ to $go$ with associated contracting axes $w_i$'s. The constant $\hat L$ is comparable with $L$ and $\tau$ depends only on $\f$. See \cite{YANG22} for details.
\end{enumerate}

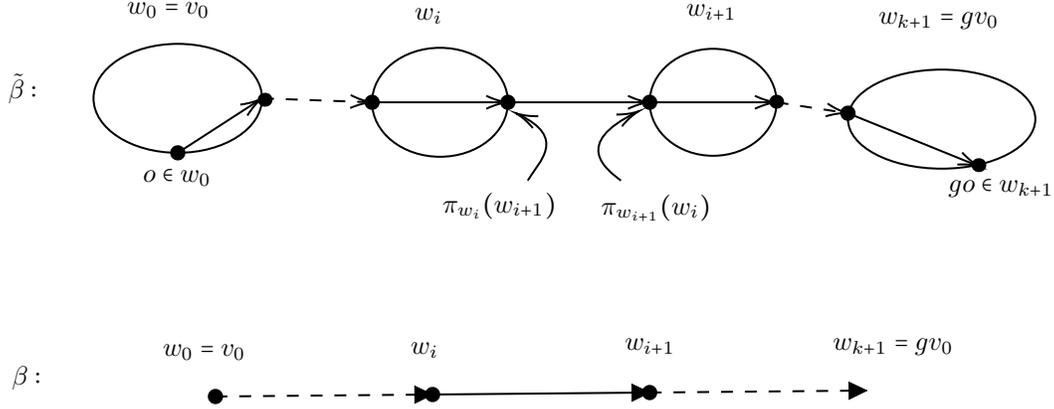
\begin{figure}
    \centering

\tikzset{every picture/.style={line width=0.75pt}} 

\begin{tikzpicture}[x=0.75pt,y=0.75pt,yscale=-1,xscale=1]

\draw   (80.5,98.5) .. controls (80.5,83.31) and (99.53,71) .. (123,71) .. controls (146.47,71) and (165.5,83.31) .. (165.5,98.5) .. controls (165.5,113.69) and (146.47,126) .. (123,126) .. controls (99.53,126) and (80.5,113.69) .. (80.5,98.5) -- cycle ;
\draw   (221,100.5) .. controls (221,85.31) and (236.33,73) .. (255.25,73) .. controls (274.17,73) and (289.5,85.31) .. (289.5,100.5) .. controls (289.5,115.69) and (274.17,128) .. (255.25,128) .. controls (236.33,128) and (221,115.69) .. (221,100.5) -- cycle ;
\draw   (461,109) .. controls (461,95.19) and (482.15,84) .. (508.25,84) .. controls (534.35,84) and (555.5,95.19) .. (555.5,109) .. controls (555.5,122.81) and (534.35,134) .. (508.25,134) .. controls (482.15,134) and (461,122.81) .. (461,109) -- cycle ;
\draw   (361,100.5) .. controls (361,85.59) and (375.33,73.5) .. (393,73.5) .. controls (410.67,73.5) and (425,85.59) .. (425,100.5) .. controls (425,115.41) and (410.67,127.5) .. (393,127.5) .. controls (375.33,127.5) and (361,115.41) .. (361,100.5) -- cycle ;
\draw  [dash pattern={on 4.5pt off 4.5pt}]  (165.5,98.5) -- (219,100.43) ;
\draw [shift={(221,100.5)}, rotate = 182.06] [color={rgb, 255:red, 0; green, 0; blue, 0 }  ][line width=0.75]    (10.93,-3.29) .. controls (6.95,-1.4) and (3.31,-0.3) .. (0,0) .. controls (3.31,0.3) and (6.95,1.4) .. (10.93,3.29)   ;
\draw  [dash pattern={on 4.5pt off 4.5pt}]  (425,100.5) -- (459.02,105.7) ;
\draw [shift={(461,106)}, rotate = 188.69] [color={rgb, 255:red, 0; green, 0; blue, 0 }  ][line width=0.75]    (10.93,-3.29) .. controls (6.95,-1.4) and (3.31,-0.3) .. (0,0) .. controls (3.31,0.3) and (6.95,1.4) .. (10.93,3.29)   ;
\draw    (289.5,100.5) -- (359,100.5) ;
\draw [shift={(361,100.5)}, rotate = 180] [color={rgb, 255:red, 0; green, 0; blue, 0 }  ][line width=0.75]    (10.93,-3.29) .. controls (6.95,-1.4) and (3.31,-0.3) .. (0,0) .. controls (3.31,0.3) and (6.95,1.4) .. (10.93,3.29)   ;
\draw [shift={(289.5,100.5)}, rotate = 0] [color={rgb, 255:red, 0; green, 0; blue, 0 }  ][fill={rgb, 255:red, 0; green, 0; blue, 0 }  ][line width=0.75]      (0, 0) circle [x radius= 3.35, y radius= 3.35]   ;
\draw    (299.5,140) .. controls (305.38,130.2) and (319.42,118.48) .. (295.04,106.72) ;
\draw [shift={(293.5,106)}, rotate = 24.36] [color={rgb, 255:red, 0; green, 0; blue, 0 }  ][line width=0.75]    (10.93,-3.29) .. controls (6.95,-1.4) and (3.31,-0.3) .. (0,0) .. controls (3.31,0.3) and (6.95,1.4) .. (10.93,3.29)   ;
\draw    (346.5,140) .. controls (338.62,134.09) and (322.01,121.39) .. (354,105.72) ;
\draw [shift={(355.5,105)}, rotate = 154.8] [color={rgb, 255:red, 0; green, 0; blue, 0 }  ][line width=0.75]    (10.93,-3.29) .. controls (6.95,-1.4) and (3.31,-0.3) .. (0,0) .. controls (3.31,0.3) and (6.95,1.4) .. (10.93,3.29)   ;
\draw    (221,100.5) -- (287.5,100.5) ;
\draw [shift={(289.5,100.5)}, rotate = 180] [color={rgb, 255:red, 0; green, 0; blue, 0 }  ][line width=0.75]    (10.93,-3.29) .. controls (6.95,-1.4) and (3.31,-0.3) .. (0,0) .. controls (3.31,0.3) and (6.95,1.4) .. (10.93,3.29)   ;
\draw [shift={(221,100.5)}, rotate = 0] [color={rgb, 255:red, 0; green, 0; blue, 0 }  ][fill={rgb, 255:red, 0; green, 0; blue, 0 }  ][line width=0.75]      (0, 0) circle [x radius= 3.35, y radius= 3.35]   ;
\draw    (361,100.5) -- (423,100.5) ;
\draw [shift={(425,100.5)}, rotate = 180] [color={rgb, 255:red, 0; green, 0; blue, 0 }  ][line width=0.75]    (10.93,-3.29) .. controls (6.95,-1.4) and (3.31,-0.3) .. (0,0) .. controls (3.31,0.3) and (6.95,1.4) .. (10.93,3.29)   ;
\draw [shift={(361,100.5)}, rotate = 0] [color={rgb, 255:red, 0; green, 0; blue, 0 }  ][fill={rgb, 255:red, 0; green, 0; blue, 0 }  ][line width=0.75]      (0, 0) circle [x radius= 3.35, y radius= 3.35]   ;
\draw  [dash pattern={on 4.5pt off 4.5pt}]  (142,249) -- (248.5,248.03) ;
\draw [shift={(251.5,248)}, rotate = 179.48] [fill={rgb, 255:red, 0; green, 0; blue, 0 }  ][line width=0.08]  [draw opacity=0] (8.93,-4.29) -- (0,0) -- (8.93,4.29) -- cycle    ;
\draw [shift={(142,249)}, rotate = 359.48] [color={rgb, 255:red, 0; green, 0; blue, 0 }  ][fill={rgb, 255:red, 0; green, 0; blue, 0 }  ][line width=0.75]      (0, 0) circle [x radius= 3.35, y radius= 3.35]   ;
\draw    (251.5,248) -- (358,247.03) ;
\draw [shift={(361,247)}, rotate = 179.48] [fill={rgb, 255:red, 0; green, 0; blue, 0 }  ][line width=0.08]  [draw opacity=0] (8.93,-4.29) -- (0,0) -- (8.93,4.29) -- cycle    ;
\draw [shift={(251.5,248)}, rotate = 359.48] [color={rgb, 255:red, 0; green, 0; blue, 0 }  ][fill={rgb, 255:red, 0; green, 0; blue, 0 }  ][line width=0.75]      (0, 0) circle [x radius= 3.35, y radius= 3.35]   ;
\draw  [dash pattern={on 4.5pt off 4.5pt}]  (361,247) -- (467.5,246.03) ;
\draw [shift={(470.5,246)}, rotate = 179.48] [fill={rgb, 255:red, 0; green, 0; blue, 0 }  ][line width=0.08]  [draw opacity=0] (8.93,-4.29) -- (0,0) -- (8.93,4.29) -- cycle    ;
\draw [shift={(361,247)}, rotate = 359.48] [color={rgb, 255:red, 0; green, 0; blue, 0 }  ][fill={rgb, 255:red, 0; green, 0; blue, 0 }  ][line width=0.75]      (0, 0) circle [x radius= 3.35, y radius= 3.35]   ;
\draw    (123,126) -- (163.82,99.59) ;
\draw [shift={(165.5,98.5)}, rotate = 147.09] [color={rgb, 255:red, 0; green, 0; blue, 0 }  ][line width=0.75]    (10.93,-3.29) .. controls (6.95,-1.4) and (3.31,-0.3) .. (0,0) .. controls (3.31,0.3) and (6.95,1.4) .. (10.93,3.29)   ;
\draw [shift={(123,126)}, rotate = 327.09] [color={rgb, 255:red, 0; green, 0; blue, 0 }  ][fill={rgb, 255:red, 0; green, 0; blue, 0 }  ][line width=0.75]      (0, 0) circle [x radius= 3.35, y radius= 3.35]   ;
\draw    (461,106) -- (525.65,132.25) ;
\draw [shift={(527.5,133)}, rotate = 202.1] [color={rgb, 255:red, 0; green, 0; blue, 0 }  ][line width=0.75]    (10.93,-3.29) .. controls (6.95,-1.4) and (3.31,-0.3) .. (0,0) .. controls (3.31,0.3) and (6.95,1.4) .. (10.93,3.29)   ;
\draw [shift={(461,106)}, rotate = 22.1] [color={rgb, 255:red, 0; green, 0; blue, 0 }  ][fill={rgb, 255:red, 0; green, 0; blue, 0 }  ][line width=0.75]      (0, 0) circle [x radius= 3.35, y radius= 3.35]   ;
\draw  [line width=5.25] [line join = round][line cap = round] (166.8,99.22) .. controls (166.8,99.22) and (166.8,99.22) .. (166.8,99.22) ;
\draw  [line width=5.25] [line join = round][line cap = round] (424.8,100.22) .. controls (424.8,100.22) and (424.8,100.22) .. (424.8,100.22) ;
\draw  [line width=5.25] [line join = round][line cap = round] (526.8,132.22) .. controls (526.8,132.22) and (526.8,132.22) .. (526.8,132.22) ;

\draw (96,48.4) node [anchor=north west][inner sep=0.75pt]    {$w_{0} =v_{0}$};
\draw (241,51.4) node [anchor=north west][inner sep=0.75pt]    {$w_{i}$};
\draw (378,49.4) node [anchor=north west][inner sep=0.75pt]    {$w_{i+1}$};
\draw (475,53.4) node [anchor=north west][inner sep=0.75pt]    {$w_{k+1} =gv_{0}$};
\draw (255,144.4) node [anchor=north west][inner sep=0.75pt]    {$\pi _{w_{i}}( w_{i+1})$};
\draw (335,145.4) node [anchor=north west][inner sep=0.75pt]    {$\pi _{w_{i+1}}( w_{i})$};
\draw (239,220.4) node [anchor=north west][inner sep=0.75pt]    {$w_{i}$};
\draw (347,218.4) node [anchor=north west][inner sep=0.75pt]    {$w_{i+1}$};
\draw (452,218.4) node [anchor=north west][inner sep=0.75pt]    {$w_{k+1} =gv_{0}$};
\draw (114,221.4) node [anchor=north west][inner sep=0.75pt]    {$w_{0} =v_{0}$};
\draw (104,132.4) node [anchor=north west][inner sep=0.75pt]    {$o\in w_{0}$};
\draw (510.25,137.4) node [anchor=north west][inner sep=0.75pt]    {$go\in w_{k+1}$};
\draw (38,232.4) node [anchor=north west][inner sep=0.75pt]    {$\beta :$};
\draw (36,84.4) node [anchor=north west][inner sep=0.75pt]    {$\tilde{\beta } :$};

\end{tikzpicture}
    \caption{Lift a standard path $\beta$ (at bottom) in $\p_K(\f)$ to admissible path (at top) in $\U$}
    \label{fig:liftstandardpath}
\end{figure}

We first establish the desired map mentioned in Proposition \ref{limitsetdecomp}.
\begin{lem}\label{BSTreeEnds}
There exists a map  $\iota: \partial T\to[\pG]$ so that the image $\iota(\partial T)$ is contained in the conical limit set $[\Lambda^{\mathrm{con}}(Go)]$. Moreover, if $\iota(p)=\iota(q)$ for $p\ne q$, then $\iota(p)$ is outside the set  $\mathcal C$ of non-pinched points.    
\end{lem}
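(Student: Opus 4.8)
\textbf{Plan for the proof of Lemma \ref{BSTreeEnds}.}

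The plan is to build $\iota$ by unfolding each end $p\in\partial T$ into an infinite canoeing path in $\p_K(\f)$, lifting that to an infinite admissible ray in $\U$, and taking its well-defined $[\cdot]$--class of endpoint. Concretely, fix the basepoint $\mathbf v_0\in V_1$ and the basevertex $v_0=\ax(f)\in\p_K(\f)$, and fix $o\in\U$ on $\ax(f)$. Given $p\in\partial T$, represent it by the geodesic ray from $\mathbf v_0$; reading off its infinite alternating word $U=(h_1,f^{n_1},h_2,f^{n_2},\dots)$ exactly as in Step 1 above, apply Step 2 to obtain an infinite $L$--canoeing path $\gamma_p$ in $\p_K(\f)$ starting at $v_0$, whose $L$--large-angle points are the vertices $g_i v_0$ with $g_i=h_1f^{n_1}\cdots h_i$. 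By \cite[Prop.~4.4]{BDDKPS} these large-angle points all lie on the (infinite) standard ray $\beta_p$ from $v_0$; since standard paths are $2$--quasi-geodesics (\cite[Cor.~3.7]{BBFS}), $\beta_p$ converges to a point of $\partial \p_K(\f)$ and in particular the associated axes $w_i=g_i\ax(f)$ form an escaping sequence of $C$--contracting quasi-geodesics in $\U$. First I would carry out Step 3: lift $\beta_p$ via \cite[Lemma~2.22]{YANG22} to an infinite $(\hat L,\tau)$--admissible ray $\tilde\beta_p$ in $\U$ based at $o$. By Proposition \ref{admisProp}, $\tilde\beta_p$ is a $c$--quasi-geodesic, and being contracting it determines (Assumption (A) in Definition \ref{ConvBdryDefn}) a unique $[\cdot]$--class $[\tilde\beta_p^+]\in[\pG]$; set $\iota(p):=[\tilde\beta_p^+]$.

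Next I would check that $\iota$ is well defined, i.e.\ independent of the auxiliary choices (which geodesic representative of $p$, which geodesics were chosen in the lift): two such lifts share infinitely many common contracting subsets $w_i$ along escaping axes, so by Definition \ref{ConvBdryDefn}(B) any two convergent subsequences of points on them accumulate to the same $[\cdot]$--class. Then I would verify that $\iota(p)$ is a conical limit point of $Go$: the lifted admissible ray $\tilde\beta_p$ passes through $N_r(w_i)$ for each $i$ with $w_i=g_i\ax(f)$ and $g_i\in G$, so $\iota(p)$ lies in infinitely many $(r,F_0)$--shadows $\Pi_o^{F_0}(g_io,r)$ for a suitable finite $F_0$ containing (a power of) $f$ — hence $\iota(p)\in[\Lambda^{\mathrm{con}}(Go)]$. (If one wants the $(r,F)$--conical set for all $F$, one intersects over the countably many admissible structures, exactly as in the definition of $\mG$; but membership in $\ccG$ suffices here and this is what I would state.)

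Finally, for the ``moreover'' clause: suppose $p\ne q\in\partial T$ with $\iota(p)=\iota(q)=:\xi$. The tree geodesics representing $p,q$ diverge after finitely many edges, so the unfolded words $U_p,U_q$ agree on a common prefix and then branch; consequently the standard rays $\beta_p,\beta_q$ in $\p_K(\f)$ branch after finitely many common vertices (here the almost-tripod property \cite[Lemma~3.6]{BBFS} for the triangle they span with $v_0$ is the clean way to see the branch point is essentially unique), hence the lifted admissible rays $\tilde\beta_p,\tilde\beta_q$ in $\U$ eventually travel along \emph{distinct} sequences of contracting axes and therefore, by bounded projection in the contracting system $\f$, satisfy $d(o,\tilde\beta_p(t)),d(o,\tilde\beta_q(s))\to\infty$ while $\pi_{\tilde\beta_p}(\tilde\beta_q(s))$ stays bounded once $s$ is large. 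Take sequences $x_n\to\xi$ along $\tilde\beta_p$ and $y_n\to\xi$ along $\tilde\beta_q$: since both converge to $[\xi]$ but the escaping projection argument forces $[x_n,y_n]$ to stay within a bounded distance of the branch region (its projections to far-out axes being bounded), $[x_n,y_n]$ does \emph{not} escape, which by Definition \ref{ConvBdryDefn}(C) means $\xi\notin\mathcal C$. The main obstacle I anticipate is this last step: making precise the claim that two admissible rays which eventually use disjoint families of contracting subsets, yet share the same boundary class, force that class to be pinched — one has to feed the geometry of the two lifts (bounded mutual projections, common prefix) carefully into the definition of non-pinched point, using Proposition \ref{admisProp} and bounded intersection of $\f$ rather than any hyperbolicity of $\U$.
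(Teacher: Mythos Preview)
Your construction of $\iota$ is exactly the paper's: unfold an end to an infinite alternating word via Step~1, pass through Steps~2--3 to an infinite $(\hat L,\tau)$--admissible ray $\tilde\beta_p$ in $\U$, and take its boundary $[\cdot]$--class (the paper invokes Assumption~(B) on the escaping axes $w_i$ rather than Assumption~(A) on the ray itself, but either works). The conicality check is the same.

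For the ``moreover'' clause the paper takes a more direct route than your projection argument. Instead of analysing $\pi_{\tilde\beta_p}(\tilde\beta_q(s))$, it considers the bi-infinite geodesic $\gamma$ in $T$ joining $p$ to $q$, picks sequences $g_no$ along $\tilde\beta_p$ and $g_n'o$ along $\tilde\beta_q$, and lifts the finite $T$--segment between them (which runs along $\gamma$) via Step~3 to an $(\hat L,\tau)$--admissible path from $g_no$ to $g_n'o$. The first $G_0$--vertex on $\gamma$ past the branch point corresponds to a \emph{fixed} contracting axis whose exit point $x$ is, by Proposition~\ref{admisProp}, within $r$ of every geodesic $[g_no,g_n'o]$. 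This immediately shows $[g_no,g_n'o]$ is non-escaping, hence $\xi\notin\mathcal C$. Your route would work too --- the bounded-projection claim is true --- but justifying it ultimately reduces to the same fellow-travel argument applied to the concatenated admissible path through the branch, so the paper's formulation is the cleaner way to resolve the obstacle you yourself flagged.
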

\begin{proof}
By construction, $T^0=\{gH, gG_0: g\in G\}$, so any geodesic ray $\alpha$ in $T$ from $v_0=G_0$ has the list of all vertices  $A_n$, which are alternating left $H$ and $G_0$ cosets. This determines a unique infinite word of form (\ref{AlternatingWord}). By Step (3), the lifted $(\tilde L,\tau)$--admissible path $\tilde \beta$ in $\U$ intersects infinitely many translates $X_n$ of $N_r(\ax(f))$ under $G$, as $A_n$ contains infinitely many $G_0$--cosets. By Definition \ref{ConvBdryDefn}(B), $X_n$ accumulates into a $[\cdot]$--class $[\xi]$. If we choose a sequence $g_n\in A_n$, we see $d(g_no, [o,\xi])\le r$ so $\xi$ lies in $[\Lambda^{\mathrm{con}}(Go)]$. Setting $\iota(\alpha^+)=[\xi]$ defines  the desired map  $\iota: \partial T\to [\Lambda^{\mathrm{con}}(Go)]$.   

To justify the claim of injectivity, consider a distinct geodesic $\alpha\ne \beta$ originating at $v_0$ with the set of vertices $B_n$, so that $[\xi]=\iota(\alpha^+)=\iota(\beta^+)$. We shall show that $[\xi]$ is pinched. Let $\gamma$ be the bi-infinite geodesic in the tree $T$ from $\alpha^+$ to $\beta^+$, \emph{i.e.}: $\gamma=(\alpha\cup\beta)\setminus [v_0,w_0)$ as a set, where $w_0$ is the  vertex that  $\beta$ departs from  $\alpha$. As shown in the first paragraph, there are two sequences $g_no\in A_no$ and $g'_no\in B_no$ tending to $[\xi]$.

We read off from $\gamma$ a bi-infinite alternating word $U$ in the form (\ref{AlternatingWord}).  As described in Step (3), we obtain from $\gamma_n$  an $(\tilde L,\tau)$--admissible path $\hat \gamma_n$ from  $g_no$  to $g_n'o$, where the contracting subsets correspond to the $G_0$--vertices on $\gamma$. By Proposition \ref{admisProp}, $[g_n, g_n'o]$ has at most  $r$--distance to the entry and exit points of the contracting subsets. See Fig. \ref{fig:admissiblepath}.  
Recall that $T$ is a bipartite graph with $G_0$--vertices and $H$--vertices. Look at   the first $G_0$--vertex $w$ on $\beta$ after $w_0$, and its exit point $x$ of the contracting subset. By the above discussion, we have $d(x,[g_no,g_n'o])\le r$, that is $[g_no,g'_no]$ is non-escaping. Hence, $[\xi]\notin \mathcal C$ is pinched. 
\end{proof}
We continue to examine the possible intersection in (\ref{LimitSetCover}).
\begin{lem}\label{DisjointHLimitSet}
The following hold.
\begin{enumerate}
    \item 
    For any $g\in G\setminus H$, the intersection $[\Lambda(Ho)]\cap g[\Lambda(Ho)]$ consist of only pinched points.
    \item 
    $[\Lambda(Ho)]$ intersects $\iota(\partial T)$ only in pinched points.
    \item 
    $[\Lambda(Ho)]\cap g[f^\pm]=\emptyset$ for any $1\ne g\in G$.
\end{enumerate} 
\end{lem}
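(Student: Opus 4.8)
The plan is to reduce all three items to a single pinchedness criterion, exactly the one behind the proof of Lemma~\ref{BSTreeEnds}: a boundary point approached from two genuinely different ``directions'' in the Bass--Serre tree $T$ of the splitting $G=G_0\star H$ is pinched. So suppose $\xi\in[\Lambda(Ho)]$; passing to a subsequence, $h_no\to\eta$ for some $\eta\in[\xi]$ with $h_n\in H$ and $h_n\to\infty$ in $H$. In each item $\xi$ is also the limit (again up to a subsequence) of a second sequence $y_m$ of orbit points coming from the ``other side''. I would then join $h_no$ to $y_m$ by the geodesic $[h_no,y_m]=h_n[o,h_n^{-1}y_m]$ and analyse the $G_0\star H$ normal form of $h_n^{-1}y_m$ over the alphabet $(H\cup G_0)\setminus\{1\}$, lifting it to an admissible path in $\U$ via Steps~1--3 of \S\ref{secNontrivialHopf}.

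The combinatorial heart is the following observation about normal forms. Write the relevant element $g\in G$ (in item (1) this is $g$ itself, in item (2) a partial product $g_m$ along the ray $\alpha\subseteq T$ with $\iota(\alpha^+)=[\xi]$, in item (3) $g$) in its $G_0\star H$ normal form. Premultiplying by $h_n^{-1}\in H$ and postmultiplying by an element of $H$ (or by $f^{nm}\in G_0$) can only alter the first and last $H$--syllables of $g$; every $G_0$--syllable $f^{n_i}$ of $g$ survives unchanged and occupies an interior position of the resulting word, with $|n_i|\ge n$ (the large power fixed at the start of the construction). Lifting via Step~3, this $f^{n_i}$ contributes to $[h_no,y_m]$ a contracting subset of the form $h_n\big(h_n^{-1}\cdot(\text{prefix of }g\text{ up to }f^{n_i})\big)\ax(f)=(\text{prefix of }g)\,\ax(f)$, which is \emph{independent of $n$ and $m$} because the elements to the left of $f^{n_i}$ cancel the leading $h_n^{-1}$ after the $h_n$--translation; moreover the subsegment of $[h_no,y_m]$ running along it has length $\asymp|n_i|\cdot\|fo\|\ge n\|fo\|$, which exceeds the long-local threshold of Definition~\ref{AdmDef} once $n$ is chosen large. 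By Proposition~\ref{admisProp} the geodesic $[h_no,y_m]$ then $r$--fellow travels this admissible path, hence stays within $r$ of the fixed exit point $x$ of that fixed contracting subset. Letting $n,m\to\infty$, with $h_no\to[\xi]$ and $y_m\to[\xi]$, the geodesics $[h_no,y_m]$ do not escape, so $\xi\notin\mathcal C$ is pinched (non-pinchedness being, by Definition~\ref{ConvBdryDefn}(C), a property of the $[\cdot]$--class).

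For item (1), take $y_m=gh_m'o$ with $h_m'\in H$; since $g\in G\setminus H$ its normal form has at least one $G_0$--syllable, so every point of $[\Lambda(Ho)]\cap g[\Lambda(Ho)]$ is pinched. For item (2), take $y_m=g_mo$, where $g_m$ is the partial product of $\alpha$'s word up to its $m$-th syllable; such a ray begins at $\mathbf v_0$ with a nontrivial $H$--syllable immediately followed by a $G_0$--syllable $f^{n_1}$, fixed once $\alpha$ is fixed, so every point of $[\Lambda(Ho)]\cap\iota(\partial T)$ is pinched. For item (3), say $\xi=g[f^{+}]$ (the case $g[f^-]$ is symmetric), so $y_m=gf^{nm}o\to[\xi]$; if $g\notin H$ the argument above applies verbatim, while if $g\in H\setminus\{1\}$ then $h_n^{-1}gf^{nm}$ has normal form $(h_n^{-1}g,\,f^{nm})$ for $n$ large, and $[h_no,gf^{nm}o]=[h_no,go]\cdot g[o,f^{nm}o]$ runs along the fixed contracting axis $g\ax(f)$ for length $\asymp nm\|fo\|\to\infty$, again forcing non-escaping. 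Thus every point of $[\Lambda(Ho)]\cap g[f^{\pm}]$ is pinched; but $g[f^{\pm}]\subseteq g\,\mathcal C=\mathcal C$ since $f$ is non-pinched and the defining escaping condition of $\mathcal C$ is $\isom(\U)$--invariant, so this intersection is empty.

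The main obstacle is the uniform bookkeeping behind ``a fixed interior $G_0$--syllable'': one must verify, across the cases according to whether $g$ begins/ends with an $H$-- or a $G_0$--syllable and whether $g\in H$, that the lifted admissible path genuinely has a contracting subset independent of the approaching indices sitting in its interior, separated from both index-dependent ends by long-local segments, and that the long-local and bounded-projection constants of Definition~\ref{AdmDef} can be taken uniform in $n,m$. This is of the same nature as the estimates of \cite{YANG22,BDDKPS} already invoked for Step~3 and Lemma~\ref{BSTreeEnds}, and should go through with care; everything else (the fellow-traveling, the non-escaping criterion, and the $\isom(\U)$--invariance of $\mathcal C$) is immediate.
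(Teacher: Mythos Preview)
Your proposal is correct and follows essentially the same route as the paper: write $h_n^{-1}y_m$ in $G_0\star H$ normal form, observe that the interior $G_0$--syllables of $g$ are unaffected by the pre/post--multiplication from $H$, lift via Steps~1--3 to an $(L,\tau)$--admissible path whose contracting subset $\mathfrak h_1\ax(f)$ (after $h_n$--translation) is independent of $n,m$, and use Proposition~\ref{admisProp} to trap all geodesics $[h_no,y_m]$ near a fixed orbit point; the paper names this fixed point as $\mathfrak h_1 f^{n_1}o$, which is exactly your ``fixed exit point.'' Your final paragraph correctly flags the only delicate spot (uniform admissibility constants and the location of the fixed point when $g$ has a single $G_0$--syllable), and this is precisely the informality the paper also leaves to the reader.
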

\begin{proof}
\textbf{(1).} Without loss of generality, we may represent $g$ as  a nonempty alternating   word $U=\mathfrak h_1f^{n_1}\cdots  \mathfrak h_kf^{n_k}$ as in (\ref{AlternatingWord}), ending with letters in $G_0$. Let $h_no\in Ho$ and $g_no\in gHo$ tend to the same  $[\xi]$. We will show $[\xi]\notin \mathcal C$ ia pinched: that is, $\gamma_n:=[h_no, g_no]$  intersects a compact set for all $n\ge 1$.  

To this end, consider the nonempty alternating   word $W$ representing $h_n^{-1}g_n$. As $h_n\in H$ and $g_n\in gH$, the word $W$ has the form $\tilde h_n f^{n_1}\cdots  \mathfrak h_kf^{n_k}$, where $\tilde h_n =h_n^{-1}\mathfrak h_1$ gives one letter in $H$. We obtain from $W$, as described in the Step (3) above, an $(L,\tau)$--admissible path $\beta$, which has the same endpoints $o, h_n^{-1}g_no$ as $h_n^{-1}\gamma_n$. Note that $\mathfrak h_1f^{n_1} o$ is the exit vertex of the admissible path $h_n\beta$. Up to translation, we obtain that $\gamma_n$ has $r$--distance to the fixed point $\mathfrak h_1f^{n_1} o$ by Proposition \ref{admisProp}.  This shows that $[\xi]$ lies  outside of the non-pinched points $\mathcal C$. 

\textbf{(2).} The same reasoning shows that $\Lambda(Ho)$ intersects $\iota(\partial T)$ only in pinched points. Indeed, if $[\xi]$ lies in $\iota(\partial T)$, then it is an accumulation point of $g_no$, where $g_n$ lies in infinitely many $G_0$--cosets (see the proof of Lemma \ref{BSTreeEnds}). The same argument as above implies that $\gamma_n$ is non-escaping. 

\textbf{(2).} The same argument proves that $\Lambda(Ho)\cap g[f^\pm]$ must be pinched. However, since $[f^\pm]$ is assumed to be non-pinched points, we obtain $\Lambda(Ho)\cap g[f^\pm]=\emptyset$. 
\end{proof}

It is now fairly easy to derive from Lemma \ref{BSTreeEnds} and Lemma \ref{DisjointHLimitSet}:
\begin{proof}[Proof of Proposition \ref{limitsetdecomp}]
Note that $G$ is the union of all left $G_0$--cosets and $H$--cosets. Consider  a limit point $\xi\in [\Lambda(Go)]$ in the following complement 
$$[\Lambda(Go)] \setminus  \left[\bigcup_{g\in G} g[\Lambda(Ho)]\right] \bigcup  \left[\bigcup_{g\in G} g[\Lambda(G_0o)]\right] $$
so that $g_no \to[\xi]$, for a sequence of $g_n\in G$ belonging to infinitely many distinct $G_0$--cosets and/or $H$--cosets denoted by $ A_n$. By the above construction of Bass-Serre tree, $A_n$  corresponds to vertices on $T$, so after passing to subsequence,  we obtain a limiting geodesic ray $\alpha$ in $T$ starting at $\mathbf{v_0}$: that is, the intersection of $\alpha$ with $[\mathbf{v_0},g_n\mathbf{v_0}]$ becomes unbounded as $n\to\infty$. By the proof of Lemma \ref{BSTreeEnds}, this implies $\xi$ is contained in $\iota(\partial T)$.

The ``moreover" statement follows from Lemma \ref{DisjointHLimitSet}. The proof is now complete.  
\end{proof}

\begin{proof}[Proof of Theorem \ref{PositiveHorLimitset}]
First of all, as $G<\Gamma$ is an infinite normal subgroup,   $\mu_o(\Lambda^{\mathrm{Hor}}(Go))=\mu_o(\Lambda^{\mathrm{hor}}(Go))=1$ by Theorem \ref{ConfinedConsThm}. 

By Proposition \ref{limitsetdecomp}, $[\Lambda(Go)]$ is covered by a countable union $G\cdot [\Lambda (Ho)]$ with the remaining ``exceptional" set  $E$:
$$
E:=\left[\bigcup_{g\in G} g[\Lambda(G_0o)]\right] \cup \iota(\partial T).
$$  As $\iota(\partial T)$ is contained in the conical points, so is shadowed by the elements in $Go$. By the assumption that $\e G<\e \Gamma$, we have $\iota(\partial T)$  is $\mu_o$--null by Lemma \ref{ConvLimitNull}. This then proves $\mu_o(E)=0$.

This concludes the proof modulo of the following claim.
\begin{claim}
A  small/big horospheric limit points of $G$ is either in   $E$ or a  small/big horospheric limit point in  a $G$--translate of $\Lambda(Ho)$. That is,
$$
[\Lambda^{\mathrm{Hor}}(Go)]\subseteq E\bigcup \left[\bigcup_{g\in G} g[\Lambda^{\mathrm{Hor}}(Ho)]\right]
$$
\end{claim}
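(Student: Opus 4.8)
The plan is to prove the Claim by taking a horospheric limit point $[\xi]$ of $G$ and tracing which piece of the decomposition (\ref{LimitSetCover}) from Proposition \ref{limitsetdecomp} it falls into. So suppose $[\xi]\in [\Lambda^{\mathrm{Hor}}(Go)]\setminus E$. By Proposition \ref{limitsetdecomp}, since $[\xi]$ is not in $\iota(\partial T)$ nor in $\bigcup_{g\in G}g[f^\pm]$, it lies in $g[\Lambda(Ho)]$ for some $g\in G$. Replacing $[\xi]$ by $g^{-1}[\xi]$ (which is harmless: the set of big/small horospheric limit points of $G$ is $G$--invariant, being defined by a condition that transforms equivariantly under $G$ via the conformal derivative formula, and $E$ is $G$--invariant), we may assume $[\xi]\in [\Lambda(Ho)]$. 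It then remains to show that $[\xi]$, being a big (resp. small) horospheric limit point of $G$ and lying in $[\Lambda(Ho)]$, is actually a big (resp. small) horospheric limit point of $H$.

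For this I would use the geometry of the free splitting $G=G_0\star H$ together with the admissible-path machinery from Steps 1--3. By definition of $[\xi]\in\Lambda^{\mathrm{Hor}}(Go)$, there is a sequence $g_no\in Go$ with $g_no\to[\xi]$ lying in a fixed horoball $\mathcal{HB}([\xi],L)$. Write each $g_n$ as a reduced alternating word as in (\ref{AlternatingWord}); the key point is that for all but finitely many $n$, we must have $g_n\in H$. Indeed, if infinitely many $g_n$ had a nontrivial $G_0$--syllable, then the lifted $(\hat L,\tau)$--admissible path $\tilde\beta_n$ from $o$ to $g_no$ would pass through a $G$--translate of $N_r(\ax(f))$; combining this with the fact that $g_no$ tends to $[\xi]\in[\Lambda(Ho)]$ and arguing as in the proof of Lemma \ref{DisjointHLimitSet}(1)--(2), one finds that $[\xi]$ must be pinched, contradicting $[\xi]\notin E$ once one also uses that $[\Lambda(Ho)]\cap g[f^\pm]=\emptyset$ (Lemma \ref{DisjointHLimitSet}(3)) and that $[\xi]$ is by hypothesis a genuine limit point. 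A cleaner route may be: the cone $\Omega_o(N_r(\ax(f)))$-type reasoning shows that if $g_n$ has a $G_0$--syllable then $g_no$ lies in the cone at an axis $w_i=h_1f^{n_1}\cdots h_i\ax(f)$, forcing the limit into $h_1f^{n_1}\cdots h_i[f^\pm]$ or $\iota(\partial T)$, hence into $E$. So $g_n\in H$ eventually, whence $g_no\in Ho$ tends to $[\xi]$ while staying in $\mathcal{HB}([\xi],L)$; that is exactly the definition of $[\xi]\in\Lambda^{\mathrm{Hor}}(Ho)$ (Definition \ref{HoroLimitPtsDef}). The same argument with ``some horoball'' replaced by ``any horoball'' handles the small horospheric case.

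Once the Claim is established, the theorem follows quickly: $\mu_o(\Lambda^{\mathrm{hor}}(Go))=1$ by Theorem \ref{ConfinedConsThm} applied to the infinite normal subgroup $G\lhd\Gamma$; the Claim gives $[\Lambda^{\mathrm{hor}}(Go)]\subseteq E\cup\bigcup_{g\in G}g[\Lambda^{\mathrm{hor}}(Ho)]$; and $\mu_o(E)=0$ because $[\bigcup_g g[\Lambda(G_0o)]]$ is a countable union of translates of $[f^\pm]$ (a $\mu_o$--null pair since $\mu_o$ charges conical points and every $[\cdot]$--class is $\mu_o$--null by Lemma \ref{ConicalPointsLem}) while $\iota(\partial T)$ sits inside the conical limit set and is the limsup of shadows $\Pi_o^F(go,r)$ over $g$ in a subset whose Poincaré series converges at $\e\Gamma$ (this is where the hypothesis $\e G<\e\Gamma$ enters: it makes $\p_G(\e\Gamma,o,o)<\infty$, so Lemma \ref{ConvLimitNull} applies). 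Hence $\mu_o\big(\bigcup_{g\in G}g[\Lambda^{\mathrm{hor}}(Ho)]\big)=1$, and by countable additivity and $G$--quasi-invariance of $\mu_o$ at least one translate, hence $[\Lambda^{\mathrm{hor}}(Ho)]$ itself, has positive measure; therefore $\mu_o(\HG)\ge\mu_o(\hG)>0$.

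The main obstacle I anticipate is the ``eventually $g_n\in H$'' step: making rigorous that a horoball-bounded sequence converging to a point of $[\Lambda(Ho)]$ cannot be fed by words with $G_0$--syllables. The delicate issue is that the lifted admissible path $\tilde\beta_n$ depends on $n$ and its ``pinching witness'' (the exit point of the contracting subset associated to the first $G_0$--syllable) could a priori drift to infinity as $n\to\infty$; one must argue that the horoball constraint $B_{[\xi]}(g_no,o)\le L$ pins this witness within a bounded region, so that $[\xi]$ genuinely fails to be non-pinched — or, alternatively, that the limit is forced into $g h_1 f^{n_1}\cdots[f^\pm]$ or $\iota(\partial T)$. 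I expect this to require a careful combination of the fellow-travelling property of Proposition \ref{admisProp}, Lemma \ref{SameHoroball}, and Lemma \ref{HoroballUniqueLimit}, in the spirit of the arguments already used in the proofs of Theorem \ref{ConInHorLimitSet} and Lemma \ref{DisjointHLimitSet}.
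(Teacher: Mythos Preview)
Your overall strategy is correct and close to the paper's, but you are making the central step harder than it needs to be, and the obstacle you flag at the end is an artifact of that detour.

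The paper's argument for the Claim is very short. Take $\xi\in\Lambda^{\mathrm{Hor}}(Go)\setminus E$ and a horoball sequence $g_no\to[\xi]$. Since $\xi\notin\iota(\partial T)$, the proof of Proposition \ref{limitsetdecomp} already shows that the $g_n$ lie in only finitely many $H$--cosets: if they lay in infinitely many, the corresponding vertices in the Bass--Serre tree would produce a limiting geodesic ray, forcing $[\xi]\in\iota(\partial T)$. Passing to a subsequence in a single coset $g_0H$, write $g_n=g_0h_n$ with $h_n\in H$; then $h_no\to g_0^{-1}[\xi]$ inside the translated horoball, so $[\xi]\in g_0[\Lambda^{\mathrm{Hor}}(Ho)]$. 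Done.

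Your route tries instead to analyse the alternating word of each $g_n$ and argue that a nontrivial $G_0$--syllable forces pinching or membership in $E$. The problem you correctly isolate --- that the axis witnessing the first $G_0$--syllable may drift to infinity with $n$ --- is genuine, and patching it via horoball estimates and Proposition \ref{admisProp} would be painful. The coset argument bypasses this entirely: the dichotomy is not ``has a $G_0$--syllable or not'' but ``lies in infinitely many $H$--cosets or not'', and that dichotomy is already handled by Proposition \ref{limitsetdecomp}. If you keep your preliminary reduction to $[\xi]\in[\Lambda(Ho)]$, then after landing in a single coset $g_0H$ you have $[\xi]\in g_0[\Lambda(Ho)]\cap[\Lambda(Ho)]$; by Lemma \ref{DisjointHLimitSet}(1) this intersection consists of pinched points when $g_0\notin H$, while $\xi\in\mathcal{C}^{\mathrm{hor}}\subseteq\mathcal{C}$ is non-pinched by Definition \ref{HoroLimitPtsDef}, so $g_0\in H$. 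Either way the argument is a couple of lines once you use cosets rather than syllables.

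Your discussion of the material surrounding the Claim (why $\mu_o(E)=0$, how to deduce $\mu_o(\hG)>0$) matches the paper.
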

\begin{proof}[Proof of the claim]Indeed, let $\xi\in \Lambda^{\mathrm{Hor}}(Go)\setminus E$.   Then there exists a sequence $g_no\in Go$ tending to $[\xi]$ in some (or any)  horoball $\mathcal{HB}([\xi])$ based at $[\xi]$. As $\xi\notin E$, $g_n\in G$ lies in finitely many $H$--cosets, and after passing to subsequence, we may assume $g_nH=g_0H$ for $n\gg 1$. By Lemma \ref{DisjointHLimitSet}, $g_0\Lambda (Ho)\cap \Lambda (Ho)=\emptyset$ for $g_0\notin H$, so we conclude that $g_0H=H$ and thus $\xi\in \Lambda^{\mathrm{Hor}}(Ho)$.  
\end{proof}
As $E$ is $\mu_o$--null, we thus obtain from Proposition \ref{limitsetdecomp} that $\mu_o(\HG)\ge \mu_o(\hG)>0$.
\end{proof}

We conclude this section with a corollary which might be of independent interest. Define a topology on $M:=T^0\cup \partial T$ as follows (cf. \cite{CSW}). A sequence of points $x_n\to x$ if and only if the geodesic $[x_n,x]$ eventually passes no element in any given finite set of edges (which is required adjacent to $x$ if $x\in T^0$). This makes $M$ homeomorphic to a Cantor space, in particular, it is compact. If $T$ is the Bass-Serre tree of $\Gamma=H\star K$, then $M$ is exactly the Bowditch boundary of a hyperbolic group $\Gamma$  relative to $\{H, K\}$. Moreover, if both $H, K$ are one-ended, it is also the end boundary of $\Gamma$.

\begin{cor}
In the setting of Corollary \ref{limitsetdecompcor}, there exists a continuous surjective $G$--map from $\Lambda(Go)$ to   $M$, where $M$ can be identified with the Bowditch boundary of $G$ relative to factors $G_0$ and $H$.     
\end{cor}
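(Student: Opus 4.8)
The plan is to read the map off the decomposition of the limit set in Corollary \ref{limitsetdecompcor} together with the Bass--Serre tree $T$ of the splitting $G=G_0\star H$. Since we are in the setting of Corollary \ref{limitsetdecompcor}, every point of $\pU$ is non-pinched, so by Lemma \ref{BSTreeEnds} the map $\iota\colon\partial T\to\iota(\partial T)$ is a bijection, and by Lemma \ref{DisjointHLimitSet} the three families $\{g[\Lambda(Ho)]\}_{g\in G}$, $\{g[f^\pm]\}_{g\in G}$ and $\iota(\partial T)$ are pairwise disjoint; moreover $g[\Lambda(Ho)]=g'[\Lambda(Ho)]$ forces $gH=g'H$, while $g[f^\pm]=g'[f^\pm]$ forces $gG_0=g'G_0$ (the latter because $E_G(f)=\langle f^n\rangle=G_0$ by Lemma \ref{NonPinchedStabilizer} and the construction of $G$, $v_0=\ax(f)$ having $G$--stabilizer $\langle f^n\rangle$). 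I would therefore define $\pi\colon\Lambda(Go)\to M=T^0\cup\partial T$ by
\[
\pi(\xi)=\begin{cases} gH, & \xi\in g[\Lambda(Ho)],\\[2pt] gG_0, & \xi\in g[f^\pm],\\[2pt] \iota^{-1}(\xi), & \xi\in\iota(\partial T).\end{cases}
\]
This is well defined, constant on $[\cdot]$--classes (hence it descends to $[\Lambda(Go)]$), and $G$--equivariant by construction. It is surjective: each $H$--vertex $gH$ is hit because $\Lambda(Ho)\neq\emptyset$ ($H$ is a nontrivial free product of infinite cyclic groups generated by contracting elements), each $G_0$--vertex $gG_0$ is hit because $gf^ng^{-1}$ is a contracting element with fixed points $g[f^\pm]$, and every end is hit since $\iota$ surjects onto $\iota(\partial T)$.

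The hard part will be continuity. Here is the strategy. The space $M$ is compact Hausdorff and totally disconnected, and its topology is generated by the clopen sets $N_v$, $v\in T^0$, where $N_v$ consists of $v$ together with all vertices and ends lying strictly beyond $v$ (away from $v_0$); so it suffices to show $\pi^{-1}(N_v)$ is clopen in $\Lambda(Go)$ for every $v$. For a $G_0$--vertex $v=gG_0$ I would identify, via the lift construction in the proof of Proposition \ref{limitsetdecomp} (a $T$--geodesic unfolds a reduced word, which determines a canoeing path in $\p_K(\f)$, which lifts to an $(\hat L,\tau)$--admissible path in $\U$), the vertex $v$ with the axis $Y_v=g\ax(f)$, and show that $\pi^{-1}(N_v)=\Lambda(Go)\cap\Pi_o^F(go,r)$ up to finitely many partial shadows coming from the three generators of the system $F$, for any sufficiently large $r$. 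Indeed, if $\pi(\xi)\in N_v$ then any $g_no\to\xi$ with $g_n$ in the cosets prescribed by the address of $\xi$ has $[o,g_no]$ fellow travelling an admissible path through $Y_v$ (Proposition \ref{admisProp}), so $[o,\xi]$ passes within $r$ of $Y_v$; conversely, for $r$ large, entering $B(Y_v,r)$ forces (by bounded projection of $\f$) a long overlap with $Y_v$, hence forces the address of $\xi$ through $v$. For an $H$--vertex $gH$ one writes $N_{gH}=g[\Lambda(Ho)]$ together with the countable union of the $N_{v'}$ over the $G_0$--vertices $v'$ adjacent to $gH$ on the far side, reducing again to the $G_0$--vertex case. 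The remaining genuinely technical point, and the heart of the argument, is to promote $\Lambda(Go)\cap\Pi_o^F(go,r)$ to a \emph{clopen} subset of $\Lambda(Go)$: shadows are closed (closures of cones), and for $r'>r$ one has $\Lambda(Go)\cap\Pi_o^F(go,r)\subseteq\mathrm{int}_{\Lambda(Go)}\Pi_o^F(go,r')$ whenever $go$ is far from $o$; since the generating family $\{N_v\}$ may be chosen cofinally (i.e.\ with $Y_v$ arbitrarily far from $o$), the vertices near $v_0$ being handled by complements, this estimate suffices and yields that $\pi$ is continuous.

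Finally, for the identification of $M$ with the Bowditch boundary of $(G;\{G_0,H\})$: a free product $G=G_0\star H$ is hyperbolic relative to $\{G_0,H\}$, and its fine hyperbolic coned-off graph is $G$--equivariantly quasi-isometric to the Bass--Serre tree $T$; the associated Bowditch boundary is therefore $\partial T$ together with one parabolic point for each cone point, i.e.\ for each vertex of $T$, which is precisely $M=T^0\cup\partial T$ with the topology described above. This is the standard description of Bowditch boundaries of free products, so no further argument is needed, and $\pi$ is the desired continuous surjective $G$--map.
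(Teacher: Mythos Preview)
The paper states this corollary without proof, leaving it as a consequence of the preceding development (the disjoint decomposition in Corollary \ref{limitsetdecompcor}, Lemmas \ref{BSTreeEnds} and \ref{DisjointHLimitSet}, and the three-step lift of Bass--Serre geodesics to admissible paths). Your construction of $\pi$ via that decomposition is exactly the map the paper has set up, and your verification of well-definedness, $G$--equivariance, and surjectivity is correct; in particular your identification of the $G$--stabilizer of $[f^\pm]$ with $G_0$ is right, since any $g\in G$ fixing $[f^\pm]$ lies in $E_\Gamma(f)\cap G$, hence fixes the vertex $v_0=\ax(f)$ in $\p_K(\f)$, whose $G$--stabilizer the paper records as $\langle f^n\rangle$.

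One small caution on the continuity step. Your strategy of matching $\pi^{-1}(N_v)$ with a shadow through the axis $g\ax(f)$ is the right geometric picture, but in this section the contracting system is $\f=\{g\ax(f):g\in\Gamma\}$ built from a \emph{single} $f$, not the three-element set $F$ used elsewhere, so the relevant object is the ordinary shadow $\Pi_o(g\ax(f),r)$ rather than a partial $(r,F)$--shadow. More substantively, the openness of shadows (your inclusion $\Pi_o(go,r)\subseteq\mathrm{int}\,\Pi_o(go,r')$) is not proved in the paper at the level of a general convergence boundary; it holds in each of the cases (1), (2), (3), (5) of Examples \ref{ConvbdryExamples} for case-specific reasons (thin triangles, CAT(0) convexity, etc.), which is presumably why the corollary is restricted to those settings. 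A cleaner route that avoids this issue is the closed-graph argument: both $\Lambda(Go)$ and $M$ are compact Hausdorff, so it suffices to show that if $\xi_n\to\xi$ and $\pi(\xi_n)\to m$ then $\pi(\xi)=m$; this follows directly from Assumption~(B) in Definition \ref{ConvBdryDefn} applied to the escaping axes $g\ax(f)$ recorded along the $T$--geodesics, together with the non-pinched hypothesis (Assumption~(C)) to rule out collisions, and uses only what the paper has already established.
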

The reader may have noticed that item (4) in Examples \ref{ConvbdryExamples} is not covered in the corollaries. It is not clear whether the decomposition holds on the Thurston boundary without taking the intersection with $\ue$. 
 


\bibliographystyle{amsplain}
 \bibliography{bibliography}

\end{document}